\theoremstyle{plain}
\newtheorem{prop}{Proposition}[subsection]
\newtheorem{lem}[prop]{Lemma}
\newtheorem{thm}[prop]{Theorem}
\newtheorem{cor}[prop]{Corollary}
\theoremstyle{definition}
\newtheorem{definit}[prop]{Definition}
\newtheorem{rem}[prop]{Remark}
\theoremstyle{plain}
\theoremstyle{definition}
\theoremstyle{plain}
\newtheorem{propIntro}{Proposition}[section]
\newtheorem{thmIntro}[propIntro]{Theorem}
\theoremstyle{definition}
\DeclareMathAlphabet{\mathpzc}{OT1}{pzc}{m}{it}
\DeclarePairedDelimiter{\norm}{||}{||}
\DeclarePairedDelimiter{\set}{\{}{\}}
\DeclareMathOperator{\End}{End}
\DeclareMathOperator{\Hom}{Hom}
\DeclareMathOperator{\Ind}{Ind}
\DeclareMathOperator{\Res}{Res}
\DeclareMathOperator{\GL}{GL}
\DeclareMathOperator{\SL}{SL}
\DeclareMathOperator{\Gal}{Gal}
\DeclareMathOperator{\Image}{Im}
\DeclareMathOperator{\tr}{tr}
\DeclareMathOperator{\Proj}{Proj}
\DeclareMathOperator{\Inj}{Inj}
\DeclareMathOperator{\Id}{Id}
\DeclareMathOperator{\soc}{soc}
\DeclareMathOperator{\cosoc}{cosoc}
\DeclareMathOperator{\rad}{rad}
\DeclareMathOperator{\Ext}{Ext}
\DeclareMathOperator{\nr}{nr}
\DeclareMathOperator{\gr}{gr}
\DeclareMathOperator{\ad}{ad}
\DeclareMathOperator{\pr}{pr}
\DeclareMathOperator{\Ann}{Ann}
\newcommand{\m}{\mathfrak{m}}
\newcommand{\NN}{\mathbb{N}}
\newcommand{\ZZ}{\mathbb{Z}}
\newcommand{\cC}{\mathcal{C}}
\newcommand{\QK}[1]{I_{1}}
\newcommand{\cW}{\mathcal{W}}
\newcommand{\Qp}{\mathbb{Q}_{p}}
\newcommand{\Zp}{\mathbb{Z}_{p}}
\newcommand{\F}{\mathbb{F}}
\newcommand{\Fp}{\mathbb{F}_{p}}
\newcommand{\Qpbar}{\overline{\mathbb{Q}}_p}
\newcommand{\Fpbar}{\overline{\mathbb{F}}_p}
\newcommand{\sgn}{\mathrm{sgn}}
\newcommand{\QQ}{\mathbb{Q}}
\newcommand{\RR}{\mathbb{R}}
\newcommand{\bV}{\mathbb{V}}
\newcommand{\wt}[1]{\widetilde{#1}}
\newcommand{\ang}[1]{\langle #1 \rangle}
\newcommand{\onto}{\twoheadrightarrow}
\newcommand{\into}{\hookrightarrow}
\newcommand{\congto}{\xrightarrow{\,\sim\,}}
\newcommand{\eps}{\varepsilon}
\newcommand{\cE}{\mathcal{E}}
\newcommand{\cI}{\mathcal{I}}
\newcommand{\cJ}{{\mathcal{J}}}
\newcommand{\cL}{\mathcal{L}}
\newcommand{\cM}{\mathcal{M}}
\newcommand{\cO}{\mathcal{O}}
\newcommand{\rbar}{\overline{r}}
\newcommand{\rhobar}{\overline{\rho}}
\newcommand{\taubar}{\overline{\tau}}
\newcommand{\brho}{\overline{\rho}}%
\newcommand{\smatr}[4]{\bigl(\begin{smallmatrix} {#1}& {#2}\\ {#3}&{#4}\end{smallmatrix}\bigl)}%
\newcommand{\defeq}{\stackrel{\textrm{\tiny{\upshape{def}}}}{=}}
\newcommand{\teich}[1]{\widetilde{#1}}
\newcommand{\ovl}[1]{\overline{#1}}
\newcommand{\un}[1]{\underline{#1}}
\renewcommand{\bf}[1]{\mathbf{#1}}
\newcommand{\tld}[1]{\tilde{#1}}
\newcommand{\wtld}[1]{\widetilde{#1}}
\DeclareMathOperator{\JH}{\mathrm{JH}}
\newcommand{\rig}{\mathrm{rig}}
\newcommand{\orient}{\mathrm{or}}
\newcommand{\phz}{\varphi}
\newcommand{\ra}{\rightarrow}
\newcommand{\lra}{\longrightarrow}
\newcommand{\ppar}[1]{(\mkern-3mu(#1)\mkern-3mu)}
\newcommand{\bbra}[1]{\llbracket #1\rrbracket}
\newcommand{\fM}{\mathfrak{M}}
\newcommand{\fp}{\mathfrak{p}}
\newcommand{\fQ}{\mathfrak{q}}
\newcommand{\fS}{\mathfrak{S}}
\newcommand{\fW}{\mathfrak{w}}
\newcommand{\fm}{\mathfrak{m}}
\renewcommand{\t}{\mathfrak{t}}
\DeclareMathOperator{\Mat}{Mat}
\DeclareMathOperator{\Ad}{Ad}
\DeclareMathOperator{\Adm}{Adm}
\DeclareMathOperator{\id}{id}
\DeclareMathOperator{\reg}{reg}
\DeclareMathOperator{\Spec}{Spec}
\DeclareMathOperator{\Supp}{Supp}
\DeclareMathOperator{\Lie}{Lie}
\DeclareMathOperator{\M}{M}
\newcommand{\xto}[1][]{\xrightarrow{#1}}
\newcommand{\simto}{\xto[\sim]} %
\renewcommand{\subset}{\subseteq}
\renewcommand{\simeq}{\cong}
\newcommand{\oS}{{\ovl S}}
\newcommand{\fq}{\fQ} 
\definecolor{olive}{rgb}{0.5, 0.5, 0.0}
\title{Gelfand--Kirillov dimension and mod $p$ cohomology for $\GL_2$}
\author{Christophe Breuil}
\address{CNRS, Universit\'e Paris--Saclay, Laboratoire de math\'ematiques d'Orsay, 91405, Orsay, France}
\email{christophe.breuil@universite-paris-saclay.fr}
\author{Florian Herzig}
\address{Department of Mathematics, University of Toronto, 40 St. George Street, Toronto, ON M5S 2E4, Canada}
\email{herzig@math.toronto.edu}
\author{Yongquan Hu}
\address{Morningside Center of Mathematics, Academy of Mathematics and Systems Science, Chinese Academy of Sciences, Beijing 100190, China; University of the Chinese Academy of Sciences, Beijing 100049, China}
\email{yhu@amss.ac.cn}
\author{Stefano Morra}
\address{Universit\'e Paris 8, Laboratoire d'Analyse, G\'eom\'etrie et Applications, LAGA, Universit\'e Sorbonne Paris Nord, CNRS, UMR 7539, F-93430, Villetaneuse, France}
\email{morra@math.univ-paris13.fr}
\author{Benjamin Schraen}
\address{Universit\'e Paris--Saclay, CNRS, Laboratoire de math\'ematiques d'Orsay, 91405, Orsay, France}
\email{benjamin.schraen@universite-paris-saclay.fr}
\begin{document}
 
 \begin{abstract}
Let $p$ be a prime number, $F$ a totally real number field unramified at places above $p$ and $D$ a quaternion algebra of center $F$ split at places above $p$ and at no more than one infinite place. Let $v$ be a fixed place of $F$ above $p$ and $\rbar : {\rm Gal}(\overline F/F)\rightarrow \GL_2(\Fpbar)$ an irreducible modular continuous Galois representation which, at the place $v$, is semisimple and sufficiently generic (and satisfies some weak genericity conditions at a few other finite places). We prove that many of the admissible smooth representations of $\GL_2(F_v)$ over $\Fpbar$ associated to $\rbar$ in the corresponding Hecke-eigenspaces of the mod $p$ cohomology have Gelfand--Kirillov dimension $[F_v:\Qp]$, as well as several related results. 
\end{abstract}

\maketitle

\setlength{\parskip}{0mm}
\tableofcontents
\setlength{\parskip}{3mm}

\section{Introduction}

\normalem %

\subsection{{Torsion in cohomology and Gelfand--Kirillov dimension}}

Fix a prime number $p$, a totally real number field $F$ which is unramified at places above $p$, and a quaternion algebra $D$ of center $F$ which is split at places above $p$ and at exactly one infinite place. For $V$ a compact open subgroup of $(D\otimes_F{\mathbb A}_F^\infty)^\times$ denote by $X_V$ the associated smooth projective Shimura curve over $F$. Let $v$ be a fixed place of $F$ above $p$ and $\F$ a finite extension of $\Fp$ (``sufficiently large'', as usual). This paper is concerned with admissible smooth representations of $\GL_2(F_v)$ over $\F$ of the form
\begin{equation}\label{goal}
\pi\defeq \varinjlim_{V_v}\Hom_{{\rm Gal}(\overline F/F)}\!\big(\rbar,H^1_{{\rm \acute et}}(X_{V^vV_v} \times_F \overline F, \F)\big),
\end{equation}
where $V^v$ is a fixed compact open subgroup of $(D\otimes_F{\mathbb A}_F^{\infty,v})^\times$, the inductive limit running over compact open subgroups $V_v$ of $(D\otimes_FF_v)^\times\cong \GL_2(F_v)$ and $\rbar : {\rm Gal}(\overline F/F)\rightarrow \GL_2(\F)$ is a continuous absolutely irreducible Galois representation such that $\pi\ne 0$. Understanding such representations $\pi$ of $\GL_2(F_v)$ attached to Galois representations is important, as it is hoped that they realize a mod $p$ Langlands correspondence. For instance, when $F=\QQ$ (and $X_V$ is the compactified modular curve), {under weak assumptions on $\rbar\vert_{{\rm Gal}(\Qpbar/\Qp)}$  the representation $\pi$ of $\GL_2(\Qp)$ is well understood (see \cite{emerton-local-global})}.

{This is far from being the case when $F_v\ne \Qp$, despite a great amount of effort during the past 20 years and we only have few guidelines from modularity lifting expectations.
In particular, the work of \cite{GN}, which follows the heuristic of \cite[\S 3.1.1]{EmertonICM}, shows how relevant geometric properties of the ``big'' Hecke algebra are consequences of the \emph{Gelfand--Kirillov dimension} of $\pi$ (a measure of the growth of the dimension of invariant subspaces under principal congruence subgroups).}
{
For $F_v=\Qp$ this dimension is known by \cite{morra-inv}, thanks to the explicit description of the \emph{supersingular} representations of $\GL_2(\Qp)$ \cite{breuilI}, but if  $F_v\ne \Qp$ the (over-)abundance of  supersingular representations (\cite{BP}, \cite{yongquan-algebra}) makes it more difficult to obtain information, even for the invariants under the first congruence subgroup (\cite{LMS}, \cite{HuWang}, \cite{DanWild}, which are based on the patching construction of \cite{EGS}).
}

{The aim of this work is to lift a corner of the veil surrounding the smooth representations $\pi$ coming from cohomology, by establishing their Gelfand--Kirillov dimension.
{Besides applications to the flatness of completed homology over a big Hecke algebra (Theorem~\ref{thm:platitude-intro} below) and on the candidate of \cite{CEGGPS} for the $p$-adic Langlands correspondence (Theorem~\ref{thmGN:application} below), our methods also lead us to an abelian subcategory of the category of smooth representations of $\GL_2(F_v)$ that has desirable finiteness property, with further applications to a functor towards Galois representations; cf.\ our subsequent work (\cite{BHHMS2,BHHMS3}).}
}

{
We now describe in more detail our results.
}

\subsection{{The main theorem and its consequences}}

In order to state our main theorem, we first give the precise definition of $\dim_{\GL_2(F_v)}(\pi)$, the Gelfand--Kirillov dimension of $\pi$ in the context of smooth $\GL_2(F_v)$-representations over mod $p$ vector spaces.\footnote{Strictly speaking, this is not quite the Gelfand--Kirillov dimension of $\pi$, see Remark \ref{GKdimisnotGKdim} in the text, but this is the only dimension we will consider.} We let $f\defeq [F_v:\Qp]$, $K\defeq \GL_2({\mathcal O}_{F_v})$, $K_n\defeq 1+p^nM_2({\mathcal O}_{F_v})\subset K$ for $n\geq 1$, $Z_1$ the center of $K_1$, and we assume $p>2$. For $\pi$ a nonzero admissible smooth representation of $\GL_2(F_v)$ over $\F$ with a central character, we set (see \S\ref{sec:kirillov})
$$\dim_{\GL_2(F_v)}(\pi) \defeq 3f - \min\set{d\geq 0 : \Ext^d_{\F\bbra{K_1/Z_1}}(\pi^\vee,\F\bbra{K_1/Z_1})\neq0},$$
where $\F\bbra{K_1/Z_1}$ is the Iwasawa algebra of $K_1/Z_1$ and $\pi^\vee$ is the algebraic dual of $\pi$, considered as a module over $\F\bbra{K_1/Z_1}$ (note that $Z_1$ acts trivially on $\pi$ and that $3f=\dim(\GL_2(F_v)/Z_1)$). Another equivalent and maybe more intuitive definition of $\dim_{\GL_2(F_v)}(\pi)$ is the following: it is the unique integer such that there exist $a\leq b$ in ${\mathbb R}_{>0}$ satisfying
$$a\leq \frac{\dim_{\F}(\pi^{K_n})}{p^{n\dim_{\GL_2(F_v)}(\pi)}} \leq b$$
for all $n\geq 1$ (see Remark \ref{GKdimisnotGKdim}). 
(As alluded above, the dimension $\dim_{\GL_2(F_v)}(\pi)$ measures the growth of $\pi^{K_n}$ when $n$ grows: for instance it is $0$ if and only if $\dim_{\F}(\pi)$ is finite and nonzero.)

\begin{thmIntro}[Corollary \ref{mainmain}]\label{mainintro}
Keep all the above assumptions on $F$, $D$, and assume that $\rbar$ is generic and %
that $\rbar |_{G_{F(\!\sqrt[p]{1})}}$ is absolutely irreducible. 
Let $V^v=\prod_{w\ne v}V_w$ with $V_w=\GL_2({\mathcal O}_{F_w})$ if neither $D$ nor $\rbar$ ramifies at $w$, and $V_w\subset 1+pM_2({\mathcal O}_{F_w})$ if $w\!\mid\!p$ \emph{(}$w\ne v$\emph{)}. 

Then for $\pi$ as in \eqref{goal} we have $\dim_{\GL_2(F_v)}(\pi)=f$.
\end{thmIntro}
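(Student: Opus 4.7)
The plan is to establish the equality $\dim_{\GL_2(F_v)}(\pi) = f$ by separately proving matching upper and lower bounds, relying on the growth-of-invariants characterization recalled in the introduction. The upper bound is the main technical content and rests on an intrinsic analysis of $\pi$ as an Iwasawa module; the lower bound should follow comparatively directly from previously available structural information on $\pi^{K_1}$.

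For the upper bound $\dim_{\GL_2(F_v)}(\pi) \le f$, I would pass to the Pontryagin dual $M \defeq \pi^\vee$, viewed as a compact module over the noetherian local Iwasawa algebra $\Lambda \defeq \F\bbra{K_1/Z_1}$. By the $\Ext$-characterization of $\dim_{\GL_2(F_v)}$, this is equivalent to bounding the dimension of $M$ over $\Lambda$ from above by $f$, or, after passing to an associated graded, to producing enough homogeneous elements of $\gr \Lambda$ annihilating $\gr M$ to cut its Krull dimension down to $f$ (note that $\gr \Lambda$ itself has Krull dimension $3f$). Such annihilators should come from: the explicit structure of the $K$-socle of $\pi$, which is a direct sum of Serre weights determined by $\rbar|_{G_{F_v}}$ (semisimple and generic); relations on the pro-$p$ Iwahori--Hecke module $\pi^{I_1}$; and, crucially, arithmetic relations coming from local-global compatibility with the patching functor of \cite{EGS}, which transfer information about the potentially crystalline local deformation rings at $v$ into relations on $\pi$.

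For the lower bound $\dim_{\GL_2(F_v)}(\pi) \ge f$, the structural results on $\pi^{K_1}$ obtained by Taylor--Wiles type patching in \cite{LMS}, \cite{HuWang}, \cite{DanWild} (building on \cite{EGS}) should produce enough explicit vectors in $\pi^{K_1}$, and when combined with the admissibility of $\pi$ and the noetherianness of $\Lambda$, these force the invariants $\pi^{K_n}$ to grow at least like $p^{nf}$, yielding the desired bound.

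The main obstacle is the upper bound. The root difficulty is that for $F_v \ne \QQ_p$ there is no analogue of the classification \cite{breuilI} of supersingular representations: these are vastly over-abundant and unclassified, so one cannot compute $\dim_{\GL_2(F_v)}(\pi)$ intrinsically the way \cite{morra-inv} does over $\QQ_p$. The paper's strategy must therefore work indirectly with the globally defined $\pi$, producing the required elements of $\Ann_{\gr \Lambda}(\gr M)$ via patching together with the structure of local deformation rings. The most delicate step is to propagate information from $\pi^{K_1}$, where patching is known to give sharp control, to the full graded module $\gr M$, i.e.\ to all higher $K_n$-invariants simultaneously.
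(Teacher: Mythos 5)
Your overall architecture (dual to an Iwasawa module, pass to the associated graded, cut the dimension via annihilators sourced from socle structure and patching) is in the right spirit for the upper bound, but the crucial mechanism is missing, and your lower-bound argument has a genuine gap.

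\emph{Upper bound.} The paper does not work directly over $\F\bbra{K_1/Z_1}$. The decisive step is to switch to the pro-$p$-Iwahori $I_1$ and to the Iwasawa algebra $\F\bbra{I_1/Z_1}$. Because $I_1/Z_1$ is not uniform, $\gr_\fm \F\bbra{I_1/Z_1}$ is a genuinely noncommutative enveloping algebra $U_\F(\ovl{\gr G})$, but it has an explicit two-sided ideal $I_{I_1/Z_1}$ (generated in degree $2$ by the commutators $e_jf_j$ and the $h_j$) whose quotient is the commutative ring $\F[e_j,f_j]/(e_jf_j)$ of Krull dimension exactly $f$ (Theorem \ref{quotientalg}). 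The key lemma (Lemma \ref{prop:grsemiabelian}, fed into Corollary \ref{cor:GKdim}) says: if the multiplicity-one condition $[\pi[\fm_{I_1/Z_1}^3]:\chi]=[\pi[\fm_{I_1/Z_1}]:\chi]$ holds for all $I$-characters $\chi$ in $\pi^{I_1}$, then $\gr_\fm\pi^\vee$ is annihilated by $I_{I_1/Z_1}$, and so $\dim_{\GL_2(F_v)}(\pi)\le f$. The reduction from the $K_1$-level mod-$\fm_{K_1/Z_1}^2$ multiplicity statement (Theorem \ref{thmintro1}\ref{multintro}) to this Iwahori multiplicity statement is itself a substantial piece of representation theory (Proposition \ref{prop-W3topi=dim1}), and the $K_1$-level statement is established by patching against deformation rings with Hodge--Tate weights up to $(2,-1)$. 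Your sketch conflates these into ``produce enough annihilators'', but the precise source of the annihilating ideal---the Iwahori-level multiplicity-one condition in three layers---is exactly the novel idea of the proof and cannot be skipped.

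\emph{Lower bound.} Your claim that the structure of $\pi^{K_1}$ together with admissibility and noetherianness of $\Lambda$ ``forces the invariants $\pi^{K_n}$ to grow at least like $p^{nf}$'' is false as stated: $\pi^{K_1}$ by itself says nothing about $\pi^{K_n}$ for $n\ge 2$, and indeed $\dim_{\GL_2(F_v)}$ is not determined by $\pi^{K_1}$. The lower bound in the paper (part (ii) of the proof of Theorem \ref{mainpatching2}) does \emph{not} come from $\pi^{K_1}$. It is a Gee--Newton ``Miracle Flatness'' argument on the \emph{patched} module $\mathbb{M}_\infty$: freeness of $\mathbb{M}_\infty$ over $S_\infty\bbra{K_1/Z_1}$ gives $j_{S_\infty\bbra{K_1/Z_1}}(\mathbb{M}_\infty)=0$, whence via \cite[Lemma A.19]{GN} one computes $j_{R_\infty\bbra{K_1/Z_1}}(\mathbb{M}_\infty)=2f$; since $j_{R_\infty\bbra{K_1/Z_1}}(\mathbb{M}_\infty)\ge 3f-\dim_{\GL_2(F_v)}(\pi)$, one deduces $\dim_{\GL_2(F_v)}(\pi)\ge f$. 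This uses the full strength of the Taylor--Wiles--Kisin construction, not merely the known structure of $\pi^{K_1}$.
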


We also prove the same statement for the analog of $\pi$ when $D$ is totally definite. Although we did not check it carefully, the same method should also work in other global settings in which the group is $\GL_2(F_v)$ at the place $v$, like for instance unitary groups which are forms of $\GL_2$. Moreover, from exchanges with Kozio\l{}, we believe the same result applies when, in the global setup, the unitary group is a nonsplit unramified unitary group at $v$. In a companion paper (and the same global setup), Hu and Wang prove an analog of Theorem~\ref{prop1intro} below and apply our Theorem~\ref{prop2intro} to deduce $\dim_{\GL_2(F_v)}(\pi)=[F_v:\Qp]$ when $\rbar\vert_{{\rm Gal}(\overline F_v/F_v)}$ is {\it not} semisimple and sufficiently generic (\cite{HuWang2}). 
The method of \emph{loc.~cit.} uses at several places that $\rbar\vert_{{\rm Gal}(\overline F_v/F_v)}$ is not semisimple, but in fact the method of the present work extends more or less directly to the non-semisimple case, see \cite{YWang}.
Finally, a variant of the strategy used in this paper was used by Hu and Wang in \cite{HuWang3} to prove an analog of Theorem~\ref{mainintro} in the case of quaternion algebras over $\Qp$.

{
By work of Gee--Newton (see \cite{GN}), Theorem \ref{mainintro} can be applied to obtain ``big $R$ equals big $T$'' results and flatness for the completed homology of towers of Shimura curves, when considered as a module over the ``big Hecke algebra''.
More precisely let $\psi$ be
the Teichm\"uller lift of the product of $\det(\rbar)$ and the
mod $p$ cyclotomic character, let
\[
  \widehat{H}^1(V^v)_{\rbar}^{\psi^{-1}}\defeq\varprojlim_{n}\varinjlim_{V_v}H^1(X_{V^vV_v}\times_F\overline{F},W(\F)/p^n)^{\psi^{-1}}_{\rbar}\]
be the $\psi^{-1}$-isotypic subspace of the completed cohomology
``localized at $\rbar$'', let
$\widehat{\mathbb{T}}(V^v)_{\rbar}^{\psi^{-1}}$ be the ``big Hecke algebra''
acting on it, and let $R_{\rbar,S}^{\psi}$ be the universal deformation ring of $\rbar$ parametrizing deformations $r$ of $\rbar$ which are unramified outside of $S$ and such that $\eps\det(r)=\psi$ (see \S\ref{sec:platitude_Hecke} for precise
definitions). Assume moreover that $p$ is inert in $F$ and that
$V_{w_1}$ is sufficiently small at a conveniently chosen place $w_1$ of $F$.}

\begin{thmIntro}[Corollary \ref{cor:platitude_Hecke}]\label{thm:platitude-intro}
{  There is an isomorphism $R_{\rbar,S}^{\psi}\simto\widehat{\mathbb{T}}(V^v)_{\rbar}^{\psi^{-1}}$, the $\widehat{\mathbb{T}}(V^v)_{\rbar}^{\psi^{-1}}$-module
  $\Hom_{W(\F)}(\widehat{H}^1(V^v)_{\rbar}^{\psi^{-1}},W(\F))$ is faithfully
  flat, and $\widehat{\mathbb{T}}(V^v)_{\rbar}^{\psi^{-1}}$ is a complete
  intersection.}
\end{thmIntro}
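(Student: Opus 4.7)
The plan is to apply the machinery of Gee--Newton~\cite{GN}, whose key input is the Gelfand--Kirillov dimension computation supplied by Theorem~\ref{mainintro}.

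First I would carry out the Taylor--Wiles patching construction to produce a patched module $M_\infty$ together with actions of a power series ring $S_\infty$ (encoding the Taylor--Wiles and framing variables) and a patched global deformation ring $R_\infty$, such that dividing by a suitable ideal $\mathfrak{a}\subset S_\infty$ recovers $\widehat{\mathbb{T}}(V^v)_{\rbar}^{\psi^{-1}}$ and the dual of $\widehat{H}^1(V^v)_{\rbar}^{\psi^{-1}}$. The smallness condition on $V_{w_1}$ is what guarantees that the completed homology is projective at the auxiliary prime, allowing the patching argument to go through. Since $p$ is inert in $F$, the place $v$ is the only one above $p$, and $R_\infty$ is a formal power series ring over the framed local deformation ring at $v$ together with finitely many factors at ramified places; in particular $R_\infty$ is a complete intersection of the expected relative dimension over $\cO \defeq W(\F)$.

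Second, Theorem~\ref{mainintro} tells me that the mod $\fm_{S_\infty}$ fiber of $M_\infty^\vee$, viewed as a module over $\F\bbra{K_1/Z_1}$, has codimension exactly $3f-f = 2f$. A standard count shows that this matches precisely the codimension required for $M_\infty$ to be a maximal Cohen--Macaulay module over $R_\infty$. Since $R_\infty$ is a complete intersection (hence Cohen--Macaulay), maximal Cohen--Macaulayness forces $M_\infty$ to be flat over $R_\infty$, and faithfulness follows from the existence of the global Galois representation with residual reduction $\rbar$. Quotienting by $\mathfrak{a}$ then yields the faithful flatness of $\Hom_{W(\F)}(\widehat{H}^1(V^v)_{\rbar}^{\psi^{-1}},W(\F))$ over $R_\infty/\mathfrak{a}$. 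The surjection $R_{\rbar,S}^{\psi}\onto \widehat{\mathbb{T}}(V^v)_{\rbar}^{\psi^{-1}}$ (produced as usual from Galois pseudo-representations attached to Hecke eigensystems) must then be an isomorphism, both sides identifying with $R_\infty/\mathfrak{a}$. Finally, since $\mathfrak{a}$ is generated by a regular sequence on the faithfully flat $R_\infty$-module $M_\infty$, it is regular on $R_\infty$ itself, so $R_\infty/\mathfrak{a}$ is a complete intersection.

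The main obstacle will be the bookkeeping of dimensions: one must verify that the relative dimension of $R_\infty$ over $\cO$, the number of variables in $S_\infty$, and the dimension $f$ supplied by Theorem~\ref{mainintro} combine to give exactly the codimension needed for maximal Cohen--Macaulayness of $M_\infty$. Once this identity is established --- crucially using that $p$ is inert so only a single local deformation ring at $p$ contributes, together with the fixed-determinant condition imposed by $\psi$ --- all three conclusions of the theorem follow uniformly from the single input of Theorem~\ref{mainintro}.
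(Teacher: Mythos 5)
Your proposal follows essentially the same route as the paper: patch to get $\mathbb{M}_\infty$ free over $S_\infty\bbra{K_1/Z_1}$, use Theorem~\ref{mainintro} to compute the grade, invoke ``Miracle Flatness'' to deduce flatness of $\mathbb{M}_\infty$ over $R_\infty$, and then descend through the quotient by the Taylor--Wiles/framing variables. The overall structure is right, but two points need tightening.

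First, the claim that ``Since $R_\infty$ is a complete intersection (hence Cohen--Macaulay), maximal Cohen--Macaulayness forces $M_\infty$ to be flat over $R_\infty$'' is not valid as stated: over a Cohen--Macaulay (or even complete intersection) local ring, a maximal Cohen--Macaulay module need not be flat --- for instance, $k\bbra{x,y}/(x)$ is maximal Cohen--Macaulay over $k\bbra{x,y}/(xy)$ but not flat. What Miracle Flatness actually requires is that $R_\infty$ be \emph{regular} (so that maximal Cohen--Macaulay implies free by Auslander--Buchsbaum). This is exactly why the genericity hypotheses on $\rbar$ matter: they make all the relevant local framed deformation rings formally smooth, so that $R_\infty$ is literally a power series ring over $W(\F)$ (cf.~\eqref{rinfini}). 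You should identify $R_\infty$ as regular, not merely as a complete intersection, before invoking the flatness criterion.

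Second, the codimension input from Theorem~\ref{mainintro} is about the fiber $\mathbb{M}_\infty/\mathfrak{m}_\infty \cong \pi^\vee$, where $\mathfrak{m}_\infty$ is the maximal ideal of $R_\infty$, not the maximal ideal of $S_\infty$. Reducing only modulo $\mathfrak{m}_{S_\infty}$ leaves you with a projective $\F\bbra{K/Z_1}$-module of full dimension $3f$, which would give the wrong grade. The correct statement is $j_{\F\bbra{K_1/Z_1}}(\mathbb{M}_\infty/\mathfrak{m}_\infty) = 3f - \dim_{\GL_2(F_v)}(\pi) = 2f$, which then serves as a lower bound for $j_{R_\infty\bbra{K_1/Z_1}}(\mathbb{M}_\infty)$, while freeness over $S_\infty\bbra{K_1/Z_1}$ pins it down exactly. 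With those two corrections the argument goes through as in the paper.
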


{We also prove the analogous result in the case of definite quaternion
algebras. Note that the isomorphism $R_{\rbar,S}^{\psi}\simto\widehat{\mathbb{T}}(V^v)_{\rbar}^{\psi^{-1}}$ is related to a theorem of Allen (\cite[Thm.~6.3.6]{Allen}) building on previous results of Gouv\^ea--Mazur and Chenevier (but without the determinant condition); however, flatness is new. This flatness was known in the case of modular
curves using the full strength of the $p$-adic Langlands
correspondence for $\GL_2(\Qp)$ and the local-global compatibility
result of \cite{emerton-local-global}.}

As mentioned above, Theorem \ref{mainintro} {also} has important consequences for the existence of admissible unitary Banach representations of $\GL_2(F_v)$ lifting the eigenspace of $\rbar$.
From now on we let
\begin{equation}\label{tobepatched}
\pi\defeq \varinjlim_{V_v}\Hom_{\prod_{\overset{w|p}{w\ne v}}\GL_2({\mathcal O}_{F_w})}\!\bigg(\bigotimes_{\overset{w|p}{w\ne v}} \sigma_w, \Hom_{G_F}\!\big(\rbar, H^1_{{\rm \acute et}}(X_{V^vV_v} \times_F \overline F, \F)\big)\bigg),
\end{equation}
where, for $w\!\mid\!p$, $w\ne v$, $\sigma_w$ is any Serre weight in the set $W(\rbar_w^\vee)$ of \cite[\S 3]{BDJ}, $V_w\subset 1+pM_2({\mathcal O}_{F_w})$ is normal in $\GL_2({\mathcal O}_{F_w})$, and $V_{w_1}$ is sufficiently small at a nice place $w_1$ where nothing ramifies.
(Note that, by d\'evissage, we can always replace $\pi$ as in (\ref{goal}) by (\ref{tobepatched}).)
The representation $\pi$ of $\GL_2(F_v)$ in (\ref{tobepatched}) can be ``patched'' as in \cite{CEGGPS} or \cite[\S 6]{DoLe} giving rise to a ``big'' profinite $R_\infty$-module ${\mathbb M}_\infty$ endowed with an $R_\infty$-linear continuous action of $\GL_2(F_v)$ such that ${\mathbb M}_\infty/{\mathfrak m}_\infty\cong \pi^\vee$.

\begin{thmIntro}[Corollary \ref{padiclanglands}]
\label{thmGN:application}
Keep the assumptions of Theorem \ref{mainintro} and let $x:R_\infty\rightarrow {\mathcal O}'$ be any homomorphism of local $W(\F)$-algebras, where ${\mathcal O}'$ is the ring of integers of a finite extension $E'$ of $W(\F)[1/p]$. Then
$$\Hom_{{\mathcal O}'}^{\rm cont}\big({\mathbb M}_\infty\otimes_{R_\infty,x}{\mathcal O}',E'\big)$$
is a \emph{(}nonzero\emph{)} admissible unitary Banach representation of $\GL_2(F_v)$ over $E'$ with a $\GL_2(F_v)$-invariant unit ball lifting $\pi\otimes_{\F}{\F'}$, where $\F'$ is the residue field of $\mathcal O'$.
\end{thmIntro}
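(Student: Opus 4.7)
The strategy is to feed Theorem \ref{mainintro} into the patching framework of \cite{CEGGPS,DoLe} via the criterion of Gee--Newton \cite{GN}, in order to upgrade $\mathbb{M}_\infty$ to a faithfully flat $R_\infty$-module; the corollary is then obtained by specialisation and continuous duality. Recall that $\mathbb{M}_\infty$ is a finitely generated $R_\infty\bbra{K_1/Z_1}$-module carrying a continuous $R_\infty$-linear $\GL_2(F_v)$-action and satisfying $\mathbb{M}_\infty/\fm_\infty \mathbb{M}_\infty \cong \pi^\vee$.

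The main step is flatness of $\mathbb{M}_\infty$ over $R_\infty$. By standard bounds for modules over the Iwasawa algebra $\F\bbra{K_1/Z_1}$, the Gelfand--Kirillov dimension of any nonzero infinite-dimensional admissible smooth $\GL_2(F_v)$-representation is at least $f$, and Theorem \ref{mainintro} tells us $\pi$ saturates this lower bound. Combined with the fact that $\dim R_\infty$ takes its expected ``patching'' value, this minimality is precisely the input needed for the criterion of \cite{GN} to conclude that $\mathbb{M}_\infty$ is maximal Cohen--Macaulay over $R_\infty$; since $R_\infty$ is regular, $\mathbb{M}_\infty$ is then projective, hence faithfully flat---this is essentially Theorem \ref{thm:platitude-intro} in our Shimura-curve setup, so one can alternatively cite that theorem directly.

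Granting this flatness, the corollary is formal. For any $x: R_\infty \to \mathcal{O}'$, flatness gives that $M_x \defeq \mathbb{M}_\infty \otimes_{R_\infty, x} \mathcal{O}'$ is nonzero and finitely generated as an $\mathcal{O}'\bbra{K_1/Z_1}$-module, so its continuous $\mathcal{O}'$-dual $\Hom_{\mathcal{O}'}^{\rm cont}(M_x, E')$ is an admissible unitary $E'$-Banach representation of $\GL_2(F_v)$, with $\GL_2(F_v)$-invariant unit ball $\Hom_{\mathcal{O}'}^{\rm cont}(M_x, \mathcal{O}')$, via the standard Schikhof duality between compact $\mathcal{O}'\bbra{K_1/Z_1}$-modules and admissible $E'$-Banach representations. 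The lifting assertion is a Nakayama argument: the composition $R_\infty \xrightarrow{x} \mathcal{O}' \twoheadrightarrow \F'$ factors through $R_\infty/\fm_\infty = \F$, so $M_x \otimes_{\mathcal{O}'} \F' \cong (\mathbb{M}_\infty/\fm_\infty \mathbb{M}_\infty) \otimes_{\F} \F' \cong \pi^\vee \otimes_{\F} \F'$, whose continuous $\F'$-dual recovers $\pi \otimes_{\F} \F'$. The genuine obstacle lies entirely in Theorem \ref{mainintro}; the rest is formal unpacking, modulo checking that the hypotheses of the Gee--Newton criterion (dimension of $R_\infty$, choice of auxiliary place $w_1$, determinant condition $\psi$) match the patching data of \cite{CEGGPS,DoLe} in our setting, which is standard.
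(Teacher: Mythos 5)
Your proposal is correct and follows the paper's strategy: establish faithful flatness of $\mathbb{M}_\infty$ over $R_\infty$ via the Gee--Newton Miracle Flatness criterion (this is exactly Theorem \ref{thm:flat_infty}, using Theorem \ref{mainpatching2}), then conclude by base change and Schikhof duality as in \cite[Thm.\ 1.2]{schneider-teitelbaum-IL} and \cite[Prop.\ 2.13]{CEGGPS}. Two small imprecisions worth noting: the Cohen--Macaulay/grade input should be formulated over $R_\infty\bbra{K_1/Z_1}$ (as $\mathbb{M}_\infty$ is not finitely generated over $R_\infty$ itself), and your aside suggesting one could instead cite Theorem \ref{thm:platitude-intro} is logically backwards, since that flatness statement for completed cohomology is itself deduced from the flatness of $\mathbb{M}_\infty$ over $R_\infty$.
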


{Note that $x:R_\infty\rightarrow {\mathcal O}'$ gives rise to a Galois representation $\rho_x :
\Gal(\overline F_v/F_v) \to \GL_2(E')$ and that $\Hom_{{\mathcal O}'}^{\rm cont}({\mathbb M}_\infty\otimes_{R_\infty,x}{\mathcal
  O}',E')$ is the natural candidate of \cite{CEGGPS} for the Banach space representation of $\GL_2(F_v)$ associated to
$\rho_x$ by the hypothetical $p$-adic Langlands correspondence. So far it was not known that this representation is nonzero in
this generality.}

{To deduce this from Theorem~\ref{mainintro}}, by Schikhof duality (see \cite[\S 1]{schneider-teitelbaum-IL}), it is enough to prove that ${\mathbb M}_\infty\otimes_{R_\infty,x}{\mathcal O}'$ is flat over ${\mathcal O}'$. But an argument due to Gee and Newton in \cite[Cor.\ A.30]{GN} (and usually called ``Miracle Flatness'') shows that, when $\dim_{\GL_2(F_v)}(\pi)= f$, the $R_\infty$-module ${\mathbb M}_\infty$ is indeed flat over $R_\infty$, whence the result by base change.

We also prove several variants and generalizations of Theorem \ref{mainintro}. For instance, without the assumption $V_w\subset 1+pM_2({\mathcal O}_{F_w})$ for $w\!\mid\!p$, we still have $\dim_{\GL_2(F_v)}(\pi)\leq f$, see Remark \ref{abitfurther}. We can take $V_w=\GL_2({\mathcal O}_{F_w})$ for $w$ outside any finite set $S$ containing the ramification places of $D$ and $\rbar$ provided $R_{\rbar_w}$ is formally smooth for all $w\in S$ prime to $p$ (see \emph{loc.~cit.}). It is likely that other variants of Theorem \ref{mainintro} can be proven, e.g.\ by fixing types at some places $w$ prime to $p$ instead of assuming $R_{\rbar_w}$ formally smooth. For instance, we have $\dim_{\GL_2(F_v)}(\pi_{D,v}(\rbar))=f$, where $\pi_{D,v}(\rbar)$ is the ``local factor'' $\pi_{D,v}(\rbar)$ of \cite[(3.3)]{BD} and \cite[\S 6.5]{EGS} (see Remark \ref{localfactor}).

{The notion of genericity for $\rbar$ appearing in Theorem \ref{mainintro} is mainly dictated by the current technology for studying potentially crystalline deformation rings (cf.~\cite{MLM}). It is made explicit as follows.} %
For a finite place $w$ of $F$, let $I_{F_w}$ be the inertia subgroup at $w$ and $\omega_{f'}$, $f'\in \{f,2f\}$ be Serre's fundamental character of level $f'$.
Then:
\begin{enumerate}
\item\label{wramifies}for $w\!\nmid\! p$ such that either $D$ or $\rbar$ ramifies, the framed deformation ring $R_{\rbar_w}$ of $\rbar_w\defeq \rbar\vert_{{\rm Gal}(\overline F_w/F_w)}$ over the Witt vectors $W(\F)$ is formally smooth;
\item for $w\!\mid\!p$, $w\ne v$, $\rbar\vert_{I_{F_w}}$ is generic in the sense of \cite[Def.\ 11.7]{BP};
\item\label{wisv}$\rbar\vert_{I_{F_v}}$ is semisimple of one of the following forms up to twist:
\begin{enumerate}
\item$\begin{pmatrix}\omega_f^{(r_0+1)+\cdots+p^{f-1}(r_{f-1}+1)}&0\\0&1\end{pmatrix}$\ \ {$12\leq r_i\leq p-15$},
\item$\begin{pmatrix}\omega_{2f}^{(r_0+1)+\cdots+p^{f-1}(r_{f-1}+1)}&0\\0&\omega_{2f}^{p^f({\rm same})}\end{pmatrix}$\ {$13\leq r_0\leq p-14$, $12\leq r_i\leq p-15$ for $i>0$.}
\end{enumerate}
\end{enumerate}

Note that \ref{wisv} {implies $p>23$} and that \ref{wramifies} can be made explicit (\cite{Shotton}).

\subsection{The proof}
We now sketch the proof of Theorem \ref{mainintro}. 

\subsubsection{Smooth representations}
\label{sub:sub:smoothrep}

{A key step in our method is to show that the representations $\pi$ appearing in Theorem \ref{mainintro} satisfy a ``minimal multiplicity'' condition, namely condition (\ref{eq:fund:intro}) of Proposition \ref{prop1intro} below.
{It is this condition that plays a key role in our subsequent work \cite{BHHMS2,BHHMS3}.}
}

{We describe these results in more detail.}
We let $k(\cong {\mathbb F}_{p^f})$ be the residue field of $F_v$, and for each Serre weight $\sigma\in W(\rbar_v^\vee)$, we define $D_{0,\sigma}$ as the largest subrepresentation of the injective envelope $\Inj_{\GL_2(k)}\sigma$ such that $\sigma$ only appears in the socle of $D_{0,\sigma}$ and no other Serre weight of $W(\rbar_v^\vee)$ is a constituent of $D_{0,\sigma}$. We set $D_0(\rbar_v^\vee)\defeq \bigoplus_{\sigma\in W(\rbar_v^\vee)}D_{0,\sigma}$ as in \cite[\S13]{BP}. We also denote by $\mathfrak{m}_{K_1/Z_1}$ the maximal ideal of $\F\bbra{K_1/Z_1}$. In order to get the above upper bound on $\dim_{\GL_2(F_v)}(\pi)$, we will apply the following theorem to $\pi$ in (\ref{tobepatched}).

\begin{thmIntro}[Theorem \ref{thm:GKdim-criterion}]\label{thmintro1}
Let $\pi$ be an admissible smooth representation of $\GL_2(F_v)$ over $\F$ with a central character. Assume that
\begin{enumerate}
\item\label{k1intro}we have an isomorphism $\pi^{K_1}=\pi[\mathfrak{m}_{K_1/Z_1}]\cong D_0(\rbar_v^\vee)^{\oplus r}$ of representations of $\GL_2(k)$ for some $r\geq 1$;
\item\label{multintro}we have $[\pi[\mathfrak{m}_{K_1/Z_1}^2] : \sigma]=[\pi[\mathfrak{m}_{K_1/Z_1}] : \sigma]$ for all $\sigma\in W(\rbar_v^\vee)$.
\end{enumerate}
Then $\dim_{\GL_2(F_v)}(\pi)\leq f$.
\end{thmIntro}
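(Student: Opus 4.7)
The plan is to pass to $M := \pi^\vee$, a finitely generated compact module over the Iwasawa algebra $\Lambda := \F\bbra{K_1/Z_1}$, and to bound its canonical (Auslander) dimension. Since $p>2$, the group $K_1/Z_1$ is uniform pro-$p$ of dimension $3f$, so the $\mathfrak{m}_{K_1/Z_1}$-adic filtration on $\Lambda$ has commutative associated graded $\gr \Lambda \cong \mathrm{Sym}_\F(\mathfrak{g})$, a polynomial ring in $3f$ variables, where $\mathfrak{g} := \mathfrak{m}_{K_1/Z_1}/\mathfrak{m}_{K_1/Z_1}^2$ carries a $\GL_2(k)$-action isomorphic to $\bigoplus_{i=0}^{f-1} \ad^{(i)}$ (Frobenius twists of the $3$-dimensional adjoint representation). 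By Auslander regularity of $\Lambda$, $\dim_{\GL_2(F_v)}(\pi)$ coincides with the Krull dimension of $\gr\Lambda/\Ann_{\gr\Lambda}(\gr M)$; it therefore suffices to exhibit a homogeneous ideal $J \subset \gr\Lambda$ of height $\geq 2f$ annihilating $\gr M$.

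Hypothesis \ref{k1intro} identifies $\gr_0 M = M/\mathfrak{m}_{K_1/Z_1}M$ with the Pontryagin dual of $D_0(\rbar_v^\vee)^{\oplus r}$, while hypothesis \ref{multintro} guarantees that $\gr_1 M = \mathfrak{m}_{K_1/Z_1}M/\mathfrak{m}_{K_1/Z_1}^2 M$ admits no $\GL_2(k)$-Jordan--H\"older constituent of the form $\sigma^\vee$ for $\sigma \in W(\rbar_v^\vee)$. Consequently the $\GL_2(k)$-equivariant multiplication $\mu : \mathfrak{g} \otimes_\F \gr_0 M \to \gr_1 M$ vanishes identically on the $W(\rbar_v^\vee)^\vee$-isotypic part of its source. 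Because $\gr M$ is generated by $\gr_0 M$ over the commutative ring $\gr\Lambda$, any homogeneous element of $\gr\Lambda$ that kills $\gr_0 M$ automatically annihilates all of $\gr M$, so we are reduced to producing a height-$2f$ ideal inside $\Ann_{\gr\Lambda}(\gr_0 M)$.

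The combinatorial crux is to analyze, at each embedding $i$, the Clebsch--Gordan decomposition of $\ad^{(i)} \otimes \sigma^\vee$ into three $\GL_2(k)$-constituents obtained by shifting the $i$-th highest weight of $\sigma^\vee$ by $+2$, $0$, or $-2$ (valid since $a_i+2 \leq p-13 < p$). The genericity bounds $12 \leq r_i \leq p-15$ force the $\pm 2$ summands to lie outside $W(\rbar_v^\vee)^\vee$ for every Jordan--H\"older constituent of $D_0(\rbar_v^\vee)^\vee$, leaving at most the middle summand potentially inside $W(\rbar_v^\vee)^\vee$. Combined with the precise socle-filtration structure of each $D_{0,\sigma}$ (whose non-socle constituents are themselves explicit generically-placed Serre weights) and iterated into $\mathrm{Sym}^n(\mathfrak{g})$ via $\gr_n\Lambda \otimes \gr_0 M \to \gr_n M$ for $n \geq 2$, this forbidden-constituent analysis produces $2f$ algebraically independent homogeneous elements of $\gr\Lambda$ in $\Ann_{\gr\Lambda}(\gr_0 M)$.

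The main obstacle is that each $\ad^{(i)}$ is $\GL_2(k)$-irreducible, so no nonzero $\GL_2(k)$-subrepresentation of $\mathfrak{g}$ can be extracted as an annihilator at the level of $\gr_1$ alone (a fact already transparent for $f=1$, where the $\gr_1$-analysis is vacuous yet the bound $\dim_{\GL_2(\Qp)}(\pi)\leq 1$ is known to hold); the construction must proceed genuinely in $\mathrm{Sym}^n(\mathfrak{g})$ for $n \geq 2$, exploiting the couplings between distinct embeddings enforced by the non-socle constituents of the $D_{0,\sigma}$ and by symmetric products of Clebsch--Gordan data. Once the height-$2f$ ideal $J \subset \Ann_{\gr\Lambda}(\gr M)$ is produced, $\gr\Lambda/J$ has Krull dimension $3f-2f=f$, yielding $\dim_{\GL_2(F_v)}(\pi)\leq f$.
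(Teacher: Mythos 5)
Your route is genuinely different from the paper's, and it has a real gap. You keep the uniform group $K_1/Z_1$, whose associated graded $\gr_\mathfrak{m}\F\bbra{K_1/Z_1}\cong{\rm Sym}(\mathfrak{g})$ is commutative with $\mathfrak{g}\cong\bigoplus_i\ad^{(i)}$, and you aim to produce a height-$2f$ homogeneous ideal killing $\gr_\mathfrak{m}\pi^\vee$. As you observe, degree-one annihilators do not exist because each summand $\ad^{(i)}$ is $\GL_2(k)$-irreducible, so you claim to produce annihilators in degree $\geq 2$. But hypotheses \ref{k1intro} and \ref{multintro} constrain only $\pi[\mathfrak{m}_{K_1/Z_1}^2]$, hence only $\gr_\mathfrak{m}^0\pi^\vee$ and $\gr_\mathfrak{m}^1\pi^\vee$; they say nothing about $\gr_\mathfrak{m}^n\pi^\vee$ for $n\geq 2$. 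A homogeneous element $x$ of degree two annihilates $\gr_\mathfrak{m}\pi^\vee$ exactly when multiplication by $x$ is zero from $\gr_\mathfrak{m}^0\pi^\vee$ into $\gr_\mathfrak{m}^2\pi^\vee$, and $\gr_\mathfrak{m}^2\pi^\vee$ is simply not determined by the data you have: knowing $\gr_\mathfrak{m}^1\pi^\vee$ bounds the kernel of that multiplication from \emph{below}, while verifying that a given $x$ lies in it requires a forbidden-constituent bound on $\gr_\mathfrak{m}^2\pi^\vee$ from \emph{above} --- a third-layer condition the hypotheses do not supply. The $f=1$ case you mention is not a harmless sanity check but a counterexample to the method: there your analysis yields no candidate annihilator in any degree, yet the bound is supposed to hold.

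The paper resolves exactly this by replacing $K_1$ by the pro-$p$-Iwahori $I_1$. Proposition \ref{prop-W3topi=dim1} upgrades the two-layer $K_1$-condition into a \emph{three}-layer $I_1$-condition, $[\pi[\mathfrak{m}_{I_1/Z_1}^3]:\chi]=[\pi[\mathfrak{m}_{I_1/Z_1}]:\chi]$ for every character $\chi$ occurring in $\pi^{I_1}$; this nontrivial input (proved in \S\ref{sec:multiplicityoneprop} using the structure of the $D_{0,\sigma}$, the connectedness of $D_1(\rbar_v^\vee)$, and Frobenius reciprocity with principal series) is what makes the third layer, and hence a degree-two ideal, accessible. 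The group $I_1/Z_1$ is \emph{not} uniform and $\gr_\mathfrak{m}\F\bbra{I_1/Z_1}$ is a noncommutative enveloping algebra, but its degree-one part splits under the finite torus into one-dimensional characters $\alpha_j^{\pm1}$ rather than $3$-dimensional irreducibles, so the required vanishing is detected character by character. Corollary \ref{cor:GKdim} then shows that the three-layer bound forces the explicit degree-two central ideal of Theorem \ref{quotientalg} to kill $\gr_\mathfrak{m}\pi^\vee$, and Lemma \ref{prop:grsemiabelian} converts the resulting Krull dimension $f$ into $\dim_{\GL_2(F_v)}(\pi)\leq f$. Both the passage to the Iwahori and the noncommutative graded analysis are absent from your proposal; without them, hypotheses \ref{k1intro} and \ref{multintro} do not yield the required degree-$\geq 2$ vanishing.
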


(In fact we prove in Theorem \ref{thm:GKdim-criterion} a slightly stronger statement.) Condition \ref{k1intro} in Theorem \ref{thmintro1} is already familiar, for instance it is satisfied with $r=1$ by the representation $\pi_{D,v}(\rbar)$ mentioned above (see \cite{HuWang} and \cite{LMS}, which build upon \cite{BP} and \cite{EGS}). Thus it is rather condition \ref{multintro} which is important. Though it is purely local, the proof of Theorem \ref{thmintro1} is not at all trivial, and it took us a long time before finding a proof (or even convincing ourselves that the statement was true!). The key idea is to look at the action on $\pi$ of the {\it Iwahori subgroup} $I$ of $K$ instead of $K$ itself. The proof of Theorem~\ref{thmintro1} is divided into two steps. The first step is the following result, where $I_1\subset I$ is the pro-$p$-Iwahori subgroup and $\mathfrak{m}_{I_1/Z_1}$ is the maximal ideal of the Iwasawa algebra $\F\bbra{I_1/Z_1}$.

\begin{thmIntro}[Proposition \ref{prop-W3topi=dim1}]\label{prop1intro}
Let $\pi$ be an admissible smooth representation of $\GL_2(F_v)$ over $\F$ with a central character and assume $\pi$ satisfies \ref{k1intro} and \ref{multintro} of Theorem \ref{thmintro1}. Then for all continuous characters $\chi:I\rightarrow \F^\times$ such that $[\pi[\mathfrak{m}_{I_1/Z_1}] : \chi]\neq 0$ we have:
\begin{equation}
\label{eq:fund:intro}
[\pi[\mathfrak{m}_{I_1/Z_1}^3] : \chi]=[\pi[\mathfrak{m}_{I_1/Z_1}] : \chi].
\end{equation}
\end{thmIntro}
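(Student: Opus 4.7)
My approach is to analyze the filtration
\[\pi[\mathfrak{m}_{I_1/Z_1}]\subset\pi[\mathfrak{m}_{I_1/Z_1}^2]\subset\pi[\mathfrak{m}_{I_1/Z_1}^3]\]
by tracking the action of $I$ through the successive quotients, and to import hypothesis (ii) via the inclusions $\pi[\mathfrak{m}_{I_1/Z_1}^n]\subset\pi[\mathfrak{m}_{K_1/Z_1}^n]$ (which follow from the containment $\mathfrak{m}_{K_1/Z_1}\subset\mathfrak{m}_{I_1/Z_1}$ in $\F\bbra{I_1/Z_1}$, induced by $K_1\subset I_1$). Since $I/I_1$ has order prime to $p$, any character $\chi:I\to\F^\times$ (necessarily trivial on $I_1$) defines a $\chi$-isotypic subspace of each layer, and $[\pi[\mathfrak{m}_{I_1/Z_1}^n]:\chi]$ equals its $\F$-dimension. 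The claim is then equivalent to the vanishing of the $\chi$-isotypic subspace in each of the two quotients $\pi[\mathfrak{m}_{I_1/Z_1}^2]/\pi^{I_1}$ and $\pi[\mathfrak{m}_{I_1/Z_1}^3]/\pi[\mathfrak{m}_{I_1/Z_1}^2]$, for every $\chi$ occurring in $\pi^{I_1}$.

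Using (i), I would first identify $\pi^{I_1}=(D_0(\rbar_v^\vee)^{\oplus r})^{I_1/K_1}$ and list the characters $\chi$ appearing together with their multiplicities, governed by the combinatorics of the constituents of $D_0(\rbar_v^\vee)$. Next, I would describe $\mathfrak{m}_{I_1/Z_1}/\mathfrak{m}_{I_1/Z_1}^2$ as an $I/I_1$-module: it has $3f$ generators distributed into $f$ of upper-root weight, $f$ of lower-root weight, and $f$ of trivial (Cartan) weight under $I/I_1$. A useful technical point is that in characteristic $p$ one has $(g-1)^p=g^p-1$, so the upper-triangular generators of $\F\bbra{K_1/Z_1}$ are $p$-th powers of the upper generators of $\F\bbra{I_1/Z_1}$ and therefore live in $\mathfrak{m}_{I_1/Z_1}^p$, while the Cartan and lower generators of $K_1$ are already in degree one. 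I would then invoke the $I/I_1$-equivariant multiplication maps
\[
\frac{\mathfrak{m}_{I_1/Z_1}}{\mathfrak{m}_{I_1/Z_1}^2}\otimes_\F\frac{\pi[\mathfrak{m}_{I_1/Z_1}^{n+1}]}{\pi[\mathfrak{m}_{I_1/Z_1}^n]}\longrightarrow\frac{\pi[\mathfrak{m}_{I_1/Z_1}^n]}{\pi[\mathfrak{m}_{I_1/Z_1}^{n-1}]}
\]
(with the convention $\pi[\mathfrak{m}_{I_1/Z_1}^0]=0$) to propagate character information downward. For $n=1$, an extra $\chi$-eigenvector in $\pi[\mathfrak{m}_{I_1/Z_1}^2]/\pi^{I_1}$ must, after multiplication by a nonzero generator, produce either (a) a weight-shifted eigenvector in $\pi^{I_1}$ via an upper or lower generator, or (b) a $\chi$-eigenvector in $\pi^{I_1}$ via a Cartan generator. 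Case (a) is ruled out by the list from (i) combined with (ii) via the $K_1$-inclusion: it would force a new Serre weight of $W(\rbar_v^\vee)$ to appear in $\pi[\mathfrak{m}_{K_1/Z_1}^2]/\pi^{K_1}$. Iterating the same argument for $n=2$ would handle $\pi[\mathfrak{m}_{I_1/Z_1}^3]/\pi[\mathfrak{m}_{I_1/Z_1}^2]$.

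The main obstacle will be case (b), the Cartan direction: Cartan generators have trivial $I/I_1$-weight, so character bookkeeping alone does not exclude them from producing extra $\chi$-eigenvectors in higher layers. To handle this I would exploit the rigid internal structure of $D_0(\rbar_v^\vee)$, in particular its multiplicity-free $\GL_2(k)$-socle indexed by $W(\rbar_v^\vee)$, together with the full strength of (ii) applied Serre-weight by Serre-weight. Concretely, a Cartan-produced $\chi$-eigenvector in $\pi[\mathfrak{m}_{I_1/Z_1}^3]$ should average, via the $I_1/K_1$-action, to an element of $\pi[\mathfrak{m}_{K_1/Z_1}^2]$ realizing an additional occurrence of some Serre weight of $W(\rbar_v^\vee)$ beyond those already present in $\pi^{K_1}=D_0(\rbar_v^\vee)^{\oplus r}$, contradicting (ii). The exponent $3$ then appears naturally: it accounts for one extra layer in the $I_1$-filtration needed to register, after $I_1/K_1$-averaging, the failure of (ii) at $K_1$-level.
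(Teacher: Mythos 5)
There is a basic structural error at the start that then shapes (and undermines) the rest of the plan: $\mathfrak{m}_{I_1/Z_1}/\mathfrak{m}_{I_1/Z_1}^2$ does \emph{not} have dimension $3f$ with upper, lower, and Cartan contributions. As shown in \S\ref{sec:propIwahoriGL2} (see Proposition~\ref{comparison_filtrations} and~\eqref{eq:expldescription}), the group $I_1/Z_1$ is not uniform and the Cartan generators $H_i-1$ already lie in $\mathfrak{m}_{I_1/Z_1}^2$ (they are commutators of $E_i$ and $F_0$ modulo $\mathfrak{m}_{I_1/Z_1}^2$). So $\mathfrak{m}_{I_1/Z_1}/\mathfrak{m}_{I_1/Z_1}^2 \cong \bigoplus_i(\F\alpha_i\oplus\F\alpha_i^{-1})$ has dimension $2f$, and your ``case (b): Cartan direction'' does not arise at the level of your multiplication map for $n=1$. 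You seem to have transferred the uniform structure of $K_1/Z_1$ (which genuinely has $3f$ degree-one generators) to $I_1/Z_1$, which is exactly the confusion the paper's non-commutative graded-algebra analysis is designed to avoid.

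More importantly, the paper's actual proof does not go through an $I$-level character bookkeeping argument. Instead it studies maps $W_{\chi,2}\to\pi|_I$ and $W_{\chi,3}\to\pi|_I$ (with $W_{\chi,n} = \Proj_{I/Z_1}\chi/\mathfrak{m}_{I_1}^n$), and after passing to $\Ind_I^K$ compares cosocles of $K$-representations such as $\Ind_I^K E_{\chi',\chi}$ against the constituents of the multiplicity-free $\widetilde D_\sigma$ inside $\pi[\mathfrak{m}_{K_1}^2]$. Two ingredients are indispensable and absent from your proposal: (a) the action of the Iwahori normalizer $t=\smatr{0}{1}{p}{0}\in\GL_2(F_v)$, which is used in Lemma~\ref{lemma-W2topi=dim1} to convert the ``positive root'' shift $\chi'=\chi\alpha_i$ into the ``negative root'' shift $\chi\alpha_i^{-1}$ (the two cases are genuinely asymmetric and only one of them is amenable to direct analysis via $\Ind_I^K$); and (b) the connectedness of $\pi^{I_1}$ (Definition~\ref{definit-fullset}, verified in Lemma~\ref{lem:connected}), which is exactly what makes Step~1 of the paper's proof of Proposition~\ref{prop-W3topi=dim1} go through -- it ensures a character $\chi''$ in the degree-$2$ socle of $W_{\chi,3}$ is reachable through a degree-$1$ intermediary $\chi'$ occurring in $\pi^{I_1}$, allowing reduction to the $W_{\chi,2}$ case. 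Without both of these, your plan to rule out the ``weight-shifted'' case by combining (i) and (ii) with the containment $\pi[\mathfrak{m}_{I_1/Z_1}^n]\subset\pi[\mathfrak{m}_{K_1/Z_1}^n]$ does not close: condition (ii) concerns multiplicities of Serre weights of $W(\rbar_v^\vee)$ as $K$-representations, and turning an $I$-character eigenvector produced by your multiplication map into an extra occurrence of such a Serre weight requires precisely the $\Ind_I^K$ and cosocle analysis (Lemmas~\ref{lemma-IndW2-multione}, \ref{lm:IndE}, \ref{lemma-Echi'=minus}, \ref{lemma-Echi'=plus}, Proposition~\ref{prop-forspecialW}) that you do not perform. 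The proposed ``$I_1/K_1$-averaging'' has no counterpart in the argument and it is unclear how it would produce the needed contradiction, so as written the plan has a genuine gap.
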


Note that ${\rm socle}(\pi|_I)=\pi[\mathfrak{m}_{I_1/Z_1}]=\pi^{I_1}$ since $p>2$. The proof of Theorem \ref{prop1intro} is given in \S\ref{sec:smooth:rep}. It is a bit long and technical, but is rather standard (to apply Proposition \ref{prop-W3topi=dim1} to $\pi$ as in Theorem \ref{thmintro1} one actually needs Corollary \ref{cor:J-fil} and Lemma \ref{lem:connected}, see \S\ref{sec:multiplicityoneprop}).

The second step is the following key result which gives the sought-after upper bound on the Gelfand--Kirillov dimension.

\begin{thmIntro}[Corollary \ref{cor:GKdim}]\label{prop2intro}
Let $\pi$ be an admissible smooth representation of $\GL_2(F_v)$ over $\F$ with a central character and assume $[\pi[\mathfrak{m}_{I_1/Z_1}^3] : \chi]=[\pi[\mathfrak{m}_{I_1/Z_1}] : \chi]$ for all $\chi:I\rightarrow \F^\times$ such that $[\pi[\mathfrak{m}_{I_1/Z_1}] : \chi]\neq 0$. Then $\dim_{\GL_2(F_v)}(\pi)\leq f$.
\end{thmIntro}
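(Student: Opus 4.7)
My plan would be to bound the Gelfand--Kirillov dimension via the associated graded of $M\defeq\pi^\vee$ with respect to the $\mathfrak{m}$-adic filtration, and to extract explicit annihilators in $\gr^\bullet\Lambda$ directly from the character-theoretic hypothesis.

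\textbf{Setup.} Set $\Lambda\defeq\F\bbra{I_1/Z_1}$ with maximal ideal $\mathfrak{m}=\mathfrak{m}_{I_1/Z_1}$, and regard $M$ as a finitely generated $\Lambda$-module (by admissibility of $\pi$). A standard commensurability argument shows that the Ext-based dimension $\dim_{\GL_2(F_v)}(\pi)$ in the paper's definition, computed over $\F\bbra{K_1/Z_1}$, coincides with the analogous invariant over $\Lambda$, since $K_1/Z_1$ and $I_1/Z_1$ are open subgroups of the same $\Qp$-analytic group of dimension $3f$. Dualizing the identity $\pi[\mathfrak{m}^n]^\vee=M/\mathfrak{m}^nM$ in a $T(k)/Z(k)$-equivariant way, the hypothesis translates into the vanishing
$$(\gr_1 M)^\chi \;=\; (\gr_2 M)^\chi \;=\; 0$$
for every socle character $\chi$ of $\pi$, where $\gr^\bullet M\defeq\bigoplus_{n\ge 0}\mathfrak{m}^n M/\mathfrak{m}^{n+1}M$.

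\textbf{Structure of $\gr^\bullet\Lambda$.} I would then invoke Lazard's theorem: for $p>2$, up to passing to a uniform open subgroup of $I_1/Z_1$, $\gr^\bullet\Lambda$ is isomorphic to a commutative polynomial ring in $3f$ variables over $\F$. The Iwahori decomposition of $\Lie(I_1/Z_1)\otimes_{\Zp}\F$, combined with the Frobenius decomposition $k\otimes_{\Fp}\F\cong\F^f$, gives a $T(k)/Z(k)$-equivariant basis $\{e_i,h_i,f_i\}_{i=0}^{f-1}$ of $\mathfrak{m}/\mathfrak{m}^2$ in which $e_i$ has character $\alpha^{p^i}$ (upper nilpotent part), $f_i$ has character $\alpha^{-p^i}$ (lower nilpotent part), and $h_i$ has trivial character (torus part modulo center), $\alpha$ denoting the positive root character of $T(k)/Z(k)$.

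\textbf{Algebraic core.} The module $\gr^\bullet M$ is generated over $\gr^\bullet\Lambda$ by its degree zero piece $M/\mathfrak{m}M$, whose $T(k)/Z(k)$-characters are exactly the socle characters of $\pi$. For such a generator $\bar m$ of character $\chi$, the element $h_i\bar m$ lies in $(\gr_1 M)^\chi=0$, and $e_if_i\bar m$ lies in $(\gr_2 M)^\chi=0$ (as $h_i$ and $e_if_i$ both have trivial character). By commutativity of $\gr^\bullet\Lambda$, the elements $h_0,\dots,h_{f-1}$ and $e_0f_0,\dots,e_{f-1}f_{f-1}$ therefore annihilate all of $\gr^\bullet M$. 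Hence $\gr^\bullet M$ is a module over the quotient
$$\frac{\F[e_0,\dots,e_{f-1},f_0,\dots,f_{f-1},h_0,\dots,h_{f-1}]}{(h_0,\dots,h_{f-1},\,e_0f_0,\dots,e_{f-1}f_{f-1})}\;\cong\;\frac{\F[e_\bullet,f_\bullet]}{(e_if_i)_{i=0}^{f-1}},$$
whose spectrum is the union of the $2^f$ coordinate $f$-planes in $\mathbb{A}^{2f}$, of Krull dimension $f$. Since the Krull dimension of $\gr^\bullet M$ over $\gr^\bullet\Lambda$ equals the Gelfand--Kirillov dimension of $M$ over $\Lambda$ (standard for $\Lambda$ Auslander regular with polynomial associated graded), this yields $\dim_{\GL_2(F_v)}(\pi)\le f$.

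\textbf{Main obstacle.} The substantive step is just the one-line translation from the hypothesis to the annihilator statement, but the technical care is all in the setup: verifying that the Ext-based definition agrees with Krull dimension of $\gr^\bullet M$, handling uniformity issues in the passage between $K_1/Z_1$ and $I_1/Z_1$, and making precise the $T(k)/Z(k)$-equivariant Lazard description of $\gr^\bullet\Lambda$ together with the character computation for the basis $\{e_i,h_i,f_i\}$. With that framework in place, the rest is a computation of Krull dimension in an explicit monomial quotient.
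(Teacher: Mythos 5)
Your overall strategy is the paper's own, and your ``algebraic core'' captures exactly what makes the argument work: degree-$2$ elements of trivial torus character kill the degree-$0$ piece of $\gr_{\mathfrak{m}}\pi^\vee$, hence all of $\gr_{\mathfrak{m}}\pi^\vee$, and the quotient $\F[e_\bullet,f_\bullet]/(e_jf_j)_j$ has Krull dimension $f$. But your description of the graded ring is wrong, and in a way that breaks the bridge back to $\dim_{\GL_2(F_v)}(\pi)$. The group $I_1/Z_1$ is \emph{not} uniform, and $\gr_{\mathfrak{m}}\F\bbra{I_1/Z_1}$ is \emph{not} a commutative polynomial ring: it is the noncommutative enveloping algebra $U_{\Fp}(\overline{\mathfrak{g}})_\F^{\otimes f}$ of a nilpotent Lie algebra in which $e_j,f_j$ sit in degree $1$ but $h_j$ sits in degree $2$ (Proposition~\ref{comparison_filtrations} and~\eqref{isograded}); in particular $\dim\mathfrak{m}/\mathfrak{m}^2=2f$, not $3f$, so $\{e_i,h_i,f_i\}$ is not a basis and $h_i\bar m$ lands in $\gr_2 M$, not $\gr_1 M$. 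Your fix of passing to a uniform open subgroup $U\subset I_1/Z_1$ would make the graded ring polynomial, but it severs the link to your hypothesis: the degree-zero generators of $\gr_{\mathfrak{m}_U}\pi^\vee$ would then be $(\pi^U)^\vee\supsetneq(\pi^{I_1})^\vee$, whose $T(k)$-characters are not constrained by the assumption on $\pi[\mathfrak{m}_{I_1/Z_1}^3]$. The non-uniformity of $I_1/Z_1$ is not an obstacle to route around but the very reason the argument works: it places $h_j$ in degree $2$ alongside $e_jf_j$, exactly where the hypothesis bites.

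Consequently, the final step—identifying the Krull dimension of the support of $\gr_\mathfrak{m}\pi^\vee$ (over the noncommutative ring $A=\gr_{\mathfrak{m}}\F\bbra{I_1/Z_1}$) with $\dim_{\GL_2(F_v)}(\pi)$—is not a standard citation but the content of Lemma~\ref{prop:grsemiabelian}. One needs: (a) that $j_G(\pi^\vee)=j_A(\gr_\mathfrak{m}\pi^\vee)$, using that $\F\bbra{I_1/Z_1}$ is Zariski and $A$ is Auslander regular; (b) a spectral sequence showing the grade shifts by $f$ when passing from $A$ to the commutative polynomial quotient $A/(h_0,\dots,h_{f-1})\cong\F[e_j,f_j]$; and (c), in place of your appeal to commutativity, that $I_G$ is two-sided and that $(h_0,\dots,h_{f-1})$ is a central regular sequence with polynomial quotient—the content of Theorem~\ref{quotientalg}. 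Your outline becomes a proof only after supplying these structural facts about $\gr_{\mathfrak{m}}\F\bbra{I_1/Z_1}$.
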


Let us sketch the proof of Theorem \ref{prop2intro}. We view the algebraic dual $\pi^\vee$ as a (finitely generated) module over $\F\bbra{I_1/Z_1}$ and denote by $\gr_\mathfrak{m}\pi^\vee$ the associated graded module over $\gr_\mathfrak{m}\F\bbra{I_1/Z_1}$ for the $\mathfrak{m}_{I_1/Z_1}$-adic filtration. The graded ring $\gr_\mathfrak{m}\F\bbra{I_1/Z_1}$ is not commutative, as the pro-$p$ group $I_1/Z_1$ is not uniform (see \cite{Clozel} and \S\ref{sec:propIwahoriGL2}). But the assumption $[\pi[\mathfrak{m}_{I_1/Z_1}^3] : \chi]=[\pi[\mathfrak{m}_{I_1/Z_1}] : \chi]$ implies that the action of $\gr_\mathfrak{m}\F\bbra{I_1/Z_1}$ on $\pi^\vee$ factors through a {\it commutative} quotient $(\gr_\mathfrak{m}\F\bbra{I_1/Z_1})/I_{I_1/Z_1}$, where $I_{I_1/Z_1}$ is an explicit $2$-sided ideal of $\gr_\mathfrak{m}\F\bbra{I_1/Z_1}$ generated by certain degree $2$ elements (see Theorem \ref{quotientalg}). More precisely one has
\begin{equation}\label{grquotient}
\big(\gr_\mathfrak{m}\F\bbra{I_1/Z_1}\big)/I_{I_1/Z_1}\cong \F[e_i,f_i;\,0\leq i\leq f-1]/(e_if_i ;\, 0\leq j\leq f-1),
\end{equation}
where the (commutative) polynomial algebra $\F[e_i,f_i;\,0\leq i\leq f-1]$ is itself the quotient of $\gr_\mathfrak{m}\F\bbra{I_1/Z_1}$ by a regular sequence $(h_0,\dots,h_{f-1})$ of central elements. By a general lemma (Lemma \ref{prop:grsemiabelian}), $\dim_{\GL_2(F_v)}(\pi)$ is equal to the dimension of the support of $\gr_\mathfrak{m}\pi^\vee$ in the polynomial algebra
$$\big(\gr_\mathfrak{m}\F\bbra{I_1/Z_1}\big)/(h_0,\dots,h_{f-1})\cong \F[e_i,f_i;\,0\leq i\leq f-1],$$
which by (\ref{grquotient}) is smaller or equal than $\dim(\gr_\mathfrak{m}\F\bbra{I_1/Z_1}/I_{I_1/Z_1})=2f-f=f$. So we see that the fact that $\gr_\mathfrak{m}\pi^\vee$ (for an admissible smooth representation of $\GL_2(F_v)$ over $\F$) is a module over $(\gr_\mathfrak{m}\F\bbra{I_1/Z_1})/I_{I_1/Z_1}$, and not just over $\gr_\mathfrak{m}\F\bbra{I_1/Z_1}$, turns out to be an important condition.

\subsubsection{{Patching: the setup}}
\label{subsub:patch}

We now apply Theorem \ref{thmintro1} to $\pi$ in (\ref{tobepatched}). For this, we need to prove that $\pi$ satisfies conditions \ref{k1intro} and \ref{multintro} of Theorem \ref{thmintro1}. We first sketch the proof of (ii), which is the harder and more important one. We fix an arbitrary Serre weight $\sigma$ in $W(\rbar_v^\vee)$. We need to prove
\begin{eqnarray}\label{mult1intro}
\Hom_{K}(\sigma, \pi)\buildrel\sim\over\longrightarrow\Hom_{K}\big((\Proj_{K/Z_1}\sigma)/\mathfrak{m}_{K_1/Z_1}^2, \pi\big),
\end{eqnarray}
where $\Proj_{K/Z_1}\sigma$ is the algebraic dual of the injective envelope $\Inj_{K/Z_1}\sigma^\vee$ of $\sigma^\vee$ in the category of smooth representations of $K/Z_1$ over $\F$. 

We do not know any other way to prove (\ref{mult1intro}) than to ``patch'' (the dual of) both sides using the patching functors of \cite{EGS}. This strategy is not new: it is initially due to Emerton, Gee, Savitt in \cite{EGS} (generalizing work of Diamond, of Fujiwara, and using of course the work of Taylor, Wiles and of Kisin) and has been generalized by Le, Morra, Schraen, by Hu, Wang, and by Le in \cite{LMS}, \cite{HuWang}, \cite{Le} who proved (under various hypotheses) a result analogous to (\ref{mult1intro}) but with $\mathfrak{m}_{K_1/Z_1}$ instead of $\mathfrak{m}_{K_1/Z_1}^2$. Recall that a patching functor is an exact (covariant) functor $M_\infty$ from the category of continuous representations of $K$ on finite type $W(\F)$-modules to the category of finite type $R_\infty$-modules satisfying several ``Cohen--Macaulay'' properties, see \cite[\S 6]{EGS}. Here $R_\infty$ is the relevant patched deformation ring, a power series ring over $R^{\rm loc}$ (using standard notation), see \S \ref{patching}. Note that one also has to be careful about determinants and central characters, but we ignore this minor issue in the introduction. 

Thus proving (\ref{mult1intro}) is equivalent to proving
\begin{eqnarray}\label{mult1infiniintro}
M_\infty\big((\Proj_{K/Z_1}\sigma)/\mathfrak{m}_{K_1/Z_1}^2\big)/{\mathfrak m}_\infty\buildrel\sim\over\longrightarrow M_\infty(\sigma)/{\mathfrak m}_\infty,
\end{eqnarray}
where ${\mathfrak m}_\infty$ is the maximal ideal of $R_\infty$. The strategy in the above references to prove (a ``multiplicity one'' variant of) (\ref{mult1infiniintro}) with $\mathfrak{m}_{K_1/Z_1}^2$ replaced by $\mathfrak{m}_{K_1/Z_1}$ is to use the isomorphism
\begin{equation*}\label{tildeintro}
M_\infty(\widetilde \Proj_{\GL_2(k)}\sigma)/(p)\cong M_\infty(\Proj_{\GL_2(k)}\sigma)=M_\infty\big((\Proj_{K/Z_1}\sigma)/\mathfrak{m}_{K_1/Z_1}\big),
\end{equation*}
where $\widetilde \Proj_{\GL_2(k)}\sigma$ is the unique projective $W(\F)[\GL_2(k)]$-module lifting $\Proj_{\GL_2(k)}\!\sigma\cong \Inj_{\GL_2(k)}\!\sigma$, and to determine the support of $M_\infty(\widetilde \Proj_{\GL_2(k)}\sigma)$ in $R_\infty$.

\subsubsection{{Lattices in locally algebraic representations}}
\label{subsub:lattices:patch}

 We apply a similar strategy in our case, which means we first have to lift $(\Proj_{K/Z_1}\sigma)/\mathfrak{m}_{K_1/Z_1}^2$ to a $W(\F)[K]$-module. This is significantly more complicated than to lift $(\Proj_{K/Z_1}\sigma)/\mathfrak{m}_{K_1/Z_1}$. It is easy to check that the $K$-representation $(\Proj_{K/Z_1}\sigma)/\mathfrak{m}_{K_1/Z_1}^2$ is a nonsplit extension
\begin{equation*}
0\longrightarrow (\mathfrak{m}_{K_1/Z_1}/\mathfrak{m}_{K_1/Z_1}^2)\otimes_{\F} \Proj_{\GL_2(k)}\sigma \longrightarrow (\Proj_{K/Z_1}\sigma)/\mathfrak{m}_{K_1/Z_1}^2 \longrightarrow \Proj_{\GL_2(k)}\sigma \longrightarrow 0.
\end{equation*}
For convenience, let us fix an embedding $\sigma_0:k\cong{\mathbb F}_{p^f}\hookrightarrow \F$ and write all others as $\sigma_0\circ \varphi^{j}$, $j\in \{0,\dots,f-1\}$, where $\varphi$ is the Frobenius $x\mapsto x^p$ on $k$. Then we have
$$\mathfrak{m}_{K_1/Z_1}/\mathfrak{m}_{K_1/Z_1}^2\cong \bigoplus_{j=0}^{f-1} \big({\rm Sym}^2({\F}^2)\otimes_{{\F}}{\rm det}^{-1}\big)^{(j)},$$
where $(j)$ means that $\GL_2(k)$ acts via $\sigma_0\circ \varphi^{j}$. Moreover, for each $j$, we fix a (non-canonical) $\GL_2(k)$-equivariant embedding
$$\iota_j:\Proj_{\GL_2(k)}\sigma\hookrightarrow \big({\rm Sym}^2({\mathbb F}^2)\otimes_{{\mathbb F}}{\rm det}^{-1}\big)^{(j)}\otimes_{\F}\Proj_{\GL_2(k)}\sigma.$$
We set $L_{-1}\defeq \widetilde \Proj_{\GL_2(k)}\sigma$ and
$$R_{2,j}\defeq \big({\rm Sym}^2(W({\F})^2)\otimes_{W({\F})}{\rm det}^{-1}\big)^{(j)}\otimes_{W(\F)}L_{-1}\ \ \ \ j\in \{0,\dots,f-1\},$$
and we define a $K$-invariant lattice $L_j$ in the locally algebraic representation
$$L_{-1}[1/p]\oplus \Big(\bigoplus_{j'=0}^{j} R_{2,j'}[1/p]\Big)$$
as follows
\begin{multline*}
L_j \defeq \{(x,(x_{j'})_{0\leq j'\leq j})\in L_{-1} \oplus \big(\bigoplus_{j'=0}^{j} R_{2,j'}\big) : (x_{j'}\bmod pR_{2,j'})=(x\bmod pL_{-1})\\
{\rm via}\ \iota_{j'}:L_{-1}/pL_{-1}\hookrightarrow R_{2,j'}/pR_{2,j'}\ \forall\ j'\in \{0,\dots,j\}\}.
\end{multline*}
Equivalently, we have for $j\in \{0,\dots,f-1\}$ that
\begin{equation}\label{produitfibreintro}
L_j \defeq L_{j-1}\times_{\Proj_{\GL_2(k)}\sigma} R'_{2,j},
\end{equation}
where $R'_{2,j}\defeq \{x\in R_{2,j} : (x\bmod pR_{2,j})\in\ \iota_j(L_{-1}/pL_{-1})\}$ (another $K$-invariant lattice in $R_{2,j}[1/p]$). By explicit computations carried out in \S \ref{sec:lattices}, we first prove that the lattice $L_{f-1}$ lifts $(\Proj_{K/Z_1}\sigma)/\mathfrak{m}_{K_1/Z_1}^2$.

\begin{thmIntro}[Corollary \ref{rpr}]\label{reseauintro}
We have a $K$-equivariant isomorphism
$$L_{f-1}/pL_{f-1}\cong (\Proj_{K/Z_1}\sigma)/\mathfrak{m}_{K_1/Z_1}^2.$$
\end{thmIntro}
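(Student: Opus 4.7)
The strategy is to compute $L_{f-1}/pL_{f-1}$ explicitly using the iterated fiber-product structure, and then identify the result with $(\Proj_{K/Z_1}\sigma)/\mathfrak{m}_{K_1/Z_1}^2$ by a universal-property argument.

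First, one observes inductively that each $L_j$ is free as a $W(\F)$-module: $L_{-1}$ is free, and $L_j$ embeds in the free $W(\F)$-module $L_{-1}\oplus\bigoplus_{j'\leq j}R'_{2,j'}$, hence is torsion-free, hence free over the DVR $W(\F)$. Consider the projection $L_{f-1}\to L_{-1}$ onto the first coordinate. It is surjective, and its kernel is $\{(0,(x_{j'}))\in L_{f-1}\}$, which by the compatibility $\iota_{j'}(\overline 0)=\overline{x_{j'}}$ equals $\bigoplus_{j'=0}^{f-1}pR_{2,j'}$; as a $K$-submodule of $L_{f-1}$, this is identified $K$-equivariantly with $\bigoplus_{j'} R_{2,j'}$ via multiplication by $p$. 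We thus obtain a short exact sequence of $W(\F)[K]$-modules $0\to \bigoplus_{j'}R_{2,j'}\xrightarrow{\cdot p}L_{f-1}\to L_{-1}\to 0$, and reducing mod $p$ (using $\mathrm{Tor}_1^{W(\F)}(L_{-1},\F)=0$ from freeness) yields the desired short exact sequence of $\F[K]$-modules
\begin{equation*}
0\to\bigoplus_{j=0}^{f-1}\bigl({\rm Sym}^2(\F^2)\otimes_\F{\rm det}^{-1}\bigr)^{(j)}\otimes_\F\Proj_{\GL_2(k)}\sigma\to L_{f-1}/pL_{f-1}\to\Proj_{\GL_2(k)}\sigma\to 0.
\end{equation*}

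Next, I would observe that $K_1$ acts trivially on both the sub and the quotient of this sequence: on the quotient because $K_1$ is the kernel of $K\twoheadrightarrow\GL_2(k)$; on the sub because the algebraic ${\rm Sym}^2\otimes{\rm det}^{-1}$-action of $K_1\subset 1+pM_2(\mathcal{O}_{F_v})$ reduces to the trivial action modulo $p$, while $K_1$ already acts trivially on $L_{-1}$ by construction. Consequently $\mathfrak{m}_{K_1/Z_1}^2$ annihilates $L_{f-1}/pL_{f-1}$. Using projectivity of $\Proj_{K/Z_1}\sigma$ in the category of smooth $K/Z_1$-representations over $\F$, the natural surjection $L_{f-1}/pL_{f-1}\twoheadrightarrow\Proj_{\GL_2(k)}\sigma=(\Proj_{K/Z_1}\sigma)/\mathfrak{m}_{K_1/Z_1}$ lifts to a $K$-equivariant map $\Proj_{K/Z_1}\sigma\to L_{f-1}/pL_{f-1}$, which then factors through
\begin{equation*}
\varphi\colon(\Proj_{K/Z_1}\sigma)/\mathfrak{m}_{K_1/Z_1}^2\to L_{f-1}/pL_{f-1}.
\end{equation*}

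To finish, both the domain and codomain of $\varphi$ have $\F$-dimension $(1+3f)\dim_\F\Proj_{\GL_2(k)}\sigma$, using the standard $\GL_2(k)$-equivariant identification $\mathfrak{m}_{K_1/Z_1}/\mathfrak{m}_{K_1/Z_1}^2\cong\bigoplus_{j=0}^{f-1}({\rm Sym}^2(\F^2)\otimes{\rm det}^{-1})^{(j)}$ arising from the Lie algebra of $K_1/Z_1$ over $\F$, together with $\dim{\rm Sym}^2(\F^2)=3$. By the five-lemma applied to the two short exact sequences on the two sides of $\varphi$, the map $\varphi$ is an isomorphism if and only if the induced map on subobjects
\begin{equation*}
\mathfrak{m}_{K_1/Z_1}/\mathfrak{m}_{K_1/Z_1}^2\otimes_\F\Proj_{\GL_2(k)}\sigma\to\bigoplus_{j=0}^{f-1}\bigl({\rm Sym}^2(\F^2)\otimes{\rm det}^{-1}\bigr)^{(j)}\otimes_\F\Proj_{\GL_2(k)}\sigma
\end{equation*}
is one. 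This last verification is the main obstacle, and the heart of the proof: it requires an explicit computation of the $K_1$-action on $L_{f-1}/pL_{f-1}$, obtained by differentiating the algebraic ${\rm Sym}^2$-action of $g=1+pX\in K_1$ on each component $R_{2,j}[1/p]$, and then tracking through the embeddings $\iota_j$ that the induced $\GL_2(k)$-equivariant map is nonzero on each $j$-summand (hence, by matching $\GL_2(k)$-characters on both sides, an isomorphism).
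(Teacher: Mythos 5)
Your high-level strategy (reduce mod $p$ to get the short exact sequence, build a map from $(\Proj_{K/Z_1}\sigma)/\mathfrak{m}_{K_1/Z_1}^2$, then argue by comparing the two short exact sequences) is the same as the paper's, and the short exact sequence you derive is exactly the sequence of Theorem \ref{thm:lattice-coinvariants}. However, there is a real gap in your last step, which you rightly call the heart of the proof but then dispatch too quickly. The map $\varphi|_{\mathrm{sub}}$ is a $\GL_2(k)$-equivariant endomorphism of $\bigoplus_{j}\bigl(\Sym^2(\F^2)\otimes\det^{-1}\bigr)^{(j)}\otimes_\F\Proj_{\GL_2(k)}\sigma$, and each $j$-summand decomposes as $P_\sigma\oplus P_{\sigma_{1,j}}\oplus P_{\sigma_{2,j}}$ by \eqref{decomp}; the crucial point is that $P_\sigma$ occurs with multiplicity $f$ overall (one copy per $j$), so ``nonzero on each $j$-summand, hence an isomorphism by matching $\GL_2(k)$-characters'' does not follow. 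A $\GL_2(k)$-equivariant map could be nonzero on every $j$-summand and still collapse the $f$-dimensional space $\Hom_{\GL_2(k)}(P_\sigma,P_\sigma^{\oplus f})$ in each argument slot. The paper avoids this by restricting the map to the socle (Lemma~\ref{lm:socle-tensor}), and then uses the $k$-bilinearity of the Lie bracket to observe that the resulting map $\Psi$ splits as a direct sum $\bigoplus_i\Psi_i$ over the embeddings (this is why the individual blocks do not interact); only then is the ``nonzero on each of three distinct irreducibles'' argument valid, via Lemma~\ref{lm:computation-GL2} and Proposition~\ref{prop:computation-Gbar}.

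There is a second, more structural issue you should be aware of: identifying $\varphi|_{\mathrm{sub}}$ with a ``differentiation'' of the $\Sym^2$-action is not immediate for a glued lattice such as $L_{f-1}$, because the $K_1$-action on $L_{f-1}/pL_{f-1}$ has already been reduced modulo $p$ and is thus invisible at the tangent level; one must go to the $p^2$-lift. This is exactly what the paper's Bockstein formalism (Lemmas~\ref{lm:homology-exact-sq}, \ref{lm:cocycle-adjoint}, \ref{lm:sublattice}) encodes, and it is also why the paper's route is slightly cleverer than the five-lemma route you propose: Proposition~\ref{prop:alg-lattice} and Theorem~\ref{thm:lattice-coinvariants} compute the $K_1$-coinvariants $(R/pR)_{K_1}\cong P_\sigma$, from which the cosocle of $R/pR$ is immediately $\sigma$, so one gets a \emph{surjection} from $(\Proj_{K/Z_1}\sigma)/\mathfrak{m}_{K_1/Z_1}^2$ for free and concludes by the dimension count $(3f+1)\dim_\F P_\sigma$ on both sides — without ever having to identify or analyze the induced map on subobjects. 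Your five-lemma argument would also work, but only after you make the map on subobjects explicit (which requires the Bockstein) and correctly handle the $P_\sigma$-multiplicity (which requires the socle reduction and the $k$-bilinearity); as written, the final ``hence an isomorphism'' does not follow.
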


We then prove the following theorem.

\begin{thmIntro}[Corollary \ref{HT102-1}]\label{freeintro}
For $j\in \{-1,\dots,f-1\}$ the $R_\infty$-module $M_\infty(L_j)$ is free of finite rank over $R_\infty/{\rm Ann}_{R_\infty}(M_\infty(L_j))$. Moreover this rank depends neither on $j$ nor on the fixed Serre weight $\sigma$ in $W(\rbar_v^\vee)$.
\end{thmIntro}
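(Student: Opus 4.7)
The plan is to proceed by induction on $j\in\{-1,0,\dots,f-1\}$, with the inductive step built from the fiber product recursion \eqref{produitfibreintro} together with the exactness of the patching functor $M_\infty$. At each step one must establish both freeness over the support and the numerical rank; the base case uses the known structure of $M_\infty$ on tame types, and the inductive step propagates this structure along the fiber product.

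\smallskip\noindent\textbf{Base case $j=-1$.} The module $L_{-1}=\widetilde\Proj_{\GL_2(k)}\sigma$ is projective over $W(\F)[\GL_2(k)]$. After inverting $p$, $L_{-1}[1/p]$ decomposes (with explicit multiplicities) as a direct sum of Deligne--Lusztig representations $\sigma(\tau)$ indexed by tame types $\tau$ whose mod $p$ reduction contains $\sigma$. Exactness of $M_\infty$ ensures that $M_\infty(L_{-1})$ is $p$-torsion free, and the genericity hypothesis on $\rbar_v$ lets one invoke \cite{MLM}: the relevant components $R_\infty^\tau$ of the patched tame potentially Barsotti--Tate deformation ring are formally smooth over $W(\F)$, and $M_\infty(\sigma(\tau))$ is free of rank one over each $R_\infty^\tau$ (in line with \cite{EGS}, \cite{LMS}, \cite{HuWang}). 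Gluing these freeness statements along the explicit intersection pattern of components yields freeness of $M_\infty(L_{-1})$ over $R_\infty/\Ann_{R_\infty}(M_\infty(L_{-1}))$.

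\smallskip\noindent\textbf{Inductive step.} Assuming the statement for $L_{j-1}$, the fiber product description \eqref{produitfibreintro} fits into a short exact sequence of $W(\F)[K]$-modules
$$0\longrightarrow L_j\longrightarrow L_{j-1}\oplus R'_{2,j}\longrightarrow \Proj_{\GL_2(k)}\sigma\longrightarrow 0,$$
which by exactness of $M_\infty$ becomes a short exact sequence of $R_\infty$-modules. The lattice $R'_{2,j}$ itself lies in a locally algebraic representation whose generic fiber decomposes into Deligne--Lusztig types with Hodge--Tate weights raised by one in the $j$-th embedding; the analogue of the base case (invoking again \cite{MLM} for the appropriate potentially crystalline components) shows $M_\infty(R'_{2,j})$ is free over its support. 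Freeness of $M_\infty(L_j)$ is then extracted by a Cohen--Macaulay / Auslander--Buchsbaum argument: $M_\infty(L_j)$ has depth equal to the Krull dimension of $R_\infty/\Ann_{R_\infty}(M_\infty(L_j))$, and the latter inherits a presentation as a union of formally smooth components glued along explicit subschemes from the known descriptions of $R_\infty^\tau$ entering in $L_{j-1}$ and $R'_{2,j}$.

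\smallskip\noindent\textbf{Rank independence and main obstacle.} The rank can be read off after inverting $p$: over each component of the support it equals a sum of Breuil--M\'ezard multiplicities, and under sufficient genericity these multiplicities, together with the set of contributing types, depend only on $\rbar_v$. Independence from $\sigma\in W(\rbar_v^\vee)$ and from $j$ follows because each embedding $\sigma_0\circ\varphi^j$ enters the construction symmetrically and all weights in $W(\rbar_v^\vee)$ contribute identical multiplicity patterns in the patched setting. The main obstacle is the inductive step: transferring freeness across the fiber product requires tight control on how the formally smooth components coming from potentially crystalline deformation rings of different Hodge--Tate weights intersect in $R_\infty$, which is precisely the content (and limitation) of the genericity hypotheses feeding into \cite{MLM}. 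The integral (as opposed to generic-fiber) statement is delicate, and it is here that the Cohen--Macaulay properties of patched modules, together with the exactness of $M_\infty$, are indispensable.
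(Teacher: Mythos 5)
Your overall architecture is right: induction on $j$, base case $j=-1$ via Proposition \ref{HT10}, freeness of $M_\infty(R'_{2,j})$ as in Theorem \ref{HT2-1}, and propagation along the fiber product $L_j = L_{j-1}\times_{P_{\sigma_v}} R'_{2,j}$ using exactness of $M_\infty$. But there is a genuine gap in the inductive step, and it is exactly the one the paper flags in \S\ref{subsub:multiHT}.

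Once you know that $M_\infty(L_{j-1})$ and $M_\infty(R'_{2,j})$ are each free of rank $r$ over their schematic supports, passing to the fiber product still requires the ideal containment
\[
\mathrm{Ann}_{R_\infty}\big(M_\infty(P_{\sigma_v})\big)\subseteq \mathrm{Ann}_{R_\infty}\big(M_\infty(L_{j-1})\big) + \mathrm{Ann}_{R_\infty}\big(M_\infty(R'_{2,j})\big)
\]
(Lemma \ref{lem:hard-glueing}). Your ``Cohen--Macaulay / Auslander--Buchsbaum'' argument does not substitute for this: the CM structure gives you, at best, that $M_\infty(L_j)$ is maximal CM over its support and that $M_\infty(L_j)[1/p]$ is locally free, but to conclude \emph{integral} freeness one first needs to know that $M_\infty(L_j)$ can be generated by $r$ elements, which is precisely what the containment controls. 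The annihilator $\mathrm{Ann}_{R_\infty}(M_\infty(R'_{2,j}))$ is (modulo formal variables) an intersection $\cap_\tau \mathfrak{p}^{(2,-1)_j}_\tau$ over $2^f$ tame types $\tau$, and because the Hodge--Tate weights $(2,-1)$ introduce a nontrivial monodromy condition, these primes do not have simple generators; computing in the intersection directly is not feasible. The paper explicitly records that their first attempt to prove the containment ``directly'' had a gap, which is what your proposal reproduces.

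The paper's resolution (following the idea of \cite[Prop.~4.18]{HuWang2}) is a detour through the intermediate object $N_j = L_{j-1}\times_{Y_j}T'_{2,j}$, where $T'_{2,j}$ is built from a single carefully chosen type $\tau_0$ (with $\JH(\overline{\sigma(\tau_0)}) = W(\rbar_v^\vee)$), so that the relevant containment becomes $p \in \mathrm{Ann}_{R_\infty}(M_\infty(L_{j-1})) + \mathfrak{p}^{(2,-1)_j}_{\tau_0}$, checkable by hand via the explicit generators in Tables \ref{Table1FV}--\ref{Table4} (Proposition \ref{prop:p:in:inter}). Transferring the conclusion from $N_j$ back to $L_j$ is then the nontrivial representation-theoretic input (Proposition \ref{prop:Lj-Nj-equivalence-HW}, Lemma \ref{lem:Loewy3}, Lemma \ref{lem:cosoc3}), which your proposal does not mention and which cannot be replaced by general commutative algebra. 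In short: the blueprint is correct, but the core of the inductive step --- the annihilator containment and the $N_j$ detour that makes it tractable --- is missing.
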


Denote by $r\geq 1$ the rank in Theorem \ref{freeintro}. Applying Theorem \ref{freeintro} to both $j=-1$ and $j=f-1$, and using Theorem \ref{reseauintro} when $j=f-1$, we see that the two $\F$-vector spaces in (\ref{mult1infiniintro}) both have dimension $r$. Since the natural map from left to right in (\ref{mult1infiniintro}) is surjective by exactness of $M_\infty$, we obtain that (\ref{mult1infiniintro}) is an isomorphism, and hence that $\pi$ satisfies condition \ref{multintro} of Theorem \ref{thmintro1}.

We now sketch the proof of Theorem \ref{freeintro}, which is by induction on $j$. We first prove the following two statements for $j\in \{0,\dots,f-1\}$:
\begin{enumerate}
\item\label{infini-1intro}$M_\infty(L_{-1})$ is free of rank $r$ over $R_\infty/{\rm Ann}_{R_\infty}(M_\infty(L_{-1}))$;
\item\label{infini-iintro}$M_\infty(R'_{2,j})$ is free of rank $r$ over $R_\infty/{\rm Ann}_{R_\infty}(M_\infty(R'_{2,j}))$.
\end{enumerate}

Statement \ref{infini-1intro} is proven in \S \ref{tobefree1} (see Proposition \ref{HT10}) by a refinement of the techniques in \cite[\S 10]{EGS} and \cite[\S 4]{LMS} together with some commutative algebra. Statement \ref{infini-iintro} is proven in Theorem \ref{HT2-1} using standard d\'evissage techniques and ``elementary'' properties of the functor $M_\infty$ (in particular \cite[Lemma~4.5]{DanWild} instead of \cite[Lemma 10.1.13]{EGS}) and some results of \S \ref{tobefree1}.

By exactness of $M_\infty$, (\ref{produitfibreintro}) implies
\[M_\infty(L_j)\cong M_\infty(L_{j-1})\times_{M_\infty(\Proj_{\GL_2(k)}\sigma)}M_\infty(R'_{2,j}).\]
We know that $M_\infty(R'_{2,j})$ is free of rank $r$ by \ref{infini-iintro} above and  that $M_\infty(L_{j-1})$ is free of rank $r$ over $R_\infty/{\rm Ann}_{R_\infty}(M_\infty(L_{j-1}))$ by our induction hypothesis (which holds for $j=0$ by \ref{infini-1intro}). Hence, to deduce the same statement for $M_\infty(L_{j})$, it is enough (in fact equivalent using Lemma \ref{lem:hard-glueing}) to prove
\begin{equation}\label{sum-intro}
{\rm Ann}_{R_\infty}(M_\infty(\Proj_{\GL_2(k)}\sigma))\subseteq {\rm Ann}_{R_\infty}(M_\infty(L_{j-1})) + {\rm Ann}_{R_\infty}(M_\infty(R'_{2,j})).
\end{equation}

\subsubsection{{Deformation rings, and conclusion}}
\label{subsub:multiHT}

Statement (\ref{sum-intro}) is the most subtle and the most technical part of the paper and is ultimately proven in Theorem \ref{HT102-1}, though in a somewhat indirect way as we explain now.

Recall that $R_{\rbar_v^\vee}$ is the local $W(\F)$-algebra parametrizing framed deformations of $\rbar_v^\vee$. We let $R_{\rbar_v^\vee}^{(1,0),\tau}$, resp.\ $R_{\rbar_v^\vee}^{(2,-1)_j,{\tau}}$ for $j\in \{0,\dots,f-1\}$, be the reduced $p$-torsion free quotient of $R_{\rbar_v^\vee}$ parametrizing those deformations which have inertial type $\tau$ and parallel Hodge--Tate weights $(1,0)$, resp.\ Hodge--Tate weights $(2,-1)$ in the embedding $F_v\hookrightarrow W(\F)[1/p]$ induced by $\sigma_0\circ \varphi^{j}$ and $(1,0)$ elsewhere. An explicit computation that builds on the recent advances of Le--Le Hung--Levin--Morra \cite{LLLM}, \cite{LLL} (see Proposition \ref{prop:def:ring}) shows that these rings are all domains. It follows (see Proposition \ref{HT10}) that $R_\infty/{\rm Ann}_{R_\infty}(M_\infty(L_{-1}))$ is a power series ring over $R_{\rbar_v^\vee}/\cap_{\tau}{\mathfrak p}_\tau^{(1,0)}$, where ${\mathfrak p}_\tau^{(1,0)}$ is the prime ideal $\ker(R_{\rbar_v^\vee}\twoheadrightarrow R_{\rbar_v^\vee}^{(1,0),{\tau}})$ and $\tau$ runs over the tame inertial types such that $\sigma$ is a Jordan--H\"older factor in the mod $p$ semisimplification of $\sigma(\tau)$ (here $\sigma(\tau)$ is the usual irreducible smooth representation of $K$ associated by Henniart to $\tau$ in the appendix to \cite{BM}). Likewise, $R_\infty/{\rm Ann}_{R_\infty}(M_\infty(R'_{2,j}))$ is a power series ring over $R_{\rbar_v^\vee}/\cap_{\tau}{\mathfrak p}_\tau^{(2,-1)_j}$, where ${\mathfrak p}_\tau^{(2,-1)_j}=\ker(R_{\rbar_v^\vee}\twoheadrightarrow R_{\rbar_v^\vee}^{(2,-1)_j,{\tau}})$ and $\tau$ runs over the same tame types (see Theorem \ref{HT2-1}).

In the first version of our work, we tried to prove (\ref{sum-intro}) directly. For that one has to deal with ${\rm Ann}_{R_\infty}(M_\infty(R'_{2,j}))$ which is essentially (forgetting formal variables) $\cap_{\tau}{\mathfrak p}_\tau^{(2,-1)_j}$. However, computing elements in this intersection over the $2^f$ types $\tau$ turns out to be very hard because the ideals ${\mathfrak p}_\tau^{(2,-1)_j}$ do not have simple generators (this is mainly due to the technical monodromy condition which appears as we have Hodge--Tate weights $(2,-1)$) and there was a gap in our proof. To avoid this intersection, we use the following detour, which is inspired by the proof of \cite[Prop.~4.18]{HuWang2}.

Choose a tame inertial type $\tau_0$ such that the set of irreducible constituents of $\sigma(\tau_0)/p\sigma(\tau_0)$ coincides with the set $W(\rbar_v^\vee)$ (such a type exists) and define for $j\in \{0,\dots,f-1\}$
\begin{eqnarray*}
T_{2,j}&\defeq &\big({\rm Sym}^2(W({\F})^2)\otimes_{W({\F})}{\rm det}^{-1}\big)^{(j)}\otimes_{W(\F)}\sigma(\tau_0)^0,\\
T'_{2,j}&\defeq &\textrm{image of the composition } R'_{2,j}\hookrightarrow R_{2,j}\twoheadrightarrow T_{2,j},
\end{eqnarray*}
where $\sigma(\tau_0)^0$ is the image of $L_{-1}$ in $\sigma(\tau_0)$ (equivalently the unique $K$-invariant lattice in $\sigma(\tau_0)$ with cosocle $\sigma$).
Then the surjection $R'_{2,j}\twoheadrightarrow T'_{2,j}$ induces a surjection
\[L_j\twoheadrightarrow N_j \defeq L_{j-1}\times_{Y_j} T'_{2,j}\]
where $Y_j$ is an explicit quotient of $\Proj_{\GL_2(k)}\sigma$ such that $M_\infty(Y_{j})=M_\infty(T'_{2,j}/pT'_{2,j})$ (Lemma \ref{lem:WinYj}). We first prove that $M_\infty(L_{j})$ is free of rank $r$ (over its schematic support) if and only if $M_\infty(N_{j})$ is free of rank $r$ (see Proposition \ref{prop:Lj-Nj-equivalence-HW} and the last paragraph of the proof of Theorem \ref{HT102-1}). To prove the latter, as for (\ref{sum-intro}) we have to prove for $j\in \{0,\dots,f-1\}$
\[{\rm Ann}_{R_\infty}(M_\infty(Y_j))\subseteq {\rm Ann}_{R_\infty}(M_\infty(L_{j-1})) + {\rm Ann}_{R_\infty}(M_\infty(T'_{2,j}))\]
or equivalently since ${\rm Ann}_{R_\infty}(M_\infty(Y_j))=(p) + {\rm Ann}_{R_\infty}(M_\infty(T'_{2,j}))$ and since $T'_{2,j}$ is a lattice in $T_{2,j}[1/p]$,
\begin{equation}\label{sumjintro}
p\in {\rm Ann}_{R_\infty}(M_\infty(L_{j-1})) + {\rm Ann}_{R_\infty}(M_\infty(T'_{2,j}))={\rm Ann}_{R_\infty}(M_\infty(L_{j-1})) +  {\mathfrak p}_{\tau_0}^{(2,-1)_j}.
\end{equation}
Note that we have replaced the intersection $\cap_{\tau}{\mathfrak p}_\tau^{(2,-1)_j}$ by just ${\mathfrak p}_{\tau_0}^{(2,-1)_j}$! It is then possible to check (\ref{sumjintro}) by an explicit computation, which can be done entirely ``by hand'', see Proposition \ref{prop:p:in:inter} and the proof of Theorem \ref{HT102-1}. We have compiled in Tables 1 to 5 all the explicit computations of deformation rings that we use in the proofs (everything was checked ``by hand'').

To apply Theorem \ref{thmintro1} to $\pi$ in (\ref{tobepatched}), it remains to show that $\pi$ satisfies condition \ref{k1intro} of Theorem \ref{thmintro1}. But using (\ref{mult1infiniintro}) together with standard injectivity properties of localizations of Hecke modules at non-Eisenstein maximal ideals and (a lot of) representation theory of $K$ (see Corollary \ref{cor:J-fil}), we actually obtain the complete structure of $\pi[\mathfrak{m}_{K_1/Z_1}^2]$ as a representation of $K$.

\begin{thmIntro}[Theorem \ref{largest}]\label{largestintro}
Let $\pi$ be as in \eqref{tobepatched}, we have
\begin{equation}\label{eq:13}
\pi[\mathfrak{m}_{K_1/Z_1}^2]\cong \Big(\bigoplus_{\sigma\in W(\rbar_v^\vee)}\widetilde D_{\sigma}\Big)^{\oplus r},
\end{equation}
where $r$ is the rank in Theorem \ref{freeintro} and $\widetilde D_{\sigma}$ is the largest subrepresentation of $(\Inj_{K/Z_1}\sigma)[\mathfrak{m}_{K_1/Z_1}^2]$ containing $\sigma$ with multiplicity $1$ \emph{(}= its socle\emph{)} and no other Serre weights of $W(\rbar_v^\vee)$. Moreover, each irreducible constituent of $\pi[\mathfrak{m}_{K_1/Z_1}^2]$ has multiplicity $r$.
\end{thmIntro}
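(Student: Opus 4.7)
The plan is to combine the mod-$\mathfrak{m}_{K_1/Z_1}^2$ multiplicity-one statement furnished by the patching input (Theorems \ref{freeintro} and \ref{reseauintro}) with an embedding of $\pi$ into a direct sum of injective hulls, and finally to identify the resulting image using the representation theory of $K$ (Corollary \ref{cor:J-fil}).

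First I would extract the multiplicity identity. Applying Theorem \ref{freeintro} at $j=-1$ and $j=f-1$ together with Theorem \ref{reseauintro} (which identifies $L_{f-1}/pL_{f-1}$ with $(\Proj_{K/Z_1}\sigma)/\mathfrak{m}_{K_1/Z_1}^2$), both sides of \eqref{mult1infiniintro} have $\F$-dimension $r$, and so \eqref{mult1infiniintro} is an isomorphism. Translating back to $\pi$ via the standard local--global duality of $M_\infty$ (using the injective-envelope property of cohomology after localization at a non-Eisenstein maximal ideal of the Hecke algebra) yields \eqref{mult1intro} with both sides of dimension $r$. Since $(\Proj_{K/Z_1}\sigma)/\mathfrak{m}_{K_1/Z_1}^n$ is a projective cover of $\sigma$ in the category of smooth $K/Z_1$-representations annihilated by $\mathfrak{m}_{K_1/Z_1}^n$, this translates to
\[[\pi[\mathfrak{m}_{K_1/Z_1}^2]:\sigma]=[\pi[\mathfrak{m}_{K_1/Z_1}]:\sigma]=r\quad\text{for every }\sigma\in W(\rbar_v^\vee).\]

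Next I would embed $\pi[\mathfrak{m}_{K_1/Z_1}^2]$ into a controlled direct sum. By condition (i) of Theorem \ref{thmintro1} (following \cite{HuWang}, \cite{LMS} building on \cite{EGS}), $\pi^{K_1}\cong \bigoplus_{\sigma\in W(\rbar_v^\vee)}D_{0,\sigma}^{\oplus r}$, so the socle of $\pi$ as a smooth $K/Z_1$-representation is $\bigoplus_{\sigma}\sigma^{\oplus r}$. Embedding $\pi$ into the injective hull of its socle and restricting to $\mathfrak{m}_{K_1/Z_1}^2$-torsion gives
\[\pi[\mathfrak{m}_{K_1/Z_1}^2]\hookrightarrow \bigoplus_{\sigma\in W(\rbar_v^\vee)}(\Inj_{K/Z_1}\sigma)[\mathfrak{m}_{K_1/Z_1}^2]^{\oplus r}.\]
For distinct $\sigma,\sigma'\in W(\rbar_v^\vee)$, the weight $\sigma'$ does not occur in $\soc(\Inj_{K/Z_1}\sigma)^{\oplus r}$, so all $r$ copies of $\sigma'$ guaranteed by the first step must sit in the $\sigma'$-summand. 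Consequently the image of $\pi[\mathfrak{m}_{K_1/Z_1}^2]$ in the $\sigma$-summand has socle $\sigma^{\oplus r}$ and no Jordan--H\"older factor in $W(\rbar_v^\vee)\setminus\{\sigma\}$; by the maximality property defining $\widetilde D_\sigma$ it is contained in $\widetilde D_\sigma^{\oplus r}$.

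The main obstacle is to upgrade this inclusion $\pi[\mathfrak{m}_{K_1/Z_1}^2]\hookrightarrow \bigoplus_\sigma \widetilde D_\sigma^{\oplus r}$ to an equality, which will simultaneously deliver the uniform multiplicity-$r$ statement for every irreducible constituent. My plan is to invoke Corollary \ref{cor:J-fil}, which equips $(\Proj_{K/Z_1}\sigma)/\mathfrak{m}_{K_1/Z_1}^2$ with a natural filtration whose graded pieces are described explicitly in terms of the $\GL_2(k)$-modules $D_{0,\sigma}$ and $\mathfrak{m}_{K_1/Z_1}/\mathfrak{m}_{K_1/Z_1}^2$. Applying the exact patching functor $M_\infty$ to this filtration and using the freeness and support computations from Section \ref{tobefree1} (the same ingredients underlying Theorem \ref{freeintro}), one reads off that every Jordan--H\"older constituent $\tau$ of $\widetilde D_\sigma$ appears in $\pi[\mathfrak{m}_{K_1/Z_1}^2]$ with multiplicity at least $r$. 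Combined with the inclusion above, this forces the isomorphism \eqref{eq:13} and the uniform multiplicity statement; the delicate point is the representation-theoretic analysis of the $J$-filtration, which is precisely what Lemma \ref{lem:connected} and Corollary \ref{cor:J-fil} are designed to provide.
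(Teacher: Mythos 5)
Your Steps 1 and 2 correctly reproduce the upper bound. Step 1 (applying Theorem~\ref{freeintro} at $j=-1$ and $j=f-1$, using Theorem~\ref{reseauintro}, and dualizing via the patching functor) gives exactly Theorem~\ref{mainpatching}, hence $[\pi[\mathfrak{m}_{K_1/Z_1}^2]:\sigma]=r$ for $\sigma\in W(\rbar_v^\vee)$. Step 2 then derives the inclusion $\pi[\mathfrak{m}_{K_1/Z_1}^2]\hookrightarrow\bigl(\bigoplus_\sigma\widetilde D_\sigma\bigr)^{\oplus r}$; this is essentially re-deriving Corollary~\ref{cor:J-fil}\ref{it:J-fil-1}, which the paper cites directly at this point. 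So far so good.

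The genuine gap is in Step 3: you never establish the reverse inclusion $\bigl(\bigoplus_\sigma\widetilde D_\sigma\bigr)^{\oplus r}\subseteq\pi[\mathfrak{m}_{K_1/Z_1}^2]$, and the mechanism you propose cannot deliver it. To upgrade the containment to an equality one must show, for instance, that each restriction map $\Hom_K(\widetilde D_\sigma,\pi)\to\Hom_K(\sigma,\pi)$ is surjective (equivalently, that every constituent of $\widetilde D_\sigma$ appears in $\pi[\mathfrak{m}_{K_1/Z_1}^2]$ with the right multiplicity). Your suggestion — apply the exact functor $M_\infty$ to a filtration of $(\Proj_{K/Z_1}\sigma)/\mathfrak{m}_{K_1/Z_1}^2$ and invoke Lemma~\ref{lem:connected} and Corollary~\ref{cor:J-fil} — does not do this. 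Those lemmas control the ambient representation-theoretic structure (and give the upper bound), but they say nothing about which of its constituents actually occur in $\pi$. Moreover, the multiplicity of a \emph{non-modular} constituent $\tau$ of $\widetilde D_\sigma$ in $\pi[\mathfrak{m}_{K_1/Z_1}^2]$ is not computable from the $M_\infty$-calculations carried out in \S\ref{tobefree1}: those results concern $\widetilde P_{\sigma_v}$, $R'_{2,j}$ and the lattices $L_j$ for $\sigma_v\in W(\rbar_v^\vee)$, and there is no analogue of Theorem~\ref{mainpatching} for such $\tau$. The paper's proof instead uses the ``big'' patched module $\mathbb M_\infty$ from \cite[\S6]{DoLe} and \cite{CEGGPS}: its projectivity as an $S_\infty\bbra{K}_Z$-module forces $\mathbb M_\infty/(p,x_1,\dots,x_{4|S|+q-1})\cong\bigoplus_\sigma(\Proj_{K/Z_1}\sigma^\vee)^{\oplus m_\sigma}$, from which the key isomorphism
\[
\Hom^{\rm cont}_{\F\bbra K}\bigl(\mathbb M_\infty/\mathfrak m_\infty,\widetilde D_{\sigma_v}^\vee\bigr)\congto\Hom^{\rm cont}_{\F\bbra K}\bigl(\mathbb M_\infty/\mathfrak m_\infty,\sigma_v^\vee\bigr)
\]
drops out after passing to $\mathfrak m_\infty$-torsion. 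This structural input — the projectivity of the full Taylor--Wiles--Kisin patched module over the Iwasawa algebra, rather than any property of the finite-level functor $M_\infty$ alone — is what your argument is missing, and without it the isomorphism \eqref{eq:13} does not follow.
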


Condition \ref{k1intro} of Theorem \ref{thmintro1} then immediately follows from the isomorphism~\eqref{eq:13} in Theorem \ref{largestintro} by taking $K_1$-invariants on both sides. In particular we finally obtain:

\begin{thmIntro}[Theorem \ref{mainpatching2}]\label{mainbisintro}
Let $\pi$ be as in \eqref{tobepatched}. Then $\dim_{\GL_2(F_v)}(\pi)= f$.
\end{thmIntro}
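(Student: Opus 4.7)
The plan is to obtain $\dim_{\GL_2(F_v)}(\pi) = f$ as a two-sided estimate: the upper bound is the difficult direction, in which essentially all of the paper's machinery culminates, while the matching lower bound should follow from the nontriviality of the patched module together with standard dimension theory.

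First I would verify the two hypotheses of Theorem~\ref{thmintro1} for the representation $\pi$ defined by \eqref{tobepatched}. Both are immediate consequences of Theorem~\ref{largestintro}. For hypothesis~(i), apply the left-exact functor $(-)^{K_1}$ to the isomorphism
$$\pi[\mathfrak{m}_{K_1/Z_1}^2] \cong \Big(\bigoplus_{\sigma \in W(\rbar_v^\vee)} \widetilde D_\sigma\Big)^{\oplus r}$$
provided by Theorem~\ref{largestintro}; since $\pi^{K_1} = \pi[\mathfrak{m}_{K_1/Z_1}]$ and $\widetilde D_\sigma^{K_1}$ is identified with $D_{0,\sigma}$ by the very construction of $\widetilde D_\sigma$ and the definition of $D_0(\rbar_v^\vee)$, this yields $\pi^{K_1} \cong D_0(\rbar_v^\vee)^{\oplus r}$ with $r\geq 1$. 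For hypothesis~(ii), invoke the final assertion of Theorem~\ref{largestintro}: every irreducible constituent of $\pi[\mathfrak{m}_{K_1/Z_1}^2]$ occurs with multiplicity exactly $r$; in particular, for each $\sigma \in W(\rbar_v^\vee)$, its multiplicity in $\pi[\mathfrak{m}_{K_1/Z_1}^2]$ equals its multiplicity in the socle $\pi[\mathfrak{m}_{K_1/Z_1}]$ (which already contains $\sigma$ with multiplicity~$r$). Applying Theorem~\ref{thmintro1} then yields $\dim_{\GL_2(F_v)}(\pi) \leq f$.

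For the lower bound $\dim_{\GL_2(F_v)}(\pi) \geq f$, I would combine the admissibility and nonvanishing of $\pi$ with the patching setup of the text. The patched module $\mathbb{M}_\infty$ is a nonzero finitely generated $R_\infty$-module, and the explicit description of its support on $R_\infty$ (a component whose Krull dimension is $2f$ larger than that of the formal variables, matching the $2f$ polynomial generators $e_i,f_i$ of \eqref{grquotient} modulo the $f$ relations $e_if_i$) translates, via Lemma~\ref{prop:grsemiabelian} applied to the commutative quotient $(\gr_\mathfrak{m}\F\bbra{I_1/Z_1})/I_{I_1/Z_1}$, into $\dim_{\GL_2(F_v)}(\pi)\geq f$. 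Equivalently, one can argue directly that $\dim_\F \pi^{K_n}$ grows at least like $p^{nf}$ from the known lower bound on Hecke multiplicities in cohomology when $\rbar$ is modular and non-Eisenstein at its associated maximal ideal.

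The main obstacle in this plan is by far the upper bound, and within it the deduction of Theorem~\ref{largestintro}; this rests on the freeness statement of Theorem~\ref{freeintro}, which in turn reduces to the subtle containment \eqref{sumjintro} — an explicit computation inside the tame potentially crystalline framed deformation rings with one Hodge--Tate weight pair $(2,-1)$. Once Theorems~\ref{thmintro1} and \ref{largestintro} are granted, and once the standard lower bound on growth of $K_n$-invariants is invoked, the proof of $\dim_{\GL_2(F_v)}(\pi) = f$ becomes a short assembly of pre-existing inputs.
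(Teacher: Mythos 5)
Your upper bound argument is correct and is exactly the route the introduction sketches: taking $K_1$-invariants of the isomorphism in Theorem~\ref{largestintro} gives hypothesis~(i) of Theorem~\ref{thmintro1} (using that $\widetilde D_\sigma^{K_1}\cong D_{0,\sigma}$, which is Corollary~\ref{cor:V-K1}), while the multiplicity statement gives hypothesis~(ii); the body of the paper takes the slight variant of verifying the hypotheses of Theorem~\ref{thm:GKdim-criterion} directly from Theorem~\ref{mainpatching}, but either reduction is valid.

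The lower bound, however, has a genuine gap. You claim that the Krull dimension of the support of $\mathbb{M}_\infty$ in $R_\infty$ ``translates, via Lemma~\ref{prop:grsemiabelian} applied to the commutative quotient $(\gr_\mathfrak{m}\F\bbra{I_1/Z_1})/I_{I_1/Z_1}$, into $\dim_{\GL_2(F_v)}(\pi)\geq f$.'' There is no such translation: Lemma~\ref{prop:grsemiabelian} computes $\dim_G(\pi^\vee)$ as the dimension of the support of $\gr_\mathfrak{m}\pi^\vee$ inside $\F[e_j,f_j]$, which has nothing a priori to do with the Krull dimension of $\Supp_{R_\infty}(\mathbb{M}_\infty)$ — these live on entirely different rings, and there is no map between them along which dimensions can be compared. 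The numerical coincidence you invoke (the $2f$ dimension gap matching the $2f$ generators $e_i,f_i$) is exactly that, a coincidence; it does not constitute an argument. The ``equivalent'' alternative you offer (growth of $\dim_\F\pi^{K_n}$ from ``the known lower bound on Hecke multiplicities in cohomology'') is not a reference to any theorem and I do not see how to make it precise.

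The argument the paper actually runs is an Auslander--Gorenstein grade calculation that you have not reproduced. The crucial input is not that $\mathbb{M}_\infty$ is a nonzero finitely generated $R_\infty$-module (which is far too weak) but that $\mathbb{M}_\infty$ is \emph{free of finite type over} $S_\infty\bbra{K_1/Z_1}$; this forces $j_{S_\infty\bbra{K_1/Z_1}}(\mathbb{M}_\infty)=0$, and then \cite[Lemma A.19]{GN} pins down the grade \emph{exactly}: $j_{R_\infty\bbra{K_1/Z_1}}(\mathbb{M}_\infty)=\dim R_\infty-\dim S_\infty=2f$. Separately, \cite[Lemma A.16]{GN} gives the inequality $j_{R_\infty\bbra{K_1/Z_1}}(\mathbb{M}_\infty)\geq j_{\F\bbra{K_1/Z_1}}(\mathbb{M}_\infty/\mathfrak{m}_\infty)=3f-\dim_{\GL_2(F_v)}(\pi)$, and combining the two yields $\dim_{\GL_2(F_v)}(\pi)\geq f$. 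Without the freeness of $\mathbb{M}_\infty$ over $S_\infty\bbra{K_1/Z_1}$ and the two grade lemmas from \cite{GN}, the lower bound does not follow from the patching setup, so your proposal as written is incomplete on this half of the statement.
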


\subsection{Notation}
\label{sec:notation}

We only give some very general notation here, more specific notation will be given in each section. We fix an algebraic closure $\ovl{\mathbb{Q}}_p$ of $\Qp$. All finite extensions of $\Qp$ will be considered as subfields of $\ovl{\mathbb{Q}}_p$. We let $v_p$ denote the valuation of $\ovl{\mathbb{Q}}_p$ such that $v_p(p) = 1$.

We let $E$ be a finite extension of $\Qp$, with ring of integers $\cO$, uniformizer $\varpi$ and residue field $\F$, and will always assume that $E$ is \emph{sufficiently large}. We let $k$ be a finite extension of $\Fp$ of degree $f\defeq [k:\Fp]$. We fix an embedding $\sigma_0 : k \into \F$ and let $\sigma_j \defeq \sigma_0\circ\phz^{j}$, where $\phz:x\mapsto x^p$ is the arithmetic Frobenius on $k$. Then the set $\cJ\defeq\Hom(k,\F)$ is identified with $\{0,\dots,f-1\}$.

We let $\varepsilon$ (resp.\ $\omega$) denote the $p$-adic (resp.\ mod $p$) cyclotomic character of the absolute Galois group $G_F$, where $F$ is any finite extension of $\QQ$ or $\QQ_p$. We normalize Hodge--Tate weights so that $\varepsilon$ has Hodge--Tate weight 1 at every embedding.

Given a profinite group $G$, we write $\F\bbra{G}$ for its completed group algebra with $\F$-coefficients, with augmentation ideal denoted by $\fm_G$. We recall that Pontryagin duality $M\mapsto M^\vee$ induces an exact anti-equivalence between the category of smooth $G$-representations over $\F$, and the category of pseudocompact $\F\bbra{G}$-modules. Recall that given a pseudocompact $\F\bbra{G}$-module $M$, we have the radical $\rad_G M\defeq \fm_GM$. Dually, given a smooth $G$-representation $M$ we write $\soc_G M$ for its socle.

If $G$ is a group and $V$ a representation of $G$ on a finite-dimensional $E$-vector space we denote by $\ovl{V}$ the semisimplification of a $G$-stable $\cO$-lattice in $V$. If $V$ is a representation of $G$ on a finite-dimensional vector space, we let $\JH(V)$ denote the set of Jordan--H\"older factors of $V$. Also, if $\sigma$ is an irreducible representation of $G$, we let $[V:\sigma]$ be the multiplicity of $\sigma$ in the semisimplification of $V$.

\subsection{Acknowledgements}
\label{sec:merci}

The initial impetus for this work was a SQuaRE meeting at the American Institute of Mathematics at San Jose in August 2019 (though it was not quite clear at the time where we were really heading!). We heartily thank AIM for hosting and supporting us and for outstanding working conditions. We are also very grateful to Sug Woo Shin and Karol Kozio\l{} for participating in this meeting and for sharing their thoughts with us. 
{We are particularly grateful to Karol Kozio\l{} for pointing out a mistake in an earlier version of this work.}
Finally, we heartily thank an anonymous referee for his or her report, especially for pointing out
an embarrassing mistake in our previous use of multi-type deformation rings.

C.\;B.\ thanks X.\ Caruso for discussions in an early attempt to approach the Gelfand--Kirillov dimension via computational techniques, and Ahmed Abbes and all the organizers of the S\'eminaire de G\'eom\'etrie Arithm\'etique Paris--P\'ekin--Tokyo for their invitation to give the very last talk of this seminar on this work in June 2020. Y.\;H.\ thanks Ahmed Abbes for inviting him to I.H.\'E.S.\ for the period of November--December 2019 and I.H.\'E.S.\ for its hospitality.

C.\;B., F.\;H., S.\;M., B.\;S. thank Y.\;H. and Haoran Wang for sharing a preliminary version of \cite{HuWang2}, which inspired us in the early stages of our project.

F.\;H.\ is partially supported by an NSERC grant.
Y.\;H.\ is partially supported by  National Key R$\&$D Program of China 2020YFA0712600, National Natural Science Foundation of China Grants 12288201 and 11971028; National Center for Mathematics and Interdisciplinary Sciences and Hua Loo-Keng Key Laboratory of Mathematics, Chinese Academy of Sciences. 
S.\;M.\ and B.\;S.\ are partially supported by Institut Universitaire de France.
C.\;B., S.\;M. and B.\;S. are members of the A.N.R.\ project CLap-CLap ANR-18-CE40-0026.

\section{Preliminaries}
\label{sec:preliminaries}

Throughout this section $K$ denotes the unramified extension of $\Qp$ of degree $f$ with ring of integers $\cO_K$ and residue field $k$.
Recall from \S \ref{sec:notation} that we have fixed an embedding $\sigma_0:k\into\F$, hence an embedding $K\into E$ which we still denote by the same symbol $\sigma_0$.
In particular we have compatible identifications of $\cJ=\Hom(k,\F)$ with $\Hom_{\Qp}(K,E)$ and with $\{0,\dots,f-1\}$.

\subsection{Group theoretic preliminaries}
\label{sec:GT:prel}

We consider the group scheme $\GL_n$ defined over $\ZZ$, let $T\subseteq \GL_n$ be the diagonal maximal torus and $Z$ its center.
We write $R$ for the set of roots of $(\GL_n,T)$, $W$ for its Weyl group, with longest element $\fW$ and let
$B\subset \GL_n$ denote the Borel of upper-triangular matrices. 
In particular, $B$ determines the subsets $R^+$ of positive roots.
We identify the set of characters $X^*(T) = \Hom(T,\mathbb G_m)$ with $\ZZ^n$ in the standard way.
If $n = 2$, let $\alpha\in R^+$ correspond to $(1,-1)\in \ZZ^2$ so that ${R}^+ = \{\alpha\}$.
If $A$ is any ring, we write ${\GL_n}_{/A}$ to denote the base change of $\GL_n$ to $A$.

Let $\un{G}_0$ be the algebraic group $ \Res_{\cO_K/\Zp} {\GL_n}_{/\cO_K}$ with $\un{T}_0$ the diagonal maximal torus and center $\un{Z}_0$. 
Let $\un{G}$ be the base change $\un{G}_0 \times_{\Zp} \cO$, and similarly define $\un{T}$ and $\un{Z}$.

There is a natural isomorphism $\un{G} \cong \prod_{\cJ} {\GL_n}_{/\cO}$ induced by the ring homomorphism $\cO_K\otimes_{\Zp}\cO\cong \cO^\cJ$ defined by $x\otimes1\mapsto (\sigma_j(x))_{j\in\cJ}$. 
One has similar isomorphisms for $\un{T}$, $\un{Z}$, $X^*(\un{T})$, $\un{R}$, $\un{R}^\vee$, where $\un{R}$ (resp.~$\un{R}^\vee$) denotes the set of roots (resp.~coroots) of $(\un{G},\un{T})$.
If $\mu \in X^*(\un{T})$, then we correspondingly write $\mu = (\mu_j)_{j\in \cJ}$.
We define an automorphism $\pi$ on $X^*(\un{T})$ by $\pi(\mu)_j\defeq \mu_{j-1}$ (it is the automorphism coming from the descent data on $\un{T}$ induced by $\un{T}_0$ and corresponding to the arithmetic Frobenius on $\cO_K$).

We identify $X^*(\un{T}) = \bigoplus_{\cJ} X^*(T)$ with $(\ZZ^n)^\cJ$ as above.
Moreover, if $(a_1,\dots,a_n)\in\ZZ^n$ we write $(\un{a_1},\dots,\un{a_n})$ to denote the element of $X^*(\un{T})$ whose corresponding tuple equals $(a_1,\dots,a_n)$ at each embedding $j\in\cJ$.
We let $\eta_{j}$ be $(n-1,\dots,1,0)$ in the $j$-th coordinate and $0$ otherwise.
We let $\eta\defeq \sum_j\eta_j=(\un{n-1},\dots,\un{1},\un{0})$.

Given $\lambda\in X^*(T)$ (resp.~$\lambda\in X^*(\un{T})$), we let $V(\lambda)_{/\cO}$ denote the algebraic Weyl module of ${\GL_n}_{/\cO}$ (resp.~$\un{G}$) with highest weight $\lambda$ as defined in \cite[II.8.3]{RAGS}.
If $A$ is an $\cO$-algebra, we write $V_A(\lambda)$ to denote the restriction of $V(\lambda)_{/\cO}(A)$ to $\GL_n(\cO_K)$ via the map $\GL_n(\cO_K)\ra \GL_n(A)$ induced by the ring homomorphism $\sigma_0$. 
If $j\in\cJ$ and $\lambda\in X^*(T)$, we write $V(\lambda)^{(j)}_{/\cO}$ to denote the algebraic representation of $\un{G}$ obtained, by inflation from the $j$-th projection $\un{G}\cong \prod_{\cJ}{\GL_n}_{/\cO}\stackrel{\pi_j}{\onto} {\GL_n}_{/\cO}$, from the algebraic Weyl module $V(\lambda)_{/\cO}$ of ${\GL_n}_{/\cO}$.

Let $\un{R}^+\subseteq \un{R}$ (resp.~$\un{R}^{\vee,+}\subseteq \un{R}^\vee$) be the subset of positive roots (resp.~coroots) of $\un{G}$ with respect to the upper-triangular Borel in each embedding.
If $n = 2$, let $\alpha_j\in\un{R}$ be $(1,-1)$ in the $j$-th coordinate and $0$ otherwise, so that $\un{R}^+ = \{\alpha_j : j=0,\dots,f-1\}$.

Let $X^*_+(\un{T})$ be the set of dominant weights, i.e.\ the set of weights $\lambda\in X^*(\un{T})$ satisfying $0\leq \langle\lambda,\alpha^\vee\rangle$ for all $\alpha\in \un{R}^{+}$.
We denote by $X_1(\un{T}) \subseteq X^*_+(\un{T})$ the subset of \emph{$p$-restricted} weights $\lambda\in X^*_+(\un{T})$ satisfying $0\leq \langle \lambda,\alpha^\vee\rangle\leq p-1$ for all simple roots $\alpha\in \un{R}^+$.
Let $X_{\reg}(\un{T}) \subseteq X^*_+(\un{T})$ be the subset of weights $\lambda\in X^*_+(\un{T})$ satisfying $0\leq \langle \lambda,\alpha^\vee\rangle < p-1$ for all simple roots $\alpha\in \un{R}^+$.
Finally, we let $X^0(\un{T}) \subseteq X^*_+(\un{T})$ be the subset of weights $\lambda\in X^*(\un{T})$ satisfying $\langle \lambda,\alpha^\vee\rangle=0$ for all simple roots $\alpha\in \un{R}^+$.

The lowest alcove is defined as
\begin{equation*}
\un{C}_0\defeq \{\lambda\in X^*(\un{T}) \otimes \RR : 0< \langle \lambda+\eta,\alpha^\vee\rangle<p\ \forall\, \alpha\in \un{R}^{+}\}.
\end{equation*}
Given $N\geq 0$ and $\mu\in \un{C}_0$ we say that $\mu$ is \emph{N-deep in $\un{C}_0$} if
$N<\ang{\mu+\eta,\alpha^\vee}<p-N$ for all $\alpha\in \un{R}^{+}$.
(Thus the existence of an $N$-deep weight in $\un C_0$ implies $p \ge 2N+2$.)

In particular, when $n = 2$, via the identifications above
\begin{align*}
X_1(\un{T})&=\{\lambda\in (\ZZ^2)^f : 0\leq \lambda_{j,1}-\lambda_{j,2}\le p-1\ \forall\, j=0,\dots,f-1\},\\
X_{\reg}(\un{T})&=\{\lambda\in (\ZZ^2)^f : 0\leq \lambda_{j,1}-\lambda_{j,2}< p-1\ \forall\, j=0,\dots,f-1\},
\end{align*}
and $\un{C}_0 \cap X^*(\un T) = X_{\reg}(\un T)$.

Let $\un{W}$ be the Weyl group of $(\un{G},\un T)$, with longest element $w_0$.
It acts on $X^*(\un{T})$ and we have a compatible identification of $\un{W}$ with $\prod_{j\in\cJ} W$.
Given $w\in \un{W}$, we write $w_{j}$ to denote its $j$-th component via the identification above.

Let $\un{W}_a$ and $\widetilde{\un{W}}$ be the affine Weyl group and extended affine Weyl group, respectively, of $\un{G}$.
Concretely, $\un{W}_a\cong \Lambda_R\rtimes \un{W}$ and $\wtld{\un{W}}\cong X^*(\un{T})\rtimes \un{W}$, where $\Lambda_R \subseteq X^*(\un{T})$ is the root lattice of $\un{G}$.
The image of $\lambda\in X^*(\un{T})$ in $\wtld{\un{W}}$ is denoted by $t_\lambda$.
Note that $\wtld{\un{W}} \cong (\ZZ^n \rtimes S_n)^f$ and we will also write $t_{\un a}$ for the image of $\un a \in \ZZ^n$ in $\ZZ^n \rtimes S_n$.
We have the \emph{$p$-dot action} of $\wtld{\un{W}}$ on $X^*(\un{T})$, defined as follows: if $\tld{w}=wt_\nu\in \wtld{\un{W}}$ and $\mu\in X^*(\un{T})$ then $\tld{w}\cdot \mu\defeq w(\mu+\eta+p\nu)-\eta$.

Let $\Omega$ be the stabilizer of the lowest alcove $\un{C}_0$ in $\wtld{\un{W}}$.
One checks that $\wtld{\un{W}}= \un{W}_a\rtimes \Omega$.
Concretely, when $n=2$, it is the subgroup of $\wtld{\un{W}}$ generated by $X^0(\un{T})$ and $\big\{1, \fW t_{-(1,0)} \big\}^{\cJ}$. 

Recall that the choice of $\un{C}_0$ endows $\un{W}_a$ with the structure of a Coxeter group generated by the reflections with respect to the walls of $\un{C}_0$ (cf.\ \cite[II.6.3]{RAGS}), and thus with a Bruhat order, which is denoted by $\leq$.
This induces a partial order $\leq$ on $\wtld{\un{W}}$, namely 
$\tld{w}_{a}\omega \le \tld{w}'_{a}\omega'$ in $\un{W}_a\rtimes \Omega=\wtld{\un{W}}$ if and only if
$\tld{w}_{a}\leq \tld{w}'_{a}$ in $\un{W}_a$ and $\omega = \omega'$ in $\Omega$.
We denote $\wtld{\un{W}}^\vee$ the group $\wtld{\un{W}}$, endowed with the Bruhat order induced by the choice of the \emph{antidominant} base alcove, i.e.\
\begin{equation*}
\un{C}_0^\vee\defeq \{\lambda\in X^*(\un{T}) \otimes \RR : -p< \langle \lambda+\eta,\alpha^\vee\rangle<0\ \forall\, \alpha\in \un{R}^{+}\}.
\end{equation*}
We have an anti-isomorphism
\begin{align*}
\wtld{\un{W}}^\vee&\congto \wtld{\un{W}}\\
\tld{w}&\mapsto \tld{w}^*
\end{align*}
defined by $((st_\mu)^*)_j=t_{\mu_{f-1-j}}s_{f-1-j}^{-1}$ such that $\tld{w}_1 \le \tld{w}_2$ if and only if $\tld{w}_2^* \le \tld{w}_1^*$
\cite[Lemma 2.1.3]{LLL}.
Given $\lambda\in X^*(\un{T})$ we define $\Adm^\vee(t_\lambda)\defeq \big\{\tld{w}\in \wtld{\un{W}}^\vee : \tld{w}\leq t_{w(\lambda)}{\rm{\ for\ some\ }}w\in \un{W}\big\}$. 
It is the $\lambda$-admissible set in the sense of \cite{KoRa} relative to the Bruhat order defined above on $\wtld{\un{W}}^\vee$.

Let $R$ be a commutative ring.
If $(x_1,\dots,x_n)\in R^n$ we write $\mathrm{Diag}(x_1,\dots,x_n)$ for the diagonal matrix of $\M_n(R)$ whose $i$-th diagonal entry is $x_i$.
If $\mu\in\ZZ^n$ and $x\in R$ then we write $x^\mu$ for the diagonal matrix $\mathrm{Diag}(x^{\mu_1},\dots,x^{\mu_n})\in \M_n(R)$.
Sometimes it will be convenient to consider $\wtld{\un{W}}^\vee$ as subgroup of $\GL_n(\F\ppar{v})^f$
by the injective homomorphism
sending $s t_\mu$ to $(\dot s_j v^{\mu_j})_j$, where $\dot s_j$ is the permutation matrix associated to $s_j \in S_n$.

If $w \in S_n$ we let $\sgn(w)\in\{\pm 1\}$ denotes its sign.

\subsection{The inertial local Langlands correspondence and Serre weights}
\label{sec:inert-local-langl}

An \emph{inertial type} is a representation $\tau:I_K\rightarrow \GL_n(\ovl{\QQ}_p)$ with open kernel which can be extended to $W_K$ (or equivalently to $G_K$).
When $n=2$ a result of Henniart (see the appendix to \cite{BM}) shows that given an inertial type $\tau$, there is an irreducible smooth $\GL_2(\cO_K)$-representation $\sigma(\tau)$ over $\ovl{\QQ}_p$
associated to it. We normalize it as in \cite[\S 2.1.1]{BM} when $\tau$ is non-scalar, and when $\tau = \chi \oplus \chi$ is scalar we let $\sigma(\tau) \defeq \chi \circ \det$ (via local class field theory).
(This is often referred to as the \emph{inertial local Langlands correspondence}; the representation $\sigma(\tau)$ above is the same as the representation $\sigma(\tau)$ appearing in \cite[Thm.~3.7]{CEGGPS} where, in the notation of \emph{loc.~cit.},~$G=\GL_2(K)$.)
We remark that for any inertial type $\tau$, the representation $\sigma(\tau)$ can be realized over $E$, up to enlarging $E$ if necessary. %

A \emph{Serre weight} of $\un{G}_0\times_{\Zp}\Fp$ is an isomorphism class of an (absolutely) irreducible representation of $\un{G}_0(\Fp) = \GL_n(k)$ over $\F$. %
If $\lambda\in X_1(\un{T})$, we write $L(\lambda)_{/\F}$ (or sometimes just $L(\lambda)$) for the irreducible algebraic representation of $\un{G}\times_{\cO}\F$ of highest weight $\lambda$, and $F(\lambda)$ for the restriction of $L(\lambda)_{/\F}$ to the group ${\un{G}_0(\Fp)}$. %
The map $\lambda\mapsto F(\lambda)$ induces a bijection between $X_1(\un{T})/(p-\pi)X^0(\un{T})$ and the set of Serre weights of $\un{G}_0\times_{\Zp}\Fp$ (cf.~\cite[Lemma 9.2.4]{GHS}).
A Serre weight $\sigma$ is \emph{regular} if $\sigma\cong F(\lambda)$ with $\lambda\in X_{\reg}(\un T)$, cf.~\cite[Def.\ 6.1]{herzig-duke}.

If $n = 2$ and $\rhobar:G_K\rightarrow \GL_2(\F)$ is a tame Galois representation then we have a set $W(\rhobar)$ of Serre weights, defined by Buzzard--Diamond--Jarvis in \cite[\S 3]{BDJ}. %
We recall that $W(\rhobar)$ depends only on $\rhobar|_{I_K}$.

\subsection{Tame inertial types}
\label{sec:tame-inertial-types}

Fix a pair $(s, \mu) \in \un{W} \times X^*(\un{T})$, which we will use to define a tame inertial type.

Writing $s = (s_0, \ldots, s_{f-1}) \in \un{W}$ we set $s_{\tau} \defeq s_0 s_{f-1} s_{f-2} \cdots s_1 \in S_n$ and let $r$ denote the order of $s_\tau$.
Let $f' \defeq rf$, $e'\defeq p^{f'}-1$.
Let $K'/K$ be the unramified extension of $K$ of degree $r$ with residue field $k'$. %
We fix an embedding $\sigma_0':k'\into \F$ extending $\sigma_0$, so we can identify $\cJ'\defeq \Hom(k',\F)$ with the set $\{0,\dots,f'-1\}$ via $\sigma'_{j'}\defeq \sigma_0'\circ\phz^{j'}\mapsto j'$.
We define the tame fundamental character $\omega_{f'} : I_K \to \F^\times$ as the composition $I_K = I_{K'} \onto \cO_{K'}^\times \onto k'^\times \to \F^\times$,
where the first map is the local Artin map, normalized so that uniformizers correspond to geometric Frobenius elements, and the last map
is given by $\sigma_0'$. We also let $\teich{\omega}_{f'} : I_K \to \cO^\times$ denote the Teichm\"uller lift of $\omega_{f'}$.

Define $\bm{\alpha}'_{(s,\mu)} \in  (\ZZ^n)^{\Hom(k', \F)} \cong X^*(\un{T})^{r}$ %
by 
\[
\bm{\alpha}'_{(s,\mu), j'} \defeq s_1^{-1} s_2^{-1} \cdots s_{j'}^{-1}(\mu_{j'}+\eta_{j'}) \in \ZZ^n,\ \ j' \in \ZZ,
\]
where the indices on the right-hand side are considered modulo $f$. In particular, $\bm{\alpha}'_{(s,\mu), j'+kf} = s_{\tau}^{-k} \bm{\alpha}'_{(s,\mu), j'}$,
showing that $\bm{\alpha}'_{(s,\mu), j'}$ only depends on $j'$ modulo $f'$. Also define
\begin{equation}\label{eq:bold-a}
  \bf{a}^{\prime\,(j')}_{(s,\mu)}\defeq \sum_{i'=0}^{f'-1}\bm{\alpha}'_{(s,\mu), -j' + i'}p^{i'} \in \ZZ^n, \ \ j' \in \ZZ.
\end{equation}

\begin{definit} \label{def:tau} Given $(s, \mu) \in \un{W} \times X^*(\un{T})$ define
\begin{equation*}
\tau(s,\mu+\eta) \defeq \bigoplus_{1 \leq i \leq n} \teich{\omega}_{f'}^{\bf{a}^{\prime\,(0)}_{(s,\mu), i}} : I_K \ra \GL_{n}(\cO).
\end{equation*}
Setting $\bf{a}^{(0)} \defeq \sum_{j =0}^{f-1} \bm{\alpha}'_{(s, \mu), j} p^j$ we can also write it as
\begin{equation} \label{eq:def:type}
\tau(s,\mu+\eta) = \bigoplus_{1 \leq i \leq n} \teich{\omega}_{f'}^{\sum_{0 \leq k \leq r-1} \bf{a}^{(0)}_{s_{\tau}^{k}(i)} p^{fk}}.
\end{equation}
\end{definit} 

From~\eqref{eq:def:type} we see that $\tau(s,\mu+\eta)$ is a tame inertial type, i.e.\ can be extended to $G_K$.
Given a tame inertial type $\tau(s,\mu+\eta)$, we write $\taubar(s,\mu+\eta)$ for its reduction mod $\varpi$.

\begin{rem}
\label{rmk:LLL:error}
Due to our choice of labeling of the embeddings of $k$ in $\F$, namely $\sigma_j=\sigma_0\circ\varphi^j$, our definition of $\tau(s,\mu+\eta)$ is not compatible with \cite[Def.~2.2.1]{LLL}. This choice is motivated by the fact that we do not think that the definition in \emph{loc.~cit.} is compatible with \cite{herzig-duke} and \cite{GHS}. However we checked that it does not affect our further references to \cite{LLL}.
\end{rem}

\begin{definit}
\label{defi:gen}
Let $\tau$ be a tame inertial type and $N\in \ZZ_{\geq 0}$. 
\begin{enumerate}
\item 
\label{defi:gen:type}
We say that $\tau$ is \emph{$N$-generic} if there is an isomorphism $\tau\cong \tau(s,\lambda + \eta)$ for some $s \in \un{W}$ and $\lambda \in X^*(\un{T})$ which is $N$-deep in alcove $\un{C}_0$.

\item 
\label{def:LApres}
A \emph{lowest alcove presentation} of $\tau$ is a pair $(s, \mu) \in \un{W} \times \un{C}_0$ such that $\tau \cong \tau(s, \mu + \eta)$ (which by definition exists exactly when $\tau$ is 0-generic).
\end{enumerate}
\end{definit}

We also recall the following definition.
\begin{definit}
\label{def:rhobar:gen}
Let $\rhobar: G_K\ra \GL_n(\F)$ be a Galois representation and let $N\in\NN$.
Let $\rhobar^{\mathrm{ss}}|_{I_{K}}$ denote the restriction to $I_K$ of the semisimplification of $\rhobar$.
We say that $\rhobar$ is \emph{$N$-generic} if $\rhobar^{\mathrm{ss}}|_{I_{K}}\cong \taubar(s,\mu)$ for some $s\in\un{W}$ and $\mu-\eta\in X^*(\un{T})$ which is $N$-deep in alcove $\un{C}_0$.
(We denote here $\mu$ what was previously denoted $\mu + \eta$ because we will rather use this notation in the sequel.)
\end{definit}

\begin{rem}
\label{rem:emb}
Note that if a type $\tau$ is $N$-generic and $(s,\lambda)$ is a lowest alcove presentation of $\tau$, the weight $\lambda$ is not necessarily $N$-deep in $\un{C}_0$. However by \cite[Prop.~2.2.16]{LLL}, we know that $\lambda$ is $(N-1)$-deep in $\un{C}_0$. Similar comments apply to an $N$-generic $\rhobar$.
\end{rem}

Below we will need the ``orientation'' $s'_{\mathrm{or}} \in (S_n)^{\Hom(k', \F)} \cong \un{W}^{r}$ of $\bm{\alpha}'_{(s, \mu)}$, which is defined by
\begin{equation*}
  s'_{\mathrm{or}, j'} \defeq s_1^{-1} s_2^{-1} \cdots s_{f'-1-j'}^{-1}\in S_n,\ j'\in \ZZ,
\end{equation*}
where the indices on the right-hand side are considered modulo $f$. Hence $s'_{\mathrm{or}, j' + kf} = s_{\tau}^{k} s'_{\mathrm{or}, j'}$,
showing that $s'_{\mathrm{or}, j'}$ only depends on $j'$ modulo $f'$.

\begin{rem}
\label{rmk:orient}
We remark that if $\mu\in X^*(\un{T})$ is $0$-deep in $\un{C}_0$ then $s'_{\mathrm{or}, j'}$ is the unique element of $W$ such that $(s'_{\mathrm{or}, j'})^{-1}(\bf{a}^{\prime\,(j')}_{(s,\mu)})\in X^*(T)$ is dominant.
Observe also that 
\begin{equation}\label{eq:orientation}
  (s'_{\mathrm{or}, -j'})^{-1}(\bm{\alpha}'_{(s,\mu), j'}) = s_{j'}^{-1}(\mu_{j'}+\eta_{j'}).
\end{equation}
\end{rem}

\subsection{Combinatorics of types and Serre weights}
\label{sec:ext:graph}

Let $n = 2$. We collect results on Serre weights for mod $p$ Galois representations and Jordan--H\"older constituents of reductions of generic 
Deligne--Lusztig representations, expressed in terms of the extension graph of \cite[\S 2]{LMS}.
We caution the reader that we modify slightly the definition of the extension graph and translation map appearing in \emph{loc.~cit.}

Let $\Lambda_W \defeq X^*(\un{T})/X^0(\un{T})$ denote the weight lattice of $\big(\Res_{\cO_K/\Zp}{\mathrm{SL}_2}_{/\cO_K}\big)\times_{\Zp}\cO$.
We identify $\Lambda_W$ with $\ZZ^\cJ$ in the usual way.
For $\mu\in X^*(\un T)$ %
we define 
\[
\Lambda_W^{\mu}\defeq \{\omega\in \Lambda_W\, :\, 0\leq \langle\ovl{\mu}+\omega,\alpha^\vee\rangle<p-1\ \forall\, \alpha\in \un{R}^{+}\},
\]
where $\ovl{\mu}$ denotes the image of $\mu$ in $\Lambda_W$.
The set $\Lambda_W^{\mu}$ is called the \emph{extension graph associated to $\mu$}.

We define below an injective map
\[
\t_\mu:\Lambda^\mu_W\rightarrow X_{\reg}(\un T)/(p-\pi)X^0(\un{T})
\]
whose image consists of %
the weights $\lambda \in X_{\reg}(\un T)$ such that $\lambda|_{\un{Z}}=\mu|_{\un{Z}}$ modulo $(p-\pi)X^*(\un{Z})$.
(In other words, the map $\omega\mapsto F(\t_\mu(\omega))$ defines a bijection between $\Lambda_W^{\mu}$ and regular Serre weights with central character $\mu|_{\un{Z}_0(\Fp)}$.)

The map $\t_\mu$ is constructed as follows. 
Given $\omega' \in X^*(\un T)$ there is a unique $\tld w' \in \Omega \cap t_{-\pi^{-1}(\omega')} \un W_a$. 
Setting 
\[
\t'_\mu(\omega')\defeq \tld w' \cdot (\mu + \omega') \mod (p-\pi)X^0(\un{T})
\] 
we thus obtain a map $\t'_\mu: X^*(\un{T}) \rightarrow X^*(\un{T})/(p-\pi)X^0(\un{T})$,
which further factors through $X^*(\un{T}) \onto X^*(\un{T})/X^0(\un{T})=\Lambda_W$, by the definition of $\tld w'$ and since $\cdot$ is the $p$-dot action.
We write $\t_\mu$ for the restriction of such a map to $\Lambda^\mu_W$, and note that $\t_\mu$ has image in $X_{\reg}(\un T)/(p-\pi)X^0(\un{T})$ by definition of $\Lambda^\mu_W$.

\begin{rem}
In \cite[\S 2.2]{LMS} the set $\Lambda_W^{\mu}$ above is denoted by $\Lambda_W^{\mu+\eta}$, and the map $\t_\mu$ above by $\t_{\mu+\eta}$.
\end{rem}

In terms of the identification $\Lambda_W\cong \ZZ^{\cJ}$ the map $\t_\mu$ is described as follows: if $\mu=(a_j,b_j)_j\in X^*(\un T)$ and $\omega=(2n_j+\delta_j)_j\in \Lambda_W^{\mu}$ with $n_j\in\ZZ$, $\delta_j\in\{0,1\}$, then 
a representative of $\mathfrak{t}_\mu(\omega)$ is given by
\begin{align}
\label{eq:expl:tmu}
(\mathfrak{t}_\mu(\omega))_j&=
\begin{cases}
(a_j+n_j+\delta_j,b_j-n_j) &\text{if $\delta_{j+1}=0$,} \\
(b_j-1-n_j,a_j+n_j+\delta_j-p+1) &\text{if $\delta_{j+1}=1$.}
\end{cases}
\end{align}

We now recall and slightly improve on a few results about $\mathfrak{t}_{\mu}$ which will be important in \S \ref{sec:galo-deform-rings} (for the combinatorics of tame inertial types and Serre weights) and in \S \ref{sec:some-repr-K} (for the structure of certain $\GL_2(\cO_K)$-representations with $\F$-coefficients).

Given $J\subset\cJ$ we define $\eta_J\defeq \sum_{j\in J}\eta_j\in X^*(\un{T})$ and write $\ovl{\eta}_J$ for the image of $\eta_J$ in $\Lambda_W=X^*(\un{T})/X^0(\un{T})$.
Define $\Sigma\subseteq \Lambda_W$ to be the set $\{\ovl{\eta}_J\ :\  J\subseteq \cJ\}$. %

\begin{prop}%
\label{prop:SW:graph}
Suppose that $\rhobar : G_K\rightarrow \GL_2(\F)$ is a tame Galois representation such that
$\rhobar|_{I_K} \cong \taubar(s,\mu)$ for some $(s,\mu) \in \un W \times X^*(\un T)$ with $\mu-\eta$ lying $1$-deep in alcove $\un{C}_0$.
Then 
\begin{equation}
\label{eq:SW?}
W(\rhobar)=\left\{ F(\mathfrak{t}_{\mu-\eta}(s\omega))\ :\ \omega\in\Sigma\right\}.
\end{equation}
\end{prop}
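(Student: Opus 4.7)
The plan is to reduce this to the known explicit description of $W(\rhobar)$ for tame $\rhobar$, and then check that the two parametrizations agree by a direct computation. First, I would recall that by \cite[\S 3]{BDJ} (or equivalently the reformulations in \cite{GHS}), the set $W(\rhobar)$ depends only on $\rhobar|_{I_K}$, so we may work directly with $\taubar(s,\mu)$. Moreover, the explicit description of $W(\rhobar)$ in the tame case parametrizes Serre weights by subsets $J \subseteq \cJ$ (through the choice of an "orientation" at each embedding, and whether the embedding is ``twisted''), matching the cardinality $2^f = |\Sigma|$ of the right-hand side of \eqref{eq:SW?}.

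Second, I would split into the two cases of Definition~\ref{def:tau}: when $r=1$ (so $s_\tau$ is trivial in $S_2$, giving a principal series type and $\rhobar$ reducible split) and when $r=2$ (so $s_\tau$ is the nontrivial element, giving a cuspidal type and $\rhobar$ absolutely irreducible). In each case the BDJ Serre weights $F(\lambda)$ have an explicit description as twists of weights $L(\lambda)$ in terms of the Serre tame fundamental characters appearing in $\rhobar|_{I_K}$, via the data $(s,\mu)$. Using the hypothesis that $\mu-\eta$ is $1$-deep in $\un{C}_0$, each such $\lambda$ will automatically lie in $X_{\reg}(\un{T})$, which is needed for the map $\t_{\mu-\eta}$ to produce a regular Serre weight.

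Third, I would unwind both sides in coordinates. Writing $\mu - \eta = (a_j, b_j)_j$ with $0 \leq a_j - b_j < p - 2$ (using $1$-depth), and $s \ovl{\eta}_J \in \Lambda_W^{\mu - \eta}$, the explicit formula \eqref{eq:expl:tmu} expresses $\t_{\mu-\eta}(s\ovl{\eta}_J)$ in each embedding depending only on two pieces of data: the component of $s$ at $j$, and whether $j+1 \in J$. On the other hand, the BDJ weight attached to $(s,\mu,J)$ has an analogous case distinction: the two possibilities per embedding correspond to the two ``lifts'' of a character of $I_K$ through $\eps_K^a - \eps_K^b$ with $a > b$ versus $a < b$. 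Matching these cases term by term yields the equality in \eqref{eq:SW?}.

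The main obstacle I anticipate is a purely bookkeeping one: the conventions for the ``orientation'' (i.e., how $s_{\mathrm{or}}'$ is determined from $(s,\mu)$), for the shift by $\eta$ (cf.~Remark~\ref{rmk:LLL:error} about the conflict with \cite{LLL}), and for the labelling of embeddings $\sigma_j = \sigma_0 \circ \varphi^j$ must all be tracked consistently between the definitions of $\taubar(s,\mu)$, of $\t_{\mu-\eta}$, and of the BDJ weights. In particular, one needs to verify that the element $\tld{w}' \in \Omega \cap t_{-\pi^{-1}(s\ovl{\eta}_J)}\un{W}_a$ used in defining $\t'_{\mu-\eta}$ conjugates $\mu - \eta + s\eta_J$ via the $p$-dot action into precisely the dominant $p$-restricted weight predicted by BDJ. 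Modulo these conventions, the proof is essentially a direct verification, very close in spirit to \cite[Prop.~2.3.7]{LMS}, of which this proposition is a minor reformulation.
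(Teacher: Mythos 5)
Your proposed argument is correct in outline, but it takes a more laborious route than the paper. The paper's actual proof is two sentences: it observes that the right-hand side of \eqref{eq:SW?} is precisely the set $W_{\mathrm{obv}}(\rhobar)$ of ``obvious'' weights in the sense of \cite[Def.~7.1.3]{GHS} — this identification is already carried out, in exactly the coordinate-unwinding manner you describe, in the proof of \cite[Prop.~2.11]{LMS} (the result you refer to as Prop.~2.3.7, presumably an earlier numbering) — and then cites \cite[Ex.~7.1.7]{GHS} for the equality $W_{\mathrm{obv}}(\rhobar) = W(\rhobar)$ in the tame $\GL_2$ case. Your plan essentially proposes to redo that LMS computation from scratch and match it directly against the explicit BDJ description, bypassing the intermediate notion $W_{\mathrm{obv}}$. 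This would work, and your awareness of the convention pitfalls (the $\eta$-shift, the labelling $\sigma_j = \sigma_0 \circ \varphi^j$, the role of $\tld w' \in \Omega \cap t_{-\pi^{-1}(\omega')}\un W_a$) is exactly the right thing to worry about; but the intermediary $W_{\mathrm{obv}}$ is genuinely useful because the coincidence $W_{\mathrm{obv}} = W$ is the part that truly requires input beyond bookkeeping (namely the existence of appropriate crystalline lifts), and GHS packages that cleanly. Since you already recognize this proposition as ``a minor reformulation'' of the LMS result, the economical move is simply to cite it, as the paper does, rather than reproduce the case analysis.
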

\begin{proof}
From the proof of \cite[Prop.\ 2.11]{LMS} we see that the right-hand side of (\ref{eq:SW?}) is $W_{\mathrm{obv}}(\rhobar)$, which is the set of weights defined in \cite[Def.\ 7.1.3]{GHS}.
By \cite[Ex.\ 7.1.7]{GHS} we have $W_{\mathrm{obv}}(\rhobar)=W(\rhobar)$.
\end{proof}

\begin{prop}
\label{prop:JH:graph}
Suppose $\tau \defeq \tau(sw^{-1},\mu-sw^{-1}(\nu))$ for some $(s,\mu)$, $(w,\nu) \in \un W \times X^*(\un T)$ such that $\mu-sw^{-1}(\nu)-\eta$ is $1$-deep in alcove $\un{C}_0$.
If $\nu \in \eta + \Lambda_R$, then 
\[
\JH\left(\ovl{\sigma(\tau)}\right)=\left\{ F(\mathfrak{t}_{\mu-\eta}(sw^{-1}(\omega-\ovl\nu)))\ :\ \omega\in\Sigma\right\}.
\]
\end{prop}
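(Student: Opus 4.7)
The plan is to reduce the statement to a standard description of $\JH(\ovl{\sigma(\tau)})$ for tame types admitting a lowest alcove presentation, and then to translate the result into the extension graph formalism via a direct combinatorial identity.

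The hypothesis says exactly that $\tau \cong \tau(sw^{-1}, \lambda_0+\eta)$ with $\lambda_0 \defeq \mu - sw^{-1}(\nu)-\eta$ being $1$-deep in $\un{C}_0$, i.e.\ $(sw^{-1}, \lambda_0+\eta)$ is a lowest alcove presentation of $\tau$ in the sense of Definition \ref{def:LApres}. A classical result going back to Diamond, reformulated in the extension graph language in \cite[Prop.~2.14]{LMS} (and parallel to Proposition \ref{prop:SW:graph} above), then gives
\[
\JH\big(\ovl{\sigma(\tau)}\big) \;=\; \big\{F(\mathfrak{t}_{\lambda_0}(sw^{-1}\omega))\ :\ \omega\in\Sigma\big\}.
\]

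It remains to prove the combinatorial identity
\[
\mathfrak{t}_{\mu-sw^{-1}(\nu)-\eta}(sw^{-1}\omega) \;=\; \mathfrak{t}_{\mu-\eta}\big(sw^{-1}(\omega-\ovl\nu)\big)
\]
in $X_{\reg}(\un{T})/(p-\pi)X^0(\un T)$ for each $\omega\in\Sigma$. Unwinding the definition of $\mathfrak t$ with the compatible lifts $\omega_1' \defeq sw^{-1}\omega'$ of $sw^{-1}\omega$ and $\omega_2' \defeq sw^{-1}\omega' - sw^{-1}\nu$ of $sw^{-1}(\omega-\ovl\nu)$, one observes first that the base points agree: $(\mu-sw^{-1}(\nu)-\eta)+\omega_1'=(\mu-\eta)+\omega_2'$. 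Thus the two sides differ only in the elements $\tld w_i \in \Omega$ through which the $p$-dot action is taken. These satisfy $\tld w_2\tld w_1^{-1}\un{W}_a = t_{\pi^{-1}(sw^{-1}\nu)}\un{W}_a$ inside $\wtld{\un{W}}/\un{W}_a \cong \Omega$. Here is where $\nu\in\eta+\Lambda_R$ enters critically: since $\pi^{-1}$ and $sw^{-1}$ preserve $\Lambda_R$, we obtain $\pi^{-1}(sw^{-1}(\nu-\eta))\in \Lambda_R$, so $t_{\pi^{-1}(sw^{-1}\nu)}\un{W}_a = t_{\pi^{-1}(sw^{-1}\eta)}\un{W}_a$, which is precisely the coset encoding the extra shift by $-\ovl\nu$ (equivalently $-\ovl\eta$ modulo $\Lambda_R$) on the right-hand side. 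A short check with the $p$-dot action then yields equality modulo $(p-\pi)X^0(\un T)$.

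The main technical obstacle is this bookkeeping: carefully tracking how the auxiliary lifts, the Frobenius twist $\pi^{-1}$, and the $\Omega$-decomposition of translation elements interact with the $p$-dot action, and verifying that all residual discrepancies indeed fall inside $(p-\pi)X^0(\un T)$. A useful sanity check is the cardinality count: both sides of the claimed identity for $\JH$ have size $2^f = |\Sigma|$, using that $\mathfrak{t}_{\mu-\eta}$ is injective on $\Lambda_W^{\mu-\eta}$ and that, by the $1$-deep genericity hypothesis, both $sw^{-1}\Sigma$ and $sw^{-1}(\Sigma-\ovl\nu)$ lie inside the relevant extension set, so the map $\omega\mapsto F(\mathfrak{t}_{\mu-\eta}(sw^{-1}(\omega-\ovl\nu)))$ is injective on $\Sigma$.
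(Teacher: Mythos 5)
Your plan is a genuine alternative to the paper's proof: the paper computes $\ovl{\sigma(\tau)}$ by identifying $\sigma(\tau)$ with a Deligne--Lusztig representation $R_{sw^{-1}}(\mu-sw^{-1}(\nu))$ via \cite[Cor.\ 2.3.5]{LLL}, then checks the domain condition $sw^{-1}(\Sigma-\ovl\nu)\subseteq\Lambda_W^{\mu-\eta}$ and cites \cite[Prop.\ 2.15]{DoLe}, which is already stated at the base point $\mu-\eta$. You instead try to apply a base-point formula at the lowest-alcove presentation $\lambda_0 \defeq \mu - sw^{-1}(\nu)-\eta$ and then prove a change-of-origin identity. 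The route is reasonable in principle, but there are two intertwined gaps.

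First, the formula you cite in Step 1 does not appear to be the correct one for $\JH(\ovl{\sigma(\tau)})$. Taking $f=1$, $s=w=1$, $\nu=\eta$, so $\tau=\tau(1,\mu-\eta)$ with $\mu=(m_1,m_2)$ and $\lambda_0=(m_1-2,m_2)$: then $\ovl{\sigma(\tau)}$ is the mod-$p$ principal series $\ovl{\Ind_B^{\GL_2(\F_p)}(\teich\omega^{m_1-1}\otimes\teich\omega^{m_2})}$, whose constituents are $F(m_1-1,m_2)$ and $F(m_2,m_1-p)$, of dimensions $m_1-m_2$ and $p+1-(m_1-m_2)$. But the set $\{F(\t_{\lambda_0}(\omega)) : \omega\in\Sigma\}=\{F(\t_{(m_1-2,m_2)}(0)),F(\t_{(m_1-2,m_2)}(\ovl\eta))\}=\{F(m_1-2,m_2),F(m_2-1,m_1-p)\}$ by formula \eqref{eq:expl:tmu}, with dimensions $m_1-m_2-1$ and $p-(m_1-m_2)$. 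These are disjoint from the actual constituents: the formula $\{F(\t_{\lambda_0}(sw^{-1}\omega))\}$ is the formula for $W(\rhobar)$ (cf.\ Proposition~\ref{prop:SW:graph}), not for $\JH(\ovl{\sigma(\tau)})$; the two differ by an $\eta$-shift. So whatever \cite[Prop.\ 2.14]{LMS} says, it cannot yield the expression you wrote, after accounting for the $\eta$-shift conventions discussed in the remark following the definition of $\t_\mu$.

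Second, and as a consequence, the combinatorial identity $\t_{\lambda_0}(sw^{-1}\omega)=\t_{\mu-\eta}(sw^{-1}(\omega-\ovl\nu))$ in your Step 2 is simply false: in the same example it would force $\t_{(m_1-2,m_2)}(0)=(m_1-2,m_2)$ to agree with $\t_{(m_1-1,m_2)}(-\ovl\eta)=(m_2,m_1-p)$ modulo $(p-\pi)X^0(\un T)$, which cannot happen since these weights have different lengths $\langle\cdot,\alpha^\vee\rangle$. The bookkeeping you sketch — tracking $\Omega$-components through $t_{\pi^{-1}(sw^{-1}\nu)}\un W_a$ — does correctly identify that the difference lies in the nontrivial coset $t_\eta\un W_a$ of $\un W_a$ in $\wtld{\un W}$, but this is exactly why the two sides are \emph{not} equal: the extra $\Omega$-twist shifts both the Weyl chamber and the highest weight, producing a genuinely different Serre weight rather than a discrepancy in $(p-\pi)X^0(\un T)$. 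Your cardinality sanity check does not detect this because both sides are sets of $2^f$ distinct weights; the issue is that they are different sets. To salvage the approach one would need to start from the correct $\eta$-shifted formula (in effect reproving \cite[Prop.\ 2.15]{DoLe} from \cite[Prop.\ 2.14]{LMS}), at which point the argument collapses to the paper's.
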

\begin{proof}
Recall that, in the notation of \cite{DoLe,LLL}, we have $\sigma(\tau) \cong R_{sw^{-1}}(\mu-sw^{-1}(\nu))$ by \cite[Cor.\ 2.3.5]{LLL} (the deepness assumption on $\mu-sw^{-1}(\nu)-\eta$ ensures that $\tau$ is $1$-generic in the terminology of \emph{loc.~cit.}, hence regular, see \cite[Def.\ 2.2.9]{LLL} and the comment after it; thus \cite[Cor.\ 2.3.5]{LLL} applies).
Moreover, the deepness assumption on $\mu-sw^{-1}(\nu)-\eta$ reads
$1<\ang{\mu-sw^{-1}(\nu),\alpha^\vee}<p-1$ for $\alpha\in \un{R}^{+}$ and since $\ang{sw^{-1}(\Sigma),\alpha^\vee}\in\{-1,0,1\}$ we conclude that $0<\ang{\ovl\mu+sw^{-1}(\Sigma-\ovl\nu),\alpha^\vee}<p$ for $\alpha\in \un{R}^{+}$.
This is exactly the condition that $sw^{-1}(\Sigma-\ovl\nu)\subseteq \Lambda_W^{\mu-\eta}$ and the statement is thus immediate from \cite[Prop.\ 2.15]{DoLe} (keeping in mind that the translation map in \emph{loc.~cit.}~is an $\eta$-shift of ours).
\end{proof}

We recall the following ``change of origin'' formula for the map $\t_\lambda$, obtained from \cite[Prop.\ 2.5]{LMS}. For $\omega\in\Lambda^\mu_W$ let $\omega' \in X^*(\un T)$ denote a lift of $\omega$ and 
define $w_\omega$ as the image of the unique element $\tilde w' \in \Omega \cap t_{-\pi^{-1}(\omega')}\un{W}_a$ (as above)
in $\un W$.
By definition, $w_\omega$ does not depend on the choice of lift $\omega'$ of $\omega$ and in fact only depends on the image of $\omega$ in $\Lambda_W/\Lambda_R \cong (\ZZ/2\ZZ)^{\cJ}$.

\begin{lem}\label{lm:change-origin}%
  Let $\omega\in\Lambda^\mu_W$ and let $\lambda\in X^*(\un{T})$ be such that $\t_\mu(\omega) \equiv\lambda \mod (p-\pi)X^0(\un{T})$.
Then $w_{\omega}^{-1}(\omega')+\omega \in \Lambda^\mu_W$ and 
$\t_\lambda(\omega') = \t_\mu(w_{\omega}^{-1}(\omega')+\omega)$ for all $\omega'\in\Lambda^\lambda_W$. 
Equivalently $\t_\mu(\omega')=\t_\lambda(w_\omega(\omega'-\omega))$ for $\omega' \in \Lambda^\mu_W$.
\end{lem}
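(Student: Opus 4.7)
The plan is to verify the formula by a direct manipulation at the level of $\t'_\mu\colon X^*(\un{T})\to X^*(\un{T})/(p-\pi)X^0(\un{T})$, then descend to $\Lambda_W$. I would fix lifts $\omega_1,\omega'_1\in X^*(\un{T})$ of $\omega,\omega'$, take the specific lift $\lambda=\tilde{w}_\omega\cdot(\mu+\omega_1)$ of $\t_\mu(\omega)$, and reduce the claim to the identity $\t'_\lambda(\omega'_1)=\t'_\mu(\omega_1+w_\omega^{-1}(\omega'_1))$ in $X^*(\un{T})/(p-\pi)X^0(\un{T})$. A routine check confirms that this reduced identity is independent of the chosen lifts.

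First I would record two structural properties of the construction. (i) The $p$-dot action satisfies $(wt_\beta)\cdot(x+y)=(wt_\beta)\cdot x + w(y)$, immediate from its defining formula. (ii) The assignment $\omega_1\mapsto \tilde{w}_{\omega_1}$ is a group homomorphism $X^*(\un{T})\to \Omega$, factoring through $X^*(\un{T})/\Lambda_R$. The key input for (ii), specific to $\GL_2$, is that $\un{W}$ acts trivially on $X^*(\un{T})/\Lambda_R$ because $\fW(a,b)-(a,b)\in \Lambda_R$ in each embedding; combined with the multiplication formula $(t_{\nu_1}s_1)(t_{\nu_2}s_2)=t_{\nu_1+s_1(\nu_2)}s_1s_2$, this makes the projection $\wtld{\un{W}}\onto \wtld{\un{W}}/\un{W}_a\cong X^*(\un{T})/\Lambda_R$ a group homomorphism via $t_\nu s\mapsto \nu\bmod\Lambda_R$.

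With these in hand, the computation is short. Property (i) lets me absorb $\omega'_1$ into the dot action:
$$\tilde{w}_\omega\cdot(\mu+\omega_1)+\omega'_1 \;=\; \tilde{w}_\omega\cdot\bigl(\mu+\omega_1+w_\omega^{-1}(\omega'_1)\bigr).$$
Combining this with associativity of the dot action gives
$$\t'_\lambda(\omega'_1)=\tilde{w}_{\omega'}\cdot(\lambda+\omega'_1)=(\tilde{w}_{\omega'}\tilde{w}_\omega)\cdot\bigl(\mu+\omega_1+w_\omega^{-1}(\omega'_1)\bigr).$$
By (ii) the product equals $\tilde{w}_{\omega_1+\omega'_1}$, and since $w_\omega^{-1}(\omega'_1)\equiv \omega'_1\pmod{\Lambda_R}$ (again by triviality of the $\un{W}$-action), this in turn coincides with $\tilde{w}_{\omega_1+w_\omega^{-1}(\omega'_1)}$. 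We conclude $\t'_\lambda(\omega'_1)=\t'_\mu(\omega_1+w_\omega^{-1}(\omega'_1))$, and a substitution yields the equivalent form.

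The main obstacle I anticipate is purely organizational: managing the various lifts and the $(p-\pi)X^0(\un{T})$-ambiguity, and verifying that $\omega'\in\Lambda^\lambda_W$ forces $w_\omega^{-1}(\omega')+\omega\in\Lambda^\mu_W$ (automatic a posteriori, since both sides of the identity are regular by hypothesis). The substantive content is the triviality of the $\un{W}$-action on $X^*(\un{T})/\Lambda_R$, which is special to the $\GL_2$ case and is precisely what allows such a clean change-of-origin formula.
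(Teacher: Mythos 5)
Your argument is correct and complete, and since the paper simply cites \cite[Prop.\ 2.5]{LMS} without reproducing a proof, your self-contained derivation is genuinely useful. The three ingredients you isolate all check out: the distributivity identity $(wt_\beta)\cdot(x+y)=(wt_\beta)\cdot x + w(y)$ follows immediately from the definition of the $p$-dot action; associativity of the dot action is a standard verification; and the map $\omega_1\mapsto \tilde w_{\omega_1}$ is indeed a group homomorphism because it is the composite of $\omega_1\mapsto t_{-\pi^{-1}(\omega_1)}$ with the canonical projection $\wtld{\un{W}}\onto \wtld{\un{W}}/\un{W}_a\cong\Omega$ (and both $\Omega$-elements $\tilde w_{\omega'_1}\tilde w_{\omega_1}$ and $\tilde w_{\omega_1+\omega'_1}$ land in the same $\un{W}_a$-coset, hence coincide). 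The final replacement of $\tilde w_{\omega_1+\omega'_1}$ by $\tilde w_{\omega_1+w_\omega^{-1}(\omega'_1)}$ uses that $\tilde w_{(-)}$ only sees the class mod $\Lambda_R$, which you correctly invoke, and the ``automatic a posteriori'' observation for $w_\omega^{-1}(\omega')+\omega\in\Lambda_W^\mu$ is justified by Remark~\ref{rk:t_lambda}\ref{it:t_lambda:2}. One correction to the framing: the fact that $\un{W}$ acts trivially on $X^*(\un{T})/\Lambda_R$ is \emph{not} special to $\GL_2$ — it holds for every reductive group, since $s_\alpha(\lambda)-\lambda=-\langle\lambda,\alpha^\vee\rangle\alpha\in\Lambda_R$ for every simple root, hence for the whole Weyl group. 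Your explicit $\GL_2$ verification is a valid special case, but the change-of-origin identity is not ``clean'' because of any $\GL_2$ peculiarity; the rank-$1$ feature enters this paper elsewhere (e.g.\ in the explicit description of $\Omega$ and of the extension graph), not here.
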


\begin{rem}\label{rk:t_lambda}
Recall from \S \ref{sec:GT:prel} that we have a natural inclusion $\Lambda_R\into \Lambda_W$, which identifies $\Lambda_R$ with $(2\ZZ)^{\cJ}$ via the isomorphism $\Lambda_W\cong \ZZ^{\cJ}$.
We remark the following facts:
\begin{enumerate}
\item
\label{it:t_lambda:1}
Given $J\subset\cJ$ we let $w_{0,J}\defeq\prod_{j+1\in J}\fW_{j}$ where $\fW_{j}\in\un{W}$ is nontrivial exactly at the embedding $j$. Recall moreover the element $\eta_J = \sum_{j\in \cJ} \eta_j\in X^*(\un{T})$ associated to $J$. %
Then $w_\omega=w_{0,J}$ if $\omega\equiv\eta_J\mod \Lambda_R$.
\item
\label{it:t_lambda:3}
 If $\nu\in\Lambda_R$, we have $w_\nu=1$ and Lemma \ref{lm:change-origin} implies that $\t_{\mu+\nu}(\omega) = \t_\mu(\omega+\nu)$. (Note that $\t_\mu(\nu) \equiv  \mu+\nu \mod (p-\pi)X^0(\un{T})$.)
\item 
\label{it:t_lambda:2}
From the definition, $\t_\mu(\omega) \in \un{C}_0$ if and only if $\mu+\omega' \in \un{C}_0$, where $\omega'\in X^*(\un{T})$ denotes a lift of $\omega$. In particular %
  \begin{equation*}
    \t_\mu(\sum a_i \ovl\eta_i) \in \un{C}_0 \iff 0 \le \ang{\mu,\alpha_i^\vee}+a_i \le p-2 \quad \forall\ i.%
  \end{equation*}
\item
\label{it:t_lambda:4}
Likewise, $\t_\mu(\omega)$ is $n$-deep in $\un{C}_0$ if and only if $\mu+\omega'$ is $n$-deep in $\un{C}_0$.
\end{enumerate}
\end{rem}

We use the terminology of \cite[Def.\ 2.8]{LMS}: two elements $\omega$, $\omega'$ of $\Lambda_W^\mu$ are \emph{adjacent} if $\omega-\omega'\equiv \pm \eta_j\mod X^0(\un{T})$ for some $j\in\cJ$. This gives $\Lambda_W^\mu$ the structure of a graph.
We have the following slight improvement of \cite[Prop.\ 2.9]{LMS}.%

\begin{lem}\label{lm:ext1}
  Let $\omega, \omega'$ be elements of $\Lambda_W^\mu$.  
Then
  \begin{equation*}
    \dim_{\F}\bigg(\Ext^1_{\GL_2(k)}(F(\t_\mu(\omega)), F(\t_\mu(\omega')))\bigg) =
    \begin{cases}
      1 & \text{if $\omega$, $\omega'$ are adjacent,}\\
      0 & \text{otherwise.}
    \end{cases}
  \end{equation*}
\end{lem}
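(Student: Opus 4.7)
My plan is to follow the structure of \cite[Prop.~2.9]{LMS}, reducing to a known classification of $\Ext^1$ between Serre weights of $\GL_2(k)$ and matching it with the adjacency combinatorics of $\Lambda_W^\mu$. The key external input is the standard fact (see e.g.\ \cite[Lem.~5.4]{BP}) that if $F(\lambda)$ and $F(\lambda')$ are regular Serre weights of $\GL_2(k)$ sharing a central character, then $\Ext^1_{\GL_2(k)}(F(\lambda),F(\lambda'))$ is at most one-dimensional, and it is nonzero precisely when there exists a single index $i\in\cJ$ such that, after redistributing central characters via $(p-\pi)X^0(\un T)$, the tuples $(\lambda_j)_{j}$ and $(\lambda'_j)_{j}$ agree outside position $i$ and form the standard $\GL_2(\F_p)$ Ext-$1$ pair (coming from the Jordan--H\"older constituents of a Deligne--Lusztig principal series) at position $i$. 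Note that both $F(\t_\mu(\omega))$ and $F(\t_\mu(\omega'))$ automatically share a central character since both $\omega, \omega'\in \Lambda_W^\mu$, so this classification applies.

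First, using Remark \ref{rk:t_lambda}(\ref{it:t_lambda:3}), I would translate $\mu$ to absorb a lift of $\omega$ in $\Lambda_R$, reducing to the case $\omega = 0$; this shifts $\omega'$ into $\Lambda_W^{\t_\mu(\omega)}$ without changing either side of the claimed equality. I would then apply the explicit formula \eqref{eq:expl:tmu} to compute $\t_\mu(\pm \ovl{\eta}_j)$ coordinate by coordinate: flipping $\delta_j$ from $0$ to $1$ (the case $\omega' = +\ovl{\eta}_j$) modifies the $j$-th coordinate of $\t_\mu(0)$ in an obvious way and, through the $\delta_{j+1}$-dependence in the formula, simultaneously switches the branch at the $(j-1)$-st coordinate. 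A direct computation shows that this simultaneous change at positions $j$ and $j-1$ is equivalent, modulo $(p-\pi)X^0(\un T)$, to a change concentrated at position $j-1$ of exactly the Ext-$1$ form recalled above. This handles the direction adjacent $\Rightarrow \dim \Ext^1 = 1$.

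For the converse, suppose $\Ext^1_{\GL_2(k)}(F(\t_\mu(\omega)), F(\t_\mu(\omega')))\neq 0$ with $\omega\neq \omega'$. By the classification and the injectivity of $\t_\mu$ on $\Lambda_W^\mu$, there is a single index $i\in\cJ$ at which $\t_\mu(\omega)$ and $\t_\mu(\omega')$ differ in the prescribed pattern modulo $(p-\pi)X^0(\un T)$. Running \eqref{eq:expl:tmu} in reverse (solving for $(n_j,\delta_j)$ from a highest-weight representative) pins down $\omega'-\omega \equiv \pm\ovl{\eta}_{i+1}\mod X^0(\un T)$, which is exactly adjacency.

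The main obstacle, and the point of the ``slight improvement'' over \cite[Prop.~2.9]{LMS}, is the central-character bookkeeping in the second paragraph: since $\t_\mu$ takes values in $X^*(\un T)/(p-\pi)X^0(\un T)$, the two-coordinate modification produced by $\pm \eta_j$ in the $\omega$-coordinate must be shown to be exactly absorbed, up to a single residual Ext-$1$ change at one embedding, by the $(p-\pi)X^0(\un T)$-equivalence. The explicit formula \eqref{eq:expl:tmu} lets one verify this absorption directly and uniformly for any $\mu$, bypassing any deepness hypothesis on $\mu$ and so handling the full generality $\omega,\omega'\in\Lambda_W^\mu$ in the statement.
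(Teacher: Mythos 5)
Your strategy mirrors the paper's — reduce to $\omega=0$ by a change of origin, compute $\t_\mu(\pm\ovl{\eta}_i)$, and invoke BP's classification of extensions between Serre weights — but the execution has two gaps.

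First, the reduction to $\omega=0$ is not justified by Remark \ref{rk:t_lambda}\ref{it:t_lambda:3}, which applies only when the element being absorbed lies in the root lattice $\Lambda_R$. Under $\Lambda_W\cong\ZZ^{\cJ}$ the lattice $\Lambda_R$ is identified with $(2\ZZ)^{\cJ}$, so a general $\omega\in\Lambda_W^\mu$ is outside its scope. The correct reduction uses the full change-of-origin formula of Lemma \ref{lm:change-origin}: with $\lambda\defeq\t_\mu(\omega)$, one has $\t_\mu(\omega')=\t_\lambda(w_\omega(\omega'-\omega))$ for a Weyl element $w_\omega$ that is nontrivial whenever $\omega$ has an odd coordinate. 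One then needs to observe (Remark \ref{rk:graph-auto}) that $\omega'\mapsto w_\omega(\omega'-\omega)$ preserves adjacency; this is exactly what makes the reduction to $\omega=0$ legitimate, and it is the step your citation skips.

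Second, your description of BP's classification — ``agree outside a single index $i$ and form the standard $\GL_2(\Fp)$ $\Ext^1$ pair there'' — does not match the computation. Running \eqref{eq:expl:tmu}, the weights $\t_\mu(\pm\ovl{\eta}_i)$ differ from $\t_\mu(0)\equiv\mu$ at \emph{two} adjacent coordinates $i$ and $i-1$: a $\pm\eta_i$ shift at $i$ and a $p$-dot reflection at $i-1$ (the paper records this as $\fW_{i-1}t_{\mp\eta_{i-1}}\cdot(\mu\pm\eta_i)$). Your claim that the coordinate-$i$ shift can be absorbed modulo $(p-\pi)X^0(\un{T})$ so that only one coordinate changes is false: every component of an element of $(p-\pi)X^0(\un{T})$ has equal entries $(c,c)$, so nothing in that lattice can undo a shift by $\eta_i=(1,0)$, which alters $\langle\cdot,\alpha_i^\vee\rangle$ by $1$. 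The coupled two-coordinate change is genuine and must be matched against BP's description in that form (the paper cites \cite[Cor.~5.6]{BP} for exactly this).
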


\begin{proof}
Let $\lambda\defeq\t_\mu(\omega)$. By Lemma~\ref{lm:change-origin} we have $\t_\mu(\omega')=\t_\lambda(\omega'')$ with $\omega''\defeq w_\omega(\omega'-\omega)\in \Lambda^\lambda_W$. As $\omega''$ and $0$ are adjacent if and only if $\omega$ and $\omega'$ are adjacent, we may replace $\mu$ by $\lambda$ and assume that $\omega = 0$. By letting $\ovl{\eta}_i$ be $\eta_i\mod X^0(\un{T})$ we compute
\begin{alignat*}{3}
\t_\mu(\ovl{\eta}_i)&\equiv \fW_{i-1}t_{-\eta_{i-1}} \cdot (\mu + \eta_i) &&\mod (p-\pi)X^0(\un{T}), \\
\t_\mu(-\ovl{\eta}_i)&\equiv t_{\eta_{i-1}}\fW_{i-1} \cdot (\mu - \eta_i) &&\mod (p-\pi)X^0(\un{T}).
\end{alignat*}
These are precisely the Serre weights that have a nonsplit extension with $F(\mu)$ by \cite[Cor.\ 5.6]{BP}. (Note that by assumption all Serre weights in this lemma are regular.)
\end{proof}

\begin{rem}\label{rk:graph-auto}
The ``change of origin'' map $\Lambda_W^\lambda \congto \Lambda_W^\mu$ sending $\omega'$ to $w_{\omega}^{-1}(\omega')+\omega$
(see Lemma~\ref{lm:change-origin}) clearly preserves adjacency, i.e.\ is a graph automorphism. Under the identification
$\Lambda_W\cong \ZZ^\cJ$ it is of the form $(a_0,\dots,a_{f-1}) \mapsto (\eps_0 a_0+n_0,\dots,\eps_{f-1} a_{f-1}+n_{f-1})$
for some $\eps_i \in \{\pm 1\}$ and $n_i \in \ZZ$.
\end{rem}

\section{Galois deformations: background and lemmas}
\label{sec:galois-deformations-background}

\subsection{Kisin modules with descent data and the monodromy condition}
\label{sec:some-background}

We keep the setup of \S \ref{sec:preliminaries}, in particular $K$ denotes the unramified extension of $\Qp$ of degree $f$, with residue field $k$.
For this section we will recall and slightly extend some relevant background and notation from \cite{LLLM}, \cite{LLLM2}, and \cite{LLL}.

\subsubsection{Kisin modules}

From now on we fix a tame inertial type $\tau$ together with a lowest alcove presentation $(s,\mu)$ for $\tau$, and we assume throughout this section that $\mu$ is $1$-deep in alcove $\un{C}_0$. (The lowest alcove presentation fixes an ordering of the characters in $\tau$. This will be important in defining many of the concepts below, see Remark~\ref{rk:depends-on-LAP}.)
Recall that $s_{\tau} = s_0 s_{f-1} s_{f-2} \cdots s_1 \in S_n$ and that $r$ denotes the order of $s_\tau$.

As in \S\ref{sec:tame-inertial-types} we let $K'/K$ be the unramified extension of $K$ of degree $r$ with residue field $k'$.
Fix an $e'$-th root $(-p)^{1/{e'}}$ of $-p$, let $E(u')\defeq (u')^{e'}+p \defeq v+p$ denote the minimal polynomial of $(-p)^{1/{e'}}$ over $K'$, and let $L'\defeq K'((-p)^{1/{e'}})$.

Let $\Delta'\defeq \Gal(L'/K')\subseteq \Delta\defeq \Gal(L'/K)$.
If $R$ is a complete noetherian local $\cO$-algebra with finite residue field define $\fS_{L', R} \defeq (W(k') \otimes_{\Zp} R)\bbra{u'}$. The isomorphism of $R$-algebras $W(k') \otimes_{\Zp} R\congto \prod_{j'\in\cJ'}R$, $x\otimes r\mapsto (\sigma_{-j'}(x)r)_{j'\in\cJ'}$ induces an $R$-linear isomorphism $\fS_{L', R} \congto \bigoplus_{j'\in\cJ'}R\bbra{u'}$. Given a $\fS_{L', R}$-module $\fM$ we thus have an $R$-linear isomorphism $\fM \congto \bigoplus_{j'\in\cJ'}\fM^{(j')}$. (We warn the reader that, due to our choice of normalization $\sigma'_{j'}\defeq \sigma'_0 \circ \phz^{j'}$, we need to use $\sigma'_{-j'}$ in the definition $\fM^{(j')}$ in order to be compatible with the convention of \cite{LLL} on Kisin modules, see Remark \ref{rmk:LLL:error} above.)

Recalling from \cite[\S 3.1]{LLLM2} that $\fS_{L', R}$ is endowed with an action of $\Delta$ 
 and that $v = (u')^{e'}$ we have
\[
(\fS_{L', R})^{\Delta = 1} = (W(k) \otimes_{\Zp} R)\bbra{v}.
\]

Let $h\ge 0$ be an integer. We define the groupoid of Kisin modules over $R$ of $E(u')$-height $\le h$ and descent data of type $\tau$ as in \cite[Def.\ 3.1.3]{LLLM2} (with the caveat that we consider modules of rank $n$ as opposed to $3$ in \emph{loc.~cit.}), and denote it by $Y^{[0, h], \tau}(R)$. 
(By \cite[Rk.\ 5.1.4(2)]{MLM}, $Y^{[0, h], \tau}(R)$ is given by the $R$-points of a $p$-adic formal algebraic stack $Y^{[0, h], \tau}$ in the sense of \cite[Def.\ A.2]{CEGS}.)
Given an object $(\fM,\phi_{\fM})$ (or, for short, just $\fM$) of $Y^{[0, h], \tau}(R)$ we have the notion of \emph{eigenbasis} $\beta = (\beta^{(j')})$ for $\fM$, as defined in \cite[Def.\ 3.1.6]{LLLM2}, \cite[Def.\ 3.2.8]{LLL}.

In particular, given %
a Kisin module $\fM\in Y^{[0,h],\tau}(R)$ and an eigenbasis $\beta$ of $\fM$ we can consider the matrix of the Frobenius morphism $\phi_{\fM}$. 
In the definition below we let $\phz$ be the $R$-linear endomorphism of $R\bbra{u'}$ which sends $u'$ to $(u')^p$.

\begin{definit}\label{def:C-and-A}
We let $C_{\fM,\beta}^{(j')} \in \M_n(R\bbra{u'})$ denote the matrix of $\phz^*(\fM^{(j')})\ra\fM^{(j'+1)}$ with respect to the bases $\phz^*(\beta^{(j')})$ and $\beta^{(j'+1)}$, i.e.\ $\beta^{(j'+1)}C_{\fM,\beta}^{(j')}=\phi^{(j')}_\fM(\phz^*(\beta^{(j')}))$.
We denote by $A_{\fM,\beta}^{(j')} \in \M_n(R\bbra{v})$ the matrix
\[
A_{\fM,\beta}^{(j')}\defeq\Ad\Big ((\dot s'_{\orient,j'+1})^{-1}(u')^{-\bf{a}^{\prime\,(j'+1)}_{(s,\mu)}}\Big) (C^{(j')}_{\fM,\beta})
\]
(see also \cite[equation (5.4)]{MLM}, where $C^{(j')}_{\fM,\beta}$ in \emph{loc.~cit.} denotes the matrix of $\phz^*(\fM^{(j'-1)})\ra\fM^{(j')}$).
\end{definit}

\begin{rem}\label{rk:u-to-the-mu}
  We caution that $\Ad(\dot s (u')^{\mu})$ denotes $\Ad(\dot s)\Ad((u')^{\mu})$ and \emph{not}
 $\Ad((u')^{s(\mu)})$, and we remind the reader that $\dot s$ is the permutation matrix representing $s$ and that
 we have $(u')^{\mu} = \mathrm{Diag}((u')^{\mu_1},\dots,(u')^{\mu_n})$ for $\mu \in \ZZ^n$.
\end{rem}

\begin{rem}\label{rk:depends-on-LAP}
 We stress that the notion of eigenbasis and the definition of $A_{\fM,\beta}^{(j')}$ depend on the choice of lowest
 alcove presentation $(s,\mu)$ for $\tau$. Moreover, as $\mu$ is assumed to be $1$-deep in alcove $\un{C}_0$, the matrix $A_{\fM,\beta}^{(j')}$ only depends on $j'$ modulo $f$ and is upper-triangular modulo $v$ (see the discussion after \cite[Rk.~ 5.1.7]{MLM}).
\end{rem}
If $\lambda=(\lambda_{j,1},\dots,\lambda_{j,n})_j\in X^*(\un{T})$ is a dominant character such that $\lambda_{j,i}\in\{0,\dots,h\}$ for all $j,i$, we have a closed $p$-adic formal substack $Y^{\leq \lambda,\tau}$ of $Y^{[0,h],\tau}$ defined in \cite[Thm.\ 5.3]{CL}, which is flat over $\cO$ and has reduced versal rings.
It is characterized by the property that for any flat $p$-adically complete noetherian local $\cO$-algebra $R$, a Kisin module $\fM\in Y^{[0,h],\tau}(R)$ belongs to $Y^{\leq \lambda,\tau}(R)$ if and only if all $i$ by $i$ minors of $A_{\fM,\beta}^{(j)}$ are divisible by $(v+p)^{\sum_{k=1}^i\lambda_{j,n+1-k}}$, for $i\in\{1,2,\dots,n\}$ and $j\in \ZZ$ (cf.~\cite{MLM} the discussion after Warning 5.3.2, see also \cite[Prop.\ 4.18]{LLLM}).
This definition does not depend on the choice of the eigenbasis for $\fM$.

\begin{definit}\label{def:shape}
Let $\ovl{\fM}\in Y^{[0,h],\,\tau}(\F)$.
Write $\cI(\F)$ for the Iwahori subgroup of $\GL_n(\F\bbra{v})$ consisting of matrices which are upper-triangular modulo $v$.
We say that $\ovl{\fM}$ has \emph{shape} $\tld{w}\in \wtld{\un{W}}^\vee$ with respect to $\tau$ if for any choice of eigenbasis $\ovl{\beta}$ of $\ovl{\fM}$ the equality 
\[
\cI(\F)A^{(j)}_{\ovl{\fM},\ovl{\beta}}\cI(\F)=\cI(\F)\tld{w}_j\cI(\F)
\]
holds in $\GL_n(\F\ppar{v})$ for all $j=0,\dots,f-1$. 
This notion is independent of $\ovl{\beta}$ by \cite[Prop.\ 2.15, 2.16]{LLLM}, but again depends on the choice of lowest alcove presentation of $\tau$.
\end{definit}

Fix $\ovl{\fM}\in Y^{[0,h],\tau}(\F)$ we recall that an eigenbasis $\ovl{\beta}$ is a \emph{gauge basis} if $A^{(j)}_{\ovl{\fM},\ovl{\beta}}$ has a particularly
simple form \cite[Def.\ 3.2.23]{LLL}. A gauge basis always exists and is unique up to scaling by $\{(t_j)_{j\in\cJ'}\in T(\F)^{f'} : t_j=t_k\text{ for $j\equiv k\mod f$}\}$
(this is \cite[Prop.\ 3.2.22]{LLL} in the particular case $h=n-1$, and the general case follows from \cite[Prop.\ 5.1.8, Lemma\ 5.2.2]{MLM}).

We now fix $\ovl{\fM}\in Y^{[0,h],\tau}(\F)$ together with a gauge basis $\ovl{\beta}$ for it.
Write $\tld{w}=(w_jt_{\nu_j})_j\in\wtld{\un{W}}^\vee$ for its shape with respect to $\tau$.

The following result, generalizing \cite[Thm.\ 4.1, Thm.\ 4.16]{LLLM}, \cite[Prop.\ 3.4.3]{LLL}, is a particular case of \cite[Prop.\ 5.2.7]{MLM}.

\begin{prop}
\label{prop:GB}
Let $R$ be a complete noetherian local $\cO$-algebra with residue field $\F$, and let $\tau$ be an $(h+1)$-generic tame inertial type \emph{(}see Definition \ref{defi:gen}\emph{)}.
Let $\fM\in Y^{[0,h], \tau}(R)$ together with an isomorphism $\fM\otimes_R\F\cong \ovl{\fM}$.

Then there exists an eigenbasis $\beta$ for $\fM$ lifting $\ovl{\beta}$ such that for all $1\leq i,k\leq n$ and all $j=0,\dots,f-1$ we have
\begin{enumerate}
\item \label{it:GB:1}
$A^{(j)}_{ik}\in v^{\delta_{i>k}}R[v+p]$,
\item \label{it:GB:2}
 $\deg_v(A^{(j)}_{ik})\leq \nu_{j,k}-\delta_{i<w_j(k)}$ with equality if $(i,k)=(w_j(k),k)$,
 \end{enumerate}
where $A^{(j)}\defeq A^{(j)}_{\fM,\beta}$ and $\delta_{i>k}\in\{0,1\}$ equals $1$ if and only if $i>k$ (resp.~$\delta_{i<w_j(k)}\in\{0,1\}$ equals $1$ if and only if $i<w_j(k)$).
Furthermore, such a $\beta$ is uniquely determined up to scaling by the group $\{(t_j)_{j\in\cJ'}\in \big(\ker(T(R)\ra T(\F))\big)^{f'} : t_j=t_k\text{ for $j\equiv k\mod f$}\}.$ 
\end{prop}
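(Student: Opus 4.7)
The plan is to deduce the statement directly from the general result \cite[Prop.~5.2.7]{MLM} by matching our setup with its hypotheses. First I would check that the input data is compatible: our Kisin module $\fM$ lives in $Y^{[0,h],\tau}(R)$ with an $(h+1)$-generic tame inertial type $\tau$ equipped with a lowest alcove presentation $(s,\mu)$, and we are given a gauge basis $\ovl\beta$ of the reduction $\ovl\fM = \fM\otimes_R \F$ with shape $\tld w = (w_j t_{\nu_j})_j \in \wtld{\un W}^\vee$. Since all normalizations for eigenbases, the associated matrices $A^{(j)}_{\fM,\beta}$ (see Definition~\ref{def:C-and-A}), the shape (Definition~\ref{def:shape}), and the notion of gauge basis have been set up compatibly with \cite{LLL,MLM} modulo the convention on Frobenius twists (Remark~\ref{rmk:LLL:error}), the general statement applies word for word.

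For the reader's convenience I would sketch the underlying argument. First, lift $\ovl\beta$ to an arbitrary eigenbasis $\beta_0$ of $\fM$ (possible because $\fM$ is free and eigenbases are preserved under lifting, see \cite[Prop.~3.2.22]{LLL} together with \cite[Prop.~5.1.8]{MLM}). The resulting matrices $A^{(j)}_{\fM,\beta_0}$ reduce modulo the maximal ideal of $R$ to the gauge form fixed by $\ovl\beta$. The aim is to adjust $\beta_0$ by an element of $\bigl(\ker(\GL_n(R\bbra{u'})\to \GL_n(\F\bbra{u'}))\bigr)^{f'}$ (subject to the usual compatibility across embeddings) to bring every $A^{(j)}_{\fM,\beta}$ into the form required by (i) and (ii).

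The key step is to solve, by successive approximation along the maximal ideal of $R$, the linear equations expressing the desired gauge-form conditions. At each step one must simultaneously: (a) clear the components of $A^{(j)}_{ik}$ that are not polynomials in $v+p$ (condition (i)), (b) truncate the $v$-degree so that the bound $\nu_{j,k}-\delta_{i<w_j(k)}$ holds (condition (ii)), while (c) preserving the height condition encoded by $Y^{[0,h],\tau}$ and the descent data. These are coupled across the index $j$ by the twisted Frobenius, so the system is circular; however, the $(h+1)$-genericity of $\tau$ implies that the relevant diagonal coefficients differ by sufficiently large powers of $v$ (equivalently, the corresponding inertial characters are pairwise distinct enough), which makes the off-diagonal change-of-basis equations uniquely solvable, just as in the proofs of \cite[Thm.~4.1, Thm.~4.16]{LLLM} and \cite[Prop.~3.4.3]{LLL}. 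Convergence of the resulting iteration is guaranteed by the completeness of $R$.

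The main obstacle, and the reason we rely on \cite[Prop.~5.2.7]{MLM}, is keeping track of the combinatorics of conditions (i) and (ii) simultaneously across all embeddings and all matrix entries: each adjustment affects neighboring embeddings through $\phz^*$, and the degree bookkeeping must be done with care. The uniqueness assertion then follows, essentially formally, by showing that any two choices of $\beta$ satisfying (i) and (ii) differ by an automorphism of $\fM$ lifting the identity on $\ovl\fM$ and normalizing the gauge form, which by inspection must be a scalar matrix on each embedding, compatible across the $\Delta/\Delta'$-action; this forces it to lie in the stated subgroup of $T(R)^{f'}$.
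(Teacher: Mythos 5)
Your proposal is correct and takes essentially the same approach as the paper: Proposition~\ref{prop:GB} is introduced in the text as a particular case of \cite[Prop.~5.2.7]{MLM} (generalizing \cite[Thm.~4.1, Thm.~4.16]{LLLM} and \cite[Prop.~3.4.3]{LLL}), with no further proof given, and your proposal does exactly this, supplemented by a sketch of the iterative gauge-basis argument underlying those references.
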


\begin{definit}
\label{def:gauge:basis}
Let $R$ be a complete noetherian local $\cO$-algebra with residue field $\F$, and let $\fM\in Y^{[0,h], \tau}(R)$ together with an isomorphism $\fM\otimes_R\F\cong \ovl{\fM}$.
A \emph{gauge basis} of $\fM$ is an eigenbasis $\beta$ lifting $\ovl{\beta}$ that satisfies conditions \ref{it:GB:1} and \ref{it:GB:2} of Proposition \ref{prop:GB}.
\end{definit}

\subsubsection{Monodromy condition}
\label{subsec:MC}

Let $R$ be a $p$-adically complete flat $\cO$-algebra that is topologically of finite type. 
Define $\cO^{\rig}_R$ as the inverse limit over $n\geq 1$ of $R\bbra{u',\frac{{u'}^{n}}{p}}[1/p]$, the transition maps being the natural inclusions, so $\cO^{\rig}_R$ is a subring of $R[1/p]\bbra{u'}$.
The Frobenius $\phz:u'\mapsto (u')^p$ on $R\bbra{u'}$ extends naturally to $\cO^{\rig}_R$.
By letting
$$
\lambda \defeq \prod_{n=0}^{\infty} \phz^n \left(\frac{E(u')}{p} \right) = \prod_{n=0}^{\infty} \bigg(1+\frac{v^{p^n}}p\bigg) \in \cO^{\rig}_{\cO} \subseteq \cO^{\rig}_R
$$ 
we have the derivation $N_{\nabla} \defeq - u' \lambda \frac{d}{d (u')}$ of $\cO^{\rig}_R$.

Let $\fM \in Y^{[0, h], \tau}(R)$ and write $\fM^{\rig}$ for the base change $\fM \otimes_{R\bbra{u'}} \cO^{\rig}_R$, which decomposes
as $\fM^{\rig} = \bigoplus_{j'} \fM^{\rig,(j')} = \bigoplus_{j'} \fM^{(j')}\otimes_{R\bbra{u'}} \cO^{\rig}_R$.

The following result builds on \cite[Cor.\ 1.3.15]{KisinFcrys} and is stated in \cite[Prop.\ 7.1.3]{MLM}.
\begin{prop} \label{prop:Kisin-connection} 
Let $\fM \in Y^{[0, h], \tau}(R)$ for $R$ a $p$-adically complete flat $\cO$-algebra that is topologically of finite type. Then $\fM^{\rig}[1/\lambda]$ is equipped with a unique derivation $N_{\fM^{\rig}}$ over $N_\nabla$ such that 
\begin{equation*} \label{commrel}
N_{\fM^{\rig}} \phi_{\fM^{\rig}} = E(u') \phi_{\fM^{\rig}} N_{\fM^{\rig}}
\end{equation*}
and $N_{\fM^{\rig}}\mod u' = 0$.
\end{prop}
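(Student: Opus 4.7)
The plan is to follow the original construction of Kisin (\cite[Cor.\ 1.3.15]{KisinFcrys}, reprised in \cite[Prop.\ 7.1.3]{MLM}) and record it in our slightly more general tame descent data setting. The key structural input is that, since $\fM$ has $E(u')$-height $\leq h$, the cokernel of $\phi_{\fM}$ is annihilated by $E(u')^{h}$; because $E(u')$ divides $\lambda$ in $\cO^{\rig}_{\cO}$, the map $\phi_{\fM^{\rig}}$ becomes an isomorphism after inverting $\lambda$. Moreover, the identity $\phz(\lambda) = (p/E(u'))\,\lambda$ gives $E(u') \cdot \phz \circ N_\nabla \circ \phz^{-1} = N_\nabla$ by a direct computation, so \eqref{commrel} can be recast as the fixed-point equation
\[
N_{\fM^{\rig}} \;=\; E(u')\,\phi_{\fM^{\rig}} \circ N_{\fM^{\rig}} \circ \phi_{\fM^{\rig}}^{-1}
\]
for derivations of $\fM^{\rig}[1/\lambda]$ over $N_\nabla$.

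For uniqueness, I would argue that if $N_1, N_2$ are two solutions then $D \defeq N_1 - N_2$ is $\cO^{\rig}_R[1/\lambda]$-linear, vanishes modulo $u'$, and is fixed by the operator $X \mapsto E(u')\,\phi_{\fM^{\rig}} \circ X \circ \phi_{\fM^{\rig}}^{-1}$. Iterating $n$ times yields
\[
D \;=\; \Bigl(\prod_{k=0}^{n-1}\phz^{k}(E(u'))\Bigr)\,\phi_{\fM^{\rig}}^{n} \circ D \circ \phi_{\fM^{\rig}}^{-n}.
\]
Conjugation by $\phi_{\fM^{\rig}}^{n}$ composes $D$ with $\phz^{n}$ on coefficients, which raises $u'$-adic orders of vanishing exponentially in $n$, while the accumulated product is, up to a factor of $p^{n}$, essentially a partial product converging to $\lambda$. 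Combined with the initial divisibility $D(\fM^{\rig}) \subseteq u'\,\fM^{\rig}[1/\lambda]$, this forces $D$ to be divisible by arbitrarily high powers of $u'$ in the Fr\'echet topology of $\cO^{\rig}_R$, whence $D = 0$.

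For existence, I would fix a basis of $\fM$ over $\fS_{L',R}$ and encode a candidate derivation by a matrix $X \in \M_n(\cO^{\rig}_R[1/\lambda])$. The commutation relation then translates into the matrix equation
\[
X \;=\; E(u')\,A\,\phz(X)\,A^{-1} \;+\; N_\nabla(A)\,A^{-1},
\]
where $A$ denotes the matrix of $\phi_{\fM}$, invertible in $\M_n(\cO^{\rig}_R[1/\lambda])$ by the height condition. The series $X = \sum_{n \geq 0} \Phi^{n}\bigl(N_\nabla(A)\,A^{-1}\bigr)$ with $\Phi(Y) \defeq E(u')\,A\,\phz(Y)\,A^{-1}$ is a candidate solution, and its convergence in $\M_n(\cO^{\rig}_R[1/\lambda])$ follows from the same $u'$-adic versus $p$-adic comparison as in the uniqueness step; since $N_\nabla(A)$ is divisible by $u'$ (as $N_\nabla = -u'\lambda\,d/du'$), the resulting $X$ satisfies $X \equiv 0 \pmod{u'}$, giving the required initial condition. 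The main technical obstacle is precisely this convergence analysis: one must balance the $p$-adic denominators introduced by inverting $\lambda$ against the $u'$-adic numerators supplied by $\phz$-iteration, verifying that the limit lands in $\cO^{\rig}_R[1/\lambda]$ rather than in a larger completion. This is where the flatness and topologically-of-finite-type hypothesis on $R$ enters, and where the Newton polygon estimates of \cite[\S 1.3]{KisinFcrys} do the real work.
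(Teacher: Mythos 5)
The paper itself gives no proof of this proposition: it simply cites \cite[Prop.\ 7.1.3]{MLM}, which extends Kisin's \cite[Cor.\ 1.3.15]{KisinFcrys} to the descent-data setting. There is therefore no ``paper proof'' to compare against, but your sketch does follow exactly the Kisin-style route one would expect: recast the commutation relation as a fixed-point equation for the matrix $X$ of the derivation in a chosen eigenbasis, solve it by the geometric series $X=\sum_{n\ge 0}\Phi^n(c)$ with $\Phi$ the twisted conjugation by the Frobenius matrix $A$, and establish convergence by balancing the $u'$-adic gain of iterated Frobenius against the $E(u')$-denominators coming from $A^{-1}$ and the prefactors. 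This is the right approach and the right organisation of the argument, and you correctly identify the convergence estimate as the genuine technical content.

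Two remarks on the details. First, with the conventions $N(\beta)=\beta X$, $\phi(\beta)=\beta A$, and $N\phi=E(u')\phi N$, the matrix relation reads $XA+N_\nabla(A)=E(u')A\phz(X)$, i.e.\ $X=E(u')A\phz(X)A^{-1}-N_\nabla(A)A^{-1}$; the sign differs from yours, though this has no structural effect. Second, the crucial bookkeeping in both the uniqueness and existence steps is only gestured at: the conjugation by $A_n:=A\phz(A)\cdots\phz^{n-1}(A)$ costs up to $\prod_{k<n}\phz^k(E(u'))^h$ in the denominator, whose $u'$-degree also grows like a constant times $p^n$, so it is not a priori obvious that the numerator gain $(u')^{p^n}$ wins; the Newton-polygon estimate of \cite[\S 1.3]{KisinFcrys} is precisely what makes this work, and the conclusion is that $N_{\fM^{\rig}}$ only preserves $\fM^{\rig}$ up to a bounded $\lambda$-denominator, as Definition \ref{def:Mcond} in the paper already anticipates (the monodromy condition is exactly the statement that $N_{\fM^{\rig}}(\fM^{\rig})\subset\fM^{\rig}$). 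Your uniqueness argument should therefore be run in this bounded-denominator subspace, not all of $\fM^{\rig}[1/\lambda]$. Finally, you do not mention compatibility with the descent data; it is worth noting that this comes for free from uniqueness, since the $\Delta$-conjugate of a solution is again a solution.
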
 
We have a decomposition of $N_{\fM^{\rig}}$ into $N^{(j')}_{\fM^{\rig}}:\fM^{\rig,(j')}[1/\lambda]\ra\fM^{\rig,(j')}[1/\lambda]$ and we write $N_{\fM^{\rig},\beta}^{(j')}\in \M_n(\cO^{\rig}_R[1/\lambda])$ to denote the matrix of the endomorphism $N^{(j')}_{\fM^{\rig}}$ with respect to the basis $\beta^{(j')}$ (short for $\beta^{(j')}\otimes 1$) of $\fM^{\rig,(j')}[1/\lambda]$, i.e.\ $\beta^{(j')}N_{\fM^{\rig},\beta}^{(j')}=N^{(j')}_{\fM^{\rig}}(\beta^{(j')})$.

\begin{definit}
\label{def:Mcond}
Let $\fM \in Y^{[0,h],\tau}(R)$ with eigenbasis $\beta$.
The \emph{monodromy condition} is the condition that $\lambda^{h-1} N_{\fM^{\rig},\beta}^{(j')}$ %
vanishes to order $h-1$ at $u'=(-p)^{1/{e'}}$ for all $j'$.
\end{definit}
We see as in \cite[Prop.\ 5.3]{LLLM} that the condition above is equivalent to $N_{\fM^{\rig}}(\fM^{\rig})\subseteq \fM^\rig$. 
As in the proof of \cite[Thm.\ 6.14]{LLLM}, the monodromy condition only depends on $j'$ modulo $f$.

As in \cite[Thm.\ 5.6]{LLLM}, \cite[Prop.\ 3.4.12]{LLL}, given $\fM \in Y^{[0,h],\tau}(R)$ with eigenbasis $\beta$, the matrix $N_{\fM^{\rig},\beta}^{(j')}$ can be expressed as
\[
N_{\fM^{\rig},\beta}^{(j')}=
N_1^{(j')}+\sum_{i=1}^\infty\Bigg(
\prod_{k=0}^{i-1}\phz^k(C_{\fM,\beta}^{(j'-k-1)})\Bigg)\phz^{i}(N_1^{(j'-i)})\Bigg(
\prod_{k=i-1}^{0}\phz^k\big(E(u')(C_{\fM,\beta}^{(j'-k-1)})^{-1}\big)\Bigg),
\]
where $N_1^{(j')}$ satisfies
\begin{align*}
&\Ad\Big((\dot s'_{\mathrm{or}, j'})^{-1} (u')^{-\mathbf{a}_{(s, \mu)}^{\prime \, (j')}}\Big) (\lambda^{h-1}N_1^{(j')})=\\
&\qquad\qquad=-\bigg(\frac{\phz(\lambda)}{p}\bigg)^{h}\left(- e'v \frac{d}{dv} A_{\fM,\beta}^{(j'-1)} - \left[\mathrm{Diag}((s'_{\mathrm{or}, j'})^{-1}(\mathbf{a}_{(s, \mu)}^{\prime \, (j')})), A_{\fM,\beta}^{(j'-1)}\right] \right)  (v+p)^{h}(A_{\fM,\beta}^{(j'-1)})^{-1}
\end{align*}
and $[M,N]\defeq MN-NM$. 

In what follows, define the \emph{leading term of the monodromy condition}
\begin{align}
\label{eq:non-expl_mon}
P_N(A_{\fM,\beta}^{(j-1)})\defeq \left(- e'v \frac{d}{dv} A_{\fM,\beta}^{(j-1)} - \left[\mathrm{Diag}((s'_{\mathrm{or}, j})^{-1}(\mathbf{a}_{(s, \mu)}^{\prime \, (j)})), A_{\fM,\beta}^{(j-1)}\right] \right)  (v+p)^{h}(A_{\fM,\beta}^{(j-1)})^{-1},
\end{align}
which again only depends on $j$ modulo $f$.

\begin{prop}[\cite{LLLM}]
\label{prop:monodromy} 
Let $\fM \in Y^{[0,h],\tau}(R)$ with eigenbasis $\beta$.
The monodromy condition is equivalent to the condition that
\begin{align}
\label{eq:expl_mon}
\Big(\frac{d}{du'}\Big)^{\!t}\big|_{u'=(-p)^{1/{e'}}} \left[\Ad\Big((\dot s'_{\mathrm{or}, j'})^{-1} (u')^{-\mathbf{a}_{(s, \mu)}^{\prime \, (j')}}\Big) (\lambda^{h-1}N_{\fM^{\rig},\beta}^{(j')})\right]=0
\end{align}
for all $t=0,\dots, h-2$, $j' = 0,\dots,f'-1$ and only depends on $j'$ modulo $f$.

Assume that $\tau$ is $N$-generic, where $N \ge 2h-3$ and $(N-1)(p-1) \ge h$.
Then the monodromy condition has the form
{\[
\Big(\frac{d}{dv}\Big)^t\big|_{v=-p} \left(P_N(A^{(j-1)}_{\fM,\beta})\right)+O(p^{N-(h-1)-t})=0
\]
for all $j=0,\dots,f-1$ and all $t=0,\dots,h-2$, where the terms $O(p^{N-(h-1)-t})$ denote specific but inexplicit elements of $p^{N-(h-1)-t}\M_2(R)$.}
\end{prop}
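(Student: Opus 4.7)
The first assertion follows by unwinding definitions. By Proposition~\ref{prop:Kisin-connection} and Definition~\ref{def:Mcond}, the monodromy condition is equivalent to the matrix-valued analytic function $\lambda^{h-1}N^{(j')}_{\fM^\rig,\beta}$ vanishing to order $h-1$ at $u'=(-p)^{1/e'}$ for every $j'$, which in turn is equivalent to the vanishing of its $t$-th $u'$-derivatives at that point for $0\leq t\leq h-2$. The conjugation matrix $(\dot s'_{\orient,j'})^{-1}(u')^{-\bf{a}^{\prime\,(j')}_{(s,\mu)}}$ is a permutation matrix times a diagonal monomial in $u'$, and is therefore analytic and invertible on the punctured open unit disk; conjugation by such an invertible analytic matrix preserves the order of vanishing at any nonzero point, yielding the equivalence with~(\ref{eq:expl_mon}). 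That the vanishing condition depends only on $j'$ modulo $f$ reflects the $\Gal(K'/K)$-descent data on $\fM$: the descent carries $\fM^{(j')}$ onto $\fM^{(j'+f)}$, and since the connection supplied by Proposition~\ref{prop:Kisin-connection} is uniquely determined, it is automatically $\Gal(K'/K)$-equivariant, so the vanishing at $j'+f$ is equivalent to that at $j'$ (as in the proof of \cite[Thm.~6.14]{LLLM}).

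For the explicit form under $N$-genericity, I would substitute into~(\ref{eq:expl_mon}) the expansion $N^{(j')}_{\fM^\rig,\beta}=N_1^{(j')}+\sum_{i\geq 1}(\cdots)$. The $\Ad$-conjugate of the leading contribution equals $-(\phz(\lambda)/p)^h P_N(A^{(j'-1)}_{\fM,\beta})$ by construction of $N_1^{(j')}$. Since order of vanishing is intrinsic to the geometric point, and the change of variables $v=(u')^{e'}$ has nonzero Jacobian at $u'=(-p)^{1/e'}\neq 0$, the vanishing of $(d/du')^t$ at $u'=(-p)^{1/e'}$ for $0\leq t\leq h-2$ is equivalent to the vanishing of $(d/dv)^t$ at $v=-p$ over the same range. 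Using the identity $\lambda=(v+p)p^{-1}\phz(\lambda)$ (from the defining product for $\lambda$) together with the observation that $\phz(\lambda)\vert_{v=-p}=\prod_{n\geq 1}(1-p^{p^n-1})\in 1+p\cO$ is a $1$-unit (using $p>2$), each Taylor coefficient of $-(\phz(\lambda)/p)^h P_N(A^{(j'-1)})$ at $v=-p$ equals the corresponding coefficient of $P_N(A^{(j'-1)})$ up to multiplication by a $p$-adic unit and addition of terms of strictly higher order in $(v+p)$. Hence the vanishing of the leading term's $t$-th $v$-derivative at $v=-p$ is, modulo the tail, equivalent to the vanishing of $(d/dv)^t\vert_{v=-p}(P_N(A^{(j-1)}_{\fM,\beta}))$.

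The main technical obstacle is bounding the tail $\sum_{i\geq 1}\prod_{k=0}^{i-1}\phz^k(C^{(j'-k-1)})\cdot\phz^i(N_1^{(j'-i)})\cdot\prod_{k=i-1}^{0}\phz^k\bigl(E(u')(C^{(j'-k-1)})^{-1}\bigr)$ after $\Ad$-conjugation and evaluation of the $t$-th derivative at $u'=(-p)^{1/e'}$. The key ingredients are: (i) for $k\geq 1$, $\phz^k(E(u'))\vert_{u'=(-p)^{1/e'}}=p\bigl(1+(-1)^{p^k}p^{p^k-1}\bigr)\in p\cO^\times$, contributing a factor of $p$ from each such iterate; (ii) the gauge-basis shape provided by Proposition~\ref{prop:GB} controls the $v$-adic structure of $C^{(j)}$ and of its inverse; (iii) the hypotheses $N\geq 2h-3$ and $(N-1)(p-1)\geq h$ ensure, via the deepness bound $\ang{\mu+\eta,\alpha^\vee}>N$, that the $v$-adic valuations coming from the diagonal entries of $A^{(j)}$ translate, after conjugation by $(u')^{-\bf{a}^{\prime\,(\bullet)}}$ and evaluation at $v=-p$, into $p$-adic content large enough to dominate the pole of order $h$ coming from $(\phz(\lambda)/p)^h$ and the derivative loss $t\leq h-2$. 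Carefully tracking these valuations across the Frobenius iterates, and summing geometrically in $i$, yields that the tail contributes an element of $p^{N-(h-1)-t}\M_2(R)$, as required. This extends the height-$2$ analysis of \cite[Prop.~5.3, Thm.~5.6]{LLLM} to arbitrary height $h$, the delicate $p$-adic bookkeeping of step (iii) being the principal additional input.
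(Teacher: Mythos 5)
Your first two paragraphs track the paper's argument quite closely: the equivalence with~\eqref{eq:expl_mon} via invertibility of $u'$ at $(-p)^{1/e'}$, the change of variable $v=(u')^{e'}$, and the fact that the unit prefactor $-(\phz(\lambda)/p)^h$ does not affect the vanishing of derivatives at $v=-p$ are all handled correctly (though the claim that the condition depends only on $j'$ modulo $f$ is already built into Definition~\ref{def:Mcond} and needn't be re-derived via Galois-equivariance of the connection).

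The third paragraph, however, has a genuine gap. First, your ingredient (ii) invokes the gauge-basis shape of Proposition~\ref{prop:GB}, but the present proposition is stated for an \emph{arbitrary} eigenbasis $\beta$ of $\fM$, and indeed it is applied before any gauge basis has been chosen. A gauge-basis assumption would restrict the statement's applicability. The $v$-adic control you need on $A^{(j)}_{\fM,\beta}$ and $C^{(j)}_{\fM,\beta}$ comes instead from the height-$\le h$ condition (which bounds $\det A^{(j)}$ by $(v+p)^{hn}$ up to units) and from Remark~\ref{rk:depends-on-LAP} (upper-triangularity modulo $v$), combined with the $N$-deepness bound on $\bf a'_{(s,\mu)}$; none of this requires a gauge basis. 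Second, and more importantly, the core of the proposition is the assertion that the tail lies in $p^{N-(h-1)-t}\M_2(R)$, and you only sketch the bookkeeping and assert the conclusion. The paper establishes this by introducing (following \cite[Prop.~3.4.12]{LLL}) the recursively defined matrices $Z_i^{(j')}$ and $M^{(j')}\defeq \phz(\lambda)^{-h}\sum_{i\ge 1}\phz^{i+1}(\lambda)^h Z_i^{(j')}$, together with the explicit estimates $Z_1^{(j')}\in \frac{v^{N}}{p^{h-1}}\M_n(R\bbra{v})$ and $Z_i^{(j')}\in\frac{v^{(N-1)p^{i-1}}}{p^{i(h-1)}}\M_n(R\bbra{v})$ for $i>1$, the $v$-power coming from $(N-1)$-deepness amplified by Frobenius and the $p$-denominator from the $p^{-(h-1)}$ in the recursion; one must also separately verify that taking $t$ derivatives of $(\phz^{i+1}(\lambda)/\phz(\lambda))^h Z_i^{(j')}$ at $v=-p$ does not lose more than $\Zp$-combinations of lower derivatives of $Z_i^{(j')}$, which is precisely where the hypothesis $(N-1)(p-1)\ge h$ enters. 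Without these quantitative inputs the claimed $p$-adic estimate is not justified.
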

\begin{proof}
The proof is a slight generalization of the argument appearing in the proof of \cite[Prop.\ 3.4.12]{LLL} (which is the particular case where $h=n-1$ and $N=2n-1$).

As in the proof of \cite[Prop.\ 3.4.12]{LLL} the monodromy condition of Definition \ref{def:Mcond} is equivalent to condition (\ref{eq:expl_mon}) for all $t=0,\dots, h-2$ and all $j'$, as $u'$ is invertible in $\big(R[u']/(E(u'))\big)[1/p]$.

Defining $Z_i^{(j')}$, $M^{(j')}\in \M_n(R[1/p][[v]])$ in analogy to $Z_i^{(j)}$, $M^{(j)}$ in \emph{loc.~cit}.~(replacing $n-1$  and $j$ in \emph{loc.~cit}.~by $h$ and $j'$ respectively) we see as in \cite[Prop.\ 3.4.12]{LLL} that $Z_i^{(j')}\in \displaystyle\frac{v^{(N-1)p^{i-1}}}{p^{i(h-1)}}\M_n(R\bbra{v})$ for $i>1$ and $Z_1^{(j')}\in \frac{v^{N}}{p^{h-1}}\M_n(R\bbra{v})$ (as $\tau$ is $N$-generic), hence that 
\begin{equation}
\label{eq:M_in_p}\Big(\frac{d}{dv}\Big)^{\!t}\big|_{v=-p} M^{(j')} \in p^{N-(h-1)-t}\M_n(R)\qquad \text{for $t=0,\dots, h-2$ and all $j'$.}
\end{equation}
(Note that 
\[
\Big(\frac{d}{dv}\Big)^{\!t}\big|_{v=-p} (\phz^{i+1}(\lambda)/\phz(\lambda))^{h}Z_i^{(j')}
\] 
is contained
in $\sum_{t' = 0}^t \Zp\Big(\frac{d}{dv}\Big)^{\!t'}\big|_{v=-p} Z_i^{(j')}$.
Here we use that $(N-1)(p-1) \ge h$ to deal with the terms for $i \ge 2$.)
From the definition of $Z_i^{(j')}$ and $M^{(j')}$ we deduce from (\ref{eq:expl_mon})
that the monodromy condition is equivalent to \[ \Big(\frac{d}{dv}\Big)^{\!t}\big|_{v=-p} \left[-P_N(A^{(j'-1)}_{\fM,\beta})+M^{(j')}\right]=0 \] for all $j'$ and all $t=0,\dots, h-2$ (note that $(\phz(\lambda)/p)^h$ does not vanish at $u'=(-p)^{1/{e'}}$), which gives the second part of the statement thanks to \eqref{eq:M_in_p}.
\end{proof}

\subsection{Lemmas on mod \texorpdfstring{$p$}{p} Galois representations}
\label{sec:lemmas-mod-p}

Given $(s, \mu) \in \un{W} \times X^*(\un{T})$, consider the reduction $\taubar(s,\mu) : I_K \to \GL_n(\F)$ of the tame inertial type $\tau(s,\mu)$ of Definition \ref{def:tau} (with $\mu$ instead of $\mu + \eta$).
Typically, the length of $\taubar(s,\mu)$ as representation of $I_K$ equals the number of orbits of $s_\tau = s_f s_{f-1} \cdots s_1 \in S_n$.
The following definition gives the precise condition for this to be true.

\begin{definit}
\label{def:good}
We say that $(s,\mu)\in \un{W}\times X^*(\un{T})$ is \emph{good} if
\[
\sum_{j=0}^{f d(i)-1}p^j(s_1^{-1}\cdots s_j^{-1}(\mu_j))_{i}\not\equiv 0\pmod {\frac{q^{d(i)}-1}{q^d-1}}\qquad
\forall\, 1 \le i \le n\ \forall\, d\mid d(i), 1\leq d < d(i),
\]
where $d(i)\geq 1$ is minimal such that $s_1^{-1}s_2^{-1}\cdots s_{f d(i)}^{-1}(i)=i$ (and where the indices are considered modulo $f$).
\end{definit}
\begin{rem}
\label{rmk:good}
Definition \ref{def:good} generalizes \cite[Def.\ 6.19]{herzig-duke}.
\label{rem:irr:rep}
We see that $\taubar(s,\mu)$ is the restriction to $I_K$ of an irreducible $n$-dimensional representation of $G_K$ if and only if %
$s_\tau$ has order $n$ and $(s,\mu)$ is good.
Just note from Definition~\ref{def:tau} that
\[
\taubar(s,\mu) \cong \bigoplus_{i=1}^{n}\omega_{fd(i)}^{\sum_{j=0}^{f d(i)-1}p^j(s_1^{-1}\cdots s_j^{-1}\mu_j)_{i}}.
\]
In this case, any extension of $\taubar(s,\mu)$ to a $G_K$-representation is irreducible.
\end{rem}

\begin{lem}\label{lem:good-if-C0}
If $\mu-\eta\in\un{C}_0$, then $(s,\mu)$ is good for any $s \in \un W$.
\end{lem}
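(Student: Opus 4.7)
The statement reduces to showing that for each $i\in\{1,\ldots,n\}$ and each $d$ with $1\leq d<d(i)$ and $d\mid d(i)$, setting $N\defeq fd(i)$, $N'\defeq fd$, $M\defeq (p^N-1)/(p^{N'}-1)$, and $i_j\defeq(s_j\cdots s_1)(i)$ (so that $i_0 = i$), the integer
$A_i = \sum_{j=0}^{N-1} p^j\, \mu_{j\bmod f,\,i_j}$
is not divisible by $M$.

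First, I would use $\mu-\eta\in\un{C}_0$ to reduce to a question about base-$p$ digits. The deepness hypothesis forces $\mu_{j,1}>\cdots>\mu_{j,n}$ with adjacent differences at most $p-1$, so $b_j\defeq \mu_{j\bmod f,\,i_j}-\mu_{j\bmod f,n}$ takes values in $\{0,\ldots,p-1\}$, and distinct values of $i_j$ at a fixed residue $j\bmod f$ give distinct $b_j$. Moreover the ``scalar shift'' $\sum_{j=0}^{N-1}p^j \mu_{j\bmod f,n}$ factors as $\bigl(\sum_{k=0}^{f-1}p^k\mu_{k,n}\bigr)\cdot (p^N-1)/(p^f-1)$ and is therefore divisible by $M$, since $f\mid N'$ implies that $M$ divides $(p^N-1)/(p^f-1)$. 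Thus $M\mid A_i$ if and only if $M\mid B$, where $B\defeq\sum_{j=0}^{N-1}p^j b_j$ and the $b_j$ are genuinely the base-$p$ digits of $B$.

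Second, I would translate divisibility of $B$ by $M$ into periodicity of its digits. Since $0\leq B<p^N$ and $M\geq p^{N-N'}$, any equation $B=cM$ forces $0\leq c<p^{N'}$; expanding $c$ in base $p$ and using $M=\sum_{l=0}^{d(i)/d-1}p^{lN'}$, one sees that $cM$ has base-$p$ digits periodic of period $N'$. Hence $M\mid B$ forces $b_{j+N'}=b_j$ for all $0\leq j\leq N-N'-1$; together with $(j+N')\bmod f=j\bmod f$ and distinctness of the entries of $\mu_{j\bmod f}$, this amounts to $i_{j+N'}=i_j$ for all such $j$.

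Finally, I would obstruct this last condition using the cyclic structure of $s_\tau$. From $s_{j+f}=s_j$ one checks inductively that $s_{j+f}s_{j+f-1}\cdots s_{j+1}=P_j\, s_\tau\, P_j^{-1}$, where $P_j\defeq s_j\cdots s_1$ (the base case $j=0$ being the very definition of $s_\tau$); iterating, $i_{j+N'}=P_j(s_\tau^d(i))$ while $i_j=P_j(i)$, so equality for even a single $j$ would force $s_\tau^d(i)=i$, i.e.\ $d(i)\mid d$, contradicting $d<d(i)$. The main obstacle is bridging the arithmetic condition ``$M$ divides $A_i$'' to the combinatorial condition ``$s_\tau^d$ fixes $i$''; the base-$p$ digit reformulation makes this bridge transparent, and crucially relies on $\mu-\eta\in\un{C}_0$ to ensure the $b_j$ are honest digits in $\{0,\ldots,p-1\}$.
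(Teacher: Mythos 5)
Your proof is correct. The paper's own proof introduces $\nu \defeq \sum_{j=0}^{f-1} p^j s_1^{-1}\cdots s_j^{-1}(\mu_j)$ and $c_k \defeq (s_\tau^{-k}\nu)_i$, so that the sum in Definition~\ref{def:good} becomes $\sum_{k=0}^{d(i)-1} q^k c_k$, observes from $\mu-\eta\in\un{C}_0$ that $0<|c_k-c_\ell|<q$ for $k\not\equiv\ell\pmod{d(i)}$, and then defers to \cite[Lemma~6.24]{herzig-duke} (or to \cite[Lemma~2.2.3]{LLL}). Your argument effectively supplies that deferred step, in base $p$ rather than base $q$: you normalize by subtracting $\mu_{j\bmod f,n}$ to obtain honest digits $b_j\in\{0,\dots,p-1\}$, use the structure of $M=\sum_{l}p^{lN'}$ to show that $M\mid B$ forces $N'$-periodicity of the digit string, and translate this back through $P_{j+N'}=P_j s_\tau^d$ to the contradiction $s_\tau^d(i)=i$. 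This is the same mechanism as the paper's (through its reference), and the extra self-containedness is a genuine plus. One small imprecision worth fixing: the conclusion $b_j\in\{0,\dots,p-1\}$ requires a bound on the \emph{full spread} $\mu_{j,1}-\mu_{j,n}\le p-1$, not merely on adjacent differences; this is exactly what $\mu-\eta\in\un{C}_0$ gives via the long positive root $e_1-e_n$, but the phrase ``adjacent differences at most $p-1$'' understates what is used.
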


\begin{proof}
  Fix $i \in \{1,\dots,n\}$.
  Let $\nu \defeq \sum_{j=0}^{f-1} p^j s_1^{-1}\cdots s_j^{-1}(\mu_j) \in \ZZ^n$ and let $c_k \defeq (s_\tau^{-k} \nu)_{i}$. By assumption, $0 < \ang{\mu_j,\alpha_j^\vee} < p$ for
  all $i$, which implies that $0 < |c_k - c_\ell| < q$ for all $k \not\equiv \ell \pmod {d(i)}$. It suffices to show that
  $\sum_{k = 0}^{d(i)-1} q^k c_k \not\equiv 0\pmod {\frac{q^{d(i)}-1}{q^d-1}}$ for all $d\mid d(i), 1\leq d < d(i)$.
  This follows exactly as in the proof of \cite[Lemma 6.24]{herzig-duke}.
  (Alternatively one can check that Definition~\ref{def:good} is equivalent to the definition given in \cite[\S 2.2]{LLL}
  and invoke \cite[Lemma 2.2.3]{LLL}.)
\end{proof}

\begin{definit}(\cite[Def.\ 3.1.1]{LLL})
\label{def:ss-phi-mod}
For $\tld{w}\in\widetilde{\un{W}}^\vee$ and $D\in\un{T}(\F)$, let $\cM(\tld{w},D)$ denote the \'etale $\phz$-module which is free of rank $n$ over $k\ppar{v}\otimes_{\Fp}\F$ and such that $\Mat(\phz^{(j)})=D_j\tld{w}_j\in \GL_n(\F\ppar{v})$ with respect to the standard basis (where $\widetilde{\un{W}}^\vee$ embeds into $\GL_n(\F\ppar{v})^f$ as at the end of \S \ref{sec:GT:prel}).
\end{definit}

Recall that $\cO_{\mathcal{E}}$ denotes the $p$-adic completion of $W(k)\bbra{v}[1/v]$. 
For any complete noetherian local $\Zp$-algebra $R$ with maximal ideal $\fm_R$ and finite residue field we let $\cO_{\mathcal{E},R}\defeq \cO_{\mathcal{E}}\widehat{\otimes}_{\Zp}R$ where the completed tensor product is with respect to the $p$-adic topology on $\cO_{\mathcal{E}}$ and the $\fm_R$-adic topology on $R$.
Let $K_\infty\defeq\bigcup_{n\in\NN}K(p_n)$ where $(p_n)_{n\in\NN}\in(\ovl{\mathbb{Q}}_p)^{\NN}$ satisfies $p_0=-p$ and $p_{n}^p=p_{n-1}$ if $n\geq 1$.
By \cite[\S 3.1]{LLL} (building on the classical result of Fontaine \cite{fontaine-fest}), there is a rank preserving exact contravariant functor $\bV^*_K$ from the category of finite rank projective \'etale $\phz$-modules over $\cO_{\mathcal{E},R}$ to the category of continuous representations of $G_{K_\infty}$ over finite rank projective $R$-modules.

\begin{definit}\label{def:ss-phi-mod2}
For $\tld{w}\in\widetilde{\un{W}}^\vee$ and $D\in\un{T}(\F)$, let $V(\tld{w},D)$ be the unique tame representation of $G_K$ over $\F$ of dimension $n$ such that 
\[
V(\tld{w},D)|_{G_{K_\infty}}\cong \bV^*_K(\cM(\tld{w},D)).
\]
Its existence and uniqueness is guaranteed by \cite[Prop.\ 3.1.2]{LLL} and the equivalence for tame representations in \cite[\S 3.1]{LLL}.
\end{definit}

\begin{lem}
\label{lem:un:tw}
For $\lambda\in(\F^\times)^f$ we have
\[
V(\tld{w},\lambda D)\cong V(\tld{w},D)\otimes_{\F}\nr\Big(\prod_{j=0}^{f-1}\lambda_j\Big),
\]
where $\nr(\alpha)$ denotes the unramified character of $G_K$ sending an arithmetic Frobenius to $\alpha\in\F^\times$.
\end{lem}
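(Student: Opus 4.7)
My plan is to reduce the statement to a twist of étale $\phi$-modules by a rank-one object corresponding to an unramified character.

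First, I would interpret the datum $D \mapsto \lambda D$ at the level of étale $\phi$-modules. Let $R \defeq k\ppar{v}\otimes_{\Fp}\F \cong \prod_{j\in\cJ}\F\ppar{v}$ and let $\cN_\lambda$ denote the rank-one étale $\phi$-module over $R$ with a fixed basis vector $e$ such that $\phi(e) = \tld\lambda\cdot e$, where $\tld\lambda\in R^\times$ is the element whose $j$-th component under $R\cong \prod_{j}\F\ppar{v}$ is $\lambda_j\in\F^\times$. Inspecting Definition \ref{def:ss-phi-mod}, a direct comparison of the Frobenius matrices on each component (where $\Mat(\phi^{(j)}) = \lambda_j D_j\tld w_j = \lambda_j\cdot(D_j\tld w_j)$) yields a canonical isomorphism of étale $\phi$-modules
\[
\cM(\tld w,\lambda D)\;\cong\;\cM(\tld w, D)\otimes_{R}\cN_\lambda.
\]

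Second, I would apply the contravariant tensor functor $\bV^*_K$ and use that it respects tensor products (standard for the Fontaine-equivalence between étale $\phi$-modules over $R$ and continuous $\F$-representations of $G_{K_\infty}$). This gives
\[
\bV^*_K(\cM(\tld w,\lambda D))\;\cong\;\bV^*_K(\cM(\tld w,D))\otimes_{\F}\bV^*_K(\cN_\lambda)
\]
as representations of $G_{K_\infty}$.

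Third — and this is the key computational step — I would identify $\bV^*_K(\cN_\lambda)$. Because $\tld\lambda$ lies in $(\F^\cJ)^\times\subset R^\times$ and has no $v$-dependence, the module $\cN_\lambda$ is \emph{trivially} an étale $(\phi,\Gamma)$-module extending to the full Galois group $G_K$, with the Galois action being unramified. Iterating Frobenius $f$ times on $e$ yields the scalar $\prod_{j=0}^{f-1}\lambda_j\in\F^\times$; by the standard dictionary for rank-one unramified $\phi$-modules (see e.g.\ the discussion preceding \cite[Prop.~3.1.2]{LLL} and the uniqueness assertion there), this shows that $\bV^*_K(\cN_\lambda)$ is the restriction to $G_{K_\infty}$ of the unramified character $\nr\bigl(\prod_{j}\lambda_j\bigr)$ of $G_K$.

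Finally, I would invoke the uniqueness in Definition \ref{def:ss-phi-mod2}: $V(\tld w,\lambda D)$ is characterised as the unique tame $G_K$-representation whose restriction to $G_{K_\infty}$ coincides with $\bV^*_K(\cM(\tld w,\lambda D))$. Since $V(\tld w, D)\otimes\nr\bigl(\prod_{j}\lambda_j\bigr)$ is tame (both tensor factors being tame) and its restriction to $G_{K_\infty}$ matches by the previous steps, the two representations must be isomorphic. The only real obstacle is the bookkeeping in Step 3 — getting the direction of the Frobenius shift and the absence of an inverse on the product $\prod_j\lambda_j$ correct under the paper's conventions (cf.\ the minus sign in $\fM^{(j')}\defeq\fM\otimes_{W(k'),\sigma'_{-j'}}R$ and Remark \ref{rmk:LLL:error}); once these conventions are tracked, everything else is formal.
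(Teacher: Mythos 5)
Your proposal is correct and follows essentially the same route as the paper: factor $\cM(\tld w,\lambda D)$ as $\cM(\tld w,D)$ tensored with the rank-one $\phi$-module whose Frobenius is scalar multiplication by $\lambda_j$ in each embedding, use that $\bV^*_K$ is a tensor functor, identify the rank-one piece with $\nr\bigl(\prod_j\lambda_j\bigr)|_{G_{K_\infty}}$ (the paper delegates this to \cite[Lemma 6.3]{GLS}), and conclude by the uniqueness of tame $G_K$-extensions in Definition \ref{def:ss-phi-mod2}.
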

\begin{proof}
As $\cM(\tld{w},\lambda D)$ is the tensor product of $\cM(\tld{w},D)$ and $\cM(1,\lambda )$ over $k\ppar{v}\otimes_{\Fp}\F$ and $\bV^*_K$ is a tensor functor, it suffices to show that 
\[
V(1,\lambda) \cong \nr(\prod_{j=0}^{f-1}\lambda_j).
\]
Note that $\cM(1,\lambda )$ is isomorphic to the rank one \'etale $\phz$-module with 
\[
\phz^{(j)}=
\begin{cases}
  1&\text{if $0 \le j<f-1$,}\\
  \prod_{j'=0}^{f-1}\lambda_{j'}&\text{if $j=f-1$}  
\end{cases}
\]
in the standard basis.
By the proof of \cite[Lemma 6.3]{GLS}, $\bV^*_K(\cM(1,\lambda))\cong\nr(\prod_{j=0}^{f-1}\lambda_j)|_{G_{K_\infty}}$.
\end{proof}

\begin{prop}
\label{prop:un:twist}
Let $\tld{w}\in\widetilde{\un{W}}^\vee$, and suppose that $\tld{w}^*=t_{\mu'}s'\in \widetilde{\un{W}}$ is such that $(s',\mu')\in \un{W}\times X^*(\un{T})$ is good.
Then
\begin{equation}
\left\{ 
\rhobar: G_K\ra \GL_n(\F)\ \text{semisimple} :\ \rhobar|_{I_K}\cong \taubar(s',\mu')
\right\}_{/\cong}
=
\left\{
V(\tld{w},D)\ :\ D\in \un{T}(\F)
\right\}_{/\cong}.\label{eq:semisimple-rhobar}
\end{equation}
\end{prop}
\begin{proof}
By \cite[Prop.\ 3.1.2]{LLL} we know that $V(\tld{w},D)|_{I_K} \cong \taubar(s',\mu')$.
It remains to verify that $V(\tld{w},D)$ is semisimple and that the left-hand side of~\eqref{eq:semisimple-rhobar} is contained in the right-hand side of~\eqref{eq:semisimple-rhobar}.
As in line 1 of the proof of \cite[Prop.\ 3.1.2]{LLL} we may assume that $(\tld{w}^*)_j=1$ for all $0\leq j<f-1$.
Decompose $\{1,\dots,n\}$ and $X^*(\un{T})$ according to the orbits of $(s^{\prime \ast})_{f-1}= (s'_0)^{-1}\in S_n$ (i.e.\ find a minimal Levi subgroup containing $(s',\mu')$ and decompose it into a product of smaller general linear groups).
Correspondingly, $s' = \prod_{i=1}^t s^{\prime (i)}$ and $\mu' = \sum_{i=1}^t \mu^{\prime (i)}$ with each $(s^{\prime (i)},\mu^{\prime (i)})$ good such that $\taubar(s',\mu') \cong \bigoplus_{i=1}^t \taubar(s^{\prime (i)},\mu^{\prime (i)})$.
If $\rhobar$ is semisimple with $\rhobar|_{I_K}\cong \taubar(s',\mu')$, then, since each $(s^{\prime (i)},\mu^{\prime (i)})$ is good, we deduce that there exists a decomposition $\rhobar \cong \bigoplus_{i=1}^t \rhobar_i$ such that $\rhobar_i$ irreducible and $\rhobar_i|_{I_K} \cong \taubar(s^{\prime (i)},\mu^{\prime (i)})$ for all $i$.
Likewise, from the definitions, there exists a decomposition $V(\tld{w},D) \cong \bigoplus_{i=1}^t V(\tld{w}^{(i)},D^{(i)})$ with $\tld{w}^{(i)*} = t_{\mu^{\prime (i)}}s^{\prime (i)}$ and $D = \prod_{i=1}^t D^{(i)}$.
In this way we are reduced to the case where $\rhobar$ is irreducible or equivalently $s'_0$ has only one orbit.
Then $V(\tld{w},D)$ is irreducible for each $D \in \un{T}(\F)$ (cf.\ Remark~\ref{rem:irr:rep}).
On the other hand, if $\rhobar$ is as on the left-hand side of~\eqref{eq:semisimple-rhobar}, we know that
$V(\tld{w},D)|_{I_K} \cong \taubar(s',\mu') \cong \rhobar|_{I_K}$ for all $D \in \un{T}(\F)$.
Since $\rhobar$ is irreducible, Lemma \ref{lem:un:tw} implies that $\rhobar$ is contained in the right-hand side of~\eqref{eq:semisimple-rhobar}.
\end{proof}

Recall that $\rhobar : G_K \to \GL_n(\F)$ is \emph{cyclotomic free} if $\rhobar$
becomes upper-triangular over an unramified extension $K'/K$ of degree prime to $p$ such that $H^0(G_{K'},(\rhobar|_{G_{K'}})^{\mathrm{ss}} \otimes_\F \omega^{-1}) = 0$
\cite[Def.\ 3.8]{LLLM}.

\begin{lem}
\label{lem:ff}
If $\rhobar_1, \rhobar_2$ are finite-dimensional representations of $G_K$ over $\F$ such that $\rhobar_1^\vee\otimes_\F \rhobar_2$ is cyclotomic free, then the natural map
\[
\Hom_{G_K}(\rhobar_1,\rhobar_2)\rightarrow \Hom_{G_{K_\infty}}({\rhobar_1}|_{G_{K_\infty}},{\rhobar_2}|_{G_{K_\infty}})
\]
is an isomorphism.
\end{lem}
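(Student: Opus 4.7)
Set $V \defeq \rhobar_1^\vee \otimes_\F \rhobar_2$, so that the map in the statement is the natural inclusion $V^{G_K} \hookrightarrow V^{G_{K_\infty}}$. This map is automatically injective; only surjectivity requires proof. Unpacking the cyclotomic-free hypothesis, fix an unramified extension $K'/K$ of degree $d$ prime to $p$ such that $V|_{G_{K'}}$ is upper-triangular and no Jordan--H\"older constituent of $V|_{G_{K'}}$ is isomorphic to $\omega$. Setting $K'_\infty \defeq K' K_\infty$, the extension $K'_\infty/K_\infty$ is unramified of degree $d$, and restriction gives $\Gal(K'_\infty/K_\infty) \congto \Gal(K'/K)$. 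Since $d$ is invertible in $\F$, the averaging projector $e \defeq d^{-1} \sum_\sigma \sigma$ identifies $V^{G_K} = (V^{G_{K'}})^{\Gal(K'/K)}$ and $V^{G_{K_\infty}} = (V^{G_{K'_\infty}})^{\Gal(K'_\infty/K_\infty)}$, reducing the problem to the case $K = K'$, where $V$ admits a $G_K$-stable filtration whose graded pieces are characters $\chi_1, \dots, \chi_n : G_K \to \F^\times$, none equal to $\omega$.

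We proceed by induction on the filtration length $n$. For $n=1$, $V = \chi$ is a character of $G_K$ of order prime to $p$ (as $|\F^\times|$ is prime to $p$), hence factors through the tame Galois quotient of $G_K$. Since $\mu_p \not\subset K$ ($K$ being unramified over $\Qp$ with $p$ odd), every finite subextension of $K_\infty/K$ is totally wildly ramified, so $K_\infty$ is linearly disjoint from every tame extension of $K$; thus $\chi|_{G_{K_\infty}}$ is trivial if and only if $\chi$ is trivial, giving $\chi^{G_K} = \chi^{G_{K_\infty}}$. For the inductive step, choose a codimension-one $G_K$-submodule $V' \subset V$ with quotient $\chi_n$ and compare the long exact sequences of $G_K$- and $G_{K_\infty}$-cohomology associated to $0 \to V' \to V \to \chi_n \to 0$. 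The vertical restriction map on $(V')^\bullet$ is an isomorphism by the inductive hypothesis, and on $\chi_n^\bullet$ by the base case. By the snake lemma, surjectivity of $V^{G_K}\to V^{G_{K_\infty}}$ reduces to the triviality of the kernel of
\[
H^1(G_K, V') \longrightarrow H^1(G_{K_\infty}, V')
\]
on the image of the connecting map $V^{G_K}\to \chi_n^{G_K}$.

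The main obstacle is therefore this last injectivity, which we formulate as an independent statement: for any finite $\F[G_K]$-module $W$ with no Jordan--H\"older constituent isomorphic to $\omega$, the restriction $H^1(G_K, W) \to H^1(G_{K_\infty}, W)$ has trivial kernel. This is a standard consequence of the theory of \'etale $(\phi,\Gamma)$-modules (or of Liu's $(\phi,\hat G)$-module formalism): passing to the Galois closure $L \defeq K_\infty K(\mu_{p^\infty})$ and using inflation--restriction along $G_L \subset G_{K(\mu_{p^\infty})}$ and $G_L \subset G_{K_\infty}$, any kernel class is controlled by cohomology of $\Gal(L/K(\mu_{p^\infty}))$ with coefficients in a subquotient of $W$; since $\Gal(L/K(\mu_{p^\infty})) \cong \Zp(1)$ via Kummer theory applied to the compatible system of $p$-power roots of a uniformizer of $K$, these obstructions produce an $\omega$-isotypic contribution to $W$, which is ruled out by hypothesis. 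Granted this input, the rest of the argument is formal d\'evissage through the snake lemma.
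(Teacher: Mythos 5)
Your proposal is correct and recovers the natural ``elementary'' argument. The paper itself just delegates to the proof of \cite[Lemma~7.2.10(3)]{MLM}, so formally your route is different, but mathematically the d\'evissage you perform is the expected one and matches (up to minor reorganization) the argument the authors could have written out.

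A few comments on the comparison. You d\'evissage on the tensor product $V=\rhobar_1^\vee\otimes_\F\rhobar_2$ directly, which is slightly cleaner than a double d\'evissage on $\rhobar_1$ and $\rhobar_2$ separately; either works because the cyclotomic-free condition is stable under passing to subquotients. The base case and the snake-lemma step are both correct. The ``averaging projector'' remark in your first reduction is unnecessary: the identity $V^{G_K}=(V^{G_{K'}})^{\Gal(K'/K)}$ holds without any hypothesis on the index, since you are only taking $H^0$; the prime-to-$p$ degree matters only for the $H^1$ statement, and even there it is already baked into the definition of cyclotomic free. The crucial input in your last paragraph --- injectivity of $H^1(G_K,W)\to H^1(G_{K_\infty},W)$ when $H^0(G_K,W\otimes_\F\omega^{-1})=0$ --- is exactly \cite[Lemma~3.10]{LLLM}; it would be cleaner (and safer) to cite that lemma than to re-derive it, since your sketch compresses the part where one passes from the kernel class to an element of $\Hom_{G_K}(\omega,W)$. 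Concretely: a kernel class restricts trivially to $G_L$ with $L=K_\infty K(\mu_{p^\infty})$, so inflates from $\Gal(L/K)\cong \Zp(1)\rtimes\Gal(L/K_\infty)$; after killing the contribution from $\Gal(L/K_\infty)$ one is left with an $\F[\Gal(L/K_\infty)]$-equivariant homomorphism from $\Zp(1)$ into (a subspace of) $W$, i.e.\ an element of $H^0(G_K,W\otimes_\F\omega^{-1})$, which vanishes by hypothesis. This is the argument you are gesturing at, and it does go through, but as written it is more of a road map than a proof, so a citation would be preferable.
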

\begin{proof}
This follows from (the proof of) \cite[Lemma 7.2.10(3)]{MLM}.
\end{proof}

\begin{cor}
\label{cor:ff}
If $\rhobar_1, \rhobar_2$ are finite-dimensional representations of $G_K$ over $\F$ such that $\rhobar_1$ is $2$-generic, then the natural injective map
\[
\mathrm{Isom}_{G_K}(\rhobar_1,\rhobar_2)\rightarrow \mathrm{Isom}_{G_{K_\infty}}({\rhobar_1}|_{G_{K_\infty}},{\rhobar_2}|_{G_{K_\infty}})
\]
is a bijection.
\end{cor}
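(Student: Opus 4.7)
My plan is to deduce the bijection from Lemma~\ref{lem:ff} applied to the pair $(\rhobar_1, \rhobar_2)$. We may assume $\mathrm{Isom}_{G_{K_\infty}}(\rhobar_1|_{G_{K_\infty}}, \rhobar_2|_{G_{K_\infty}}) \neq \emptyset$ (otherwise both sides are empty); fix $\phi$ in it. It suffices to check that $\rhobar_1^\vee \otimes \rhobar_2$ is cyclotomic free: Lemma~\ref{lem:ff} will then produce a unique $\psi \in \Hom_{G_K}(\rhobar_1, \rhobar_2)$ lifting $\phi$, and since the underlying $\F$-linear map of $\psi$ coincides with that of $\phi$, $\psi$ is automatically an $\F$-vector space isomorphism, hence a $G_K$-equivariant isomorphism mapping to $\phi$ under the restriction map.

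The first key step is to transfer the $2$-genericity hypothesis from $\rhobar_1$ to $\rhobar_2$. Restricting $\phi$ to $I_{K_\infty}$ yields $\rhobar_1|_{I_{K_\infty}} \cong \rhobar_2|_{I_{K_\infty}}$, and passing to semisimplifications, the corresponding multisets of characters of $I_{K_\infty}$ agree. Since $K_\infty/K$ is a totally ramified pro-$p$ extension, the quotient $I_K / I_{K_\infty}$ is pro-$p$, so the image of $I_{K_\infty}$ in the tame quotient $I_K^{\mathrm{tame}}$ has cokernel that is simultaneously pro-$p$ and pro-prime-to-$p$, hence trivial. Thus tame characters of $I_K$ are determined by their restriction to $I_{K_\infty}$, and since the Jordan--H\"older characters of $\rhobar_i^{\mathrm{ss}}|_{I_K}$ are tame, we deduce $\rhobar_2^{\mathrm{ss}}|_{I_K} \cong \rhobar_1^{\mathrm{ss}}|_{I_K} \cong \taubar(s,\mu)$, so $\rhobar_2$ is also $2$-generic with the same parameter $\mu$ as $\rhobar_1$ (Definition~\ref{def:rhobar:gen}).

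Next I would verify cyclotomic freeness of $\rhobar_1^\vee \otimes \rhobar_2$. Choose an unramified extension $K'/K$ of degree prime to $p$ over which both $\rhobar_i|_{G_{K'}}$ become upper-triangular; the diagonal characters of $(\rhobar_1^\vee \otimes \rhobar_2)|_{G_{K'}}$ are then ratios $\chi_i^{-1}\chi_j$ with each $\chi_l$ a character appearing in $\rhobar_1^{\mathrm{ss}}|_{G_{K'}}$. From the explicit description of $\taubar(s,\mu)$ in Definition~\ref{def:tau} in terms of the fundamental characters $\omega_f$, $\omega_{2f}$, the $2$-deepness of $\mu - \eta$ in $\un{C}_0$ forces $\chi_i^{-1}\chi_j|_{I_K} \neq \omega|_{I_K}$ for all such pairs, hence $\chi_i^{-1}\chi_j \neq \omega$ as characters of $G_{K'}$. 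This yields the required vanishing of $H^0(G_{K'}, (\rhobar_1^\vee \otimes \rhobar_2)^{\mathrm{ss}} \otimes_\F \omega^{-1})$.

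The main technical obstacle is this last combinatorial verification: unwinding the $2$-deep condition on $\mu-\eta$ into the inequality $\chi_i^{-1}\chi_j|_{I_K} \neq \omega|_{I_K}$. This is however a routine computation once the characters of $\taubar(s,\mu)$ have been written out, after which Lemma~\ref{lem:ff} delivers the statement.
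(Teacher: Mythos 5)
Your proposal is correct and follows the same overall strategy as the paper: both reduce to showing that $\rhobar_1^\vee\otimes_\F\rhobar_2$ is cyclotomic free and then invoke Lemma~\ref{lem:ff}. The difference lies in how this cyclotomic freeness is established. The paper first shows, via the fact that $K_\infty/K$ is totally ramified pro-$p$ (so $G_{K_\infty}/(G_{K_\infty}\cap I_K^w)\cong G_K/I_K^w$), that $\rhobar^{\mathrm{ss}}|_{G_{K_\infty}}\cong (\rhobar|_{G_{K_\infty}})^{\mathrm{ss}}$, and deduces the \emph{full} isomorphism $\rhobar_1^{\mathrm{ss}}\cong\rhobar_2^{\mathrm{ss}}$ as $G_K$-representations. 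This yields $(\rhobar_1^\vee\otimes_\F\rhobar_2)^{\mathrm{ss}}\cong\ad(\rhobar_1)^{\mathrm{ss}}$, allowing the paper to simply cite the analog of \cite[Prop.\ 3.9]{LLLM}, which records cyclotomic freeness of $\ad(\rhobar_1)$ for $\rhobar_1$ $2$-generic. You instead work over $I_{K_\infty}$ and only conclude the weaker statement $\rhobar_1^{\mathrm{ss}}|_{I_K}\cong\rhobar_2^{\mathrm{ss}}|_{I_K}$ (up to unramified twist this determines nothing about the Frobenius action), and then re-prove cyclotomic freeness by explicit manipulation of the characters $\omega_f$, $\omega_{2f}$. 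That weaker intermediate statement is still sufficient for your purposes, precisely because the cyclotomic freeness condition you check is an inequality of characters on inertia, but the paper's route is cleaner and lets it offload the combinatorics to an existing reference. Two small points to tighten: (a) in your transfer-of-genericity step you should note explicitly that semisimplification commutes with restriction along $G_K\supset G_{K_\infty}$ (or $I_K\supset I_{K_\infty}$), which is exactly the paper's first claim; your argument implicitly uses it when you pass from the isomorphism $\rhobar_1|_{I_{K_\infty}}\cong\rhobar_2|_{I_{K_\infty}}$ to a matching of Jordan--H\"older characters of the $\rhobar_i^{\mathrm{ss}}$; (b) you write ``each $\chi_l$ a character appearing in $\rhobar_1^{\mathrm{ss}}|_{G_{K'}}$'', which is only true up to unramified twist for the $\rhobar_2$-factors, but since you only ever check inequalities after restricting to $I_K$ this does not break the argument.
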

\begin{proof}
We first claim that $\rhobar^{\mathrm{ss}}|_{G_{K_\infty}} \cong (\rhobar|_{G_{K_\infty}})^{\mathrm{ss}}$ for any finite-dimensional representation $\rhobar$ of $G_K$ over $\F$,
i.e.\ that $\rhobar^{\mathrm{ss}}|_{G_{K_\infty}}$ is already semisimple. This follows as in \cite[\S 3.1]{LLL}: $\rhobar^{\mathrm{ss}}$ is a representation of
$G_K/I_K^w$, where $I_K^w$ is the wild inertia group and $G_{K_\infty}/(G_{K_\infty} \cap I_K^w) \cong G_K/I_K^w$, as $K_\infty/K$ is a totally ramified $p$-extension.

Assume $\mathrm{Isom}_{G_{K_\infty}}({\rhobar_1}|_{G_{K_\infty}},{\rhobar_2}|_{G_{K_\infty}})\neq 0$.
By the previous paragraph and again by the beginning of \cite[\S 3.1]{LLL} we thus have $\rhobar_1^{\mathrm{ss}}\cong\rhobar_2^{\mathrm{ss}}$, hence
$(\rhobar_1^\vee\otimes_\F \rhobar_2)^{\mathrm{ss}}\cong \ad(\rhobar_1)^{\mathrm{ss}}$.
As $\ad(\rhobar_1)$ is cyclotomic free by the analog of~\cite[Prop.\ 3.9]{LLLM} (and noting that $2$-generic in our context implies $2$-generic in the sense of \cite[Def.\ 3.7]{LLLM}, see \cite[Rk.\ 2.2.3]{LLLM2}), we obtain that 
$\rhobar_1^\vee \otimes_\F \rhobar_2$ is cyclotomic free, and we can then conclude by Lemma \ref{lem:ff}.
\end{proof}

\subsection{A commutative algebra lemma}
\label{sec:few-lemmas}

If $A$ is a local ring we denote by $\mathfrak{m}_A$ its maximal ideal.

\begin{lem}
\label{lem:irreducible}
Let $A\defeq \cO\bbra{x_1,\dots,x_n}$, where $\cO$ is a complete DVR with uniformizer $\varpi$ and $n \ge 2$.
If $f\in A^\times$ and $d>0$, then $x_1x_2+\varpi^d f$ is irreducible in $A$.
Moreover the ideals $(x_1x_2+\varpi^d f)$ and $(x_1)$ are distinct, and 
the ideals $(x_1x_2+\varpi^d f_1)$ and $(x_1x_2+\varpi^d f_2)$ are distinct if $f_1\not\equiv f_2\mod \mathfrak{m}_A$.
\end{lem}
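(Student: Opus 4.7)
The ring $A=\cO\bbra{x_1,\dots,x_n}$ is a regular local ring, hence a UFD, and its reduction $A/\varpi A\cong k\bbra{x_1,\dots,x_n}$ is also a UFD. Recall that an element of $A$ is a unit if and only if its image in $A/\mathfrak{m}_A=k$ is nonzero, so every unit of $A/\varpi A$ lifts to a unit of $A$. Also, since $A$ is a UFD in which $\varpi$ is prime (because $A/\varpi A$ is a domain), the $\varpi$-adic valuation $v_\varpi$ is multiplicative on $A\setminus\{0\}$.

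For irreducibility, suppose $x_1x_2+\varpi^d f=gh$ with $g,h\in A$ non-units. Reducing modulo $\varpi$ yields $\bar g\bar h=x_1x_2$ in the UFD $A/\varpi A$, so by matching irreducible factors (swapping $g,h$ if necessary) one gets $\bar g=\bar u x_1$ and $\bar h=\bar u^{-1}x_2$ for some $\bar u\in(A/\varpi A)^\times$. Lifting $\bar u$ to a unit of $A$ and replacing $(g,h)$ by $(\bar u^{-1}g,\bar u h)$, we may assume $g=x_1+\varpi g_1$ and $h=x_2+\varpi h_1$. Expanding $gh=x_1x_2+\varpi^d f$ and cancelling one $\varpi$ gives
\[
x_1h_1+x_2g_1+\varpi g_1h_1=\varpi^{d-1}f.
\]
The key step is an iterative improvement: at stage $e\geq 1$ one has $g=x_1+\varpi^e g_1$ and $h=x_2+\varpi^e h_1$ with $x_1h_1+x_2g_1+\varpi^e g_1h_1=\varpi^{d-e}f$. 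If $e<d$, reducing mod $\varpi$ gives $x_1\bar h_1+x_2\bar g_1=0$; since $x_1,x_2$ are coprime in the UFD $A/\varpi A$ this forces $\bar g_1=x_1\bar q$ and $\bar h_1=-x_2\bar q$ for some $\bar q$. Lifting $\bar q$ and setting $u_e\defeq 1+\varpi^e q\in A^\times$, a direct calculation (using $2e\geq e+1$) shows that $u_e^{-1}g$ and $u_eh$ again have the form $x_1+\varpi^{e+1}g_1'$ and $x_2+\varpi^{e+1}h_1'$ with new $g_1',h_1'\in A$. Iterating until $e=d$ and reducing the relation $x_1h_1+x_2g_1+\varpi^d g_1h_1=f$ modulo $\mathfrak{m}_A$ gives $0=\bar f$, contradicting $f\in A^\times$. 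The technical heart of the proof lies in this iterative bookkeeping; everything else is either setup or routine valuation tracking.

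For the two distinctness statements: $(x_1x_2+\varpi^d f)\neq(x_1)$ is immediate, since $x_1\mid x_1x_2+\varpi^d f$ would force $x_1\mid\varpi^d f$, whereas the image of $\varpi^d f$ modulo $(x_1)$ is $\varpi^d f(0,x_2,\dots,x_n)\neq 0$. For the last claim, suppose $x_1x_2+\varpi^d f=u(x_1x_2+\varpi^d g)$ with $u\in A^\times$. Reducing mod $\varpi$ forces $(\bar u-1)x_1x_2=0$, hence $\bar u=1$, so $u=1+\varpi v$ with $v\in A$; substituting and cancelling one $\varpi$ yields
\[
\varpi^{d-1}(f-g)=vx_1x_2+\varpi^d vg.
\]
If $f\not\equiv g\pmod{\mathfrak{m}_A}$ then $f-g$ has nonzero image in $k$ and is therefore a unit, so the left-hand side has $\varpi$-adic valuation exactly $d-1$, while the right-hand side has $\varpi$-adic valuation $v_\varpi(v)$ (using $v_\varpi(x_1x_2)=0$ and $d+v_\varpi(vg)>v_\varpi(v)$). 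This forces $v\in\varpi^{d-1}A$; writing $v=\varpi^{d-1}v'$ and cancelling $\varpi^{d-1}$ gives $f-g=v'x_1x_2+\varpi^d v'g$, whose reduction modulo $\mathfrak{m}_A$ reads $\bar f=\bar g$, the desired contradiction.
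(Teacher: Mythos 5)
Your proof is correct, and it takes a genuinely different route from the paper's for the irreducibility statement. The paper first applies the $\cO$-automorphism $x_2 \mapsto x_1 + x_2$ to turn $x_1x_2 + \varpi^d f$ into $x_1^2 + x_1x_2 + \varpi^d g$, which is (up to a unit) a distinguished polynomial of degree $2$ in $x_1$; Weierstrass preparation then forces any nontrivial factorization to have a linear factor $x_1 - b$ with $b \in \fm_{\cO\bbra{x_2,\dots,x_n}}$, and evaluating at $x_1 = b$ yields $b^2 + bx_2 + \varpi^d g(b,\dots) = 0$, whence $\varpi^d \mid b(b + x_2)$ and a contradiction in either case. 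You instead work directly with $x_1x_2 + \varpi^d f$ and run a Hensel-type iterative lifting: mod $\varpi$ the factors must be associates of $x_1, x_2$ since $A/\varpi A$ is a UFD and the factors are non-units, and then you successively normalize by units $1 + \varpi^e q$ to improve the congruence $g \equiv x_1$, $h \equiv x_2$ one $\varpi$-power at a time until the relation $x_1 h_1 + x_2 g_1 + \varpi^d g_1 h_1 = f$ at stage $e = d$ contradicts $f \in A^\times$ upon reduction mod $\fm_A$. The paper's argument is shorter and trades the iteration for a single application of a standard structure theorem; yours is more elementary in the sense of not invoking Weierstrass preparation, at the cost of the bookkeeping (all of which you carried out correctly, including the crucial bound $2e \ge e+1$). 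For the two distinctness assertions the paper simply declares them ``straightforward''; your $\varpi$-adic valuation argument for the last part is a clean way to make this precise.
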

\begin{proof}
By the $\cO$-automorphism of $A$ sending $x_2$ to $x_1+x_2$ and fixing $x_i$ ($i \ne 2$), we may instead consider $x_1^2+x_1x_2+\varpi^d g$ ($g\in A^\times$), which is distinguished in the variable $x_1$.
By the Weierstrass preparation theorem, if $x_1^2+x_1x_2+\varpi^d g$ is reducible then it has a factor of the form $x_1-b$ for some $b\in\fm_{\cO\bbra{x_2,\dots,x_n}}$.
Evaluating at $x_1 = b$ we see that $b^2+bx_2+\varpi^d g(b,x_2,\dots,x_n) = 0$, so $\varpi^d\mid b(b+x_2)$.
Then one easily checks $\varpi^d\mid b$ or $\varpi^d\mid (b+x_2)$. In the first case, $b=\varpi^d c$ and $\varpi^d c^2+c x_2+g(\varpi^dc,x_2,\dots,x_n)=0$, which implies $g\in \fm_A$, contradiction.
The second case is similar.

For the last part, the first two ideals are distinct by the Weierstrass preparation theorem.
Suppose that $x_1^2+x_1x_2+\varpi^d g_1 = u(x_1^2+x_1x_2+\varpi^d g_2)$ for some $u \in A^\times$.
By working modulo $(\varpi,x_2)$ we deduce that $u(0) \equiv 1 \pmod \varpi$.
On the other hand, $g_1(0) = u(0)g_2(0)$ in $\cO$, so $g_1 \equiv g_2 \mod \mathfrak{m}_A$, as required.
\end{proof}

\section{Galois deformation rings}
\label{sec:galo-deform-rings}

\subsection{Setup}
\label{sec:setup}

From now on we consider the situation where $n=2$.

Throughout this section we fix a semisimple Galois representation $\rhobar: G_K\ra \GL_2(\F)$ and $(s,\mu)\in \un{W}\times X^*(\un{T})$ such that $\rhobar|_{I_K}\cong \taubar(s,\mu)$, where
\begin{enumerate}
\item 
\label{it:sf:1}
$s_j\neq1$ (hence, $s_j=\fW$) precisely when $j=0$ and $\rhobar$ is irreducible;
\item 
\label{it:sf:2}
$\mu-\eta$ is $N$-deep in $\un{C}_0$ with {$N \ge 12$}.
\end{enumerate}
(The pair $(s,\mu)$ is not uniquely determined by $\rhobar|_{I_K}$ and depends on the choice of the embedding $\sigma_0$; however when $\rhobar$ is {13}-generic the conditions \ref{it:sf:1}--\ref{it:sf:2} above can always
be arranged by an appropriate choice of $s$, see Remark \ref{rem:emb} and \cite[Proposition 2.2.15]{LLL}.)
Up to a twist by a power of $\omega_f$ we can furthermore assume that $\mu_j = (r_j+2,1)_j\in \ZZ^2$ with $N< r_j+1<p-N$ for all $j$, and hence
\begin{equation*}
\rhobar|_{I_K} \cong
\begin{cases}
\bigg(\omega_{f}^{\sum_{j=0}^{f-1}(r_j+1)p^j}\oplus 1\bigg)\otimes \omega&\text{ if $\rhobar$ is reducible,}
\\
\bigg(\omega_{2f}^{\sum_{j=0}^{f-1} (r_{j}+1)p^j}\oplus \omega_{2f}^{\sum_{j=0}^{f-1}(r_j+1)p^{j+f}}\bigg)\otimes\omega%
&\text{ if $\rhobar$ is irreducible.}
\end{cases}
\end{equation*}

In this section we will study various framed Galois deformation rings of $\rhobar$, for which $3^f$ tame inertial types play a role, and we now introduce them.
(These are precisely the tame inertial types $\tau$ such that $\JH\left(\ovl{\sigma(\tau)}\otimes_\F (N_{k/\Fp}\circ \det)\right) \cap W(\rhobar) \ne \emptyset$, cf.\ Lemma~\ref{lem:inter} below.)
Given $$\tld{w}\in\Adm^\vee(t_{(\un 2,\un 1)})=\big\{t_{(2,1)},\, \fW t_{(2,1)},\, t_{(1,2)}\big\}^f$$ arbitrary, write $\tld{w}^*=t_\nu w$ for $(w,\nu)\in \un{W}\times X^*(\un{T})$.
Define the type $$\tau_{\tld{w}} \defeq \tau(sw^{-1},\mu-sw^{-1}(\nu))$$ 
{(or just $\tau$ when there is no ambiguity on $\tld{w}$),}
which we always consider together with its lowest alcove presentation $(s(\tau),\mu(\tau))\defeq(sw^{-1},\mu-sw^{-1}(\nu)-\eta)$. %

Concretely, $s(\tau)_j=w_{j}^{-1}$ except when $j=0$ and $\rhobar$ is irreducible, in which case we have $s(\tau)_0=\fW w_{0}^{-1}$, and
\begin{equation*}
\mu(\tau)_j + \eta_j =
\begin{cases}
  (r_j,0) & \text{if $(t_{\nu_j}w_j,s_j)\in\{(t_{(2,1)},1),\, (t_{(2,1)}\fW,\fW), (t_{(1,2)},\fW)\}$,} \\
  (r_j+1,-1) & \text{if $(t_{\nu_j}w_j,s_j)\in\{(t_{(2,1)},\fW),\, (t_{(2,1)}\fW ,1), (t_{(1,2)},1)\}$.}
\end{cases}
\end{equation*}
Then
\begin{equation*}
\tau_{\tld{w}}\cong
\begin{cases}
\widetilde\omega_{f}^{\bf{a}^{(0)}_1}\oplus \widetilde\omega_{f}^{\bf{a}^{(0)}_2}&\text{if $\prod_{j=0}^{f-1} s(\tau)_j = 1$,}
\\
\widetilde\omega_{2f}^{\bf{a}^{(0)}_1+p^f\bf{a}^{(0)}_2}\oplus
\widetilde\omega_{2f}^{\bf{a}^{(0)}_2+p^f\bf{a}^{(0)}_1}
&\text{otherwise,}
\end{cases}
\end{equation*}
where $\bf{a}^{(0)}=(\bf{a}^{(0)}_1,\bf{a}^{(0)}_2)\in \ZZ^2$ is defined to be $\bf{a}^{(0)}\defeq \sum_{j=0}^{f-1}p^j(\prod_{i=1}^{j}w_{j})(\mu(\tau)_j+\eta_j).$

Recall from \cite[\S 3.2]{LLL} that we have a functor $T^*_{dd}$ from $Y^{\leq(3,0), \tau_{\tld{w}}}(\F)$ to $2$-dimensional continuous representations of $G_{K_\infty}$ over $\F$.
\begin{lem}
\label{lem:ss:Kisin}
Up \ to \ isomorphism \ there \ exists \ a \ unique \ \emph{(}semisimple\emph{)} \ Kisin \ module \ $\ovl{\fM}$ \ in $Y^{\leq(3,0), \tau_{\tld{w}}}(\F)$ of shape $\tld{w}$ 
such that $T^*_{dd}(\ovl{\fM})\cong \rhobar|_{G_{K_\infty}}$.
\end{lem}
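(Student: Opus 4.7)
The approach is to realize $\ovl\fM$ explicitly using the classification of tame \'etale $\phz$-modules from Proposition~\ref{prop:un:twist} and to use the gauge basis formalism of Proposition~\ref{prop:GB} to conclude uniqueness. The genericity assumption $N\ge 12$ will ensure that all generic-type hypotheses of these results are satisfied.

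For existence, I would first observe that the pair $(s(\tau),\mu(\tau))$ is good: the explicit formula for $\mu(\tau)+\eta_j$ written above the statement of Lemma~\ref{lem:ss:Kisin} shows that $\mu(\tau)-\eta$ is still several deep in $\un{C}_0$, so Lemma~\ref{lem:good-if-C0} applies. By Proposition~\ref{prop:un:twist} there exists $D\in\un{T}(\F)$ with $V(\tld w, D)\cong \rhobar$. Writing $\tld{w}^*=t_\nu w\in\un{W}\times X^*(\un T)$, I would then construct $\ovl\fM$ as the Kisin module whose gauge matrix is
\[
A^{(j)}_{\ovl{\fM},\ovl\beta}\defeq \dot{w}_j v^{\nu_j} D_j.
\]
This is manifestly of shape $\tld{w}$ with respect to the lowest alcove presentation $(s(\tau),\mu(\tau))$ of $\tau_{\tld{w}}$, and the height bound $\leq(3,0)$ reduces in characteristic $p$ to divisibility of $\det A^{(j)}$ by $v^3$, which holds because each $\nu_j$ has coordinates summing to $3$ (a direct consequence of $\tld{w}\in\Adm^\vee(t_{(\un 2,\un 1)})$). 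Inverting $u'$ and comparing with Definition~\ref{def:ss-phi-mod}, the generic fibre of $\ovl{\fM}$ identifies with $\cM(\tld w, D)$, so that $T^*_{dd}(\ovl{\fM}) \cong \bV^*_K(\cM(\tld w, D))\cong V(\tld w, D)|_{G_{K_\infty}}\cong\rhobar|_{G_{K_\infty}}$ by Definition~\ref{def:ss-phi-mod2}.

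For uniqueness, let $\ovl{\fM}'\in Y^{\leq(3,0),\tau_{\tld{w}}}(\F)$ be any other semisimple Kisin module of shape $\tld{w}$ with the required Galois representation. Proposition~\ref{prop:GB} (applied over $R=\F$) produces a gauge basis for $\ovl{\fM}'$ in which $A^{(j)}_{\ovl{\fM}',\ovl\beta'}\in \cI(\F)\tld w_j\cI(\F)$ has the normal form constrained by $\tld w$. Semisimplicity would rule out the lower-order corrections otherwise allowed in that normal form, forcing $A^{(j)}_{\ovl{\fM}',\ovl\beta'} = \dot w_j v^{\nu_j} D'_j$ for some $D'\in\un{T}(\F)$. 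Inverting $u'$ shows $T^*_{dd}(\ovl{\fM}')|_{G_{K_\infty}}\cong V(\tld w, D')|_{G_{K_\infty}}$, and the hypothesis $T^*_{dd}(\ovl{\fM}')\cong \rhobar|_{G_{K_\infty}}$ combined with Lemma~\ref{lem:un:tw} and the classification in Proposition~\ref{prop:un:twist} pin down $D'$ up to the rescaling freedom of gauge bases encoded in the last clause of Proposition~\ref{prop:GB}. Rescaling the basis accordingly would provide an explicit isomorphism $\ovl{\fM}'\simto \ovl{\fM}$.

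The main obstacle I anticipate is making rigorous the reduction to the monomial form in the uniqueness step: the shape condition only specifies an Iwahori double coset, and the gauge basis form of Proposition~\ref{prop:GB} a priori still allows nonzero lower-degree entries that go beyond the pure monomial $\dot w_j v^{\nu_j}D_j$. One must pin down the precise meaning of ``semisimple'' (presumably via a torus action on the moduli stack of Kisin modules, singling out the closed Iwahori orbit) and verify that it excludes those corrections; a related subtlety is the careful book-keeping of the descent datum when passing between $T^*_{dd}$ on the Kisin module and $\bV^*_K$ on the localised \'etale $\phz$-module, so that the identification $T^*_{dd}(\ovl\fM)|_{G_{K_\infty}}\cong V(\tld w, D)|_{G_{K_\infty}}$ can be invoked without ambiguity.
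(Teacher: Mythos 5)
Your existence step contains a genuine error. You claim that after inverting $u'$ the Kisin module $\ovl{\fM}$ with gauge matrices $A^{(j)} = \dot w_j v^{\nu_j} D_j$ has generic fibre $\cM(\tld w, D)$, and then apply Proposition~\ref{prop:un:twist} with the admissible element $\tld w$. This conflates the gauge matrix $A^{(j)}_{\ovl\fM,\ovl\beta}$ with the Frobenius matrix of the associated \'etale $\phz$-module: by Definition~\ref{def:C-and-A} these differ by $\Ad\big((\dot s'_{\orient,j'+1})^{-1}(u')^{-\bf{a}'^{(j'+1)}_{(s,\mu)}}\big)$, and extracting the \'etale $\phz$-module over $k\ppar{v}\otimes_{\Fp}\F$ picks up a further contribution from the descent data of the type $\tau_{\tld w}$. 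The paper invokes \cite[Prop.~3.2.1]{LLLM2} to compute the Frobenius matrix of the associated $\phz$-module to be $\big(D\tld w(sw^{-1})^* t_{(\mu-sw^{-1}(\nu))^*}\big)_j = (Ds^* t_{\mu^*})_j$, \emph{not} $D_j\tld w_j$. As a consequence the element of $\wtld{\un W}^\vee$ you feed into Proposition~\ref{prop:un:twist} is wrong: you need a $\tld w''$ whose $\ast$ equals $t_\mu s$, matching the hypothesis $\rhobar|_{I_K}\cong\taubar(s,\mu)$, whereas your $\tld w$ gives only $(w,\nu)$, with $\nu_j\in\{(2,1),(1,2)\}$, which does \emph{not} satisfy $\rhobar|_{I_K}\cong\taubar(w,\nu)$. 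Relatedly, you check goodness of $(s(\tau),\mu(\tau))$, but the pair that has to be good for Proposition~\ref{prop:un:twist} here is $(s,\mu)$, which the paper deduces from $\mu-\eta\in\un C_0$ via Lemma~\ref{lem:good-if-C0}.

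For uniqueness you correctly identify the obstacle, but your proposed route around it (defining semisimplicity via a torus action and showing it rules out lower-degree entries in the gauge matrix) is not how the paper proceeds and would require a separate argument that the non-monomial gauge matrices of the given shape always produce the wrong $\phz$-module. The actual mechanism is not semisimplicity at all: uniqueness is the content of \cite[Thm.~3.2]{LLLM} and \cite[Prop.~3.2.18]{LLL}, which pins down the Kisin module from the shape and the \'etale $\phz$-module under a genericity bound, and the paper records the needed input explicitly, namely $3 < \ang{\mu(\tau)_j+\eta_j,\alpha_j^\vee} < p-4$ for all $j$, available here because $\mu-\eta$ is $N$-deep with $N\ge 12$. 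Without invoking that genericity bound your uniqueness argument does not close.
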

\begin{proof}
Define a Kisin module $\ovl{\fM}$ of type $\tau_{\tld{w}}$ by $A^{(j)}=D_j\tld{w}_j$ (keeping the notation of Definition~\ref{def:C-and-A} and setting $A^{(j)}\defeq A^{(j)}_{\ovl{\fM},\ovl{\beta}}$ for some choice of eigenbasis $\ovl{\beta}$ on $\ovl{\fM}$) for some $D = (D_j)\in \un{T}(\F)$.
By definition it has shape $\tld{w}$.
As $\tld{w}\in\Adm^\vee(t_{(\un 2,\un 1)}) \subseteq \Adm^\vee(t_{(\un 3,\un 0)})$ we know that $\ovl{\fM}\in Y^{\leq(3,0), \tau_{\tld{w}}}(\F)$ (\cite[\S 3.2]{LLL}).
By \cite[Prop.\ 3.2.1]{LLLM2} the associated \'etale $\phz$-module is given by
\[
\Mat(\phz^{(j)})=\big(D\tld{w}(sw^{-1})^*t_{(\mu-sw^{-1}(\nu))^*}\big)_j=(Ds^*t_{\mu^*})_j
\]
in some suitable basis.
As $\mu-\eta\in \un{C}_0$ we know by Lemma~\ref{lem:good-if-C0} that $(s,\mu)$ is good, hence by Proposition \ref{prop:un:twist} we can choose $D\in\un{T}(\F)$ such that $T^*_{dd}(\ovl{\fM})\cong \rhobar|_{G_{K_\infty}}$.
The uniqueness of $\ovl{\fM}$ follows as in \cite[Thm.\ 3.2]{LLLM}, \cite[Prop.\ 3.2.18]{LLL}
(this uses that $3 < \ang{\mu(\tau)_j+\eta_j,\alpha_j^\vee} < p-4$ for all $j$).
\end{proof}

\begin{lem}
\label{lem:inter}
There is a unique bijection $\theta: W(\rhobar)\rightarrow\big\{t_{(2,1)},\, t_{(1,2)}\big\}^f$ such that for $\sigma\in W(\rhobar)$ and $\tld{w}\in \Adm^\vee(t_{(\un 2,\un 1)})$ we have
\begin{equation}
  \sigma\in \JH\left(\ovl{\sigma(\tau_{\tld{w}})}\otimes_\F (N_{k/\Fp}\circ \det)\right)\Leftrightarrow \left(\tld{w}_j\neq \theta(\sigma)_j\ \forall\, j\right).\label{eq:theta}
\end{equation}

Moreover, if $\tau$ is any tame inertial type, then $\JH\left(\ovl{\sigma(\tau)}\otimes_\F (N_{k/\Fp}\circ \det)\right) \cap W(\rhobar) \ne \emptyset$ if and only if $\tau = \tau_{\tld{w}}$ for some $\tld{w}\in \Adm^\vee(t_{(\un 2,\un 1)})$.
\end{lem}
\begin{proof}
We note that
$\ovl{\sigma(\tau_{\tld{w}})}\otimes_\F (N_{k/\Fp}\circ \det) \cong \ovl{\sigma(\tau(sw^{-1},\mu-sw^{-1}(\nu)+(\un 1,\un 1)))}$, and
as $\tld{w}\in \Adm^\vee(t_{(\un 2,\un 1)})$ we see that $\nu-(\un 1,\un 1) \in \eta+\Lambda_R$.

Recall from \S\ref{sec:ext:graph} that the map $\omega \mapsto F(\mathfrak{t}_{\mu-\eta}(\omega))$ induces a bijection between $\Lambda^{\mu-\eta}_W \subseteq \Lambda_W$
and the set of regular Serre weights with central character $(\mu-\eta)|_{\un{Z}_0(\Fp)}$.
By Proposition~\ref{prop:SW:graph}, this map induces a bijection between $s\Sigma\subseteq \Lambda_W^{\mu-\eta}$ and the set $W(\rhobar)$ (see just before \emph{loc.~cit}.~for $\Sigma$), and
by Proposition~\ref{prop:JH:graph} this map induces a bijection between $sw^{-1}(\Sigma-\ovl\nu)\subseteq \Lambda_W^{\mu-\eta}$ and the set $\JH\left(\ovl{\sigma(\tau_{\tld{w}})}\otimes_\F N_{k/\Fp}\circ \det\right)$ (note that $\ovl{\nu-(\un 1,\un 1)}=\ovl\nu$ in $\Lambda_W$). 
(Note that Propositions~\ref{prop:SW:graph}, \ref{prop:JH:graph} apply as soon as $\mu-\eta$ is $2$-deep in alcove $\un{C}_0$, and we have $N \ge 2$.)

We conclude that the statement of the proposition is equivalent to: there is a unique bijection $\theta^{\Sigma}: \Sigma\rightarrow\big\{t_{(2,1)},\, t_{(1,2)}\big\}^f$ such that for $\omega\in \Sigma$ and $\tld{w}\in \Adm^\vee(t_{(\un 2,\un 1)})$ we have
\begin{equation}
\label{eq:char:prop}
\omega\in w^{-1}(\Sigma-\ovl\nu)\cap \Sigma
\ \Leftrightarrow\ \left(\theta^\Sigma(\omega)_j\neq\tld{w}_j\ \forall\, j\right).
\end{equation}
We first consider the case $f = 1$.
In that case, $$\tld w \in \Adm^\vee(t_{(2,1)})=\{t_{(2,1)}, \fW t_{(2,1)}, t_{(1,2)}\}$$ and note that correspondingly
$$(w,\ovl\nu) \in \{(1,\ovl{\eta}),(\fW,\ovl{\eta}),(1,-\ovl{\eta})\}.$$
\begin{figure}[t]
\caption{Extension graph}
\label{fig:ext-graph}
\includegraphics[scale=0.35]{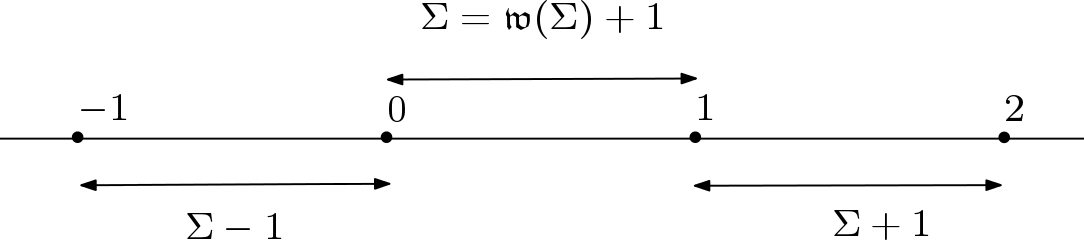}
\end{figure}%
As $\fW$ acts by $-1$ on $\Lambda_W$, we see from Figure~\ref{fig:ext-graph} and~\eqref{eq:char:prop} that
$\theta^{\Sigma}(0) = t_{(1,2)}$ and $\theta^{\Sigma}(\ovl{\eta}) = t_{(2,1)}$ is the desired unique bijection.

For general $f$, the existence follows from the case $f = 1$ by taking $\theta^{\Sigma}(\omega)_j = t_{(1,2)}$ if $\omega_{f-1-j}=0$
and $\theta^{\Sigma}(\omega)_j = t_{(2,1)}$ if $\omega_{f-1-j}=\ovl{\eta}_{f-1-j}$.
For uniqueness, fix $j$ and let $\tld{w}_{j'} \defeq \fW t_{(2,1)}$ for all $j'\ne j$.
Then \eqref{eq:char:prop} simplifies to 
$$\omega_{f-1-j}\in w_{f-1-j}^{-1}(\Sigma_{f-1-j}-\ovl\nu_{f-1-j}) \cap \Sigma_{f-1-j} \ \Leftrightarrow\ \theta^\Sigma(\omega)_j \neq \tld{w}_j,$$
where $\Sigma_{f-1-j} \defeq \{0,\ovl\eta_{f-1-j}\}$, which gives uniqueness by the case $f = 1$.

We now justify the final assertion.
We already know the ``if'' direction by~\eqref{eq:theta}.
Conversely, suppose that $\sigma \in \JH\left(\ovl{\sigma(\tau)}\otimes_\F (N_{k/\Fp}\circ \det)\right) \cap W(\rhobar)$.
By~\eqref{eq:theta} there exist $2^f$ elements $\tld{w}\in \Adm^\vee(t_{(\un 2,\un 1)})$ such that $\sigma \in \JH\left(\ovl{\sigma(\tau_{\tld{w}})}\otimes_\F (N_{k/\Fp}\circ \det)\right)$.
As $\sigma \in \JH\left(\ovl{\sigma(\tau)}\otimes_\F (N_{k/\Fp}\circ \det)\right)$, the representation $\sigma(\tau) \otimes_E \chi$ is an irreducible constituent of $\wt P_\sigma[1/p]$, where $\chi$ denotes the Teichm\"uller lift of $N_{k/\Fp}\circ \det$ and $\wt P_\sigma$ denotes the projective cover of $\sigma$ as $\cO[\GL_2(k)]$-module.
It thus suffices to show that $\wt P_\sigma[1/p]$ has length $2^f$. 

We prove that $\wt P_\sigma[1/p]$ has length $2^f$  for any Serre weight $\sigma=F(\lambda)$ such that $\lambda\in X^*(\underline{T})$ satisfies $0\leq \langle \lambda,\alpha_i^\vee\rangle \leq p-2$ for all $i\in\cJ$. We first treat the case $\dim_{\F}(\sigma)\geq 2$. Then we have $\dim_E (\wt P_\sigma[1/p]) =\dim_{\F}(P_{\sigma})= (2p)^f$ by the structure of $P_{\sigma}\defeq\wt{P}_{\sigma}\otimes_{\cO}\F$ described in \cite[\S3]{BP} (as $P_{\sigma}$ is isomorphic to  the injective envelope  of $\sigma$ as $\F[\GL_2(k)]$-module), and $\dim_E(V)\geq p^f-1$ for any irreducible constituent $V$ of $\wt{P}_{\sigma}[1/p]$ (as $[\overline{V}:\sigma]\neq0$). Hence, if $\wt{P}_{\sigma}[1/p]$ had length $\geq 2^f+1$ we would obtain a contradiction  as  $(2^f+1)(p^f-1) > (2p)^f$ (as $p > 3$). On the other hand, since any irreducible $E[\GL_2(k)]$-module has dimension $\leq p^f+1$, a similar computation shows that $\wt{P}_{\sigma}[1/p]$ has length $\geq 2^f$, which proves the claim.  The case $\dim_{\F}(\sigma)=1$ can be treated in a similar way, noting that $\dim_{E}(\wt{P}_{\sigma}[1/p])=(2^f-1)p^f$ and that $\wt{P}_{\sigma}[1/p]$ contains a unique irreducible constituent of dimension $1$. 
Alternatively, using \cite[Lemmas 3.4, 3.5 $\&$ 3.8]{BP} one checks that $[P_{\sigma}:\sigma]=2^f$ and concludes by noting that  irreducible  $E[\GL_2(k)]$-modules are residually multiplicity-free (\cite[Prop.\ 1.1, Prop.\ 1.3]{diamond-durham}. This moreover shows that $\wt{P}_{\sigma}[1/p]$ is multiplicity-free.
\end{proof}

\subsection{Deformation rings I: single type}
\label{sec:deformation-rings1}

We now compute some Galois deformation rings of $\rhobar$ for a single type $\tau$ and Hodge--Tate weights $\le (3,0)$, meaning Hodge--Tate weights $(3,0)$ or $(2,1)$.

We suppose that $\rhobar$ is as in \S\ref{sec:setup}.
Fix now $\tld{w}\in \Adm^\vee(t_{(\un 2,\un 1)})$ and $\ovl{\fM}\in Y^{\leq (3,0),\tau_{\tld{w}}}(\F)$ semisimple of shape $\tld{w}$ such that $T^*_{dd}(\ovl{\fM})\cong\rhobar|_{G_{K_\infty}}$ (Lemma \ref{lem:ss:Kisin}). By the proof of Lemma \ref{lem:ss:Kisin}, $\ovl{\fM}$ is such that the associated matrix $\ovl{A}^{(j)}\defeq {A}^{(j)}_{\ovl{\fM},\ovl{\beta}}$ equals $D_j\tld{w}_j$ for some $D_j\in T(\F)$ and some choice of eigenbasis $\ovl{\beta}$ for $\ovl{\fM}$.

We use the notation 
\begin{equation*}
D_{f-1-j}=
\begin{cases}
\begin{pmatrix}
\ovl{e_{11}^{\ast (j)}}&0\\
0& \ovl{d_{22}^{\ast (j)}}
\end{pmatrix}& \text{if $\tld{w}_{f-1-j}=t_{(2,1)}$,}\\
\begin{pmatrix}
\ovl{d_{12}^{\ast (j)}}&0\\
0&\ovl{d_{21}^{\ast (j)}}
\end{pmatrix}& \text{if $\tld{w}_{f-1-j}=\fW t_{(2,1)}$,}\\
\begin{pmatrix}
\ovl{d_{11}^{\ast (j)}}&0\\
0& \ovl{e_{22}^{\ast (j)}}
\end{pmatrix}& \text{if $\tld{w}_{f-1-j}=t_{(1,2)}$.}
\end{cases}
\label{eq:5}
\end{equation*}
(See Tables \ref{Table1FV}--\ref{Table3FV}, where the superscript $(j)$ is omitted for readability.)

Let $R^{\leq(3,0),\tau_{\tld{w}}}_{\rhobar}$ denote the maximal reduced, $\cO$-flat quotient of $R^{\square}_{\rhobar}$
that parametrizes lifts of $\rhobar$ of Hodge--Tate weights $\le (3,0)$ in each embedding and tame inertial type $\tau_{\tld{w}}$.
For each dominant character $\lambda\in X^*(\un{T})$ let $R^{\lambda,\tau_{\tld{w}}}_{\rhobar}$ denote the maximal reduced, $\cO$-flat quotient of $R^{\square}_{\rhobar}$
that parametrizes lifts of $\rhobar$ of Hodge--Tate weights $\lambda_j$ in the $j$-th embedding $\sigma_j$ for all $j$ and tame inertial type $\tau_{\tld{w}}$.

\begin{prop}
\label{prop:def:ring}
We have an isomorphism
\[
R^{\leq(3,0), \tau_{\tld{w}}}_{\rhobar}\bbra{X_1,\dots,X_{2f}}\cong\Big(
R/\sum_j I^{(j)}
\Big)\bbra{Y_1,\dots,Y_4},
\]
where $R \defeq \widehat{\bigotimes}_{\cO, 0\leq j\leq f-1}R^{(j)}$ and the $\cO$-algebras $R^{(j)}$ and the ideals $I^{(j)}$ of $R$ are found in Tables \ref{Table1FV}--\ref{Table3FV}. The irreducible
components of $\Spec R^{\leq(3,0), \tau_{\tld{w}}}_{\rhobar}$ are given by the $\Spec R^{\lambda, \tau_{\tld{w}}}_{\rhobar}$,
where $\lambda = (\lambda_j) \in \{(3,0), (2,1)\}^f$.

More precisely, via the above isomorphism, for any choice of $\lambda = (\lambda_j) \in \{(3,0), (2,1)\}^f$ the kernel of the natural surjection
$R^{\leq(3,0), \tau_{\tld{w}}}_{\rhobar}\bbra{X_1,\dots,X_{2f}} \onto R^{\lambda, \tau_{\tld{w}}}_{\rhobar}\bbra{X_1,\dots,X_{2f}}$ is
generated by the prime ideal $\sum_{j=0}^{f-1} \fp^{(j),\lambda_{f-1-j}}$ of $R/\sum_j I^{(j)}$, where the ideals $\fp^{(j),\lambda_{f-1-j}}$ of $R/\sum_j I^{(j)}$ are found in  Tables \ref{Table1FV}--\ref{Table3FV}.
\end{prop}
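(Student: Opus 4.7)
The plan is to follow the strategy developed in \cite{LLLM}, \cite{LLLM2}, \cite{LLL}, \cite{MLM}: realise $\Spf R^{\leq(3,0),\tau_{\tld w}}_{\rhobar}$ as the image of a (pro-)representable Kisin module deformation problem, use a gauge basis to present the latter by matrix equations, and then impose the monodromy condition to read off the tabulated ring. The extra variables $X_1,\dots,X_{2f}$ on the left and $Y_1,\dots,Y_4$ on the right will arise from the usual framing/eigenbasis discrepancies.

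First I would set up the Kisin side. Let $\ovl{\fM}\in Y^{\leq(3,0),\tau_{\tld w}}(\F)$ be the semisimple Kisin module of shape $\tld w$ with $T^*_{dd}(\ovl{\fM})\cong \rhobar|_{G_{K_\infty}}$ produced by Lemma \ref{lem:ss:Kisin}, equipped with its canonical gauge basis $\ovl\beta$. Since $\tau_{\tld w}$ is $(N-1)$-generic with $N\geq 12 \gg 2h+1$ for $h=3$, Proposition \ref{prop:GB} yields a versal deformation $(\fM^{\mathrm{univ}},\beta^{\mathrm{univ}})$ of $(\ovl{\fM},\ovl\beta)$ over an explicit power series ring: the entries of the matrices $A^{(j)}\in \M_2(R^{\mathrm{univ}}[v])$ are forced to vanish in prescribed $v$-degrees by Proposition \ref{prop:GB}(\ref{it:GB:1})(\ref{it:GB:2}), and the remaining matrix coefficients of $A^{(j)}_{ik}\in R^{\mathrm{univ}}[v+p]$ are free parameters. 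Reading off the shape $\tld w_{f-1-j}\in\{t_{(2,1)},\fW t_{(2,1)},t_{(1,2)}\}$ and using \eqref{eq:5} to label the diagonal deformation variables $e_{ii}^{*(j)},d_{ik}^{*(j)}$, one obtains exactly the local rings $R^{(j)}$ listed in Tables \ref{Table1FV}--\ref{Table3FV}, so that the universal Kisin module of shape $\tld w$, type $\tau_{\tld w}$, and height $\leq(3,0)$ is carried by $R=\widehat{\bigotimes}_{\cO,j}R^{(j)}$.

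Next I would impose the monodromy condition. Since $h=3$ and $N\geq 12\geq 2h-3$ and $(N-1)(p-1)\geq h$, Proposition \ref{prop:monodromy} applies and gives, for every $j$ and every $t\in\{0,1\}$, an equation of the form
\[
\Bigl(\tfrac{d}{dv}\Bigr)^{\!t}\big|_{v=-p} P_N(A^{(j-1)})+O(p^{N-2-t})=0.
\]
Using the explicit form of $A^{(j-1)}$ provided by the gauge basis above, a direct calculation of $P_N(A^{(j-1)})$ produces, entry by entry and shape by shape, the generators of the ideal $I^{(j)}$ recorded in Tables \ref{Table1FV}--\ref{Table3FV}; the genericity bound $N\geq 12$ ensures that the error terms $O(p^{N-2-t})$ are absorbed into the leading relations (after Weierstrass preparation they only modify the tabulated generators by units), so that they do not change the underlying ideal. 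Quotienting $R$ by $\sum_j I^{(j)}$ therefore gives the versal ring of the monodromy-corrected Kisin module problem, which by Kisin's theory (or by \cite[Thm.~3.2]{LLLM}, \cite[Prop.~3.3.1, 3.3.8]{LLL}, \cite{MLM}) represents the moduli of potentially crystalline framed deformations of $\rhobar$ of Hodge--Tate weights in $[0,3]$ and inertial type $\tau_{\tld w}$, up to a formally smooth morphism whose relative dimension ($4$ versus $2f$) is given by the discrepancy between the $\GL_2$-framing of $\rhobar$ and the scaling/eigenbasis ambiguity for gauge bases described in Proposition \ref{prop:GB}; this produces the power series variables $X_i$ and $Y_k$ in the statement.

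Finally I would identify the irreducible components. For each $j$ and each choice $\lambda_{f-1-j}\in\{(3,0),(2,1)\}$ the tabulated ideal $\fp^{(j),\lambda_{f-1-j}}\subseteq R$ cuts out the closed substack $Y^{\lambda_{f-1-j},\tau_{\tld w}}$ at embedding $j$, because the determinant condition forces the $j$-th slope filtration to split either with weights $(3,0)$ or $(2,1)$; by Lemma \ref{lem:irreducible}, applied to the residual relations of the form $xy+\varpi^d f$ that appear in the tables (these are the universal manifestations of the ``$(v+p)^3$ or $(v+p)\cdot (v+p)^2$'' factorisation of the determinant), each $\fp^{(j),\lambda_{f-1-j}}$ is prime and the distinct $\lambda_{f-1-j}$ yield distinct primes. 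Summing over $j$ gives $2^f$ primes whose union equals $V(\sum_j I^{(j)})$ on the level of reduced schemes, so the corresponding quotients are exactly the irreducible components and coincide with $R^{\lambda,\tau_{\tld w}}_{\rhobar}$. The main obstacle in this plan is the third paragraph: the explicit computation of $P_N(A^{(j-1)})$ and the verification that the tabulated ideals $I^{(j)}$ and primes $\fp^{(j),\lambda}$ correctly record the monodromy relations for all three shapes; this is the combinatorially heavy ``by hand'' bookkeeping to which the authors allude, and it is the only step that does not reduce to an application of a previously proved lemma.
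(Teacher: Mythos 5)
Your outline captures the correct global architecture — gauge bases via Proposition \ref{prop:GB}, the height conditions from $\det A^{(f-1-j)}\in R^\times(v+p)^3$, the monodromy conditions from Proposition \ref{prop:monodromy}, the framing discrepancy of $2f$ vs.\ $4$ variables, and Lemma \ref{lem:irreducible} to certify the primes. But there is a genuine gap in your third paragraph that cannot be patched by the phrase you use.

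You write that ``the genericity bound $N\geq 12$ ensures that the error terms $O(p^{N-2-t})$ are absorbed into the leading relations (after Weierstrass preparation they only modify the tabulated generators by units).'' This is not correct, and it glosses over the most delicate point of the proof. The error terms are \emph{additive} perturbations of the generators, not multiplicative ones, so there is no Weierstrass-preparation mechanism that would absorb them into a unit. Worse, as the paper emphasizes in the footnotes to Tables \ref{Table1FV}--\ref{Table3FV}, those $O(p^{N-8})$ tails involve variables from \emph{all} embeddings, so even after isolating a single $j$ one is not dealing with an ideal of $R^{(j)}$ alone. Because of this, one cannot simply assert that $R/\sum_j I^{(j)}$ (with the genuine but inexplicit tails) is the completed tensor product $\widehat\bigotimes_j R^{(j)}/I^{(j)}_{\mathrm{poly}}$. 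The paper resolves this by an Elkik-approximation argument: one forms the polynomial subring $R_{\mathrm{poly}}=\bigotimes_j R^{(j)}_{\mathrm{poly}}$ and the tail-free ideal $I_{\mathrm{poly}}=\sum_j I^{(j)}_{\mathrm{poly}}$, checks (Lemma \ref{lem:Elkik:1}) that $p^3\in H^{(j)}+I^{(j)}_{\mathrm{poly}}$, and invokes \cite[Lemme~1]{elkik} to produce a surjection $\wt\phi:R/I_{\mathrm{poly}}\onto R/I_\infty$ agreeing with the natural map modulo $p^{N-8}$; then Lemma \ref{lem:automorphism} upgrades this to an automorphism of $R$ sending $I_{\mathrm{poly}}$ onto $I_\infty$. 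Without this step the ``explicit'' presentation of the proposition is not established, and the tabulated ideals $I^{(j)}$ (with their tails) have no intrinsic meaning.

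A second, related gap: the conclusion that $\wt\phi$ is an isomorphism requires knowing that $\Spec R^{\leq(3,0),\tau,\nabla}_{\ovl{\fM},\ovl\beta}$ has \emph{at least} $2^f$ irreducible components. Your argument only shows that $R/I_{\mathrm{poly}}$ has $2^f$ components (each cut out by a sum of ideals that are prime by Lemma \ref{lem:irreducible}); a priori the surjection onto the actual deformation ring could identify some of them. The paper supplies the lower bound by exhibiting, for each $\lambda\in\{(3,0),(2,1)\}^f$, a potentially crystalline lift of $\rhobar$ with those labelled Hodge--Tate weights and inertial type $\tau_{\tld w}$, via \cite[Thm.~D]{GHLS} together with Lemma \ref{lem:inter} (to verify the weight-intersection hypothesis~\eqref{eq:int}). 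This component count is indispensable and is entirely absent from your proposal.

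So the overall strategy is sound, but the proof as written would not go through: you need the Elkik machinery to legitimize dropping the tails, and you need the modularity/existence-of-lifts input to pin down the component count.
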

\begin{rem}
To obtain Proposition \ref{prop:def:ring} we cannot use directly the results of \cite{MLM}, namely Theorem 7.3.2(2) there.
In fact, on the one hand we need the precise equations for the ideals $I^{(j)}$ to perform the computations in Proposition \ref{prop:p:in:inter} (where we check that $p$ is contained in suitably chosen ideals in multi Hodge-type deformation rings).
On the other hand we need to perform Elkik's approximation theorem (used in the proof of \cite[Thm.\ 7.3.2(2)]{MLM}) in an effective way to have ``explicit'' generators of the minimal primes of the multi-type deformation rings.
As a byproduct, we have less stringent conditions on the tame inertial types appearing in Proposition \ref{prop:def:ring} above, in that the genericity of $\tau_{\tld{w}}$ is the explicit requirement that $\mu(\tau_{\tld{w}})$ is {$11$-deep} in $\un{C}_0$, rather than a condition on an inexplicit polynomial $P_{\tau_{\tld{w}}}\in \ZZ[X_1,X_2]$ such that $P_{\tau_{\tld{w}}}(\mu(\tau_{\tld{w}})_j)\not\equiv 0\pmod{p}$ for all $j\in\cJ$ (cf.~the geometric genericity condition described in \cite[\S 1.3]{MLM}).
\end{rem}

\begin{proof}
We let $\tau\defeq \tau_{\tld{w}}$ for short.

As $\ovl{A}^{(j)}=D_j\tld{w}_j$, the standard basis $\ovl{\beta}$ is a gauge basis of $\ovl{\fM}$ in the sense of \cite[Def.\ 3.2.23]{LLL}.
(There, $\ovl{\fM}\in Y^{\eta,\tau}(\F)$ but $\eta$ plays no role.)
For $R'$ a complete noetherian local $\cO$-algebra with residue field $\F$ define $D^{\leq (3,0),\tau}_{\ovl{\fM},\ovl{\beta}}(R')$ to be the groupoid of triples $(\fM,\beta,\jmath)$, where $\fM\in Y^{\leq(3,0),\tau}(R')$, $\beta$ is a gauge basis of $\fM$ (Definition \ref{def:gauge:basis}) and $\jmath: \fM\otimes_{R'}\F\stackrel{\sim}{\ra}\ovl{\fM}$ sending $\beta$ to $\ovl{\beta}$.
From the definition of a gauge basis, for any lift $(\fM,\beta,\jmath)\in D^{\leq (3,0),\tau}_{\ovl{\fM},\ovl{\beta}}(R)$ the corresponding matrices $A^{(j)}$ are given in row 1 of Tables \ref{Table1FV}--\ref{Table3FV}, where the entries $c_{11}^{(j)}, c_{12}^{(j)}$, \dots\ are in $R$, subject to $A^{(f-1-j)}$ reducing to our fixed $\ovl{A}^{(f-1-j)}$ modulo $\fm_{R}$.
By the analog of \cite[Prop.~3.2.1]{LLLM2}, the \'etale $\varphi$-module over $\cO_{\cE,R}$ corresponding to $T^*_{dd}(\fM)$ is given, at embedding $(f-1-j)$, by row 2 of Tables \ref{Table1FV}--\ref{Table3FV}.

By the analog of \cite[Prop.\ 4.18]{LLLM} the finite height conditions are given by
$$\det A^{(f-1-j)}\in R^\times (v+p)^3\ \forall\, j,$$
giving rise to the generators of the ideal $I^{(j),\leq (3,0)}$ in row 4 of Tables \ref{Table1FV}--\ref{Table3FV}.
As in \cite[Thm.\ 4.17]{LLLM}, $D_{\ovl{\fM},\ovl{\beta}}^{\leq (3,0),\tau}$ is represented by the maximal reduced $p$-flat quotient of $\widehat{\bigotimes}_{\cO, 0\leq j\leq f-1}R^{(j)}/I^{(j),\leq (3,0)}$, which we also denote by $R_{\ovl{\fM},\ovl{\beta}}^{\leq (3,0),\tau}$.

By the second statement of Proposition \ref{prop:monodromy} (applied with $h = 3$ {and noting that $\tau$ is $(N-1)$-generic}) the monodromy conditions are given by
\[
\left(\frac{d}{dv}\right)^{\!t}\Big|_{v=-p}\left[P_N(A^{(f-1-j)})\right]+O(p^{N-3-t})=0%
\]
for all $0\leq t\leq 1,\, 0\leq j\leq f-1$.
{(Recall that the $O(p^{N-3-t})$ denote specific but inexplicit elements of $p^{N-3-t}\M_2(R)$.)}
Note that 
\begin{align*}
P_N(A^{(f-1-j)})&\equiv
\left[
-e'v \frac{d}{dv}A^{(f-1-j)}+ A^{(f-1-j)}\begin{pmatrix}b^{(j)}&0\\0&c^{(j)}\end{pmatrix}
\right](v+p)^3 (A^{(f-1-j)})^{-1}\\
&\equiv
-e'\left[
v \frac{d}{dv}A^{(f-1-j)}- A^{(f-1-j)}\begin{pmatrix}a^{(j)}&0\\0&0\end{pmatrix}
\right](v+p)^3 (A^{(f-1-j)})^{-1}
\end{align*}
modulo $(v+p)^3 \M_2(R\bbra{v})$, where $(b^{(j)},c^{(j)})\defeq (s'_{\orient,f-j})^{-1}(\bf{a}^{\prime\, (f-j)}_{(s(\tau),\mu(\tau))})\in \ZZ^2$ (see~\eqref{eq:bold-a} for $\bf{a}^{\prime\, (f-j)}_{(s(\tau),\mu(\tau))}$) and $a^{(j)}\defeq \frac{b^{(j)}-c^{(j)}}{e'}\in \ZZ_{(p)}$.
(Note that the ``other'' term $\text{\tiny{$\begin{pmatrix}b^{(j)}&0\\0&c^{(j)}\end{pmatrix}$}}A^{(f-1-j)}
(v+p)^3 (A^{(f-1-j)})^{-1}$ from the Lie bracket in equation (\ref{eq:non-expl_mon}) is in $(v+p)^3 \M_2(R\bbra{v})$.)
We emphasize that the constants $a^{(j)}, b^{(j)}$ and $c^{(j)}$ \emph{depend on the whole $f$-tuple} $\tld{w}\in \Adm^\vee(t_{\un{2},\un{1}})$.

Combining this, the monodromy condition is 
\[
\left(\frac{d}{dv}\right)^{\!t}\Big|_{v=-p}\left\{\left[
v \frac{d}{dv}A^{(f-1-j)}- A^{(f-1-j)}\begin{pmatrix}a^{(j)}&0\\0&0\end{pmatrix}
\right](v+p)^3 (A^{(f-1-j)})^{-1}\right\}+O(p^{N-3-t})=0
\]
for all $0\leq t\leq 1,\, 0\leq j\leq f-1$.
The entries of the left-hand side give rise to the eight generators in row 5 of Tables \ref{Table1FV}--\ref{Table3FV}, where we denote $a^{(j)}$ by $a_1$, $a_2$, $a_3$ respectively ($j$ being omitted in the tables).

By equation~\eqref{eq:orientation} in Remark \ref{rmk:orient} we have
\[
(b^{(j)},c^{(j)})\equiv (s'_{\orient,f-j})^{-1}(\bm{\alpha}'_{(s(\tau),\mu(\tau)),j-f})\equiv s(\tau)_j^{-1}(\mu(\tau)+\eta)_j\equiv (ws^{-1}(\mu)-\nu)_j\pmod p,
\]
recalling that $(s(\tau),\mu(\tau))=(sw^{-1},\mu-sw^{-1}(\nu)-\eta)$. As $e'=p^{f'}-1$, we get $a^{(j)} \equiv -\langle(ws^{-1}(\mu)-\nu)_j,\alpha_j^\vee\rangle\pmod p$.
As $\mu_j = (r_j+2,1)$, this gives us the explicit formulas for $a^{(j)} \pmod p$ listed below Tables~\ref{Table1FV}--\ref{Table3FV}.

Let $R^{\leq(3,0),\tau,\nabla}_{\ovl{\fM},\ovl{\beta}}$ be the maximal reduced and $\cO$-flat quotient of
$R/ \sum_j(I^{(j),\leq (3,0)}+I^{(j),\nabla})$.
As in \cite[\S 5]{LLLM}, using that $\ad(\rhobar)$ is cyclotomic free we get
\begin{equation}
\label{eq:def:ring}
R^{\leq(3,0), \tau}_{\rhobar}\bbra{X_1,\dots,X_{2f}}\cong R^{\leq(3,0),\tau,\nabla}_{\ovl{\fM},\ovl{\beta}}\bbra{Y_1,\dots,Y_4}.
\end{equation}
(See in particular Thm.\ 5.12, Cor.\ 5.13, and Diagram (5.9) in \cite{LLLM}, noting that for us $n = 2$, so the addition of the gauge basis requires $2f$ instead
of $3f$ variables and the framing of the Galois deformation requires $2^2 = 4$ instead of $3^2 = 9$ variables. Note also that $T_8$ should be $T_9$ in \cite[Cor.\ 5.13]{LLLM},
cf.\ the errata in \cite[\S6]{LLLM2}.
Finally note that we allow deformations with \emph{any} Hodge--Tate weights $\le (3,0)$, so we do not have a restriction on the shape as in \cite[Cor.\ 5.13]{LLLM}.)

We now compute ``explicit'' generators of 
\[
I_\infty\defeq \ker\left(R\onto R^{\leq(3,0),\tau,\nabla}_{\ovl{\fM},\ovl{\beta}}\right)
\]
and show that $I_\infty=\sum_{j}I^{(j)}$, where the ideals $I^{(j)}$ of $R$ are given in row 6 of Tables \ref{Table1FV}--\ref{Table3FV}.
(Note that the $O(p^{N-8})$ tails in Tables \ref{Table1FV}--\ref{Table3FV} involve variables
of \emph{all} embeddings. In particular, the tails depend on $\tld w$ and not just on $\tld w_{f-1-j}$, and $I^{(j)}$ is not an ideal of $R^{(j)}$ in general!)

We first define a dense polynomial sub-$\cO$-algebra $R^{(j)}_{\mathrm{poly}}$ of $R^{(j)}$ for each $0\leq j\leq f-1$ by
\begin{align*}
  R^{(j)}_{\mathrm{poly}}\defeq \cO[c_{11},d_{11},x_{11}^*,\frac{c_{12}}{e_{11}^*},c_{21},\frac{d_{21}}{d_{22}^*},c_{22},x_{22}^*]&\qquad\text{if $\tilde w_{f-1-j} = t_{(2,1)}$}&\text{ (see Table \ref{Table1FV})},\\
  R^{(j)}_{\mathrm{poly}}\defeq \cO[c_{11},\frac{d_{11}}{d^*_{12}},c_{12},x^*_{12},c_{21},x_{21}^*,c_{22},\frac{d_{22}}{d_{21}^*}]&\qquad\text{if $\tilde w_{f-1-j} = \fW t_{(2,1)}$}&\text{ (see Table \ref{Table2FV})},\\
  R^{(j)}_{\mathrm{poly}}\defeq \cO[c_{11},x^*_{11},c_{12},\frac{d_{12}}{d_{11}^*},\frac{c_{21}}{e_{22}^*},c_{22},d_{22},x_{22}^*]&\qquad\text{if $\tilde w_{f-1-j} = t_{(1,2)}$}&\text{  (see Table \ref{Table3FV})}.
\end{align*}
Note in fact that the subspace topology on $R^{(j)}_{\mathrm{poly}}$ is the $\fm$-adic topology, where $\fm$ is the maximal ideal generated by all the polynomial variables above as well as $\varpi$. (Note that the polynomial variables above are power series generators of $R^{(j)}$.)
Let $R_{\mathrm{poly}}\defeq \bigotimes_{\cO,j}R_{\mathrm{poly}}^{(j)}$ and $I_{\mathrm{poly}}\defeq \sum_jI^{(j)}_{\mathrm{poly}}$, where $I^{(j)}_{\mathrm{poly}}$ is the ideal of $R_{\mathrm{poly}}^{(j)}$ generated by the elements in row 6 of Tables \ref{Table1FV}--\ref{Table3FV} \emph{without their $O(p^{N-8})$ tails}.

We first show that $I_{\mathrm{poly}}\subseteq (I_{\infty},\ p^{N-5})$.

In the following, we will focus on Table \ref{Table2FV} (the other cases being similar).
Let us label the elements on the right side of row 4 by $(H_i)$ ($1\leq i\leq 3$), of row 5 by $(M_i)$ ($1\leq i\leq 8$), and of row 6 \emph{without their $O(p^{N-8})$ tails} by $(G_i)$ ($1\leq i\leq 5$).
Then, omitting superscripts $(j)$ for simplicity,
\begin{align}
\label{eq:first:incl}
\frac{1}{p}\left[-(M_7)+\frac{1}{p}(M_8)\right]&=
d_{12}^*c_{21}+(a_2-2)(c_{12}d_{21}^*+d_{12}^*c_{21})+(d_{11}d_{22}+pd_{12}^\ast d_{21}^*)+O(p^{N-5})\\
&=-c_{12}d_{21}^*+(a_2-1)(c_{12}d_{21}^*+d_{12}^*c_{21})+(d_{11}d_{22}+pd_{12}^\ast d_{21}^*)+O(p^{N-5}),\nonumber
\end{align}
so replacing $c_{12}d_{21}^*+d_{12}^*c_{21}$ by $d_{11}d_{22}+pd_{12}^\ast d_{21}^*$ using $(H_1)$ we see that $(G_1),\ (G_2)\in (I_{\infty}, p^{N-5})$ (noting that the left-hand side of equation (\ref{eq:first:incl}) is in the $p$-saturation of the ideal $I^{(j),\leq (3,0)}+I^{(j),\nabla}$, so is in particular an element of $I_\infty$).
From $(M_3)$ and $(G_2)$ we get $(G_3)\in  (I_{\infty}, p^{N-5})$, as $a_2\not\equiv -1\pmod p$.

From $\frac{1}{p}[-(M_5)+\frac{1}{p}(M_6)]$ and $(G_1)$ we get $(G_4)\in  (I_{\infty}, p^{N-5})$, as $a_2\not\equiv 2\pmod p$.
Replacing $c_{12}$, $c_{21}$, $c_{11}$ in $\frac{1}{p}(M_8)$ by using the elements $(G_1), (G_2), (G_3)$ and as $a_2\not\equiv 0, -1\pmod p$ we get
\[
(d_{11}d_{22}+pd_{12}^*d_{21}^*)\left(d_{11}d_{22}+p\frac{(a_2-2)(a_2+1)}{a_2(a_2-1)}d_{12}^*d_{21}^*\right)+O(p^{N-5})\in I_\infty,
\]
hence $(G_5)\in (I_\infty,p^{N-5})$.
Thus we have $I_{\mathrm{poly}}\subseteq (I_{\infty},\ p^{N-5})$.

For any $0 \le j \le f-1$ we can then consider the commutative diagram of $\cO$-algebras
\begin{equation}\label{eq:elkik-diag}
  \begin{gathered}
    \xymatrix{
      R/(I_\infty,p^{N-5})&\ar_-{\;\phi^{(j)}}[l]R^{(j)}_{\mathrm{poly}}/I^{(j)}_{\mathrm{poly}}\\
      R/I_\infty\ar@{->>}[u]&\cO\ar[u]\ar[l] }
  \end{gathered}
\end{equation}
where $\phi^{(j)}$ is induced by the inclusions $R^{(j)}_{\mathrm{poly}}\into R^{(j)}\into R$.
Let $H^{(j)}$ be the ideal of the polynomial ring $R^{(j)}_{\mathrm{poly}}$ defined in \cite[\S 0.2]{elkik} (and denoted there by $\sum_{(\alpha),p} K_{(\alpha)} \Delta_{(\alpha)}$) for the finitely presented algebra $\cO\rightarrow R^{(j)}_{\mathrm{poly}}/I^{(j)}_{\mathrm{poly}}$.

\begin{lem}
\label{lem:Elkik:1}
We have $p^{3}\in H^{(j)}+I^{(j)}_{\mathrm{poly}}$.
\end{lem}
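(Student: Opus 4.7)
The strategy is direct computation, exploiting that $I^{(j)}_{\mathrm{poly}}$ is generated by exactly five polynomials $G_1,\dots,G_5$ (row~6 of the appropriate table, without the $O(p^{N-8})$ tails) inside the polynomial ring $R^{(j)}_{\mathrm{poly}}$ on eight generators. Because we will take the Jacobian with respect to all five generators at once, the colon-ideal factor appearing in Elkik's definition of $H^{(j)}$ is trivially the unit ideal, and hence \emph{any} $5\times 5$ minor $\Delta$ of the $5\times 8$ Jacobian matrix $J=(\partial G_i/\partial X_j)$ belongs to $H^{(j)}$. It therefore suffices to exhibit one such minor $\Delta$ and show that
\[
\Delta \equiv u\cdot p^{3} \pmod{I^{(j)}_{\mathrm{poly}}}
\]
for some unit $u\in R^{(j)}_{\mathrm{poly}}$.

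I would carry out this computation in parallel in the three cases corresponding to Tables~\ref{Table1FV}, \ref{Table2FV}, \ref{Table3FV}, since they differ only in labels. The structure of the generators, as made visible in the proof of Proposition~\ref{prop:def:ring} (and in particular in equation~\eqref{eq:first:incl} for Table~\ref{Table2FV}), is as follows: the four generators $G_1,G_2,G_3,G_4$ are affine in one of the variables $c_{11},c_{12},c_{21},c_{22}$ respectively, with leading coefficients that are units modulo $\fm$ — the corresponding numerical factors being the constants $a^{(j)}$, $a^{(j)}-1$, $a^{(j)}-2$, $a^{(j)}+1$, all invertible in $\cO$ by the genericity hypothesis $N\ge 12$. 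The fifth generator $G_5$ is a quadratic Pl\"ucker-type relation of the form $(d_{11}d_{22})^{2}+p\cdot(\text{unit})\cdot d_{11}d_{22}\cdot d^{*}_{12}d^{*}_{21}+p^{2}\cdot(\text{unit})\cdot(d^{*}_{12}d^{*}_{21})^{2}+O(p^{N-5})$.

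Accordingly, I would let $\Delta$ be the minor obtained by differentiating $(G_1,G_2,G_3,G_4)$ with respect to $(c_{11},c_{12},c_{21},c_{22})$ and $G_5$ with respect to the remaining ``fractional'' variable (e.g.\ $d_{11}/d^{*}_{12}$ in the Table~\ref{Table2FV} case). After an easy reorganization, the first four rows and columns yield a block which is upper-triangular modulo $\fm$ with unit diagonal entries, contributing a unit factor to $\Delta$. The remaining scalar entry, namely $\partial G_5/\partial(d_{11}/d^{*}_{12})$ computed modulo $I^{(j)}_{\mathrm{poly}}$, is where the $p^{3}$ factor arises: using the factored form of $G_5$ recalled above and the relation $G_5\equiv 0$, the leading $(d_{11}d_{22})^{2}$-term is killed and one is left with an expression of exact $p$-adic valuation three.

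The main technical obstacle is the careful bookkeeping that ensures no cross-derivative in $J$ contributes a lower $p$-power to the full $5\times 5$ determinant. This reduces to checking that the constants $a^{(j)},a^{(j)}\pm 1,a^{(j)}-2$ are all units in $\cO$, which is exactly the content of the $11$-deepness of $\tau_{\tld{w}}$ (hence the assumption $N\ge 12$); and that the neglected $O(p^{N-8})$ tails in the $G_i$, which involve variables from other embeddings, contribute only terms in $p^{4}R$ to the minor, lying deeper in the maximal ideal than the precision $p^{3}$ we are tracking. Once the computation is performed for one table, the other two are handled verbatim with cosmetic changes in notation, yielding the claimed inclusion $p^{3}\in H^{(j)}+I^{(j)}_{\mathrm{poly}}$.
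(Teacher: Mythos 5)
The proposal correctly identifies the right objects ($5\times 5$ minors of the Jacobian of $G_1,\dots,G_5$, with the block coming from $G_1,\dots,G_4$ differentiated in the $c$-variables contributing a unit), and the paper does use exactly the minor you describe, equal up to a unit to $\partial G_5/\partial x$ with $x \defeq d_{11}/d^*_{12}$. But the central claim --- that this minor satisfies $\Delta\equiv u\,p^3 \pmod{I^{(j)}_{\mathrm{poly}}}$ for a unit $u$, because ``the leading $(d_{11}d_{22})^2$-term is killed'' modulo $G_5\equiv 0$ --- is false, and no choice of minor can make it true. Writing $y\defeq d_{22}/d^*_{21}$ and $a\defeq \frac{(a_2-2)(a_2+1)}{a_2(a_2-1)}$, so that $G_5 = (xy+p)(xy+ap)$ up to a unit, one computes
\[
\frac{\partial G_5}{\partial x} = y\bigl(2xy+(a+1)p\bigr).
\]
This already has degree $1$ in $xy$, so imposing $G_5\equiv 0$ (which only allows trading $(xy)^2$ for lower-degree terms) does not simplify it, and it certainly does not collapse to a constant. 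More concretely, since $y$ lies in the maximal ideal of $R^{(j)}_{\mathrm{poly}}/I^{(j)}_{\mathrm{poly}}$ but is not itself a power of $p$, the minor is a genuine (non-constant) element of $\fm^2$ in the quotient, and on the component where $xy+p = 0$ it specializes to $\frac{a-1}{a}py$, which is visibly not $p^3$ times a unit. So the direct reduction you propose does not go through.

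What actually works --- and what the paper does --- is subtler: it does not try to reduce the minor modulo $I^{(j)}_{\mathrm{poly}}$ to a power of $p$, but instead exhibits an explicit polynomial linear combination of $G_5$ (which lies in $I^{(j)}_{\mathrm{poly}}$) and $x\cdot\frac{\partial G_5}{\partial x}$ (which lies in $H^{(j)}$, since $H^{(j)}$ is an ideal) whose value is identically $p^3$ times a unit. Concretely,
\[
\Bigl(2(a+1)xy+p(a-1)^2\Bigr)G_5 - \Bigl((a+1)xy+p(a^2+1)\Bigr)\,x\,\frac{\partial G_5}{\partial x} = p^3\,a(a-1)^2,
\]
an algebraic identity in $\cO[x,y]$ in which all $u^3$, $pu^2$, and $p^2u$ terms (with $u = xy$) cancel. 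Genericity guarantees $a(a-1)^2\in\cO^\times$, giving $p^3\in H^{(j)}+I^{(j)}_{\mathrm{poly}}$. The missing ingredient in your proposal is precisely this nontrivial resultant-style elimination; ``$\Delta$ is $p^3$ times a unit modulo $I^{(j)}_{\mathrm{poly}}$'' is a strictly stronger, and incorrect, assertion. (Also a secondary issue: in the cases $t_{(2,1)}$ and $t_{(1,2)}$ the analogous computation gives $p^2$, not $p^3$, so a uniform claim of ``exact $p$-adic valuation three'' across the three tables is off.)
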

\begin{proof}
We give detail for the case $\tld{w}_{f-1-j}=\fW t_{(2,1)}$ (Table \ref{Table2FV}), the others being simpler.
To ease notation, we set $x\defeq \frac{d_{11}}{d^*_{12}}$, $y\defeq \frac{d_{22}}{d_{21}^*}$ and $a\defeq \frac{(a_2-2)(a_2+1)}{a_2(a_2-1)} \in \cO^\times$ so that $(G_5)=a^{-1}(xy+p)(xy+ap)$.
It follows directly from the definitions that $H^{(j)}$ contains the $5 \times 5$ minors of the Jacobian matrix of $I^{(j)}_{\mathrm{poly}} = ((G_1),\dots,(G_5))$ (i.e.~the ideal $M_{(\alpha)}$ with $\alpha=(1,\dots,q=5)$ in the notation of \cite[\S 0.2]{elkik}), in particular, by direct inspection, contains the element $\frac{\partial}{\partial x}(G_5)=a^{-1}(2xy^2+p(a+1)y)$.
Thus, the ideal $H^{(j)}+I^{(j)}_{\mathrm{poly}}$ contains 
\begin{align*}
\Big(2(a+1)xy+p(a-1)^2\Big)(G_5)-\Big((a+1)xy+p(a^2+1)\Big)x\frac{\partial}{\partial x}(G_5)&=p^3(a-1)^2.
\end{align*}
As $a-1=-\frac{2}{a_2(a_2-1)}$ and $a_2-1 \equiv \pm(r_j+1) \pmod p$, we conclude that $(a-1)^2 \in \cO^\times$ and hence $p^3\in H^{(j)}+I^{(j)}_{\mathrm{poly}}$.
The cases where $\tilde w_{f-1-j}\in\{ t_{(1,2)},t_{(2,1)}\}$ are similar, giving actually $p^2\in H^{(j)}+I^{(j)}_{\mathrm{poly}}$.
\end{proof}

We apply Elkik's lemma analogously to \cite[Prop.~3.3.9]{MLM}. Let $A \defeq R/I_\infty\cong R^{\leq(3,0),\tau,\nabla}_{\ovl{\fM},\ovl{\beta}}$ which by definition is $p$-torsion free and $p$-adically complete. Write $R^{(j)}_{\mathrm{poly}} = \cO[X_1,\dots,X_8]$ (relabeling the generators above).
Let $H^{(j)}_B\subseteq A[X_1,\dots,X_8]$ denote the Elkik ideal for the finitely presented algebra $A \to B \defeq A \otimes_{\cO} R^{(j)}_{\mathrm{poly}}/I^{(j)}_{\mathrm{poly}}\cong A[X_1,\dots,X_8]/I^{(j)}_{\mathrm{poly}}$, so that $H^{(j)}_B$ contains the image of $H^{(j)}$ via $R^{(j)}_{\mathrm{poly}} \ra A[X_1,\dots,X_8]$.  Diagram~\eqref{eq:elkik-diag} gives a surjection $B\twoheadrightarrow A/(p^{N-5})$ of $A$-algebras and taking lifts in $A$ of the images of the $X_i$ gives $\un a = (a_1,\dots,a_8)\in A^8$ such that $I^{(j)}_{\mathrm{poly}}(\un a) \subset p^{N-5}A$. By
Lemma~\ref{lem:Elkik:1} we get $p^3 \in H_B(\un a)+I^{(j)}_{\mathrm{poly}}(\un a) \subset H_B(\un a)+p^{N-5}A$, so $p^3 \in H_B(\un
a)$. As $N-5 > 2\times 3$ we may apply \cite[Lemme 1]{elkik} (with $I =A$, $t = p$, $k = 0$, $n=N-5$ and $h=3$ in the notation of the reference) to find $\wt{\un a} \in A^8$ that is congruent to $\un a$ modulo $p^{N-8}$ and such that $I^{(j)}_{\mathrm{poly}}(\wt{\un a})=0$. In other words, we
deduce the existence of an $\cO$-algebra homomorphism $\wt{\phi}^{(j)}:R^{(j)}_{\mathrm{poly}}/I^{(j)}_{\mathrm{poly}}\rightarrow R/I_\infty$ such that $\wt{\phi}^{(j)}$ agrees with $\phi^{(j)}$ (i.e.\ the natural map) modulo $p^{N-8}$.
By taking a tensor product of the $\wt{\phi}^{(j)}$ for $0 \le j \le f-1$ we get an $\cO$-algebra homomorphism $\wt{\phi}:R_{\mathrm{poly}}/I_{\mathrm{poly}}\rightarrow R/I_\infty$ such that $\wt{\phi}$ agrees with the natural map modulo $p^{N-8}$.
Since $N>8$, $\wt{\phi}$ is continuous and hence induces $\wt{\phi}:R/I_{\mathrm{poly}}\rightarrow R/I_\infty$ that agrees with the natural map modulo $p^{N-8}$. 
As $N\geq 10$, the map $\wt{\phi}:R/I_{\mathrm{poly}}\rightarrow R/I_\infty$ has to be surjective.

By Lemma \ref{lem:irreducible} and the explicit description of $I_{\mathrm{poly}}^{(j)}=(G_1,\dots,G_5)$ (cf.\ row 6 of Tables \ref{Table1FV}--\ref{Table3FV}), one easily gets that $R^{(j)}/I_{\mathrm{poly}}^{(j)}$ is reduced, $\cO$-flat, with two irreducible components that are geometrically integral and of relative dimension 3 over $\cO$.
By \cite[Lemma 2.6]{calegariCrelle} and \cite[Lemma 3.3]{BLGHT}, $R/I_{\mathrm{poly}}=\widehat{\bigotimes}_{\cO,j}R^{(j)}/I^{(j)}_{\mathrm{poly}}$ is reduced, $\cO$-flat with $2^f$ irreducible components, each of relative dimension $3f$ over $\cO$.
Hence the surjection
\begin{equation}
\wt{\phi}:R/I_{\mathrm{poly}}\onto R/I_\infty \cong R^{\leq(3,0),\tau,\nabla}_{\ovl{\fM},\ovl{\beta}}\label{eq:7}
\end{equation}
is an isomorphism, provided that $R^{\leq(3,0),\tau,\nabla}_{\ovl{\fM},\ovl{\beta}}$, or equivalently $R^{\leq(3,0),\tau}_{\rhobar}$ by~(\ref{eq:def:ring}), has at least $2^f$ irreducible components.
To see this, it suffices to show that for any choice of $\lambda\in\{(3,0), (2,1)\}^f$, $\rhobar$ admits a potentially crystalline lift $\rho$ of type $\tau$ with $\mathrm{HT}_j(\rho)=\lambda_j$ for all $j$.
This in turn follows from \cite[Thm.\ D]{GHLS}, provided 
\begin{equation}
\label{eq:int}
\JH(\ovl{\sigma(\tau)\otimes_E \bigotimes_{E,j} V_E(\lambda_j-(1,0))^{(j)}})\cap W(\rhobar)\neq 0.
\end{equation}

The left-hand side contains $\JH(\ovl{\sigma(\tau)\otimes_E\bigotimes_{E,j} V_E((1,1))^{(j)}})\cap W(\rhobar)$ using $L(a,b)\otimes_{\F} L(2,0)\cong L(a+2,b)\oplus L(a+1,b+1)\oplus L(a,b+2)$ if $2\leq a-b\leq p-3$ when $\lambda_j=(3,0)$.
(Note that the highest weights of the elements of $\JH(\ovl{\sigma(\tau)})$ are $7$-deep, as follows from Proposition~\ref{prop:JH:graph} and Remark~\ref{rk:t_lambda}\ref{it:t_lambda:4}.)
Hence \eqref{eq:int} follows from Lemma \ref{lem:inter}.

As~(\ref{eq:7}) is an isomorphism and induces the natural map modulo $p^{N-8}$, we conclude that $(I_{\mathrm{poly}},p^{N-8})=(I_\infty,p^{N-8})$.
\begin{lem}\label{lem:automorphism}
There exists an automorphism of local $\cO$-algebras $\psi:R\congto R$ such that
\[
\xymatrix{
R\ar^-{\psi}_-{\sim}[r]\ar@{->>}[d]&R\ar@{->>}[d]\\
R/I_{\mathrm{poly}}\ar^-{\wt{\phi}}_-{\sim}[r]&R/I_\infty
}
\]
commutes and such that $\psi$ induces the identity modulo $p^{N-8}$.
\end{lem}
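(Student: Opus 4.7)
The strategy is to construct $\psi$ by lifting $\wt\phi$ on generators of $R$.

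First, write $R=\cO\bbra{X_1,\dots,X_k}$ where $(X_1,\dots,X_k)$ is a system of power series generators (namely the union, over $j=0,\dots,f-1$, of the eight polynomial generators of $R^{(j)}$ listed in the tables). Recall that we have established $(I_{\mathrm{poly}},p^{N-8})=(I_\infty,p^{N-8})$ and that $\wt\phi:R/I_{\mathrm{poly}}\congto R/I_\infty$ agrees with the ``natural'' map (induced by $\mathrm{id}_R$) modulo $p^{N-8}$. Hence for each $i$, choosing any lift $z_i\in R$ of $\wt\phi(X_i\bmod I_{\mathrm{poly}})$, we have $z_i\equiv X_i\pmod{I_\infty+p^{N-8}R}$. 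Writing $z_i-X_i=a_i+p^{N-8}b_i$ with $a_i\in I_\infty$ and $b_i\in R$, the element $y_i\defeq z_i-a_i$ satisfies
\[
y_i\equiv X_i\pmod{p^{N-8}R}\quad\text{and}\quad y_i\bmod I_\infty \;=\;\wt\phi(X_i\bmod I_{\mathrm{poly}}).
\]
In particular $y_i\in\fm_R$ since $X_i,\,\varpi\in\fm_R$.

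By the universal property of the power series ring (and the fact that $R$ is $\fm_R$-adically complete), the assignment $X_i\mapsto y_i$ extends uniquely to a continuous local $\cO$-algebra endomorphism $\psi:R\to R$. By construction, $\psi(X_i)\equiv X_i\pmod{p^{N-8}R}$ for every $i$, and since both $\psi$ and $\mathrm{id}_R$ are continuous $\cO$-algebra maps coinciding on the $X_i$ modulo $p^{N-8}R$, we get $\psi\equiv\mathrm{id}_R\pmod{p^{N-8}R}$. On the other hand, by construction $\pi_\infty\circ\psi$ and $\wt\phi\circ\pi_{\mathrm{poly}}$ agree on the generators $X_i$, where $\pi_{\mathrm{poly}}$, $\pi_\infty$ denote the quotient maps onto $R/I_{\mathrm{poly}}$, $R/I_\infty$; by continuity they are equal, which gives the commutativity of the diagram.

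It remains to check that $\psi$ is an automorphism. Since $N\ge 12$, we have $p^{N-8}\in\fm_R^2$, hence $\psi(X_i)-X_i\in\fm_R^2$ for all $i$. Thus the induced graded map $\gr_{\fm_R}\psi$ is the identity on $\fm_R/\fm_R^2$, and being the identity in degrees $0$ and $1$, it is the identity on all of $\gr_{\fm_R}R$. Consequently $\psi:R/\fm_R^n\to R/\fm_R^n$ is surjective for every $n$, and by $\fm_R$-adic completeness $\psi$ itself is surjective. Finally, since $R$ is Noetherian, a surjective endomorphism is injective (the chain $\ker\psi\subseteq\ker\psi^2\subseteq\cdots$ stabilizes), so $\psi$ is an automorphism. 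The only nontrivial part of the argument is the arrangement $y_i\equiv X_i\pmod{p^{N-8}R}$, which is exactly what the established congruence $(I_{\mathrm{poly}},p^{N-8})=(I_\infty,p^{N-8})$ provides.
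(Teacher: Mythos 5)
Your proof is correct and takes essentially the same route as the paper: you pick lifts of $\wt\phi(X_i\bmod I_{\mathrm{poly}})$ differing from $X_i$ by $p^{N-8}$ times something, define $\psi$ on generators, and check it is a local automorphism (the paper writes $\psi(X_i)=X_i+p^{N-8}\eps(X_i)$ and leaves the automorphism check implicit, invoking $N\geq 10$). The additional details you supply — passing to $\gr_{\fm_R}$ to get surjectivity, then injectivity from Noetherianity — are the standard way to fill in that last step.
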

\begin{proof}
Let us write $R=\cO\bbra{X_1,\dots,X_k}$.
As $\wt{\phi}$ induces the identity modulo $p^{N-8}$ we see that for each $x\in R$ there exists $\eps(x)\in R$ such that $\wt{\phi}(x+I_{\mathrm{poly}})=x+p^{N-8}\eps(x)+I_\infty$.
Define $\psi$ by demanding that $\psi(X_i)=X_i+p^{N-8}\eps(X_i)$ for all $1\leq i\leq k$.
As $N\geq 10$ (in fact, even $N\ge 9$ suffices) it follows that $\psi$ is an automorphism of complete noetherian local $\cO$-algebras, and the lemma follows.
\end{proof}
In particular, $\psi$ identifies $I_{\mathrm{poly}}$ with $I_\infty$.
Thus $I_\infty=\sum_j I^{(j)}$, where $I^{(j)}$ is the ideal of $R$ given by the explicit generators in Tables \ref{Table1FV}--\ref{Table3FV} (by applying $\psi$ to the generators of $I_{\mathrm{poly}}$).
Moreover it follows that the ideals $\fp^{\lambda}\defeq \sum_j\fp^{(j),\lambda_{f-1-j}}$ of $R$ for $\lambda\in\{(2,1),\,(3,0)\}^f$, where the $\fp^{(j),\lambda_{f-1-j}}$ are defined in Tables \ref{Table1FV}--\ref{Table3FV}, are the distinct minimal primes containing $I_\infty$.

By the above argument that~(\ref{eq:7}) is an isomorphism, we know that the irreducible components of
$\Spec R^{\leq(3,0), \tau_{\tld{w}}}_{\rhobar}$ are in bijection with the set $\{(3,0), (2,1)\}^f$, explicitly given by
sending a component $\cC$ to the labeled Hodge--Tate weights of the framed deformation corresponding to any closed point of
the generic fiber of $\cC$. So the components are indeed given by the $\Spec R^{\lambda, \tau_{\tld{w}}}_{\rhobar}$, where
$\lambda = (\lambda_j) \in \{(3,0), (2,1)\}^f$.

It remains to establish the final claim identifying irreducible components. 
For any $\lambda = (\lambda_j) \in \{(3,0), (2,1)\}^f$ consider the kernel of the composition
\[
\phi_\lambda: R\onto R/I_\infty \cong R^{\leq(3,0),\tau,\nabla}_{\ovl{\fM},\ovl{\beta}}\onto R^{\leq\lambda,\tau,\nabla}_{\ovl{\fM},\ovl{\beta}}.
\]
By above we know that $\ker(\phi_\lambda)$ is of the form $\bigcap_{\lambda' \in X} \fp^{\lambda'}$ for some subset $X$ of $\{(3,0), (2,1)\}^f$ of cardinality $2^k$, 
where $k \defeq \#\{j : \lambda_j = (3,0)\}$. For the identification of components it suffices, by induction on $\lambda$, to show that
$\lambda_j = (2,1)$ implies that $\lambda'_j = (2,1)$ for all $\lambda' \in X$.
If this is false, then there exists $0 \le j \le f-1$ and $\lambda' \in X$ such that $\lambda_{f-1-j} = (2,1)$ and $\lambda'_{f-1-j} = (3,0)$.
By the same argument as above for row 4 of Tables \ref{Table1FV}--\ref{Table3FV} (as $\lambda_{f-1-j}=(2,1)$, the finite height conditions imply that each entry of $A^{(f-1-j)}$ is divisible by $v+p$), we deduce that $c^{(j)}_{ik}\in\ker(\phi_\lambda) \subset \fp^{\lambda'}$  for all $1\leq i,k\leq 2$ and moreover $d_{11}^{(j)}d_{22}^{(j)}+pd_{12}^{\ast(j)}d_{21}^{\ast(j)}\in\ker(\phi_\lambda)\subset \fp^{\lambda'}$ in case of Table \ref{Table2FV} (using row 4).
From the additional assumption that $\lambda'_{f-1-j} = (3,0)$ it is now easy to see, using row 8 of Tables \ref{Table1FV}--\ref{Table3FV}, that $p\in \fp^{\lambda'}$, which is a contradiction.
(In the notation of Remark~\ref{rem:lowest-hodge-type} we have $p \in \fq^{(j),(2,1)} + \fp^{(j),(3,0)} \subset \fp^{\lambda'}$, where $\fq^{(j),(2,1)}$
denotes the ideal defined there.)
\end{proof}

\begin{rem}\label{rem:lowest-hodge-type}
  Suppose that $\lambda \in \{(3,0), (2,1)\}^f$ is such that $\lambda_{f-1-j} = (2,1)$ and let $\fp^\lambda \defeq \sum_{j'} \fp^{(j'),\lambda_{f-1-j'}}$ (an ideal of $R$).
  As observed at the end of the proof of Proposition~\ref{prop:def:ring}, we see that $c_{ik}^{(j)} \in \fp^\lambda$.
  Using row 4 of Tables~\ref{Table1FV}--\ref{Table3FV} we can even say that
  \begin{align*}
    (c_{11},c_{12},c_{21},c_{22},d_{11}) \subset \fp^\lambda &\qquad\text{if $\tilde w_{f-1-j} = t_{(2,1)}$},\\
    \bigg(c_{11},c_{12},c_{21},c_{22},\frac{d_{11}d_{22}}{d_{12}^{\ast}d_{21}^{\ast}}+p\bigg) \subset \fp^\lambda &\qquad\text{if $\tilde w_{f-1-j} = \fW t_{(2,1)}$},\\
    (c_{11},c_{12},c_{21},c_{22},d_{22}) \subset \fp^\lambda &\qquad\text{if $\tilde w_{f-1-j} = t_{(1,2)}$},
  \end{align*}
  as ideals of $R$, where we omit the superscripts $(j)$ for readability.
  Moreover, the sum of the ideals on the left equals $\fp^\lambda$ if $\lambda_{f-1-j} = (2,1)$ for all $j$ (by dimension reasons or since
  the monodromy condition is vacuous in this case).  
\end{rem}

\begin{cor}\label{cor:special-fibre-def-ring}
  For each $\lambda = (\lambda_j) \in \{(3,0), (2,1)\}^f$ and $\tld{w}\in \Adm^\vee(t_{(\un 2,\un 1)})$ the special fibre of $\Spec R^{\lambda, \tau_{\tld{w}}}_{\rhobar}$
  is reduced and all its irreducible components are formally smooth over $\F$.
\end{cor}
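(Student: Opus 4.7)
The plan is to reduce to explicit local computations at each embedding separately via Proposition~\ref{prop:def:ring}, and then perform a case-by-case inspection of the generators in Tables~\ref{Table1FV}--\ref{Table3FV}.

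First, Proposition~\ref{prop:def:ring} identifies $R^{\lambda,\tau_{\tld w}}_{\rhobar}\bbra{X_1,\dots,X_{2f}}$ with a formal power series ring in four variables over $R/\sum_{j}\fp^{(j),\lambda_{f-1-j}}$. Since adjoining formal variables preserves and reflects both reducedness and formal smoothness of irreducible components of the special fibre, it suffices to establish the corollary for the special fibre of $R/\sum_{j}\fp^{(j),\lambda_{f-1-j}}$.

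Next, I would invoke the automorphism $\psi$ of Lemma~\ref{lem:automorphism}, which is the identity modulo $p^{N-8}$ and in particular modulo $\varpi$ (as $N\geq 10$). Tracing $\psi$ through the proof of Proposition~\ref{prop:def:ring} (it carries $I_{\mathrm{poly}}$ to $I_\infty$ and therefore permutes their minimal primes), it sends each polynomial-version ideal $\fp^{(j),\lambda}_{\mathrm{poly}}$ (obtained from the generators listed in Tables~\ref{Table1FV}--\ref{Table3FV} by dropping the $O(p^{N-8})$ tails) to $\fp^{(j),\lambda}$. Since $\psi$ reduces to the identity modulo $\varpi$, the special fibres of $R/\sum_{j}\fp^{(j),\lambda_{f-1-j}}$ and $R/\sum_{j}\fp^{(j),\lambda_{f-1-j}}_{\mathrm{poly}}$ are canonically isomorphic as $\F$-algebras. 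Crucially, each $\fp^{(j),\lambda}_{\mathrm{poly}}$ is an ideal of $R^{(j)}_{\mathrm{poly}}$ alone, so we obtain a completed tensor product decomposition
\[
\F\otimes_\cO R\Big/\sum_{j}\fp^{(j),\lambda_{f-1-j}}_{\mathrm{poly}}\ \cong\ \widehat{\bigotimes}_{\F,\,j}\Big(\F\otimes_\cO R^{(j)}_{\mathrm{poly}}/\fp^{(j),\lambda_{f-1-j}}_{\mathrm{poly}}\Big).
\]
Standard results on completed tensor products of geometrically reduced $\F$-algebras (cf.~\cite[Lemma 3.3]{BLGHT}) then reduce the statement to checking, for each $j$ separately, that $\F\otimes_\cO R^{(j)}_{\mathrm{poly}}/\fp^{(j),\lambda_{f-1-j}}_{\mathrm{poly}}$ is reduced with each irreducible component geometrically formally smooth over $\F$.

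The main obstacle, and the only truly case-specific part, is this final verification for each of the six combinations of $\tld{w}_{f-1-j}\in\{t_{(2,1)},\fW t_{(2,1)},t_{(1,2)}\}$ and $\lambda_{f-1-j}\in\{(3,0),(2,1)\}$. This is a routine inspection of the explicit generators displayed in Tables~\ref{Table1FV}--\ref{Table3FV}: when $\lambda_{f-1-j}=(2,1)$, the finite-height conditions force the $c^{(j)}_{ik}$ and one diagonal entry to vanish, while the residual monodromy relation factors modulo $\varpi$ as a product such as $d^{(j)}_{11}d^{(j)}_{22}$, producing two formally smooth components meeting transversely; when $\lambda_{f-1-j}=(3,0)$, the additional relations pin down further entries (using that the coefficients $a^{(j)}_i\bmod p$ are units by the deepness assumption on $\mu$) and force one of the diagonal factors to a unit, so that only a single formally smooth component survives. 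In every case the generators modulo $\varpi$ form a regular sequence of linear and simple quadratic-monomial relations in a polynomial ring, which gives both the reducedness and the formal smoothness of the components, completing the proof.
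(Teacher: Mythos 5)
Your strategy is essentially the paper's: pass via Proposition~\ref{prop:def:ring} and the automorphism $\psi$ of Lemma~\ref{lem:automorphism} to the tail-free ideals $\fp^{(j),\lambda}_{\mathrm{poly}}$, decompose the special fibre as a completed tensor product over the embeddings $j$, and inspect each factor $R^{(j)}/(\varpi,\fp^{(j),\lambda_{f-1-j}}_{\mathrm{poly}})$ explicitly. The paper phrases the first part by observing that the proof of Proposition~\ref{prop:def:ring} already yields $R^{\lambda,\tau}_{\rhobar}\bbra{X_1,\dots,X_{2f}}\cong(\widehat\bigotimes_j R^{(j)}/\fp^{(j),\lambda_{f-1-j}}_\mathrm{poly})\bbra{Y_1,\dots,Y_4}$, then applies right-exactness of completed tensor products to pass to the special fibre and Hamann's Lemma~\ref{lm:hamann} to strip off the power series variables; your observation that $\psi\equiv\mathrm{id}\pmod\varpi$ and that adjoining formal variables preserves and reflects reducedness and component-wise formal smoothness is a correct unpacking of the same mechanism.

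However, your case-by-case inspection is substantially wrong in its details, even though the structural conclusion you draw from it is correct. Reading off Tables~\ref{Table1FV}--\ref{Table3FV}: when $\lambda_{f-1-j}=(2,1)$ the answer depends on the shape $\tld w_{f-1-j}$ — for $\tld w_{f-1-j}\in\{t_{(2,1)},t_{(1,2)}\}$ all four $c^{(j)}_{ik}$ together with $d^{(j)}_{11}$ (resp.~$d^{(j)}_{22}$) vanish, leaving $\F\bbra{Z_1,Z_2,Z_3}$ ($m=0$, a \emph{single} formally smooth component), whereas for $\tld w_{f-1-j}=\fW t_{(2,1)}$ it is the product $d^{(j)}_{11}d^{(j)}_{22}$ that is killed (neither factor separately), giving $\F\bbra{Z_1,\dots,Z_4}/(Z_1Z_2)$ with $m=1$. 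When $\lambda_{f-1-j}=(3,0)$ one gets $m=1$ (i.e.~\emph{two} components) for every shape, because the extra generator reduces mod $\varpi$ to a genuine quadratic monomial such as $c^{(j)}_{12}d^{(j)}_{21}$ or $d^{(j)}_{11}d^{(j)}_{22}$ (the constants $a^{(j)}_i$ being units) — it does not, contrary to your description, force a diagonal factor to be a unit. So you have in effect swapped the roles of the two Hodge types and conflated the three shapes. Your final sentence — that in every case the reduction is a power series ring modulo a regular sequence of linear relations and quadratic monomials, hence reduced with formally smooth components — remains correct and is what the corollary needs; but the intermediate descriptions should be fixed before this is relied upon.
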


\begin{proof}
  Referring back to the proof of Proposition~\ref{prop:def:ring} as well as Lemma~\ref{lem:automorphism} we have an isomorphism $R^{\leq(3,0),\tau,\nabla}_{\ovl{\fM},\ovl{\beta}} \cong R/I_\mathrm{poly}$ and
  \begin{equation}\label{eq:14a}
    R^{\lambda, \tau_{\tld{w}}}_{\rhobar}\bbra{X_1,\dots,X_{2f}}\cong\left(\widehat{\bigotimes}_{\cO, 0\leq j\leq f-1}R^{(j)}/\fp^{(j),\lambda_{f-1-j}}_\mathrm{poly}\right)\bbra{Y_1,\dots,Y_4},
  \end{equation}
  where $\fp^{(j),\lambda_{f-1-j}}_\mathrm{poly}$ is the ideal of $R^{(j)}_\mathrm{poly}$ generated by the elements of rows 7 and 8 in Tables~\ref{Table1FV}--\ref{Table3FV}
  \emph{without their $O(p^{N-8})$ tails}.

  From~\eqref{eq:14a} and right exactness of completed tensor products we obtain
  \begin{equation}\label{eq:14b}
    \begin{aligned}
      (R^{\lambda, \tau_{\tld{w}}}_{\rhobar}/\varpi)\bbra{X_1,\dots,X_{2f}} %
      &\cong\left(\left(\widehat{\bigotimes}_{\cO, 0\leq j\leq f-1}R^{(j)}/\fp^{(j),\lambda_{f-1-j}}_\mathrm{poly}\right)\Big/(\varpi)\right)\bbra{Y_1,\dots,Y_4}
      \\
      &\cong
      \left(\widehat{\bigotimes}_{\F, 0\leq j\leq f-1}R^{(j)}/(\varpi,\fp^{(j),\lambda_{f-1-j}}_\mathrm{poly})\right)\bbra{Y_1,\dots,Y_4}.
    \end{aligned}
  \end{equation}
  By Tables~\ref{Table1FV}--\ref{Table3FV} we see that $R^{(j)}/(\varpi,\fp^{(j),\lambda_{f-1-j}}_\mathrm{poly}) \cong
  \F\bbra{Z_1,\dots,Z_{3+m}}/(Z_1Z_2,\dots,Z_{2m-1}Z_{2m})$ for some $m \le 1$.
  It follows from~\eqref{eq:14b} and Lemma~\ref{lm:hamann} that 
  \[ R^{\lambda, \tau_{\tld{w}}}_{\rhobar}/\varpi \cong \F\bbra{U_1,\dots,U_{f+4+m}}/(U_1U_2,\dots,U_{2m-1}U_{2m})\]
  for some $m \le f$.
\end{proof}

\subsection{Deformation rings II: multiple types}
\label{sec:deformation-rings2}

Inspired by the techniques of \cite[\S 3.2]{DanWild} we now compute some multi-type deformation rings. 

We suppose that $\rhobar$ is as in \S\ref{sec:setup}.
For $\sigma \in W(\rhobar)$ let $R^{\leq(3,0),\sigma}_{\rhobar}$ denote the maximal reduced, $\cO$-flat quotient of $R^{\square}_{\rhobar}$
that parametrizes lifts of $\rhobar$ of Hodge--Tate weights $\le (3,0)$ in each embedding and tame inertial type $\tau$ for some $\tau$ such
that $\sigma \in \JH\left(\ovl{\sigma(\tau)}\otimes_\F N_{k/\Fp}\circ \det\right)$.
Letting $\tld w_\sigma \defeq \theta(\sigma)$ via the bijection $\theta$ of
Lemma~\ref{lem:inter} and
$$X(\sigma) \defeq \{ \tld w \in \Adm^\vee(t_{(\un 2,\un 1)}) : \tld{w}_j\neq (\tld w_\sigma)_j \ \forall\, j\},$$ we see that
$\Spec R^{\leq(3,0),\sigma}_{\rhobar}$ is the flat closure of
$\bigcup_{\tld w \in X(\sigma)} \Spec R^{\leq(3,0),\tau_{\tld w}}_{\rhobar}[1/p]$ inside $\Spec R^{\square}_{\rhobar}$.
Also, define a bijection $i : \Adm^\vee(t_{(\un{2},\un{1})}) \to \{1,2,3\}^f$ by letting $i(\tld w)$ be the $f$-tuple given by
\[
i(\tld{w})_j \defeq \begin{cases}
1&\text{if }\tld{w}_j=t_{(2,1)}\\
2&\text{if }\tld{w}_j=\fW t_{(2,1)}\\
3&\text{if }\tld{w}_j=t_{(1,2)}
\end{cases}
\]
for all $0\leq j\leq f-1$.

\begin{prop}\label{prop:multitype-def-ring}
We have an isomorphism
\[
R^{\leq(3,0),\sigma}_{\rhobar}\bbra{X_1,\dots,X_{2f}}\cong\bigg(
S/\bigcap_{\tld w \in X(\sigma)} \sum_j I_{\tld w}^{(j)}
\bigg)\bbra{Y_1,\dots,Y_4},
\]
where $S \defeq \widehat{\bigotimes}_{\cO, 0\leq j\leq f-1}S^{(j)}$
and the $\cO$-algebras $S^{(j)}$ and the ideals $I_{\tld{w}}^{(j)}$ of $S$ are as in Table \ref{Table4FV} if
$(\tld w_\sigma)_{f-1-j} = t_{(1,2)}$, whereas $S^{(j)}$ and the ideals $I_{\tld{w}}^{(j)}$ of $S$ are 
as in Table \ref{Table4} if $(\tld w_\sigma)_{f-1-j} = t_{(2,1)}$.
The irreducible components of $\Spec R^{\leq(3,0), \sigma}_{\rhobar}$ are given by the $\Spec R^{\lambda, \tau_{\tld{w}}}_{\rhobar}$,
where $\lambda = (\lambda_j) \in \{(3,0), (2,1)\}^f$ and $\tld w \in X(\sigma)$.

More precisely, via the above isomorphism, for any choice of $\lambda = (\lambda_j) \in \{(3,0), (2,1)\}^f$ and $\tld w \in X(\sigma)$ the kernel of the natural surjection
$R^{\leq(3,0),\sigma}_{\rhobar}\bbra{X_1,\dots,X_{2f}} \onto R^{\lambda, \tau_{\tld{w}}}_{\rhobar}\bbra{X_1,\dots,X_{2f}}$ is
generated by the prime ideal $\sum_{j=0}^{f-1} \fp_{\tld w}^{(j),\lambda_{f-1-j}}$ of $S/\bigcap_{\tld w \in X(\sigma)} \sum_j I_{\tld w}^{(j)}$, where the ideals $\fp_{\tld w}^{(j),\lambda_{f-1-j}}$ of $S/\bigcap_{\tld w \in X(\sigma)} \sum_j I_{\tld w}^{(j)}$ are found in Tables \ref{Table4FV}--\ref{Table4}.
\end{prop}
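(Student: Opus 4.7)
The plan is to reduce the claim to Proposition~\ref{prop:def:ring} by introducing a common ambient ring $S$ that simultaneously accommodates the single-type deformation rings for all $\tld w \in X(\sigma)$.

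First, I would translate the definition of $R^{\leq(3,0),\sigma}_{\rhobar}$ as a flat closure of a union into the ring-theoretic statement
\[
R^{\leq(3,0),\sigma}_{\rhobar} \;=\; R^{\square}_{\rhobar}\Big/\bigcap_{\tld w \in X(\sigma)}\fa_{\tld w},\qquad \fa_{\tld w} \defeq \ker\!\bigl(R^{\square}_{\rhobar}\twoheadrightarrow R^{\leq(3,0),\tau_{\tld w}}_{\rhobar}\bigr).
\]
By Proposition~\ref{prop:def:ring} each $R^{\leq(3,0),\tau_{\tld w}}_{\rhobar}$ is reduced and $\cO$-flat, so $\bigcap_{\tld w}\fa_{\tld w}$ is radical and $R^{\square}_{\rhobar}/\bigcap_{\tld w}\fa_{\tld w}$, embedding into $\prod_{\tld w} R^{\leq(3,0),\tau_{\tld w}}_{\rhobar}$, is $\cO$-flat; the universal property defining $R^{\leq(3,0),\sigma}_{\rhobar}$ then yields the identification.

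Second, I would construct $S = \widehat{\bigotimes}_{\cO, j}S^{(j)}$, where $S^{(j)}$ has generators encompassing those of the power series rings $R^{(j)}$ appearing in Tables~\ref{Table1FV}--\ref{Table3FV} for the two shapes allowed at embedding $f-1-j$, i.e.\ the elements of the set $X(\sigma)_j \defeq \{t_{(2,1)},\fW t_{(2,1)}, t_{(1,2)}\} \setminus \{(\tld w_\sigma)_{f-1-j}\}$. For each $\tld w \in X(\sigma)$ one then has a natural surjection $S \twoheadrightarrow R_{\tld w} \defeq \widehat{\bigotimes}_j R^{(j)}_{\tld w}$, and the ideals $I^{(j)}_{\tld w}$ and primes $\fp^{(j),\lambda_{f-1-j}}_{\tld w}$ of $S$ exhibited in Tables~\ref{Table4FV}--\ref{Table4} are, by construction, the preimages of the corresponding objects in $R_{\tld w}$ from Proposition~\ref{prop:def:ring}, together with the relations killing those generators of $S$ that are absent from $R_{\tld w}$.

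Third, for each $\tld w \in X(\sigma)$, Proposition~\ref{prop:def:ring} applied to $R_{\tld w}$, combined with the surjection $S \twoheadrightarrow R_{\tld w}$, yields compatible isomorphisms
\[
R^{\leq(3,0),\tau_{\tld w}}_{\rhobar}\bbra{X_1,\dots,X_{2f}} \;\cong\; \bigl(S/\textstyle\sum_j I^{(j)}_{\tld w}\bigr)\bbra{Y_1,\dots,Y_4},
\]
as well as analogous statements for each irreducible component $R^{\lambda,\tau_{\tld w}}_{\rhobar}$ with the prime $\sum_j \fp^{(j),\lambda_{f-1-j}}_{\tld w}$. Combining these with step one yields the desired isomorphism
\[
R^{\leq(3,0),\sigma}_{\rhobar}\bbra{X_1,\dots,X_{2f}} \;\cong\; \Bigl(S\big/\!\bigcap_{\tld w\in X(\sigma)}\textstyle\sum_j I^{(j)}_{\tld w}\Bigr)\bbra{Y_1,\dots,Y_4},
\]
and similarly the identification of irreducible components.

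The main obstacle lies in the explicit content of Tables~\ref{Table4FV}--\ref{Table4}: one must carefully transport each single-type ideal $\sum_j I^{(j)}$ of $R_{\tld w}$ into the common ring $S$. The $O(p^{N-8})$ error terms appearing in the single-type ideals couple variables across different embeddings and depend on the full $f$-tuple $\tld w$, so verifying that the ideals written in Tables~\ref{Table4FV}--\ref{Table4} really are the preimages of the $\sum_j I^{(j)}_{\tld w}$ requires an Elkik-type approximation argument fully compatible with the one used in the proof of Proposition~\ref{prop:def:ring}, applied now to the universal framed deformation problem over $S$ rather than over a single $R_{\tld w}$.
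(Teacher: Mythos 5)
There is a genuine gap at the point where you write ``Combining these with step one yields the desired isomorphism.'' The isomorphisms $R^{\leq(3,0),\tau_{\tld w}}_{\rhobar}\bbra{X_1,\ldots,X_{2f}} \cong (S/\sum_j I^{(j)}_{\tld w})\bbra{Y_1,\ldots,Y_4}$ that you obtain from Proposition~\ref{prop:def:ring} for the various $\tld w \in X(\sigma)$ are \emph{not canonical}: each one is produced from a choice of Kisin module, gauge basis, framing of the Galois deformation, and an Elkik approximation — all of which depend on $\tld w$. To deduce the multi-type statement by intersecting ideals, you would need all these isomorphisms to be restrictions of a \emph{single} surjection $R^{\Box}_{\rhobar}\bbra{\underline X} \to S\bbra{\underline Y}$, and nothing in your argument establishes that compatibility. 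In other words, expressing $R^{\leq(3,0),\sigma}_{\rhobar}$ as $R^{\Box}_{\rhobar}/\bigcap_{\tld w}\mathfrak a_{\tld w}$ and having a family of presentations of the individual $R^{\Box}_{\rhobar}/\mathfrak a_{\tld w}$ does not produce a presentation of $R^{\Box}_{\rhobar}/\bigcap_{\tld w}\mathfrak a_{\tld w}$, unless the presentations are induced by one common map.

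The paper's proof is built precisely to close this gap: it constructs a \emph{single} \'etale $\varphi$-module $\cM$ over $\oS = S/\bigcap_{\tld w}\sum_j I^{(j)}_{\tld w}$ — using the extra variable $b_{12}^{(j)}$ or $b_{21}^{(j)}$ in $S^{(j)}$ to simultaneously parametrize all the shapes $\tld w \in X(\sigma)$ — and from $\cM$ and a fixed basis it produces one map $\psi: R^{\Box}_{\rhobar|_{G_{K_\infty}}}\bbra{\underline X',\underline X''}\to \oS\bbra{\underline Y}$. It then proves $\psi$ is surjective by an injectivity check on reduced tangent vectors (Claim~1), shows $\psi$ factors through $R^{\leq(3,0),\sigma}_{\rhobar}$ by checking closed points of the generic fiber land where they should (Claim~2, using the single-type computations and the change of variables of Figure~\ref{fig:changeofvar}), and concludes by a dimension and component count (Claim~3). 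This explicit ``universal'' $\varphi$-module over $\oS$ is the missing ingredient in your proposal: without it, there is no mechanism to pass from the single-type presentations to the multi-type one.
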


\begin{proof}
Recall that $\rhobar|_{I_K}\cong \taubar(s,\mu)$. The proof of Lemma~\ref{lem:ss:Kisin} shows that the \'etale $\phz$-module
associated to $\rhobar|_{G_{K_\infty}}$ is given by $\Mat(\phz^{(j)})=(Ds^*t_{\mu^*})_j$ in some basis, for some $D = (D_j)\in
\un{T}(\F)$. Define $\delta_{12}^{(j)}$, $\delta_{21}^{(j)} \in \cO^\times$ to be the Teichm\"uller lifts of the diagonal entries
of $D_{f-1-j}$. Also let $\mu'_j\defeq \mu_j-(1,1) = (r_j+1,0)$.

Let $\oS\defeq S/\bigcap_{\tld w \in X(\sigma)} \sum_j I_{\tld w}^{(j)}$.
Consider the \'etale $\phz$-module $\cM$ over $\cO_{\mathcal{E},\oS}$ given by
\[
\Mat(\phz_{\cM}^{(f-1-j)})=
\begin{pmatrix}
(v+p)(\delta_{12}^{(j)}+x_{12}^{\ast(j)})+c_{12}^{(j)}+\frac{b_{12}^{(j)}}{v}&\frac{1}{v}\big((v+p)d_{11}^{(j)}+c_{11}^{(j)}\big)\\
(v+p)d_{22}^{(j)}+c_{22}^{(j)}&
(v+p)(\delta_{21}^{(j)}+x_{21}^{\ast(j)})+c_{21}^{(j)}+\frac{b_{21}^{(j)}}{v}
\end{pmatrix}s_{j}^{-1}v^{\mu_j'} %
\]
in a suitable basis, where $b_{21}^{(j)}\defeq 0$ if $(\tld{w}_\sigma)_{f-1-j}=t_{(1,2)}$ and $b_{12}^{(j)}\defeq 0$ if
$(\tld w_\sigma)_{f-1-j} = t_{(2,1)}$.
Write $\oS\bbra{\un{Y}} \defeq \oS\bbra{Y_1,\dots,Y_4}$ for short and
define the $\phz$-module $\cM_{\oS\bbra{\un{Y}}}\defeq \cM\widehat{\otimes}_\oS \oS\bbra{\un{Y}}$ over $\cO_{\mathcal{E},\oS\bbra{\un{Y}}}$. %

Let $\cM_\F\defeq \cM\otimes_\oS\F$. As every variable in $S^{(j)}$ gets sent to zero in $\F$ and $\mu_j = (r_j+2,1)$, we see that $\bV^*_K(\cM_\F)\cong \rhobar|_{G_{K_\infty}}$.
Fix an $\F$-basis $\gamma_\F$ of $\bV^*_K(\cM_\F)\cong \rhobar|_{G_{K_\infty}}$.
If $\rhobar$ is reducible, we demand moreover that $\gamma_{\F,1}$, $\gamma_{\F,2}$ each span $G_{K_\infty}$-stable lines.

Fix an $\oS$-basis $\gamma$ of $\bV^*_K(\cM)$ that lifts $\gamma_\F$.
Then the $G_{K_\infty}$-representation $\bV^*_K(\cM_{\oS\bbra{\un{Y}}})$ together with the basis $\big(1+\begin{pmatrix}Y_1&Y_2\\Y_3&Y_4\end{pmatrix}\big)\big(\gamma\otimes1\big)$ gives rise to a homomorphism $\psi_0: R^{\Box}_{\rhobar|_{G_{K_\infty}}}\rightarrow \oS\bbra{\un{Y}}$.

For notational convenience, rename the variables $(X_1,\dots,X_f)$ as $\un{X}' \defeq (X_0',\dots,X_{f-1}')$ and $(X_{f+1},\dots,X_{2f})$ as $\un{X}'' \defeq (X_0'',\dots,X_{f-1}'')$. 
Extend $\psi_0$ to a homomorphism $\psi: R^{\Box}_{\rhobar|_{G_{K_\infty}}}\bbra{\un{X}',\un{X}''}\rightarrow \oS\bbra{\un{Y}}$ as follows:
\begin{align*}
\psi(X'_j)&=
\begin{cases}x_{12}^{\ast (j)}&\text{if $0\leq j<f-1$ or $\rhobar$ is irreducible;}\\
Y_1&\text{if $j=f-1$ and $\rhobar$ is reducible;}
\end{cases}
\\
\psi(X''_j)&=\begin{cases}x_{21}^{\ast (j)}&\text{if $0\leq j<f-1$;}\\
Y_4&\text{if $j=f-1$.}\end{cases}
\end{align*}

\paragraph{\textit{Claim 1.}} The map $\psi: R^{\Box}_{\rhobar|_{G_{K_\infty}}}\bbra{\un{X}',\un{X}''}\rightarrow \oS\bbra{\un{Y}}$ is surjective.

We will check it is injective on reduced tangent vectors, i.e.\ on $\F[\eps]/(\eps^2)$-points.
Pick any continuous homomorphism $t: \oS\bbra{\un{Y}}\rightarrow \F[\eps]/(\eps^2)$, let $t_0:\oS\bbra{\un{Y}}\rightarrow \F\rightarrow \F[\eps]/(\eps^2)$ be the zero vector, and suppose that $t\circ\psi=t_0\circ\psi$.
Abusing notation, we will write $t(b_{ik}^{(j)})=\eps b_{ik}^{(j)}$ for some $b_{ik}^{(j)}\in \F$ on the right, and similarly $t(c_{ik}^{(j)})=\eps c_{ik}^{(j)}$, $t(d_{ik}^{(j)})=\eps d_{ik}^{(j)}$, $t(x_{ik}^{\ast (j)})=\eps x_{ik}^{(j)}$, $t(Y_{i})=\eps y_{i}$.
From the definition of $\psi$ (and $t\circ\psi=t_0\circ\psi$) we deduce $x_{12}^{(j)}=x_{21}^{(j)}=0$ for $0\leq j<f-1$, $y_4=0$, and 
\begin{equation}
\begin{cases}
x_{12}^{(f-1)}=0&\text{if $\rhobar$ is irreducible},\\
y_1=0&\text{if $\rhobar$ is reducible.}
\end{cases}\label{eq:3}
\end{equation}
Also, since {\it a fortiori} $t\circ\psi_0=t_0\circ\psi_0$, we see that there is an isomorphism
\begin{equation}
\lambda:\cM_{\oS\bbra{\un{Y}}}\widehat{\otimes}_{\oS\bbra{\un{Y}},t}\F[\eps]/(\eps^2)\congto \cM_{\oS\bbra{\un{Y}}}\widehat{\otimes}_{\oS\bbra{\un{Y}},t_0}\F[\eps]/(\eps^2)\label{eq:4}
\end{equation}
such that $\bV^*_{K}(\lambda)$ sends the basis $(1+\eps\begin{pmatrix}y_1&y_2\\ y_3&y_4\end{pmatrix})(\gamma\otimes 1)$ to $\gamma\otimes 1$.
In particular $\lambda\mod \eps$ is the identity of $\cM_\F$.

Hence, as $\cO_{\mathcal{E},\F[\eps]/(\eps^2)} \cong \prod_{j=0}^{f-1} \F\ppar v[\eps]/(\eps^2)$, the isomorphism $\lambda$ is realized by change of basis matrices of the form
\[
1+\eps M_{f-1-j}\in\GL_2(\F\ppar v[\eps]/(\eps^2)),
\]
for some $M_{f-1-j}\in\M_2(\F\ppar{v})$. In other words,
\begin{equation}
\label{eq:chg:basis}
\begin{split}
&(1+\eps M_{j-1})\begin{pmatrix}\delta_{12}^{(j)}&\\&\delta_{21}^{(j)}\end{pmatrix}s_j^{-1}v^{\mu'_j}(1-\eps \phz(M_{j}))=\\
&\qquad=
\begin{pmatrix}
\delta_{12}^{(j)}+\eps(x_{12}^{(j)}+c_{12}^{(j)}v^{-1}+b_{12}^{(j)}v^{-2})&
\eps(d_{11}^{(j)}v^{-1}+c_{11}^{(j)}v^{-2})\\
\eps(d_{22}^{(j)}+c_{22}^{(j)}v^{-1})&
\delta_{21}^{(j)}+\eps(x_{21}^{(j)}+c_{21}^{(j)}v^{-1}+b_{21}^{(j)}v^{-2})
\end{pmatrix}s_j^{-1}v^{\mu_j'},
\end{split}
\end{equation}
where we have divided by $v$, and $j$ is considered in $\ZZ/f\ZZ$, as usual.

Let $k_j\in \ZZ$ be minimal such that $v^{k_j} M_j\in \M_2(\F\bbra{v})$. 
Consider
\begin{align*}
&1-\eps \phz(M_{j})=v^{-\mu'_j}s_j\begin{pmatrix}\delta_{12}^{(j)}&\\&\delta_{21}^{(j)}\end{pmatrix}^{-1}
(1-\eps M_{j-1}) \cdot \\ %
&\qquad \cdot \begin{pmatrix}
\delta_{12}^{(j)}+\eps(x_{12}^{(j)}+c_{12}^{(j)}v^{-1}+b_{12}^{(j)}v^{-2})&
\eps(d_{11}^{(j)}v^{-1}+c_{11}^{(j)}v^{-2})\\
\eps(d_{22}^{(j)}+c_{22}^{(j)}v^{-1})&
\delta_{21}^{(j)}+\eps(x_{21}^{(j)}+c_{21}^{(j)}v^{-1}+b_{21}^{(j)}v^{-2})
\end{pmatrix}s_j^{-1}v^{\mu_j'}.
\end{align*}
Then multiplying the right-hand side by $v^{r_j+1}\cdot v^{k_{j-1}}\cdot v^2$ makes it $v$-integral, hence $pk_j\leq k_{j-1}+r_j+3<k_{j-1}+p-1$ by genericity.
This implies $p\max_j k_j<\max_j k_j + p-1$, so $\max_j k_j<1$, meaning $M_{j}\in \M_2(\F\bbra{v})$ for all $j$.

From (\ref{eq:chg:basis}) we get by multiplying on the right by $v^{-\mu_j'}s_j$:
\begin{equation}
\label{eq:M:int}
\begin{aligned}
&M_{j-1}\begin{pmatrix}\delta_{12}^{(j)}&\\&\delta_{21}^{(j)}\end{pmatrix} - \begin{pmatrix}\delta_{12}^{(j)}&\\&\delta_{21}^{(j)}\end{pmatrix}s_j^{-1}v^{\mu'_j}\phz(M_{j})v^{-\mu_j'}s_j=\\
&\qquad=
\begin{pmatrix}
x_{12}^{(j)}+c_{12}^{(j)}v^{-1}+b_{12}^{(j)}v^{-2}&
d_{11}^{(j)}v^{-1}+c_{11}^{(j)}v^{-2}\\
d_{22}^{(j)}+c_{22}^{(j)}v^{-1}&
x_{21}^{(j)}+c_{21}^{(j)}v^{-1}+b_{21}^{(j)}v^{-2}
\end{pmatrix}.
\end{aligned}
\end{equation}
Recall that we assumed $s_j=1$ for all $0<j\leq f-1$ and $s_0=1$ if and only if $\rhobar$ is reducible (see the beginning of \S\ref{sec:setup}).

As the $(1,1)$ and $(2,2)$-entries of the left-hand side of (\ref{eq:M:int}) are $v$-integral,
we deduce that $c_{12}^{(j)}=b_{12}^{(j)}=c_{21}^{(j)}=b_{21}^{(j)}=0.$
From the $(2,1)$-entry of (\ref{eq:M:int}) when $s_j=1$ (resp.~the $(1,2)$-entry of (\ref{eq:M:int}) when $s_j\neq 1$) and from $2 < r_j+1 < p$ we deduce that $v\mid (M_j)_{21}$
for all $j$. This implies that the left-hand side of~(\ref{eq:M:int}) is $v$-integral and its $(2,1)$-entry is divisible by $v$. In particular,
$d_{11}^{(j)}=c_{11}^{(j)}=d_{22}^{(j)}=c_{22}^{(j)}=0$ for all $j$.

If $s_j=1$ (e.g.~if $j\neq 0$) we have by~(\ref{eq:M:int}) and the previous paragraph
\begin{equation}
\begin{cases}
x_{12}^{(j)}=\delta_{12}^{(j)}\big((M_{j-1})_{11}-(M_j)_{11}\big)|_{v=0},\\
x_{21}^{(j)}=\delta_{21}^{(j)}\big((M_{j-1})_{22}-(M_j)_{22}\big)|_{v=0}.
\end{cases}\label{eq:2}
\end{equation}
If $s_{j}\neq 1$ then we have by~(\ref{eq:M:int}) and the previous paragraph
\begin{equation}
\begin{cases}
x_{12}^{(j)}=\delta_{12}^{(j)}\big((M_{j-1})_{11}-(M_j)_{22}\big)|_{v=0}, \\
x_{21}^{(j)}=\delta_{21}^{(j)}\big((M_{j-1})_{22}-(M_j)_{11}\big)|_{v=0}.
\end{cases}\label{eq:1}
\end{equation}

Recall that $x_{12}^{(j)}=x_{21}^{(j)}=0$ for all $0 \le j < f-1$.
If $\rhobar$ is reducible (i.e.~$s_0=1$) we deduce by~(\ref{eq:2}) %
that $\sum_j (\delta_{12}^{(j)})^{-1} x_{12}^{(j)}=\sum_j (\delta_{21}^{(j)})^{-1} x_{21}^{(j)}=0$ and hence that $x_{12}^{(j)}=x_{21}^{(j)}=0$ for all $j$.
Otherwise (i.e.~$s_0\neq 1$), we deduce from~(\ref{eq:1}) that $\sum_j \big((\delta_{12}^{(j)})^{-1} x_{12}^{(j)} + (\delta_{21}^{(j)})^{-1} x_{21}^{(j)}\big)=0$ and hence by~(\ref{eq:3}) that $x_{12}^{(j)}=x_{21}^{(j)}=0$ for all $j$.
As a result, the right-hand side of (\ref{eq:M:int}) vanishes and we conclude that $(M_{f-1-j})_j\in \End_{\phz\text{-mod}}(\cM_{\F})$.
Denote this endomorphism by $\xi$. From the properties of $\lambda$ (see~(\ref{eq:4}) and the line after) we have $(1+\eps \bV^*_K(\xi)) (1+\eps\begin{pmatrix}y_1&y_2\\ y_3&y_4\end{pmatrix})(\gamma\otimes 1) = \gamma \otimes 1$, so $\bV^*_K(\xi)=-\begin{pmatrix}y_1&y_2\\y_3&y_4\end{pmatrix}$ with respect to the basis $\gamma_\F$.
On the other hand, $\End_{\phz\text{-mod}}(\cM_{\F})\cong \End_{G_{K_\infty}}(\rhobar|_{K_\infty}) \cong \End_{G_K}(\rhobar)$ by Lemma~\ref{lem:ff}.

If $\rhobar$ is (absolutely) irreducible, then $\End_{\phz\text{-mod}}(\cM_{\F})=\F$.
As $y_4=0$ we conclude from the formula for $\bV^*_K(\xi)$ that $y_i=0$ for all $i$.
 
If $\rhobar$ is reducible, then $\End_{\phz\text{-mod}}(\cM_{\F}) \cong \F\times \F$.
By our condition that $\gamma_{\F,1}$, $\gamma_{\F,2}$ each span $G_{K_\infty}$-stable lines, we conclude that $y_2=y_3=0$.
Using~(\ref{eq:3}) we also have $y_1=y_4=0$.

We have shown that $t = t_0$, completing the proof of Claim 1.

We consider now the surjections
\[
R^{\Box}_{\rhobar|_{G_{K_\infty}}}\onto R^{\Box}_{\rhobar}\onto R^{\leq (3,0),\sigma}_{\rhobar}.
\]
(For the first, see \cite[Prop.\ 3.12]{LLLM} and use that $\ad(\rhobar)$ is cyclotomic free.)
\paragraph{\textit{Claim 2.}} The map $\psi_0: R^{\Box}_{\rhobar|_{G_{K_\infty}}}\rightarrow \oS\bbra{\un{Y}}$ factors through the surjection $R^{\Box}_{\rhobar|_{G_{K_\infty}}}\onto R^{\leq(3,0),\sigma}_{\rhobar}$.

By $\cO$-flatness it is enough to check that any closed point $x$ of $\Spec \oS\bbra{\un{Y}}[1/p]$ is sent to the closed subscheme
$\Spec R^{\leq(3,0),\sigma}_{\rhobar}[1/p]$ of $\Spec R^{\Box}_{\rhobar|_{G_{K_\infty}}}[1/p]$.
Let $\fp_x$ be the maximal ideal of $\oS\bbra{\un{Y}}[1/p]$ corresponding to $x$.
Its residue field $\kappa(x)$ is a finite extension of $E$.

By definition,
\[
\bigcap_{\tld w \in X(\sigma)} \sum_j I_{\tld w}^{(j)} = 0
\] 
in $\oS$, hence there exists some $\tld{w}\in X(\sigma)$ such that $\sum_j I_{\tld w}^{(j)} \subseteq \fp_x$.

We now observe that we have a canonical isomorphism
\begin{equation}\label{eq:comparison-S-and-R}
  S/\sum_j I_{\tld w}^{(j)} \cong R/\sum_j I^{(j)},
\end{equation}
where the right-hand side is the ring of Proposition~\ref{prop:def:ring} (for the type $\tau_{\tld w}$), using the change of variables in Figure~\ref{fig:changeofvar} (where we omit the superscripts $(j)$ for readability).
(We caution that the constants $a_i$ and the $O(p^{N-8})$ tails in Tables \ref{Table1FV}--\ref{Table3FV} depend on $\tld w$ and not just on $\tld w_{f-1-j}$.
Moreover, recall that the $O(p^{N-8})$ tails in Tables \ref{Table1FV}--\ref{Table3FV} involve variables of all embeddings, so the change of variables of $I^{(j)}$ really depends on $\tld w$ and not just on $\tld w_{f-1-j}$.)

\begin{figure}[t]
\caption{Change of variables between the tables}\label{fig:changeofvar}
\centering
\adjustbox{max width=\textwidth}{
\begin{tabular}{| c | c | c | c | c | c | c | c | c | }
\hline%
Table \ref{Table1FV}&$e_{11}^*$&$d_{11}$&$c_{11}$&$d_{21}$&$c_{12}$&$c_{21}$&$d_{22}^*$&$c_{22}$
\\
\hline%
Table \ref{Table4FV}&$d_{12}^*$&$c_{12}-pd_{12}^*$&$b_{12}-pc_{12}$&$d_{22}$&$d_{11}$&$c_{22}$&$d_{21}^*$&$c_{21}$\\
\hline
\end{tabular}}

\vspace{5mm}

\adjustbox{max width=\textwidth}{
\begin{tabular}{| c | c | c | c | c | c | c | c | c | }
\hline
Table \ref{Table3FV}&$d_{11}^*$&$c_{11}$&$d_{12}$&$c_{12}$&$c_{21}$&$e_{22}^*$&$d_{22}$&$c_{22}$
\\
\hline
Table \ref{Table4}&$d_{12}^*$&$c_{12}$&$d_{11}$&$c_{11}$&$d_{22}$&$d_{21}^*$&$c_{21}-pd_{21}^*$&$b_{21}-pc_{21}$\\
\hline
\end{tabular}
}
\end{figure}%

Importantly, under the isomorphism \eqref{eq:comparison-S-and-R} the $\phz$-module in Tables \ref{Table4FV}--\ref{Table4} becomes identified with the $\phz$-module described in Tables \ref{Table1FV}--\ref{Table3FV}.
Thus the $\phz$-module $\cM_{\oS\bbra{\un{Y}}}\widehat{\otimes}_{\oS\bbra{\un{Y}}}\kappa(x)$ is one of the $\phz$-modules described in Tables \ref{Table1FV}--\ref{Table3FV} for the type $\tau_{\tld{w}}$, at least after replacing $\cO$ by $\cO_{\kappa(x)}$.
In particular, by the proof of Proposition~\ref{prop:def:ring} we know that $\bV^*_K(\cM_{\oS\bbra{\un{Y}}}\widehat{\otimes}_{\oS\bbra{\un{Y}}}\kappa(x))$ is the restriction to $G_{K_\infty}$ of a potentially crystalline representation $\rho_x$ of $G_K$ over $\kappa(x)$, of inertial types $\tau_{\tld{w}}$ and Hodge--Tate weights $\leq (3,0)$.
Together with the basis $\gamma\otimes_x 1$, $\rho_x|_{G_{K_\infty}}$ is a framed deformation of $\rhobar|_{G_{K_\infty}}$.
By Corollary \ref{cor:ff}, $\rho_x$ is a framed deformation of $\rhobar$, completing the proof of Claim 2.

\paragraph{\textit{Claim 3.}} The ring $\oS$ is reduced, $\cO$-flat, and has $4^f$ irreducible components, each of relative dimension $3f$ over $\cO$.

For short, let $I_{\tld w} \defeq \sum_j I_{\tld w}^{(j)}$ for any $\tld w \in X(\sigma)$. Recall that $\# X(\sigma) = 2^f$.
As $\oS = S/\bigcap_{\tld w \in X(\sigma)} I_{\tld w}$ and each $S/I_{\tld w}$ is, by construction, identified with the ring
$R/\sum_j I^{(j)}$ of Proposition~\ref{prop:def:ring} (for the type $\tau_{\tld{w}}$), we deduce that $\oS$ is reduced and $\cO$-flat and that, in order to establish the claim about irreducible
components, it suffices to show that the ideals $I_{\tld w}$ are pairwise relatively prime in $S[1/p]$.
Pick $\tld w \ne \tld w'$ in $X(\sigma)$ and choose $j$ such that $\tld w_{f-1-j} \ne \tld w'_{f-1-j}$. Assume $(\tld w_\sigma)_{f-1-j} = t_{(1,2)}$, so we are in the
setting of Table \ref{Table4FV} at embedding $j$.
Hence by Table \ref{Table4FV} the ideal $I_{\tld{w}}^{(j)}+I^{(j)}_{\tld{w}'}$ contains an element of the form  (recall we omit the superscripts $(j)$)
\begin{align*}
&\bigg(
c_{12}-pd_{12}^*+(a_1-2)\frac{d_{11}d_{22}}{d_{21}^*}
\bigg) - \bigg(
c_{12}-a_2d_{12}^*\Big(\frac{d_{11}d_{22}}{d_{12}^*d_{21}^*}+p\Big)
\bigg)+O(p^{N-8})\\
&=p(a_2-1)d_{12}^*+(a_1+a_2-2)\frac{d_{11}d_{22}}{d_{21}^*}+O(p^{N-8}).
\end{align*}
As $a_2\not\equiv 1\pmod p$, $a_1+a_2\equiv 2\pmod p$ (see the explicit formulas below Tables~\ref{Table1FV}--\ref{Table2FV}), and $N \geq 10$ we deduce that $p\in I_{\tld{w}}^{(j)}+I_{\tld{w}'}^{(j)}$,
which in turn is contained in $I_{\tld w}+I_{\tld w'}$.
The case where $(\tld{w}_\sigma)_{f-1-j}=t_{(2,1)}$ is analogous, checking that $p\in I_{\tld{w}}^{(j)}+I_{\tld{w}'}^{(j)}$ by using the two elements of the form $c_{21}+\dots$ from Table~\ref{Table4}. This establishes Claim 3.

\paragraph{\textit{Conclusion of the proof.}}
By Claims 1 and 2 we have a surjective morphism $R^{\leq (3,0),\sigma}_{\rhobar}\bbra{\un{X}',\un{X}''}\onto \oS\bbra{\un{Y}}$.
By \cite[Thm.\ (3.3.8)]{KisinPSS} the ring $R^{\leq (3,0),\sigma}_{\rhobar}$ is reduced, $\cO$-flat, and each irreducible component is of relative dimension $f+4$ over $\cO$.
By Proposition \ref{prop:def:ring} it has precisely $4^f$ irreducible components.
By Claim 3 and as $(f+4)+2f=3f+4$ we deduce that $R^{\leq (3,0),\sigma}_{\rhobar}\bbra{\un{X}',\un{X}''}\cong \oS\bbra{\un{Y}}$.

The identification of irreducible components follows from Proposition \ref{prop:def:ring}, as for any $\tilde w \in X(\sigma)$ 
the isomorphism $R^{\leq (3,0),\sigma}_{\rhobar}\bbra{\un{X}',\un{X}''}\cong \oS\bbra{\un{Y}}$
factors through the isomorphism $R^{\leq (3,0),\tau_{\tld w}}_{\rhobar}\bbra{\un{X}',\un{X}''}\cong S/I_{\tld w}\bbra{\un{Y}}$
of Proposition \ref{prop:def:ring} (keeping in mind the change of variables discussed in the proof of Claim 2).
\end{proof}

\begin{lem}\label{lem:lowest-hodge-type}
  If $(\tld w_\sigma)_{f-1-j} = t_{(1,2)}$ let
  \begin{align*}
    \fq^{(j),(2,1)}_1 &\defeq (b_{12}-pc_{12},c_{11},c_{12}-pd_{12}^*,c_{21},c_{22},d_{11}), \\
    \fq^{(j),(2,1)}_2 &\defeq \bigg(b_{12},c_{11}, c_{12},c_{21},c_{22}, \frac{d_{11}d_{22}}{d_{12}^*d_{21}^*}+p\bigg)\\
  \noalign{\noindent and if $(\tld w_\sigma)_{f-1-j} = t_{(2,1)}$ let}
    \fq^{(j),(2,1)}_2 &\defeq \bigg(b_{21},c_{11}, c_{12},c_{21},c_{22}, \frac{d_{11}d_{22}}{d_{12}^*d_{21}^*}+p\bigg), \\
    \fq^{(j),(2,1)}_3 &\defeq (b_{21}-pc_{21},c_{11},c_{12},c_{21}-pd_{21}^*,c_{22},d_{22}),
  \end{align*}
  where we omit the superscripts $(j)$ for readability and we consider these as ideals of $S^{(j)}$. 
  Let $\tld{w}\in X(\sigma)$.
  Then $\fq^{(j),(2,1)}_{i(\tld w)_{f-1-j}} \subset \sum_{j'=0}^{f-1} \fp^{(j'),\lambda_{f-1-j'}}_{\tld w}$
  whenever $\lambda_{f-1-j} = (2,1)$ and $\sum_{j'=0}^{f-1} \fq^{(j'),(2,1)}_{i(\tld w)_{f-1-j'}} = \sum_{j'=0}^{f-1} \fp^{(j'),(2,1)}_{\tld w}$
  \emph{(}as ideals of $S$\emph{)}.
\end{lem}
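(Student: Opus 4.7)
My plan is to establish the lemma by a direct bookkeeping verification from Tables \ref{Table4FV} and \ref{Table4}, extending Remark \ref{rem:lowest-hodge-type} to the multi-type setting. Since $\tld w \in X(\sigma)$ we have $\tld w_{f-1-j} \ne (\tld w_\sigma)_{f-1-j}$ for every $j$, and the bijection $\theta$ of Lemma \ref{lem:inter} forces $(\tld w_\sigma)_{f-1-j}\in\{t_{(2,1)},t_{(1,2)}\}$. Hence $i(\tld w)_{f-1-j}$ is restricted to $\{1,2\}$ if $(\tld w_\sigma)_{f-1-j} = t_{(1,2)}$ and to $\{2,3\}$ if $(\tld w_\sigma)_{f-1-j} = t_{(2,1)}$; these are precisely the indices for which the ideals $\fq^{(j),(2,1)}_i$ have been defined in the statement.

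For the first assertion I would argue embedding by embedding. In each of the four relevant configurations (two choices of $(\tld w_\sigma)_{f-1-j}$ paired with two valid values of $i(\tld w)_{f-1-j}$), the listed generators of $\fq^{(j),(2,1)}_{i(\tld w)_{f-1-j}}$ can be read off directly from the $\fp^{(j),(2,1)}_{\tld w}$-row of Table \ref{Table4FV} or Table \ref{Table4}. The key simplification compared with Tables \ref{Table1FV}--\ref{Table3FV} is that the monodromy condition is vacuous at embeddings where $\lambda_{f-1-j} = (2,1)$ (as in the last sentence of Remark \ref{rem:lowest-hodge-type}), so no $O(p^{N-8})$ tails intervene in the generators of $\fp^{(j),(2,1)}_{\tld w}$ at such an embedding. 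This yields the inclusion $\fq^{(j),(2,1)}_{i(\tld w)_{f-1-j}} \subset \fp^{(j),(2,1)}_{\tld w} \subset \sum_{j'} \fp^{(j'),\lambda_{f-1-j'}}_{\tld w}$ whenever $\lambda_{f-1-j}=(2,1)$.

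For the equality $\sum_{j'} \fq^{(j'),(2,1)}_{i(\tld w)_{f-1-j'}} = \sum_{j'} \fp^{(j'),(2,1)}_{\tld w}$, the forward inclusion follows from the previous paragraph. For the reverse inclusion, when $\lambda = (\un 2, \un 1)$ the monodromy condition is vacuous at \emph{every} embedding, so $\fp^{(j'),(2,1)}_{\tld w}$ is generated at each $j'$ exactly by the $c^{(j')}_{ik}$, by $b^{(j')}_{12}$ or $b^{(j')}_{21}$, and by the single finite-height generator of the form $d_{11}^{(j')}$, $d_{22}^{(j')}$, or $\frac{d_{11}d_{22}}{d_{12}^*d_{21}^*}+p$ depending on $\tld w_{f-1-j'}$; by direct inspection these coincide with the generators of $\fq^{(j'),(2,1)}_{i(\tld w)_{f-1-j'}}$. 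Summing over $j'$ then yields the desired equality.

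The main obstacle is the enumeration of cases and the matching of generators between the tables; conceptually, the content of the lemma rests on the observation that the monodromy condition imposes no constraints when all Hodge--Tate weights equal $(2,1)$, so that $\fp^{(j),(2,1)}_{\tld w}$ admits the particularly simple finite-height description given by $\fq^{(j),(2,1)}_{i(\tld w)_{f-1-j}}$, uniformly over $\tld w \in X(\sigma)$.
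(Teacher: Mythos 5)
There is a genuine gap in your key step, namely the claim that ``no $O(p^{N-8})$ tails intervene in the generators of $\fp^{(j),(2,1)}_{\tld w}$ at such an embedding'' because the monodromy condition is vacuous there. The $O(p^{N-8})$ tails in Tables~\ref{Table4FV}--\ref{Table4} (inherited from Tables~\ref{Table1FV}--\ref{Table3FV} via Figure~\ref{fig:changeofvar}) do not come from the monodromy condition at embedding $j$: they are global error terms introduced by Elkik's approximation (the automorphism $\psi$ of Lemma~\ref{lem:automorphism}), which is applied to the entire ring $R$ at once and whose error terms involve variables from \emph{all} embeddings. They persist at embedding $j$ regardless of whether $\lambda_{f-1-j}=(2,1)$ or $(3,0)$. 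Concretely, in the case $(\tld w_\sigma)_{f-1-j}=t_{(1,2)}$, $i(\tld w)_{f-1-j}=1$, the only tail-free generator of $\fp^{(j),(2,1)}_{\tld w}$ visible in Table~\ref{Table4FV} is $c_{11}+pd_{11}$; the generators $d_{11}+O(p^{N-8})$, $c_{21}-(a_1-1)\tfrac{d_{11}d_{22}}{d_{12}^*}+O(p^{N-8})$, etc.\ leave a residual $O(p^{N-8})$ term after any manipulation, and since $p\notin\fp^{(j),(2,1)}_{\tld w}$ and the tail is an unspecified element, you cannot conclude $d_{11}$, $c_{21}$, \dots\ lie in $\fp^{(j),(2,1)}_{\tld w}$ by inspection of the table. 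Your appeal to the last sentence of Remark~\ref{rem:lowest-hodge-type} misreads it: that sentence concerns the case $\lambda_{f-1-j}=(2,1)$ for \emph{all} $j$ and is invoked only to upgrade the inclusion to an equality (by dimension reasons), not to strip tails at a single embedding, and not for the inclusion itself.

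The mechanism the paper actually uses is different. Remark~\ref{rem:lowest-hodge-type} is \emph{not} established by reading off explicit generators from the tables; it relies on the intrinsic finite-height conditions on the universal Kisin module. As observed at the end of the proof of Proposition~\ref{prop:def:ring}, the ideal $\fp^\lambda$ is the kernel of the natural map $R \to R^{\lambda,\tau}_\rhobar\bbra{\un X,\un Y}$, and the definition of $Y^{\le\lambda,\tau}$ (all entries of $A^{(f-1-j)}$ divisible by $(v+p)$ when $\lambda_{f-1-j}=(2,1)$, together with row 4) forces the clean, tail-free elements $c_{ik}^{(j)}$ and the final generator ($d_{11}$, $d_{22}$, or $\tfrac{d_{11}d_{22}}{d_{12}^*d_{21}^*}+p$) into that kernel directly, without reference to the Elkik-approximated generators. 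The paper's proof of the lemma then transports Remark~\ref{rem:lowest-hodge-type} to the multi-type picture via the change of variables of Figure~\ref{fig:changeofvar} (together with the extra generator $c_{11}$, resp.~$c_{22}$, which accounts for the additional variable in $S^{(j)}$ relative to $R^{(j)}$). Your preliminary observations about $\theta$ restricting $i(\tld w)_{f-1-j}$ to the indices for which $\fq^{(j),(2,1)}_i$ is defined are correct, but the core inclusion cannot be established by table inspection; it genuinely requires the intrinsic Kisin-module argument that Remark~\ref{rem:lowest-hodge-type} encodes.
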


\begin{proof}
  We can check the containment after reducing modulo $I_{\tld w} = \sum_{j'=0}^{f-1} I_{\tld w}^{(j')} \subset \sum_{j'=0}^{f-1} \fp^{(j'),\lambda_{f-1-j'}}_{\tld w}$.
  Hence this follows from Remark~\ref{rem:lowest-hodge-type} (and the identification~\eqref{eq:comparison-S-and-R} in the proof of Proposition~\ref{prop:multitype-def-ring}).
  The final equality follows by dimension reasons.
\end{proof}

Recall that $\rhobar: G_K\ra \GL_2(\F)$ is such that $\rhobar|_{I_K}\cong \taubar(s,\mu)$, where $\mu-\eta$ is $N$-deep with {$N\geq 12$} (see item \ref{it:sf:2} in \S \ref{sec:setup}).

\begin{prop}
\label{prop:p:in:inter}
  Keep the hypotheses of Proposition~\ref{prop:multitype-def-ring} and the definitions of Lemma~\ref{lem:lowest-hodge-type}. Then for any $0 \le j \le f-1$ and any $\tld{w}\in X(\sigma)$ such that $i(\tld{w})_{f-1-j}=2$ we have
  $p \in \fq^{(j),(2,1)}_1 \cap \fq^{(j),(2,1)}_2 + \fp^{(j),(3,0)}_{\tld{w}}$ if $(\tld w_\sigma)_{f-1-j} = t_{(1,2)}$ and 
  $p \in \fq^{(j),(2,1)}_2 \cap \fq^{(j),(2,1)}_3 + \fp^{(j),(3,0)}_{\tld{w}}$ if $(\tld w_\sigma)_{f-1-j} = t_{(2,1)}$.
\end{prop}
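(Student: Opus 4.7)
The plan is to reduce the statement to an explicit calculation in the ring $S/\fp^{(j),(3,0)}_{\tld{w}}$, using the generators of all relevant ideals coming from the finite height and monodromy conditions in Tables~\ref{Table4FV} and~\ref{Table4}. I focus on the case $(\tld w_\sigma)_{f-1-j} = t_{(1,2)}$ with $i(\tld w)_{f-1-j}=2$ (i.e.\ $\tld w_{f-1-j} = \fW t_{(2,1)}$); the other case is entirely symmetric, exchanging the roles of $(b_{12}, c_{12}, d_{11})$ with $(b_{21}, c_{21}, d_{22})$.

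First I would construct an explicit element of $\fq^{(j),(2,1)}_1 \cap \fq^{(j),(2,1)}_2$. Since any product of elements from $\fq^{(j),(2,1)}_1$ and $\fq^{(j),(2,1)}_2$ belongs to the intersection, multiplying $d_{11} \in \fq^{(j),(2,1)}_1$ with $d_{11}d_{22} + p\,d_{12}^* d_{21}^* \in \fq^{(j),(2,1)}_2$ (which is the generator $\frac{d_{11}d_{22}}{d_{12}^* d_{21}^*} + p$ cleared of denominators, recalling that $d_{12}^*, d_{21}^*$ are units in $S$) yields
\[
\xi \defeq d_{11}^2 d_{22} + p\, d_{11} d_{12}^* d_{21}^* \;\in\; \fq^{(j),(2,1)}_1 \cap \fq^{(j),(2,1)}_2.
\]
Analogous elements can be constructed from products of the remaining generators, namely $(b_{12} - pc_{12})\cdot b_{12}$, $(c_{12} - p d_{12}^*)\cdot c_{12}$, and $d_{11}\cdot b_{12}$, $d_{11}\cdot c_{12}$, etc. These will form our toolkit.

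Second, I would analyze $\fp^{(j),(3,0)}_{\tld w}$ as prescribed by Table~\ref{Table4FV}. Since the shape is $\fW t_{(2,1)}$ and the Hodge-Tate weight in embedding $j$ is $(3,0)$ (as opposed to $(2,1)$, which would kill all the $c_{ik}^{(j)}$), the finite height condition $\det A^{(f-1-j)} \in R^\times(v+p)^3$ together with the explicit form of the monodromy condition from Proposition~\ref{prop:monodromy} with $h=3$ and $t\in\{0,1\}$ provides nontrivial relations modulo $O(p^{N-5-t})$. The crucial step is to extract from these generators a congruence of the form $d_{11}^2 d_{22} \equiv p\cdot(\text{unit}) \pmod{\fp^{(j),(3,0)}_{\tld w}}$, by substituting the finite height relation into a suitable monodromy equation. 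Plugging this into $\xi$ gives $\xi \equiv p\cdot u \pmod{\fp^{(j),(3,0)}_{\tld w}}$ for some unit $u$, from which the conclusion $p \in \fq^{(j),(2,1)}_1 \cap \fq^{(j),(2,1)}_2 + \fp^{(j),(3,0)}_{\tld w}$ follows by dividing through.

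The main obstacle will be the bookkeeping of the $O(p^{N-5-t})$ error terms from the approximate form of the monodromy condition (which, inconveniently, involve variables from all embeddings), and verifying that these tails can be absorbed into the target sum. The genericity hypothesis $N \ge 12$ provides the key slack: since the constructed element $\xi$ has ``depth'' at most $2$ in the maximal ideal above $p$, the tails of order $p^{N-5}$ are absorbable after at most one or two iterations of the same construction (i.e.\ by replacing $\xi$ by $\xi + p\cdot\xi'$ for suitable corrections $\xi' \in \fq^{(j),(2,1)}_1 \cap \fq^{(j),(2,1)}_2$). Once the existence of such an explicit witness is checked, the symmetric case $(\tld w_\sigma)_{f-1-j} = t_{(2,1)}$ is obtained by the mirror-image argument with $\fq^{(j),(2,1)}_2 \cap \fq^{(j),(2,1)}_3$ in place of $\fq^{(j),(2,1)}_1 \cap \fq^{(j),(2,1)}_2$.
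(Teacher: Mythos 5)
Your witness $\xi = d_{11}\bigl(d_{11}d_{22}+p\,d_{12}^*d_{21}^*\bigr)$ cannot work. The generator $\frac{d_{11}d_{22}}{d_{12}^*d_{21}^*}+p$ of $\fp^{(j),(3,0)}_{\tld w}$ that comes from Table~\ref{Table4FV} (for $i(\tld w)_{f-1-j}=2$) forces only the \emph{degree-two} relation $d_{11}d_{22}\equiv \frac{2p}{a_2(a_2-1)}\,d_{12}^*d_{21}^* + O(p^{N-8})$ modulo $\fp^{(j),(3,0)}_{\tld w}$; there is no relation of the form $d_{11}^2d_{22}\equiv p\cdot(\text{unit})$, and one can never produce one, because substituting the degree-two relation into $d_{11}^2 d_{22}$ just returns $p\cdot d_{11}\cdot(\text{unit})$, with the non-unit $d_{11}$ still present. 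Consequently $\xi \equiv p\, d_{11} d_{12}^* d_{21}^* \bigl(\tfrac{2}{a_2(a_2-1)} \bigr) + O(p^{N-8})$ modulo $\fp^{(j),(3,0)}_{\tld w}$, which is $p$ times a non-unit, and one cannot divide by $d_{11}$ to conclude. The same objection applies to the other products you list: each is a product of two non-units, so after reducing modulo $\fp^{(j),(3,0)}_{\tld w}$ one lands either on $p^2\cdot(\text{unit})$ or on $p\cdot(\text{non-unit})$, never on $p\cdot(\text{unit})$.

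The paper instead starts from a much simpler observation: $c_{21}$ is a \emph{common generator} of $\fq^{(j),(2,1)}_1$ and $\fq^{(j),(2,1)}_2$ (see Lemma~\ref{lem:lowest-hodge-type}), hence lies directly in the intersection without any product being taken. One then uses the two relations from Table~\ref{Table4FV} that lie in $\fp^{(j),(3,0)}_{\tld w}$, namely $c_{21}+(a_2-1)d_{21}^*\bigl(\frac{d_{11}d_{22}}{d_{12}^*d_{21}^*}+p\bigr)+O(p^{N-8})$ and $\bigl(\frac{d_{11}d_{22}}{d_{12}^*d_{21}^*}+p\bigr)-\frac{2p}{a_2(a_2-1)}+O(p^{N-8})$, to eliminate $\frac{d_{11}d_{22}}{d_{12}^*d_{21}^*}+p$ and deduce $c_{21}\equiv -\frac{2p}{a_2}d_{21}^*+O(p^{N-8})$ modulo $\fp^{(j),(3,0)}_{\tld w}$. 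This \emph{is} $p$ times a unit (as $d_{21}^*$ is a unit and $N\geq 10$), so $p\in \fq^{(j),(2,1)}_1\cap\fq^{(j),(2,1)}_2+\fp^{(j),(3,0)}_{\tld w}$. The other case $(\tld w_\sigma)_{f-1-j}=t_{(2,1)}$ is handled symmetrically with $c_{12}$ in place of $c_{21}$. You should revise the argument to exhibit $c_{21}$ (resp.\ $c_{12}$) as the element of the intersection, rather than building witnesses by taking products.
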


\begin{proof}
Suppose that $(\tld w_\sigma)_{f-1-j} = t_{(1,2)}$.
We will systematically omit superscripts $(j)$ and write $\fp_2^{(3,0)}$ instead of $\fp_{\tld{w}}^{(j),(3,0)}$ for readability.
From rows 4 and 8 of Table \ref{Table4FV} note that the following elements are in $\fp_2^{(3,0)}$:
\begin{align*}
&c_{21}+(a_2-1)d_{21}^*\bigg(\frac{d_{11}d_{22}}{d_{12}^*d_{21}^*}+p\bigg)+O(p^{N-8}),\\
&\frac{d_{11}d_{22}}{d_{12}^*d_{21}^*}+p\frac{(a_2-2)(a_2+1)}{a_2(a_2-1)}+O(p^{N-8})
  = \bigg(\frac{d_{11}d_{22}}{d_{12}^*d_{21}^*}+p\bigg)-\frac{2p}{a_2(a_2-1)}+O(p^{N-8}).\\
\end{align*}
By eliminating $\frac{d_{11}d_{22}}{d_{12}^*d_{21}^*}+p$ using the last element we get
\begin{align*}
c_{21}+\frac{2p}{a_2}d_{21}^*+O(p^{N-8})&\in \fp_2^{(3,0)}.
\end{align*}
Noting that $c_{21}$ is in $\fq_1^{(2,1)}\cap \fq_2^{(2,1)}$ we deduce that
\begin{align*}
\fp_2^{(3,0)}+\fq_1^{(2,1)}\cap \fq_2^{(2,1)}&\ni\frac{2p}{a_2}d_{21}^*+O(p^{N-8})\\
&=p\bigg(\frac{2}{a_2}d_{21}^*+O(p^{N-9})\bigg).&
\end{align*}
As $N\geq 10$, the factor in parentheses is a unit in $S$, so we obtain $p\in \fp_2^{(3,0)}+\fq_1^{(2,1)}\cap \fq_2^{(2,1)}$.

The case $(\tld w_\sigma)_{f-1-j} = t_{(2,1)}$ is completely analogous, using from rows 3 and 6 of Table~\ref{Table4} that 
\begin{align*}
&c_{12}-\frac{2p}{a_2-1}d_{12}^*+O(p^{N-8})\in\fp^{(3,0)}_2, \\
&c_{12}\in\fq^{(2,1)}_2\cap\fq^{(2,1)}_3.
\end{align*}
(Alternatively, we mention that the element $\begin{pmatrix}0 & 1\\v &0\end{pmatrix}$
normalizing the Iwahori interchanges shapes $t_{(2,1)}$ and $t_{(1,2)}$ and preserves $\fW t_{(2,1)}$. It can then be seen
that Tables 1 and 3, and likewise Tables 4 and 5, are interchanged under the transformation sending $c_{ik}$, $d_{ik}$, \dots\
to $c_{3-i,3-k}$, $d_{3-i,3-k}$, \dots\ and $a_i$ to $1-a_{4-i}$. In this way we can reduce the second case of this proposition
to the first.)
\end{proof}

\newpage

\begin{table}[ht]
\captionsetup{justification=centering}
\caption[Foo content]{\textbf{Shape $\tld{w}_{f-1-j}=t_{(2,1)}$, i.e.~$\ovl{A}^{(f-1-j)}=\begin{pmatrix}\ovl{e_{11}^*}v^2&0\\0&\ovl{d_{22}^*}v\end{pmatrix}$.
}
}
\label{Table1FV}
\centering
\adjustbox{max width=\textwidth}{
\begin{tabular}{| c | c |}
\hline
&\\
$A^{(f-1-j)}$ & 
$\begin{pmatrix} (v+p)^2 e_{11}^*+(v+p)d_{11}+c_{11}& c_{12} \\ v((v+p)d_{21}+c_{21}) &  (v+p)d_{22}^*+c_{22}\end{pmatrix}$\\
&\\
\hline
&\\
$\varphi$-module at the &
\multirow{2}{*}{$
\begin{pmatrix} \frac 1v\big((v+p)^2 e_{11}^*+(v+p)d_{11}+c_{11}\big) & c_{12} \\ (v+p)d_{21}+c_{21} &  (v+p)d_{22}^*+c_{22}\end{pmatrix}s_{j}^{-1}\begin{pmatrix}v^{r_{j}+1}&0\\0&1
\end{pmatrix}
$}\\
$(f-1-j)$-th embedding& \\
&\\
\hline
&\\
$R^{(j)}$& $\cO\bbra{c_{11},d_{11},x_{11}^*,c_{12},c_{21},d_{21},c_{22},x_{22}^*}$\\
&\\
\hline
&\\
$I^{(j),\leq (3,0)}$&
$\begin{aligned}
&c_{11}c_{22}+pc_{12}c_{21},\\
&d_{11}c_{22}-c_{12}c_{21}+c_{11}d_{22}^*+pc_{12}d_{21},\\
&e_{11}^*c_{22}+d_{11}d_{22}^*-c_{12}d_{21}
\end{aligned}$\\
&\\
\hline
&\\
$I^{(j),\nabla}$&
$\begin{aligned}
&(a_1-1)d_{11}c_{22}+a_1c_{11}d_{22}^*+p(d_{11}d_{22}^*+2e_{11}^*c_{22})+O(p^{N-4}),\\
&c_{22}(a_1c_{11}+pd_{11})+O(p^{N-3}),\\
&c_{12}((a_1-1)d_{11}+2pe_{11}^*)+O(p^{N-4}),\\
&c_{12}(a_1c_{11}+pd_{11})+O(p^{N-3}),\\
&(a_1-1)c_{21}c_{22}-p\big((a_1-3)d_{21}c_{22}+(a_1+1)c_{21}d_{22}^*\big)+O(p^{N-4}),\\
&p\big((a_1-1)c_{21}c_{22}+p(d_{21}c_{22}-c_{21}d_{22}^*)\big)+O(p^{N-3}),\\
&(a_1-1)c_{12}c_{21}+c_{11}d_{22}^*-p\big((a_1-3)c_{12}d_{21}+d_{11}d_{22}^*\big)+O(p^{N-4}),\\
&p\big((a_1-1)c_{12}c_{21}+c_{11}d_{22}^*+pc_{12}d_{21}\big)+O(p^{N-3})\\
\end{aligned}$\\
&\\
\hline
&\\
$I^{(j)}$&
$\begin{aligned}
& d_{11}+(a_1-2)\frac{c_{12}d_{21}}{d_{22}^*}+O(p^{N-8}),
\\
& c_{22}-(a_1-1)\frac{c_{12}d_{21}}{e_{11}^*}+O(p^{N-8}),\\
& c_{21} + \frac{(a_1-1)(a_1-2)}{a_1}\frac{c_{12}(d_{21})^2}{e_{11}^*d_{22}^*}+O(p^{N-8}),\\
& c_{11} - \frac{c_{12}d_{21}}{d_{22}^*} \bigg(\frac{(a_1-1)^2 (a_1-2)}{a_1}\frac{c_{12}d_{21}}{e_{11}^*d_{22}^*} - p\bigg)+O(p^{N-8}),\\
& \Big(c_{12}+O(p^{N-8})\Big)\bigg((a_1-1)(a_1-2)\frac{c_{12}d_{21}}{e_{11}^*d_{22}^*}-2p+O(p^{N-8})\bigg)
\end{aligned}$
$\begin{aligned}
\end{aligned}$\\
&\\
\hline
&\\
$\fp^{(j),(2,1)}$& $I^{(j)}+\Big(c_{12}+O(p^{N-8})\Big)$ %
\\
&\\
\hline
&\\
$\fp^{(j),(3,0)}$& $I^{(j)}+\bigg((a_1-1)(a_1-2)\displaystyle\frac{c_{12}d_{21}}{e_{11}^*d_{22}^*}-2p+O(p^{N-8})\bigg)$
\\
&\\
\hline
\end{tabular}}
\captionsetup{justification=raggedright,
singlelinecheck=false
}
\caption*{\tiny{
Here, $a_1 \in \ZZ_{(p)}$ and $a_1\equiv -\langle s_{j}^{-1}(\mu_{j})-(2,1),\alpha_{j}^\vee\rangle \equiv -\sgn(s_{j})(r_{j}+1)+1 \pmod p$.}
For readability we write $a_1$, $c_{ik}$, etc.\ instead of $a_1^{(j)}$, $c_{ik}^{(j)}$, etc.
Also, note that $x_{11}^* \defeq e_{11}^*-[\ovl{e^*_{11}}]$ and $x_{22}^* \defeq d_{22}^*-[\ovl{d^*_{22}}]$.

Note that both $a_1$ and the $O(p^{N-8})$ tails \emph{depend on the whole $f$-tuple} $\tld w$ and not just on $\tld w_{f-1-j}$. 
Also, the $O(p^{N-8})$ tails involve variables of all embedding and $I^{(j),\nabla}$, $I^{(j)}$, $\fp^{(j),(2,1)}$ and $\fp^{(j),(3,0)}$ are not ideals of $R^{(j)}$ in general.
A similar comment applies to Tables \ref{Table2FV}--\ref{Table4} below.
}
\end{table}

\begin{table}[ht]
\captionsetup{justification=centering}
\caption[Foo content]{\textbf{Shape $\tld{w}_{f-1-j}=\fW t_{(2,1)}$, i.e.~$\ovl{A}^{(f-1-j)}=\begin{pmatrix}0&\ovl{d_{12}^*}v\\\ovl{d_{21}^*}v^2&0\end{pmatrix}$.}} 
\label{Table2FV}
\centering
\adjustbox{max width=\textwidth}{
\begin{tabular}{| c | c |}
\hline
&\\
$A^{(f-1-j)}$ & 
$\begin{pmatrix} (v+p)d_{11}+c_{11}& (v+p)d_{12}^*+c_{12} \\ v((v+p)d_{21}^*+c_{21}) &  (v+p)d_{22}+c_{22}\end{pmatrix}$\\
&\\
\hline
&\\
$\varphi$-module at the & 
\multirow{2}{*}{$\begin{pmatrix} (v+p)d_{12}^*+c_{12} & \frac{1}{v} \big((v+p)d_{11}+c_{11}\big)\\(v+p)d_{22}+c_{22} &(v+p)d_{21}^*+c_{21} \end{pmatrix}s_{j}^{-1}\begin{pmatrix}v^{r_{j}+1}&0\\0&1
\end{pmatrix}$}\\
$(f-1-j)$-th embedding & \\
&\\
\hline
&\\
$R^{(j)}$& $\cO\bbra{c_{11},d_{11},c_{12},x^*_{12},c_{21},x^*_{21},c_{22},d_{22}}$\\
&\\
\hline
&\\
$I^{(j),\leq (3,0)}$&
$\begin{aligned}
&d_{11}d_{22}-(c_{12}d_{21}^*+d_{12}^*c_{21})+pd_{12}^*d_{21}^*,\\
&c_{12}c_{21}-d_{11}c_{22}-c_{11}d_{22}-p(c_{12}d_{21}^*+d_{12}^*c_{21}),\\
&c_{11}c_{22}+pc_{12}c_{21}
\end{aligned}$\\
&\\
\hline
&\\
$I^{(j),\nabla}$&
$\begin{aligned}
&(a_2-1)d_{11}c_{22}+a_2 c_{11}d_{22}+p(d_{11}d_{22}-2d^*_{12}c_{21}+pd_{12}^*d_{21}^*)+O(p^{N-4}),\\
&a_2c_{11}c_{22}+p(d_{11}c_{22}+pd_{12}^*c_{21})+O(p^{N-3}),\\
&(a_2+1)c_{11}d_{12}^*+(a_2-1)d_{11}c_{12}+O(p^{N-4}),\\
&a_2c_{11}c_{12}+p(d_{11}c_{12}-c_{11}d_{12}^*)+O(p^{N-3}),\\
&(a_2-1)c_{21}c_{22}-p\big((a_2-3) d_{21}^*c_{22}+(a_2+1) c_{21}d_{22}\big)+O(p^{N-4}),\\
&p\big((a_2-1)c_{21}c_{22}+p (d_{21}^*c_{22}-c_{21}d_{22})\big)+O(p^{N-3}),\\
&(a_2-1) c_{12}c_{21}+c_{11}d_{22}-p\big((a_2-3) c_{12}d_{21}^*+(a_2-1) d_{12}^*c_{21} \\ &\qquad\qquad + d_{11}d_{22} + pd_{12}^*d_{21}^*\big)+O(p^{N-4}),\\
&p\big((a_2-1)c_{12}c_{21}+c_{11}d_{22}+p c_{12}d_{21}^*\big)+O(p^{N-3})\\
\end{aligned}$\\
&\\
\hline
&\\
$I^{(j)}$&
$\begin{aligned}
& c_{21}+(a_2-1)d_{21}^*\bigg(\frac{d_{11}d_{22}}{d_{12}^*d_{21}^*}+p\bigg)+O(p^{N-8}),\\
& c_{12}-a_2d_{12}^*\bigg(\frac{d_{11}d_{22}}{d_{12}^*d_{21}^*}+p\bigg)+O(p^{N-8}),\\
& c_{11}+\frac{a_2(a_2-1)}{a_2+1}d_{11}\bigg(\frac{d_{11}d_{22}}{d_{12}^*d_{21}^*}+p\bigg)+O(p^{N-8}),\\
& c_{22}-\frac{a_2(a_2-1)}{a_2-2}d_{22}\bigg(\frac{d_{11}d_{22}}{d_{12}^*d_{21}^*}+p\bigg)+O(p^{N-8}),\\
& \bigg(\frac{d_{11}d_{22}}{d_{12}^*d_{21}^*}+p+O(p^{N-8})\bigg)\bigg(\frac{a_2(a_2-1)}{(a_2-2)(a_2+1)}\frac{d_{11}d_{22}}{d_{12}^*d_{21}^*}+p+O(p^{N-8})\bigg)
\end{aligned}$\\
&\\
\hline&\\
$\fp^{(j),(2,1)}$& $I^{(j)}+\bigg(\displaystyle\frac{d_{11}d_{22}}{d_{12}^*d_{21}^*}+p+O(p^{N-8})\bigg)$ %
\\
&\\
\hline
&\\
$\fp^{(j),(3,0)}$& $I^{(j)}+\bigg(\displaystyle\frac{a_2(a_2-1)}{(a_2-2)(a_2+1)}\frac{d_{11}d_{22}}{d_{12}^*d_{21}^*}+p+O(p^{N-8})\bigg)$
\\
&\\
\hline
\end{tabular}}
\captionsetup{justification=raggedright,
singlelinecheck=false
}
\caption*{\footnotesize{
Here, $a_2 \in \ZZ_{(p)}$ and $a_2\equiv-\langle \fW s_{j}^{-1}(\mu_{j})-(2,1),\alpha_{j}^\vee\rangle\equiv  \sgn(s_{j})(r_{j}+1)+1 \pmod p$.
For readability we write $a_2$, $c_{ik}$, etc.\ instead of $a_2^{(j)}$, $c_{ik}^{(j)}$, etc.
Also, note that $x_{12}^* \defeq d_{12}^*-[\ovl{d^*_{12}}]$ and $x_{21}^* \defeq d_{21}^*-[\ovl{d^*_{21}}]$.
}}
\end{table}

\begin{table}[ht]
\captionsetup{justification=centering}
\caption[Foo content]{\textbf{Shape $\tld{w}_{f-1-j}=t_{(1,2)}$, i.e.~$\ovl{A}^{(f-1-j)}=\begin{pmatrix}\ovl{d_{11}^*}v&0\\0&\ovl{e_{22}^*}v^2\end{pmatrix}$.}}
\label{Table3FV}
\centering
\adjustbox{max width=\textwidth}{
\begin{tabular}{| c | c |}
\hline
&\\
$A^{(f-1-j)}$ & 
$\begin{pmatrix} (v+p)d_{11}^*+c_{11}& (v+p)d_{12}+c_{12} \\ vc_{21} &  (v+p)^2e_{22}^*+(v+p)d_{22}+c_{22}\end{pmatrix}$\\
&\\
\hline
&\\
$\varphi$-module at the & 
\multirow{2}{*}{$\begin{pmatrix} (v+p)d_{11}^*+c_{11}& \frac{1}{v}\big((v+p)d_{12}+c_{12}\big) \\ vc_{21} &  \frac{1}{v}\big((v+p)^2e_{22}^*+(v+p)d_{22}+c_{22}\big)\end{pmatrix}s_{j}^{-1}\begin{pmatrix}v^{r_{j}+1}&0\\0&1\end{pmatrix}$}\\
$(f-1-j)$-th embedding&\\
&\\
\hline
&\\
$R^{(j)}$& $\cO\bbra{c_{11},x_{11}^*,c_{12},d_{12},c_{21},c_{22},d_{22},x_{22}^*}$\\
&\\
\hline
&\\
$I^{(j),\leq (3,0)}$&
$\begin{aligned}
&c_{11}c_{22}+pc_{12}c_{21},\\
&c_{11}d_{22}-c_{12}c_{21}+d_{11}^*c_{22}+pd_{12}c_{21},\\
&c_{11}e_{22}^*+d_{11}^*d_{22}-d_{12}c_{21}
\end{aligned}$\\
&\\
\hline
&\\
$I^{(j),\nabla}$&
$\begin{aligned}
&a_3c_{11}d_{22}+(a_3-1)d^*_{11}c_{22}-p(d^*_{11}d_{22}+2c_{11}e^*_{22})+O(p^{N-4}),\\
&c_{11}\big((a_3-1)c_{22}-pd_{22}\big)+O(p^{N-3}),\\
&c_{21}(a_3d_{22}-2pe_{22}^*)+O(p^{N-4}),\\
&c_{21}\big((a_3-1)c_{22}-pd_{22}\big)+O(p^{N-3}),\\
&a_3c_{11}c_{12}-p\big((a_3+2)c_{11}d_{12}+(a_3-2)d_{11}^*c_{12}\big)+O(p^{N-4}),\\
&p\big(a_3c_{11}c_{12}-p(c_{11}d_{12}-d_{11}^*c_{12})\big)+O(p^{N-3}),\\
&a_3c_{12}c_{21}-d_{11}^*c_{22}-p\big((a_3+2)d_{12}c_{21}-d_{11}^*d_{22}\big)+O(p^{N-4}),\\
&p\big(a_3c_{12}c_{21}-d_{11}^*c_{22}-pd_{12}c_{21}\big)+O(p^{N-3})\\
\end{aligned}$\\
&\\
\hline
&\\
$I^{(j)}$&
$\begin{aligned}
&d_{22}-(a_3+1)\frac{d_{12}c_{21}}{d_{11}^*}+O(p^{N-8}),\\
&c_{11}+a_3\frac{d_{12}c_{21}}{e_{22}^*}+O(p^{N-8}),
\\
&c_{12}-\frac{a_3(a_3+1)}{a_3-1}\frac{(d_{12})^2c_{21}}{d_{11}^*e_{22}^*}+O(p^{N-8}),\\
&c_{22}-\frac{d_{12}c_{21}}{d_{11}^*}\bigg(\frac{(a_3)^2(a_3+1)}{a_3-1}\frac{d_{12}c_{21}}{d_{11}^*e_{22}^*}-p\bigg)+O(p^{N-8}),\\
&\Big(c_{21}+O(p^{N-8})\Big)\bigg(a_3(a_3+1)\frac{d_{12}c_{21}}{d_{11}^*e_{22}^*}-2p+O(p^{N-8})\bigg)
\end{aligned}$\\
&\\
\hline
&\\
$\fp^{(j),(2,1)}$& $I^{(j)}+\Big(c_{21}+O(p^{N-8})\Big)$%
\\
&\\
\hline
&\\
$\fp^{(j),(3,0)}$& $I^{(j)}+\bigg(a_3(a_3+1)\displaystyle\frac{d_{12}c_{21}}{d_{11}^*e_{22}^*}-2p+O(p^{N-8})\bigg)$
\\
&\\
\hline
\end{tabular}}
\captionsetup{justification=raggedright,
singlelinecheck=false
}
\caption*{\footnotesize{
Here, $a_3\in \ZZ_{(p)}$ and $a_3\equiv-\langle s_{j}^{-1}(\mu_{j})-(1,2),\alpha_{j}^\vee\rangle\equiv- \sgn(s_{j})(r_{j}+1)-1 \pmod p$.
For readability we write $a_3$, $c_{ik}$, etc.\ instead of $a_3^{(j)}$, $c_{ik}^{(j)}$, etc.
Also, note that $x_{11}^* \defeq d_{11}^*-[\ovl{d^*_{11}}]$ and $x_{22}^* \defeq e_{22}^*-[\ovl{e^*_{22}}]$.
}}
\end{table}

\begin{table}[ht]
\captionsetup{justification=centering}
\caption[Foo content]{\textbf{Multi-type deformations: shapes $\tld w_{f-1-j} = t_{(2,1)}$ and $\tld{w}_{f-1-j}=\fW t_{(2,1)}$.}}
\label{Table4FV}
\centering
\adjustbox{max width=\textwidth}{
\begin{tabular}{| c | c | }
\hline
&\\
Multi-type $\varphi$-module at& 
\multirow{2}{*}{$\begin{pmatrix}
(v+p)d_{12}^*+c_{12}+\frac{b_{12}}{v}&\frac{1}{v}\big((v+p)d_{11}+c_{11}\big)
\\
(v+p)d_{22}+c_{22}&
(v+p)d_{21}^*+c_{21}
\end{pmatrix}
s_{j}^{-1}\begin{pmatrix}v^{r_{j}+1}&0\\0&1
\end{pmatrix}$}\\
the $(f-1-j)$-th embedding&\\
&\\
\hline
&\\
$S^{(j)}$& $\cO\bbra{c_{11},d_{11},b_{12},c_{12},x^*_{12},c_{21},x^*_{21},c_{22},d_{22}}$\\
&\\
\hline
&\\
$I_{\tld{w}}^{(j)},\quad i(\tld{w})_{f-1-j}=1$
&
$\begin{aligned}
&c_{11}+pd_{11},\\
&c_{12}-pd_{12}^*+(a_1-2)\frac{d_{11}d_{22}}{d_{21}^*}+O(p^{N-8}),\\
&c_{21}-(a_1-1)\frac{d_{11}d_{22}}{d_{12}^*}+O(p^{N-8}),\\
&c_{22}+\frac{(a_1-1)(a_1-2)}{a_1}\frac{d_{11}(d_{22})^2}{d_{12}^*d_{21}^*}+O(p^{N-8}),\\
&b_{12}-pc_{12}-\frac{d_{11}d_{22}}{d_{21}^*}\bigg(\frac{(a_1-1)^2(a_1-2)}{a_1}\frac{d_{11}d_{22}}{d_{12}^*d_{21}^*}-p\bigg)+O(p^{N-8}),\\
&\Big(d_{11}+O(p^{N-8})\Big)\bigg((a_1-1)(a_1-2)\frac{d_{11}d_{22}}{d_{12}^*d_{21}^*}-2p+O(p^{N-8})\bigg)
\end{aligned}$\\
&\\
\hline
&\\
$I_{\tld{w}}^{(j)},\quad i(\tld{w})_{f-1-j}=2
$
&
$\begin{aligned}
&b_{12},\\
&c_{21}+(a_2-1)d_{21}^*\bigg(\frac{d_{11}d_{22}}{d_{12}^*d_{21}^*}+p\bigg)+O(p^{N-8}),\\
&c_{12}-a_2d_{12}^*\bigg(\frac{d_{11}d_{22}}{d_{12}^*d_{21}^*}+p\bigg)+O(p^{N-8}),\\
&c_{11}+\frac{a_2(a_2-1)}{a_2+1}d_{11}\bigg(\frac{d_{11}d_{22}}{d_{12}^*d_{21}^*}+p\bigg)+O(p^{N-8}),\\
&c_{22}-\frac{a_2(a_2-1)}{a_2-2}d_{22}\bigg(\frac{d_{11}d_{22}}{d_{12}^*d_{21}^*}+p\bigg)+O(p^{N-8}),\\
&\bigg(\frac{d_{11}d_{22}}{d_{12}^*d_{21}^*}+p+O(p^{N-8})\bigg)\bigg(\frac{a_2(a_2-1)}{(a_2-2)(a_2+1)}\frac{d_{11}d_{22}}{d_{12}^*d_{21}^*}+p+O(p^{N-8})\bigg)
\end{aligned}$\\
&\\
\hline
&\\
$\fp_{\tld{w}}^{(j),(2,1)}, i(\tld{w})_{f-1-j}=1$& $I_{\tld{w}}^{(j)}+\Big(d_{11}+O(p^{N-8})\Big)$ %
\\
&\\
\hline
&\\
$\fp_{\tld{w}}^{(j),(3,0)}, i(\tld{w})_{f-1-j}=1$& $I_{\tld{w}}^{(j)}+\bigg((a_1-1)(a_1-2)\displaystyle\frac{d_{11}d_{22}}{d_{12}^*d_{21}^*}-2p+O(p^{N-8})\bigg)$
\\
&\\
\hline
&\\
$\fp_{\tld{w}}^{(j),(2,1)}, i(\tld{w})_{f-1-j}=2$& $I_{\tld{w}}^{(j)}+\bigg(\displaystyle\frac{d_{11}d_{22}}{d_{12}^*d_{21}^*}+p+O(p^{N-8})\bigg)$ %
\\
&\\
\hline
&\\
$\fp_{\tld{w}}^{(j),(3,0)}, i(\tld{w})_{f-1-j}=2$& $I_{\tld{w}}^{(j)}+\bigg(\displaystyle\frac{a_2(a_2-1)}{(a_2-2)(a_2+1)}\frac{d_{11}d_{22}}{d_{12}^*d_{21}^*}+p+O(p^{N-8})\bigg)$
\\
&\\
\hline
\end{tabular}}
\captionsetup{justification=raggedright,
singlelinecheck=false
}
\caption*{\tiny{
For readability we write $a_i$, $c_{ik}$, etc.\ instead of $a_i^{(j)}$, $c_{ik}^{(j)}$, etc.
Also, note that $x_{12}^* \defeq d_{12}^*-[\ovl{d^*_{12}}]$ and $x_{21}^* \defeq d_{21}^*-[\ovl{d^*_{21}}]$, where $\ovl{d^*_{12}}$, $\ovl{d^*_{21}} \in \F^\times$.
Note that the constants $a_1, a_2$ and the $O(p^{N-8})$ tails coming from Tables~\ref{Table1FV}--\ref{Table2FV} (by the change of variables in Figure~\ref{fig:changeofvar}) \emph{depend on the whole $f$-tuple} $\tld{w}\in X(\sigma)$.
}}
\end{table}

\begin{table}[ht]
\captionsetup{justification=centering}
\caption[Foo content]{\textbf{Multi-type deformations: shapes $\tld w_{f-1-j} = \fW t_{(2,1)}$ and $\tld{w}_{f-1-j}=t_{(1,2)}$.}}
\label{Table4}
\centering
\adjustbox{max width=\textwidth}{
\begin{tabular}{| c | c | }
\hline
&\\
Multi-type $\varphi$-module at& 
\multirow{2}{*}{$\begin{pmatrix}
(v+p)d_{12}^*+c_{12}&\frac{1}{v}\big((v+p)d_{11}+c_{11}\big)
\\
(v+p)d_{22}+c_{22}&(v+p)d_{21}^*+c_{21}+\frac{b_{21}}{v}
\end{pmatrix}s_{j}^{-1}\begin{pmatrix}v^{r_{j}+1}&0\\0&1
\end{pmatrix}
$}
\\
the $(f-1-j)$-th embedding&\\
&\\
\hline
&\\
$S^{(j)}$& $\cO\bbra{c_{11},d_{11},c_{12},x^*_{12},b_{21},c_{21},x^*_{21},c_{22},d_{22}}$\\
&\\
\hline
&\\
$I_{\tld{w}}^{(j)},\quad i(\tld{w})_{f-1-j}=2$
&
$\begin{aligned}
&b_{21},\\
&c_{21}+(a_2-1)d_{21}^*\bigg(\frac{d_{11}d_{22}}{d_{12}^*d_{21}^*}+p\bigg)+O(p^{N-8}),\\
&c_{12}-a_2d_{12}^*\bigg(\frac{d_{11}d_{22}}{d_{12}^*d_{21}^*}+p\bigg)+O(p^{N-8}),\\
&c_{11}+\frac{a_2(a_2-1)}{a_2+1}d_{11}\bigg(\frac{d_{11}d_{22}}{d_{12}^*d_{21}^*}+p\bigg)+O(p^{N-8}),\\
&c_{22}-\frac{a_2(a_2-1)}{a_2-2}d_{22}\bigg(\frac{d_{11}d_{22}}{d_{12}^*d_{21}^*}+p\bigg)+O(p^{N-8}),\\
&\bigg(\frac{d_{11}d_{22}}{d_{12}^*d_{21}^*}+p+O(p^{N-8})\bigg)\bigg(\frac{a_2(a_2-1)}{(a_2-2)(a_2+1)}\frac{d_{11}d_{22}}{d_{12}^*d_{21}^*}+p+O(p^{N-8})\bigg)
\end{aligned}$\\
&\\
\hline
&\\
$I_{\tld{w}}^{(j)},\quad i(\tld{w})_{f-1-j}=3$
&
$\begin{aligned}
&c_{22}+pd_{22},\\
&c_{21}-pd_{21}^*-(a_3+1)\frac{d_{11}d_{22}}{d_{12}^*}+O(p^{N-8}),\\
&c_{12}+a_3\frac{d_{11}d_{22}}{d_{21}^*}+O(p^{N-8}),\\
&c_{11}-\frac{a_3(a_3+1)}{a_3-1}\frac{(d_{11})^2 d_{22}}{d_{12}^*d_{21}^*}+O(p^{N-8}),\\
&b_{21}-pc_{21}-\frac{d_{11}d_{22}}{d_{12}^*}\bigg(\frac{(a_3)^2(a_3+1)}{a_3-1}\frac{d_{11}d_{22}}{d_{12}^*d_{21}^*}-p\bigg)+O(p^{N-8}),\\
&\Big(d_{22}+O(p^{N-8})\Big)\bigg(a_3(a_3+1)\frac{d_{11}d_{22}}{d_{12}^*d_{21}^*}-2p+O(p^{N-8})\bigg)
\end{aligned}$\\
&\\
\hline
&\\
$\fp_{\tld{w}}^{(j),(2,1)}, i(\tld{w})_{f-1-j}=2$& $I_{\tld{w}}^{(j)}+\bigg(\displaystyle\frac{d_{11}d_{22}}{d_{12}^*d_{21}^*}+p+O(p^{N-8})\bigg)$%
\\
&\\
\hline
&\\
$\fp_{\tld{w}}^{(j),(3,0)}, i(\tld{w})_{f-1-j}=2$& $I_{\tld{w}}^{(j)}+\bigg(\displaystyle\frac{a_2(a_2-1)}{(a_2-2)(a_2+1)}\frac{d_{11}d_{22}}{d_{12}^*d_{21}^*}+p+O(p^{N-8})\bigg)$
\\
&\\
\hline
&\\
$\fp_{\tld{w}}^{(j),(2,1)}, i(\tld{w})_{f-1-j}=3$& $I_{\tld{w}}^{(j)}+\Big(d_{22}+O(p^{N-8})\Big)$%
\\
&\\
\hline
&\\
$\fp_{\tld{w}}^{(j),(3,0)}, i(\tld{w})_{f-1-j}=3$& $I_{\tld{w}}^{(j)}+\bigg(a_3(a_3+1)\displaystyle\frac{d_{11}d_{22}}{d_{12}^*d_{21}^*}-2p+O(p^{N-8})\bigg)$
\\
&\\
\hline
\end{tabular}}
\captionsetup{justification=raggedright,
singlelinecheck=false
}
\caption*{\tiny{
For readability we write $a_i$, $c_{ik}$, etc.\ instead of $a_i^{(j)}$, $c_{ik}^{(j)}$, etc.
Also, note that $x_{12}^* \defeq d_{12}^*-[\ovl{d^*_{12}}]$ and $x_{21}^* \defeq d_{21}^*-[\ovl{d^*_{21}}]$, where $\ovl{d^*_{12}}$, $\ovl{d^*_{21}} \in \F^\times$.
Note that the constants $a_2, a_3$ and the $O(p^{N-8})$ tails coming from Tables~\ref{Table1FV}--\ref{Table2FV} (by the change of variables in Figure~\ref{fig:changeofvar}) \emph{depend on the whole $f$-tuple} $\tld{w}\in X(\sigma)$.
}}
\end{table}

\clearpage{}%
\clearpage{}%
\section{Gelfand--Kirillov dimension and representations of the Iwahori}

We introduce an analog of the Gelfand--Kirillov dimension for smooth modulo $p$ representations of $p$-adic analytic groups and prove Corollary~\ref{cor:GKdim} which gives an upper bound for this dimension in the case of representations of the Iwahori subgroup of $\GL_2(L)$, $L$ unramified, satisfying a ``multiplicity one'' assumption in the first three layers of their socle filtration.

Let $\F$ be a finite field of characteristic $p$. If $H$ is a compact $p$-adic analytic group, we define
\[ \ZZ_p\bbra{H}\defeq \varprojlim_{H'\subset H}\ZZ_p[H/H'], \qquad \F\bbra{H}\defeq \F\otimes_{\ZZ_p}\ZZ_p\bbra{H},\]
for $H'$ varying among open normal subgroups of $H$. If $H$ is moreover a pro-$p$-group, $\F\bbra{H}$ is a complete noetherian local ring whose maximal ideal is denoted by $\mathfrak{m}_H$. We let $\gr_\mathfrak{m}\F\bbra{H}$ be the graded ring of $\F\bbra{H}$ for the $\mathfrak{m}_H$-adic filtration
\[ \gr_\mathfrak{m}\F\bbra{H}\defeq \bigoplus_{n\geq0}\mathfrak{m}_H^n/\mathfrak{m}_H^{n+1}.\]

\subsection{Review of Gelfand--Kirillov dimension}
\label{sec:kirillov}

We recall the notion of Gelfand--Kirillov dimension of an admissible smooth $\F$-representation of a $p$-adic analytic group. General references for this part are \cite{Venjakob} and \cite{Ardakov-Brown}. We recall here some useful definitions and results for the reader.

Let $H$ be a compact $p$-adic analytic group and let $M$ be a finitely generated $\F\bbra{H}$-module. Its \emph{grade} $j_H(M)$ is the smallest integer $d$ such that $\Ext^d_{\F\bbra{H}}(M,\F\bbra{H})\neq0$ (with the convention that the smallest element of the empty set is $+\infty$). Moreover, if $M\neq0$, we have
\[0\leq j_H(M)\leq \dim (H),\]
where $\dim(H)$ is the dimension of $H$ as a $\Qp$-analytic variety. This is a consequence of the following two facts:
\begin{enumerate}[(i)]
\item if $H'\subset H$ is an open subgroup of $H$, the $\F\bbra{H'}$-module $M$ is finitely generated and we have $j_H(M)=j_{H'}(M)$, as follows from \cite[Prop.~2.7]{Venjakob};
\item if $H$ is $p$-torsion free, $\F\bbra{H}$ is of finite injective dimension equal to $\mathrm{cd}_p(H)$ \cite[Thm.~3.30(ii)]{Venjakob} and $\mathrm{cd}_p(H)=\dim(H)$ \cite[Cor.~1]{Serrecd}.
\end{enumerate}

We also define a \emph{dimension function} by $\dim_H(M)\defeq \dim(H)-j_H(M)$.

When $H$ is a uniform pro-$p$-group, the graded $\F$-algebra $\gr_\mathfrak{m}\F\bbra{H}$ is commutative isomorphic to the polynomial algebra in $\dim(H)$ variables over $\F$ (see the paragraph after Remark 3.31 in \cite{Venjakob}). If $M$ is a finitely generated $\F\bbra{H}$-module, its graded module $\gr_\mathfrak{m}M$ for the $\mathfrak{m}_H$-adic filtration is a finitely generated $\gr_\mathfrak{m}\F\bbra{H}$-module and $\dim_H(M)$ is equal to the dimension of the support of $\gr_\mathfrak{m}M$ in $\Spec(\gr_\mathfrak{m}\F\bbra{H})$ (see \cite[Thm.~3.21.(ii)]{Venjakob}).

Let $G$ be a $p$-adic analytic group and $\pi$ an admissible smooth $\F$-representation of $G$. For each compact open subgroup $H$ of $G$, the dual $\pi^\vee\defeq \Hom_{\F}(\pi,\F)$ of $\pi$ is a finitely generated $\F\bbra{H}$-module. Its grade does not depend on the choice of $H$ and is denoted $j_G(\pi^\vee)$. The \emph{dimension}, or Gelfand--Kirillov dimension, of $\pi$ is then $\dim_G(\pi)\defeq \dim(G)-j_G(\pi^\vee)=\dim_H(\pi^\vee)$.

\begin{rem}\label{GKdimisnotGKdim}
Let $H$ be some open uniform subgroup of $G$. Then $\dim_G(\pi)$ is the Gelfand--Kirillov dimension of the graded module of $\pi^\vee$ for the $\mathfrak{m}_H$-adic topology (see \cite[\S5.4]{Ardakov-Brown}) but it does not coincide in general with the Gelfand--Kirillov dimension of $\pi^\vee$ as an $\F\bbra{H}$-module [\emph{loc.~cit.}, \S5.6]. However we have the following description of $\dim_G(\pi)$ (see \cite[Prop.~2.18]{EmertonPask}). For $n\geq1$, let $H^{p^n}$ be the subgroup of $p^n$-th powers of elements of $H$. There exist real numbers $a\geq b\geq\tfrac{1}{(\dim_G(\pi))!}$ such that
\begin{equation}\label{eq:EmPask} bp^{n\dim_G(\pi)}+O(p^{n(\dim_G(\pi)-1)})\leq\dim_{\F}\Big(\pi^{H^{p^n}}\Big)\leq ap^{n\dim_G(\pi)}+O(p^{n(\dim_G(\pi)-1)}).\end{equation}
For this reason, the integer $0\leq\dim_G(\pi)\leq\dim(G)$ (or $-\infty$ if $\pi=0$) is also called the \emph{Gelfand--Kirillov dimension} of $\pi$.
\end{rem}

\begin{lem}
\label{lem:GvsG/Z}
Let $G$ be a $p$-adic analytic group and $N$ a closed normal subgroup of $G$. Let $\pi$ be an admissible smooth $\F$-representation of $G$ such that $N$ acts trivially on $\pi$. Then we have $\dim_G(\pi)=\dim_{G/N}(\pi)$.
\end{lem}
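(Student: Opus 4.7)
The plan is to reduce to a statement about uniform pro-$p$-groups and then invoke the growth-function characterization of Gelfand--Kirillov dimension that is recalled in Remark~\ref{GKdimisnotGKdim}. Concretely, I will choose a compact open subgroup $H$ of $G$ with especially nice properties and show that the two sides of the claim can be read off from the same sequence of dimensions of invariants.

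First, I would shrink $H$: by the standard structure theory of compact $p$-adic analytic groups, one can pick an open uniform pro-$p$ subgroup $H\subset G$ such that $H_N\defeq H\cap N$ is a uniform closed normal subgroup of $H$ and such that the quotient $H/H_N$ is itself a uniform pro-$p$-group (and is automatically an open subgroup of $G/N$, since the image of an open subgroup is open). Since $\dim_G(\pi)=\dim_H(\pi^\vee)$ and $\dim_{G/N}(\pi)=\dim_{H/H_N}(\pi^\vee)$ by property (i) of \S\ref{sec:kirillov}, it suffices to establish
\[
\dim_H(\pi^\vee)=\dim_{H/H_N}(\pi^\vee).
\]

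Next, I would compare the invariants $\pi^{H^{p^n}}$ and $\pi^{(H/H_N)^{p^n}}$. Because $N$ (hence $H_N$) acts trivially on $\pi$, we have $\pi^{H^{p^n}}=\pi^{H^{p^n}H_N}$. Now in a uniform pro-$p$-group $H$ with $\ZZ_p$-Lie algebra $\fH$, one has $H^{p^n}=\exp(p^n\fH)$, and under the surjection of uniform groups $H\onto H/H_N$ (whose Lie algebra is $\fH/\fH_N$) the image of $\exp(p^n\fH)$ is exactly $\exp(p^n\fH/\fH_N)=(H/H_N)^{p^n}$. Therefore the subgroup $H^{p^n}H_N/H_N$ of $H/H_N$ equals $(H/H_N)^{p^n}$, and hence
\[
\pi^{H^{p^n}}\;=\;\pi^{H^{p^n}H_N}\;=\;\pi^{(H/H_N)^{p^n}},
\]
where on the right we view $\pi$ as an admissible smooth representation of $G/N$ and take invariants under a subgroup of $H/H_N$.

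Combining these identities with the two-sided bound \eqref{eq:EmPask} applied to the uniform pro-$p$-groups $H$ and $H/H_N$, we deduce that the integer $\dim_H(\pi^\vee)=\dim_G(\pi)$ and the integer $\dim_{H/H_N}(\pi^\vee)=\dim_{G/N}(\pi)$ are both characterized as the unique $d\ge 0$ for which $\dim_\F\pi^{H^{p^n}}$ grows like $p^{nd}$, so they are equal. The only genuinely technical point is the choice of $H$ in the first step (ensuring simultaneous uniformity of $H$, $H_N$ and $H/H_N$ and compatibility of the $p^n$-th power subgroups under the quotient); this is standard in the Lazard--Dixon--du Sautoy--Mann--Segal theory of $p$-adic analytic groups and is the only place where one has to be a little careful.
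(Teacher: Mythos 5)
Your argument is correct and follows essentially the same route as the paper's proof: both reduce to choosing an open uniform $H\subset G$ for which $H\cap N$ is a uniform (and isolated, so $H/(H\cap N)$ is torsion-free hence uniform) normal subgroup via the cited DDMS exercise, and then read off the equality of dimensions from the growth characterization \eqref{eq:EmPask}. The paper leaves implicit the identification $\pi^{H^{p^n}} = \pi^{(H/(H\cap N))^{p^n}}$ that you spell out; this is the correct fill-in of the "direct consequence" the paper invokes.
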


\begin{proof}
By replacing $G$ by an open subgroup and $N$ by the intersection we may assume that $G$ is uniform \cite[Cor.\ 8.34]{DDMS}.
Then by Exercise 14 in \cite[\S4]{DDMS} there exists an open uniform pro-$p$-group $H\subset G$ such that $H\cap N$ is uniform. The result is then a direct consequence of the characterization given by \eqref{eq:EmPask}.
\end{proof}

\begin{lem}\label{prop:grsemiabelian}
Let $G$ be an analytic pro-$p$-group without $p$-torsion. Assume that the graded ring $\gr_\mathfrak{m}\F\bbra{G}$ is Auslander-regular \emph{(}see for example \cite[\S III.2.1,~Def.~7]{LiOy} for the precise definition\emph{)}. Let $I$ be a two-sided ideal of $\gr_\mathfrak{m}\F\bbra{G}$ generated by a sequence of $r$ central elements which is $\gr_\mathfrak{m}\F\bbra{G}$-regular \emph{(}where $\gr_\mathfrak{m}\F\bbra{G}$ is considered as a module over its center\emph{)} and such that $\gr_\mathfrak{m}\F\bbra{G}/I$ is isomorphic to a polynomial ring in $\dim(G)-r$ variables. Let $M$ be a finitely generated $\F\bbra{G}$-module such that $\gr_\mathfrak{m}M$ is annihilated by $I$. Then $\dim_G(M)$ is equal to the dimension of the support of $\gr_\mathfrak{m}M$ in $\Spec(\gr_\mathfrak{m}\F\bbra{G}/I)$.
\end{lem}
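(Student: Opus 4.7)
The plan is to compute $j_G(M)$, and thus $\dim_G(M) = \dim(G) - j_G(M)$, by transferring the grade in three steps: first from $\F\bbra{G}$ to $R\defeq\gr_\mathfrak{m}\F\bbra{G}$, then from $R$ to the commutative polynomial quotient $R/I$, and finally using commutative algebra. To begin, I would equip $M$ with the $\mathfrak{m}_G$-adic filtration, which is good since $M$ is finitely generated, and is Zariskian since $\F\bbra{G}$ is $\mathfrak{m}_G$-adically complete and separated. Under the hypothesis that $R$ is Auslander-regular, the standard dimension-comparison theorem for Zariskian filtered rings (see e.g.\ \cite[Ch.~III]{LiOy}) yields the equality
\[
j_{\F\bbra{G}}(M) \;=\; j_R(\gr_\mathfrak{m} M).
\]

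The next step uses the centrality of the regular sequence $(x_1,\dots,x_r)$ generating $I$. Because the $x_i$ lie in the center of $R$ and form a regular sequence, the Koszul complex $K_\bullet(x_1,\dots,x_r;R)$ is a length-$r$ free resolution of $R/I$ as a (two-sided) $R$-module. Applying $\Hom_R(-,R)$ and using that $\gr_\mathfrak{m} M$ is annihilated by $I$, one obtains natural isomorphisms
\[
\Ext^i_R(\gr_\mathfrak{m} M, R) \;\cong\; \Ext^{i-r}_{R/I}(\gr_\mathfrak{m} M, R/I)
\]
for $i\geq r$, and vanishing for $i<r$. Reading off the smallest nonvanishing degree yields $j_R(\gr_\mathfrak{m} M) = r + j_{R/I}(\gr_\mathfrak{m} M)$.

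Finally, since by hypothesis $R/I$ is a polynomial ring in $d\defeq\dim(G)-r$ variables, it is a commutative regular (hence Cohen--Macaulay and Auslander-regular) ring, so for any finitely generated $R/I$-module $N$ one has $j_{R/I}(N) = d - \dim\,\supp(N)$ in $\Spec(R/I)$. Chaining the three equalities,
\[
j_G(M) \;=\; r + \bigl(d - \dim\,\supp(\gr_\mathfrak{m} M)\bigr) \;=\; \dim(G) - \dim\,\supp(\gr_\mathfrak{m} M),
\]
whence $\dim_G(M) = \dim\,\supp(\gr_\mathfrak{m} M)$ as claimed.

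The routine ingredients are the Koszul-complex shift and the final commutative calculation. The real conceptual content is the first step: one needs the non-commutative Zariski-ring machinery to ensure that Auslander-regularity of $R$ transfers a computation of $\Ext$ over $\F\bbra{G}$ to one over $R$. Once this is granted, centrality of the $x_i$ makes the middle step purely formal, and no genuine non-commutative obstacle survives.
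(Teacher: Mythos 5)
Your proof is correct and follows essentially the same three-step structure as the paper's: transfer the grade from $\F\bbra{G}$ to $\gr_\mathfrak{m}\F\bbra{G}$ via the Zariskian filtration machinery of Li--Van Oystaeyen, shift by $r$ using the regular central sequence, and finish with the commutative computation over the polynomial ring. The only cosmetic difference is that you invoke the Koszul resolution to compute $\Ext^q_R(R/I,R)$ and the associated degree shift, whereas the paper uses the change-of-rings spectral sequence together with an induction on $r$ via the long exact sequence---these are equivalent packagings of the same (Rees-type) argument.
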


\begin{proof}
For a ring $A$ and a left $A$-module $N$, we recall the notation
\[ j_A(N)\defeq \min\set{n\in\NN : \Ext^n_A(N,A)\neq0}\]
(with the usual convention that the minimum of the empty set is $+\infty$). Let $A\defeq \gr_\mathfrak{m}\F\bbra{G}$. It follows from \cite[\S III.2.5,~Thm.~2]{LiOy} that $j_G(M)=j_A(\gr_\mathfrak{m}M)$ if $M$ is a finitely generated $\F\bbra{G}$-module. (Note that $\F\bbra{G}$ is a left and right Zariski ring by \cite[II.2.2, Prop.~1]{LiOy}.)

As $A/I$ is a polynomial ring in $\dim(G)-r$ variables, it follows from \cite[\S III.4.1,~Thm.~7]{LiOy} that $j_{A/I}(\gr_\mathfrak{m}M)$ is equal to $\dim(G)-r-\dim_{\textrm{Kr}}\Big(\Supp_{\Spec(A/I)}\big(\gr_\mathfrak{m}M\big)\Big)$, where $\dim_{\textrm{Kr}}$ denotes the \emph{Krull} dimension.

Since $\gr_\mathfrak{m}M$ is annihilated by $I$, there is a spectral sequence
\[ E_2^{p,q}=\Ext^p_{A/I}(\gr_\mathfrak{m}M,\Ext^q_A(A/I,A))\Rightarrow\Ext^{p+q}_A(\gr_\mathfrak{m}M,A).\]
Let $(h_1,\dots,h_r)$ be an $A$-regular generating sequence of central elements in $I$. For all $i\in\ZZ$, we have $\Ext^i_A(A,A)\cong A$ if $i=0$ and $0$ if $i\neq0$. By induction on $r$, we can use the long exact sequence of cohomology to prove that $\Ext^i_A(A/I,A)\cong A/I$ if $i=r$ and $0$ if $i\neq r$. This implies that the spectral sequence degenerates and that $\Ext^p_{A/I}(\gr_\mathfrak{m}M,A/I)\cong \Ext^{p+r}_A(\gr_\mathfrak{m}M,A)$ for all $p \in \ZZ$. We deduce that $j_{A/I}(\gr_\mathfrak{m}M)=j_A(\gr_\mathfrak{m}M)-r$. Consequently we have
\[ j_A(\gr_\mathfrak{m}M)=\dim(G)-\dim_{\textrm{Kr}}\Big(\Supp_{\Spec(A/I)}\big(\gr_\mathfrak{m}M\big)\Big)\]
and we deduce
\[ \dim_G (M) = \dim(G)-j_G(M)=\dim(G)-j_A(\gr_\mathfrak{m}M)=\dim_{\textrm{Kr}}\Big(\Supp_{\Spec(A/I)}\big(\gr_\mathfrak{m}M\big)\Big).\qedhere\]
\end{proof}

\subsection{Recollection of results of Lazard}\label{sec:recoLazard}

Let $G$ be a group with unit element $e_G$. A \emph{$p$-valuation} \cite[III.2.1.2]{Lazard} on $G$ is a map
\[ \omega : G\longrightarrow\RR_{>0}\cup\set{+\infty}\]
such that, for all $x,y\in G$,
\begin{itemize}
\item $\omega(xy^{-1})\geq\min(\omega(x),\omega(y))$;
\item $\omega(x^{-1}y^{-1}xy)\geq\omega(x)+\omega(y)$;
\item $\omega(x)=+\infty\Leftrightarrow x=e_G$;
\item $\omega(x) >\frac{1}{p-1}$;
\item $\omega(x^p)=\omega(x)+1$.
\end{itemize}
A $p$-valuation $\omega$ on $G$ is \emph{saturated} \cite[III.2.1.5]{Lazard} if, for all $x\in G$,
\[ \omega(x)>\frac{p}{p-1}\ \Longleftrightarrow\ \exists y\in G,\ y^p=x.\]

Now we assume that there exists, and we fix it, a saturated $p$-valuation $\omega$ on $G$. For $\nu\in\RR_{>0}$, we define
\[ G_\nu\defeq \set{x\in G : \omega(x)\geq\nu}, \quad G_{\nu^+}\defeq \set{x\in G : \omega(x)>\nu}, \quad \gr_\nu G\defeq G_\nu/G_{\nu^+}.\]
The sets $G_\nu$ and $G_{\nu^+}$ are normal subgroups of $G$. They form a fundamental system of neighborhoods of $e_G$ for a structure of topological group on $G$. The direct sum $\gr G\defeq \bigoplus_\nu \gr_\nu G$ is a graded Lie algebra \cite[II.1.1.7]{Lazard}. If $x\in G\setminus\set{e_G}$, we define $\gr(x)$ as being the image of $x$ in $\gr_{\omega(x)}G\subset\gr G$. We assume that the topological group $G$ is compact so that $\omega(G)$ is discrete in $\RR_{>0}\cup\set{+\infty}$ \cite[Prop.~III.2.2.6]{Lazard}.

Let $\Zp\bbra{G}\defeq \varprojlim_\nu\Zp[G/G_\nu]$ be the completed group algebra of $G$. Note that when $G$ is a compact $p$-adic analytic group, the topology induced by a $p$-valuation is the profinite topology of $G$ \cite[III.3.1.4]{Lazard}.

The map $\gr(x)\mapsto\gr(x^p)$ from $\gr_\nu$ to $\gr_{\nu+1}$ induces an endomorphism of degree $1$ of the graded Lie algebra $\gr G$. Let $\F_p[\eps]$ be the graded polynomial algebra in $\eps$ with $\eps$ in degree $1$. Then there is a unique structure of graded $\F_p[\eps]$-Lie algebra on $\gr G$ such that $\eps$ acts via $\gr(x)\mapsto\gr(x^p)$. The graded $\F_p[\eps]$-module $\gr G$ is then a graded-free $\F_p[\eps]$-module \cite[III.2.1.3]{Lazard}. 
If $G$ is a compact $p$-adic analytic group, this $\F_p[\eps]$-module has finite rank $d = \dim(G)$ \cite[Prop.~III.3.1.3]{Lazard}.

From now on we assume that $G$ is a compact $p$-adic analytic group (and still that it has a saturated $p$-valuation). We fix a family $(x_i)_{1\leq i\leq d}$ of elements of $G$ such that $(\gr(x_i))_{1\leq i\leq d}$ is a basis of the $\F_p[\eps]$-module $\gr G$ (so that $x_i\neq 1$ for all $i$). We call the family $(x_i)_{1\leq i\leq d}$ an \emph{ordered basis} of $G$.

Let $\alpha=(\alpha_i)_{1\leq i\leq d}\in\NN^d$.
We define $z^{\alpha}\defeq \prod_{i=1}^d(x_i-1)^{\alpha_i}\in\Zp[G]$ and $\tau(\alpha)\defeq \sum_{i=1}^d\alpha_i\omega(x_i)$. Following Lazard, we define a valuation $w : \Zp[G]\rightarrow\RR_{>0}\cup\set{+\infty}$ as the (pointwise) infimum of the set of all $\Zp$-algebra valuations $w$ such that, for all $x\in G$, $w(x-1)\geq\omega(x)$. Actually Lazard takes the (pointwise) infimum of all \emph{filtrations} \cite[III.2.3.1.2]{Lazard} but in our case this last infimum is a valuation, so that our definition is equivalent \cite[Thm.~III.2.3.3, Cor.~III.2.3.4]{Lazard}. Moreover by \emph{loc.~cit.}, the $\Zp$-algebra $\Zp\bbra{G}$ is isomorphic to the completion of $\Zp[G]$ for $w$. We have the following description of $\Zp\bbra{G}$ and $w$ \cite[III.2.3.8.8, III.2.3.9]{Lazard}:
\begin{gather*} \Zp\bbra{G}=\set*{\sum_{\alpha\in\NN^d} \lambda_\alpha z^\alpha : \lambda_\alpha\in\Zp}; \\
w\left(\sum_{\alpha\in\NN^d}\lambda_\alpha z^\alpha\right)=\inf\set{v_p(\lambda_\alpha)+\tau(\alpha)}.\end{gather*}

The valuation $w$ extends immediately to $\Qp[G]$ and we define $\mathcal{D}_G$ as the completion of $\Qp[G]$ for the valuation $w$ (or equivalently for the multiplicative norm $\norm{\cdot}=p^{-w(\cdot)}$) which extends canonically to $\mathcal{D}_G$. This is the $\Qp$-algebra named $\mathrm{Sat}\,\Zp[G]$ in \cite[IV.1.2.7]{Lazard}. We deduce from the previous description that:
\begin{gather*} \mathcal{D}_G=\set*{\sum_{\alpha\in\NN^{d}} \lambda_\alpha z^\alpha : \lambda_\alpha\in\Qp,\ v_p(\lambda_\alpha)+\tau(\alpha)\rightarrow +\infty\ \text{as $\tau(\alpha)\rightarrow+\infty$}} \end{gather*}
and that the closure of $\Zp[G]$ in $\mathcal{D}_G$ is isomorphic to the completed group algebra $\Zp\bbra{G}$.

Let $U_{\F_p[\eps]}(\gr G)$ be the enveloping algebra of the $\F_p[\eps]$-Lie algebra $\gr G$. As $\gr G$ is graded, the $\F_p[\eps]$-algebra $U_{\F_p[\eps]}(\gr G)$ is canonically a graded $\F_p[\eps]$-algebra. Namely the tensor algebra $\mathcal{T}_{\Fp[\eps]}(\gr G)$ of the $\Fp[\eps]$-module $\gr G$ inherits a grading from $\gr G$ (see \cite[I.3.3.2]{Lazard}) and, for $x,y\in\gr G$ two homogeneous elements, the element $x\otimes y-y\otimes x-[x,y]$ is homogeneous in $\mathcal{T}_{\Fp[\eps]}(\gr G)$. Consequently $U_{\F_p[\eps]}(\gr G)$ is a quotient of a graded algebra by an homogeneous ideal and is a graded algebra (see \cite[IV.2.1.4]{Lazard}).

Let $\gr\Zp[G]$ be the graded algebra of $\Zp[G]$ with respect to the valuation $w$ which is naturally a graded $\Fp[\eps]$-algebra \cite[I.2.3.2, I.2.3.11]{Lazard}. By definition of $w$, there is a morphism of graded $\F_p[\eps]$-Lie algebras $\gr G\rightarrow\gr\Zp[G]$ given by $\gr(g)\mapsto \gr(g-1)$ for $g\in G$ \cite[III.2.3.2]{Lazard}. In particular, we have $\gr(g^p)\mapsto\eps\gr(g-1)$ for $g\in G$. By the universal property of the enveloping algebra, it extends to a morphism of graded algebras $U_{\F_p[\eps]}(\gr G)\rightarrow\gr\Zp[G]$. It follows from \cite[Thm.~III.2.3.3]{Lazard} that this morphism is an isomorphism. As $\Zp\bbra{G}$ is the completion of $\Zp[G]$ for the valuation $w$, we can identify $\gr\Zp[G]$ and $\gr\Zp\bbra{G}$.

Let $\overline{w}$ be the quotient \emph{filtration} (in the sense of \cite[I.2.1.7]{Lazard}) on $\Fp\bbra{G}=\F_p\otimes_{\Zp}\Zp\bbra{G}$. It is defined by $\overline{w}(x)\defeq \sup\set{w(\tilde{x})\in\RR\cup\set{+\infty} : \tilde{x}\in\Zp\bbra{G},\  \tilde{x}\equiv x \mod p}$. We have
\[ \overline{w}\left(\sum_{\alpha\in\NN^d}\lambda_\alpha z^\alpha\right)=\inf\set{\tau(\alpha) : \lambda_\alpha\neq0}.\]
If $x\in\Zp\bbra{G}$, we have $w(px)=w(x)+1$ so that $\gr(px)=\eps\gr(x)$ and finally $\gr(p\Zp\bbra{G})=\eps\gr(\Zp\bbra{G})$ inside $\gr(\Zp\bbra{G})$. This implies that the short exact sequence of filtered modules is strict \cite[I.2.3.8.2]{Lazard}
\[ 0\longrightarrow (p\Zp\bbra{G},w|_{p\Zp\bbra{G}})\longrightarrow(\Zp\bbra{G},w)\longrightarrow (\Fp\bbra{G},\overline{w})\longrightarrow0.\]
Combined with the isomorphism $U_{\Fp[\eps]}(\gr G)\simeq \gr\Zp\bbra{G}$, this implies the existence of an isomorphism of graded algebras
\[ U_{\F_p[\eps]}(\gr G)\otimes_{\F_p[\eps]}\F_p\simeq \gr\Fp\bbra{G}.\]
Let $\overline{\gr G}$ be the graded Lie algebra $\gr G\otimes_{\F_p[\eps]}\F_p$. We deduce an isomorphism of graded algebras
\begin{equation}\label{eq:iso-gradedmodp-algebra} U_{\Fp}(\overline{\gr G})\simeq\gr\Fp\bbra{G}.\end{equation}

We now give a convenient way to compute $\overline{\gr G}$. Actually we rather compute $\gr G$ and deduce $\overline{\gr G}$ after quotienting by $\eps$.

Let $\mathcal{L}$ be a $\Zp$-Lie algebra. A \emph{$p$-valuation} on $\mathcal{L}$ is a map $w : \mathcal{L}\rightarrow\RR_{>0}\cup\set{+\infty}$ such that for all $\lambda\in\Zp$ and $x,y\in \mathcal{L}$:
\begin{itemize}
\item $w(\lambda x)=v_p(\lambda)+w(x)$;
\item $w(x+y)\geq\inf(w(x),w(y))$;
\item $w([x,y])\geq w(x)+w(y)$.
\end{itemize}
If $(\mathcal{L},w)$ is a $p$-valued Lie algebra, the set $\gr \mathcal{L}$ has a canonical structure of graded Lie algebra. Moreover the map $\gr(x)\mapsto\gr(px)$ extends to a degree $1$ morphism $\gr \mathcal{L}\rightarrow\gr \mathcal{L}$ and to a structure of graded $\F_p[\eps]$-Lie algebra on $\gr \mathcal{L}$.

If $x\in G$, the series
\[ \log_{\mathcal{D}_G}(x)\defeq \sum_{n\geq0}\frac{(-1)^{n-1}}{n}(x-1)^n\]
converges in $\mathcal{D}_G$. The associative algebra $\mathcal{D}_G$
with its valuation $w$ is a $p$-valued Lie algebra for the commutator bracket. The subset $\mathcal{L}_G\defeq \set{\log_{\mathcal{D}_G}(x) : x\in G}$ of $\mathcal{D}_G$ is then a $p$-valued sub-$\ZZ_p$-Lie algebra of $\mathcal{D}_G$. Moreover there is canonical isomorphism of graded $\F_p[\eps]$-Lie algebras $\gr \mathcal{L}_G\simeq\gr G$ (this is a consequence of \cite[Thm.~IV.3.2.5 and IV.1.3.5]{Lazard}).

\subsection{The case of the pro-\texorpdfstring{$p$}{p}-Iwahori of \texorpdfstring{$\GL_2$}{GL\_2}}\label{sec:propIwahoriGL2}

We compute the graded ring of the completed group algebra of the pro-$p$-Iwahori subgroup $I_1$ of $\GL_2(L)$ for unramified $L$ and introduce an interesting ideal which allows us to control the Gelfand--Kirillov dimension of representations of $I_1$.

Let $L$ be an unramified extension of $\Qp$ of degree $f$ with ring of integers $\cO_L$ and residue field $k$. We are interested in the particular case of the group $I_1/Z_1$ which is the quotient of the (upper) pro-$p$-Iwahori subgroup of $\GL_2(\mathcal{O}_L)$ by its center. This group is isomorphic to the subgroup $G\defeq  I_1\cap\SL_2(\mathcal{O}_L)$ of $I_1$ since $p>2$. The following results can also be deduced from \cite{Clozel}. However we prefer to follow \cite{Lazard} in order to emphasize that the graded ring naturally has the structure of an enveloping algebra (see~\eqref{isograded}).

We follow \cite[III.3.2.7]{Lazard} to define a saturated $p$-valuation on $G$. We assume that $p>3$. 
Let $L'=L(\sqrt{p})$ and $v : \M_2(L')\rightarrow\RR_{>0}\cup\set{+\infty}$ be the valuation defined by
\[ v((m_{i,j}))\defeq \min\set{v_p(m_{i,j})}.\]
Let $D$ be the diagonal matrix $\left(\begin{smallmatrix}1 & 0 \\ 0& \sqrt{p}\end{smallmatrix}\right)$ in $\M_2(\mathcal{O}_{L'})$. We define, for $x\in G$:
\[ \omega(x)\defeq  v(D^{-1}xD-I_2).\]
It follows from \cite[III.3.2.7]{Lazard} that $\omega$ is a saturated $p$-valuation on $G$ (here we are using that $p>3$). Explicitly, for $a,b,c,d\in\mathcal{O}_L$
such that $(1+pa)(1+pd)-pbc=1$:
\[ \omega\bigg(\!\!\begin{pmatrix} 1+pa & b \\ pc & 1+pd\end{pmatrix}\!\!\bigg)=\min\set{1+v_p(a),\frac{1}{2}+v_p(b), \frac{1}{2}+v_p(c), 1+v_p(d)}.\]

Let $\mathfrak{g}_{\Zp}$ be the sub-$\Zp$-Lie algebra of $\mathfrak{sl}_{2,\Zp}$ defined by
\[ \mathfrak{g}_{\Zp}\defeq \set*{\begin{pmatrix} pa & b \\ pc & -pa \end{pmatrix} : (a,b,c)\in\Zp^3}.\]

\begin{lem}\label{LieIwahori}
We have an isomorphism of $p$-valued Lie algebras $\cL_{G}\simeq\mathcal{O}_L\otimes_{\Zp}\mathfrak{g}_{\Zp}$ with valuation, for $a,b,c\in\mathcal{O}_L$,
\begin{equation}\label{eq:explvaluation} w\bigg(\!\!\begin{pmatrix} pa & b \\ pc & -pa \end{pmatrix}\!\!\bigg)=\min\set{1+v_p(a),\frac{1}{2}+v_p(b), \frac{1}{2}+v_p(c)}.\end{equation}
\end{lem}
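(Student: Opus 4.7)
The plan is to realize the $p$-valued Lie algebra $\cL_G\subset\mathcal{D}_G$ explicitly inside $\M_2(L')$ via the matrix logarithm. Set $w_D(X)\defeq v(D^{-1}XD)$ for $X\in \M_2(L')$; this is an algebra valuation on $\M_2(L')$ (as one sees by conjugating to reduce to the usual valuation $v$, which is multiplicative on $\M_2(\mathcal{O}_{L'})$), and it satisfies $w_D(g-1)\ge\omega(g)$ for every $g\in G$ by the very definition of $\omega$. By the universal characterization of $w$ as the infimum of such valuations on $\Qp[G]$, we have $w_D|_{\Qp[G]}\le w$, so the natural inclusion $\Zp[G]\hookrightarrow \M_2(\mathcal{O}_L)$ extends to a continuous morphism of filtered $\Qp$-algebras $\mathcal{D}_G\to \M_2(L')$. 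Under this morphism, $\log_{\mathcal{D}_G}(x)$ is sent term-by-term to the ordinary matrix logarithm $\log(x)=\sum_{n\ge 1}\frac{(-1)^{n-1}}{n}(x-1)^n\in \M_2(L')$, and the series converges since $w_D((x-1)^n/n)\ge n\omega(x)-v_p(n)\to+\infty$.

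Next I would identify this image with $\mathcal{O}_L\otimes_{\Zp}\mathfrak{g}_{\Zp}$. Writing $x=I+M$ with $M=\smatr{pa}{b}{pc}{pd}$ and $(1+pa)(1+pd)-pbc=1$, the associative $\mathcal{O}_L$-subalgebra $\widetilde{\mathfrak{g}}\defeq\smatr{p\mathcal{O}_L}{\mathcal{O}_L}{p\mathcal{O}_L}{p\mathcal{O}_L}$ of $\M_2(\mathcal{O}_L)$ contains all powers $M^n$, hence each $M^n/n$ lies in $\M_2(L)$; since $L$ is complete, so does the limit $\log(x)$. The identity $\tr(\log(x))=\log(\det(x))=0$ places $\log(x)$ in $\mathfrak{sl}_2(L)$, so we may write $\log(x)=\smatr{\alpha}{\beta}{\gamma}{-\alpha}$ with $\alpha,\beta,\gamma\in L$. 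The bound $w_D(\log(x))\ge\omega(x)>1/(p-1)$ translates to $v_p(\alpha)>1/(p-1)$, $v_p(\beta)>1/(p-1)-1/2$, $v_p(\gamma)>1/(p-1)+1/2$; since these valuations are integers and $p\ge 5$, this forces $\alpha,\gamma\in p\mathcal{O}_L$ and $\beta\in\mathcal{O}_L$, so $\log(x)\in\mathcal{O}_L\otimes_{\Zp}\mathfrak{g}_{\Zp}$. The converse inclusion is obtained by applying the exact same estimates to the exponential series, and the bracket on $\cL_G$ is inherited from the commutator in $\M_2(L')$, giving an isomorphism of Lie algebras.

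Finally, for the valuation formula, Lazard's theorem (\cite[Thm.~IV.3.2.5, IV.1.3.5]{Lazard}) yields $w(\log_{\mathcal{D}_G}(x))=\omega(x)$ for every $x\in G$. Writing $X=\log(x)\in\cL_G$ and $M=x-I$, one checks $v(D^{-1}XD)=v(D^{-1}MD)$: their difference is $\sum_{n\ge 2}(-1)^{n-1}(D^{-1}MD)^n/n$ and each term has $v$-valuation $\ge n\omega(x)-v_p(n)>\omega(x)$ (as $\omega(x)>1/(p-1)$). Since $\omega(x)=v(D^{-1}MD)$ by the explicit formula given just before the lemma, we conclude $w(X)=v(D^{-1}XD)$ for all $X\in\cL_G$, which upon direct computation (conjugating $X=\smatr{pa}{b}{pc}{-pa}$ by $D$) yields the stated formula. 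The main obstacle is showing that the lower-left and diagonal entries of $\log(x)$ genuinely lie in $p\mathcal{O}_L$ (not merely in $\mathcal{O}_L$): this requires the sharp valuation estimate after conjugation by $D$, rather than only the congruences defining $G\subset\GL_2(\mathcal{O}_L)$.
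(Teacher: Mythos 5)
Your approach is essentially the same as the paper's: conjugate by $D$ to obtain a matrix-algebra valuation compatible with $\omega$, extend the inclusion $\Zp[G]\hookrightarrow \M_2(L')$ to a morphism of filtered algebras $\mathcal D_G\to\M_2(L')$ via the universal characterization of $w$, identify $\cL_G$ with the image of the matrix logarithm, and match valuations using Lazard. The paper gets the bijectivity of $\log$ between $G'=\{X:v(X-I_2)\ge\tfrac12\}$ and $\{X:v(X)\ge\tfrac12\}$ by citing \cite[II.8.4, Prop.~4]{BourbakiLie2et3} and then conjugating by $D$, whereas you redo the $\log$/$\exp$ estimates by hand; this is a matter of taste, not substance. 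One genuine slip: you write ``$w_D|_{\Qp[G]}\le w$,'' but the infimum characterization of $w$ gives the opposite inequality $w\le w_D|_{\Qp[G]}$ (the infimum is $\le$ each member of the family), and it is precisely $w\le w_D$ that is needed to conclude that a $w$-Cauchy sequence is $w_D$-Cauchy and hence that the inclusion extends to $\mathcal D_G$. Since you immediately draw the correct conclusion, this is almost certainly a transcription error rather than a conceptual one, but as written the displayed inequality contradicts the sentence that follows it and should be reversed. Apart from that and some small imprecision (calling $v$ ``multiplicative'' on $\M_2(\cO_{L'})$ when it is only submultiplicative, i.e.\ $v(AB)\ge v(A)+v(B)$), the argument is sound.
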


\begin{proof}
Let $G'$ be the subgroup of $\GL_2(L')$ defined by
\[ G'=\set*{x\in \M_2(L') : v(x-I_2)\geq\frac{1}{2}}.\]
As $p-1>2$, it follows from \cite[II.8.4, Prop.~4]{BourbakiLie2et3} that $\log_{\M_2(L')}(G')$ is the sub-Lie algebra of $\M_2(L')$ defined by
\[ \log_{\M_2(L')}(G')=\set*{x\in \M_2(L') : v(x)\geq\frac{1}{2}}.\]
For $x\in G'$, we have $\log_{\M_2(L')}(\Ad(D)x)=\Ad(D)\log_{\M_2(L')}(x)$. As $G=\Ad(D)(G')\cap \M_2(L)$, we have
\begin{equation}\label{eq:isoLielog} \log_{\M_2(L')}(G)=\set*{x\in \M_2(L) : v(\Ad(D)^{-1}x)\geq\frac{1}{2}}=\mathcal{O}_L\otimes_{\Zp}\mathfrak{g}_{\Zp}.\end{equation}
We use the notation to denote the valuation on $\mathcal{D}_G$ associated to $\omega$ as in section \ref{sec:recoLazard}. Let $\log_{\mathcal{D}_G}$ be the logarithm map on $\mathcal{D}_G$:
\[ \set*{x\in\mathcal{D}_G : w(x-1)>\frac{1}{p-1}}\longrightarrow\set*{x\in\mathcal{D}_G : w(x)>\frac{1}{p-1}}.\]
The inclusion $G\subset \M_2(\mathcal{O}_{L'})$ is continuous and extends to a continuous morphism of $\Zp$-algebras $h : \Zp[G]\rightarrow \M_2(\mathcal{O}_{L'})$ and a morphism of $\Qp$-algebras $\Qp[G]\rightarrow \M_2(L')$. By definition of $w$, we have the inequality $w(x)\leq v(\Ad(D^{-1})h(x))$ for $x\in\Zp[G]$, since $v\circ\Ad(D^{-1})\circ h$ is a valuation $w'$ on $\Zp[G]$ such that $w'(x-1)=\omega(x)$ for $x\in G$ and $w$ is defined as the pointwise infimum of valuations $w''$ with $w''(x-1) \ge \omega(x)$ for $x \in G$. As $w$ and $v$ are valuations of $\Qp$-algebras, we deduce that this inequality is true for all $x\in \Qp[G]$. As $\M_2(L')$ is complete, we can extend $h$ to a morphism of valued $\Qp$-algebras $(\mathcal{D}_G,w)\rightarrow (\M_2(L'),v\circ\Ad(D)^{-1})$. Now, by continuity of $h$, the composite
\[ G\xrightarrow{\log_{\mathcal{D}_G}}\mathcal{D}_G\xrightarrow{h} \M_2(L')\]
is the logarithm computed in $\M_2(L')$. This implies that the restriction of $h$ to $\log_{\mathcal{D}_G}(G)$ is an isomorphism of Lie algebras
\begin{equation}
  \cL_G = \log_{\mathcal{D}_G}(G)\simeq \log_{\M_2(L')}(G).\label{eq:9}
\end{equation}
Finally both valuations $w$ and $v\circ\Ad(D)^{-1}$ take value $\omega(x)$ at $x-1$ for $x\in G$. By \cite[III.1.1.5]{Lazard} the condition $\omega(x)>\tfrac{1}{p-1}$ for $x\in G$ implies then
\[ w(\log_{\mathcal{D}_G}(x))=\omega(x)=v(\Ad(D^{-1})\log_{\M_2(L')}(x)),\]
proving that~\eqref{eq:9} is an isomorphism of \emph{valued} Lie algebras. The conclusion follows from \eqref{eq:isoLielog} and from the fact that the valuation $v\circ\Ad(D^{-1})$ restricted to $\log_{\M_2(L')}(G)=\mathcal{O}_L\otimes_{\Zp}\mathfrak{g}_{\Zp}$ is given by \eqref{eq:explvaluation}.
\end{proof}

We endow the Lie algebra $\mathfrak{g}_{\Zp}$ with the restriction of the valuation $w$ and we let $\mathfrak{g}\defeq \gr\mathfrak{g}_{\Zp}$. The Lie algebra $\cL_{G}$ is an $\mathcal{O}_L$-Lie algebra and, for $a\in\mathcal{O}_L$ and $x\in \cL_{G}$, we have $w(ax)=v_p(a)+w(x)$. Hence the graded $\F_p[\eps]$-Lie algebra $\gr G \cong \gr \cL_G$ has the structure of a $k[\eps]$-graded Lie algebra and is isomorphic to $k\otimes_{\F_p}\mathfrak{g}$. Consequently the graded $\F_p$-Lie algebra $\overline{\gr G}=\gr G\otimes_{\F_p[\eps]}\F_p$ is isomorphic to $k\otimes_{\F_p}\overline{\mathfrak{g}}$, where $\overline{\mathfrak{g}}\defeq\Fp\otimes_{\Fp[\eps]}\mathfrak{g}$, and has a natural structure of graded $k$-Lie algebra.

We want to show that $\gr\Fp\bbra{G}$, defined by the valuation $\overline{w}$ associated to $\omega$, and $\gr_\mathfrak{m}\Fp\bbra{G}$ (the graded ring for the $\mathfrak{m}_G$-adic filtration of $\Fp\bbra{G}$)
are isomorphic up to rescaling indices. We will need the following lemma:

\begin{lem}\label{lem:usualmodpcomputation}
Let $G$ be a pro-$p$-group. Then for $g$ and $h$ in $G$, we have
\[ gh-1\equiv (g-1)+(h-1) \mod \mathfrak{m}_G^2, \quad (g^{-1}-1)\equiv -(g-1)\mod \mathfrak{m}_G^2\]
in $\Fp\bbra{G}$. Moreover if $g\in G$, $(g^p-1)\in\mathfrak{m}_G^p$.
\end{lem}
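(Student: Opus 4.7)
The proof is essentially a formal manipulation in the completed group algebra $\Fp\bbra{G}$, relying only on the fact that the augmentation ideal $\mathfrak{m}_G$ is the closure of the two-sided ideal generated by elements of the form $g-1$, $g \in G$.

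For the first identity, I would simply expand
\[ gh - 1 = (g-1)(h-1) + (g-1) + (h-1), \]
and observe that the product $(g-1)(h-1)$ lies in $\mathfrak{m}_G^2$ by definition. For the second identity, the cleanest route is to apply the first identity to the pair $(g, g^{-1})$: since $gg^{-1} - 1 = 0$, we get
\[ 0 \equiv (g-1) + (g^{-1}-1) \pmod{\mathfrak{m}_G^2}, \]
which gives the claim. (Alternatively, writing $g^{-1}-1 = -g^{-1}(g-1) = -(g-1) - (g^{-1}-1)(g-1)$ works equally well.)

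For the last assertion, the key point is that $g$ commutes with itself and with $1$, so although $\Fp\bbra{G}$ is noncommutative in general, the binomial theorem applies to $g^p = ((g-1)+1)^p$ and yields
\[ g^p - 1 = \sum_{k=1}^{p} \binom{p}{k}(g-1)^k. \]
In characteristic $p$, the binomial coefficients $\binom{p}{k}$ vanish for $1 \le k \le p-1$, so only the term with $k=p$ survives and we obtain $g^p - 1 = (g-1)^p \in \mathfrak{m}_G^p$.

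There is no real obstacle here; the only subtlety to keep in mind is the noncommutativity of $\Fp\bbra{G}$, which is why the first two identities are stated only modulo $\mathfrak{m}_G^2$ and why for the third identity one must exploit that $g$ commutes with $1$ before invoking the binomial theorem.
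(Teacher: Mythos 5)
Your proof is correct and takes essentially the same approach as the paper: the first two identities come from expanding $gh-1 = (g-1)(h-1)+(g-1)+(h-1)$, and the last from the identity $g^p-1 = (g-1)^p$ in characteristic $p$.
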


\begin{proof}
The first two assertions are consequences of the equality $(g-1)(h-1)=(gh-1)-(g-1)-(h-1)$ and from the fact that $g-1\in\mathfrak{m}_G$. The last one comes from $(g^p-1)=(g-1)^p$.
\end{proof}

\begin{prop}\label{comparison_filtrations}
We have, for $j\in\tfrac{1}{2}\NN$,
\[ \mathfrak{m}_{G}^{2j}=\set{x\in\Fp\bbra{G} : \overline{w}(x)\geq j}.\]
\end{prop}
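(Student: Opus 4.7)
The plan is to prove the two inclusions $\mathfrak{m}_G^{2j}\subseteq\{x:\overline{w}(x)\geq j\}$ and $\{x:\overline{w}(x)\geq j\}\subseteq\mathfrak{m}_G^{2j}$ separately. The first (easy) direction follows from the fact that $\mathfrak{m}_G$ is topologically generated as a two-sided ideal by $\{g-1:g\in G\}$ and that $\overline{w}(g-1)=\omega(g)\geq\tfrac{1}{2}$ by \eqref{eq:explvaluation}; raising to the $2j$-th power then yields the inclusion, since $\overline{w}$ is a filtration.

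For the reverse inclusion, I would fix an ordered basis $(x_i)_{1\leq i\leq 3f}$ of $G$. By \S\ref{sec:recoLazard} any element $y$ with $\overline{w}(y)\geq j$ is a convergent sum $\sum_\alpha \lambda_\alpha z^\alpha$ with $\tau(\alpha)\geq j$ whenever $\lambda_\alpha\neq 0$, so it suffices to show $z^\alpha=\prod_i(x_i-1)^{\alpha_i}\in\mathfrak{m}_G^{2\tau(\alpha)}$ for each such $\alpha$, which in turn reduces to the key claim
\[
x_i-1\in\mathfrak{m}_G^{2\omega(x_i)}\qquad\text{for every }1\leq i\leq 3f.
\]
From Lemma \ref{LieIwahori} and \eqref{eq:explvaluation} one computes that $\mathfrak{g}=\gr\mathfrak{g}_{\Zp}$ has an $\Fp[\eps]$-basis coming from the three matrices $\smatr{0}{1}{0}{0}$, $\smatr{0}{0}{p}{0}$, $\smatr{p}{0}{0}{-p}$ in respective degrees $\tfrac{1}{2},\tfrac{1}{2},1$; hence $\omega(x_i)\in\{\tfrac{1}{2},1\}$. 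The case $\omega(x_i)=\tfrac{1}{2}$ is immediate from $x_i-1\in\mathfrak{m}_G$, while the case $\omega(x_i)=1$ reduces to showing $G_1\subseteq\Phi(G)$, where $\Phi(G)=\overline{G^p[G,G]}$ is the Frattini subgroup of $G$, since the standard identification $G/\Phi(G)\congto\mathfrak{m}_G/\mathfrak{m}_G^2$ via $g\mapsto g-1$ identifies $\Phi(G)$ with $\{g\in G:g-1\in\mathfrak{m}_G^2\}$.

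To prove $G_1\subseteq\Phi(G)$, I would exploit the structure of $\overline{\mathfrak{g}}$. The commutator computation $[\smatr{0}{1}{0}{0},\smatr{0}{0}{p}{0}]=\smatr{p}{0}{0}{-p}$ in $\mathfrak{g}_{\Zp}$ produces a nonzero bracket of degree-$\tfrac{1}{2}$ elements landing in degree $1$, so $\gr_1 G$ is spanned over $\Fp$ by such brackets and therefore $G_1\subseteq[G,G]\cdot G_{3/2}$. Reading off the higher layers of $\mathfrak{g}$ one sees similarly that for every $n\geq 2$, $\gr_{n/2}\mathfrak{g}$ is generated by the image of the $p$-th power map $\eps$ together with commutators, so
\[
G_{n/2}\subseteq [G,G]\cdot G^p\cdot G_{(n+1)/2}\subseteq\Phi(G)\cdot G_{(n+1)/2}.
\]
Iterating this inclusion for $n\geq 2$, and using that $\Phi(G)$ is closed in $G$ while $\bigcap_m G_{m/2}=\{1\}$, one concludes $G_{3/2}\subseteq\Phi(G)$, whence $G_1\subseteq [G,G]\cdot\Phi(G)=\Phi(G)$.

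The main technical obstacle is the layer-by-layer iteration for $G_{3/2}\subseteq\Phi(G)$: it combines the explicit $\Fp[\eps]$-module structure of $\mathfrak{g}$, the interpretation of $\eps$ as the $p$-th power map at the group level, and a topological closure argument using that $\Phi(G)$ is closed in the profinite group $G$.
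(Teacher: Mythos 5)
Your proof is correct, and it takes a genuinely different route from the paper on the hard inclusion. Where the paper goes for the specific ordered basis $E_i, F_i, H_i$ and then produces the explicit matrix identity
\[
E_iF_0E_i^{-1}F_0^{-1}=H_i\begin{pmatrix}1&-(1-pa^i)a^{2i}\\0&1\end{pmatrix}^{p}\begin{pmatrix}1&0\\pa^i(1-pa^i)^{-1}&1\end{pmatrix}^{p},
\]
from which $H_i-1\equiv E_iF_0E_i^{-1}F_0^{-1}-1\equiv 0\bmod\mathfrak{m}_G^2$ follows in two lines by Lemma \ref{lem:usualmodpcomputation}, you instead prove the structural statement $G_1\subseteq\Phi(G)$ (equivalently, since $\Phi(G)\subseteq G_1$ holds automatically, $G_1=\Phi(G)$) by climbing the layers of the graded Lie algebra: the bracket $\gr_{1/2}\times\gr_{1/2}\to\gr_1$ is surjective because $[e,f]=h$, and for $n\ge 3$ the multiplication-by-$\eps$ map identifies $\gr_{(n-2)/2}\mathfrak{g}$ with $\gr_{n/2}\mathfrak{g}$, so each $G_{n/2}$ is absorbed into $\Phi(G)\cdot G_{(n+1)/2}$; the iteration is closed up using that $\Phi(G)$ is closed and $\bigcap_m G_{m/2}=\{1\}$. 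The paper's computation is shorter and purely elementary; your argument is longer but more conceptual and proves the stronger fact $G_1=\Phi(G)$ (hence in particular that $G$ is minimally generated by the $2f$ elements $E_i,F_i$), rather than just that the $f$ specific elements $H_i$ lie in $\Phi(G)$. Both are sound; the mild imprecision in writing "$G_{n/2}\subseteq[G,G]\cdot G^p\cdot G_{(n+1)/2}$ for every $n\ge 2$" (for $n=2$ only brackets are available, for $n\ge 3$ only $p$-th powers) is harmless since both sit inside $\Phi(G)$. The one other point worth making explicit, which you implicitly use, is that $\mathfrak{m}_G^{2j}$ is open (hence closed) in $\Fp\bbra{G}$, so that the infinite sum $\sum_\alpha\lambda_\alpha z^\alpha$ indeed converges inside it.
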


\begin{proof}
Let $a\in\mathcal{O}_L$ such that $\F_p[a]=k$, hence $\mathcal{O}_L=\Zp[a]$. Using Lemma \ref{LieIwahori} (and its proof) we see that we can choose an ordered basis $(x_1,\dots,x_{3f})$ of $G$ whose elements are
\[ E_i=\left(\begin{smallmatrix} 1 & a^i \\ 0 & 1 \end{smallmatrix}\right), \quad F_i=\left(\begin{smallmatrix}
1 & 0 \\ pa^i & 1 \end{smallmatrix}\right), \quad H_i=\left(\begin{smallmatrix}
(1-a^ip)^{-1} & 0 \\ 0 & 1-a^ip \end{smallmatrix}\right)\]
for $0\leq i \leq f-1$.

For $j\in\tfrac{1}{2}\NN$, $\set{x\in\Fp\bbra{G} : \overline{w}(x)\geq j}$ is the ideal generated by monomials $z^{\alpha}=\prod_{i=1}^{3f}(x_i-1)^{\alpha_i}$ with $\tau(\alpha)=\sum_{i=1}^{3f}\omega(x_i)\alpha_i\geq j$. For $0\leq i\leq f-1$, we have $E_i-1\in\mathfrak{m}_{G}$, $F_i-1\in\mathfrak{m}_G$. Let's prove that $H_i-1\in\mathfrak{m}_G^2$. We have
\[ E_iF_0E_i^{-1}F_0^{-1}=H_i\begin{pmatrix}
1 &-(1-pa^i)a^{2i}\\0 & 1\end{pmatrix}^p \begin{pmatrix}
1 & 0 \\ pa^i(1-pa^i)^{-1} & 1
\end{pmatrix}^p.\]
Using Lemma \ref{lem:usualmodpcomputation}, this implies that
\[ E_iF_0E_i^{-1}F_0^{-1}-1\equiv H_i-1\mod \mathfrak{m}_G^2\]
and finally that
\begin{align*}
H_i-1&\equiv E_i-1+F_0-1-(E_i-1)-(F_0-1)\mod\mathfrak{m}_G^2 \\
&\equiv 0\mod\mathfrak{m}_G^2.
\end{align*}
Since $\omega(E_i)=\omega(F_i)=1/2$ and $\omega(H_i)=1$, this proves that $z^{\alpha}\in\mathfrak{m}_G^{2j}$ when $\tau(\alpha)\geq j$, i.e.\ $\set{x\in\Fp\bbra{G} : \overline{w}(x)\geq j}\subset\mathfrak{m}_G^{2j}$. 

Noticing that $\mathfrak{m}_{G}=\set{x\in\Fp\bbra{G} : w(x)\geq 1/2}$, we have, conversely,
\[ \mathfrak{m}_G^j\subset\set{x\in\Fp\bbra{G} : \overline{w}(x)\geq 1/2}^j\subset\set{x\in\Fp\bbra{G} : \overline{w}(x)\geq j/2},\]
the last inclusion being deduced from the properties of a valuation.
\end{proof}

Proposition \ref{comparison_filtrations} suggests that we should rescale the gradings of $\mathfrak{g}$ and $\overline{\mathfrak{g}}$ by replacing the valuation $w$ on $\mathfrak{g}_{\Zp}$ with $2w$, and this is what we do from now on. 
Therefore, the multiplication by $\eps$ on $\mathfrak{g}$ now has degree $2$. We deduce from Proposition \ref{comparison_filtrations} and isomorphism \eqref{eq:iso-gradedmodp-algebra} that we have
an isomorphism of $\Fp$-Lie algebras
\begin{equation}\label{eq:iso-maxgradedmodp-algebra} \gr_\mathfrak{m}\Fp\bbra{G}\simeq U_{\Fp}(k\otimes_{\Fp}\overline{\mathfrak{g}}).\end{equation}

We now determine $\overline{\mathfrak{g}}$ explicitly. The $\Zp$-Lie algebra $\mathfrak{g}_{\Zp}$ has a $\Zp$-basis given by
\[ e=\begin{pmatrix}0 & 1 \\ 0 & 0 \end{pmatrix}, \quad f=\begin{pmatrix}
0 & 0 \\ p & 0 \end{pmatrix}, \quad h=\begin{pmatrix}
p & 0 \\ 0 & -p \end{pmatrix}\]
with relations
\[ [e,f]=h, \quad [h,e]=2pe, \quad [h,f]=-2pf\]
and valuations $2w(e)=2w(f)=1$, $2w(h)=2$. Hence the graded $\F_p[\eps]$-Lie algebra $\mathfrak{g} = \gr \mathfrak{g}_{\Zp}$ is
\[ \mathfrak{g}=\F_p[\eps]e\oplus\F_p[\eps]f\oplus\F_p[\eps]h\]
with $e$ and $f$ in degree $1$ and relations
\[ [e,f]=h,\quad [h,e]=2\eps e, \quad [h,f]=-2\eps f,\]
and the graded $\Fp$-Lie algebra $\overline{\mathfrak{g}}$ is
\[ \overline{\mathfrak{g}}=\F_pe\oplus\F_pf\oplus\F_ph\]
with $e$ and $f$ in degree $1$, $h$ in degree $2$ and relations
\begin{equation}\label{commutationrelations} [e,f]=h,\quad [h,e]=[h,f]=0.\end{equation}

Let $H$ be the (prime-to-$p$) torsion subgroup of the diagonal torus of $\GL_2(\mathcal{O}_L)$. Then $H$ is a finite subgroup of the ``upper'' Iwahori subgroup $I$ of $\GL_2(\mathcal{O}_L)$. It normalizes $I_1$ and $G$. Therefore the group $H$ acts on every object considered so far: $\Fp\bbra{G}$, $\cL_G$, $\mathfrak{g}$, $\overline{\mathfrak{g}}$, \dots\ and the isomorphism \eqref{eq:iso-maxgradedmodp-algebra} is equivariant for this action of $H$.  %
Note that the action of $H$ on $\cL_G$, $\mathfrak{g}$ and $\overline{\mathfrak{g}}$ is $k$-linear. More precisely, we have, for $g=\left(\begin{smallmatrix} a & 0 \\ 0 & d \end{smallmatrix}\right)\in H$, and $\alpha\in k$:
\[ g(\alpha\otimes e)=(ad^{-1}\alpha)\otimes e,\quad g(\alpha\otimes f)=((ad^{-1})^{-1}\alpha)\otimes f,\quad g(\alpha\otimes h)=\alpha\otimes h.\]

Let $\F$ be a field of characteristic $p$. %
Recall from the introduction that if $\F$ is an extension of $\F_p$ such that $k$ embeds into $\F$, we label the embeddings $\sigma_j = \sigma_0 \circ \varphi^{j}$, so the set $\mathcal{J}$ of embeddings $k\hookrightarrow\F$ is identifed with $\set{0,\dots,f-1}$. 
In this case, for $0\leq j\leq f-1$, we define $\mathfrak{g}_j\defeq \F \otimes_{\sigma_j,k} \gr G$ and $\overline{\mathfrak{g}}_j\defeq \F \otimes_{\sigma_j,k} \ovl{\gr G}$. Then we have a decomposition
\begin{equation*}\label{splittingLie} \F\otimes_{\F_p}\overline{\gr G}\simeq\bigoplus_{j=0}^{f-1}\overline{\mathfrak{g}}_j\end{equation*}
and canonical isomorphisms $\mathfrak{g}_j\simeq\F\otimes_{\F_p}\mathfrak{g}$ as well as $\overline{\mathfrak{g}}_j\simeq\F\otimes_{\F_p}\overline{\mathfrak{g}}$. Using also~\eqref{eq:iso-maxgradedmodp-algebra} we deduce an isomorphism of graded $\F$-algebras
\begin{equation}\label{isograded} \gr_\mathfrak{m}\F\bbra{G}\simeq\F\otimes_{\F_p}\gr_\mathfrak{m}\F_p\bbra{G}\simeq\bigotimes_{j=0}^{f-1}U_{\Fp}(\overline{\mathfrak{g}}_j)\simeq U_{\Fp}(\overline{\mathfrak{g}})_\F^{\otimes f}.\end{equation}
For $0\leq j\leq f-1$ let $e_j,f_j,h_j \in \ovl{\mathfrak{g}}_j$ denote the images of $1\otimes e, 1\otimes f, 1\otimes h$ under the isomorphism $\F\otimes_{\F_p}\overline{\mathfrak{g}}\simeq \overline{\mathfrak{g}}_j$. Then we have, for $g=\left(\begin{smallmatrix} a & 0 \\ 0 & d \end{smallmatrix}\right)\in H$, and for $0\leq j\leq f-1$,
\[ ge_j=\sigma_j(ad^{-1})e_j,\quad gf_j=\sigma_j(ad^{-1})^{-1}f_j,\quad gh_j=h_j.\]

 Let $I_G$ be the left ideal of $\gr_\mathfrak{m}\F\bbra{G}$ generated by the elements $(1\otimes e)(1\otimes f)$ and $1\otimes h$ (of degree $2$). We easily see that $I_G$ is in fact a $2$-sided ideal of $\gr_\mathfrak{m}\F\bbra{G}$. If $k$ embeds in $\F$, then $I_G$ is the left ideal generated by $(e_jf_j,h_j;\, 0\leq j\leq f-1)$ via the isomorphism \eqref{isograded}. 
 
\begin{thm}\label{quotientalg}
Let $\F$ be a field of characteristic $p$. The graded ring $\gr_\mathfrak{m}\F\bbra{G}$ is Auslander-regular and $(\gr_\mathfrak{m}\F\bbra{G})/I_G$ is a commutative Cohen--Macaulay $\F$-algebra of dimension $f$. More precisely, if we assume moreover that $k$ embeds in $\F$, then

\begin{enumerate}
\item the sequence $(h_0,\dots,h_{f-1})$ is a regular sequence of central elements of $\gr_\mathfrak{m}\F\bbra{G}$ and $\gr_\mathfrak{m}\F\bbra{G}/(h_0,\dots,h_{f-1})$ is isomorphic to $\F[e_j,f_j ;\, 0\leq j\leq f-1]$, a polynomial ring in $2f$ variables;
\item we have an isomorphism
\[ (\gr_\mathfrak{m}\F\bbra{G})/I_G\simeq\F[e_j,f_j ;\, 0\leq j\leq f-1]/(e_jf_j ;\, 0\leq j\leq f-1).\]
\end{enumerate}
\end{thm}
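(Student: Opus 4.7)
The plan is to reduce everything to the explicit description of $\gr_\mathfrak{m}\F\bbra{G}$ provided by the isomorphism (\ref{isograded}), and then to apply standard facts about enveloping algebras. Since both Auslander regularity and the Cohen--Macaulay property descend along faithfully flat field extensions, we may assume from the start that $k$ embeds in $\F$. Under this hypothesis, (\ref{isograded}) identifies $\gr_\mathfrak{m}\F\bbra{G}$ with the enveloping algebra $U_\F(\wt{\mathfrak{g}})$ of the finite-dimensional graded $\F$-Lie algebra $\wt{\mathfrak{g}} \defeq \bigoplus_{j=0}^{f-1}\overline{\mathfrak{g}}_j$, whose only nontrivial brackets are $[e_j, f_j] = h_j$.

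Auslander regularity of $\gr_\mathfrak{m}\F\bbra{G}$ then follows from the fact that the standard PBW filtration on $U_\F(\wt{\mathfrak{g}})$ has associated graded the polynomial algebra $S_\F(\wt{\mathfrak{g}})$ in $3f$ variables, which is Auslander-regular; a standard lifting argument (see \cite{LiOy}) transfers Auslander regularity from the associated graded to the filtered ring itself. For assertion (i), the commutation relations make each $h_j$ central in $U_\F(\wt{\mathfrak{g}})$, and the Lie-algebra quotient $\wt{\mathfrak{g}}/(h_0,\dots,h_{f-1})$ is the $2f$-dimensional abelian Lie algebra on the $(e_j,f_j)_{0\le j\le f-1}$; hence $U_\F(\wt{\mathfrak{g}})/(h_0,\dots,h_{f-1})$ is the symmetric algebra $\F[e_j,f_j\mid 0\le j\le f-1]$. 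Regularity of the sequence $(h_0,\dots,h_{f-1})$ then follows by induction on $j$: each partial quotient $U_\F(\wt{\mathfrak{g}})/(h_0,\dots,h_{j-1})$ is itself the enveloping algebra of a smaller Lie algebra, hence a PBW domain in which the central element $h_j$ is automatically a non-zero-divisor.

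For assertion (ii), I would first verify the two-sidedness of $I_G$ by a direct commutator computation: since $h_j$ is central and $[e_i, e_j f_j] = \delta_{ij} e_j h_j \in (h_j)$, with an analogous identity for the $f_i$, the left ideal generated by the $e_jf_j$ and $h_j$ is automatically stable under right multiplication. Combined with (i), this yields
\[
(\gr_\mathfrak{m}\F\bbra{G})/I_G \;\cong\; \F[e_j,f_j\mid 0\le j\le f-1]/(e_jf_j\mid 0\le j\le f-1) \;\cong\; \bigotimes_{j=0}^{f-1}\F[e_j,f_j]/(e_jf_j),
\]
which is commutative. To establish Cohen--Macaulayness and compute the dimension, I would exhibit $(e_j+f_j)_{0\le j\le f-1}$ as a system of parameters: a short induction on tensor factors shows it is a regular sequence, and the final quotient $\bigotimes_j \F[e_j]/(e_j^2)$ is Artinian, giving a Cohen--Macaulay ring of dimension $f$. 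The main technical point in the argument is really the two-sidedness of $I_G$ and the explicit identification of the successive quotients; beyond these the proof is formal PBW bookkeeping, with all serious content already absorbed in the identification (\ref{isograded}).
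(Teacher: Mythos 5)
Your proposal is correct and follows the same overall route as the paper: reduce to the enveloping-algebra description of $\gr_\mathfrak{m}\F\bbra{G}$ from \eqref{isograded}, use centrality of the $h_j$ and PBW domains for part (i), and then for part (ii) pass to the commutative quotient and verify Cohen--Macaulayness of dimension $f$. Two small remarks on where you deviate from the paper.

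First, you invoke faithfully flat descent to reduce the Auslander-regularity claim to the case where $k$ embeds in $\F$. This is unnecessary (and for non-commutative Auslander regularity, the descent you quote is not an off-the-shelf fact): regardless of the embedding hypothesis, \eqref{eq:iso-maxgradedmodp-algebra} shows that $\gr_\mathfrak{m}\F\bbra{G}\simeq\F\otimes_{\Fp}U_{\Fp}(k\otimes_{\Fp}\ovl{\mathfrak{g}})$ is the enveloping algebra over $\F$ of the finite-dimensional Lie algebra $\F\otimes_{\Fp}(k\otimes_{\Fp}\ovl{\mathfrak{g}})$, and the paper cites directly that such rings are Auslander-regular (\cite[\S III.2.4.4]{LiOy}). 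Descent is only needed for the Cohen--Macaulay statement, where the paper uses \cite[Cor.\ (6.7.8)]{EGAIV}; commutativity descends trivially since $\F\to\F'$ is injective.

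Second, for the Cohen--Macaulay claim in (ii), you exhibit $(e_j+f_j)_{0\le j\le f-1}$ as a regular system of parameters in the quotient $\F[e_j,f_j]/(e_jf_j)$ (noting the residue ring $\bigotimes_j\F[e_j]/(e_j^2)$ is Artinian), whereas the paper observes that $(e_jf_j)_{0\le j\le f-1}$ is a regular sequence in the polynomial ring $\F[e_j,f_j]$, which immediately makes the quotient Cohen--Macaulay of dimension $2f-f=f$. Both arguments are correct; the paper's version is shorter because it works upstairs in the regular ring rather than downstairs in the quotient. Your computation that each $e_j+f_j$ is a nonzerodivisor (and the inductive bookkeeping over tensor factors) does check out, so this is a harmless variant.
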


\begin{proof}
By \cite[\S III.2.4.4]{LiOy}, the graded ring $\gr_\mathfrak{m}\F\bbra{G}$ is Auslander-regular since it is isomorphic to an enveloping algebra. Assume now that $k$ embeds in $\F$.

(i) It follows from \eqref{commutationrelations} that $h_0,\dots,h_{f-1}$ are central elements of $\gr_\mathfrak{m}\F\bbra{G}$. For $0\leq i\leq f-1$, the ring $(\gr_\mathfrak{m}\F\bbra{G})/(h_0,\dots,h_i)$ is isomorphic to the enveloping algebra of the quotient of the Lie algebra $\F\otimes_{\Fp}\overline{\gr G}$ by the ideal generated by $h_0,\dots,h_i$ and is therefore a ring without zero divisors by the Poincar\'e--Birkhoff--Witt Theorem. This proves that $h_{i+1}$ is a regular element of $(\gr_\mathfrak{m}\F\bbra{G})/(h_0,\dots,h_i)$ and that $(h_0,\dots,h_{f-1})$ is a regular sequence of central elements of $\gr_\mathfrak{m}\F\bbra{G}$. The last assertion is clear by \eqref{commutationrelations}.

(ii) Using the isomorphism of $\F$-algebras
\[ (\gr_\mathfrak{m}\F\bbra{G})/I_G\simeq\bigotimes_{0\leq j\leq f-1} (U_{\Fp}(\overline{\mathfrak{g}}_j)/(e_jf_j,h_j)),\]
the assertion is a consequence of (i).
The sequence $(e_jf_j ;\, 0\leq j\leq f-1)$ is a regular sequence in $\F[e_j,f_j ;\, 0\leq j\leq f-1]$, so the ring $(\gr_\mathfrak{m}\F\bbra{G})/I_G$ is Cohen--Macaulay of dimension $f$.

In general (if $k$ does not embed in $\F$), we can find a finite extension $\F'/\F$ such that $k$ embeds in $\F'$. By what precedes, the ring $\F'\otimes_{\F}((\gr_\mathfrak{m}\F\bbra{G})/I_G)\simeq\gr_{\mathfrak{m}}(\F'\bbra{G}/(\F'\otimes_{\F}I_G))$ is Cohen--Macaulay of dimension $f$, hence so is $(\gr_\mathfrak{m}\F\bbra{G})/I_G$ \cite[Cor.\ (6.7.8)]{EGAIV}.
\end{proof}

\begin{cor}\label{cor:GKdim}
Let $\pi$ be an admissible smooth representation of $I/Z_1$ over $\F$. %
Assume that for each character such that $\Hom_I(\chi,\pi)\neq0$, the natural injection
\[\Hom_I(\chi,\pi)\into\Hom_{I}(W_{\chi,3},\pi)\]
is an isomorphism, where $W_{\chi,3}$ is defined in~\eqref{eq:W-chi-n}
below. Then $\dim_I(\pi)=\dim_{I/Z_1}(\pi)\leq f$.
\end{cor}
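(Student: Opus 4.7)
The plan is to reduce the Gelfand--Kirillov dimension estimate to an explicit computation in the graded algebra $\gr_\mathfrak{m}\F\bbra{I_1/Z_1}$ described in Theorem~\ref{quotientalg}. First I would observe that by Lemma~\ref{lem:GvsG/Z}, since $Z_1$ acts trivially on $\pi$, we have $\dim_I(\pi)=\dim_{I/Z_1}(\pi)$; and since $I_1/Z_1$ is an open subgroup of $I/Z_1$, the grade of $\pi^\vee$ may be computed equivalently over $\F\bbra{I_1/Z_1}$, so it is enough to show $\dim_{I_1/Z_1}(\pi)\leq f$.

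Set $G\defeq I_1/Z_1$ and $M\defeq \pi^\vee$, a finitely generated $\F\bbra{G}$-module which also carries a compatible action of $H=I/I_1$ (of order prime to $p$). The central step is to show that the graded module $\gr_\mathfrak{m} M$ is annihilated by the two-sided ideal $I_G$ of $\gr_\mathfrak{m}\F\bbra{G}$ introduced before Theorem~\ref{quotientalg}, which is generated by the degree-$2$ elements $e_jf_j$ and $h_j$ for $0\leq j\leq f-1$. Since $\gr_\mathfrak{m} M$ is generated in degree $0$ by $M/\mathfrak{m}_G M\cong(\pi^{I_1})^\vee$ and $I_G$ is two-sided, it suffices to check that these generators kill every $H$-weight vector in the degree-zero part. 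The key observation is that $e_jf_j$ and $h_j$ are $H$-invariant, so they preserve each $H$-isotypic component. Therefore, if $\chi$ is a character of $H$ appearing in $M/\mathfrak{m}_G M$ and $y\in\{e_jf_j,h_j\}$, then $y$ sends a weight-$\chi$ element of $M/\mathfrak{m}_G M$ into the weight-$\chi$ part of $\mathfrak{m}_G^2 M/\mathfrak{m}_G^3 M$. The hypothesis, dualized, says precisely that $(M/\mathfrak{m}_G^3 M)_\chi=(M/\mathfrak{m}_G M)_\chi$ for each $\chi$ appearing in $M/\mathfrak{m}_G M$, i.e.\ $(\mathfrak{m}_G^2 M/\mathfrak{m}_G^3 M)_\chi=0$. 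Hence $y$ kills each such generator, and the two-sidedness of $I_G$ yields the claim.

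With this in hand, the dimension computation is essentially formal. By Theorem~\ref{quotientalg}, $\gr_\mathfrak{m}\F\bbra{G}$ is Auslander-regular, $(h_0,\dots,h_{f-1})$ is a regular sequence of central elements, and the quotient is the polynomial algebra $\F[e_j,f_j;\, 0\leq j\leq f-1]$ in $2f$ variables. Since $\gr_\mathfrak{m} M$ is annihilated by each $h_j\in I_G$, Lemma~\ref{prop:grsemiabelian} gives
\[
\dim_G(\pi)=\dim \Supp_{\Spec \F[e_j,f_j]}(\gr_\mathfrak{m} M).
\]
Because $\gr_\mathfrak{m} M$ is also annihilated by the elements $e_jf_j\in I_G$, its support lies in the closed subscheme $V(e_0f_0,\dots,e_{f-1}f_{f-1})\subset \Spec \F[e_j,f_j]$, a union of coordinate linear subspaces each of dimension $f$. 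Hence $\dim_{I_1/Z_1}(\pi)\leq f$, which combined with the reductions at the start completes the proof.

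The main work concentrates in the passage from the multiplicity hypothesis to the annihilation of $\gr_\mathfrak{m} M$ by $I_G$; once $H$-semisimplicity and the $H$-invariance of the chosen generators of $I_G$ are exploited, the remainder is a direct application of Theorem~\ref{quotientalg} and Lemma~\ref{prop:grsemiabelian}. I do not anticipate a significant obstacle here: the hypothesis is tailored exactly to enforce the vanishing one needs in depth three of the $\mathfrak{m}_G$-adic filtration, which is precisely the depth at which the degree-$2$ generators of $I_G$ act.
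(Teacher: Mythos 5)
Your proof is correct and follows essentially the same route as the paper's: reduce to the pro-$p$-Iwahori $G=I_1/Z_1$, use the $H$-equivariance of the degree-two generators of $I_G$ together with the multiplicity hypothesis to show $I_G^{(2)}$ annihilates $\gr^0_{\mathfrak m}\pi^\vee$, deduce that $I_G$ annihilates all of $\gr_{\mathfrak m}\pi^\vee$ (using that $I_G$ is a two-sided ideal generated as a left ideal by $I_G^{(2)}$), and then apply Theorem~\ref{quotientalg} and Lemma~\ref{prop:grsemiabelian}. This matches the paper's argument step for step.
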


\begin{proof}
By increasing $\F$ we may assume that $k$ embeds in $\F$. As $\pi$ is an admissible representation of $I/Z_1$, it is an admissible representation of $G\simeq I_1/Z_1$ and $\pi^\vee$ is a finitely generated $\F\bbra{G}$-module. Moreover the socle filtration on $\pi$ coincides with the socle filtration on $\pi|_G$ and with the dual of the $\mathfrak{m}_G$-adic filtration on $\pi^\vee$ so that $(\soc_i \pi/\soc_{i-1}\pi)^\vee\simeq\gr_{\mathfrak{m}}^i\pi^\vee$. Moreover the graded $\gr_\mathfrak{m}\F\bbra{G}$-module $\gr_\mathfrak{m}\pi^\vee$ is generated by its homogeneous elements of degree $0$.

Let $I_G$ be the graded ideal of $\gr_\mathfrak{m}\F\bbra{G}$ defined above and let $I_G^{(2)}$ be its homogeneous component of degree $2$. Note that $H$ acts trivially on $I_G^{(2)}$.
If $\Hom_I(\chi,\gr_\mathfrak{m}^0\pi^\vee)\neq0$, then by assumption $\Hom_I(\chi,\gr_\mathfrak{m}^2\pi^\vee)=0$, so we have $I_G^{(2)}(\gr_\mathfrak{m}^0\pi^\vee)=0$. As $\gr_\mathfrak{m}\pi^\vee$ is generated by $\gr_\mathfrak{m}^0\pi^\vee$ and $I_G$ by $I_G^{(2)}$, we deduce that $I_G(\gr_\mathfrak{m}\pi^\vee)=0$ and that $\gr_\mathfrak{m}\pi^\vee$ is actually a $\gr_\mathfrak{m}\F\bbra{G}/I_G$-module. Theorem \ref{quotientalg} implies that the dimension of its support is $\leq f$. We can therefore apply Lemma \ref{prop:grsemiabelian} (with $I=(h_0,\dots,h_{f-1})$) to conclude that $\dim_{I/Z_1}(\pi)=\dim_G(\pi)\leq f$. The equality $\dim_I(\pi)=\dim_{I/Z_1}(\pi)$ follows from Lemma \ref{lem:GvsG/Z}.
\end{proof}

Using \eqref{isograded} and the Poincar\'e--Birkhoff--Witt Theorem, we can write down explicitly the structure of the first three graded pieces of $\gr_\mathfrak{m}\F\bbra{I_1/Z_1}$ as $I$-representations, assuming that $k$ embeds in $\F$:
\begin{gather}
\begin{gathered}
\label{eq:expldescription} \gr_\mathfrak{m}^0\F\bbra{I_1/Z_1}=\F, \quad \gr_\mathfrak{m}^1\F\bbra{I_1/Z_1}\simeq\bigoplus_{i=0}^{f-1}(\F\alpha_i\oplus\F\alpha_i^{-1}), \\ \gr_\mathfrak{m}^2\F\bbra{I_1/Z_1}\simeq\F^{2f}\oplus\bigoplus_{0\leq i \le j\leq f-1}\F\alpha_i\alpha_j\oplus \bigoplus_{0\leq i \le j\leq f-1}\F\alpha_i^{-1}\alpha_j^{-1}\oplus \bigoplus_{0\leq i \neq j\leq f-1}\F\alpha_i\alpha_j^{-1},
\end{gathered}
\end{gather}
where $\alpha_j$ is the character $\left(\begin{smallmatrix} a & 0 \\ 0 & d \end{smallmatrix}\right)\mapsto\sigma_j(ad^{-1})$. As a consequence, each nontrivial character appears with multiplicity at most one as a Jordan--H\"older factor of $\F\bbra{I_1/Z_1}/\mathfrak{m}_{I_1/Z_1}^3$.

\section{On smooth representations of \texorpdfstring{$\GL_2$}{GL\_2}}
\label{sec:smooth:rep}

The aim of this section is to prove Theorem \ref{thm:GKdim-criterion} below which provides a useful criterion for bounding the Gelfand--Kirillov dimension of an admissible smooth representation of $\GL_2(L)$.

 We keep the notation of \S\ref{sec:propIwahoriGL2}: $L$ is a finite unramified extension of $\Qp$ of degree $f$ with ring of integers $\mathcal{O}_L$ and residue field $k$, $I$ (resp.~$I_1$) is the upper (resp.~upper pro-$p$) Iwahori subgroup of $K\defeq \GL_2(\mathcal{O}_L)$ and $Z_1$ is the center of $I_1$. We set $K_1\defeq 1+p\M_2(\mathcal{O}_L)\subset I_1$.

If $H$ is a compact $p$-adic analytic group and if $V$ is an admissible smooth $\F$-representation of $H$ we denote by $\Inj_H V$ an injective envelope of $V$ in the category of admissible smooth representations of $H$; it is unique up to \emph{nonunique} isomorphism. As an $\F\bbra{H}$-module, the dual $V^\vee$ is finitely generated and we denote by $\Proj_H V^\vee$ a projective cover of $V^\vee$ in the category of pseudocompact $\F\bbra{H}$-modules. The radical $\rad M$ of a pseudocompact $\F\bbra{H}$-module is the submodule $\mathfrak{m}_H M$.

If $G$ is a $p$-adic analytic group, $H$ a closed subgroup of $G$ and $V$ a smooth $H$-representation over $\F$, we denote by $\Ind_H^GV$ the $\F$-vector space of smooth functions $f : G\rightarrow V$ such that $f(hg)=hf(g)$ for all $g\in G$ and $h\in H$. The group $G$ acts on $\Ind_H^GV$ by translation on the right. If $H$ is cocompact in $G$, the representation $\Ind_H^GV$ is smooth and if moreover $V$ is admissible, it is admissible.

If $\lambda\in X^*(\un{T})$ we use the notation $\chi_\lambda$ to denote the character $T(k)\to \un{T}(\F) \xrightarrow{\lambda}\F^\times$, where the first map is the inclusion. We use the same notation $\chi_\lambda$ to denote the character of $I$ obtained by composition with $I\twoheadrightarrow T(k)$. Equivalently $\chi_\lambda$ is the character of $I$ acting on $F(\lambda)^{I_1}$. 

In this section, we always assume that $p>3$.

\subsection{On some %
 representations of the Iwahori}
\label{sec:some-repr-iwah}

\

Let $\alpha_i : T(k) \to \F^\times$ denote also the character $\chi_{\alpha_i}$, i.e.~the character sending $\left(\begin{smallmatrix} a & 0 \\ 0 & d \end{smallmatrix}\right)\in T(k)$ to $\sigma_i(ad^{-1})$.
In particular, $\alpha_i=\alpha_0^{p^i}$ as characters of $T(k)$ for $0\leq i\leq f-1$. 

Let $\chi:I\ra\F^{\times}$ be a smooth character. For any $n\geq 1$, we set
\begin{equation}\label{eq:W-chi-n} W_{\chi,n}\defeq(\Proj_{I/Z_1}\chi)/\m_{I_1}^n.\end{equation}
(Note that via the natural map $\F\bbra{I}\rightarrow\F\bbra{I/Z_1}$ the actions of $\m_{I_1}^n$ and $\m_{I_1/Z_1}^n$ coincide on $\Proj_{I/Z_1}\chi$; similar comment will apply later on for pseudocompact $\F\bbra{K/Z_1}$-modules.)

Let $\chi_0$ be the trivial character of $I$. As any smooth character $\chi : I\rightarrow\F^\times$ is trivial on $I_1$, there is an isomorphism of $\F\bbra{I/Z_1}$-modules
\[ \Proj_{I/Z_1}\chi\simeq\chi\otimes_{\F}\Proj_{I/Z_1}\chi_0\]
and an isomorphism of $\F\bbra{I/Z_1}$-modules $\Proj_{I/Z_1}\chi_0\simeq\F\bbra{I_1/Z_1}$. 
(Note that the decomposition $I= I_1\rtimes H$ with $H$ as in \S\ref{sec:propIwahoriGL2} gives a natural left action of $I$ on $\F\bbra{I_1/Z_1}$, where $I_1$ acts by left translation and $H$ by conjugation.)
Consequently for any $n\geq1$, we have an isomorphism of $I$-representations $W_{\chi,n}\simeq\chi\otimes_{\F}(\F\bbra{I_1/Z_1}/\mathfrak{m}_{I_1}^n)$. From the description of $\gr_{\mathfrak{m}}\F\bbra{I_1/Z_1}$ in \eqref{eq:expldescription}, we can deduce the following result.

\begin{lem}
\label{lemma-Ext1=dim1}
We keep the above hypotheses.
\begin{enumerate}%
\item 
\label{it2-Ext1=dim1}
For any $\chi'\neq \chi$, $[W_{\chi,3}:\chi']\leq 1$.
\item\label{it1-Ext1=dim1}
Suppose that $\chi, \chi': I\ra\F^{\times}$ are smooth characters such that $\Ext^1_{I/Z_1}(\chi,\chi')\neq0$.
Then $\chi'\in \{\chi\alpha_i^{\pm1} : 0\leq i\leq f-1\}$ and we have $\dim_{\F}\Ext^1_{I/Z_1}(\chi,\chi')=1$.
Letting $E_{\chi',\chi}$ denote the unique nonsplit $I$-extension 
\begin{equation}
  0\ra \chi'\ra E_{\chi',\chi}\ra\chi\ra0,\label{eq:10}
\end{equation}
the group $K_1$ acts trivially on $E_{\chi',\chi}$ if and only if $\chi'=\chi\alpha_i$ for some $0\leq i\leq f-1$.
\end{enumerate}
\end{lem}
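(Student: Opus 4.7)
The plan is to pull the computation down to $I_1/Z_1$ via the isomorphism $W_{\chi,n}\simeq\chi\otimes_\F(\F\bbra{I_1/Z_1}/\mathfrak{m}_{I_1/Z_1}^n)$ of $I$-representations (which is valid because characters of $I$ are trivial on $I_1$), and then read off both multiplicities and extensions from the explicit description~\eqref{eq:expldescription} of the graded pieces of $\F\bbra{I_1/Z_1}$. Part (i) is then immediate: by the above twist, $[W_{\chi,3}:\chi']=[\F\bbra{I_1/Z_1}/\mathfrak{m}_{I_1/Z_1}^3:\chi^{-1}\chi']$, and the observation recorded at the end of \S\ref{sec:propIwahoriGL2} (extracted from~\eqref{eq:expldescription}) says precisely that each nontrivial character appears at most once in $\F\bbra{I_1/Z_1}/\mathfrak{m}_{I_1/Z_1}^3$, so whenever $\chi'\ne\chi$ the multiplicity is at most $1$.

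For the first two assertions of (ii), I would write $I=I_1\rtimes H$ with $|H|$ prime to $p$, so inflation--restriction collapses to $\Ext^1_{I/Z_1}(\chi,\chi')\cong H^1(I_1/Z_1,\psi)^H$ with $\psi\defeq\chi^{-1}\chi'$. Since $\psi|_{I_1}$ is trivial, $H^1(I_1/Z_1,\psi)$ is simply $\Hom(I_1/Z_1,\F)$, i.e.\ the $\F$-linear dual of the Frattini quotient, which by the computations of \S\ref{sec:propIwahoriGL2} is canonically identified (as $H$-module) with $V\defeq\gr_\mathfrak{m}^1\F\bbra{I_1/Z_1}\cong\bigoplus_i(\F\alpha_i\oplus\F\alpha_i^{-1})$. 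A careful bookkeeping of the $H$-equivariance of the cocycle (the condition $c(ghg^{-1})=\psi(g)c(h)$ translates into $c$ being an element of $\Hom_H(V,\psi)$) shows that the extension group is nonzero precisely when $\psi=\alpha_i^{\pm1}$, and is one-dimensional in that case.

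For the $K_1$-triviality statement, I would determine the image of $K_1Z_1/Z_1$ in the Frattini quotient $V$. The upper-unipotent part of $K_1$ is built from $p$-th powers of the $E_i$'s (using $E_i^p=\smatr{1}{pa^i}{0}{1}$), and the nonscalar diagonal part of $K_1$ lies in $[I_1,I_1]$; both contributions lie in the Frattini subgroup $\Phi(I_1)Z_1$. Only the lower-unipotent part $F_i\in K_1$ survives, so the image is exactly the ``lower'' summand $V^-\defeq\bigoplus_i\F\alpha_i^{-1}$. Consequently $K_1$ acts trivially on $E_{\chi',\chi}$ iff the corresponding cocycle $c\in\Hom_H(V,\psi)$ vanishes on $V^-$, i.e.\ iff it factors through $V^+\defeq\bigoplus_i\F\alpha_i$; by $H$-equivariance this happens iff $\psi$ is a weight of $V^+$, namely $\psi=\alpha_i$, equivalently $\chi'=\chi\alpha_i$.

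The main technical subtlety will be getting the $H$-equivariance convention right in step two: the cocycle condition forces $c$ to be viewed as an element of $\Hom_H(V,\psi)$ rather than as a weight-$\psi$ element of $V^*$, and a sign error in this identification would flip the conclusion of the $K_1$-triviality assertion (switching $\alpha_i$ with $\alpha_i^{-1}$). The computation of the image of $K_1$ in the Frattini quotient is then the only remaining input, and it is straightforward once one recalls that the lower-unipotent entries of $I_1$ are already divisible by $p$, so $F_i\in K_1$ whereas $E_i\notin K_1$.
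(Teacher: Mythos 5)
Your argument for part (i) matches the paper's: both read the multiplicity off from the explicit description \eqref{eq:expldescription} of the first three graded pieces of $\F\bbra{I_1/Z_1}$, after twisting by $\chi^{-1}$. For part (ii), however, the paper simply cites \cite[Lemma~2.4]{yongquan-algebra}, whereas you give a self-contained computation: inflation--restriction along $I_1/Z_1\trianglelefteq I/Z_1$ with prime-to-$p$ quotient $H$, identification of $\Ext^1$ with $\Hom_H(V,\psi)$ where $V$ is the Frattini quotient (read off from \eqref{eq:expldescription} as $\oplus_i(\F\alpha_i\oplus\F\alpha_i^{-1})$), and then the $K_1$-triviality test via the image of $K_1$ in $V$. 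This is a perfectly valid alternative; what it buys is that the statement becomes transparently a consequence of the structure of $\gr_\mathfrak{m}\F\bbra{I_1/Z_1}$ already established in \S\ref{sec:propIwahoriGL2}, rather than an import. The sign-tracking worry you flag in the last paragraph is real but you handle it correctly: the cocycle condition $c(hnh^{-1})=\psi(h)c(n)$ makes $c$ an $H$-equivariant map $V\to\psi$, and vanishing on the lower-unipotent summand $V^-$ of weight $\alpha_i^{-1}$ forces $\psi$ to be a weight of $V^+=\oplus_i\F\alpha_i$, i.e.\ $\chi'=\chi\alpha_i$, which is the sign that actually appears in the statement.

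One small imprecision: you assert that the nonscalar diagonal part of $K_1$ ``lies in $[I_1,I_1]$.'' What the proof of Proposition~\ref{comparison_filtrations} actually shows is that $H_i$ is a commutator times $p$-th powers, hence lies in $\Phi(I_1)=I_1^p[I_1,I_1]$, which is all you use. (You then correctly summarise as ``both contributions lie in $\Phi(I_1)Z_1$.'') You should also note that the one-dimensionality of each $\Ext^1$ requires the characters $\alpha_i^{\pm1}$ of $H$ to be pairwise distinct, which holds under the paper's standing assumption $p>3$ (and is where something could go wrong for small $p$). With those two clarifications the proof is complete and correct.
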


\begin{proof}
Part \ref{it2-Ext1=dim1} follows from equation \eqref{eq:expldescription} by twisting and part \ref{it1-Ext1=dim1} follows from \cite[Lemma~2.4]{yongquan-algebra} (i) and (ii).
\end{proof}

Now, let $\chi'$ be a character such that $\Ext^1_{I/Z_1}(\chi,\chi')\neq0$. Since $[W_{\chi,3}:\chi']=1$ and $\chi'$ occurs as a subquotient in $\rad_{I_1}(W_{\chi,3})$ which is killed by $\mathfrak{m}_{I_1}^2$, there is a unique (up to scalar) nonzero $I$-equivariant morphism $W_{\chi',2}\ra W_{\chi,3}$.

\begin{lem}\label{lemma-chi''isinimage}
If $\Ext^1_{I/Z_1}(\chi,\chi')\neq0$, then any nonzero $I$-equivariant  morphism $W_{\chi',2}\ra W_{\chi,3}$ is injective.
\end{lem}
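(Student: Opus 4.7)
The plan is to reduce injectivity of $\phi$ to a nonvanishing check on one-dimensional $H$-isotypic components of $\soc_{I}W_{\chi',2}$, and then solve that check by invoking the domain property of $\gr_\mathfrak{m}\F\bbra{I_1/Z_1}$. First, by Lemma~\ref{lemma-Ext1=dim1}\ref{it1-Ext1=dim1}, the hypothesis forces $\chi' = \chi\cdot\tau$ with $\tau\in\{\alpha_i^{\pm 1} : 0\le i\le f-1\}$; moreover, as explained in the paragraph preceding the lemma, $[W_{\chi,3}:\chi']=1$ and the unique copy of $\chi'$ lies in the middle layer $\mathfrak{m}_{I_1}W_{\chi,3}/\mathfrak{m}_{I_1}^2 W_{\chi,3}\cong \gr_\mathfrak{m}^1\F\bbra{I_1/Z_1}\otimes\chi$. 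Since $\Proj_{I/Z_1}\chi'$ is projective and $\mathfrak{m}_{I_1}^2 W_{\chi',2}=0$, a morphism $\phi:W_{\chi',2}\to W_{\chi,3}$ is the same as an $H$-eigenvector $v\in W_{\chi,3}$ of $H$-character $\chi'$, via $\phi(y\otimes e_{\chi'})=y\cdot v$ for $y\in\F\bbra{I_1/Z_1}$, where $e_{\chi'}$ (resp.\ $e_\chi$) denotes the canonical generator of the cosocle of $W_{\chi',2}$ (resp.\ $W_{\chi,3}$). By the multiplicity-one property, writing $Y_\tau\in\gr_\mathfrak{m}^1\F\bbra{I_1/Z_1}$ for a generator of the $\tau$-isotypic line, one has $v = \lambda\,Y_\tau\cdot e_\chi$ for some $\lambda\in\F^\times$: indeed any would-be ``tail'' of $v$ in $\mathfrak{m}_{I_1}^2 W_{\chi,3}\cong\gr_\mathfrak{m}^2\F\bbra{I_1/Z_1}\otimes\chi$ is an $H$-eigenvector of character $\chi'$ in the socle of $W_{\chi,3}$, and the $\chi'$-isotypic component of that socle vanishes since the unique copy of $\chi'$ sits in the middle layer.

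Next, since $\mathfrak{m}_{I_1}^2$ annihilates $W_{\chi',2}$, its socle equals $\mathfrak{m}_{I_1}W_{\chi',2}\cong \gr_\mathfrak{m}^1\F\bbra{I_1/Z_1}\otimes\chi'$ which, by \eqref{eq:expldescription}, decomposes as an $H$-representation into $\bigoplus_{i=0}^{f-1}(\chi'\alpha_i\oplus\chi'\alpha_i^{-1})$, a sum of $2f$ pairwise distinct characters: distinctness uses the running assumption $p>3$, which ensures $p^i+p^j<p^f-1$ for all $0\le i,j\le f-1$, so that no relation of the form $\alpha_i=\alpha_j^{-1}$ can hold. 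Every nonzero $\F\bbra{I/Z_1}$-subrepresentation of $W_{\chi',2}$ meets its socle, so to show that $\ker\phi=0$ it suffices to verify that $\phi$ is nonzero on each of the $2f$ one-dimensional $H$-isotypic lines $\F\cdot Y_{\alpha_i^{\delta}}\cdot e_{\chi'}\subseteq\soc_I W_{\chi',2}$ (for $0\le i\le f-1$ and $\delta\in\{\pm 1\}$).

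By the explicit form of $v$, such a line is sent by $\phi$ to $\lambda\,Y_{\alpha_i^{\delta}}Y_\tau\cdot e_\chi\in\mathfrak{m}_{I_1}^2 W_{\chi,3}\cong\gr_\mathfrak{m}^2\F\bbra{I_1/Z_1}\otimes\chi$, so the entire lemma reduces to the nonvanishing of $Y_{\alpha_i^{\delta}}Y_\tau$ in $\gr_\mathfrak{m}^2\F\bbra{I_1/Z_1}$. By \eqref{isograded} and Theorem~\ref{quotientalg}, after a harmless finite scalar extension so that $k$ embeds in $\F$, $\gr_\mathfrak{m}\F\bbra{I_1/Z_1}$ identifies with the $f$-fold tensor product $U_\F(\ovl{\mathfrak{g}})^{\otimes f}$ of enveloping algebras of the Heisenberg-type Lie algebra $\ovl{\mathfrak{g}}$, and is in particular a domain by the Poincar\'e--Birkhoff--Witt theorem. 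Consequently the product of any two nonzero degree-one elements is nonzero in degree two, which finishes the argument. The only subtle step in this plan is the multiplicity-one identification $v=\lambda\,Y_\tau\cdot e_\chi$ (not merely up to higher-order terms); the rest is a clean application of the domain property of $\gr_\mathfrak{m}\F\bbra{I_1/Z_1}$.
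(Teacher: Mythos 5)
Your proof is correct, and it rests on the same key fact as the paper's own argument: by the PBW theorem, $\gr_\mathfrak{m}\F\bbra{I_1/Z_1}$ is a domain. The routes differ in execution, though. The paper twists to $\chi=\chi_0$, identifies $\Proj_{I/Z_1}\chi_0\cong\F\bbra{I_1/Z_1}$, lifts a $\chi'$-weight element $e\in\gr^1_\mathfrak{m}$ to an $H$-eigenvector $\tilde e\in\fm_{I_1}$, and shows directly that right multiplication by $\tilde e$ induces a filtered map $\F\bbra{I_1/Z_1}/\fm^2\to\F\bbra{I_1/Z_1}/\fm^3$ whose associated graded is injective because the graded ring has no zero divisors. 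You instead decompose $\soc_I W_{\chi',2}$ into the $2f$ one-dimensional $H$-isotypic lines and verify nonvanishing of $\phi$ on each, reducing to nonvanishing of the products $Y_{\alpha_i^\delta}Y_\tau$ in degree two, which again is the domain property. The extra step your route incurs is that you must establish that the $2f$ characters $\chi'\alpha_i^{\pm1}$ in the socle are pairwise distinct (your discussion of $p^i+p^j\not\equiv 0\bmod q-1$, which excludes the case $p=3$, $f=1$). The paper's version sidesteps this entirely: injectivity of the graded map is a single appeal to the no-zero-divisor property without any need for the socle to be multiplicity-free. Both are valid in the regime where the lemma is applied (the genericity hypotheses ultimately force $p>23$), but the paper's formulation is slightly tighter and more robust. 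Your observation that the potentially delicate point is the exact identification $v=\lambda\,\tilde Y_\tau\cdot e_\chi$ (rather than up to degree-two error) is well taken, and your argument via the vanishing of the $\chi'$-component in $\gr^2_\mathfrak{m}\otimes\chi$ handles it correctly.
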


\begin{proof}
By twisting, it is sufficient to consider the case where $\chi$ is the trivial character $\chi_0$. In this case, there is an $I$-equivariant isomorphism $\F\bbra{I_1/Z_1}\simeq \Proj_{I/Z_1}\chi_0$. Let $e\in \gr_{\mathfrak{m}}^1\F\bbra{I_1/Z_1}$ be an eigenvector of weight $\chi'$. There is a unique degree $1$ morphism of graded $\gr_\mathfrak{m}\F\bbra{I_1/Z_1}$-modules $f : \gr_\mathfrak{m}\F\bbra{I_1/Z_1}\rightarrow\gr_\mathfrak{m}\F\bbra{I_1/Z_1}$ sending $1$ to $e$. As $\gr_\mathfrak{m}\F\bbra{I_1/Z_1}$ is isomorphic to an enveloping algebra over a field by \eqref{isograded}, the Poincar\'e--Birkhoff--Witt Theorem implies that it has no zero divisor so that the map $f$ is injective. Let $\tilde{e}\in \fm_{I_1/Z_1}$ such that $\gr_\mathfrak{m}(\tilde{e})=e$. We define a degree 1 morphism of filtered $\F\bbra{I_1/Z_1}$-modules $\tilde{f} : \F\bbra{I_1/Z_1}\rightarrow\F\bbra{I_1/Z_1}$ sending $x$ to $x\tilde{e}$. Obviously we have $f=\gr_\mathfrak{m}(\tilde{f})$. Moreover, if we choose for $\tilde{e}$ a $\chi'$-eigenvector for the action of the group $H$, then $\tilde{f}$ induces an $H$-equivariant map $\tilde{f}': \chi'\otimes_{\F}\F\bbra{I_1/Z_1}\rightarrow\F\bbra{I_1/Z_1}$. As $I= I_1\rtimes H$, the map $\tilde{f}'$ is $I$-equivariant. 
Since $\tilde{f}'$ is injective on graded modules for the $\mathfrak{m}_{I_1}$-adic filtration, it induces an $I$-equivariant injective map
\[ W_{\chi',2}=\Proj_{I/Z_1}\chi'/\mathfrak{m}_{I_1}^2\hookrightarrow\Proj_{I/Z_1}\chi_0/\mathfrak{m}_{I_1}^3=W_{\chi_0,3}.\]
Since $\Hom_{I}(W_{\chi',2},W_{\chi_0,3})$ has dimension $1$, this finishes the proof.
\end{proof}

For an integer $0 \le \ell \le q-1$ we let $\ell_i$ denote the $i$-th base $p$ digit of $\ell$, so $\ell = \sum_{i=0}^{f-1} \ell_i p^i$.

\begin{lem}\label{lm:inj-env-borel}
  Let $\cI_\chi \defeq \Inj_{B(k)}\chi$. Then $\cI_\chi$ has socle and cosocle isomorphic to $\chi$, and its remaining Jordan--H\"older factors
  $\chi \alpha_0^{-j}$, $0 < j < q-1$,
  occur with multiplicity 1. Its submodule structure is determined by the following property: the unique proper submodule of
  $\cI_\chi$ with cosocle $\chi \alpha_0^{-j}$ \emph{(}$0 \le j < q-1$\emph{)}
  has Jordan--H\"older factors $\chi \alpha_0^{-\ell}$, where $0 \le \ell < q-1$ and $\ell_i \le j_i$ for all $i$.
\end{lem}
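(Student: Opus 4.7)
The plan is to identify $\cI_\chi$ with a concrete induced representation and then reduce the structural claims to commutative algebra on a $T(k)$-equivariant truncated polynomial ring.

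First I would realize $\cI_\chi \cong \mathrm{Ind}_{T(k)}^{B(k)}\chi$. Decompose $B(k) = U(k) \rtimes T(k)$ with $U(k) \cong (k,+)$ of order $q = p^f$ and $T(k) \cong (k^\times)^2$ of order coprime to $p$. Every simple $\F[B(k)]$-module is a character of $T(k)$ inflated to $B(k)$, and since $B(k)$ is finite and $\mathrm{char}(\F)=p$, projective and injective $\F[B(k)]$-modules coincide. Induction from the prime-to-$p$ subgroup $T(k)$ is exact and preserves projectivity, and by Frobenius reciprocity the cosocle of $\mathrm{Ind}_{T(k)}^{B(k)}\chi$ is $\chi$; hence $\cI_\chi \cong \mathrm{Ind}_{T(k)}^{B(k)}\chi$, of $\F$-dimension $q$. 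The socle coincides with the space of $U(k)$-invariants, which one computes directly to be one-dimensional with $T(k)$ acting by $\chi$.

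Next, twisting reduces to $\chi = \mathbf{1}$, and I would identify $\cI_{\mathbf{1}}$ with the group algebra $\F[U(k)]$ as a $B(k)$-module, where $U(k)$ acts by the regular representation and $T(k)$ by extension of its conjugation action on $U(k)$. Letting $J$ denote the augmentation ideal, one has $J/J^2 \cong U(k)\otimes_{\F_p}\F$ as $T(k)$-modules, and using $k\otimes_{\F_p}\F = \bigoplus_{j=0}^{f-1}L_j$ (with $T(k)$ acting on $L_j$ through $\alpha_j$) this becomes $\bigoplus_j \alpha_j$. Choose $E_j \in J$ lifting a basis vector of the $\alpha_j$-weight space; every element of $J$ is $p$-th-power nilpotent because $U(k)$ has exponent $p$ and $\F[U(k)]$ is commutative, so $E_j^p = 0$. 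A dimension count then promotes the surjection $\F[X_0,\dots,X_{f-1}]/(X_0^p,\dots,X_{f-1}^p) \onto \F[U(k)]$, $X_j \mapsto E_j$, to a $T(k)$-equivariant isomorphism of $\F$-algebras in which $E_j$ has weight $\alpha_j$.

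Finally, I would read off the submodule structure. Since $|T(k)|$ is coprime to $p$, every $B(k)$-submodule of $\cI_\chi$ is a $T(k)$-invariant ideal of $\F[U(k)]$, hence spanned by its $T$-eigenvectors; the latter are scalar multiples of the monomials $m_{\mathbf{a}} \defeq \prod_i E_i^{a_i}$ with $0 \le a_i \le p-1$, of weight $\alpha_0^{\sum_i a_i p^i}$. Each character $\alpha_0^\ell$ with $0 \le \ell \le q-1$ is realized by a unique such monomial, except the trivial character which is realized by both $1$ and $\prod_i E_i^{p-1}$; this yields the multiplicity part of the Jordan--H\"older claim. For $0 \le j < q-1$, taking $a_i \defeq p-1-j_i$ gives the unique monomial of weight $\alpha_0^{-j}$ belonging to a proper ideal, the principal ideal it generates has one-dimensional cosocle $\chi\alpha_0^{-j}$, and its $\F$-basis is $\{m_{\mathbf{b}} : b_i \ge p-1-j_i\ \forall i\}$; substituting $\ell_i \defeq p-1-b_i$ rewrites this as the set of Jordan--H\"older factors $\chi\alpha_0^{-\ell}$ with $\ell_i \le j_i$ for all $i$. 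Uniqueness holds because $\chi\alpha_0^{-j}$ has multiplicity one in $\cI_\chi$ when $0 < j < q-1$, so a submodule with one-dimensional cosocle of that weight is forced to be principal on the unique eigenvector of that weight, and for $j=0$ the unique proper submodule with cosocle $\chi$ is the socle. The main technical point throughout is the $T(k)$-equivariant identification $\F[U(k)] \cong \F[E_0,\dots,E_{f-1}]/(E_0^p,\dots,E_{f-1}^p)$; once this is established, the rest is bookkeeping with base-$p$ expansions.
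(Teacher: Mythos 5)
Your proof is correct, and it takes a genuinely different route from the paper's. After the common first step (identifying $\cI_\chi$ with $\Ind_{T(k)}^{B(k)}\chi$), the paper stays in the function model: it writes down the explicit $T(k)$-eigenvectors $f_j = \sum_{\lambda} \lambda^j \big(\begin{smallmatrix}1 & \lambda \\ 0 & 1\end{smallmatrix}\big) e$, computes $\big(\begin{smallmatrix}1 & x \\ 0 & 1\end{smallmatrix}\big) f_j = \sum_{\ell} \binom{j}{\ell}(-x)^{j-\ell}f_\ell$, and then invokes Lucas' theorem ($\binom{j}{\ell} \not\equiv 0 \bmod p$ iff $\ell_i \le j_i$ for all $i$) to read off which $f_\ell$ lie in the submodule generated by $f_j$. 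You instead pass to the group-algebra model: twisting to $\chi = \mathbf{1}$ and identifying $\Ind_{T(k)}^{B(k)}\mathbf{1}$ with $\F[U(k)]$ (right regular action of $U(k)$, conjugation action of $T(k)$), then establishing the $T(k)$-equivariant algebra isomorphism $\F[U(k)] \cong \F[E_0,\dots,E_{f-1}]/(E_0^p,\dots,E_{f-1}^p)$ with $E_j$ of weight $\alpha_j$. With that isomorphism in hand, the submodule lattice is just the lattice of monomial ideals, so the digit condition $\ell_i \le j_i$ falls out of the grading by $\ZZ^f$ rather than from a binomial-coefficient computation — you never need Lucas. This is a trade-off: the paper's $f_j$-calculation is more hands-on but requires the $\bmod\,p$ binomial combinatorics; your approach front-loads a bit of structure theory ($J/J^2$ as a $T(k)$-module, Frobenius to kill $p$-th powers in $J$, Nakayama for surjectivity) and then the combinatorics becomes transparent. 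One small point worth making explicit when writing this up: you should observe that the $T(k)$-splitting of $J \onto J/J^2$ (available since $|T(k)|$ is prime to $p$) lets you choose $E_j$ to be an actual $\alpha_j$-eigenvector in $J$, not just a lift, which is what makes the isomorphism $T(k)$-equivariant.
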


\begin{proof}
  The claim about socle and cosocle are true for injective envelopes of any finite group.

  We first observe that $\cI_\chi \cong \Ind_{T(k)}^{B(k)} \chi$. The latter representation is injective by Frobenius reciprocity
  (as any $T(k)$-representation is injective). It has the correct socle and cosocle by Frobenius reciprocity, hence indeed
  $\cI_\chi \cong \Ind_{T(k)}^{B(k)} \chi$.
  
  As the kernel of $B(k) \onto T(k)$ is a normal $p$-subgroup, every irreducible $B(k)$-representation is trivial on it. To
  determine Jordan--H\"older factors we may thus restrict to $T(k)$.  By Mackey's formula,
  $(\Ind_{T(k)}^{B(k)} \chi)|_{T(k)} \cong \chi \oplus (\Ind_{Z(k)}^{T(k)} \chi)|_{Z(k)}$, where $Z$ is the center of $\GL_2$. Thus the
  irreducible constituents of $\cI_\chi$ are all the characters $\chi'$ of $T(k)$ such that $\chi'|_{Z(k)} = \chi|_{Z(k)}$, or
  equivalently $\chi' = \chi \alpha_0^{-j}$ for some $0\leq j< q-1$, as well as one more copy of $\chi$.

  As in \cite[\S2]{BP} we define
  $f_j \defeq \sum_{\lambda\in k} \lambda^j \big(\begin{smallmatrix}1 & \lambda \\ 0 & 1 \end{smallmatrix}\big) \phi$,
  where $\phi\in \Ind_{T(k)}^{B(k)} \chi$ is some function whose support equals $T(k)$.
  It follows that $f_j$ is a $T(k)$-eigenvector with eigenvalue $\chi \alpha_0^{-j}$.

  Assume now that $j < q-1$.
  An explicit calculation shows that
  $\big(\begin{smallmatrix}1 & x \\ 0 & 1 \end{smallmatrix}\big) f_j = \sum_{\ell=0}^j \binom{j}{\ell} (-x)^{j-\ell} f_\ell$. Hence
  the $B(k)$-representation $W$ generated by $f_j$ has basis $f_\ell$ for $\ell$ such that $\binom{j}{\ell} \ne 0$ or equivalently
  $\ell_i \le j_i$ for all $i$. In particular, $W \ne \cI_\chi$ since $j< q-1$. On the other hand, $W$ is a quotient of
  $\Ind_{T(k)}^{B(k)} \chi \alpha_0^{-j}$, so $W$ is the unique proper subrepresentation of $\cI_\chi$ with cosocle
  $\chi \alpha_0^{-j}$.
\end{proof}

The element $\big(\begin{smallmatrix}0 & 1 \\ p & 0 \end{smallmatrix}\big)\in\GL_2(L)$
normalizes $I$ and its square is central. Let $\chi^s$ denote the conjugate of $\chi$ by $\big(\begin{smallmatrix}0 & 1 \\ p & 0 \end{smallmatrix}\big)\in\GL_2(L)$. By conjugating $\cI_\chi$ by $\big(\begin{smallmatrix}0 & 1 \\ p & 0 \end{smallmatrix}\big)\in\GL_2(L)$ we obtain
the following corollary.

\begin{cor}\label{cor:iwahori-reps}
  Given $\chi : T(k) \to \F^\times$ there is a \emph{(}finite-dimensional\emph{)}
  smooth representation $\cJ_\chi$ of $I$ with the following properties.  The socle and cosocle of $\cJ_\chi$ are isomorphic
  to $\chi^s$, and the remaining Jordan--H\"older factors of $\cJ_\chi$ are $\chi^s \alpha_0^{j}$ for $0 < j < q-1$,
  each occurring with multiplicity 1. The unique proper
  submodule of $\cJ_\chi$ with cosocle $\chi^s \alpha_0^{j}$ \emph{(}$0 \le j < q-1$\emph{)}
  has Jordan--H\"older factors $\chi^s \alpha_0^{\ell}$, where $0 \le \ell < q-1$ and $\ell_i \le j_i$ for all $i$.
  Moreover, $\cJ_\chi$ admits a central character.
\end{cor}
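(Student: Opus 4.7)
The plan is to define $\cJ_\chi$ as the pullback of $\cI_\chi$ along the group homomorphism $\pi\colon I\to B(k)$ sending $g$ to $s^{-1}gs \bmod p$, where $s\defeq\bigl(\begin{smallmatrix}0 & 1 \\ p & 0 \end{smallmatrix}\bigr)\in\GL_2(L)$. I first verify that $\pi$ is well defined: the explicit matrix computation
\[
s^{-1}\bigl(\begin{smallmatrix} a & b \\ pc & d \end{smallmatrix}\bigr) s = \bigl(\begin{smallmatrix} d & c \\ pb & a \end{smallmatrix}\bigr)
\]
shows that $s$ normalizes $I$, and composing with the reduction $I\onto B(k)$ produces a surjective homomorphism. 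Thus $\cJ_\chi$ is a well-defined finite-dimensional smooth $I$-representation.

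Next I will read off its structure by tracking characters of $T(k)\subset I$ through $\pi$: the composition $T(k)\hookrightarrow I\xrightarrow{\pi} B(k)\onto T(k)$ is the swap $\bigl(\begin{smallmatrix}a&0\\0&d\end{smallmatrix}\bigr)\mapsto\bigl(\begin{smallmatrix}d&0\\0&a\end{smallmatrix}\bigr)$, so pullback along $\pi$ sends a character $\eta$ of $T(k)$ to $\eta^s$. In particular $\chi\mapsto\chi^s$ and $\alpha_0\mapsto\alpha_0^{-1}$ (using that $\alpha_0\bigl(\begin{smallmatrix}a&0\\0&d\end{smallmatrix}\bigr)=\sigma_0(ad^{-1})$). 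Since the lattice of subrepresentations is preserved under pullback by a group automorphism, applying this character correspondence to Lemma~\ref{lm:inj-env-borel} yields all the claimed properties at once: socle and cosocle become $\chi^s$; the remaining Jordan--H\"older factors become $\chi^s\alpha_0^{j}$ for $0<j<q-1$, each with multiplicity one; and the unique proper submodule with cosocle $\chi^s\alpha_0^j$ has Jordan--H\"older factors $\chi^s\alpha_0^{\ell}$ for those $0\le\ell<q-1$ with $\ell_i\le j_i$.

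For the central character, I observe that scalar matrices commute with $s$, so the center $Z(\cO_L)\subset I$ acts on $\cJ_\chi$ by exactly the same character as it acts on $\cI_\chi$ (regarded as the inflation of a $B(k)$-representation). Every Jordan--H\"older constituent $\chi\alpha_0^{-j}$ of $\cI_\chi$ has the same restriction to $Z(k)$, because $\alpha_0(\bar\lambda I_2)=\sigma_0(\bar\lambda/\bar\lambda)=1$; as $Z(k)$ has order prime to $p$ and acts semisimply on the finite-dimensional space $\cI_\chi$, the whole representation lies in a single $Z(k)$-isotypic component and hence admits a central character. The argument is essentially a book-keeping transport of Lemma~\ref{lm:inj-env-borel} along the automorphism $g\mapsto s^{-1}gs$ of $I$, and I expect no serious obstacle: the only genuine verification required is that $s$, which lies outside $\GL_2(\cO_L)$, still normalizes $I$ and induces the swap on $T(k)$ after reduction mod $p$ -- and this is settled by the explicit computation above.
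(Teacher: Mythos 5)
Your proof is correct and takes the same approach as the paper: both define $\cJ_\chi$ by conjugating $\cI_\chi \cong \Ind_{T(k)}^{B(k)}\chi$ (viewed as an $I$-representation by inflation) by the element $s=\bigl(\begin{smallmatrix}0 & 1 \\ p & 0 \end{smallmatrix}\bigr)$, and then transport the submodule structure from Lemma~\ref{lm:inj-env-borel} via the induced swap $\chi\alpha_0^{-j}\mapsto\chi^s\alpha_0^j$ on $T(k)$-characters. One small wording slip: the map $\pi\colon I\to B(k)$ is not an automorphism of $I$; what preserves the submodule lattice is the composite of conjugation by $s$ (which is an automorphism of $I$) with inflation along $I\onto B(k)$ (which preserves lattices because the kernel is a normal pro-$p$ subgroup acting trivially, so every $I$-subrepresentation of an inflated module is again inflated) -- the argument still goes through.
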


\begin{rem}
On $\mathcal{J}_\chi$ the action of $I$ does not factor through its quotient $B(k)$, contrary to the case $\mathcal{I}_\chi$
(cf.\ Lemma~\ref{lemma-Ext1=dim1}).
\end{rem}

\subsection{On some indecomposable representations of \texorpdfstring{$K$}{K}}
\label{sec:some-repr-K}

\

We will use again the notation of section \ref{sec:ext:graph}. 
In particular, recall that we have identified $\cJ=\Hom(k,\F)$ with $\set{0,1,\dots, f-1}$ and that $\eta_J\defeq\sum_{i\in J}\eta_i$ for $J\subseteq \cJ$. %
Also, for $\lambda\in X^*(\un{T})$ recall the injective map
\[ \t_\lambda : \Lambda^\lambda_W \rightarrow X_{\reg}(\un{T})/(p-\pi)X^0(\un{T}). \]

Let $\sigma'$ be a Serre weight appearing in $\Inj_{\GL_2(k)}F(\lambda)$. 
It follows from \cite[Cor.\ 3.12]{BP} that there exists a unique subrepresentation of $\Inj_{\GL_2(k)}F(\lambda)$, denoted by $I(F(\lambda),\sigma')$, with cosocle $\sigma'$ and such that $[I(F(\lambda),\sigma'):F(\lambda)]=1$. Moreover, $I(F(\lambda),\sigma')$ is multiplicity-free. As a consequence, if $W$ is a subrepresentation of $\Inj_{\GL_2(k)}F(\lambda)$ such that $[W:\sigma']\neq0$, then $W$ contains $I(F(\lambda),\sigma')$ as a subrepresentation. 
Dually, we have similar statements for quotients of $\Proj_{\GL_2(k)}F(\lambda)$.

\begin{lem}\label{lm:princ-series}
We keep the above hypotheses.
\begin{enumerate}%
\item 
\label{it:princ-series-1}
Suppose that $0 < \langle\lambda,\alpha_i^\vee\rangle < p-1$ for all $i$. Then $\Ind_{I}^{K}\chi_\lambda^s$ is multiplicity-free with Jordan--H\"older factors $\{F(\t_\lambda(-\ovl{\eta}_J)) : J \subset \cJ\}$. %
\item
\label{it:princ-series-2}
Suppose that $0 < \langle\lambda,\alpha_i^\vee\rangle < p-2$ for all $i$. The Jordan--H\"older factors of $\Inj_{\GL_2(k)}F(\lambda)$ are the $\set{F(\t_\lambda(\sum_{i\in \cJ}a_i\ovl{\eta}_i)) : (a_i)_{i\in \cJ}\in \set{0,\pm1}^\cJ}$, up to multiplicity.
\item
\label{it:princ-series-3}
Suppose that $0 < \langle\lambda,\alpha_i^\vee\rangle < p-2$ for all $i$.
Let $\sigma'=F(\t_{\lambda}(\sum_{i\in\cJ}a_i\ovl{\eta}_i))$ for some $(a_i)\in\{0,\pm1\}^{\cJ}.$ The Jordan--H\"older factors of $I(F(\lambda),\sigma')$ are $\big\{F(\t_{\lambda}(\sum_{i\in J}a_i\ovl{\eta}_i)): J\subset\cJ\big\}.$ As a consequence, the length of $I(F(\lambda),\sigma')$ is equal to $2^{|\{i\in\cJ:a_i\neq0\}|}$.
\end{enumerate}
\end{lem}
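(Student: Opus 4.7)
The plan is to treat the three parts in sequence.

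For (i), observe that $\chi_\lambda^s$ is trivial on $I_1$ and hence on $K_1\triangleleft K$, so $\Ind_I^K\chi_\lambda^s$ factors through $K/K_1\cong\GL_2(k)$ and identifies with the $\GL_2(k)$-principal series $\Ind_{B(k)}^{\GL_2(k)}\chi_{\fW\lambda}$ (a direct computation shows $\chi_\lambda^s = \chi_{\fW\lambda}$ on $T(k)$, using that conjugation by $\smatr{0}{1}{p}{0}$ swaps diagonal entries). Under the regularity hypothesis $0<\langle\lambda,\alpha_i^\vee\rangle<p-1$, this mod $p$ principal series is multiplicity free of length $2^f$, with irreducible constituents classically parametrised by subsets $J\subset\cJ$ (see e.g.\ \cite[\S 2]{BP}). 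A direct comparison using the explicit formula~\eqref{eq:expl:tmu} then identifies the constituent at $J$ with $F(\mathfrak{t}_\lambda(-\ovl{\eta}_J))$; alternatively, one may invoke Proposition~\ref{prop:JH:graph} applied to a split tame inertial type whose associated $\sigma(\tau)$ realises the principal series over $\cO$.

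For (ii), I will invoke the structure theorem for the injective envelope $\Inj_{\GL_2(k)}F(\lambda)$ from \cite[\S 3]{BP}. The deepness hypothesis $0<\langle\lambda,\alpha_i^\vee\rangle<p-2$ guarantees via Remark~\ref{rk:t_lambda}(iii) that every $(a_i)\in\{0,\pm 1\}^\cJ$ defines an element of $\Lambda_W^\lambda$, so that each $F(\mathfrak{t}_\lambda(\sum a_i\ovl{\eta}_i))$ is a well-defined regular Serre weight. Combining the adjacency criterion (Lemma~\ref{lm:ext1}) with the Loewy structure in \cite[\S 3]{BP} (equivalently, iterating (i) and gluing principal series layer by layer via Frobenius reciprocity), one identifies the set of Jordan--H\"older factors of $\Inj_{\GL_2(k)}F(\lambda)$ with exactly the Boolean cube $\{F(\mathfrak{t}_\lambda(\sum a_i\ovl{\eta}_i)):(a_i)\in\{0,\pm1\}^\cJ\}$.

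For (iii), recall that by \cite[Cor.~3.12]{BP} the representation $I(F(\lambda),\sigma')$ is multiplicity free with cosocle $\sigma'$ and $[I(F(\lambda),\sigma'):F(\lambda)]=1$, and is uniquely characterised by these properties. Set $S\defeq\{i\in\cJ:a_i\neq 0\}$ and $\Omega\defeq\{F(\mathfrak{t}_\lambda(\sum_{i\in J}a_i\ovl{\eta}_i)):J\subset S\}$; the elements of $\Omega$ form a Boolean sublattice of the extension graph of size $2^{|S|}$, with $F(\lambda)$ at the bottom ($J=\emptyset$) and $\sigma'$ at the top ($J=S$). I will construct by induction on $|S|$ a subrepresentation $W\subset\Inj_{\GL_2(k)}F(\lambda)$ with socle $F(\lambda)$, cosocle $\sigma'$ and Jordan--H\"older set exactly $\Omega$, at each step adjoining a new embedding $i\in S$ via the unique nontrivial $\Ext^1$ provided by Lemma~\ref{lm:ext1}. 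Uniqueness will force $W\cong I(F(\lambda),\sigma')$, giving the length count $2^{|S|}$. The main obstacle is verifying that the constructed $W$ has cosocle precisely $\sigma'$ and no more: any further Serre weight in the cosocle would by Lemma~\ref{lm:ext1} be adjacent to an element of $\Omega$, and a combinatorial check, using the signs of $a_i$ together with the deepness bound of Remark~\ref{rk:t_lambda}(iii), rules out all adjacent weights outside $\Omega$ compatible with multiplicity freeness of $W$.
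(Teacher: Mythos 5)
For parts (i) and (ii) your approach matches the paper's: (i) reduces $\Ind_I^K\chi_\lambda^s$ to a $\GL_2(k)$-principal series and matches its classical multiplicity-free decomposition (the paper uses \cite[Prop.~1.1]{diamond-durham} where you cite \cite[\S 2]{BP}, but the content is the same) against the explicit formula \eqref{eq:expl:tmu}, and (ii) is a direct citation of \cite[Lemma~3.2]{BP}. Both are fine.

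For part (iii) you depart from the paper, which simply invokes \cite[Cor.~4.11]{BP}: that result computes $\JH(I(F(\lambda),\sigma'))$ exactly as stated. You instead attempt an inductive construction of a subrepresentation $W\subset\Inj_{\GL_2(k)}F(\lambda)$ with $\JH(W)=\Omega$ and cosocle $\sigma'$, then appeal to the uniqueness in \cite[Cor.~3.12]{BP}. The logical skeleton is sound, but the inductive step is not a matter of ``adjoining a new embedding $i\in S$ via the unique nontrivial $\Ext^1$'': passing from $|S|=k$ to $k+1$ means enlarging $W$ by $2^k$ new constituents at once, one for each old constituent, and you must (a) show the gluing is compatible, i.e.\ that such a multiplicity-free extension exists inside $\Inj_{\GL_2(k)}F(\lambda)$, and (b) show no constituent of $\Omega\setminus\{\sigma'\}$ survives into $\cosoc_K W$ after the gluing. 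Your closing sentence acknowledges (b) but dismisses it as ``a combinatorial check''; that check \emph{is} the substance of \cite[Cor.~4.11]{BP}, and it is not immediate because a weight $F(\t_\lambda(\sum_{i\in J}a_i\ovl\eta_i))$ with $J\subsetneq S$ can a priori have no extension beneath it inside $W$ and hence lie in the cosocle. Either carry out this verification carefully (tracking, for each $J\subsetneq S$, the nonsplit extension inside $W$ from Lemma~\ref{lm:ext1} that prevents $F(\t_\lambda(\sum_{i\in J}a_i\ovl\eta_i))$ from splitting off the top) or, more economically, just cite \cite[Cor.~4.11]{BP} as the paper does.
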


By Remark \ref{rk:t_lambda}\ref{it:t_lambda:2} the condition on $\lambda$ in \ref{it:princ-series-1} is precisely that all weights $\t_\lambda(-\ovl{\eta}_J)$ lie in $C_0$.  %
Also note in part~\ref{it:princ-series-3} that the Jordan--H\"older factors correspond via $\t_\lambda$ precisely to the weights lying on geodesics between
$0$ and $\sum_{i\in\cJ}a_i\ovl{\eta}_i$.

\begin{proof}
Part \ref{it:princ-series-1} is almost a special case of Proposition \ref{prop:JH:graph} (with $sw^{-1} = 1$, $\nu = \eta$, and $\mu-\eta = \lambda$), but the hypothesis is weaker here.

If $\nu \in X^0(\un{T})$, then from the definition, $F(\t_{\lambda+\nu}(\omega)) \cong F(\t_\lambda(\omega)) \otimes_{\F} F(\nu)$. (Note that $F(\nu)$ is one-dimensional.) We may therefore assume that $\lambda_i$ is of the form $(a_i,0)$ for some integers $0 < a_i < p-1$.

Recall from Remark \ref{rk:t_lambda}\ref{it:t_lambda:1} the notation $w_{0,J} = \prod_{i+1 \in J} \fW_{i} \in \un{W}$, where $\fW_{i}$ denotes the Weyl group element which is nontrivial exactly in the $i$-th embedding. We first calculate $\t_\lambda(-\ovl{\eta}_J) \equiv \mu_J \mod (p-\pi)X^0(\un{T})$, where $\mu_J = (t_{\pi^{-1}(\eta_J)} w_{0,J}) \cdot (\lambda-\eta_J) \in X^*(\un{T})$.
We have
\begin{align*}
\mu_{J,i} &= \begin{cases}
\lambda_i - \delta_J(i) (1,0) & \text{if $i+1 \not\in J$,} \\
w_0\cdot \big(\lambda_i + (0,p)-\delta_J(i) (1,0)\big) & \text{if $i+1 \in J$,}
\end{cases}\\
&= \begin{cases}
(a_i,0) - \delta_J(i) (1,0) & \text{if $i+1 \not\in J$,} \\
(p-1,a_i+1)-\delta_J(i) (0,1) & \text{if $i+1 \in J$,}
\end{cases}
\end{align*}
where $\delta_J$ is the characteristic function of $J$ (cf.~equation (\ref{eq:expl:tmu})). 
Replacing $J$ by the set $K \defeq \{ i \in \cJ : i+1 \not \in J \}$, we obtain precisely the formula for the composition factors listed in \cite[Prop.\ 1.1]{diamond-durham}.

Part \ref{it:princ-series-2} follows similarly from \cite[Lemma~3.2]{BP}, and part \ref{it:princ-series-3} follows from \cite[Cor.\ 4.11]{BP}.
\end{proof}

\begin{prop}\label{prop:K-rep-by-ind}
  Fix $\lambda \in X^*(\un{T})$. Suppose that integers $B_i \in \ZZ_{\ge 0}$ and signs $\eps_i \in \{\pm1\}$ \emph{(}$0\leq i\leq f-1$\emph{)} satisfy the following conditions:
  \begin{enumerate}
  \item $B_i \equiv \frac{1-\eps_{i-1}}2 \pmod 2$;
  \item\label{item:1} if $\eps_i = -1$, then $B_i \le \ang{\lambda,\alpha_i^\vee} \le p-2-\frac{1+\eps_{i-1}}2$;
  \item\label{item:2} if $\eps_i = 1$, then $B_i \le p-2-\ang{\lambda,\alpha_i^\vee} \le p-2-\frac{1+\eps_{i-1}}2$.
  \end{enumerate}
  Then there exists a multiplicity-free representation $V$
  of $K/Z_1$ with Jordan--H\"older constituents $\sigma_{\un{a}} \defeq F(\t_\lambda(\sum \eps_i a_i \ovl{\eta}_i))$, where $0 \le a_i \le B_i$
  and whose submodule structure is determined as follows: the unique subrepresentation with cosocle $\sigma_{\un{a}}$ has constituents
  $\sigma_{\un{b}}$ for all $\un{b}$ such that $0 \le b_i \le a_i$ for all $i$. In particular, the socle of $V$ is isomorphic to $F(\lambda)$.
\end{prop}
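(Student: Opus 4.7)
My plan is to construct $V$ explicitly as a subquotient of an induced representation from the Iwahori, combined with an induction on the ``size'' of the box $\sum_i B_i$. The signs $\eps_i$ and the parity condition~(i) will be handled by a preliminary reduction, while conditions~(ii) and~(iii) guarantee that every prospective constituent $\sigma_{\un a}$ is a well-defined regular Serre weight lying in $\un{C}_0$ (apply Remark~\ref{rk:t_lambda}\ref{it:t_lambda:4} to $\lambda + \sum_i \eps_i a_i \eta_i$).

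First I reduce to the case where $\eps_i = -1$ for every $i$. Using Remark~\ref{rk:t_lambda}\ref{it:t_lambda:3} and the change-of-origin formula of Lemma~\ref{lm:change-origin}, the constituent $F(\t_\lambda(\sum_i \eps_i a_i \ovl{\eta}_i))$ can be rewritten as $F(\t_{\lambda'}(-\sum_i a_i \ovl{\eta}_i))$ for a suitable translate $\lambda' = \lambda + \sum_{i \,:\, \eps_i = 1}(\text{shift in }\Lambda_R)$, and the conditions (i)--(iii) on $(\lambda,\eps_i,B_i)$ translate into a single bound $B_i \le \ang{\lambda', \alpha_i^\vee} \le p - 2 - \frac{1+\eps'_{i-1}}2$ with all $\eps'_i = -1$. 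So I assume henceforth $\eps_i = -1$ for all $i$, whence condition~(i) reads: every $B_i$ is odd.

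Next, I argue by induction on $N = \sum_i B_i$ (necessarily a sum of odd non-negative integers). The base case $N = 0$ is trivial ($V = F(\lambda)$). For the ``base case of size one in each direction,'' i.e.\ when each $B_i \in \{0,1\}$, I set $V \defeq \Ind_I^K \chi_\lambda^s$ (restricted to those embeddings where $B_i = 1$, viewed via a parabolic subgroup adapted to the zero-coordinates). By Lemma~\ref{lm:princ-series}\ref{it:princ-series-1}, this is multiplicity-free with constituents exactly $\{F(\t_\lambda(-\ovl\eta_J))\}_{J \subseteq \cJ}$, and the classical submodule structure of principal series for $\GL_2(k)$ (cf.\ \cite[Cor.\ 3.12]{BP}) yields the hypercube poset structure: the unique subrepresentation with cosocle $F(\t_\lambda(-\ovl\eta_J))$ has constituents $F(\t_\lambda(-\ovl\eta_{J'}))$ for $J' \subseteq J$. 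For the inductive step with some $B_{i_0} \ge 2$, I shrink the box in the $i_0$-direction by two, preserving parity. By the inductive hypothesis applied at the shifted weight $\lambda + 2\eps_{i_0}\eta_{i_0}$ (and the remaining box), I obtain a representation $V''$ with constituents indexed by $0 \le a_i \le B_i$, $a_{i_0} \ge 2$, after a translation of indices via Remark~\ref{rk:t_lambda}\ref{it:t_lambda:3}. Similarly, the inductive hypothesis applied to $B_{i_0} = 1$ in the $i_0$-direction (and the other $B_i$'s unchanged) gives a ``bottom slab'' $V'$ with constituents $a_{i_0} \in \{0,1\}$. I then glue $V'$ and $V''$ along their shared ``interface'' constituents to produce $V$ as the desired extension.

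The main obstacle is verifying the existence and uniqueness of these glueing extensions in Step~3. Concretely, one must produce $V$ as a nonsplit extension realizing the prescribed adjacency pattern between the ``bottom slab'' and the ``upper box,'' and show that these extensions organize into the full box submodule structure. The key inputs are Lemma~\ref{lm:ext1}, which ensures $\dim_{\F}\Ext^1_{\GL_2(k)}(\sigma_{\un a},\sigma_{\un b}) = 1$ precisely for adjacent $\un a, \un b$, and a compatibility check that all required extensions lift to $K/Z_1$ (which they do since the constituents are Serre weights, so that $\Ext^1$ in the category of $K/Z_1$-representations agrees with that in $\GL_2(k)$-representations under the genericity hypotheses). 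A careful bookkeeping of which adjacencies give nontrivial extensions, governed by Lemma~\ref{lm:ext1} and the geometry of the extension graph $\Lambda^\lambda_W$, then forces the submodule structure to be exactly the one stated. Multiplicity-freeness follows from multiplicity-freeness at each inductive step, since distinct lattice points of the box yield distinct Serre weights by the injectivity of $\t_\lambda$.
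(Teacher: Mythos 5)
Your proposal has the right raw materials (Lemma~\ref{lm:princ-series}\ref{it:princ-series-1}, Lemma~\ref{lm:ext1}, the hypercube structure of principal series for the base case) but it diverges from the paper at two points, and at each one there is a genuine gap.

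\textbf{The reduction to $\eps_i=-1$ is incorrect as stated.} You claim that a translate $\lambda' = \lambda + (\text{shift in }\Lambda_R)$ converts $F(\t_\lambda(\sum_i \eps_i a_i \ovl{\eta}_i))$ into $F(\t_{\lambda'}(-\sum_i a_i \ovl\eta_i))$. But a $\Lambda_R$-shift preserves signs: by Remark~\ref{rk:t_lambda}\ref{it:t_lambda:3}, $\t_{\lambda+\nu}(\omega) = \t_\lambda(\omega+\nu)$ for $\nu\in\Lambda_R$, which is an affine translation of the index set and cannot flip a $+1$ direction to a $-1$ direction. Flipping requires a change-of-origin by a vector $\omega$ with $w_\omega \ne 1$, i.e.\ $\omega \notin \Lambda_R$ (cf.~Remark~\ref{rk:graph-auto}). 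The correct reduction — which is what the paper does — is to change origin to $\mu = \t_\lambda(w_{0,J}(\ovl\eta_J))$ where $J = \{i : \eps_{i-1} = 1\}$; this yields $\t_\lambda(\sum \eps_i a_i \ovl\eta_i) = \t_\mu(-\sum(a_i + \delta_J(i))\ovl\eta_i)$, so the box \emph{grows} (to $B_i + \delta_J(i)$) and the socle $F(\lambda)$ sits at the interior point $\ovl\eta_J$ rather than at the corner. One then has to build the larger $V'$ with socle $F(\mu)$ and take the unique quotient with socle $F(\lambda)$. Your formula misses both the enlargement of the box and the subsequent quotient.

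\textbf{The inductive gluing step is not a proof.} After producing the ``bottom slab'' $V'$ (constituents with $a_{i_0} \in \{0,1\}$) and the ``upper box'' $V''$ (constituents with $a_{i_0} \ge 2$), you assert that these can be glued into a $V$ with exactly the prescribed hypercube submodule lattice, and you correctly identify this as ``the main obstacle,'' but you never overcome it. Lemma~\ref{lm:ext1} computes $\Ext^1$ between \emph{irreducible} constituents; it does not by itself produce, or ensure uniqueness of, an extension $0 \to V' \to V \to V'' \to 0$ realizing precisely the hypercube poset. There are potential obstruction and rigidity issues (e.g.\ multiple nonisomorphic extensions of $V''$ by $V'$ compatible with all prescribed $\Ext^1$'s at the socle/cosocle level, or no extension at all). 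The paper avoids this entirely by constructing $V$ in one step as $\Ind_I^K W$, where $W \subset \cJ_\chi$ is picked out via Corollary~\ref{cor:iwahori-reps}; the submodule structure is then read off from the $I$-filtration of $\Ind_I^K W$ by principal series, together with Lemma~\ref{lm:ext1} and, in the case $f\ge 2$, \cite[Lemme 2.12(i)]{yongquan-algebra}. This global construction from the Iwahori is the content that your inductive gluing would have to replace, and it is not present in your argument.
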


\begin{proof}
  As a first step we consider the case where $\eps_i = -1$ for all $i$. Let $b_i \defeq \frac{B_i-1}2 \in \ZZ_{\ge 0}$ for $0\leq i\leq f-1$. Note that $\t_\lambda(-\sum_i a_i\ovl{\eta}_i)\in\un{C_0}$ for all $0\leq a_i\leq B_i$ is \emph{equivalent} to condition \ref{item:1} (cf.\ Remark \ref{rk:t_lambda}\ref{it:t_lambda:2}). Let $\chi \defeq \chi_\lambda$.
  Corollary~\ref{cor:iwahori-reps} gives us a representation $W \subset \cJ_\chi$ of $I$ with constituents
  $\chi^s \alpha_0^{j}$, where $0 \le j_i \le b_i$ for all $i$, and such that the unique subrepresentation of $W$ with cosocle
  $\chi^s \alpha_0^{j}$ has constituents $\chi^s \alpha_0^{\ell}$, where $0 \le \ell_i \le j_i$ for all $i$.  Let
  $V \defeq \Ind_I^{K} W$. By Lemma~\ref{lm:princ-series} and Remark \ref{rk:t_lambda}\ref{it:t_lambda:3}, this representation is multiplicity-free with constituents
  $F(\t_\lambda(-\sum c_i \ovl{\eta}_i))$, where $0 \le c_i \le 2b_i+1 = B_i$ for all $i$.

  To determine the submodule structure, by Lemma~\ref{lm:ext1} it is enough to show that for any $(c_i)_i$ as above and any $j$ such
  that $c_j < 2b_j+1$ there exists a length 2 subquotient with socle $F(\t_\lambda(-\sum c_i \ovl{\eta}_i))$ and cosocle
  $F(\t_\lambda(-\ovl{\eta}_j-\sum c_i \ovl{\eta}_i))$.
  To see this, write $c_i = 2d_i+r_i$ with $0 \le r_i \le 1$. Observe that
\[F(\t_\lambda(-\sum c_i \ovl{\eta}_i)) = F(\t_\lambda(-\sum r_i \ovl{\eta}_i-\sum d_i \alpha_i)) = F(\t_{\lambda-\sum
    d_i \alpha_i}(-\sum r_i \ovl{\eta}_i))\]
  by applying Remark~\ref{rk:t_lambda}\ref{it:t_lambda:3}. By Lemma~\ref{lm:princ-series} this is a
  constituent of $\Ind_B^G \chi'^s$, where $\chi'^s = \chi^s_{\lambda-\sum d_i \alpha_i} = \chi^s_\lambda \alpha_0^{\sum d_ip^{i}}$.

  If $r_j = 0$, then $F(\t_\lambda(-\ovl{\eta}_j-\sum c_i \ovl{\eta}_i))$ is a constituent of $\Ind_{I}^{K} \chi'^s$ as well, 
  and we are done by Lemma~\ref{lm:princ-series}, as $V$ admits $\Ind_I^{K} \chi'^s$ as subquotient.

  If $r_j = 1$, then $F(\t_\lambda(-\ovl{\eta}_j-\sum c_i \ovl{\eta}_i))$ is a constituent of
  $\Ind_{I}^{K} \chi'^s \alpha_0^{p^{j}}$. Letting the other $r_i$ vary in $\set{0,1}$, we need to check the existence of the $2^{f-1}$ nonsplit
  extensions inside $V$ between constituents of $\Ind_{I}^{K} \chi'^s \alpha_0^{p^{j}}$ and $\Ind_{I}^K \chi'^s$ given by
  Lemma~\ref{lm:ext1}. When $f = 1$ this is obvious, as we can compute the cosocle of $\Ind_{I}^K (E_{\chi'^s,\chi'^s \alpha_0^{p^{j}}})$
  by Frobenius reciprocity (cf.\ Lemma~\ref{lemma-Echi'=minus}). When $f \ge 2$ then \cite[Lemme 2.12(i)]{yongquan-algebra} confirms there are $2^{f-1}$ nonsplit extensions, as required
  (in the notation of that reference the condition is $J(\lambda) = J(\theta) \sqcup \{j-1\}$).

  Finally we treat the general case. Let $J \defeq \{ 0\leq i \leq f-1 : \eps_{i-1} = 1\}$. Set $\mu=\t_\lambda(w_{0,J}(\ovl{\eta}_J))$. Using Lemma~\ref{lm:change-origin} 
  and Remark~\ref{rk:t_lambda}\ref{it:t_lambda:1} we compute
  $\t_\lambda(\sum \eps_i a_i \ovl{\eta}_i) = \t_\mu(-\sum (a_i+\delta_J(i))\ovl{\eta}_i)$ for integers $a_i$.
  Note that $\delta_J(i) = \frac{1+\eps_{i-1}}2$.
  
  We apply the first step of the proof with the weight $\mu$, the bounds $B_i+\delta_{J}(i)$ and all signs $-1$. We obtain a representation $V'$ with socle $F(\mu)$ satisfying the desired hypotheses with signs $-1$ for all $i$ and $B_i+\delta_J(i)$ in place of $B_i$. We note that its unique quotient with socle $F(\lambda)$ has the desired properties with signs $\eps_i$ and bounds $B_i$. We just have to check that we can apply the first step in this case. Namely it suffices to check that $\t_{\mu}(-\sum a'_i\ovl{\eta}_i) \in \un{C_0}$ for $0 \le a'_i \le B_i+\delta_{J}(i)$, noting that
  $B_i+\delta_{J}(i) = B_i+\frac{1+\eps_{i-1}}2$ is odd for all $i$. Equivalently, we need that
  $\t_\lambda(\sum \eps_i a_i \ovl{\eta}_i) \in \un{C_0}$ for $-\delta_{J}(i) \le a_i \le B_i$, i.e.\ 
  $0 \le \ang{\lambda,\alpha_i^\vee}+\eps_i a_i \le p-2$ for $-\delta_{J}(i) \le a_i \le B_i$ and all $i$.
  This is equivalent to conditions (ii) and (iii) that we assumed.
\end{proof}

Assume that $\lambda$ is $1$-deep in alcove $\un{C}_0$, i.e.\ $1\leq\ang{\lambda,\alpha_i^\vee}\leq p-3$ for all $i$. Let $V$ be the representation of Proposition \ref{prop:K-rep-by-ind} with $B_i\in\set{0,1}$ for all $i$. Let $\un{a}$ be such that $0 \le a_i \le B_i$ for all $i$. Then the subrepresentation of $V$ with cosocle $\sigma_{\un{a}}$ of Proposition \ref{prop:K-rep-by-ind} is isomorphic to the representation $I(F(\lambda),\sigma_{\un{a}})$ of \cite[Cor.~3.12]{BP}.

\begin{lem}\label{lm:K1-invt}
  Suppose that $V$ is a finite-dimensional smooth representation of $K$ that has irreducible $K$-socle $\sigma = F(\lambda)$
  with $2 < \ang{\lambda,\alpha_i^\vee} < p-3$ for all $i$.
  If $[V:\sigma] = 1$ and all constituents of $V$ occur in $\Inj_{\GL_2(k)} \sigma$, then $V$ is $K_1$-invariant.
\end{lem}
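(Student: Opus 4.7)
I propose to prove $V = V^{K_1}$ in two stages: first reducing to the case where $Z_1$ acts trivially on $V$, then obtaining a contradiction via an inflation--restriction argument combined with a composition-factor computation.

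\textbf{Stage 1 (triviality of $Z_1$).} For any $z \in Z_1$, the operator $z - 1$ is a $K$-equivariant endomorphism of $V$ (since $Z_1$ is central in $K$) and is nilpotent (as it vanishes on each Serre weight composition factor). If $z - 1 \neq 0$, then $(z-1)V$ is a nonzero $K$-subrepresentation of $V$ contained in $\ker(z-1)$, whose socle must equal $\sigma$; combined with $\sigma \subseteq \ker(z-1)$ and $(z-1)V \cong V/\ker(z-1)$, this yields $[V:\sigma] \geq 2$, contradicting the hypothesis. Hence $V$ is naturally a smooth $K/Z_1$-representation.

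\textbf{Stage 2 (reduction to an extension).} Let $V^{K_1} \subseteq V$ denote the largest $K_1$-invariant subrepresentation; it contains $\sigma$, has $K$-socle equal to $\sigma$, and thus embeds as a $\GL_2(k)$-representation into $\Inj_{\GL_2(k)}\sigma$. Suppose for contradiction $V^{K_1} \subsetneq V$, and choose a simple subrepresentation $\tau$ of $V/V^{K_1}$: then $\tau$ is a Serre weight in $\JH(\Inj_{\GL_2(k)}\sigma)$, and $[V/V^{K_1}:\sigma] = [V:\sigma] - [V^{K_1}:\sigma] = 0$ forces $\tau \neq \sigma$. The preimage $\widetilde\tau \subseteq V$ is a non-$K_1$-trivial extension $0 \to V^{K_1} \to \widetilde\tau \to \tau \to 0$, and I aim to contradict its existence.

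\textbf{Stage 3 (the key vanishing).} The plan is to apply inflation--restriction for $K_1/Z_1 \triangleleft K/Z_1$ with quotient $\GL_2(k)$:
\[
0 \to \Ext^1_{\GL_2(k)}(\tau, V^{K_1}) \to \Ext^1_{K/Z_1}(\tau, V^{K_1}) \xrightarrow{\delta} \Hom_{\GL_2(k)}(\tau, H^1(K_1/Z_1, V^{K_1})),
\]
and show $\delta([\widetilde\tau]) = 0$. Since $p > 2$ and $K_1/Z_1$ acts trivially on $V^{K_1}$, the Frattini quotient of $K_1/Z_1$ base-changed to $\F$ is isomorphic to $\bigoplus_{j=0}^{f-1} \mathrm{ad}^{(j)}$ as $\GL_2(k)$-representations (with $\mathrm{ad}^{(j)} \defeq F((1,-1))^{(j)}$ self-dual), so by adjunction the target of $\delta$ identifies with $\bigoplus_j \Hom_{\GL_2(k)}(\tau \otimes \mathrm{ad}^{(j)}, V^{K_1})$. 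Each summand embeds into $[\tau \otimes \mathrm{ad}^{(j)} : \sigma]$ via injectivity of $\Inj_{\GL_2(k)}\sigma$. The crucial step is that, by Steinberg tensor product combined with Clebsch--Gordan for $\mathrm{Sym}^r \otimes \mathrm{Sym}^2$ (valid in the deep regime), the composition factors of $\tau \otimes \mathrm{ad}^{(j)}$ lie among $\{F(\mu_\tau + \epsilon\alpha_j) : \epsilon \in \{-1, 0, 1\}\}$, so $\sigma$ can appear only if $\tau \in \{\sigma, F(\lambda \pm \alpha_j)\}$; the former is excluded, and the latter weights correspond to extension-graph parameter $\pm 2\ovl{\eta}_j$ at base point $\lambda$, outside the range $\{0, \pm 1\}^{\cJ}$ of $\JH(\Inj_{\GL_2(k)}\sigma)$ given by Lemma~\ref{lm:princ-series}(ii). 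This will yield the desired contradiction. The most delicate point will be verifying the Clebsch--Gordan decomposition uniformly: one must use formula~\eqref{eq:expl:tmu} to check that $\langle\mu_\tau, \alpha_j^\vee\rangle \in [1, p-3]$ for every constituent of $V^{K_1}$ under the deepness $3 \leq \langle\lambda, \alpha_i^\vee\rangle \leq p-4$, and to handle the boundary case where this pairing equals $1$ (yielding only two composition factors, neither of which equals $\sigma$).
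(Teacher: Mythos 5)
Your plan is sound and, modulo two minor wording issues noted below, it gives a correct proof; but it takes a genuinely different route from the paper's. The paper argues by reduction to irreducible cosocle, induction on the length, and the observation that $\mathfrak{m}_{K_1}^2$ kills $V$ once $\rad_K V$ is $K_1$-invariant; it then quotes \cite[Thm.~2.23]{HuWang2} to conclude. Your argument, by contrast, is self-contained (within the paper's toolbox): you bootstrap from the multiplicity-one hypothesis to $Z_1$-triviality, set up the five-term inflation--restriction sequence for $K_1/Z_1 \triangleleft K/Z_1$ with the identification $H^1(K_1/Z_1,\F) \cong \bigoplus_j F(\alpha_j)$ from \cite[Prop.~5.1]{BP}, and kill the transgression target by a Clebsch--Gordan computation combined with the extension-graph description of $\JH(\Inj_{\GL_2(k)}\sigma)$ from Lemma~\ref{lm:princ-series}\ref{it:princ-series-2}. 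In extension-graph coordinates based at $\lambda$ the constituents of $\tau\otimes\mathrm{ad}^{(j)}$ sit at $\omega_\tau$ and $\omega_\tau \pm 2\ovl\eta_j$ (where $\tau = F(\t_\lambda(\omega_\tau))$, $\omega_\tau \in \{0,\pm1\}^\cJ \setminus\{0\}$), none of which is $0$, so $[\tau\otimes\mathrm{ad}^{(j)}:\sigma]=0$ and the transgression target $\bigoplus_j\Hom_{\GL_2(k)}(\tau\otimes\mathrm{ad}^{(j)},V^{K_1})$ vanishes — forcing the class $[\widetilde\tau]$ to be inflated, i.e.\ $K_1$-trivial. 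What your approach buys is independence from the external reference \cite{HuWang2}, at the cost of needing the inflation--restriction machinery and the explicit Clebsch--Gordan bookkeeping; the paper's approach is shorter but delegates the work.

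The two minor glitches: in Stage~1, $(z-1)V\subseteq\ker(z-1)$ requires $(z-1)^2=0$, which you haven't established; you should instead argue directly via $(z-1)V\cong V/\ker(z-1)$, $\sigma\subseteq\ker(z-1)$, and $[V:\sigma]=1$ forcing $[(z-1)V:\sigma]=0$, contradicting $(z-1)V\neq0$ having socle $\sigma$. In Stage~3 you write ``for every constituent of $V^{K_1}$'' where you mean ``for every constituent $\tau$ of $\Inj_{\GL_2(k)}\sigma$ other than $\sigma$'' (or equivalently for every constituent of $V/V^{K_1}$, using the hypothesis that all constituents of $V$ lie in $\Inj_{\GL_2(k)}\sigma$). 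Also note that Stage~1 is actually optional: running inflation--restriction with $K_1\triangleleft K$ instead of $K_1/Z_1\triangleleft K/Z_1$ adds trivial summands $\mathrm{triv}^{(j)}$ to $H^1(K_1,\F)$, but the resulting extra terms $\Hom_{\GL_2(k)}(\tau,V^{K_1})$ vanish anyway since $\soc_K V^{K_1}=\sigma\neq\tau$.
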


\begin{proof}
  By writing $V$ as a quotient of $\Proj_K(\cosoc_K V)$ and decomposing $\cosoc_K V$ as a direct sum of irreducible representations,
  we see that $V$ is the sum of all subrepresentations with irreducible cosocle. We may thus assume that $V$ itself has irreducible cosocle $\tau$,
  and we argue by induction on the length $\ell(V)$ of $V$. If $\ell(V) = 1$ there is nothing to show. By induction, $\rad_K V$ is $K_1$-invariant, so $V[\mathfrak{m}_{K_1}^2] = V$.
  By \cite[Thm.\ 2.23]{HuWang2} we know that $V$ is $K_1$-invariant.
\end{proof}

\begin{prop}\label{prop:J-fil}
  Fix $\lambda \in X^*(\un{T})$. Suppose that integers $B_i \in \ZZ_{\ge 0}$ and signs $\eps_i \in \{\pm1\}$ \emph{(}$0\leq i\leq f-1$\emph{)} satisfy the following conditions:
  \begin{enumerate}
  \item $B_i \equiv \frac{1-\eps_{i-1}}2 \pmod 2$;
  \item\label{item:J1} if $\eps_i = -1$, then $3+2\lfloor B_i/2\rfloor \le \ang{\lambda,\alpha_i^\vee} \le p-4$;
  \item\label{item:J2} if $\eps_i = 1$, then $3 \le \ang{\lambda,\alpha_i^\vee} \le p-4-2\lfloor B_i/2\rfloor$.
  \end{enumerate}
  Let $V$ be the $K$-representation defined by this choice of $\lambda$, $B_i$, $\eps_i$ in Proposition~\ref{prop:K-rep-by-ind}.

  Then for $0 \le n-1 \le \sum \lfloor B_i/2\rfloor$ we have that $V[\mathfrak{m}_{K_1}^n]$ is the unique subrepresentation of $V$ with cosocle $\bigoplus \sigma_{\un{a}}$, where the sum runs over all $\un{a}$ such that $0 \le a_i \le B_i$ and
  \begin{enumerate}
  \item $a_i$ is odd or $a_i = B_i$,
  \item $\sum \lfloor a_i/2\rfloor = n-1$.
  \end{enumerate}
\end{prop}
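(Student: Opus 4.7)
I would argue by induction on $n\ge 1$, after first reinterpreting the indexing set. There is a bijection between $A_n$ and $M_n\defeq\{\un m\in\ZZ_{\ge 0}^f : \sum m_i=n-1,\ m_i\le\lfloor B_i/2\rfloor\}$, sending $\un m\mapsto\un a(\un m)$ with $a_i\defeq\min(2m_i+1,B_i)$. Using the identity $\ovl\alpha_i=2\ovl\eta_i$ in $\Lambda_W$ together with Remark~\ref{rk:t_lambda}\ref{it:t_lambda:3}, any $\un b$ of the form $b_i=2m_i+r_i$, $r_i\in\{0,1\}$, satisfies $\sigma_{\un b}=F(\t_{\lambda_{\un m}}(\sum_i\eps_i r_i\ovl\eta_i))$ with $\lambda_{\un m}\defeq\lambda+\sum_i\eps_i m_i\alpha_i$, so $\sigma_{\un b}\in\JH(\Inj_{\GL_2(k)}F(\lambda_{\un m}))$ by Lemma~\ref{lm:princ-series}\ref{it:princ-series-2}. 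The hypotheses (ii), (iii) on $(\lambda,B_i,\eps_i)$ are exactly what is needed to ensure that each $\lambda_{\un m}$ for $\un m\in M_n$ (with $n-1\le\sum\lfloor B_i/2\rfloor$) lies $3$-deep in alcove $\un C_0$, so that Lemma~\ref{lm:K1-invt} applies uniformly with $\lambda$ replaced by any such $\lambda_{\un m}$.

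For the base case $n=1$, write $\un a_1\defeq\un a(\un 0)$. The subrepresentation $W_{\un a_1}$ of $V$ with cosocle $\sigma_{\un a_1}$ (in the sense of Proposition~\ref{prop:K-rep-by-ind}) has irreducible socle $F(\lambda)$ of multiplicity one and all of its constituents $\sigma_{\un b}$ satisfy $b_i\in\{0,1\}$, so they all lie in $\JH(\Inj_{\GL_2(k)}F(\lambda))$; Lemma~\ref{lm:K1-invt} then gives that $W_{\un a_1}$ is $K_1$-invariant, so $W_{\un a_1}\subseteq V_1$. Conversely, any $K_1$-invariant subrepresentation $W\subseteq V$ has $\soc W\subseteq\soc V=F(\lambda)$, so every constituent of $W$ is $K_1$-invariant and thus lies in $\Inj_{\GL_2(k)}F(\lambda)$, i.e.\ is some $\sigma_{\un b}$ with $b_i\in\{0,1\}$; using the submodule lattice of $V$ recorded in Proposition~\ref{prop:K-rep-by-ind} (every subrepresentation is determined by a downward-closed subset of indices), this forces $W\subseteq W_{\un a_1}$.

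For the inductive step, assuming the claim for $V_{n-1}$ I would compute $V_n/V_{n-1}=(V/V_{n-1})^{K_1}$. For each $\un m\in M_n$, let $V^{(\un m)}\subseteq V$ be the subrepresentation with cosocle $\sigma_{\un a(\un m)}$; by the inductive identification of $V_{n-1}$, the image of $V^{(\un m)}$ in $V/V_{n-1}$ has precisely the constituents $\sigma_{\un b}$ with $\lfloor b_i/2\rfloor=m_i$ and $r_i\in\{0,1\}$, i.e.\ the copy of $I(F(\lambda_{\un m}),\sigma_{\un a(\un m)})$ sitting in $\Inj_{\GL_2(k)}F(\lambda_{\un m})$ described by Lemma~\ref{lm:princ-series}\ref{it:princ-series-3}. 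Applying Lemma~\ref{lm:K1-invt} with base point $\lambda_{\un m}$ shows this image is $K_1$-invariant, and summing over $\un m\in M_n$ produces the candidate subrepresentation of the proposition inside $(V/V_{n-1})^{K_1}$. For the reverse inclusion, any $K_1$-invariant subrepresentation of $V/V_{n-1}$ has every simple constituent lying in the injective envelope of some constituent of its socle; the constraints from the inductive hypothesis and the structure of $V$ then force the set of indices of its constituents to be contained in the downward closure of $\bigcup_{\un m\in M_n}\{\un b:\un b\le\un a(\un m)\}$.

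\textbf{Main obstacle.} The hardest step will be verifying that the cosocle of the candidate subrepresentation is truly $\bigoplus_{\un m\in M_n}\sigma_{\un a(\un m)}$ (that is, distinct $\un m\in M_n$ contribute independent summands, and each summand is genuinely in the cosocle rather than further up the socle filtration). This amounts to a rigidity statement about the $K$-submodule structure of $V$, which I would handle by an Ext-computation using Lemma~\ref{lm:ext1}: two vertices $\un a(\un m),\un a(\un m')$ with $\un m\neq \un m'$ in $M_n$ cannot be adjacent in the extension graph (since their coordinates differ in two positions by a step of size $\ge 2$), so there is no nontrivial extension between $\sigma_{\un a(\un m)}$ and $\sigma_{\un a(\un m')}$ inside $V/V_{n-1}$, guaranteeing they are distinct cosocle summands. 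The deepness hypotheses (ii)--(iii) are crucial at this step so that all the shifted base points $\lambda_{\un m}$ remain sufficiently far from the walls of $\un C_0$ and all intermediate weights lie in $X_{\reg}(\un T)$, keeping the combinatorial dictionary of \S\ref{sec:ext:graph} intact.
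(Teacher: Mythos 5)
Your argument is correct and follows essentially the same route as the paper: induction on $n$, applying Lemma~\ref{lm:K1-invt} to the summands of $V_n/V_{n-1}$ (after shifting the base point to $\lambda_{\un m}$), and bounding $(V/V_{n-1})^{K_1}$ by embedding it into $\Inj_{\GL_2(k)}(\soc_K(V/V_{n-1}))$. Your \textbf{Main obstacle} is, however, a non-issue and no $\Ext$-computation is needed: because $V$ is multiplicity-free and its submodule lattice is exactly the lattice of downward-closed index sets (Proposition~\ref{prop:K-rep-by-ind}), the images of the $V^{(\un m)}$ in $V/V_{n-1}$ have pairwise disjoint Jordan--H\"older constituents (their constituents satisfy $\lfloor b_i/2\rfloor = m_i$ for all $i$, and these conditions are mutually exclusive for distinct $\un m,\un m' \in M_n$), so the sum is automatically direct with cosocle $\bigoplus_{\un m}\sigma_{\un a(\un m)}$.
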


\begin{proof}
  We proceed by induction on $n \ge 1$ and denote by $V_n$ the unique subrepresentation in the statement. For convenience let $V_0 = 0$. We need to show that $V_n/V_{n-1} = (V/V_{n-1})^{K_1}$.
  The constituents of $V_n/V_{n-1}$ (resp.\ $V/V_{n-1}$) are all Serre weights $\sigma_{\un{a}}$ with $0 \le a_i \le B_i$ and $\sum \lfloor a_i/2\rfloor = n-1$
  (resp.\ $\sum \lfloor a_i/2\rfloor \ge n-1$).
  Using the submodule structure of $V$ given by Proposition \ref{prop:K-rep-by-ind}, we see that $V_n/V_{n-1}$ is a direct sum of indecomposable representations $W_{\un{a}}$, where the index set is the
  same as in the statement of the proposition and the constituents of $W_{\un{a}}$ are all $\sigma_{\un{b}}$ with $0 \le b_i \le B_i$ and
  $\lfloor b_i/2\rfloor = \lfloor a_i/2\rfloor$ for all $i$ (and the submodule structure is described by the usual
  partial order). Note that $\soc_K W_{\un{a}} \cong \sigma_{\un{b}}$, where $b_i = 2\lfloor a_i/2\rfloor$.

  By Lemma~\ref{lm:K1-invt}, $V_n/V_{n-1}$ is $K_1$-invariant (the given bounds guarantee that the lemma applies by Remark \ref{rk:t_lambda}\ref{it:t_lambda:2},
  see also Lemma~\ref{lm:princ-series}\ref{it:princ-series-2}). On the
  other hand, $(V/V_{n-1})^{K_1}$ has to inject into the injective envelope $\Inj_{\GL_2(k)} (\soc_{K} (V/V_{n-1}))$. By 
  Lemma~\ref{lm:princ-series}\ref{it:princ-series-2} we deduce that $(V/V_{n-1})^{K_1} \subset V_n/V_{n-1}$.
  (Note that our genericity bounds are stronger.)
\end{proof}

\subsection{A result on maximal representations of \texorpdfstring{$K$}{K} with prescribed socle}

In this section, we prove a structure result for certain representations of $K$ killed by $\mathfrak{m}_{K_1}^2$.

We begin with some preliminary lemmas concerning Jordan--H\"older factors of subrepresentations of some parabolically induced representations. Recall from~\eqref{eq:10} the representation $E_{\chi',\chi}$ for two characters $\chi,\chi'$ of $I$ such that $\Ext^1_{I/Z_1}(\chi,\chi')\neq0$.

\begin{lem} \label{lemma-Echi'=minus}
Assume $\chi'=\chi\alpha_i^{-1}$ for some $0\leq i\leq f-1$. The cosocle of $\Ind_I^{K}E_{\chi',\chi}$ is equal to the cosocle of $\Ind_I^{K}\chi$.
\end{lem}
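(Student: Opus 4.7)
The strategy is to compute the multiplicity of each irreducible Serre weight $\sigma$ in both cosocles via Frobenius reciprocity, and to reduce the statement to the non-vanishing of a pushout class in $\Ext^1_I$.

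By Frobenius reciprocity, for any $I$-representation $V$ one has $\dim\Hom_K(\Ind_I^K V, \sigma) = \dim\Hom_I(V, \sigma|_I)$. Since $\sigma$ is a Serre weight, $\sigma^{I_1}$ is one-dimensional of some character $\chi_\sigma$ of $I$, and any $I$-equivariant map from a character $\psi$ of $I$ into $\sigma|_I$ factors through $\sigma^{I_1}$; hence $\dim\Hom_I(\psi, \sigma|_I)$ equals $1$ if $\psi = \chi_\sigma$ and $0$ otherwise. In particular, $\cosoc(\Ind_I^K \chi)$ is the multiplicity-one direct sum of all Serre weights $\sigma$ with $\chi_\sigma = \chi$. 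Applying $\Hom_I(-, \sigma|_I)$ to $0 \ra \chi' \ra E_{\chi',\chi} \ra \chi \ra 0$ gives
\[
  0 \ra \Hom_I(\chi,\sigma) \ra \Hom_I(E_{\chi',\chi},\sigma) \ra \Hom_I(\chi',\sigma) \xrightarrow{\,\delta\,} \Ext^1_I(\chi,\sigma).
\]
When $\chi_\sigma = \chi$, the term $\Hom_I(\chi',\sigma)$ vanishes and we obtain multiplicity one for $\sigma$ on both cosocles, while if $\chi_\sigma \notin \{\chi,\chi'\}$ all $\Hom$ terms vanish. The only remaining case is $\chi_\sigma = \chi'$, where one must show that $\delta$ is injective, i.e., that the pushout class $f_*[E_{\chi',\chi}] \in \Ext^1_I(\chi,\sigma)$ is nonzero for the unique (up to scalar) embedding $f: \chi' \hookrightarrow \sigma$.

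Assume for contradiction that this pushout splits, so the $I$-representation
\[
  E' \defeq (\sigma \oplus E_{\chi',\chi})/\{(f(c),-c) : c \in \chi'\}
\]
is isomorphic to $\sigma \oplus \chi$. The natural map $E_{\chi',\chi} \ra E'$ given by $e \mapsto [(0,e)]$ is injective (because $f$ is), so $E_{\chi',\chi}$ embeds as an $I$-subrepresentation of $\sigma \oplus \chi$. Since $K_1 \subset I_1$ acts trivially on $\sigma$ (as $\sigma$ is inflated from $K/K_1 = \GL_2(k)$) and on the character $\chi$, it would act trivially on the direct sum and hence on $E_{\chi',\chi}$. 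By Lemma~\ref{lemma-Ext1=dim1}\ref{it1-Ext1=dim1}, however, $K_1$ acts trivially on $E_{\chi',\chi}$ if and only if $\chi' = \chi\alpha_j$ for some $0 \le j \le f-1$; the hypothesis $\chi' = \chi\alpha_i^{-1}$ rules this out, since the characters $\alpha_i^{-1}$ and $\alpha_j$ of $T(k)$ are distinct in our setting. This contradiction establishes the non-vanishing of $f_*[E_{\chi',\chi}]$.

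The main obstacle is thus isolating and proving the pushout/$K_1$-action contradiction; once this is in place, all other steps (Frobenius reciprocity for induction from $I$, the long exact sequence of $\Hom$, and the dimension count of $\sigma^{I_1}$) are formal.
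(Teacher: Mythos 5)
Your argument is correct and rests on the same key ingredient as the paper, namely the characterization in Lemma~\ref{lemma-Ext1=dim1}\ref{it1-Ext1=dim1} of when $K_1$ acts trivially on $E_{\chi',\chi}$. The paper reaches the conclusion a bit more directly — given a surjection $f:\Ind_I^K E_{\chi',\chi}\twoheadrightarrow\sigma$, Frobenius reciprocity produces a nonzero $f'\in\Hom_I(E_{\chi',\chi},\sigma|_I)$, which cannot be injective since $K_1$ acts trivially on $\sigma$ but not on $E_{\chi',\chi}$, so $f'$ factors through $\chi$ — whereas you package the same $K_1$-triviality contradiction through the long exact sequence of $\Ext$ and a pushout nonsplitting argument, but the underlying mechanism is identical.
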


\begin{proof}
Let $\sigma$ be a Serre weight and assume there exists a surjection $f : \Ind_I^{K}E_{\chi',\chi}\twoheadrightarrow \sigma$. Then Frobenius reciprocity induces a nonzero $I$-equivariant morphism $f'\in \Hom_I(E_{\chi',\chi},\sigma|_{I})$. Since $K_1$ acts trivially on $\sigma$ but not on $E_{\chi',\chi}$ (see Lemma \ref{lemma-Ext1=dim1}\ref{it1-Ext1=dim1}), $f'$ cannot be injective. In other words, $f'$ factors through $E_{\chi',\chi}\twoheadrightarrow \chi\hookrightarrow\sigma|_I$, i.e.\ $f$ factors through $\Ind_I^{K}E_{\chi',\chi}\twoheadrightarrow\Ind_I^{K}\chi$.
\end{proof}
\begin{rem}
For the explicit structure of $\Ind_I^{K}E_{\chi',\chi}$ when $\chi'=\chi\alpha_i^{-1}$ (resp.\ $\chi'=\chi\alpha_i$), see \cite[\S18]{BP} or \cite[Lemma 3.7]{HuWang2} (resp.\ \cite[Lemma 3.8]{HuWang2}). 
\end{rem}

Given $\chi$ satisfying $\chi\neq \chi^s$, we denote by $\sigma_{\chi}$ the unique Serre weight such that $I$ acts on $\sigma_{\chi}^{I_1}$ via $\chi$. Recall that in this case $\Ind_I^{K}\chi$ has irreducible cosocle $\sigma_{\chi}$ and irreducible socle $\sigma_{\chi^s}$ (see e.g.\ \cite[Thm.\ 2.4]{BP}). Given a Serre weight $\sigma$, we denote by $\chi_{\sigma}$ the  character of $I$ acting on $\sigma^{I_1}$.

 \begin{lem}\label{lemma-IndW2-multione}
Assume that $0<\ang{\lambda,\alpha_i^\vee}<p-3$ for all $0\leq i\leq f-1$.
Then the $K$-representation $\Ind_I^KE_{\chi\alpha_i,\chi}$ is multiplicity-free for any $i$.
Suppose moreover that $\chi = \chi_\lambda$ with $2<\ang{\lambda,\alpha_i^\vee}<p-3$ for all $0\leq i\leq f-1$. Then the $K$-representation $\Ind_I^KW_{\chi,2}$ is multiplicity-free, where $W_{\chi,2}$ is defined in \eqref{eq:W-chi-n}.
\end{lem}
 
\begin{proof}
This is a direct check using Remark \ref{rk:t_lambda}\ref{it:t_lambda:3} and Lemma \ref{lm:princ-series}\ref{it:princ-series-1}.
The assumption that $0<\ang{\lambda,\alpha_i^\vee}<p-3$ for all $0\leq i\leq f-1$ ensures that the hypothesis of Lemma \ref{lm:princ-series}\ref{it:princ-series-1} applies to $\Ind_I^K\chi\alpha_i$ and $\Ind_I^K\chi$.
If furthermore $\lambda$ satisfies the stronger condition $2<\ang{\lambda,\alpha_i^\vee}<p-3$ for all $0\leq i\leq f-1$ then again the hypothesis of Lemma \ref{lm:princ-series}\ref{it:princ-series-1} applies to all $\Ind_I^{K}\chi'$ with $\chi'\in\JH(W_{\chi,2})$.
\end{proof}

From now on we fix $\chi=\chi_\lambda$ with $\lambda\in X_1(\un{T})$ such that $0<\ang{\lambda,\alpha_i^\vee}<p-3$ for all $0\leq i\leq f-1$.
Let $\chi'\defeq \chi\alpha_i$. Lemma \ref{lemma-IndW2-multione} implies that $\Ind_I^K E_{\chi',\chi}$ is multiplicity-free. 

On the other hand, $K_1$ acts trivially on $\Ind_I^KE_{\chi',\chi}$ by Lemma~\ref{lemma-Ext1=dim1}\ref{it1-Ext1=dim1}.
Hence there is a unique (up to scalar) nonzero map $f : \Proj_{\GL_2(k)}\sigma_\chi\rightarrow\Ind_I^KE_{\chi',\chi}$. Observe that the composite map
\[\Proj_{\GL_2(k)}\sigma_{\chi}\xrightarrow{f}\Ind_{I}^KE_{\chi',\chi}\twoheadrightarrow \Ind_I^K\chi\]
is surjective, since it is surjective on $K$-cosocles.

\begin{lem}\label{lm:IndE}
Suppose that $\chi = \chi_\lambda$ with $0<\ang{\lambda,\alpha_i^\vee}<p-3$ for all $0\leq i\leq f-1$. %
Assume $\chi'=\chi\alpha_i$ for some $i\in\cJ$. We have
\begin{equation}\label{eq:JH-imf} \JH(\Image(f))=\JH(\Ind_I^KE_{\chi',\chi})\cap\JH(\Proj_{\GL_2(k)}\sigma_\chi).\end{equation}
\end{lem}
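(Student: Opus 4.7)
The inclusion $\JH(\Image(f)) \subseteq \JH(\Ind_I^K E_{\chi',\chi}) \cap \JH(\Proj_{\GL_2(k)}\sigma_\chi)$ is immediate: $\Image(f)$ is simultaneously a $K$-subrepresentation of $V \defeq \Ind_I^K E_{\chi',\chi}$ and a quotient of $P \defeq \Proj_{\GL_2(k)}\sigma_\chi$. For the reverse inclusion, the key observation I would establish first is that $\Image(f)$ is the unique $K$-subrepresentation of $V$ whose cosocle equals $\sigma_\chi$. Indeed, $V$ is multiplicity-free by Lemma \ref{lemma-IndW2-multione}, and the genericity hypothesis on $\lambda$ combined with Lemma \ref{lm:princ-series}\ref{it:princ-series-1} gives $\sigma_\chi \in \JH(\Ind_I^K\chi) \setminus \JH(\Ind_I^K\chi')$; hence $[V:\sigma_\chi] = 1$ and $\dim_\F \Hom_K(P,V) = 1$, so every nonzero map $P \to V$ is a scalar multiple of $f$ and any subrepresentation $W \subseteq V$ with $\cosoc W = \sigma_\chi$, being automatically a quotient of $P$, must coincide with $\Image(f)$.

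Now let $\sigma \in \JH(V) \cap \JH(P)$. If $\sigma \in \JH(\Ind_I^K\chi)$, then since $\Image(f) \twoheadrightarrow \Ind_I^K\chi$ (as noted in the text preceding the statement) and the target is multiplicity-free, $\sigma \in \JH(\Image(f))$ at once. If instead $\sigma \in \JH(\Ind_I^K\chi')$, my plan is to produce a $K$-subrepresentation $W \subseteq V$ containing $\sigma$ as a Jordan--H\"older factor and satisfying $\cosoc W = \sigma_\chi$; by the previous paragraph $W \subseteq \Image(f)$, which gives the desired containment.

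The construction of $W$ is the main technical step. Writing $\sigma_\chi = F(\mu)$, the condition $\sigma \in \JH(P) = \JH(\Inj_{\GL_2(k)}\sigma_\chi)$ translates, via Lemma \ref{lm:princ-series}\ref{it:princ-series-2}, into $\sigma = F(\mathfrak{t}_\mu(\omega))$ for some $\omega \in \Lambda_W^\mu$ with coordinates in $\{0,\pm 1\}$, while the condition $\sigma \in \JH(\Ind_I^K\chi')$ further restricts the admissible $\omega$ by Lemma \ref{lm:princ-series}\ref{it:princ-series-1}. The nonsplit extension $0 \to \chi' \to E_{\chi',\chi} \to \chi \to 0$ of $I$-representations induces, via exactness of $\Ind_I^K$ and Frobenius reciprocity, a nonsplit class in $\Ext^1_K(\Ind_I^K\chi, \Ind_I^K\chi')$ which produces specific nonsplit extensions between certain pairs of Jordan--H\"older constituents of $V$ identified by Lemma \ref{lm:ext1}. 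I would then exhibit a chain of pairwise adjacent Serre weights in $\Lambda_W^\mu$ linking $\sigma$ to $\sigma_\chi$ and remaining within $\JH(P)$, and successively invoke these nonsplit extensions inside $V$, together with Proposition \ref{prop:K-rep-by-ind} and the ``change of origin'' lemma (Lemma \ref{lm:change-origin}), to generate the required $W$. The hardest part will be the combinatorial identification of this chain and the verification of non-splitness at each step inside $V$; this relies on tracking the explicit submodule structure of Proposition \ref{prop:K-rep-by-ind} and on the interplay between the Frobenius-reciprocity description of $\Ext^1$ and the extension graph combinatorics of \S \ref{sec:some-repr-K}.
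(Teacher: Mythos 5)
Your opening observations are correct: the inclusion $\subseteq$ is immediate, and because $K_1$ acts trivially on $\Ind_I^K E_{\chi',\chi}$ (as $\chi'=\chi\alpha_i$), the induced representation is a $\GL_2(k)$-module; since it is multiplicity-free with $[\Ind_I^K E_{\chi',\chi}:\sigma_\chi]=1$, projectivity of $\Proj_{\GL_2(k)}\sigma_\chi$ gives $\dim_\F\Hom_K(\Proj_{\GL_2(k)}\sigma_\chi,\Ind_I^K E_{\chi',\chi})=1$, whence $\Image(f)$ is indeed the unique $K$-subrepresentation with cosocle $\sigma_\chi$. The case $\sigma\in\JH(\Ind_I^K\chi)$ is also disposed of correctly.

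The decisive case $\sigma\in\JH(\Ind_I^K\chi')$ is, however, left as a plan, and the plan has a genuine gap. You propose to produce the required $W$ by exhibiting a chain of adjacent Serre weights from $\sigma$ to $\sigma_\chi$ and ``successively invoking nonsplit extensions inside $\Ind_I^K E_{\chi',\chi}$'', relying on Proposition~\ref{prop:K-rep-by-ind} and Lemma~\ref{lm:change-origin}. But Proposition~\ref{prop:K-rep-by-ind} determines the submodule structure only of representations of the form $\Ind_I^K W'$ with $W'\subset\cJ_\chi$, i.e.\ with $\chi^s$ as both socle and cosocle; $E_{\chi',\chi}$ has socle $\chi'$ and cosocle $\chi$ and is not of that form, so $\Ind_I^K E_{\chi',\chi}$ is not among the representations that proposition describes. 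Moreover, knowing abstractly (Lemma~\ref{lm:ext1}) that $\Ext^1_{\GL_2(k)}$ is nonzero between adjacent weights does not tell you that the corresponding extension is realized as a subquotient of $\Ind_I^K E_{\chi',\chi}$, let alone that the extensions along a chain can be stacked into an actual subrepresentation with cosocle $\sigma_\chi$; this is exactly the nontrivial content (compare Lemma~\ref{lemma-Echi'=plus}, which pins down which extensions survive in quotients of $\Ind_I^K E_{\chi',\chi}$), and you flag it as ``the hardest part'' without carrying it out.

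The paper sidesteps all of this by a short structural argument. It first shows $\Image(f)\cap\Ind_I^K\chi'\neq 0$: otherwise $\Image(f)\hookrightarrow\Ind_I^K E_{\chi',\chi}\twoheadrightarrow\Ind_I^K\chi$ would be an isomorphism, forcing a decomposition $\Ind_I^K E_{\chi',\chi}\cong\Ind_I^K\chi\oplus\Ind_I^K\chi'$, which is impossible. Hence $\sigma_{\chi'^s}$ occurs as a subobject of $\Image(f)$, and since $\Image(f)$ is a quotient of $\Proj_{\GL_2(k)}\sigma_\chi$ containing $\sigma_{\chi'^s}$, it admits $I(\sigma_{\chi'^s},\sigma_\chi)$ as a quotient. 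The right-hand side of \eqref{eq:JH-imf} then matches $\JH(I(\sigma_{\chi'^s},\sigma_\chi))$ by a one-line coordinate computation in the extension graph via $\t_\lambda$; no extension-building is required.
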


\begin{proof}
Observe that the $K$-socle of $\Ind_I^KE_{\chi',\chi}$ is isomorphic to $\sigma_{\chi'^s}\oplus\sigma_{\chi^s}$, i.e.\ the direct sum of the socles of $\Ind_I^K\chi'$ and $\Ind_I^K\chi$. Indeed, it is clear that \[\sigma_{\chi'^s}\subset\soc_K(\Ind_I^KE_{\chi',\chi})\subset \sigma_{\chi'^s}\oplus\sigma_{\chi^s},\] 
so it suffices to prove that $\Hom_{K}(\sigma_{\chi^s},\Ind_I^KE_{\chi',\chi})\neq0$, or equivalently $\Hom_{I}(\sigma_{\chi^s}|_I,E_{\chi',\chi})\neq0$, by Frobenius reciprocity. This can be checked directly, by writing down the standard basis of $\sigma_{\chi^s}$. 

Let $V\defeq\Image(f)$. We claim that $V\cap\Ind_I^K\chi'\neq0$. Otherwise, the composite morphism $V\hookrightarrow \Ind_I^KE_{\chi',\chi}\twoheadrightarrow \Ind_I^K\chi$ would be injective, and also surjective as remarked before the lemma. Thus, we would have a $K$-equivariant decomposition $\Ind_I^KE_{\chi',\chi}\simeq\Ind_I^K\chi\oplus\Ind_I^K\chi'$, which is not possible (see for example \cite[\S8, Lemma~6(5)]{Alperin}). As a consequence of the claim, $\sigma_{\chi'^s}$ appears in $V$ (as a subobject), and therefore $V$ admits a quotient isomorphic to $I(\sigma_{\chi'^s},\sigma_{\chi})$ (we recall that this representation was defined in \S\ref{sec:some-repr-K}).

Now we prove \eqref{eq:JH-imf}. The inclusion $\subseteq$ is obvious. Let $\sigma$ be a Serre weight lying in the right-hand side of \eqref{eq:JH-imf}. If $\sigma\in \JH(\Ind_I^K\chi)$, then clearly $\sigma\in \JH(V)$ because $\Ind_I^K\chi$ is a quotient of $V$. So we may assume $\sigma\in\JH(\Ind_I^K\chi')$. Then, by Lemma \ref{lm:princ-series}\ref{it:princ-series-1} and Remark \ref{rk:t_lambda}\ref{it:t_lambda:3}, $\sigma$ is of the form $F(\t_{\lambda+\alpha_i}(-\ovl{\eta}_J))=F(\t_{\lambda}(2\ovl{\eta}_i-\ovl{\eta}_J))$ for some $J\subset\cJ$. 
It follows from Lemma \ref{lm:princ-series}\ref{it:princ-series-2}, \ref{it:princ-series-3} and Remark \ref{rk:t_lambda}\ref{it:t_lambda:3} that such a Serre weight is a Jordan--H\"older factor of $\Proj_{\GL_2(k)}\sigma_{\chi}$ if and only if $i\in J$, if and only if it is a Jordan--H\"older factor of $I(\sigma_{\chi'^s},\sigma_\chi)$. 
(Note that $\sigma_\chi \cong F(\lambda)$ and $\sigma_{\chi'^s} \cong F(\t_{\lambda}(2\ovl{\eta}_i-\ovl{\eta}_{\cJ}))$.)
Since $I(\sigma_{\chi'^s},\sigma_{\chi})$ is a quotient of $V$, this finishes the proof.
\end{proof}

\begin{lem}\label{lemma-Echi'=plus}
Suppose that $\chi = \chi_\lambda$ with  $0<\ang{\lambda,\alpha_i^\vee}<p-3$ for all $0\leq i\leq f-1$.
Assume $\chi'=\chi\alpha_i$ for some $i\in\cJ$. 
Let $Q$ be a quotient of $\Ind_I^{K}E_{\chi',\chi}$ such that $[Q:\sigma_{\chi}]=0$, then $\Ext^1_{K}(\sigma,\sigma_{\chi})=0$ for any $\sigma\in \JH(Q)$.
\end{lem}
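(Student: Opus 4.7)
The plan is to exploit the canonical map $f: P \defeq \Proj_{\GL_2(k)}\sigma_\chi \to V \defeq \Ind_I^K E_{\chi',\chi}$ introduced before Lemma~\ref{lm:IndE}, together with the fact that, since $\chi' = \chi\alpha_i$, the group $K_1$ acts trivially on $E_{\chi',\chi}$ by Lemma~\ref{lemma-Ext1=dim1}\ref{it1-Ext1=dim1}, and hence on $V$ and on $Q$.

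First I would reduce the computation of $\Ext^1_K(\sigma, \sigma_\chi)$ for $\sigma \in \JH(V)$ to that of $\Ext^1_{\GL_2(k)}(\sigma, \sigma_\chi)$. As $\alpha_i$ is trivial on the center $Z(k)$, all Jordan--H\"older factors of $V$ share the central character of $\sigma_\chi$, so lie in its block. For any $K$-extension $0 \to \sigma_\chi \to E \to \sigma \to 0$ with $\sigma \in \JH(V)$ and $\sigma \ne \sigma_\chi$, the genericity bound $2 < \ang{\lambda,\alpha_j^\vee} < p-3$ then implies that $E$ satisfies the hypotheses of Lemma~\ref{lm:K1-invt}, hence is $K_1$-invariant. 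Thus $\Ext^1_K(\sigma, \sigma_\chi) = \Ext^1_{\GL_2(k)}(\sigma, \sigma_\chi)$, and by Lemma~\ref{lm:ext1} its non-vanishing forces $\sigma$ to be adjacent to $\sigma_\chi = F(\t_\lambda(0))$ in $\Lambda_W^{\lambda-\eta}$. The case $\sigma = \sigma_\chi$ is ruled out by the hypothesis $[Q:\sigma_\chi] = 0$.

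Second, I would enumerate those weights of $\JH(V)$ which are adjacent to $\sigma_\chi$. By Lemma~\ref{lm:princ-series}\ref{it:princ-series-1} and Remark~\ref{rk:t_lambda}\ref{it:t_lambda:3}, $\JH(\Ind_I^K\chi) = \{F(\t_\lambda(-\ovl\eta_J)) : J \subseteq \cJ\}$ and $\JH(\Ind_I^K\chi') = \{F(\t_\lambda(2\ovl\eta_i - \ovl\eta_J)) : J \subseteq \cJ\}$, and these are disjoint (their union being $\JH(V)$ by Lemma~\ref{lemma-IndW2-multione}). A direct computation in the identification $\Lambda_W \cong \ZZ^\cJ$ shows the weights in $\JH(V)$ adjacent to $\sigma_\chi$ are exactly $F(\t_\lambda(-\ovl\eta_j))$ for $j \in \cJ$ together with $F(\t_\lambda(\ovl\eta_i))$ (obtained by taking $J = \{i\}$ in the second family). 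All these $f+1$ weights lie in $\JH(P) = \JH(\Inj_{\GL_2(k)}\sigma_\chi)$ by Lemma~\ref{lm:princ-series}\ref{it:princ-series-2}, and therefore in $\JH(\Image f)$ by Lemma~\ref{lm:IndE}.

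Finally, I would observe that the composition $P \xrightarrow{f} V \twoheadrightarrow Q$ must vanish: otherwise its image in $Q$ would be a nonzero quotient of $P$ and would hence have $\sigma_\chi$ as its irreducible cosocle, contradicting $[Q:\sigma_\chi] = 0$. Consequently $\Image f \subset \ker(V \twoheadrightarrow Q)$, and since $V$ is multiplicity-free by Lemma~\ref{lemma-IndW2-multione}, $\JH(\Image f) \cap \JH(Q) = \emptyset$. Any $\sigma \in \JH(Q)$ with $\Ext^1_K(\sigma, \sigma_\chi) \ne 0$ would therefore lie in this empty intersection, the desired contradiction. The main subtlety is the first step: a priori smooth $K$-extensions of Serre weights could be non-$K_1$-trivial, and ruling this out requires combining Lemma~\ref{lm:K1-invt} (which relies on the genericity of $\lambda$) with the triviality of $\alpha_i$ on $Z(k)$.
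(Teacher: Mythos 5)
Your argument has the same backbone as the paper's: identify the Serre weights of $\JH(V)$ adjacent to $\sigma_\chi$ (where $V\defeq\Ind_I^K E_{\chi',\chi}$), show that they all appear in $\JH(\Image f)$, and then use $\Image f\subset\ker(V\twoheadrightarrow Q)$ together with multiplicity-freeness of $V$ to exclude them from $\JH(Q)$. The paper first reduces $Q$ to a quotient of $\Ind_I^K\chi'$ via the snake lemma (using $[\ker(V\to Q):\sigma_\chi]=1$), which cuts the adjacent-weight candidates down to $F(\t_\lambda(\ovl\eta_i))$ alone; your direct enumeration of all $f+1$ adjacent weights of $\JH(V)$ works just as well for those weights.

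The gap is in your first step, the reduction of $\Ext^1_K(\sigma,\sigma_\chi)$ to $\Ext^1_{\GL_2(k)}(\sigma,\sigma_\chi)$. Lemma~\ref{lm:K1-invt} requires that \emph{all} constituents of the extension $E$ occur in $\Inj_{\GL_2(k)}\sigma_\chi$, i.e.\ that $\sigma\in\JH(\Inj_{\GL_2(k)}\sigma_\chi)$, and this is not true for every $\sigma\in\JH(V)$. By Lemma~\ref{lm:princ-series}\ref{it:princ-series-2} the constituents of $\Inj_{\GL_2(k)}\sigma_\chi$ are the $F(\t_\lambda(\sum a_j\ovl\eta_j))$ with $a_j\in\{0,\pm1\}$, whereas $\sigma_{\chi'}=F(\lambda+\alpha_i)=F(\t_\lambda(2\ovl\eta_i))$ is not of this form. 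This weight is the $K$-cosocle of $\Ind_I^K\chi'$ and hence lies in $\JH(Q)$ for every nonzero $Q$, so it cannot be brushed aside. For such a $\sigma$ you cannot invoke Lemma~\ref{lm:K1-invt}, and you still need to rule out a non-$K_1$-trivial class (coming from $\Hom_{\GL_2(k)}(\sigma,\sigma_\chi\otimes H^1(K_1/Z_1,\F))\ne 0$) lifting to $\Ext^1_K(\sigma,\sigma_\chi)$; nothing in your three steps does this, since this $\sigma$ is \emph{not} in $\JH(P)$ and hence your $\JH(\Image f)$ containment argument never touches it. (The paper's own proof passes directly from $\Ext^1_K(\sigma,\sigma_\chi)\neq0$ to Lemma~\ref{lm:ext1}, i.e.\ to adjacency in the extension graph, so it leaves the same reduction implicit; your proposal has the merit of trying to make it explicit, but the appeal to Lemma~\ref{lm:K1-invt} does not cover the one case that actually matters.)
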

\begin{proof}

Let $M$ be the kernel of $\Ind_{I}^{K}E_{\chi',\chi}\twoheadrightarrow Q$. By Lemma \ref{lemma-IndW2-multione}, $\Ind_I^KE_{\chi',\chi}$ is multiplicity-free.
Since $[Q:\sigma_{\chi}]=0$ by assumption, we have $[M:\sigma_{\chi}]=1$. As a consequence, the natural morphism $M\ra \Ind_I^{K}\chi$ is surjective (as $\sigma_{\chi}$ is the cosocle of $\Ind_I^K\chi$), and therefore $Q$ is a quotient of $\Ind_I^{K}\chi'$ by the snake lemma. By Lemma \ref{lm:princ-series}\ref{it:princ-series-1}, the Jordan--H\"older factors of $\Ind_I^{K}\chi'$ are of the form $F(\t_{\lambda+\alpha_i}(-\ovl{\eta}_J))$ for $J\subset\cJ$. It follows from Lemma \ref{lm:ext1} that the existence of $\sigma\in\JH(Q)$ such that $\Ext^1_{K}(\sigma,\sigma_\chi)\neq0$ implies the existence of $J\subset\cJ$
and $j\in\cJ$ such that $\sigma=F(\t_{\lambda+\alpha_i}(-\ovl{\eta}_J))\in\JH(Q)$ and $\t_{\lambda+\alpha_i}(-\ovl{\eta}_J)=\t_\lambda(\pm \ovl{\eta}_j)$. By Remark \ref{rk:t_lambda}\ref{it:t_lambda:3} we get $2\ovl{\eta}_i-\ovl{\eta}_J=\pm\ovl{\eta}_j$, i.e.\ we must have $J=\set{i}$ and $j=i$, and hence $\sigma=F(\t_{\lambda}(\ovl{\eta}_i))$.

Consider again the unique (up to a scalar) nonzero map
\[ f : \Proj_{\GL_2(k)}\sigma_{\chi}\rightarrow\Ind_I^{K}E_{\chi',\chi}.\]
By Lemma \ref{lm:IndE} we have $F(\t_\lambda(\ovl{\eta}_i))\in\JH(\Image(f))$. However, $\sigma_\chi\in \JH(M)$, thus by uniqueness of $f$, we must have $\Image(f)\subset M$. Then the Serre weight $F(\t_\lambda(\ovl{\eta}_i))$ is a subquotient of both $M$ and $Q$. This contradicts the fact that $\Ind_I^KE_{\chi',\chi}$ is multiplicity-free. %
\end{proof}

We fix $\un{\eps}\in\set{-1,0,1}^\cJ$ and define
\begin{equation}\label{eq:D-lambda-eps}
D_{\lambda,\un{\eps}}\defeq I\Big(F(\lambda),F\big(\t_\lambda(\sum_{i\in\cJ}\eps_i\ovl{\eta}_i)\big)\Big).
\end{equation}
Its Jordan--H\"older factors are given by $F(\t_\lambda(\sum_{i\in J}\eps_i\ovl{\eta}_i))$ for $J\subset J_{\un{\eps}}$ by Lemma \ref{lm:princ-series}\ref{it:princ-series-3}, where \[J_{\un{\eps}} \defeq \set{i\in \cJ:\eps_i\neq 0}.\]
In particular, $D_{\lambda,\un{\eps}}$ has length $2^{|J_{\un{\eps}}|}$.
 
\begin{rem}\label{rem:D-eps} %
Keep the previous hypotheses and setting. 
\begin{enumerate}%
\item We have
\[ \Ind_{I}^{K}\chi_\lambda^s\simeq D_{\lambda,\un{-1}},\]
as follows from Lemma \ref{lm:princ-series}\ref{it:princ-series-1}.
\item Let $\rhobar$ be a $2$-dimensional semisimple Galois representation which is $2$-generic (see Definition \ref{def:rhobar:gen}).
Then the $\GL_2(k)$-representation $D_0(\rhobar)$ attached to $\rhobar$ as in \cite[\S14]{BP} is a direct sum of such $D_{\lambda,\un{\eps}}$, where $\un{\eps}\in\set{\pm 1}^\cJ$; see Theorem 14.8 in \emph{loc.~cit.}
\end{enumerate}
\end{rem}

We want to understand the structure of $D_{\lambda,\un{\eps}}\otimes_{\F} F(\alpha_j)$. 

\begin{lem}\label{lemmJH}
Suppose that $\chi = \chi_\lambda$ with $2<\ang{\lambda,\alpha_i^\vee}<p-4$ for all $0\leq i\leq f-1$.
The Jordan--H\"older factors of $D_{\lambda,\un{\eps}}\otimes_{\F} F(\alpha_j)$ have multiplicity one and are given by $F(\t_{\lambda}(2\eps'\ovl{\eta}_j+\sum_{i\in J}\eps_i\ovl{\eta}_i))$ for $J\subset J_{\un{\eps}}$ and $\eps'\in\set{-1,0,1}$.
\end{lem}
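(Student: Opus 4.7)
The plan combines three inputs: the Jordan--H\"older structure of $D_{\lambda,\un{\eps}}$ already available, the exactness of tensoring with $F(\alpha_j)$, and a Clebsch--Gordan-type decomposition for irreducible $\GL_2(k)$-representations.

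First, by Lemma~\ref{lm:princ-series}\ref{it:princ-series-3} the representation $D_{\lambda,\un{\eps}}$ is multiplicity-free with Jordan--H\"older factors $F(\tau_J)$ for $J\subseteq\cJ$, where $\tau_J\defeq\mathfrak{t}_\lambda(\omega_J)$ and $\omega_J\defeq\sum_{i\in J}\eps_i\ovl{\eta}_i$. Since $-\otimes_\F F(\alpha_j)$ is exact, the Jordan--H\"older factors of $D_{\lambda,\un{\eps}}\otimes_\F F(\alpha_j)$ are obtained, with multiplicity, by taking the disjoint union over $J\subseteq\cJ$ of the factors of $F(\tau_J)\otimes_\F F(\alpha_j)$. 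For each $J$, I would then apply the three-term Clebsch--Gordan decomposition $L(a,b)\otimes_\F L(2,0)\cong L(a+2,b)\oplus L(a+1,b+1)\oplus L(a,b+2)$ (valid for $2\le a-b\le p-3$, as recalled in the proof of Proposition~\ref{prop:def:ring}), suitably twisted at the $j$-th embedding through the Frobenius $\phz^j$ and the determinant character so as to match $F(\alpha_j)$, concluding
\[
F(\tau_J)\otimes_\F F(\alpha_j)\cong F(\tau_J+\alpha_j)\oplus F(\tau_J)\oplus F(\tau_J-\alpha_j).
\]

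Next, using the explicit formula~\eqref{eq:expl:tmu} for $\mathfrak{t}_\lambda$, I would carry out a direct case analysis on the parity of $\omega_{J,j+1}$ to identify
\[
\{\tau_J,\ \tau_J+\alpha_j,\ \tau_J-\alpha_j\}=\big\{\mathfrak{t}_\lambda(\omega_J+2\eps'\ovl{\eta}_j)\ :\ \eps'\in\{-1,0,1\}\big\},
\]
which translates the Jordan--H\"older factors into the extension graph language. Multiplicity one then reduces to the injectivity of the map
\[
2^\cJ\times\{-1,0,1\}\longrightarrow\Lambda_W,\qquad (J,\eps')\longmapsto 2\eps'\ovl{\eta}_j+\omega_J.
\]
Reading off the components at indices $i\neq j$, which equal $\eps_i\delta_J(i)\in\{-\eps_i,0,\eps_i\}$, recovers $\delta_J(i)$; then the $j$-th component $2\eps'+\eps_j\delta_J(j)\in\{-3,\dots,3\}$ recovers first $\delta_J(j)$ from its parity and subsequently $\eps'$ from the remaining integer. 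Combined with the injectivity of $\mathfrak{t}_\lambda$ on $\Lambda_W^\lambda$ recalled in \S\ref{sec:ext:graph}, this yields $3\cdot 2^f$ pairwise distinct Jordan--H\"older factors in $D_{\lambda,\un{\eps}}\otimes_\F F(\alpha_j)$, each of multiplicity one.

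The main obstacle is checking that the hypotheses on $\lambda$ are strong enough for the Clebsch--Gordan decomposition to apply uniformly across all $\tau_J$ and for the shifts $\omega_J+2\eps'\ovl{\eta}_j$ to remain inside $\Lambda_W^\lambda$, so that $\mathfrak{t}_\lambda$ produces honest regular Serre weights. The delicate point is that~\eqref{eq:expl:tmu} takes two different shapes depending on the parity of $\omega_{J,j+1}$: in the case $\delta_J(j+1)=0$ the pairing $\langle\tau_{J,j},\alpha_j^\vee\rangle$ equals $\langle\lambda_j,\alpha_j^\vee\rangle+\omega_{J,j}$, which under the hypothesis $2<\langle\lambda,\alpha_i^\vee\rangle<p-3$ and $\omega_{J,i}\in\{-1,0,1\}$ lies in the admissible range $[2,p-3]$; in the case $\delta_J(j+1)=1$ the pairing equals $p-2-\langle\lambda_j,\alpha_j^\vee\rangle-\omega_{J,j}$, and a careful inspection of the boundary values again keeps the relevant pairings (and those of the three shifts $\eps'\in\{-1,0,1\}$) in a range where the three-term formula produces three distinct nonzero terms, so that all ingredients combine to give the claim.
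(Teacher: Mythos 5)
Your strategy mirrors the paper's own proof: apply a three-term Clebsch--Gordan decomposition $F(\mu) \otimes_\F F(\alpha_j) \cong F(\mu-\alpha_j)\oplus F(\mu)\oplus F(\mu+\alpha_j)$ (as in \cite[Prop.~5.4]{BP} or \cite[Prop.~3.3(1)]{LMS}) to each Jordan--H\"older factor $F(\t_\lambda(\omega_J))$ of $D_{\lambda,\un{\eps}}$, where $\omega_J\defeq\sum_{i\in J}\eps_i\ovl{\eta}_i$, translate the shift $\pm\alpha_j$ into $\pm 2\ovl{\eta}_j$ in the extension graph (the paper cites Remark~\ref{rk:t_lambda}\ref{it:t_lambda:3}; you unwind~\eqref{eq:expl:tmu} by hand), and deduce multiplicity one from the injectivity of $\t_\lambda$. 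Your component-by-component reading of the injectivity is a more verbose version of the paper's argument, which simply reduces modulo $2\Lambda_W$ to recover $J$ and then subtracts to recover $\eps'$.

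However, your closing assertion --- that ``a careful inspection of the boundary values again keeps the relevant pairings \dots\ in a range where the three-term formula produces three distinct nonzero terms'' --- does not actually hold at the stated level of genericity. Take $\ang{\lambda,\alpha_j^\vee}=p-4$ (allowed, since $p-4<p-3$), $\eps_j=1$, and $J$ containing both $j$ and $j+1$, so that $\delta_J(j+1)=1$ and the $j$-th component of $\omega_J$ equals $1$. Your own formula then gives $\ang{\t_\lambda(\omega_J),\alpha_j^\vee}=p-2-(p-4)-1=1$, which lies outside $[2,p-3]$: at that embedding $L(a,a-1)\otimes L(2,0)\cong L(a+2,a-1)\oplus L(a+1,a)$ has only two Jordan--H\"older factors, and the constituent you list for $\eps'=1$ would have $j$-th pairing $-1$, i.e.\ $2\ovl{\eta}_j+\omega_J\notin\Lambda_W^\lambda$, so it is not a Serre weight at all. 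The hypothesis $2<\ang{\lambda,\alpha_i^\vee}<p-3$ as written is one unit too weak; the argument needs $\ang{\lambda,\alpha_i^\vee}\leq p-5$. The paper's proof is terse and inherits the same issue, but the lemma is only applied downstream (Lemma~\ref{lemmaPS}, Proposition~\ref{prop:main}) under stronger $4$-deep hypotheses, where the problem disappears. You should state that the boundary check in fact fails at the level of genericity in the lemma's hypothesis, rather than assert that it succeeds.
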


\begin{proof}
First note that we have $F(\lambda) \otimes_{\F} F(\alpha_j) \cong \bigoplus_{\eps\in\set{-1,0,1}} F(\lambda+\eps \alpha_j)$ by \cite[Prop.~5.4]{BP} or \cite[Prop.~3.3(1)]{LMS}.
We then obtain the Jordan--H\"older factors using Remark~\ref{rk:t_lambda}\ref{it:t_lambda:3}.
The multiplicity one property then follows from the injectivity of $\t_\lambda$. Namely if $2\eps'_1\ovl{\eta}_j+\sum_{i\in J_1}\eps_i\ovl{\eta}_i=2\eps'_2\ovl{\eta}_j+\sum_{i\in J_2}\eps_i\ovl{\eta}_i$  for some subsets $J_1$, $J_2$ of $J_{\un{\eps}}$, then $J_1 = J_2$ by passing to $\Lambda_W/2\Lambda_W$, so also $\eps'_1=\eps'_2$.
\end{proof}

\begin{lem}\label{socle}
Suppose that $\chi = \chi_\lambda$ with $2<\ang{\lambda,\alpha_i^\vee}<p-4$ for all $0\leq i\leq f-1$.
We have
\begin{eqnarray*}
\soc_{\GL_2(k)}(D_{\lambda,\un{\eps}}\otimes_{\F} F(\alpha_j))&\cong&\bigoplus_{\eps'\in\set{-1,0,1}} F(\t_\lambda(2\eps'\ovl{\eta}_j)),\\
\cosoc_{\GL_2(k)}(D_{\lambda,\un{\eps}}\otimes_{\F} F(\alpha_j))&\cong&\bigoplus_{\eps'\in\set{-1,0,1}} F(\t_\lambda(2\eps'\ovl{\eta}_j+\sum_{i\in J_{\un{\eps}}}\eps_i\ovl{\eta}_i)).
\end{eqnarray*}
\end{lem}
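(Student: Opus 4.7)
The plan is to prove both statements by using injective envelopes and projective covers of $\GL_2(k)$-representations, together with the elementary but crucial fact that for a finite group in characteristic $p$, tensoring an injective (resp.\ projective) module by a finite-dimensional representation preserves injectivity (resp.\ projectivity); this follows from the natural adjunction $\Hom_G(V^{\vee}\otimes_{\F}-,I)\cong \Hom_G(-,V\otimes_{\F} I)$. The only nontrivial computational input is the tensor product decomposition $F(\mu)\otimes_{\F} F(\alpha_j)\cong \bigoplus_{\eps'\in\{-1,0,1\}} F(\mu+\eps'\alpha_j)$, which holds as soon as $2\le\langle\mu,\alpha_j^\vee\rangle\le p-3$ by \cite[Prop.~5.4]{BP} (or \cite[Prop.~3.3(1)]{LMS}); we will apply it for $\mu=\lambda$ and for $\mu=\t_{\lambda}(\sum_i\eps_i\ovl\eta_i)$, whose pairing with $\alpha_j^\vee$ equals $\langle\lambda,\alpha_j^\vee\rangle+\eps_j$ by Remark~\ref{rk:t_lambda}\ref{it:t_lambda:4}, hence lies in $[2,p-3]$ by our hypothesis on $\lambda$.

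For the socle, by the definition of $I(F(\lambda),-)$ we have an embedding $D_{\lambda,\un\eps}\hookrightarrow \Inj_{\GL_2(k)}F(\lambda)$; tensoring with $F(\alpha_j)$ (which is exact) yields an embedding $D_{\lambda,\un\eps}\otimes_{\F} F(\alpha_j)\hookrightarrow \Inj_{\GL_2(k)}F(\lambda)\otimes_{\F} F(\alpha_j)$. By the fact recalled above, the target is injective, with socle $F(\lambda)\otimes_{\F} F(\alpha_j)=\bigoplus_{\eps'}F(\lambda+\eps'\alpha_j)$; being a finite-length injective, it decomposes as $\bigoplus_{\eps'}\Inj_{\GL_2(k)}F(\lambda+\eps'\alpha_j)$. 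Hence $\soc_{\GL_2(k)}(D_{\lambda,\un\eps}\otimes F(\alpha_j))\subseteq \bigoplus_{\eps'}F(\lambda+\eps'\alpha_j)$. The reverse inclusion is immediate: since $F(\lambda)=\soc_{\GL_2(k)}D_{\lambda,\un\eps}\hookrightarrow D_{\lambda,\un\eps}$, tensoring yields a semisimple subrepresentation $\bigoplus_{\eps'}F(\lambda+\eps'\alpha_j)\hookrightarrow D_{\lambda,\un\eps}\otimes F(\alpha_j)$, which must land in the socle. Identifying $F(\lambda+\eps'\alpha_j)\cong F(\t_\lambda(2\eps'\ovl\eta_j))$ via Remark~\ref{rk:t_lambda}\ref{it:t_lambda:3} gives the claimed formula.

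For the cosocle, the argument is dual: $D_{\lambda,\un\eps}$ is a quotient of $P\defeq\Proj_{\GL_2(k)}F(\t_\lambda(\sum_i\eps_i\ovl\eta_i))$, so tensoring gives a surjection $P\otimes_{\F} F(\alpha_j)\twoheadrightarrow D_{\lambda,\un\eps}\otimes_{\F} F(\alpha_j)$. The source is projective, with cosocle $F(\t_\lambda(\sum_i\eps_i\ovl\eta_i))\otimes F(\alpha_j)=\bigoplus_{\eps'}F(\t_\lambda(2\eps'\ovl\eta_j+\sum_i\eps_i\ovl\eta_i))$, so it decomposes as $\bigoplus_{\eps'}\Proj_{\GL_2(k)}F(\t_\lambda(2\eps'\ovl\eta_j+\sum_i\eps_i\ovl\eta_i))$; this gives the inclusion $\cosoc_{\GL_2(k)}(D_{\lambda,\un\eps}\otimes F(\alpha_j))\subseteq \bigoplus_{\eps'}F(\t_\lambda(2\eps'\ovl\eta_j+\sum_i\eps_i\ovl\eta_i))$. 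The reverse inclusion follows by tensoring the surjection $D_{\lambda,\un\eps}\twoheadrightarrow F(\t_\lambda(\sum_i\eps_i\ovl\eta_i))$ with $F(\alpha_j)$ and observing that the resulting semisimple quotient factors through the cosocle. There is no real obstacle here; the only delicate point is keeping track of the genericity bounds so that the tensor product formula applies at both endpoints of the composition series.
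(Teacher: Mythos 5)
Your approach is essentially the paper's: establish the socle from the chain $F(\lambda)\subset D_{\lambda,\un{\eps}}\subset\Inj_{\GL_2(k)}F(\lambda)$ after tensoring with $F(\alpha_j)$, and handle the cosocle by the dual argument. The paper dismisses the cosocle in three words (``by duality''); your explicit version via the projective cover of $\cosoc_{\GL_2(k)}D_{\lambda,\un{\eps}}$ is precisely what that phrase means, so this is a valid elaboration rather than a different route.

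There is, however, a concrete slip in your pairing computation at the other endpoint. You claim $\ang{\t_\lambda(\sum_i\eps_i\ovl\eta_i),\alpha_j^\vee}=\ang{\lambda,\alpha_j^\vee}+\eps_j$, citing Remark~\ref{rk:t_lambda}\ref{it:t_lambda:4}; but that remark only transfers the \emph{depth} of $\t_\lambda(\omega)$ to that of $\lambda+\omega'$, not the actual pairing, and the equation is false. Applying the explicit description~\eqref{eq:expl:tmu} to $\omega=\sum_i\eps_i\ovl\eta_i$, one has $\delta_i=1$ at every embedding (each $\eps_i$ is odd), so the second branch is always in force and
$\ang{\t_\lambda(\omega),\alpha_j^\vee}=p-2-\ang{\lambda,\alpha_j^\vee}-\eps_j$.
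Under the hypothesis $3\le\ang{\lambda,\alpha_j^\vee}\le p-4$ this lies in $[1,p-4]$, not $[2,p-3]$; in the single boundary case $\ang{\lambda,\alpha_j^\vee}=p-4$, $\eps_j=1$ it equals $1$, and then $F(\mu)\otimes F(\alpha_j)$ has only two summands (the weight $\mu-\alpha_j$ is not dominant), so the three-term decomposition you invoke fails --- indeed $2\ovl\eta_j+\sum_i\eps_i\ovl\eta_i\notin\Lambda_W^\lambda$ there, so the corresponding term in the cosocle formula of Lemma~\ref{socle} is not even defined. The paper's ``by duality'' has the same tacit gap, since it needs the socle argument at the dual of $\t_\lambda(\sum_i\eps_i\ovl\eta_i)$, whose pairing with $\alpha_j^\vee$ is the same; all of the paper's uses of Lemma~\ref{socle} assume $\lambda$ at least $4$-deep, which excludes this boundary case. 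Fix the pairing formula and, for a clean statement of the cosocle, tighten the depth hypothesis by one.
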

\begin{proof}
Let $I_\lambda\defeq \Inj_{\GL_2(k)}F(\lambda)$.
We have inclusions $F(\lambda)\subset D_{\lambda,\un{\eps}}\subset I_{\lambda}$, which induces inclusions
\[F(\lambda)\otimes_{\F}F(\alpha_j)\subset D_{\lambda,\un{\eps}}\otimes_{\F}F(\alpha_j)\subset I_{\lambda}\otimes_{\F}F(\alpha_j), \]
and also inclusions of the corresponding $K$-socles.
It follows from %
\cite[Prop.\ 3.3(2)]{LMS} that, if $1<\ang{\lambda,\alpha_i^\vee}<p-3$ for all $0\leq i\leq f-1$ when $f\geq 2$, or $2<\ang{\lambda,\alpha_0^\vee}<p-3$ when $f=1$, then  
\[ I_\lambda\otimes_{\F} F(\alpha_j)\simeq\bigoplus_{\eps'\in\set{-1,0,1}} I_{\lambda+\eps'\alpha_j}.\]
In particular, the $K$-socle of $I_{\lambda}\otimes_{\F}F(\alpha_j)$ is isomorphic to $\bigoplus_{\eps'\in\set{-1,0,1}} F(\t_\lambda(2\eps'\ovl{\eta}_j))$, which itself is isomorphic to $F(\lambda)\otimes_{\F}F(\alpha_j)$. This proves the result in these cases. %

The assertion on the cosocle follows by a dual argument, using surjections $P_{\t_\lambda(\sum_{i}\eps_i\ovl{\eta}_i)} \onto D_{\lambda,\un{\eps}}\onto F(\lambda)$, where $P_\mu \defeq \Proj_{\GL_2(k)}F(\mu) (\cong I_\mu)$.
By \cite[Prop.\ 3.3(2)]{LMS},
\begin{equation}\label{eq:proj-covers-tensor}
  P_{\t_\lambda(\sum_{i}\eps_i\ovl{\eta}_i)} \otimes_{\F} F(\alpha_j)\simeq\bigoplus_{\eps'\in\set{-1,0,1}} P_{\t_\lambda(2\eps'\ovl{\eta}_j+\sum_{i}\eps_i\ovl{\eta}_i)},
\end{equation}
unless there is some $\eps'\in\set{-1,0,1}$ such that $\ang{\t_\lambda(2\eps'\ovl{\eta}_j+\sum_{i}\eps_i\ovl{\eta}_i),\alpha_{i'}^\vee} = 0$ for all $i'$.
This exceptional case can only happen when $f = 1$, in which case the condition is equivalent to $\ang{\lambda+2\eps'\ovl{\eta}_0+\eps_0\ovl{\eta}_0,\alpha_0^\vee} = p-2$ (by Remark~\ref{rk:t_lambda}\ref{it:t_lambda:1} the element $\tld w'$ in the definition of $\t_\lambda$ is not a translation).
Combined with our genericity condition we obtain $\ang{\lambda,\alpha_0^\vee} = p-5$ and $\eps_0 = \eps' = +1$.
In this case, by \cite[Prop.\ 3.3(2)]{LMS} an extra direct summand that is irreducible of dimension $p$ appears on the right-hand side of~\eqref{eq:proj-covers-tensor},
and we conclude as there is no $p$-dimensional constituent in $\JH(D_{\lambda,\un{\eps}}\otimes_{\F} F(\alpha_0))$.
\end{proof}

\begin{lem}\label{lemmaPS}
Suppose that $0\leq j\leq f-1$ and that $\chi = \chi_\lambda$, where %
$\lambda$ is $4$-deep in $\un{C}_0$, i.e.\ $3<\langle \lambda,\alpha_i^{\vee}\rangle < p-5$ for all $0\leq i\leq f-1$.
Let $\eps\in\set{-1,1}$ and write $V$ for the unique nonsplit extension of $F(\t_\lambda(\eps\ovl{\eta}_j))$ by $F(\lambda)$:
\[ 0\rightarrow F(\lambda)\rightarrow V \rightarrow F(\t_\lambda(\eps\ovl{\eta}_j))\rightarrow0.\]
Then $V\otimes_{\F} F(\alpha_j)$ has a $3$-step increasing filtration whose successive graded pieces are $V_1$, $V_2$, $V_3$, where 
\begin{enumerate}
\item[$\bullet$] $V_1$ is a nonsplit extension of $F(\t_\lambda(3\eps\ovl{\eta}_j))$ by $F(\t_\lambda(2\eps\ovl{\eta}_j))$,
\item[$\bullet$] $V_2$ is a nonsplit extension of $F(\t_\lambda(\eps\ovl{\eta}_j))$ by $F(\lambda)$ (i.e.\ $V_2\cong V$), and
\item[$\bullet$] $V_3$ is a nonsplit extension of $F(\t_\lambda(-\eps\ovl{\eta}_j))$ by $F(\t_\lambda(-2\eps\ovl{\eta}_j))$. 
\end{enumerate}
As a consequence, $F(\t_\lambda(\eps\ovl{\eta}_j))$ is not contained in the socle of $(V\otimes_{\F} F(\alpha_j))/F(\t_\lambda(2\eps\ovl{\eta}_j))$.

Moreover, the corresponding extensions of $V_2$ by $V_1$, and $V_3$ by $V_2$, are nonsplit.
\end{lem}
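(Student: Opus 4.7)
The plan is to tensor the defining short exact sequence $0\to F(\lambda)\to V\to F(\t_\lambda(\eps\ovl{\eta}_j))\to 0$ by $F(\alpha_j)$ and analyze the resulting length-$6$ representation $V\otimes F(\alpha_j)$ by computing its socle, cosocle, and adjacency structure. Using \cite[Prop.~5.4]{BP} for each end term and Remark~\ref{rk:t_lambda}\ref{it:t_lambda:3} with the deepness hypothesis on $\lambda$, the six Jordan--H\"older constituents of $V\otimes F(\alpha_j)$ are identified as $F(\t_\lambda(k\ovl{\eta}_j))$ for $k\in\{-2,-1,0,1,2,3\}$ (I take $\eps=1$; the other case is symmetric). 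By $\Hom$-adjunction together with the self-duality of $F(\alpha_j)$, the socle and cosocle of $V\otimes F(\alpha_j)$ are identified with $F(\lambda)\otimes F(\alpha_j)=[-2]\oplus[0]\oplus[2]$ and $F(\t_\lambda(\eps\ovl{\eta}_j))\otimes F(\alpha_j)=[-1]\oplus[1]\oplus[3]$ respectively (both semisimple of length three), so that $V\otimes F(\alpha_j)$ has Loewy length two.

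By Lemma~\ref{lm:ext1}, the nonzero extension groups between cosocle and socle constituents are exactly the five one-dimensional $\Ext^1_{\GL_2(k)}([k\pm 1],[k])$ for adjacent $k$. The central step is to show that all five adjacent components of the extension class of $V\otimes F(\alpha_j)$ (viewed as an extension of its cosocle by its socle) are nonzero. By naturality of Yoneda extensions under the exact functor $(-)\otimes F(\alpha_j)$, this class equals $e\otimes 1_{F(\alpha_j)}$, where $e\in \Ext^1_{\GL_2(k)}(F(\t_\lambda(\eps\ovl{\eta}_j)),F(\lambda))$ is the nonzero class defining $V$; one checks, using the decomposition $F(\alpha_j)\otimes F(\alpha_j)=F(2\alpha_j)\oplus F(\alpha_j)\oplus\F$ coming from \cite[Prop.~5.4]{BP} and tracking the resulting decomposition of the Ext groups via adjunction, that each of the five adjacent components is nonzero. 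Once this is established, $V\otimes F(\alpha_j)$ has the structure of a ``path'' in which the six constituents $[-2],\ldots,[3]$ are arranged linearly with all adjacent socle--cosocle pairs connected by nonsplit extensions, which automatically yields the three-step filtration $F_1\subset F_2\subset V\otimes F(\alpha_j)$: here $F_1$ is the unique length-two subrep with socle $[2\eps]$ and cosocle $[3\eps]$, and $F_2$ is the unique length-four subrep with socle $[2\eps]\oplus F(\lambda)$ and cosocle $[3\eps]\oplus F(\t_\lambda(\eps\ovl{\eta}_j))$. The nonsplitness of the two gluings is then immediate from the nonzero-adjacency statement.

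Finally, the consequence $F(\t_\lambda(\eps\ovl{\eta}_j))\not\subset\soc((V\otimes F(\alpha_j))/F(\t_\lambda(2\eps\ovl{\eta}_j)))$ follows from a multiplicity-one argument: letting $W\defeq(V\otimes F(\alpha_j))/F(\t_\lambda(2\eps\ovl{\eta}_j))$, the quotient $F_2/F(\t_\lambda(2\eps\ovl{\eta}_j))$ splits as $F(\t_\lambda(3\eps\ovl{\eta}_j))\oplus V_2$ inside $W$ (since $F(\t_\lambda(3\eps\ovl{\eta}_j))$ has no remaining adjacency on the line after removing $F(\t_\lambda(2\eps\ovl{\eta}_j))$), giving an embedding $V_2\cong V\hookrightarrow W$; since $F(\t_\lambda(\eps\ovl{\eta}_j))$ appears in $V\otimes F(\alpha_j)$, hence in $W$, with multiplicity one (by Lemma~\ref{lemmJH}), any embedding $F(\t_\lambda(\eps\ovl{\eta}_j))\hookrightarrow W$ must coincide with the cosocle copy inside $V_2$, which is not in the socle of $V_2$ because $V$ is nonsplit, and therefore not in the socle of $W$. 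The \textbf{main obstacle} is the central step above: verifying that each of the five adjacent components of $e\otimes 1_{F(\alpha_j)}$ is nonzero, which requires a careful computation of the image of $e$ under the natural decomposition of the Ext groups of the tensored representations (alternatively, the structure could be determined by exhibiting $V_1$ and $V_3$ as the images of explicit shifts of $V$ under translation-type functors on the algebraic-representation side).
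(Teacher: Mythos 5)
Your overall skeleton is sound and correctly identifies what has to be proved, but the ``central step'' --- showing that all five adjacency components of the extension class of $V\otimes_\F F(\alpha_j)$ (viewed in $\Ext^1_{\GL_2(k)}(\cosoc, \soc)\cong\bigoplus_{\text{adjacent }(a,b)}\Ext^1([a],[b])$, a five-dimensional space by Lemma~\ref{lm:ext1}) are nonzero --- is exactly the hard part, and you do not carry it out. Two of the five components are easy: if the component $[3\eps]\to[2\eps]$ (or $[-\eps]\to[-2\eps]$) vanished, then $[3\eps]$ (resp.\ $[-2\eps]$), which has no other adjacent constituent available, would split off as a direct summand, contradicting the socle/cosocle identification you already made. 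But the three ``inner'' components $[\eps]\to[0]$, $[\eps]\to[2\eps]$, $[-\eps]\to[0]$ do not follow this way, and there is a real subtlety in the route you sketch: the natural adjunction decomposes $\Ext^1([\eps]\otimes F(\alpha_j),[0]\otimes F(\alpha_j))\cong\Ext^1([\eps],[0]\otimes F(2\alpha_j))\oplus\Ext^1([\eps],[0]\otimes F(\alpha_j))\oplus\Ext^1([\eps],[0])$ (dimensions $2+2+1$), and under this decomposition $e\otimes 1_{F(\alpha_j)}$ (which factors through the coevaluation $\F\hookrightarrow F(\alpha_j)\otimes F(\alpha_j)^*$) is concentrated in the last one-dimensional summand. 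That decomposition is \emph{transverse} to the five-line adjacency decomposition, so ``tracking Ext via adjunction'' does not directly tell you which of the five adjacency coordinates are nonzero; you would need to compute the change of basis between these two decompositions, which is precisely the nontrivial content.

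By contrast, the paper's proof avoids this altogether: for $\eps=-1$ it embeds $V\hookrightarrow\Ind_I^K\chi_\lambda^s$, tensors by $F(\alpha_j)$, and uses the explicit three-step filtration of $\Ind_I^K(\chi\otimes F(\alpha_j)|_I)$ (together with multiplicity-freeness and projective covers) to cut out $V_1,V_2,V_3$ and verify the gluings; for $\eps=+1$ with $f\geq 2$ it uses that $V$ is a quotient of a principal series; and for $\eps=+1$, $f=1$ it uses the exact sequence $0\to V\to\Inj_{\GL_2(\F_p)}F(\lambda)\to V'\to 0$ and the decomposition of $\Inj F(\lambda)\otimes F(\alpha_j)$. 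Your suggested ``alternative'' at the end --- exhibiting $V_1$, $V_3$ as shifts of $V$ under translation functors --- is closer in spirit to the paper, but as written the proposal does not constitute a complete proof. The socle/cosocle identification via adjunction and the final multiplicity-one argument for the ``consequence'' are both correct as stated.
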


The structure of $V\otimes_{\F}F(\alpha_j)$ can be illustrated by the extension graph
\begin{equation}\label{eq:ext-graph-tensor}
  \begin{gathered}
    \xymatrix{%
      F(\t_{\lambda}(3\eps\ovl{\eta}_j))\ar@{-}[d]&F(\t_{\lambda}(\eps\ovl{\eta}_j))\ar@{-}[dl]\ar@{-}[d]&F(\t_{\lambda}(-\eps\ovl{\eta}_j))\ar@{-}[d]\ar@{-}[dl]\\
      F(\t_{\lambda}(2\eps\ovl{\eta}_j))&F(\lambda)&F(\t_{\lambda}(-2\eps\ovl{\eta}_j))}
  \end{gathered}
\end{equation}
where the bottom (resp.~top) row corresponds to the socle (resp.~cosocle) of $V\otimes_{\F} F(\alpha_j)$. 

\begin{proof}
We note that $V \cong D_{\lambda,\un\eps}$, where $\eps_i = \eps$ if $i = j$ and $\eps_i = 0$ otherwise.
By Lemmas~\ref{lemmJH} and \ref{socle} we see that $V\otimes_{\F}F(\alpha_j)$ has the 6 Jordan--H\"older factors listed in~\eqref{eq:ext-graph-tensor}, and that its socle (resp.\ cosocle) is given by the bottom (resp.\ top) row of~\eqref{eq:ext-graph-tensor}.
In particular, $V\otimes_{\F}F(\alpha_j)$ is multiplicity-free and has Loewy length $2$.

Let us begin with the case where $\eps=-1$, and we will only assume $3<\langle \lambda,\alpha_i^{\vee}\rangle < p-4$ for all $0\leq i\leq f-1$.
We define $V_1$ as the image of the unique (up to scalar) nonzero map $\Proj_{\GL_2(k)}F(\t_\lambda(-3\ovl{\eta}_j))\rightarrow V\otimes_{\F} F(\alpha_j)$. 
Since $F(\t_\lambda(-3\ovl{\eta}_j))$ does not occur in the $K$-socle of $V\otimes_{\F}F(\alpha_j)$, $V_1$ also has Loewy length $2$, and each Serre weight occurring in $\rad_K(V_1)$ must have a nonsplit extension with $F(\t_\lambda(-3\ovl{\eta}_j))$. %
Comparing Jordan--H\"older factors of $V\otimes_{\F} F(\alpha_j)$ and using Lemma \ref{lm:ext1}, we find that $V_1$ has length two with socle $F(\t_\lambda(-2\ovl{\eta}_j))$ and cosocle $F(\t_\lambda(-3\ovl{\eta}_j))$.
We define $V_2\subset (V\otimes_{\F} F(\alpha_j))/V_1$ as the image of a nonzero map $\Proj_{\GL_2(k)}F(\t_\lambda(-\ovl{\eta}_j))\rightarrow (V\otimes_{\F} F(\alpha_j))/V_1$, and $V_3$ as the quotient of $(V\otimes_{\F}F(\alpha_j))/V_1$ by $V_2$.

Using the fact that $\eps=-1$ and Lemma \ref{lm:princ-series}\ref{it:princ-series-1} and \ref{it:princ-series-3}, we know that $V$ is a subrepresentation of the principal series $\Ind_{I}^{K}\chi$ with $\chi=\chi_{\lambda}^s$. Therefore, $V\otimes_{\F} F(\alpha_j)$ is a subrepresentation of \[\big(\Ind_{I}^{K}\chi\big)\otimes_{\F} F(\alpha_j)\simeq\Ind_{I}^{K}\big(\chi\otimes_{\F}F(\alpha_j)|_{I}\big).\] 
We deduce from the exactness of induction that $\Ind_{I}^{K}(\chi\otimes_{\F}F(\alpha_j)|_{I})$ has a $3$-step increasing filtration whose successive graded pieces are \[\Ind_{I}^{K}\chi\alpha_j, \ \ \Ind_{I}^{K}\chi,\ \ \Ind_{I}^{K}\chi\alpha_j^{-1}.\] 
We claim that 
\[\JH(V_1)= \JH(V\otimes_{\F}F(\alpha_j))\cap\JH(\Ind_I^K\chi\alpha_j). \]
Indeed, recalling $\chi=\chi_{\lambda}^s$, the Jordan--H\"older factors of $\Ind_I^K\chi\alpha_j=\Ind_I^K(\chi_{\lambda}\alpha_j^{-1})^s$ are of the form $F(\t_{\lambda-\alpha_j}(-\ovl{\eta}_J))=F(\t_{\lambda}(-2\ovl{\eta}_j-\ovl{\eta}_J))$ for $J\subset\cJ$, and the claim is checked  by comparing with $\JH(V\otimes_{\F}F(\alpha_j))$ given in the first paragraph of the proof.
Since $(\Ind_I^K\chi)\otimes_{\F}F(\alpha_j)$ is a subquotient of $\Ind_I^KW_{\chi,2}$, it is multiplicity-free by Lemma \ref{lemma-IndW2-multione}, so we deduce that  
\begin{equation}\label{eq:V1}V_1=(V\otimes_{\F}F(\alpha_j))\cap(\Ind_I^K\chi\alpha_j)\end{equation}
and hence an embedding
\[(V\otimes_{\F}F(\alpha_j))/V_1\hookrightarrow \Ind_I^K(\chi\otimes_{\F}F(\alpha_j)|_I)/\Ind_I^K\chi\alpha_j\cong\Ind_I^KE_{\chi,\chi\alpha_j^{-1}},\]
where the isomorphism holds because $(\chi\otimes_{\F}F(\alpha_j)|_I)/\chi\alpha_j$ is isomorphic to $E_{\chi,\chi\alpha_j^{-1}}$ as $I$-representa\-tion.

 As in the proof of Lemma \ref{lm:IndE}, the $K$-socle of $\Ind_{I}^KE_{\chi,\chi\alpha_j^{-1}}$ is equal to $F(\lambda)\oplus F(\t_{\lambda}(2\ovl{\eta}_j))$.  
In particular, $F(\t_\lambda(-\ovl{\eta}_j))$ is not a subrepresentation of $V_2$. As $F(\t_\lambda(\ovl{\eta}_j))$ and $F(\t_\lambda(2\ovl{\eta}_j))$ are not Jordan--H\"older factors of $\Proj_{\GL_2(k)}F(\t_\lambda(-\ovl{\eta}_j))$ (cf.~Lemmas \ref{lm:princ-series}\ref{it:princ-series-2} and \ref{lm:change-origin}), this implies that the socle of $V_2$ is equal to $F(\lambda)$ and hence $V_2$ is a nonsplit extension of $F(\t_{\lambda}(-\ovl{\eta}_j)) $ by $F(\lambda)$, as desired.
We deduce that $\JH(V_3)=\{F(\t_{\lambda}(2\ovl{\eta}_j)),F(\t_{\lambda}(\ovl{\eta}_j))\}$. Since $V_3$ has cosocle $F(\t_{\lambda}(\ovl{\eta}_j))$ by the first paragraph of the proof, $V_3$ has to be a nonsplit extension of $F(\t_{\lambda}(\ovl{\eta}_j))$ by $F(\t_{\lambda}(2\ovl{\eta}_j))$ as desired. 

Now we prove the last assertion (still when $\eps=-1$). We only prove that the extension of $V_2$ by $V_1$ %
is nonsplit, the other case being analogous. It suffices to prove that $V\otimes_{\F} F(\alpha_j)$ admits a subquotient isomorphic to the (unique) nonsplit extension $\mathcal{E}$ of $F(\t_{\lambda}(-\ovl{\eta}_j))$ by $F(\t_{\lambda}(-2\ovl{\eta}_j))$. %
As $V\otimes_{\F} F(\alpha_j)$ embeds in $(\Ind_I^K\chi)\otimes_{\F}F(\alpha_j)$, which is multiplicity-free, we are reduced to prove that $(\Ind_I^K\chi)\otimes_{\F}F(\alpha_j)$ admits a subquotient isomorphic to $\mathcal{E}$.
It follows from the proof of Lemma \ref{lm:IndE} that $I(F(\t_{\lambda}(-2\ovl{\eta}_j)),\sigma_{\chi})$ is isomorphic to a subquotient of $\Ind_I^K E_{\chi\alpha_j,\chi}$. Note that $\sigma_{\chi}\cong F(\t_{\lambda}(-\ovl{\eta}_{\cJ}))$ by Lemma \ref{lm:princ-series}\ref{it:princ-series-1}, so $F(\t_{\lambda}(-\ovl{\eta}_j))$ is a Jordan--H\"older factor of $I(F(\t_{\lambda}(-2\ovl{\eta}_j)),F(\t_{\lambda}(-\ovl{\eta}_{\cJ})))$ by Lemma~\ref{lm:princ-series}\ref{it:princ-series-3}.
This finishes the proof in the case $\eps=-1$.

To deal with the case $\eps=+1$, 
assume first $f\geq 2$. As $\eps=+1$, we know that $V$ is a quotient of $\Ind_I^K \chi_\mu$, where $\mu \defeq \t_\lambda(\ovl\eta_j)$ (use Lemma~\ref{lm:change-origin} and note that
$\lambda = \t_\mu(-\ovl\eta_j)$).
As $\lambda$ is $4$-deep in $\un{C}_0$ we have $3<\langle\mu,\alpha_i^\vee\rangle<p-4$ for all $0\leq i\leq f-1$, and we can use a similar argument as in the case $\eps=-1$
(this is where we need the assumption that $\lambda$ is $4$-deep rather than just $3$-deep). 

The case $\eps = +1$ is a little subtler when $f=1$ (i.e.\ $k=\F_p$), because $V$ is neither a subrepresentation nor a quotient of any principal series. To handle this case, we note the following exact sequence (see \cite[\S3]{BP})
\begin{equation*}
0\ra V\ra \Inj_{\GL_2(\F_p)}F(\lambda)\ra V'\ra0,
\end{equation*}
where $V'=\Ind_{I}^K\chi_{\lambda}$ is a principal series, and the decomposition (\cite[Prop.~3.3(2)]{LMS})
\[(\Inj_{\GL_2(\F_p)}F(\lambda))\otimes_{\F}F(\alpha_0)\cong \Inj_{\GL_2(\F_p)}F(\t_\lambda(2\ovl{\eta}_0))\oplus \Inj_{\GL_2(\F_p)}F(\lambda)\oplus \Inj_{\GL_2(\F_p)}F(\t_\lambda(-2\ovl{\eta}_0)). \] 
By tensoring with $F(\alpha_0)$ we obtain
\begin{equation}\label{eq:V-V'}
0\ra V\otimes_{\F}F(\alpha_0)\ra (\Inj_{\GL_2(\F_p)}F(\lambda))\otimes_{\F}F(\alpha_0)\ra V'\otimes_{\F}F(\alpha_0)\ra0.
\end{equation}
Like in the case $\eps=-1$, let $V_1$ denote the image of the unique (up to scalar) nonzero map $\Proj_{\GL_2(k)}F(\t_\lambda(3\ovl{\eta}_0))\rightarrow V\otimes_{\F} F(\alpha_0)$.
Dually, we define $V_3$ as the image of the unique nonzero map $V\otimes_{\F} F(\alpha_0)\rightarrow \Inj_{\GL_2(k)}F(\t_\lambda(-2\ovl{\eta}_0))$ extending $F(\t_\lambda(-2\ovl{\eta}_0)) \hookrightarrow \Inj_{\GL_2(k)}F(\t_\lambda(-2\ovl{\eta}_0))$ (and using that $F(\t_\lambda(-2\ovl{\eta}_0))\hookrightarrow V\otimes_{\F} F(\alpha_0)$).
Comparing Jordan--H\"older factors and using again the first paragraph of the proof, we see that $V_1$ and $V_3$ are as in the statement of this lemma.
Let $R$ denote the kernel of $V\otimes_{\F} F(\alpha_0)\onto V_3$ and let $V_2 \defeq R/V_1$.
It remains to show that three nonsplit extensions occur as subquotients of $V\otimes_{\F} F(\alpha_0)$, namely the nonsplit extensions of $F(\t_\lambda(\ovl{\eta}_0))$ by $F(\t_\lambda(2\ovl{\eta}_0))$, resp.\ $F(\t_\lambda(\ovl{\eta}_0))$ by $F(\lambda)$ (i.e.\ $V_2$ is nonsplit), resp.\ $F(\t_\lambda(-\ovl{\eta}_0))$ by $F(\lambda)$, cf.\ the extension graph \eqref{eq:ext-graph-tensor}.

If the nonsplit extension of $F(\t_\lambda(-\ovl{\eta}_0))$ by $F(\lambda)$ does not occur in $V\otimes_{\F} F(\alpha_0)$, then the image of the composition
\begin{equation*}
V\otimes_{\F} F(\alpha_0) \into (\Inj_{\GL_2(\F_p)}F(\lambda))\otimes_{\F}F(\alpha_0) \onto \Inj_{\GL_2(\F_p)}F(\lambda)
\end{equation*}
is contained in the unique subrepresentation isomorphic to $V$, so by~\eqref{eq:V-V'} we get an induced surjection $V'\otimes_{\F} F(\alpha_0) \onto (\Inj_{\GL_2(\F_p)}F(\lambda))/V$, where the latter representation is a nonsplit extension of $F(\lambda)$ by $F(\t_\lambda(-\ovl{\eta}_0))$, which contradicts the case $\eps = -1$.
(Note that $V'$ satisfies the relaxed genericity condition assumed in the case $\eps = -1$ above, as $\ang{\t_{\lambda}(-\eta_0), \alpha_0^\vee}=p-1-\ang{\lambda,\alpha_0^\vee}$.)
A similar argument applies to show the other two nonsplit extensions $\cE$ occur, using always~\eqref{eq:V-V'} and the projection $(\Inj_{\GL_2(\F_p)}F(\lambda))\otimes_{\F}F(\alpha_0) \onto \Inj_{\GL_2(\F_p)} (\soc_{\GL_2(\Fp)} \cE)$.
\end{proof}

\begin{prop}\label{prop:main}
Suppose that $\chi = \chi_\lambda$, where $\lambda$ is $4$-deep in $\un{C}_0$, i.e.\ $3<\ang{\lambda,\alpha_i^\vee}< p-5$ for all $0\leq i\leq f-1$. 
Let $\un\eps \in \set{-1,0,1}^{\cJ}$ and $0\leq j\leq f-1$. 
If $\eps_j \ne 0$, then there is an increasing $3$-step filtration of $D_{\lambda,\un{\eps}}\otimes_{\F} F(\alpha_j)$ whose successive graded pieces are:
\begin{equation}\label{eq:incr-filtr}
  D_{\lambda+\eps_j\alpha_j,\un{\eps}}, \quad D_{\lambda,\un{\eps}}, \quad D_{\lambda-\eps_j\alpha_j,\un{\eps}}.
\end{equation}
As a consequence, there is an embedding $D_{\lambda+\eps_j\alpha_j,\un{\eps}}\hookrightarrow D_{\lambda,\un{\eps}}\otimes_{\F} F(\alpha_j)$ whose cokernel has socle $F(\lambda)\oplus F(\t_\lambda(-2\eps_j\ovl{\eta}_j))$.

Assume moreover that $\lambda$ is $5$-deep in $\un{C}_0$. 
If $\sigma$, $\sigma'$ are irreducible constituents of $D_{\lambda,\un{\eps}}\otimes_{\F} F(\alpha_j)$ such that $\Ext^1_{\GL_2(k)}(\sigma,\sigma') \ne 0$,
  then either the nonsplit extension of $\sigma$ by $\sigma'$ or the nonsplit extension of $\sigma'$ by $\sigma$ occurs as subquotient of $D_{\lambda,\un{\eps}}\otimes_{\F} F(\alpha_j)$.
\end{prop}

We remark that if $\eps_j = 0$, then the arguments in the proof simplify and show that $D_{\lambda,\un{\eps}}\otimes_{\F} F(\alpha_j) \cong D_{\lambda+\alpha_j,\un{\eps}} \oplus D_{\lambda,\un{\eps}} \oplus D_{\lambda-\alpha_j,\un{\eps}}.$

\begin{proof}
By Lemma \ref{socle}, we know what are the socle and cosocle of $D_{\lambda,\un{\eps}}\otimes_{\F} F(\alpha_j)$.

During this proof, we will use the notation $\eta'_{\cJ}\defeq\sum_{i\in \cJ}\eps_i\eta_i$ %
(note that $\eta'_{\cJ}$ depends on the sign $\un{\eps}$). We recall that $\t_\lambda(2\eps_j\ovl{\eta}_j+\ovl{\eta}'_{\cJ})=\t_{\lambda+\eps_j\alpha_j}(\ovl{\eta}'_{\cJ})$ by Remark \ref{rk:t_lambda}\ref{it:t_lambda:3}. 
By Lemma \ref{lemmJH}, there exists a unique (up to scalar) nonzero map \[\Proj_{\GL_2(k)}F(\t_\lambda(2\eps_j\ovl{\eta}_j+\ovl{\eta}'_{\cJ}))\rightarrow D_{\lambda,\un{\eps}}\otimes_{\F} F(\alpha_j);\] let $W_1$ be its image. The socle of $W_1$ is contained in the socle of $D_{\lambda,\un{\eps}}\otimes_{\F} F(\alpha_j)$. But $F(\t_\lambda(2\eps_j\ovl{\eta}_j))$ is the only constituent of this socle which is also a constituent of $\Proj_{\GL_2(k)}F(\t_\lambda(2\eps_j\ovl{\eta}_j+\ovl{\eta}'_{\cJ}))$, cf.~Lemmas \ref{lm:princ-series}\ref{it:princ-series-2} and \ref{lm:change-origin}. This implies that $W_1$ is a quotient of $\Proj_{\GL_2(k)}F(\t_\lambda(2\eps_j\ovl{\eta}_j+\ovl{\eta}'_{\cJ}))$ with socle $F(\t_\lambda(2\eps_j\ovl{\eta}_j))$ and such that $[W_1:F(\t_\lambda(2\eps_j\ovl{\eta}_j))]=1$. We conclude that $W_1$ is isomorphic to $D_{\lambda+\eps_j\alpha_j,\un{\eps}}$. Let $Q$ be the quotient of $D_{\lambda,\un{\eps}}\otimes_{\F} F(\alpha_j)$ by $W_1$. Then $Q$ has cosocle isomorphic to the direct sum of $F(\t_\lambda(\ovl{\eta}'_\cJ))$ and $F(\t_\lambda(-2\eps_j\ovl{\eta}_j+\ovl{\eta}'_\cJ))$. Let $W_2$ be the image in $Q$ of the unique nonzero map $\Proj_{\GL_2(k)}F(\t_\lambda(\ovl{\eta}'_\cJ))\rightarrow Q$ and let $W_3\defeq Q/W_2$. Then $W_3$ is a quotient of $\Proj_{\GL_2(k)}F(\t_\lambda(-2\eps_j\ovl{\eta}_j+\ovl{\eta}'_\cJ))$.

We claim that $F(\lambda)$ is in the socle of $W_2$. Let's assume it for now. As $W_2$ is multiplicity-free, it has a unique quotient with socle $F(\lambda)$, namely $W_2$ has a quotient isomorphic to $D_{\lambda,\un{\eps}}$.

We can check that the Serre weight $F(\t_\lambda(-2\eps_j\ovl{\eta}_j))$ is not a subquotient of $\Proj_{\GL_2(k)}F(\t_\lambda(\ovl{\eta}'_\cJ))$ (again, by Lemmas \ref{lm:princ-series}\ref{it:princ-series-2} and \ref{lm:change-origin}) so that $F(\t_\lambda(-2\eps_j\ovl{\eta}_j))$ is a constituent of the socle of $W_3$. As above, we can conclude that $W_3$ has a quotient isomorphic to $D_{\lambda-\eps_j\alpha_j,\un{\eps}}$. It follows from length considerations that we must have $W_2\simeq D_{\lambda,\un{\eps}}$ and $W_3\simeq D_{\lambda-\eps_j\alpha_j,\un{\eps}}$.

We still have to prove that $F(\lambda)$ is contained in the socle of $W_2$ or equivalently that $F(\lambda)$ is a subquotient of $W_2$. Assume it is not the case. Let $\widetilde{W}_2$ be the image in $D_{\lambda,\un{\eps}}\otimes_{\F} F(\alpha_j)$ of the unique nonzero map $\Proj_{\GL_2(k)}F(\t_\lambda(\ovl{\eta}'_\cJ))\rightarrow D_{\lambda,\un{\eps}}\otimes_{\F} F(\alpha_j)$. Then $W_2$ is a quotient of $\widetilde{W}_2$ and the kernel of $\widetilde{W}_2\rightarrow W_2$ is contained in $W_1$. Thus $F(\lambda)$ is not a subquotient of $\widetilde{W}_2$ (as $\eps_j \ne 0$). The socle of $\widetilde{W}_2$ is contained in the socle of $D_{\lambda,\un{\eps}}\otimes_{\F} F(\alpha_j)$, which itself is equal to $F(\t_{\lambda}(2\eps_j\ovl{\eta}_j))\oplus F(\lambda)\oplus F(\t_{\lambda}(-2\eps_j\ovl{\eta}_j))$ by Lemma \ref{socle} (as $\eps_j \ne 0$). However, $F(\lambda)$ does not appear in the socle of $\widetilde{W}_2$ by hypothesis, neither does $F(\t_\lambda(-2\eps_j\ovl{\eta}_j))$ since it is not a subquotient of $\Proj_{\GL_2(k)}F(\t_\lambda(\ovl{\eta}'_\cJ))$. The socle of $\widetilde{W}_2$ is then equal to $F(\t_\lambda(2\eps_j\ovl{\eta}_j))$. By multiplicity-freeness, we have $\widetilde{W}_2\simeq I(F(\t_\lambda(2\eps_j\ovl{\eta}_j)),F(\t_\lambda(\ovl{\eta}'_\cJ)))$. Consequently $\widetilde{W}_2/F(\t_\lambda(2\eps_j\ovl{\eta}_j))$ contains $F(\t_\lambda(\eps_j\ovl{\eta}_j))$ in its socle by Lemma~\ref{lm:princ-series}\ref{it:princ-series-3}. This contradicts Lemma \ref{lemmaPS}. Namely if $V$ is the unique nonsplit extension of $F(\t_\lambda(\eps_j\ovl{\eta}_j))$ by $F(\lambda)$ (using $\eps_j \ne 0$), then $V\subset D_{\lambda,\un{\eps}}$ and $V\otimes_\F F(\alpha_j)\subset D_{\lambda,\un{\eps}}\otimes_\F F(\alpha_j)$ and Lemma \ref{lemmaPS} shows that $F(\t_{\lambda}(\eps_j\ovl{\eta}_j))$ occurs in $V\otimes_\F F(\alpha_j)$ but is not contained in the socle of $(V\otimes_\F F(\alpha_j))/F(\t_\lambda(2\eps_j\ovl{\eta}_j))$.

The assertion about the socle of cokernel of $D_{\lambda+\eps_j\alpha_j,\un{\eps}}\hookrightarrow D_{\lambda,\un{\eps}}\otimes_{\F} F(\alpha_j)$ is a consequence because we get the lower bound from Lemma~\ref{socle} and the upper bound from the filtration~\eqref{eq:incr-filtr}.

We now justify the final assertion of the proposition.
Recall that $J_{\un\eps} = \set{ i \in \cJ : \eps_i \ne 0 }$, which contains $j$ by assumption.
We first note that the irreducible constituents of $D_{\lambda+\eps_j\alpha_j,\un{\eps}}$, resp.\ $D_{\lambda,\un{\eps}}$, resp.\ $D_{\lambda-\eps_j\alpha_j,\un{\eps}}$ are given by $F(\t_\lambda(\sum_{i\in J_{\un\eps}}\eps_i a_i\ovl{\eta}_i))$, where $0 \le a_i \le 1$ for all $i \ne j$ and where $2 \le a_j \le 3$, resp.\ $0 \le a_j \le 1$, resp.\ $-2 \le a_j \le -1$.
By Lemma~\ref{lm:ext1} we deduce that $\sigma$ and $\sigma'$ occur either in the same or in consecutive graded pieces in \eqref{eq:incr-filtr}.
If they occur in the same graded piece, a nonsplit extension between $\sigma$ and $\sigma'$ has to occur in $D_{\lambda,\un{\eps}}\otimes_{\F}F(\alpha_j)$ by definition of $D_{\lambda,\un{\eps}}$ \eqref{eq:D-lambda-eps}.
If they occur in consecutive graded pieces, we may assume by symmetry %
and by Lemma~\ref{lm:ext1} that $\sigma \cong F(\t_\lambda(\sum_{i\in J_{\un\eps}}\eps_i a_i\ovl{\eta}_i))$ and $\sigma' \cong F(\t_\lambda(\sum_{i\in J_{\un\eps}}\eps_i a_i\ovl{\eta}_i - \eps_j \ovl{\eta}_j))$, where $0 \le a_i \le 1$ for all $i \ne j$ and where $a_j \in \{0,2\}$.
Let $V$ be the unique nonsplit extension of $F(\t_\lambda(\sum_{i\in J_{\un\eps}\setminus\{j\}}\eps_i a_i\ovl{\eta}_i + \eps_j \ovl{\eta}_j))$ by $F(\t_\lambda(\sum_{i\in J_{\un\eps}\setminus\{j\}}\eps_i a_i\ovl{\eta}_i))$ (using $\eps_j \ne 0$), which is a subquotient of $D_{\lambda,\un{\eps}}$.
By Lemma~\ref{lemmaPS} (in particular, the extension graph \eqref{eq:ext-graph-tensor} below it), we deduce that the nonsplit extension of $\sigma'$ by $\sigma$ occurs in $V \otimes_{\F} F(\alpha_j)$ and hence in $  D_{\lambda,\un{\eps}}\otimes_{\F}F(\alpha_j)$.
\end{proof}

\begin{thm}\label{thm:elimination}%
  Fix $\lambda \in X_1(\un{T})$ which is $7$-deep in $\un{C}_0$ and $\un{\eps}\in\{\pm1\}^{\cJ}$. We set
  \[\mathcal{W}_{-\un{\eps}}\defeq\set{F(\t_\lambda(-\sum_{j\in J}\eps_j\ovl{\eta}_j)) : J\subset\cJ}.\]
  There exists a largest subrepresentation $W$ of $(\Inj_{K/Z_1}F(\lambda))[\mathfrak{m}_{K_1}^2]$ satisfying $[W : \tau]=\delta_{F(\lambda),\tau}$ for $\tau\in\mathcal{W}_{-\un{\eps}}$. Moreover it has the following properties:
  \begin{enumerate}%
  \item 
  \label{it:elimination-1}
  $W^{K_1}=D_{\lambda,\un{\eps}}$;
  \item 
    \label{it:elimination-2}
  the representation $W$ is an extension of $\bigoplus_{0\leq i\leq f-1}D_{\lambda+\eps_i\alpha_i,\un{\eps}}$ by $D_{\lambda,\un{\eps}}$;
  \item
    \label{it:elimination-3}
   the representation $W$ is multiplicity-free;
  \item 
    \label{it:elimination-4}
  the cosocle of $W$ is isomorphic to $\bigoplus_{0\leq j\leq f-1}F(\t_\lambda(2\eps_j\ovl{\eta}_j+\sum_{0\leq i\leq f-1}  \eps_i\ovl{\eta}_i))$;
  \item 
    \label{it:elimination-5}
  its submodule structure is determined by: for $0\leq a_i\leq 3$ such that $\sigma_{\un{a}}= F(\t_\lambda(\sum \eps_i a_i \ovl{\eta}_i))$ is a subquotient of $W$, the unique subrepresentation of $W$ with cosocle $\sigma_{\un{a}}$ has constituents $\sigma_{\un{b}}$ for all $\un{b}$ such that $0 \le b_i \le a_i$ for all $i$.
  \end{enumerate}
\end{thm}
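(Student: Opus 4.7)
My plan is to analyze $W$ layer by layer via $W^{K_1}\subseteq W$, bounding $W$ from above via the $\mathfrak{m}_{K_1}/\mathfrak{m}_{K_1}^2$-action and achieving the bound by an explicit construction.

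First I will establish \emph{existence of the maximal $W$ and identify $W^{K_1}$, proving (i)}. If $W_1, W_2\subseteq (\Inj_{K/Z_1}F(\lambda))[\mathfrak{m}_{K_1}^2]$ both satisfy the multiplicity condition, the indecomposability of $\Inj_{K/Z_1}F(\lambda)$ forces $F(\lambda)\subseteq W_1\cap W_2$, so inclusion--exclusion yields $[W_1+W_2:\tau]\leq\delta_{F(\lambda),\tau}$ for $\tau\in\mathcal{W}_{-\un\eps}$; the sum of all such subrepresentations is the maximal $W$. Next, since $(\Inj_{K/Z_1}F(\lambda))^{K_1}=\Inj_{\GL_2(k)}F(\lambda)$, the invariants $W^{K_1}$ sit inside $\Inj_{\GL_2(k)}F(\lambda)$ and satisfy the multiplicity condition. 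By Lemma~\ref{lm:princ-series}\ref{it:princ-series-2}--\ref{it:princ-series-3} combined with the linear independence of $\{\ovl\eta_i\}_{i\in\cJ}$ in $\Lambda_W$, a constituent $F(\mathfrak{t}_\lambda(\sum a_i\ovl\eta_i))$ (with $a_i\in\{0,\pm1\}$) of $\Inj_{\GL_2(k)}F(\lambda)$ avoids $\mathcal{W}_{-\un\eps}\setminus\{F(\lambda)\}$ precisely when $a_i\in\{0,\eps_i\}$ for all $i$; so the largest such subrepresentation is exactly $D_{\lambda,\un\eps}$, and since $D_{\lambda,\un\eps}$ itself satisfies the condition we conclude $W^{K_1}=D_{\lambda,\un\eps}$.

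For the \emph{upper bound on $W/W^{K_1}$}: since $\mathfrak{m}_{K_1}^2W=0$, the multiplication descends to a $\GL_2(k)$-equivariant map
\[
\phi:\mathfrak{m}_{K_1/Z_1}/\mathfrak{m}_{K_1/Z_1}^2\otimes W/W^{K_1}\to W^{K_1}=D_{\lambda,\un\eps},
\]
whose adjoint $W/W^{K_1}\hookrightarrow\bigoplus_j F(\alpha_j)\otimes D_{\lambda,\un\eps}$ is injective (a kernel vector would lift to an element of $W$ annihilated by $\mathfrak{m}_{K_1}$, hence in $W^{K_1}$), using the self-duality of $F(\alpha_j)$ to identify $(\mathfrak{m}_{K_1/Z_1}/\mathfrak{m}_{K_1/Z_1}^2)^\vee\cong\bigoplus_j F(\alpha_j)$. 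Proposition~\ref{prop:main} filters each summand with graded pieces $D_{\lambda+\eps_j\alpha_j,\un\eps}$, $D_{\lambda,\un\eps}$, $D_{\lambda-\eps_j\alpha_j,\un\eps}$; combined with the multiplicity condition (which excludes the middle piece, as it would contribute a second $F(\lambda)$ to $W$, and the top piece, as it contains $F(\mathfrak{t}_\lambda(-\eps_j\ovl\eta_j))\in\mathcal{W}_{-\un\eps}$) together with the constraint that all Jordan--H\"older factors of $W$ occur in $\JH(\Inj_{K/Z_1}F(\lambda))$, I deduce $W/W^{K_1}\hookrightarrow\bigoplus_j D_{\lambda+\eps_j\alpha_j,\un\eps}$.

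Finally I \emph{match the bound and deduce (ii)--(v)}. For each $j$ I construct a nonsplit $K$-extension $E_j$ of $D_{\lambda+\eps_j\alpha_j,\un\eps}$ by $D_{\lambda,\un\eps}$ with $K_1$ acting nontrivially; injectivity of $\Inj_{K/Z_1}F(\lambda)$ embeds $E_j$ extending the given inclusion of its sub $D_{\lambda,\un\eps}$, and $\mathfrak{m}_{K_1}^2E_j\subseteq\mathfrak{m}_{K_1}D_{\lambda,\un\eps}=0$. The multiplicity condition for $E_j$ follows from the injectivity of $\mathfrak{t}_\lambda$ (no constituent of $D_{\lambda+\eps_j\alpha_j,\un\eps}$ lies in $\mathcal{W}_{-\un\eps}$), so $E_j\subseteq W$. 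To produce $E_j$, I apply Proposition~\ref{prop:main} to $D_{\lambda+\eps_j\alpha_j,\un\eps}\otimes F(\alpha_j)$ whose top filtration piece is $D_{\lambda,\un\eps}$: this gives a $\GL_2(k)$-surjection $F(\alpha_j)\otimes D_{\lambda+\eps_j\alpha_j,\un\eps}\twoheadrightarrow D_{\lambda,\un\eps}$ and a nonzero class in $\Hom_{\GL_2(k)}(\mathfrak{m}_{K_1}/\mathfrak{m}_{K_1}^2\otimes D_{\lambda+\eps_j\alpha_j,\un\eps},D_{\lambda,\un\eps})$, which the Hochschild--Serre spectral sequence for $K_1\triangleleft K/Z_1$ lifts to $\Ext^1_{K/Z_1}(D_{\lambda+\eps_j\alpha_j,\un\eps},D_{\lambda,\un\eps})$, with the $7$-deepness of $\lambda$ ensuring the relevant obstructions vanish. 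Summing gives (ii); (iii) follows from multiplicity-freeness of each $D_{\mu,\un\eps}$ (Lemma~\ref{lm:princ-series}\ref{it:princ-series-3}) combined with injectivity of $\mathfrak{t}_\lambda$ (disjointness of constituent sets across summands); (iv) is immediate from $\cosoc_K D_{\lambda+\eps_j\alpha_j,\un\eps}=F(\mathfrak{t}_{\lambda+\eps_j\alpha_j}(\sum_i\eps_i\ovl\eta_i))=F(\mathfrak{t}_\lambda(2\eps_j\ovl\eta_j+\sum_i\eps_i\ovl\eta_i))$; and (v) follows by combining the submodule structure of each $D_{\mu,\un\eps}$ (\cite[Cor.~3.12]{BP}) with the gluing pattern through $D_{\lambda,\un\eps}$ forced by the $E_j$. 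The main obstacle will be the explicit construction of $E_j$ with the correct $K_1$-action --- specifically, verifying that the Hochschild--Serre lift produces a genuine non-split extension realizing the entire upper bound --- as well as the precise verification of the submodule structure (v).
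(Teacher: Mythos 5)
Your existence argument for $W$, your identification of $W^{K_1}=D_{\lambda,\un\eps}$, and your upper bound $W/W^{K_1}\hookrightarrow\bigoplus_j D_{\lambda+\eps_j\alpha_j,\un\eps}$ all match the paper (the paper phrases the upper bound via the long exact sequence of $K_1/Z_1$-invariants and $H^1(K_1/Z_1,D_{\lambda,\un\eps})\cong D_{\lambda,\un\eps}\otimes\bigoplus_j F(\alpha_j)$, which is just the dual of your multiplication map $\phi$, and the exclusion of the middle and bottom Loewy pieces of $D_{\lambda,\un\eps}\otimes F(\alpha_j)$ uses Proposition~\ref{prop:main} exactly as you do). The parts \ref{it:elimination-3}--\ref{it:elimination-5} also follow as you describe once \ref{it:elimination-2} is established.

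The genuine gap is in your construction of $E_j$, i.e., in achieving the lower bound. You produce the desired class in
\[
E_2^{0,1}=\Hom_{\GL_2(k)}\bigl(D_{\lambda+\eps_j\alpha_j,\un\eps},\, D_{\lambda,\un\eps}\otimes_{\F} H^1(K_1/Z_1,\F)\bigr)
\]
of the inflation--restriction sequence for $K_1/Z_1\lhd K/Z_1$ and assert that it lifts to $\Ext^1_{K/Z_1}(D_{\lambda+\eps_j\alpha_j,\un\eps},D_{\lambda,\un\eps})$. But the obstruction lives in $E_2^{2,0}=H^2(\GL_2(k),\Hom(D_{\lambda+\eps_j\alpha_j,\un\eps},D_{\lambda,\un\eps}))$, and the claim that ``$7$-deepness ensures the relevant obstructions vanish'' is not substantiated: there is no general vanishing of $H^2(\GL_2(k),-)$ for modular coefficients under a genericity hypothesis on highest weights, and the $d_2$ differential is not obviously zero. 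You correctly flag this as the main obstacle, but the Hochschild--Serre route does not obviously close it. The paper avoids the $\Ext$-computation entirely: it builds, via parabolic induction from an explicit representation of the Iwahori $I$ (Proposition~\ref{prop:K-rep-by-ind}, whose submodule structure is controlled by the structure of $\Ind_I^K$ of Iwahori representations, Corollary~\ref{cor:iwahori-reps} and Lemma~\ref{lm:princ-series}), a concrete $K/Z_1$-representation $V$, and then shows using Proposition~\ref{prop:J-fil} (this is where the $7$-deepness is actually used) that $W'=V[\mathfrak{m}_{K_1}^2]$ already has the cosocle $\bigoplus_j F(\t_\lambda(2\eps_j\ovl\eta_j+\sum_i\eps_i\ovl\eta_i))$, forcing $W'/W'^{K_1}\subset W/W^{K_1}\subset\bigoplus_j D_{\lambda+\eps_j\alpha_j,\un\eps}$ to be equalities by a cosocle comparison. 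To repair your proof along your own lines, you would either have to prove the vanishing of the obstruction directly, or (better) replace the abstract $\Ext^1$ argument by the explicit construction of $E_j$ inside an induced representation as in the paper.
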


\begin{rem}\label{rk:elimination}
  The proof shows that $\lambda$ only needs to be $4$-deep in $\un{C}_0$ for $W$ to exist and for part~\ref{it:elimination-1} to hold. 
  In particular, in this case $W^{K_1}=D_{\lambda,\un{\eps}}$ is the largest subrepresentation of $(\Inj_{K/Z_1}F(\lambda))[\mathfrak{m}_{K_1}] = \Inj_{\GL_2(k)} F(\lambda)$ satisfying $[W^{K_1} : \tau]=\delta_{F(\lambda),\tau}$ for $\tau\in\mathcal{W}_{-\un{\eps}}$.
\end{rem}

\begin{proof}
  Let $I_\lambda\defeq \Inj_{\GL_2(k)}F(\lambda)$ and
  let $\widetilde{I}_\lambda \defeq (\Inj_{K/Z_1}F(\lambda))[\mathfrak{m}_{K_1}^2]$, which is finite-dimensional by dualising and using Nakayama's lemma. We have $I_\lambda=\widetilde{I}_\lambda[\mathfrak{m}_{K_1}]$.

The existence of a largest subrepresentation $W\subset\widetilde{I}_{\lambda}$ satisfying the desired hypothesis follows exactly as in \cite[Prop.\ 13.1]{BP}. As the representation $D_{\lambda,\un{\eps}}$ satisfies $[W : \tau]=\delta_{F(\lambda),\tau}$ for $\tau\in\mathcal{W}_{-\un{\eps}}$ by Lemma \ref{lm:princ-series}\ref{it:princ-series-3}, we have $D_{\lambda,\un{\eps}}\subset W^{K_1}$. 
Conversely, note that $W^{K_1}$ is a subrepresentation of $\widetilde{I}_{\lambda}^{K_1} \cong I_{\lambda}$. As $[W^{K_1}:F(\lambda)] = 1$ it follows by
\cite[Prop.\ 3.6 \& Cor.~3.11]{BP} that $W^{K_1}$ is multiplicity-free. By Lemma~\ref{lm:princ-series}\ref{it:princ-series-3} and our hypothesis on multiplicities,
$\JH(W^{K_1}) \subset \JH(D_{\lambda,\un{\eps}})$. Hence $W^{K_1}=D_{\lambda,\un{\eps}}$, proving \ref{it:elimination-1}.

Consider the short exact sequence:
\[ 0\rightarrow D_{\lambda,\un{\eps}}\rightarrow W\rightarrow W/D_{\lambda,\un{\eps}}\rightarrow0.\]
The long exact sequence of $K_1/Z_1$-invariants gives an injection 
\[ W/D_{\lambda,\un{\eps}}=(W/D_{\lambda,\un{\eps}})^{K_1}\hookrightarrow H^1(K_1/Z_1,D_{\lambda,\un{\eps}})\simeq D_{\lambda,\un{\eps}}\otimes_{\F} H^1(K_1/Z_1,\F),\]
where the last isomorphism holds because $K_1$ acts trivially on $D_{\lambda,\un{\eps}}$. 
Using the isomorphism $H^1(K_1/Z_1,\F)\simeq\bigoplus_{j=0}^{f-1} F(\alpha_j)$ (see \cite[Prop.\ 5.1]{BP}), we have:
\[ W/D_{\lambda,\un{\eps}}\hookrightarrow \bigoplus_{j=0}^{f-1} (D_{\lambda,\un{\eps}}\otimes_{\F} F(\alpha_j)).\]
For each $0\leq j\leq f-1$, we have a decomposition:
\[ 0\rightarrow D_{\lambda+\eps_j\alpha_j,\un{\eps}}\rightarrow D_{\lambda,\un{\eps}}\otimes_{\F} F(\alpha_j)\rightarrow Q_j\rightarrow0\]
with $\soc_{\GL_2(k)} Q_j=F(\lambda)\oplus F(\t_{\lambda}(-2\eps_j\ovl{\eta}_j))$ by Proposition \ref{prop:main}.

The assumption $[W : F(\lambda)]=1$ implies that
\[ \soc_K(W/D_{\lambda,\un{\eps}})=\soc_K(W/W^{K_1})\hookrightarrow \bigoplus_i F(\t_{\lambda}(\pm2\eps_j\ovl{\eta}_j)).\]
For $0\leq j\leq f-1$, Lemma \ref{lm:ext1} implies that the representation $F(\t_\lambda(-2\eps_j\ovl{\eta}_j))$ has no extension with Jordan--H\"older factors of $D_{\lambda,\un{\eps}}$, consequently the Serre weights $F(\t_\lambda(-2\eps_j\ovl{\eta}_j))$ are not in the socle of $W/D_{\lambda,\un{\eps}}$. We conclude that the image of $W/D_{\lambda,\un{\eps}}$ in $Q_j$ is zero and that $W/D_{\lambda,\un{\eps}}\subset\bigoplus_{j=0}^{f-1} D_{\lambda+\eps_j\alpha_j,\un{\eps}}$.

Let $V$ be the representation of $K$ constructed in Proposition \ref{prop:K-rep-by-ind}. Note that the deepness assumption on $\lambda$ allows us to apply it with $B_i=4$ if $\eps_{i-1}=1$ and $B_i=3$ if $\eps_{i-1}=-1$. Let $W'=V[\mathfrak{m}_{K_1}^2]$. By Proposition \ref{prop:K-rep-by-ind} we have $[W':\tau]=\delta_{F(\lambda),\tau}$ for $\tau\in\mathcal{W}_{-\un{\eps}}$ so that $W'\subset W$ by maximality of $W'$. It follows from Proposition \ref{prop:J-fil} 
with $n=2$ and $n=1$ that
\[ \cosoc_K(W')=\bigoplus_{0\leq j\leq f-1} F(\t_\lambda(2\eps_j\ovl{\eta}_j+\sum_i\eps_i\ovl{\eta}_i))\]
and $W'^{K_1}=D_{\lambda,\un{\eps}}=W^{K_1}$. By what precedes we have an inclusion
\[ W'/W'^{K_1}\subset W/W^{K_1}\subset\bigoplus_{j=0}^{f-1} D_{\lambda+\eps_j\alpha_j,\un{\eps}}.\]
However, the outside terms have the same cosocle, so these inclusions are equalities. From $W^{K_1}=W'^{K_1}$ and $W'/W'^{K_1}= W/W^{K_1}$ we deduce that $W'=W$. This also proves that $W/D_{\lambda,\un{\eps}}$ is isomorphic to $\bigoplus_{j=0}^{f-1} D_{\lambda+\eps_j\alpha_j,\un{\eps}}$ and gives \ref{it:elimination-2}. We then deduce properties \ref{it:elimination-3} to \ref{it:elimination-5} from the properties of $V$ given by Proposition \ref{prop:K-rep-by-ind}.
\end{proof}

\begin{cor}\label{cor:J-fil}
  Let $\rhobar : G_L\rightarrow\GL_2(\F)$ be a tame Galois representation such that $\rhobar|_{I_L}\simeq\overline{\tau}(s,\mu)$ such that $\mu-\eta$ is $8$-deep in $\un{C}_0$.
 \begin{enumerate}%
  \item 
  \label{it:J-fil-1}
  Let $\tau$ be a finite-dimensional semisimple representation of $K$ over $\F$ of the form $\tau\cong\bigoplus_{\sigma\in W(\rhobar)}\sigma^{\oplus m_{\sigma}}$, with $m_{\sigma}\geq 1$ for all $\sigma$. Then there exists a largest $K$-subrepresentation $V$ inside $(\Inj_{K/Z_1} \tau)[\mathfrak{m}_{K_1}^2]$ with $\soc_K V = \tau$ such that
    for all $\sigma \in W(\rhobar)$,
    \[ [V : \sigma]=[\tau : \sigma]=m_\sigma.\]
    Moreover $V \cong \bigoplus_{\sigma \in W(\rhobar)} V_\sigma^{\oplus m_{\sigma}}$, where $V_\sigma \subset (\Inj_{K/Z_1} \sigma)[\mathfrak{m}_{K_1}^2]$ denotes the largest $K$-subrep\-resenta\-tion of $(\Inj_{K/Z_1}\sigma)[\mathfrak{m}_{K_1}^2]$ such that $[V_\sigma:\sigma'] = \delta_{\sigma, \sigma'}$ for all $\sigma' \in W(\rhobar)$.
  \item 
  \label{it:J-fil-2}
  Fix $\sigma \in W(\rhobar)$ and choose $\lambda \in X_1(\un{T})$ such that $\sigma \cong F(\lambda)$. There exists $\un{\eps}=(\eps_i) \in \{\pm 1\}^{\cJ}$ such that
    $W(\rhobar) = \{ F(\t_\lambda(-\sum_{i \in J} \eps_i \ovl{\eta}_i)) : J \subset \cJ\}$. Then $V_\sigma$ is
    multiplicity-free and $V_\sigma^{K_1}\simeq D_{\lambda,\un{\eps}}$. Moreover the Jordan--H\"older constituents of $V_\sigma$ are the $\sigma_{\un{a}} = F(\t_\lambda(\sum \eps_i a_i \ovl{\eta}_i))$,
    where $a_i \ge 0$ and $\sum_i \lfloor a_i/2\rfloor \le 1$, with submodule structure determined as follows: the unique
    subrepresentation of $V_{\sigma}$ with cosocle $\sigma_{\un{a}}$ has constituents $\sigma_{\un{b}}$ for all $\un{b}$ such that
    $0 \le b_i \le a_i$ for all $i$.
    \item
    \label{it:J-fil-3} If $\sigma$ and $\sigma'$ are both in $W(\rhobar)$ and nonisomorphic, the sets $\JH(V_\sigma)$ and $\JH(V_{\sigma'})$ are disjoint.
  \end{enumerate}
\end{cor}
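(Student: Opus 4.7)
The plan is to prove the three parts in the order (ii), (iii), (i).

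For part (ii), fix $\sigma = F(\lambda) \in W(\rhobar)$. By Proposition~\ref{prop:SW:graph}, $W(\rhobar) = \{F(\t_{\mu-\eta}(s\ovl{\eta}_J)) : J \subset \cJ\}$; since $s \in \un W$ acts componentwise by $\pm 1$ on $\Lambda_W$, this is a ``cube'' in the extension graph centred at $\mu-\eta$. Choosing $J_\sigma \subset \cJ$ with $F(\lambda) = F(\t_{\mu-\eta}(s\ovl{\eta}_{J_\sigma}))$ and applying Lemma~\ref{lm:change-origin} together with Remark~\ref{rk:graph-auto}, the change of origin from $\mu-\eta$ to $\lambda$ is a graph automorphism of the form stated there and carries this cube to a cube at $\lambda$: there exists $\un{\eps} \in \{\pm 1\}^\cJ$ with
\[
W(\rhobar) = \{F(\t_\lambda(-\textstyle\sum_{i \in J} \eps_i \ovl{\eta}_i)) : J \subset \cJ\}.
\]
Since $\mu-\eta$ is $8$-deep in $\un C_0$, Remark~\ref{rk:t_lambda}\ref{it:t_lambda:4} implies $\lambda$ is at least $7$-deep, so Theorem~\ref{thm:elimination} applies to $(\lambda,\un{\eps})$ and produces a representation with the properties claimed in (ii), which we take as $V_\sigma$.

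For part (iii), fix distinct $\sigma,\sigma' \in W(\rhobar)$, write $\sigma = F(\lambda)$ with signs $\un{\eps}$ as in (ii), and let $J_0 \subset \cJ$ be the (nonempty) subset with $\sigma' = F(\t_\lambda(-\sum_{i \in J_0} \eps_i \ovl{\eta}_i))$. Applying (ii) to $\sigma'$ yields signs $\un{\eps}'$, and Lemma~\ref{lm:change-origin} (with $w_{0,J_0}$ as in Remark~\ref{rk:t_lambda}\ref{it:t_lambda:1}) expresses $\JH(V_{\sigma'})$ in the extension graph centred at $\lambda$ as
\[
\{F(\t_\lambda(w_{0,J_0}^{-1}(\textstyle\sum_i \eps'_i b_i \ovl{\eta}_i) - \sum_{i \in J_0} \eps_i \ovl{\eta}_i)) : b_i \ge 0,\, \textstyle\sum_i \lfloor b_i/2 \rfloor \le 1\},
\]
while by (ii), $\JH(V_\sigma) = \{F(\t_\lambda(\sum_i \eps_i a_i \ovl{\eta}_i)) : a_i \ge 0,\, \sum \lfloor a_i/2 \rfloor \le 1\}$. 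Injectivity of $\t_\lambda$ reduces disjointness to a statement in $\Lambda_W \cong \ZZ^\cJ$: fixing $i_0 \in J_0$, the $i_0$-th coordinate of a $\JH(V_\sigma)$-vector equals $\eps_{i_0} a_{i_0}$ with $a_{i_0} \ge 0$, whereas that of a $\JH(V_{\sigma'})$-vector equals $\pm\eps'_{i_0} b_{i_0} - \eps_{i_0}$; equating and combining with the nonnegativity of the $a_i, b_i$ and the constraints $\sum \lfloor a_i/2 \rfloor \le 1$, $\sum \lfloor b_i/2 \rfloor \le 1$ over all coordinates in $J_0$ rules out simultaneous solutions by a short case check.

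For part (i), the existence of a largest $V$ is standard, following the first paragraph of the proof of Theorem~\ref{thm:elimination} (itself following \cite[Prop.~13.1]{BP}). For the decomposition, fix an isomorphism $\Inj_{K/Z_1}\tau \cong \bigoplus_a \Inj_{K/Z_1}\sigma_a$ compatible with $\soc V = \tau = \bigoplus_a \sigma_a$. Using (iii), the direct sum $\bigoplus_a V_{\sigma_a}$ satisfies all conditions in (i), hence sits inside $V$. For the reverse inclusion, $V^{K_1} \subset (\Inj_{K/Z_1}\tau)^{K_1} = \Inj_{\GL_2(k)}\tau = \bigoplus_a \Inj_{\GL_2(k)}\sigma_a$ has socle $\tau$ and inherits the multiplicity constraints, so Remark~\ref{rk:elimination} together with (iii) forces $V^{K_1} = \bigoplus_a D_{\lambda_{\sigma_a},\un{\eps}_{\sigma_a}} = \bigoplus_a V_{\sigma_a}^{K_1}$. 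The quotient $V/V^{K_1}$ then injects into $H^1(K_1/Z_1,V^{K_1}) = \bigoplus_a H^1(K_1/Z_1, V_{\sigma_a}^{K_1})$, and one argues (again using (iii) to exclude ``mixing'' across summands) that the image lies in $\bigoplus_a V_{\sigma_a}/V_{\sigma_a}^{K_1}$, whence $V = \bigoplus_a V_{\sigma_a}$.

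The main obstacle is the combinatorial disjointness check in (iii): although conceptually $V_\sigma$ and $V_{\sigma'}$ extend in opposite directions from one another within the extension graph, the precise identification of $\JH(V_{\sigma'})$ in the graph centred at $\lambda$ requires careful tracking of $w_{0,J_0}$, the origin shift $-\sum_{i \in J_0} \eps_i \ovl{\eta}_i$, and the relation between $\un{\eps}$ and $\un{\eps}'$ dictated by Remark~\ref{rk:graph-auto}.
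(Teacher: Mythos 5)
Your parts (ii) and (iii) follow the paper's route; for (iii) the paper derives the cleaner description
\[
\JH(V_{\sigma'}) = \Big\{ F\big(\t_\lambda(-\textstyle\sum_{i\in J}\eps_i(b_i+1)\ovl\eta_i + \sum_{i\notin J}\eps_i b_i\ovl\eta_i)\big) : b_i \ge 0,\ \textstyle\sum_i\lfloor b_i/2\rfloor \le 1 \Big\}
\]
in the graph centered at $\lambda$, so that disjointness from $\JH(V_\sigma)$ is immediate: equating $i$-th coordinates for $i\in J$ forces $a_i = -(b_i+1)<0$, hence $J=\emptyset$, contradicting $\sigma\neq\sigma'$. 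Your version with $w_{0,J_0}$ and $\un\eps'$ is correct in spirit, but you leave exactly this elementary check as an unverified ``short case check''. The more serious issue is part (i), where you take a genuinely different route that has a gap.

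The paper proves (i) \emph{without} (iii), by a short projectivity argument: maximality of $V_\sigma$ forces $\soc_K\big((\Inj_{K/Z_1}\sigma)[\fm_{K_1}^2]/V_\sigma\big)$ to consist only of weights in $W(\rhobar)$, hence so does $\soc_K\big(V/\oplus_\sigma V_\sigma^{m_\sigma}\big)$; meanwhile applying the exact functor $\Hom_K(\Proj_{K/Z_1}\sigma,-)$ to $0\to\oplus_\sigma V_\sigma^{m_\sigma}\to V\to V/\oplus_\sigma V_\sigma^{m_\sigma}\to 0$ and comparing dimensions (both the left and middle terms have dimension $m_\sigma$) gives $\Hom_K(\Proj_{K/Z_1}\sigma, V/\oplus V_\sigma^{m_\sigma})=0$ for all $\sigma\in W(\rhobar)$, so this socle is zero and $V=\oplus V_\sigma^{m_\sigma}$. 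Your route through $H^1(K_1/Z_1,V^{K_1})$ breaks down precisely at the ``exclude mixing'' step, which is not just left implicit but fails as stated: by Lemma~\ref{socle}, $\soc_K\big(\oplus_j V_{\sigma_a}^{K_1}\otimes F(\alpha_j)\big)$ also contains the weights $F(\t_{\lambda_a}(-2\eps_{a,j}\ovl\eta_j))$, which in the single-$\sigma$ case of Theorem~\ref{thm:elimination} are excluded because they have no $\Ext^1$ with constituents of $D_{\lambda,\un\eps}$ (Lemma~\ref{lm:ext1}). Here, however, $F(\t_{\lambda_a}(-2\eps_{a,j}\ovl\eta_j))$ \emph{is} adjacent to $\sigma_b \defeq F(\t_{\lambda_a}(-\eps_{a,j}\ovl\eta_j))\in W(\rhobar)$, which is a constituent of another summand $V_{\sigma_b}^{K_1}\subset V^{K_1}$; thus the $\Ext^1$-vanishing argument no longer applies, and (iii) --- a statement about disjointness of Jordan--H\"older sets, not about the vanishing of extension groups --- does not by itself exclude such a class. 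Patching this would in effect reproduce the paper's $\Hom_K(\Proj_{K/Z_1}\sigma,-)$ count, which is the cleaner and self-contained route.
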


\begin{rem}
In Corollary \ref{cor:J-fil}\ref{it:J-fil-2} the condition $a_i \ge 0$ and $\sum_i \lfloor a_i/2\rfloor \le 1$ means exactly that $a_i \in \set{0,1,2,3}$ and that at most one of them is $\geq2$.
\end{rem}

\begin{proof}
 Part \ref{it:J-fil-1} follows by the same argument as in the proof of \cite[Prop.\ 13.1]{BP}. For the existence of $V$ we have to prove that, if $V_1$ and $V_2$ are two subrepresentations of $(\Inj_{K/Z_1} \tau)[\mathfrak{m}_{K_1}^2]$ such that $\Hom_{K}(\sigma,V_i)\simeq\Hom_{K}(\Proj_{K}\sigma,V_i)$ for all $\sigma \in W(\rhobar)$, then $V_1+V_2$ has the same property. This follows from the exactness of the sequence
  \begin{multline*} 0\longrightarrow\Hom_{K}(\Proj_{K/Z_1}\sigma,V_1\cap V_2) \\ 
  \longrightarrow \Hom_{K}(\Proj_{K/Z_1}\sigma,V_1)\oplus \Hom_{K}(\Proj_{K/Z_1}\sigma,V_2) \\ \longrightarrow\Hom_{K}(\Proj_{K/Z_1}\sigma,V_1+V_2)\longrightarrow0.\end{multline*}
  By assumption, we have
  \[ \dim_\F\big(\Hom_{K}(\Proj_{K/Z_1}\sigma,V_i)\big)=\dim_\F\big(\Hom_{K}(\Proj_{K/Z_1}\sigma,V_1\cap V_2)\big)=m_{\sigma}\]
  so that
  \[ \dim_\F\big(\Hom_{K}(\Proj_{K/Z_1}\sigma,V_1+V_2)\big)=m_{\sigma}=\dim_\F\big(\Hom_{K}(\sigma,V_1+V_2)\big).\]
  As $\tau\simeq\bigoplus_{\sigma\in W(\rhobar)}\sigma^{\oplus m_{\sigma}}$, there is a $K$-equivariant inclusion
  \[V\hookrightarrow\bigoplus_{\sigma\in W(\rhobar)}(\Inj_{K/Z_1}\sigma)^{\oplus m_{\sigma}}[\mathfrak{m}_{K_1}^2]\]
  and, by maximality of $V$, we have
  \[ \bigoplus_{\sigma\in W(\rhobar)}V_\sigma^{\oplus m_{\sigma}}\subset V\subset\bigoplus_{\sigma\in W(\rhobar)} (\Inj_{K/Z_1}\sigma)^{\oplus m_{\sigma}}[\mathfrak{m}_{K_1}^2].\]
By definition of $V_\sigma$, the socle of $(\Inj_{K/Z_1}\sigma)[\mathfrak{m}_{K_1}^2]/V_\sigma$ contains only Serre weights of $W(\rhobar)$. Hence the socle of $V/(\bigoplus_{\sigma\in W(\rhobar)}V_\sigma^{\oplus m_{\sigma}})$ has the same property. However it follows from the exactness of $\Hom_{K}(\Proj_{K/Z_1}\sigma,-)$ that we have for all $\sigma\in W(\brho)$
  \[  \Hom_{K}\Big(\Proj_{K/Z_1}\sigma,V/\big(\bigoplus_{\sigma\in W(\rhobar)}V_\sigma^{\oplus m_{\sigma}}\big)\Big)=0,\]
so that $\soc_K(V/(\bigoplus_{\sigma\in W(\rhobar)}V_\sigma^{\oplus m_{\sigma}}))=0$ and
  \[ V=\bigoplus_{\sigma\in W(\rhobar)}V_\sigma^{\oplus m_{\sigma}}.\]
  
  Now we prove part \ref{it:J-fil-2}. 
  By Proposition \ref{prop:SW:graph} the elements of $W(\rhobar)$ are of the form $F(\t_{\mu-\eta}(s\ovl{\eta}_{J'}))$ for $J'\subseteq \cJ$ and we let $J\subset\cJ$ be such that $\sigma \cong F(\t_\lambda(0)) \cong F(\t_{\mu-\eta}(s\ovl{\eta}_J))$.
  In particular, all elements of $W(\rhobar)$ are 7-deep in $\un{C}_0$ (for example, by Remark~\ref{rk:t_lambda}\ref{it:t_lambda:4}). 
  By Remark~\ref{rk:graph-auto} there exists $\un{\eps}=(\eps_i) \in \{\pm 1\}^{\cJ}$ such that $W(\rhobar) = \{ F(\t_\lambda(-\sum_{i \in J'} \eps_i \ovl{\eta}_i)) : J' \subset \cJ\}$.
The properties of $V_\sigma$ are then immediate consequences of Theorem \ref{thm:elimination}\ref{it:elimination-1}, \ref{it:elimination-3}, and \ref{it:elimination-5}.
  
  For part \ref{it:J-fil-3}, let $\lambda, \lambda'\in X_1(\un{T})$ be such that $\sigma\cong F(\lambda)$, $\sigma'\cong F(\lambda')$ and $\un{\eps}$ such that
  \[ W(\rhobar)=\set{F(\t_\lambda(\sum_{i\in J} -\eps_i\ovl{\eta}_i)) : J\subset\cJ}.\]
  Then 
  \begin{equation*}
    \JH(V_\sigma) = \{ F(\t_\lambda(\sum_i \eps_i a_i \ovl{\eta}_i)) : a_i \ge 0, \sum_i \lfloor a_i/2\rfloor \le 1 \}.\label{eq:11}
  \end{equation*}
  Choose $J\subset\cJ$ such that $F(\lambda')\cong F(\t_\lambda(-\sum_{i\in J}\eps_i\ovl{\eta}_i))$. 
  Then by part \ref{it:J-fil-2} and Remark~\ref{rk:graph-auto} we see that
  \begin{equation*}
    \JH(V_{\sigma'}) = \{ F(\t_\lambda(- \sum_J \eps_i (b_i+1)\ovl{\eta}_i + \sum_{\cJ \backslash J} \eps_i b_i \ovl{\eta}_i)) : b_i \ge 0, \sum_i \lfloor b_i/2\rfloor \le 1 \}.\label{eq:12}
  \end{equation*}
  (Note that $W(\rhobar)$ is obtained by putting $-1 \le b_i \le 0$.) If $\JH(V_\sigma)$ and $\JH(V_{\sigma'})$ are not disjoint, then $J = \emptyset$ (as $b_j+1 > 0$),
  contradicting $\sigma \not\cong \sigma'$.
\end{proof}

\begin{cor}\label{cor:V-K1}
Let $\rhobar$, $m_\sigma$ and $V$ be as in Corollary \ref{cor:J-fil}. Then 
\[V[\mathfrak{m}_{K_1}]=\bigoplus_{\sigma\in W(\rhobar)}D_{0,\sigma}(\brho)^{\oplus m_{\sigma}},\]
where $D_{0,\sigma}(\brho)$ is the representation of $\GL_2(k)$ constructed in \cite[\S13]{BP}.
\end{cor}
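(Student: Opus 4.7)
The plan is to bootstrap Corollary \ref{cor:J-fil}, using the elementary observation that for any smooth $K$-representation $V$ one has $V[\mathfrak{m}_{K_1}] = V^{K_1}$, since $\mathfrak{m}_{K_1}$ is topologically generated by the elements $k-1$ with $k \in K_1$.

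First I would apply Corollary \ref{cor:J-fil}\ref{it:J-fil-1} to decompose $V \cong \bigoplus_{\sigma \in W(\rhobar)} V_\sigma^{m_\sigma}$, and then take $K_1$-invariants (which commutes with direct sums) to obtain
\[
V[\mathfrak{m}_{K_1}] \cong \bigoplus_{\sigma \in W(\rhobar)} (V_\sigma^{K_1})^{m_\sigma}.
\]
Next I would use Corollary \ref{cor:J-fil}\ref{it:J-fil-2} to identify each summand $V_\sigma^{K_1}$ with $D_{\lambda,\un{\eps}}$, where $\sigma \cong F(\lambda)$ and $\un{\eps} \in \{\pm 1\}^{\cJ}$ is the sign vector such that $W(\rhobar) = \{F(\t_\lambda(-\sum_{i \in J} \eps_i \ovl{\eta}_i)) : J \subset \cJ\}$.

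It then remains to match $D_{\lambda,\un{\eps}}$ with the representation $D_{0,\sigma}(\brho)$ of \cite[\S13]{BP} recalled in the introduction. Both are subrepresentations of $\Inj_{\GL_2(k)} \sigma$, so it suffices to verify they satisfy the same defining property. By Lemma \ref{lm:princ-series}\ref{it:princ-series-3} the Jordan--H\"older factors of $D_{\lambda,\un{\eps}}$ are the $F(\t_\lambda(\sum_{i \in J} \eps_i \ovl{\eta}_i))$ with $J \subset \cJ$; by injectivity of $\t_\lambda$ on $\Lambda^\lambda_W$, this set intersects $W(\rhobar)$ exactly in $\{\sigma\}$, and $\sigma$ sits in the socle with multiplicity one by construction $D_{\lambda,\un{\eps}} = I(F(\lambda), F(\t_\lambda(\sum_i \eps_i \ovl{\eta}_i)))$. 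Thus $D_{\lambda,\un{\eps}}$ satisfies the defining property of $D_{0,\sigma}(\brho)$, yielding $D_{\lambda,\un{\eps}} \subseteq D_{0,\sigma}(\brho)$; the reverse inclusion is exactly the maximality statement of Remark \ref{rk:elimination}.

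I do not expect a genuine obstacle here: the corollary is a direct assembly of Corollary \ref{cor:J-fil}\ref{it:J-fil-1}, \ref{it:J-fil-2}, and Remark \ref{rk:elimination}, together with an elementary translation between the combinatorial description of $D_{0,\sigma}(\brho)$ in terms of $W(\rhobar)$ and the extension-graph description of $D_{\lambda,\un{\eps}}$.
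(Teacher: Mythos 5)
Your proof is correct and follows the same route as the paper, which simply cites Corollary \ref{cor:J-fil}\ref{it:J-fil-1}, \ref{it:J-fil-2} and Remark \ref{rk:elimination}; you merely unpack the identification $D_{\lambda,\un{\eps}} = D_{0,\sigma}(\brho)$, which follows because both are characterized as the largest subrepresentation of $\Inj_{\GL_2(k)}\sigma$ with multiplicity $\delta_{\sigma,\tau}$ for $\tau\in W(\rhobar)=\mathcal{W}_{-\un\eps}$.
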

\begin{proof}
This follows from Corollary \ref{cor:J-fil}\ref{it:J-fil-1} and \ref{it:J-fil-2}, as well as Remark~\ref{rk:elimination}.
\end{proof}

\subsection{Multiplicity one result for the pro-\texorpdfstring{$p$}{p}-Iwahori}\label{sec:multiplicityoneprop}

The aim of this subsection is to prove that some multiplicity one assumption on the first two layers of the $K_1$-socle filtration implies a multiplicity one result on the first three layers of the $I_1$-socle filtration of an admissible smooth representation of $\GL_2(L)$.

\begin{prop}\label{prop-forspecialW}
Suppose that $\chi = \chi_\lambda$ with $2<\ang{\lambda,\alpha_i^\vee}<p-3$ for all $0\leq i\leq f-1$. 
Let $W$ be a smooth and finite length representation of $I$ over $\F$ satisfying the following conditions:
\begin{enumerate}
\item[$\bullet$] both the socle and cosocle of $W$ are irreducible and isomorphic to $\chi$;
\item[$\bullet$] we have $\soc_I(W)\subsetneq \rad_I(W)$ and $\rad_I(W)/\soc_I(W)$ is semisimple; in other words, the Loewy length of $W$ is equal to $3$.
\end{enumerate} 
Let $Q$ be a nonzero quotient of $\Ind_I^{K}W$ such that $[Q:\sigma_{\chi}]=1$. Then the composition 
\[\chi=\soc_I(W)\hookrightarrow W\overset{f}{\ra} Q|_I\]
is zero, where $f$ is induced by Frobenius reciprocity.
\end{prop}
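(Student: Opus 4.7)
The plan is to apply Frobenius reciprocity and argue by contradiction. Via Frobenius reciprocity, the surjection $F:\Ind_I^KW\onto Q$ corresponds to $f:W\to Q|_I$, and the conclusion that $f|_{\soc_IW}=0$ is equivalent to the restriction $F|_{\Ind_I^K\soc_IW}$ being zero. Suppose for contradiction that $Q_0\defeq F(\Ind_I^K\soc_IW)\neq 0$. Then $Q_0$ is a nonzero quotient of $\Ind_I^K\chi$; since $\chi\neq\chi^s$ by the hypothesis $2<\ang{\lambda,\alpha_i^\vee}<p-3$, the cosocle $\sigma_\chi$ of $\Ind_I^K\chi$ is irreducible and must survive in $Q_0$, giving $[Q_0:\sigma_\chi]=1$.

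Next, applying the exact functor $\Ind_I^K$ to the socle filtration $0\subset\soc_IW\subset\rad_IW\subset W$ produces a filtration of $\Ind_I^KW$ with successive quotients $\Ind_I^K\chi$, $\Ind_I^KM$ (where $M\defeq\rad_IW/\soc_IW$ is semisimple), and $\Ind_I^K\chi$. By Lemma~\ref{lemma-Ext1=dim1}\ref{it1-Ext1=dim1}, the simple summands of $M$ are of the form $\chi\alpha_i^{\pm 1}$, and combining Lemma~\ref{lm:princ-series}\ref{it:princ-series-1} with the injectivity of $\mathfrak{t}_\lambda$ yields $[\Ind_I^K\chi\alpha_i^{\pm 1}:\sigma_\chi]=0$. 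Hence $[\Ind_I^KW:\sigma_\chi]=2$, and the assumption $[Q:\sigma_\chi]=1$ together with $[Q_0:\sigma_\chi]=1$ forces $[\overline Q:\sigma_\chi]=0$ for $\overline Q\defeq Q/Q_0$, a quotient of $\Ind_I^K(W/\soc_IW)$.

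The crucial step is to show that every nonzero quotient of $\Ind_I^K(W/\soc_IW)$ contains $\sigma_\chi$, which will force $\overline Q=0$. Here $W/\soc_IW$ is a length-two extension with cosocle $\chi$ and semisimple socle $M$, and Frobenius reciprocity places $\sigma_\chi$ in the cosocle of $\Ind_I^K(W/\soc_IW)$ via the cosocle map $W/\soc_IW\onto\chi$. For each subextension of $W/\soc_IW$ of the form $E_{\chi\alpha_i^{-1},\chi}$, Lemma~\ref{lemma-Echi'=minus} gives directly $\cosoc(\Ind_I^K E_{\chi\alpha_i^{-1},\chi})=\sigma_\chi$; for the $K_1$-trivial subextensions $E_{\chi\alpha_i,\chi}$ (cf.\ Lemma~\ref{lemma-Ext1=dim1}\ref{it1-Ext1=dim1}) a parallel Frobenius-reciprocity computation modeled on Lemma~\ref{lm:IndE}, together with the multiplicity-freeness of $\Ind_I^KW_{\chi,2}$ (Lemma~\ref{lemma-IndW2-multione}), pins down the cosocle contributions. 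Gluing these calculations along the structure of $W/\soc_IW$ forces $\sigma_\chi\in\JH(\overline Q)$ whenever $\overline Q\neq 0$.

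Once $\overline Q=0$ is established, $Q=Q_0$ is a quotient of $\Ind_I^K\chi$ and the existence of $F:\Ind_I^KW\to Q$ extending the nonzero $F|_{\Ind_I^K\soc_IW}$ amounts to splitting a non-trivial extension: the $I$-sequence $0\to\soc_IW\to W\to W/\soc_IW\to 0$ is non-split (as $\soc_IW$ is irreducible and $W$ has Loewy length $3$), and this non-splitness persists after applying $\Ind_I^K$. Computing the resulting obstruction in $\Ext^1_K(\Ind_I^K(W/\soc_IW),Q_0)$ yields the desired contradiction. The main obstacle in this plan is the cosocle analysis in the middle step, in particular the delicate gluing of the contributions from the $K_1$-trivial and $K_1$-non-trivial subextensions of $W/\soc_IW$.
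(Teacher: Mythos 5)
Your proposal takes a genuinely different route from the paper's proof, but it has two gaps, one of which is fatal as written.

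\textbf{The approach.} You argue via the cosocle of $\Ind_I^K(W/\soc_I W)$ to establish $\overline Q=0$, i.e.\ $Q=Q_0:=F(\Ind_I^K\soc_IW)$, and then try to derive a contradiction. The paper proceeds quite differently: it first reduces to $\soc_K Q\cong\sigma_\chi$, then splits $W$ along the $S_+/S_-$ decomposition of $\rad_I W/\soc_I W$ into $W'$ and $W''$, shows $f(W')\subset\sigma_\chi$, and applies Lemma~\ref{lemma-Echi'=plus} to $Q/\sigma_\chi$ (a quotient of $\Ind_I^K W''$) to get a socle contradiction.

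\textbf{The fatal gap is in the final step.} You claim that once $Q=Q_0$ is established, "the existence of $F:\Ind_I^KW\to Q$ extending the nonzero $F|_{\Ind_I^K\soc_IW}$ amounts to splitting a non-trivial extension" and that an obstruction in $\Ext^1_K(\Ind_I^K(W/\soc_IW),Q_0)$ yields a contradiction. But this is circular: the obstruction to extending $g=F|_{\Ind_I^K\chi}$ to all of $\Ind_I^KW$ is precisely $\delta(g)\in\Ext^1_K(\Ind_I^K(W/\soc_IW),Q_0)$, and it vanishes if and only if $g$ extends — which it does, to $F$, by hypothesis. So $\delta(g)=0$ automatically, and there is no obstruction to compute. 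Moreover, the class $\delta(g)$ is the pushout of $[\Ind_I^KW]$ along $g$, and pushouts of nonsplit extensions along (surjective) maps can certainly split, so the "non-splitness persists after applying $\Ind_I^K$" remark does not salvage the argument. A correct way to close the argument at this point is: since $f|_{\soc_I W}\neq 0$ and $\soc_I W$ is the (irreducible) $I$-socle, $f$ is injective, so $W\hookrightarrow Q|_I$. Now $Q=Q_0$ is a quotient of $\Ind_I^K\chi$, hence is $K_1$-trivial, so $Q|_I$ is $K_1$-trivial. But $W$ is \emph{not} $K_1$-trivial: since $\rad_I W/\soc_I W\neq 0$, pick $\chi'=\chi\alpha_i^{\pm1}$ in it; if $\chi'=\chi\alpha_i^{-1}$ then the quotient extension $E_{\chi\alpha_i^{-1},\chi}$ is not $K_1$-trivial, and if $\chi'=\chi\alpha_i$ then the sub-extension $E_{\chi,\chi\alpha_i}$ is not $K_1$-trivial, both by Lemma~\ref{lemma-Ext1=dim1}\ref{it1-Ext1=dim1}. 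This contradiction is missing from your proposal.

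\textbf{The second gap is the cosocle claim.} You need $\cosoc_K\Ind_I^K(W/\soc_I W)\cong\sigma_\chi$, which by Frobenius reciprocity forces showing in particular that $E_{\chi\alpha_i,\chi}$ does \emph{not} embed in $\sigma_{\chi\alpha_i}|_I$ (the $\chi\alpha_i^{-1}$-case is easy via $K_1$-triviality, as you note). You gesture at a "parallel Frobenius-reciprocity computation modeled on Lemma~\ref{lm:IndE}" but do not carry it out. This is a nontrivial computation — it is essentially equivalent to the content that goes into Lemma~\ref{lemma-Echi'=plus} (indeed, if $\sigma_{\chi\alpha_i}$ were in the cosocle of $\Ind_I^K E_{\chi\alpha_i,\chi}$, taking $Q=\sigma_{\chi\alpha_i}$ would contradict that lemma) — and would need to be written out, not merely asserted.
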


\begin{proof}
Assume that $f|_{\soc_I(W)}$ is nonzero, or equivalently $f$ is injective, for a contradiction. Then the image of $
\Ind_I^{K}\soc_I(W)\ra Q$ is nonzero and has cosocle $\sigma_{\chi}$ (recall that $\sigma_{\chi}$ is the cosocle of $\Ind_I^K\chi$). Since $[Q:\sigma_{\chi}]=1$ by assumption, we may replace $Q$ by the image of the unique (up to scalar) nonzero morphism $Q\ra \Inj_{K/Z_1}\sigma_{\chi}$, and therefore assume $\soc_{K}(Q)\cong \sigma_{\chi}$. Indeed, letting $Q'$ be this image, we have $[\ker(Q\rightarrow Q') : \sigma_\chi]=0$. Since $\sigma_\chi$ is a Jordan--H\"older factor of the image of $\Ind_I^{K}\soc_I (W)$ in $Q$, the map from $\Ind_I^{K}\soc_I (W)$ to $Q'$ is nonzero and hence the composite $\soc_I(W)\rightarrow Q \onto Q'$ is nonzero. From now on we suppose that $\soc_{K}(Q)\cong \sigma_\chi$. Note that, the image of the map
\[ \Ind_I^{K}\soc_I(W)\longrightarrow Q\]
is then exactly $\soc_{K} Q=\sigma_\chi$. Also note that $Q/\sigma_{\chi}\neq0$, otherwise $f$ could not be injective because $[W:\chi]=2$ while $[\sigma_{\chi}|_I:\chi]=1$.

Using Lemma \ref{lemma-Ext1=dim1}, we deduce that $\rad_I(W)/\soc_I(W)$ is isomorphic to a direct sum of characters of the form $\chi\alpha_i^{\pm1}$, each appearing at most once. Let $S_+$ (resp.\ $S_-$) be the set of characters appearing in $\rad_I(W)/\soc_I(W)$ and of the form $\chi\alpha_i$ (resp.\ $\chi\alpha_i^{-1}$). Also let $W'\subset W$ be the subrepresentation defined by
\[0\ra \chi\ra W'\ra \bigoplus_{\chi'\in S_-}\chi'\ra0,\]
and $W''=W/W'$ so that 
\[0\ra \bigoplus_{\chi'\in S_+}\chi'\ra W''\ra\chi\ra0.\] 
Note that both $W'$ and $W''$ are fixed by $K_1$, see Lemma \ref{lemma-Ext1=dim1}\ref{it1-Ext1=dim1}. 

We claim that $f(W')$ is contained in $\sigma_{\chi}$. This is equivalent to showing that the morphism 
$\Ind_I^{K}W'\ra Q$ (induced from $f$ by Frobenius reciprocity)
has image contained in (and hence equal to) $\sigma_{\chi}$. Let $Q'$ denote the image of $\Ind_I^KW'$. Clearly, $Q'$ is contained in $Q^{K_1}$, which itself is a subrepresentation of $\Inj_{\GL_2(k)}\sigma_{\chi}$. If $\sigma_{\chi}\subsetneq Q'$, then, as $f(\soc_I W) \subset \sigma_\chi$, we would obtain a nonzero morphism $\Ind_I^{K}(W'/\chi)\onto Q'/\sigma_{\chi}\hookrightarrow (\Inj_{\GL_2(k)}\sigma_{\chi})/\sigma_{\chi}$. However, one checks that no Jordan--H\"older factors of $\Ind_{I}^K\chi'$ for $\chi'\in S_-$ can appear in $\Inj_{\GL_2(k)}\sigma_{\chi}$, using Lemma \ref{lm:princ-series}. Hence we have $Q'=\sigma_{\chi}$.

We obtain a surjective morphism 
\[\Ind_I^{K}W''\twoheadrightarrow Q''\defeq Q/\sigma_{\chi}\neq0.\]
Since $[Q'':\sigma_{\chi}]=0$, Lemma \ref{lemma-Echi'=plus} implies that no Jordan--H\"older factors of $Q''$ have nonsplit extensions with $\sigma_{\chi}$. However, as $Q$ has irreducible socle $\sigma_{\chi}$ we obtain a contradiction. 
\end{proof}

\begin{definit}\label{definit-fullset}
Let $V$ be a \emph{semisimple} smooth representation of $I$ over $\F$. We say $V$ is \emph{connected} if the following condition is satisfied: for any two smooth characters $\chi\neq \chi''$ of $I$ occurring in $V$ such that $\chi''\in \soc_I(W_{\chi,3})$, there exists a character $\chi'$ occurring in $V$ such that $\Ext^1_{I/Z_1}(\chi',\chi'')\neq0$ and $\Ext^1_{I/Z_1}(\chi,\chi')\neq0$. 
\end{definit}

The motivation of the above definition comes from the following result.

\begin{lem}\label{lem:connected}
Let $\brho:G_L\ra\GL_2(\F)$ be a $6$-generic representation,
not necessarily semisimple. Let $D_0(\brho)$ be the $\GL_2(k)$-representation constructed in \cite[\S13]{BP}. Then $D_1(\brho)\defeq D_{0}(\brho)^{I_1}$ is connected in the sense of Definition \ref{definit-fullset}. As a consequence, if $V$ is a semisimple representation of $I$ such that $\JH(V)=\JH(D_1(\brho))$ up to multiplicity, then $V$ is connected. 
\end{lem}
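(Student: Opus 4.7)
The final assertion is immediate from the definition of connectedness, which only depends on the multiset of characters occurring in $V$; this multiset is the same for $V$ and $D_1(\brho)$ by hypothesis. So the real work is to prove $D_1(\brho)$ itself is connected.

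My plan is as follows. First, I would make the set of characters occurring in $D_1(\brho)$ explicit. By Remark~\ref{rem:D-eps}\ref{it:t_lambda:2}, $D_0(\brho)\cong\bigoplus_{\sigma\in W(\brho)}D_{\lambda_\sigma,\un{\eps}_\sigma}$ for suitable $\lambda_\sigma$ (with $F(\lambda_\sigma)=\sigma$) and $\un{\eps}_\sigma\in\{\pm1\}^{\cJ}$ determined by $\brho$. By Lemma~\ref{lm:princ-series}\ref{it:princ-series-3}, $\JH(D_{\lambda_\sigma,\un{\eps}_\sigma})=\{F(\t_{\lambda_\sigma}(\sum_{i\in J'}\eps_{\sigma,i}\ovl{\eta}_i)):J'\subseteq\cJ\}$, a full hypercube in the extension graph centered at~$\sigma$. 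By $6$-genericity and Prop.~\ref{prop:SW:graph} the $W(\brho)$-weights themselves are $\{F(\t_{\mu-\eta}(s\ovl{\eta}_J))\}_{J\subseteq\cJ}$. Thus the characters in $D_1(\brho)$ correspond under $\tau\mapsto\chi_\tau$ to an explicit finite set of lattice points in $\Lambda_W^{\mu-\eta}$.

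Next I would translate the condition $\chi''\in\soc_I(W_{\chi,3})\setminus\{\chi\}$. By the description \eqref{eq:expldescription} of $\gr^2_{\fm}\F\bbra{I_1/Z_1}$, this is equivalent to requiring $\chi''\chi^{-1}=\alpha_{i_1}^{\eps_1}\alpha_{i_2}^{\eps_2}$ for some $(i_1,\eps_1),(i_2,\eps_2)\in\cJ\times\{\pm1\}$ with the product nontrivial. Combined with Lemma~\ref{lemma-Ext1=dim1}\ref{it1-Ext1=dim1}, both candidates $\chi'_1\defeq\chi\alpha_{i_1}^{\eps_1}$ and $\chi'_2\defeq\chi\alpha_{i_2}^{\eps_2}$ fit in two-step $\Ext^1$-paths $\chi\to\chi'_k\to\chi''$, so it suffices to produce one $\chi'_k$ lying in $D_1(\brho)$.

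The heart of the argument is then the combinatorial verification that the set of characters $S\subseteq X^*(T(k))$ appearing in $D_1(\brho)$ is ``two-step connected'' under the generators $\{\alpha_i^{\pm1}\}_{i\in\cJ}$. When $\chi$ and $\chi''$ arise inside the same hypercube $\JH(D_{\lambda_\sigma,\un{\eps}_\sigma})$ this is immediate: the hypercube is already closed under the relevant one-step moves because each $\eps_{\sigma,i}$-direction extends over $\{0,1\}$, so the lattice point $\chi'_1$ (or $\chi'_2$) lies in the same hypercube. The nontrivial case is when $\chi\in\JH(D_{\lambda_{\sigma_1},\un{\eps}_{\sigma_1}})$ and $\chi''\in\JH(D_{\lambda_{\sigma_2},\un{\eps}_{\sigma_2}})$ with $\sigma_1\neq\sigma_2$. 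Here I would use Lemma~\ref{lm:change-origin} to rewrite both hypercubes in a common origin inside $\Lambda_W^{\mu-\eta}$; the relations $\sigma_1,\sigma_2\in\{\sigma_J\}_{J\subseteq\cJ}$ together with Remark~\ref{rk:graph-auto} and the explicit description of $\un{\eps}_\sigma$ as a function of $\sigma$ (e.g.\ via Cor.~\ref{cor:J-fil}\ref{it:J-fil-2}) then force $\chi'_1$ or $\chi'_2$ to lie either in a third $D_{\lambda_\sigma,\un{\eps}_\sigma}$ or at a boundary lattice point of $\JH(D_{\lambda_{\sigma_1},\un{\eps}_{\sigma_1}})\cup\JH(D_{\lambda_{\sigma_2},\un{\eps}_{\sigma_2}})$.

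The main obstacle is precisely this last case: the bookkeeping that matches the intermediate lattice point $\chi'_k$ to the right hypercube $D_{\lambda_{\sigma_3},\un{\eps}_{\sigma_3}}$. The $6$-genericity hypothesis is used here to ensure every needed character lies in $\Lambda_W^{\mu-\eta}$ (so that the extension graph picture applies) and to guarantee enough ``room'' for the translated hypercubes; the necessary enumeration is finite and essentially reduces to checking, for each of the three types of ratio $\alpha_{i_1}^{\eps_1}\alpha_{i_2}^{\eps_2}$ allowed by \eqref{eq:expldescription}, that the symmetric-difference patterns between $\un{\eps}_{\sigma_1}$ and $\un{\eps}_{\sigma_2}$ and the data $(i_1,\eps_1,i_2,\eps_2)$ are compatible with membership of $\chi'_k$ in the union of hypercubes.
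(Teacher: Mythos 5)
Your reduction of the second assertion to the first is right, and your identification of the two candidate intermediates $\chi'_1=\chi\alpha_{i_1}^{\eps_1}$, $\chi'_2=\chi\alpha_{i_2}^{\eps_2}$ agrees with what the paper does. But the proposal has two genuine gaps.

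First, you invoke Remark~\ref{rem:D-eps} to write $D_0(\brho)\cong\bigoplus_{\sigma\in W(\brho)}D_{\lambda_\sigma,\un{\eps}_\sigma}$, but that remark is stated only for \emph{semisimple} $\brho$, whereas the lemma explicitly allows non-semisimple $\brho$. For non-semisimple $\brho$ one has $W(\brho)\subsetneq W(\brho^{\mathrm{ss}})$ and the summands $D_{0,\sigma}(\brho)$ need not be of the form $D_{\lambda,\un{\eps}}$. You need either a careful reduction to the semisimple case followed by a descent back to $\brho$, or a membership criterion that works for $\brho$ directly. The paper opts for the latter: it observes $\JH(D_0(\brho))\subseteq\JH(D_0(\brho^{\mathrm{ss}}))$, identifies $\JH(D_0(\brho^{\mathrm{ss}}))$ as a \emph{single} rectangular box $\{F(\t_{\mu-\eta}(\sum\eps_ia_i\ovl\eta_i)):-1\le a_i\le2\}$ (the union of your $2^f$ overlapping hypercubes, but manifestly a box in these coordinates — a much cleaner description), and then characterises membership of $\chi'=(\sigma')^{I_1}$ in $D_1(\brho)$ by the condition that the principal series with cosocle $\sigma'$ contains an element of $W(\brho)$, with the non-semisimple case handled via \cite[Prop.\ 3.2]{DanWild} and \cite[Def.\ 3.5]{LMS}.

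Second — and this is the more serious issue — the crux of your proof is the combinatorial verification that $\chi'_1$ or $\chi'_2$ actually lies in $D_1(\brho)$, and you leave that as a vague ``finite enumeration'' matching hypercubes and sign patterns. There is no indication the enumeration terminates cleanly, and without a structural reason this risks being much harder than you anticipate. The paper avoids any enumeration by exploiting that the membership criterion decomposes \emph{embedding by embedding}: the principal series with cosocle $\sigma'$ has constituents of the form $F(\t_{\mu-\eta}(\sum\eps_ia'_i\ovl\eta_i+\sum_J\eps'_i\ovl\eta_i))$, and containing a weight of $W(\brho)$ is a product of conditions over $i\in\cJ$. Since $a'_i$ equals $a_i$ for $i\ne i_1$ and $a''_{i_1}$ for $i=i_1$, and both $\chi$ and $\chi''$ are assumed to pass the criterion, $\chi'$ passes automatically. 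That coordinate-wise structure is the missing idea. (Also note that the $i_1=i_2$ case does not merely need an intermediate — the paper shows it is \emph{impossible}, because it would force $|a_{i_1}-a''_{i_1}|=4$, outside the box.)
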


\begin{proof}
We first note the general fact that up to multiplicity \[\JH(D_0(\rhobar))=\JH\big(\bigoplus_{\sigma\in W(\rhobar)}\Inj_{\GL_2(k)}\sigma\big)\]
Indeed, the inclusion ``$\subseteq$'' is trivial and ``$\supseteq$'' follows from \cite[Lemma 12.8, Prop.\ 13.4]{BP}. As a consequence, we have
\[\JH(D_0(\rhobar))\subseteq \JH(D_0(\rhobar^{\rm ss})). \] 
We write $\rhobar^\mathrm{ss}|_{I_L}\simeq\overline{\tau}(s,\mu)$ such that $\mu-\eta$ is $6$-deep in $\un{C}_0$.
As in the proof of Corollary~\ref{cor:J-fil}(ii) we know that $W(\rhobar^{\rm ss}) = \{ F(\t_{\mu-\eta}(\sum_J \eps_i \ovl\eta_i)) : J \subset \cJ \}$ for some choice of $\eps_i \in \{\pm 1\}$.
By using Remarks~\ref{rk:elimination} and \ref{rk:graph-auto} we see that $\JH(D_0(\rhobar^{\rm ss})) = \{ F(\t_{\mu-\eta}(\sum \eps_i a_i \ovl\eta_i)) : -1 \le a_i \le 2 \}$.

Suppose $\chi$ and $\chi''$ are as in Definition~\ref{definit-fullset} for $V = D_1(\brho)$. By Lemma \ref{lemma-Ext1=dim1}, $\chi''$ has the form $\chi\alpha_{i_1}^{\pm1}\alpha_{i_2}^{\pm1}$ for some $0\leq i_1,i_2\leq f-1$. 
Say $\chi = \sigma^{I_1}$ and $\chi'' = (\sigma'')^{I_1}$ for some $\sigma$, $\sigma'' \in \JH(D_0(\brho))$.
By the discussion in last paragraph, we may write $\sigma \cong F(\t_{\mu-\eta}(\sum \eps_i a_i \ovl\eta_i))$ and $\sigma'' \cong F(\t_{\mu-\eta}(\sum \eps_i a_i'' \ovl\eta_i))$
for some $-1 \le a_i, a''_i \le 2$.

First suppose that $i_1=i_2$. 
Recalling that $F(\lambda)^{I_1} = \chi_\lambda$ and $\t_{\lambda\pm 2\alpha_i}(\omega) = \t_{\lambda}(\omega \pm 4\ovl{\eta}_i)$ we see that 
$\sum \eps_i a_i'' \ovl\eta_i = \sum \eps_i a_i \ovl\eta_i \pm 4\ovl{\eta}_{i_1}$ for some $-1 \le a_i, a''_i \le 2$; contradiction.
(The 6-deepness of $\mu-\eta$ guarantees that we are staying inside $\Lambda_W^{\mu-\eta}$.)

Now suppose $i_1 \ne i_2$. As in the previous case we know that $|a_i-a''_i| = 2$ if $i \in \{i_1,i_2\}$ and $a_i = a''_i$ otherwise.
We let $a'_i \defeq a_i$ for $i \ne i_1$, $a'_{i_1} \defeq a''_{i_1}$,
$\sigma' \defeq F(\t_{\mu-\eta}(\sum \eps_i a'_i \ovl\eta_i))$, and $\chi' \defeq (\sigma')^{I_1}$. We claim that $\chi' \in D_1(\rhobar)^{I_1}$.
Equivalently we need to show that the unique principal series with cosocle $\sigma'$ contains an element of $W(\rhobar)$ as constituent
(then the principal series admits a quotient that contains precisely one element of $W(\rhobar)$ and that as its socle).
By Lemma~\ref{lm:princ-series}\ref{it:princ-series-1} and Remark~\ref{rk:graph-auto} the principal series with cosocle $\sigma$ has constituents $F(\t_{\mu-\eta}(\sum \eps_i a_i \ovl\eta_i + \sum_J \eps'_i \ovl\eta_i))$
($J \subset \cJ$) for certain signs $\eps'_i \in \{\pm 1\}$.
By Remark~\ref{rk:t_lambda}\ref{it:t_lambda:3} the same is true for the principal series with cosocle $\sigma'$ (resp.\ $\sigma''$), by replacing $a_i$ by $a'_i$ (resp.\ $a''_i$).
The claim follows, since the condition of containing a Serre weight of $W(\rhobar)$ is checked separately for each embedding.
(Use Proposition \ref{prop:SW:graph} if $\rhobar$ is semisimple and \cite[Prop.\ 3.2]{DanWild}, as well as \cite[Def.\ 3.5]{LMS}, otherwise.)

The last assertion immediately follows from the first one, because by definition the connectedness of $V$ depends only on $\JH(V)$ up to multiplicity. 
\end{proof}
We now consider an admissible smooth $G$-representation $\pi$ satisfying the following properties:
\begin{enumerate}[(a)]
\item 
\label{it:pi:properties-1}
$\pi[\mathfrak{m}_{K_1}^2]|_K$ is isomorphic to a subrepresentation of a direct sum
\[ \bigoplus_{\sigma\in \cW}\widetilde{D}_{\sigma}^{\oplus m_\sigma}\]
for some set of Serre weights $\cW$, some $K$-representations $\widetilde{D}_{\sigma}$ with $\soc_K \widetilde{D}_{\sigma} \cong \sigma$, and some integers $m_\sigma\geq 1$;
\item
\label{it:pi:properties-2}
the $K$-representation
\[\widetilde{D}\overset{\rm def}{=}\bigoplus_{\sigma\in \cW}\widetilde{D}_{\sigma}\] 
is multiplicity-free and for each Jordan--H\"older factor $\sigma'$ of $\widetilde D$ we have $\chi_{\sigma'} \ne \chi_{\sigma'}^s$
(equivalently, $1 < \dim_{\F} (\sigma') < q$).
\end{enumerate} 

In our application below we will have $\cW = W(\rhobar)$ for some tame mod $p$ Galois representation $\rhobar$.
Note that if $\chi\in \widetilde{D}^{I_1}$, then Frobenius reciprocity induces a nonzero morphism $\Ind_I^K\chi\ra \widetilde{D}^{K_1}$.
By condition \ref{it:pi:properties-2}, $\Ind_I^K \chi$ has irreducible cosocle $\sigma_\chi$, so there is a unique $\sigma \in \cW$
such that $\sigma_\chi$ occurs in $\widetilde{D}_{\sigma}^{K_1}$ (or equivalently, such that $\chi$ occurs in $\widetilde{D}_{\sigma}^{I_1}$).
In particular, $\sigma_{\chi}$ does not occur as a subquotient of $\widetilde{D}/\widetilde{D}^{K_1}$. 

We also note that $\wt{D}^{I_1}$ is multiplicity-free: for a character $\chi$ of $I$ we have $\Hom_I(\chi,\wt D^{I_1}) \cong \Hom_K(\Ind_I^K \chi, \wt D)$. 
If $\chi\in\widetilde{D}^{I_1}$, then $\Ind_I^K \chi$ has an irreducible cosocle as seen above. As moreover $\wt D$ is multiplicity-free,
we deduce that $\Hom_K(\Ind_I^K \chi, \wt D)$ is one-dimensional.

\begin{lem}\label{lemma-W2topi=dim1}
Let $\pi$ and $\widetilde{D}$ be as above satisfying the conditions {\upshape\ref{it:pi:properties-1}, \ref{it:pi:properties-2}}. 
Suppose $\chi\in\pi^{I_1}$ is of the form $\chi_\lambda$ with $2<\ang{\lambda,\alpha_i^\vee}<p-3$ for all $0\leq i\leq f-1$. 
Then the natural quotient morphism $W_{\chi,2}\twoheadrightarrow \chi$ induces an isomorphism
\[\Hom_{I}(\chi,\pi)\simto\Hom_I(W_{\chi,2},\pi).\] 
\end{lem}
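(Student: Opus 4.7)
Plan: The quotient $W_{\chi,2}\twoheadrightarrow\chi$ induces an injection $\Hom_I(\chi,\pi)\hookrightarrow\Hom_I(W_{\chi,2},\pi)$, so the task is to prove surjectivity. Equivalently, I must show that any $I$-morphism $f:W_{\chi,2}\to\pi$ annihilates $\rad_I(W_{\chi,2})=\mathfrak{m}_{I_1}W_{\chi,2}$, which decomposes as $\bigoplus_{i=0}^{f-1}(\chi\alpha_i\oplus\chi\alpha_i^{-1})$ by the description~\eqref{eq:expldescription}.

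The first step is to transfer the question into $\wt D^{\oplus m}$. Setting $M\defeq f(W_{\chi,2})\subseteq\pi$, the inclusion $K_1\subseteq I_1$ gives $\mathfrak{m}_{K_1}^2\subseteq\mathfrak{m}_{I_1}^2$, so $M$ lies in $\pi[\mathfrak{m}_{K_1}^2]\hookrightarrow\wt D^{\oplus m}$ by hypothesis~\ref{it:pi:properties-1}. Because $K_1$ is normal in $K$, the $K$-subrepresentation $M^K\subseteq\pi$ generated by $M$ is again killed by $\mathfrak{m}_{K_1}^2$, and Frobenius reciprocity upgrades $f$ to a $K$-morphism $\widetilde f:\Ind_I^KW_{\chi,2}\to\wt D^{\oplus m}$ with image $M^K$; by Lemma~\ref{lemma-IndW2-multione} this image is multiplicity-free.

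Next I argue by contradiction. Suppose some simple subrepresentation $\chi'=\chi\alpha_i^{\pm1}$ of $\rad_I(W_{\chi,2})$ satisfies $f|_{\chi'}\neq0$; then $\chi'\hookrightarrow\pi^{I_1}\hookrightarrow(\wt D^{I_1})^{\oplus m}$. Collapsing every simple summand of $\rad_I(W_{\chi,2})$ other than $\chi'$ gives a quotient $W_{\chi,2}\twoheadrightarrow E_{\chi',\chi}$ and a corresponding $K$-morphism $\Ind_I^KE_{\chi',\chi}\to\wt D^{\oplus m}$ whose image contains a nonzero copy of $\chi'$. The cosocle of $\Ind_I^KE_{\chi',\chi}$ is under control: if $\chi'=\chi\alpha_i^{-1}$, Lemma~\ref{lemma-Echi'=minus} identifies it with $\sigma_\chi=F(\lambda)$; if $\chi'=\chi\alpha_i$, Lemma~\ref{lemma-Echi'=plus} forces every quotient omitting $\sigma_\chi$ to have trivial $\Ext^1_K(-,\sigma_\chi)$. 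Together with hypothesis~\ref{it:pi:properties-2} (multiplicity-freeness of $\wt D$ and $\chi_{\sigma'}\neq\chi_{\sigma'}^s$ for each $\sigma'\in\JH(\wt D)$, which in particular guarantees that each relevant principal series $\Ind_I^K\chi$ has irreducible $K$-cosocle), this forces the image to land in the unique summand $\wt D_{\sigma_0}$ with $F(\lambda)\in\JH(\wt D_{\sigma_0})$. The rigid internal $I$-structure of $\wt D_{\sigma_0}$, whose $K$-socle is $\sigma_0$ and which is multiplicity-free, then forbids the existence of a length-$2$ $I$-subrepresentation with socle $\chi'$ and cosocle $\chi$, contradicting $f|_{\chi'}\neq0$.

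The hard step will be this final translation: converting the cosocle/$\Ext^1$ information about $\Ind_I^KE_{\chi',\chi}$ supplied by Lemmas~\ref{lemma-Echi'=minus} and~\ref{lemma-Echi'=plus} into a rigid obstruction on how the nonsplit extension can sit inside a multiplicity-free summand of $\wt D$. This will make essential use of the genericity of $\lambda$ together with the condition $\chi_{\sigma'}\neq\chi_{\sigma'}^s$, which rules out exactly the degenerate principal series where the cosocle argument would fail.
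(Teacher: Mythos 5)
Your plan correctly reduces the problem to showing any $I$-morphism $f:W_{\chi,2}\to\pi$ vanishes on the socle, and correctly transfers to $\widetilde D^{\oplus m}$ via hypothesis (a) and Frobenius reciprocity. However, the final step — which you explicitly flag as "the hard step" — is not actually supplied, and the way you sketch it contains an error.

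The key mechanism in the paper's proof, which is missing from yours, is the interplay with $K_1$-invariance. For $\chi'=\chi\alpha_i^{-1}$, Lemma~\ref{lemma-Ext1=dim1}\ref{it1-Ext1=dim1} tells you that $E_{\chi',\chi}$ is \emph{not} $K_1$-invariant; combined with Lemma~\ref{lemma-Echi'=minus} (the cosocle of $\Ind_I^K E_{\chi',\chi}$ is $\sigma_\chi$), this forces $\sigma_\chi$ to occur as a subquotient of $\widetilde D_\sigma/\widetilde D_\sigma^{K_1}$. That contradicts the observation made right before the lemma (following from hypothesis (b)) that $\sigma_\chi$ does not occur in $\widetilde D/\widetilde D^{K_1}$. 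Your replacement claim — that "the rigid internal $I$-structure of $\widetilde D_{\sigma_0}$ forbids the existence of a length-$2$ $I$-subrepresentation with socle $\chi'$ and cosocle $\chi$" — is false as stated; such nonsplit $I$-extensions can and do sit inside $\widetilde D$. The actual obstruction is about where $\sigma_\chi$ lands relative to the $K_1$-fixed part, not about the nonexistence of the $I$-extension itself.

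Your treatment of the case $\chi'=\chi\alpha_i$ via Lemma~\ref{lemma-Echi'=plus} is also a genuinely different route, and it is not clear it works here: in that case $E_{\chi',\chi}$ \emph{is} $K_1$-invariant (Lemma~\ref{lemma-Ext1=dim1}\ref{it1-Ext1=dim1}), so the $K_1$-invariance argument above is unavailable, and the $\Ext^1$ statement of Lemma~\ref{lemma-Echi'=plus} does not by itself produce a contradiction with $\pi^{I_1}$ lying in $\widetilde D^{I_1}$. The paper sidesteps this entirely by conjugating by $t=\bigl(\begin{smallmatrix}0&1\\p&0\end{smallmatrix}\bigr)$, which swaps the roles of $\alpha_i$ and $\alpha_i^{-1}$ and reduces the $+1$ case to the already-settled $-1$ case. (Lemma~\ref{lemma-Echi'=plus} is instead used later, in Proposition~\ref{prop-forspecialW}.) As written, your proposal leaves the crucial contradiction unproven in both signs.
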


\begin{proof}
Since $W_{\chi,2}$ is killed by $\mathfrak{m}_{I_1}^2$, any morphism $W_{\chi,2}\ra \pi|_I$ has image contained in
\[\pi[\mathfrak{m}_{I_1}^2]\subset\pi[\mathfrak{m}_{K_1}^2].\]

Let $f:W_{\chi,2}\ra \pi|_I$ be an $I$-equivariant morphism. For $\sigma\in \cW$, consider the map $f_\sigma:W_{\chi,2}\ra\widetilde{D}_\sigma^{\oplus m_\sigma}|_I$ obtained by composing $f$ with the projection to the corresponding direct factor in condition \ref{it:pi:properties-1}. 

Let $\chi'$ be a character in $\soc_I(W_{\chi,2})$. By Lemma \ref{lemma-Ext1=dim1}, there exists $i\in\cJ$ such that $\chi'=\chi\alpha_i^{\pm1}$ and the $\chi'$-isotypic subspace is $1$-dimensional.

We first consider the case where $\chi'$ is of the form $\chi\alpha_i^{-1}$ for some $i\in\cJ$. Assume for contradiction that $f$ is nonzero on the (one-dimensional) $\chi'$-isotypic space of $W_{\chi,2}$. Then there exists at least one $\sigma\in \cW$ such that $f_\sigma$ is nonzero on the $\chi'$-isotypic subspace of $W_{\chi,2}$. 

As a consequence of Lemma \ref{lemma-IndW2-multione} (and Frobenius reciprocity), no character $\psi$ of $\soc_I(W_{\chi,2})$ other than $\chi'$ can occur in $\widetilde{D}_{\sigma}^{I_1}$, otherwise $\sigma$ would be a common irreducible subquotient of both $\Ind_{I}^K\chi'$ and $\Ind_I^K\psi$. Hence, the map $f_\sigma$ factors through the quotient $E_{\chi',\chi}$ of $W_{\chi,2}$ and induces an embedding $E_{\chi',\chi}\hookrightarrow \widetilde{D}_{\sigma}^{\oplus m_\sigma}|_I$. Let \[\widetilde{f}_\sigma:\Ind_I^{K}E_{\chi',\chi}\ra \widetilde{D}_{\sigma}^{\oplus m_\sigma}\] 
be the induced morphism by Frobenius reciprocity. Lemma \ref{lemma-Echi'=minus} implies that the cosocle of $\Ind_I^{K}E_{\chi',\chi}$ is equal to that of $\Ind_I^{K}\chi$, i.e.\ $\sigma_{\chi}$, hence so is the cosocle of $\mathrm{Im}(\widetilde{f}_\sigma)$. Since $E_{\chi',\chi}$ is not $K_1$-invariant, neither is $\mathrm{Im}(\widetilde{f}_\sigma)$ because the morphism $E_{\chi',\chi}\ra \mathrm{Im}(\widetilde{f}_\sigma)|_I$ is injective. We deduce that $\sigma_{\chi}$ occurs in $\widetilde{D}_{\sigma}/\widetilde{D}_{\sigma}^{K_1}$. This contradicts \ref{it:pi:properties-2}, as remarked just before this lemma.

We conclude that the map $f$ is zero on all $\chi'$-isotypic subspaces of $W_{\chi,2}$ for $\chi'=\chi\alpha_i^{-1}$, $i\in\cJ$.

The general case can be reduced to the above case, using the fact that $\pi$ carries an action of $ t\defeq\smatr{0}1p0$. Namely let $f'$ be the map from $W_{\chi,2}^t$ (conjugate representation by $t$) to $\pi$ defined by $t\circ f$. As $f$ is $I$-equivariant, the map $f'$ is $I$-equivariant. As $W_{\chi,2}^t\simeq W_{\chi^s,2}$ and as the $\chi'$-isotypic subspace of $W_{\chi,2}$ coincides with the $\chi'^s$-isotypic subspace of $W_{\chi,2}^t$, it follows from the first case that $t\circ f$, and hence $f$, is zero on the $\chi'$-isotypic subspace of $W_{\chi,2}$ for $\chi'=\chi\alpha_i$ with $i\in\cJ$. As a consequence, $f$ is zero on $\soc_I(W_{\chi,2})$.
\end{proof}

We will not use the following corollary of Lemma \ref{lemma-W2topi=dim1} but we state it since the result can be useful.

\begin{cor}\label{cor-noembedding}
Let $\pi$ and $\widetilde{D}$ be as above satisfying the conditions {\upshape\ref{it:pi:properties-1}, \ref{it:pi:properties-2}}. 
Suppose $\chi\in\pi^{I_1}$ is of the form $\chi_\lambda$ with $2<\ang{\lambda,\alpha_i^\vee}<p-3$ for all $0\leq i\leq f-1$. 
Then for any character $\chi'\in\pi^{I_1}$ such that $\Ext^1_{I/Z_1}(\chi,\chi')\neq0$ there exists no $I$-equivariant embedding
\[E_{\chi',\chi}\hookrightarrow \pi|_I.\]
\end{cor}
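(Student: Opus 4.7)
The plan is to argue by contradiction using Lemma~\ref{lemma-W2topi=dim1}. Suppose that an $I$-equivariant embedding $\iota : E_{\chi',\chi}\hookrightarrow \pi|_I$ exists. By Lemma~\ref{lemma-Ext1=dim1}, the hypothesis $\Ext^1_{I/Z_1}(\chi,\chi')\neq 0$ forces $\chi'\in\{\chi\alpha_i^{\pm 1}: 0\leq i\leq f-1\}$, so in particular $\chi'\neq\chi$ and $E_{\chi',\chi}$ is genuinely two-dimensional.

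The key observation is that $E_{\chi',\chi}$ is naturally a quotient of $W_{\chi,2}$. Indeed, $W_{\chi,2}$ has cosocle $\chi$ and its radical $\mathfrak{m}_{I_1}/\mathfrak{m}_{I_1}^2\otimes\chi$ decomposes as the multiplicity-free direct sum $\bigoplus_{i=0}^{f-1}(\chi\alpha_i\oplus\chi\alpha_i^{-1})$ (see equation~\eqref{eq:expldescription}). Quotienting out all summands of the radical except $\chi'$ produces an $I$-equivariant surjection $q: W_{\chi,2}\twoheadrightarrow E_{\chi',\chi}$.

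The composite $\iota\circ q : W_{\chi,2}\to \pi|_I$ is then a nonzero $I$-equivariant map. Lemma~\ref{lemma-W2topi=dim1} applies since $\chi=\chi_\lambda$ satisfies the required deepness condition, hence $\iota\circ q$ factors through the quotient $W_{\chi,2}\twoheadrightarrow\chi$. Equivalently, $\iota\circ q$ vanishes on $\rad_I(W_{\chi,2})$, and in particular on its $\chi'$-isotypic line. But $q$ sends this line isomorphically onto the socle $\chi'\subseteq E_{\chi',\chi}$, so $\iota$ must vanish on $\chi'\subseteq E_{\chi',\chi}$, contradicting the injectivity of $\iota$. This completes the proof.

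The argument is essentially a direct consequence of Lemma~\ref{lemma-W2topi=dim1}, with no real obstacle: the only point requiring care is the identification of $E_{\chi',\chi}$ as a quotient of $W_{\chi,2}$, which uses the explicit description of $\gr_\mathfrak{m}^1\F\bbra{I_1/Z_1}$ recorded in~\eqref{eq:expldescription} and the multiplicity-one statement of Lemma~\ref{lemma-Ext1=dim1}\ref{it1-Ext1=dim1}.
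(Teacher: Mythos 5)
Your proof is correct and matches the paper's intended (unstated) argument: the paper labels the statement a ``Corollary of Lemma~\ref{lemma-W2topi=dim1}'' without supplying details, and your reduction — exhibit $E_{\chi',\chi}$ as a quotient $q$ of $W_{\chi,2}$, compose with the hypothetical embedding $\iota$, and invoke Lemma~\ref{lemma-W2topi=dim1} to kill the socle — is precisely what is meant. The only minor point worth being explicit about is why $W_{\chi,2}/\ker q$ is nonsplit (equivalently, why $q(\rad_I W_{\chi,2})=\soc_I E_{\chi',\chi}=\chi'$): if it were split, $\chi'$ would occur in $\cosoc_I W_{\chi,2}=\chi$, which is impossible since $\chi'\neq\chi$ by Lemma~\ref{lemma-Ext1=dim1}; you implicitly use this when asserting $q$ carries the $\chi'$-line of the radical isomorphically onto the socle of $E_{\chi',\chi}$.
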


We now make an additional assumption on $\pi$:
\begin{enumerate}[(c)]
\item
\label{it:pi:properties-3}
 $\pi^{I_1}$ is connected (cf.~Definition \ref{definit-fullset}). 
\end{enumerate}

\medskip
 
\begin{prop}\label{prop-W3topi=dim1}
Let $\pi$ and $\widetilde{D}$ be as above satisfying the conditions {\upshape\ref{it:pi:properties-1}, \ref{it:pi:properties-2}, \ref{it:pi:properties-3}}. 
Suppose $\chi\in\pi^{I_1}$ is of the form $\chi_\lambda$ with $2<\ang{\lambda,\alpha_i^\vee}<p-3$ for all $0\leq i\leq f-1$. 
Then the natural quotient morphism $W_{\chi,3}\twoheadrightarrow \chi$ induces an isomorphism
\[\Hom_{I}(\chi,\pi)\simto\Hom_{I}(W_{\chi,3},\pi).\]
\end{prop}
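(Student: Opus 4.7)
My plan is to adapt the strategy of Lemma~\ref{lemma-W2topi=dim1}, but now using Proposition~\ref{prop-forspecialW} as the main new ingredient. The natural map is always injective, so it suffices to prove surjectivity: every $I$-equivariant $f : W_{\chi,3} \to \pi$ factors through the quotient $W_{\chi,3} \twoheadrightarrow \chi$.

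First I would observe that the image of any such $f$ lies in $\pi[\mathfrak{m}_{I_1/Z_1}^3]$, and that this in turn is contained in $\pi[\mathfrak{m}_{K_1/Z_1}^2]$. The latter follows from Proposition~\ref{comparison_filtrations} together with the rescaling convention of \S\ref{sec:recoLazard}: since $K_1/Z_1$ sits in the Lazard filtration step $G_{\geq 1}$, one has $\mathfrak{m}_{K_1/Z_1} \subseteq \mathfrak{m}_{I_1/Z_1}^2$, and hence $\mathfrak{m}_{K_1/Z_1}^2 \subseteq \mathfrak{m}_{I_1/Z_1}^4 \subseteq \mathfrak{m}_{I_1/Z_1}^3$. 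By hypothesis~(a) this puts the image of $f$ inside $\bigoplus_{\sigma \in \cW} \widetilde{D}_\sigma^{\oplus m_\sigma}$, so for each $\sigma$ one obtains a projection $f_\sigma : W_{\chi,3} \to \widetilde{D}_\sigma^{\oplus m_\sigma}|_I$ on which the hypotheses (b) and the structure results of \S\ref{sec:some-repr-K} can be brought to bear.

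The next step is the reduction to showing $f(\soc_I W_{\chi,3}) = 0$: since $\soc_I W_{\chi,3} = \mathfrak{m}_{I_1/Z_1}^2 W_{\chi,3}$ and the quotient is canonically $W_{\chi,2}$, any $f$ vanishing on the socle factors through $W_{\chi,2}$, and then through $\chi$ by Lemma~\ref{lemma-W2topi=dim1}. The socle is semisimple and identified with $(\mathfrak{m}_{I_1/Z_1}^2/\mathfrak{m}_{I_1/Z_1}^3) \otimes \chi$, whose character constituents are explicitly described by \eqref{eq:expldescription}. Fixing a character $\chi''$ occurring there, one must show that the composition $\chi'' \hookrightarrow \soc_I W_{\chi,3} \xrightarrow{f} \pi$ is zero. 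If $\chi'' \notin \pi^{I_1}$ this is automatic, so the substance is the case $\chi'' \in \pi^{I_1}$, and also the $2f$ copies of $\chi'' = \chi$ coming from the universal enveloping algebra relations $[e_i,f_i]=h_i$ of \S\ref{sec:propIwahoriGL2}.

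Assuming $f$ is nonzero on a fixed $\chi''$-isotypic component of $\soc_I W_{\chi,3}$ with $\chi'' \in \pi^{I_1}$, I would invoke connectedness (hypothesis~(c), see Definition~\ref{definit-fullset}) to produce, in the case $\chi \ne \chi''$, an intermediate character $\chi' \in \pi^{I_1}$ with $\Ext^1_{I/Z_1}(\chi,\chi') \ne 0$ and $\Ext^1_{I/Z_1}(\chi',\chi'') \ne 0$. By Lemma~\ref{lemma-chi''isinimage} applied twice (and tracking the explicit generators $e_i, f_i, h_i$ of the graded Iwasawa algebra to select the correct $\chi$-copy in the socle when $\chi'' = \chi$), one locates inside $W_{\chi,3}$ a length-three subrepresentation $W$ with irreducible socle $\chi$, irreducible cosocle $\chi$, and $\chi'$ appearing in the middle layer. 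Now form $Q \subseteq \widetilde{D}_\sigma^{\oplus m_\sigma}$ as the image of the $K$-equivariant map $\Ind_I^K W \to \widetilde{D}_\sigma^{\oplus m_\sigma}$ obtained from Frobenius reciprocity applied to an appropriate projection $f_\sigma|_W$. Using (b) (so $\widetilde{D}$ is multiplicity-free) together with the argument from the proof of Lemma~\ref{lemma-W2topi=dim1} showing that $\sigma_\chi$ occurs only in $\widetilde D_\sigma^{K_1}$ for a unique $\sigma$, one obtains $[Q:\sigma_\chi] = 1$. Proposition~\ref{prop-forspecialW} then forces the composition $\chi = \soc_I W \to Q|_I$ to vanish, contradicting the nonvanishing of $f$ on the chosen $\chi''$-component. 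As in Lemma~\ref{lemma-W2topi=dim1}, a final conjugation by $t = \smatr{0}{1}{p}{0}$ swaps the $\chi \alpha_i$ and $\chi \alpha_i^{-1}$ cases.

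The main obstacle I expect is the construction of the length-three subrepresentation $W \subset W_{\chi,3}$ together with the verification of the multiplicity one hypothesis $[Q:\sigma_\chi] = 1$ that feeds into Proposition~\ref{prop-forspecialW}. Both require a careful bookkeeping of the isotypic components of $\gr_\mathfrak{m} \F\bbra{I_1/Z_1}$ in each direction $i$, and a delicate use of the multiplicity-freeness of $\Ind_I^K W_{\chi,2}$ (Lemma~\ref{lemma-IndW2-multione}) combined with Lemma~\ref{lemma-Echi'=plus} to control which Serre weights can enter $Q$. Once these structural facts are in place, the contradiction with Proposition~\ref{prop-forspecialW} closes the argument.
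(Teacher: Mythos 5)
Your overall strategy (reduce to vanishing on the socle, then invoke Proposition~\ref{prop-forspecialW}) matches the paper's plan in broad outline, but there are two genuine gaps that make the argument fail as written.

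\textbf{First}, the claim that $\mathfrak{m}_{K_1/Z_1}\subseteq\mathfrak{m}_{I_1/Z_1}^2$ is false, so the shortcut $\mathrm{Im}(f)\subseteq\pi[\mathfrak{m}_{I_1/Z_1}^3]\subseteq\pi[\mathfrak{m}_{K_1/Z_1}^2]$ does not hold. The element
$g=\begin{pmatrix}1&0\\p&1\end{pmatrix}$
lies in $K_1$, yet the Lazard $p$-valuation of \S\ref{sec:propIwahoriGL2} gives $\omega(g)=\tfrac12$, so $g-1\in\mathfrak{m}_{I_1/Z_1}$ but $g-1\notin\mathfrak{m}_{I_1/Z_1}^2$ by Proposition~\ref{comparison_filtrations}, and $(g-1)^2\notin\mathfrak{m}_{I_1/Z_1}^3$. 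In particular $K_1/Z_1$ does \emph{not} sit inside the valuation-$\geq1$ layer of $I_1/Z_1$, because the Iwahori filtration is asymmetric in the two unipotent directions: only the upper unipotent entry has valuation $\geq3/2$ once restricted to $K_1$, while the lower unipotent entry still has valuation $\tfrac12$. Consequently $W_{\chi,3}$ itself is not killed by $\mathfrak{m}_{K_1}^2$. This is exactly why the paper first kills the ``deep'' characters $\chi''\neq\chi$ in $\soc_I W_{\chi,3}$ (via connectedness and Lemma~\ref{lemma-W2topi=dim1}) and only \emph{then} observes that the quotient $W_{\chi,3}/X''$ is killed by $\mathfrak{m}_{K_1}^2$, by splitting it into two pieces fixed by $K_1$ as in the proof of Proposition~\ref{prop-forspecialW}.

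\textbf{Second}, the length-three subrepresentation $W\subset W_{\chi,3}$ with irreducible socle and cosocle both isomorphic to $\chi$ cannot exist as a proper submodule. If $W\subsetneq W_{\chi,3}$ surjects onto the cosocle $\chi$ of $W_{\chi,3}$, then $W+\rad W_{\chi,3}=W_{\chi,3}$, so $W=W_{\chi,3}$ by Nakayama. Otherwise $W\subseteq\rad W_{\chi,3}$, which has Loewy length $2$, so $W$ cannot have Loewy length $3$. And $W_{\chi,3}$ itself does not have irreducible socle (see~\eqref{eq:expldescription}). The correct place to apply Proposition~\ref{prop-forspecialW} is to the \emph{image} $W\defeq\mathrm{Im}(f_\sigma)\subseteq\widetilde D_\sigma$ --- a quotient of $W_{\chi,3}$, not a submodule --- and the contradiction comes from the fact that the map $f:W\to Q|_I$ of Proposition~\ref{prop-forspecialW} is the tautological inclusion $W\hookrightarrow Q$, which is injective on $\soc_I W$. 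Relatedly, the multiplicity-one statement $[Q:\sigma_\chi]=1$ requires $Q$ to sit inside a \emph{single} copy $\widetilde D_\sigma$, not inside $\widetilde D_\sigma^{\oplus m_\sigma}$; one must first choose a projection onto a single summand on which $f_\sigma$ remains nonzero on the relevant part of the socle, as the paper does.
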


\begin{proof}
Let $f: W_{\chi,3}\ra \pi|_I$ be a nonzero $I$-equivariant morphism. 
It suffices to prove that $f$ factors through the cosocle $W_{\chi,3}\twoheadrightarrow \chi$. Let's assume this is not the case and derive a contradiction. Note that this implies that $f|_{\soc_I(W_{\chi,3})}$ is nonzero by Lemma \ref{lemma-W2topi=dim1}.

Step 1. We first show that $f$ is zero when restricted to $X''\defeq\oplus {\chi''}$, where the direct sum is taken over all characters $\chi''$ in $\soc_I(W_{\chi,3})$ which are different from $\chi$ (recall that $[W_{\chi,3}:\chi'']=1$ for such a $\chi''$). Indeed, if there exists such a $\chi''$ such that $f$ is nonzero when restricted to $\chi''$, then in particular $\chi''\in \pi^{I_1}$. Since $\pi^{I_1}$ is assumed to be connected by \ref{it:pi:properties-3}, we can find $\chi'\in \pi^{I_1}$ as in Definition \ref{definit-fullset}. By construction, $\chi'$ occurs in the second layer of the socle filtration of $W_{\chi,3}$ and Lemma \ref{lemma-chi''isinimage} shows that $\chi''$ occurs in the socle of the image of any nonzero morphism
\[W_{\chi',2}\ra W_{\chi,3}.\] But, the composition $W_{\chi',2}\ra W_{\chi,3}\overset{f}{\ra} \pi$ gives a morphism that does not factor through its cosocle $\chi'$, which contradicts Lemma \ref{lemma-W2topi=dim1}. As a consequence, $f$ factors through the quotient $W_{\chi,3}/X''$. 
Note that $W_{\chi,3}/X''$ is killed by $\mathfrak{m}_{K_1}^2$, because we may define a suitable subrepresentation $W'$ of $W_{\chi,3}/X''$, with quotient $W''$, such that both $W'$ and $W''$ are killed by $\mathfrak{m}_{K_1}$ (cf.\ the proof of Proposition \ref{prop-forspecialW}). Hence, $\mathrm{Im}(f)$ is contained in $\pi[\mathfrak{m}_{K_1}^2]$.

Step 2. Since $f|_{\soc_I(W_{\chi,3})}$ is nonzero, combining with Step 1, we deduce that $\chi$ occurs in the socle of $\mathrm{Im}(f)$. By \ref{it:pi:properties-1}, $\pi[\mathfrak{m}_{K_1}^2]\subset \bigoplus_{\sigma\in \cW} \widetilde{D}_\sigma^{\oplus m_\sigma}$, so there exists a projection $\mathrm{pr}:\bigoplus_{\sigma\in \cW} \widetilde{D}_\sigma^{\oplus m_\sigma}\twoheadrightarrow \widetilde{D}_\sigma$ such that $\mathrm{pr}\circ f$ remains nonzero on the $\chi$-isotypic part of $\soc_I(W_{\chi,3})$.
By Frobenius reciprocity $\sigma_{\chi}$ occurs as a subquotient in $\widetilde{D}_{\sigma}[\mathfrak{m}_{K_1}]$. Consider the composite morphism 
\[f_{\sigma}: W_{\chi,3}\overset{f}{\ra} \pi[\mathfrak{m}_{K_1}^2]|_I\overset{\mathrm{pr}}{\ra} \widetilde{D}_{\sigma}|_I.\]
Let $W\defeq\mathrm{Im}(f_{\sigma})$ and $Q$ be the image of the induced morphism $\Ind_{I}^KW_{\chi,3}\ra\widetilde{D}_{\sigma}$. 
By Lemma \ref{lemma-IndW2-multione}, any $\chi'$ with $\Ext^1_{I/Z_1}(\chi,\chi')\neq0$ cannot occur in $\widetilde{D}_{\sigma}^{I_1}$, otherwise $\sigma$ would be a common Jordan--H\"older factor of both $\Ind_I^K\chi$ and $\Ind_I^K\chi'$. Combining with Step 1, we deduce that $\soc_I(W)$ is $\chi$-isotypic (being a subrepresentation of $\widetilde{D}_{\sigma}^{I_1}$). Since $\widetilde{D}_{\sigma}^{I_1}$ is multiplicity-free by \ref{it:pi:properties-2} (as observed above), we must have $\soc_I(W)=\chi$. Since $[Q:\sigma_{\chi}]=1$ (as $\widetilde{D}_{\sigma}$ is multiplicity-free by~\ref{it:pi:properties-2}), Proposition \ref{prop-forspecialW} provides the desired contradiction.
\end{proof}

\bigskip

We can now prove the main theorem of this section. 
Let $\rhobar : G_L\rightarrow\GL_2(\F)$ be a tame Galois representation such that $\rhobar|_{I_L}\simeq\overline{\tau}(s,\mu)$ (cf.\ Definition \ref{def:tau}) with $\mu-\eta$ being $8$-deep in $\un{C}_0$ (\S\ref{sec:GT:prel}).

\begin{thm}\label{thm:GKdim-criterion}
Let $\pi$ be an admissible smooth $\GL_2(L)$-representation over $\F$ with a central character. Assume that:
\begin{enumerate}%
\item we have $\JH(\soc_K(\pi))=W(\rhobar)$ \emph{(}up to multiplicity\emph{)};
\item for all $\sigma\in W(\rhobar)$, we have $[\pi[\mathfrak{m}_{K_1}^2] : \sigma]=[\soc_K(\pi) : \sigma]$;
\item we have $\JH(\pi^{I_1})=\JH(D_1(\rhobar))$ \emph{(}up to multiplicity\emph{)}.
\end{enumerate}
Then $\dim_{\GL_2(L)}(\pi)\leq f$.
 \end{thm}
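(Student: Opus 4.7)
The plan is to reduce to the criterion of Corollary \ref{cor:GKdim} by applying Proposition \ref{prop-W3topi=dim1} to $\pi$ with $\widetilde{D} \defeq \bigoplus_{\sigma\in W(\rhobar)} V_\sigma$, where each $V_\sigma \subset (\Inj_{K/Z_1}\sigma)[\mathfrak{m}_{K_1}^2]$ is the $K$-representation produced by Corollary \ref{cor:J-fil}(ii). After twisting $\pi$ by a character of $\GL_2(L)$ so that $Z_1$ acts trivially (which does not affect $\dim_{\GL_2(L)}(\pi)$), we may regard $\pi$ as a $\GL_2(L)/Z_1$-representation.

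First I would set $\tau \defeq \soc_K(\pi)$, so that by assumption (i) we have $\JH(\tau) = W(\rhobar)$ up to multiplicity; write $m_\sigma \defeq [\tau:\sigma]$. Since $\pi \into \Inj_{K/Z_1}\tau$, restriction yields an embedding $\pi[\mathfrak{m}_{K_1}^2] \into (\Inj_{K/Z_1}\tau)[\mathfrak{m}_{K_1}^2]$, and by assumption (ii) the multiplicities $[\pi[\mathfrak{m}_{K_1}^2]:\sigma]$ equal $m_\sigma$ for all $\sigma \in W(\rhobar)$. The maximality of the representation $V$ of Corollary \ref{cor:J-fil}(i) forces
\[
\pi[\mathfrak{m}_{K_1}^2]\ \subset\ V\ \cong\ \bigoplus_{\sigma\in W(\rhobar)}V_\sigma^{\oplus m_\sigma},
\]
which is condition (a) of \S\ref{sec:multiplicityoneprop}. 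For condition (b), Corollary \ref{cor:J-fil}(ii)--(iii) shows that each $V_\sigma$ is multiplicity-free with pairwise disjoint sets of constituents, so $\widetilde{D}$ is multiplicity-free; moreover every constituent $F(\lambda')$ of $\widetilde{D}$ is of the form $F(\mathfrak{t}_\lambda(\sum \eps_i a_i\ovl{\eta}_i))$ with $\sum\lfloor a_i/2\rfloor\le 1$, and by Remark \ref{rk:t_lambda}\ref{it:t_lambda:4} applied to the $8$-deepness of $\mu-\eta$ the weight $\lambda'$ is at least $5$-deep in $\un{C}_0$. In particular $\dim_\F F(\lambda')>1$, hence $\chi_{F(\lambda')}\ne\chi_{F(\lambda')}^s$. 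Condition (c) is provided by assumption (iii) together with Lemma \ref{lem:connected}: $\pi^{I_1}$ is a semisimple $I/I_1$-representation whose Jordan--H\"older constituents coincide (up to multiplicity) with those of $D_1(\rhobar)$, hence is connected.

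Having verified (a)--(c), Proposition \ref{prop-W3topi=dim1} applies to every character $\chi=\chi_\lambda$ appearing in $\pi^{I_1}$, provided $2<\ang{\lambda,\alpha_i^\vee}<p-3$ for all $i$. By assumption (iii) such a $\chi$ equals $F(\lambda)^{I_1}$ for some $F(\lambda)\in\JH(D_0(\rhobar))$, and the description of $\JH(D_0(\rhobar))$ recalled in the proof of Lemma \ref{lem:connected} together with the $8$-deepness hypothesis ensures, via Remark \ref{rk:t_lambda}\ref{it:t_lambda:4}, that $\lambda$ is at least $6$-deep in $\un{C}_0$, which is ample. The outcome is the isomorphism $\Hom_I(\chi,\pi) \simto \Hom_I(W_{\chi,3},\pi)$ for every such $\chi$; trivially the same holds when $\Hom_I(\chi,\pi)=0$. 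Corollary \ref{cor:GKdim} then yields $\dim_{I/Z_1}(\pi)\le f$, and this coincides with $\dim_{\GL_2(L)}(\pi)$ by the open-subgroup invariance of the grade recalled in \S\ref{sec:kirillov}. The substantive content is front-loaded into Corollary \ref{cor:J-fil}, Corollary \ref{cor:GKdim}, and Proposition \ref{prop-W3topi=dim1}; the main obstacle in the present argument is purely bookkeeping, namely verifying that the deepness of every character appearing in $\pi^{I_1}$ propagates through the various graph operations to remain above the thresholds required by those inputs, which is what the $8$-deepness hypothesis on $\mu-\eta$ is designed to guarantee.
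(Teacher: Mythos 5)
Your proof is correct and follows essentially the same route as the paper's: verify conditions (a)--(c) of \S\ref{sec:multiplicityoneprop} by taking $\widetilde{D}_\sigma = V_\sigma$ from Corollary \ref{cor:J-fil} and invoking Lemma \ref{lem:connected}, then apply Proposition \ref{prop-W3topi=dim1} and Corollary \ref{cor:GKdim}. Two minor quibbles: since $Z_1$ is pro-$p$, any smooth central character over $\F$ is automatically trivial on it, so the initial twist is unnecessary; and the constituents of $\widetilde{D}$ are only guaranteed to be $4$-deep (not $5$-deep), since $\lambda$ is $7$-deep and $|\eps_i a_i|\le 3$, though this is still amply sufficient for $\chi_{\sigma'}\ne\chi_{\sigma'}^s$.
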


\begin{proof}
As $\pi$ has a central character, the group $Z_1$ acts trivially on $\pi$.
Therefore, by Corollary \ref{cor:J-fil}, Corollary \ref{cor:V-K1} and Lemma \ref{lem:connected}, the representation $\pi$ satisfies hypotheses \ref{it:pi:properties-1}, \ref{it:pi:properties-2}, \ref{it:pi:properties-3} above. Then Proposition \ref{prop-W3topi=dim1} shows that $\Hom_I(\chi,\pi)\simeq\Hom_I(W_{\chi,3},\pi)$ for all characters $\chi$ occurring in $\pi^{I_1}$. We can then apply Corollary \ref{cor:GKdim} to conclude that $\dim_{I}(\pi|_I)\leq f$ and thus that $\dim_{\GL_2(L)}(\pi)\leq f$ (since $I$ is open in $\GL_2(L)$).
\end{proof}

\section{Construction of a lattice}
\label{sec:lattices}

In this section we construct a $\GL_2(\cO_L)$-stable lattice with simple cosocle in some particular locally algebraic representation of $\GL_2(L)$.

We keep the notation of section \ref{sec:smooth:rep}.
Hence, $L$ is a finite unramified extension of $\Qp$ of degree $f$, ring of integers $\mathcal{O}_L$, residue field $k$.
Recall that we have set $K\defeq \GL_2(\mathcal{O}_L)$, $K_1\defeq1+p\M_2(\mathcal{O}_L)$ and $Z_1\defeq Z(\cO_L)\cap K_1$.

Let $\sigma$ be a Serre weight for $\un{G}_0\times_{\Zp}\Fp$. 
We write $P_\sigma\defeq \Proj_{\GL_2(k)}\sigma$ for the projective cover of $\sigma$ in the category of $\F[\GL_2(k)]$-modules and we let $\wtld{P}_\sigma$ be the projective $\cO[\GL_2(k)]$-module lifting $P_\sigma$. 
Then $\wtld{P}_\sigma\otimes_\cO E$ is a (semisimple) finite-dimensional representation of $\GL_2(k)$ over $E$. By inflation, we view it as $K$-representation on which the subgroup $K_1$ acts trivially.

The space $\mathfrak{sl}_{2,L}$ of $2\times 2$ matrices of trace zero with coefficients in $L$ is endowed with the adjoint action of ${\GL_2}_{/L}$, which is isomorphic to $V(\alpha)_{/L}\cong {\rm Sym}^2(L^2)\otimes {\det}^{-1}$. In particular it has an action of $K$. 
The goal of this section is to show the existence of a $K$-stable lattice $V^\circ$ in the locally $\Qp$-algebraic representation $\mathfrak{sl}_{2,L}\otimes_{\Qp}\wtld{P}_\sigma$ such that $(V^\circ/\varpi V^\circ)_{K_1}$ is isomorphic to $P_\sigma$ (and hence such that $\sigma$ is the $K$-cosocle of $V^\circ$) under some mild genericity assumption on $\sigma$.

As $\wtld{P}_\sigma$ is defined over $W(\F)$, and since $\Hom_{\text{$\Qp$-alg}}(L,W(\F)[1/p])$ has $[L:\Qp]$ elements, we may assume that $E$ is unramified over $\Qp$.

\emph{Throughout this section, $E$ is assumed to be unramified over $\Qp$.} We recall that, as before, we assume $p>2$.

\subsection{Locally algebraic lattices}

Let $V^\circ$ be some $K$-stable $\cO$-lattice in some continuous finite-dimensional representation $(V,\rho)$ of $K/Z_1$ over $E$. We assume that the group $K_1$ acts trivially on $V^\circ/p V^\circ$.

As $p>2$, the map $x\mapsto\exp(px)$ induces a bijection $\mathfrak{sl}_{2,\cO_L}\simto K_1/Z_1$ (note that since $p>2$, the map $K_1\cap\SL_2(L)\rightarrow K_1/Z_1$ is an isomorphism) and a group isomorphism
\begin{equation}\label{eq:lieK1} \mathfrak{sl}_{2,\cO_L}/p\mathfrak{sl}_{2,\cO_L}\simto (K_1/Z_1)/(K_1/Z_1)^p.\end{equation}
(See \cite[III.1.1.4, III.1.1.5, III.1.1.8]{Lazard}.)

By assumption, we have $\rho(k)\in \Id_{V^\circ}+p \End_{\cO}(V^\circ)$ for $k\in K_1$. For $x\in\mathfrak{sl}_{2,k}$ and $v\in V^\circ/p V^\circ$, we choose lifts $\tilde{x}\in\mathfrak{sl}_{2,\cO_L}$ of $x$ and $\tilde{v}\in V^\circ$ of $v$ and we define:
\[ \beta'_{V^\circ}(x,v)\defeq p^{-1}(\rho(\exp(p\tilde{x}))\tilde{v}-\tilde{v}) \bmod p V^\circ.\]
Note that $\beta'_{V^\circ}(x,v)$ does not depend on the choices of $\tilde{x}$ and $\tilde{v}$ and is $\Fp$-linear in $x$ and $\F$-linear in $v$. The independence and linearity in $x$ is a consequence of \eqref{eq:lieK1} and of the fact that if $g\in K_1$, we have $[g^p]-1\in\mathfrak{m}_{K_1}^2$ in $\F\bbra{K_1}$.

Therefore there exists a unique $\F$-linear map
\[ \beta_{V^\circ} : \mathfrak{sl}_{2,k}\otimes_{\Fp}(V^\circ/p V^\circ)\longrightarrow V^\circ/p V^\circ\]
such that $\beta_{V^\circ}(x\otimes v)=\beta'_{V^\circ}(x,v)$ for $x\in\mathfrak{sl}_{2,k}$ and $v\in V^\circ/p V^\circ$.
(Alternatively, one can verify that the natural Lie algebra action of $\mathfrak{sl}_{2,\cO_L}$ on $V$ preserves $V^\circ$ and gives rise to
$\beta_{V^\circ}$ upon reduction modulo $p$.)

The map $\beta_{V^\circ}$ measures the defect of exactness of the functor $(-)_{K_1}$ on finite quotients of $V^\circ$. It is a particular case of a Bockstein homomorphism in some homology long exact sequence. More precisely, we have the following lemma.

\begin{lem}\label{lm:homology-exact-sq}
The following sequence is exact:
\[ \mathfrak{sl}_{2,k}\otimes_{\Fp} (V^\circ/p V^\circ)\xrightarrow{\beta_{V^\circ}} V^\circ/p V^\circ\xrightarrow{p} (V^\circ/p^2V^\circ)_{K_1}\longrightarrow V^\circ/p V^\circ\longrightarrow0,\]
where the last map is the reduction mod $p$ \emph{(}recall that $(V^\circ/pV^\circ)_{K_1}=V^\circ/pV^\circ$\emph{)}.
\end{lem}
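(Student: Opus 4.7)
The plan is to verify exactness directly at each of the three positions, exploiting two ingredients: first, that $K_1$ acts trivially on $V^\circ/pV^\circ$ (so coinvariants equal the whole space, and the operator $g-1$ sends $V^\circ$ into $pV^\circ$ for every $g\in K_1$); second, that any $g\in K_1$ can be written as $\exp(p\tilde x)$ for a unique $\tilde x\in\mathfrak{sl}_{2,\cO_L}$ modulo $Z_1$, and the expansion
\[
g-1\;=\;p\tilde x+\frac{p^2}{2}\tilde x^2+\cdots
\]
shows $(g-1)w\equiv p\tilde x\cdot w\pmod{p^2V^\circ}$ for any $w\in V^\circ$. Here the condition $p>2$ is used to ensure $p^2/2\in p^2\cO$ so that all higher-order terms vanish mod $p^2$; the Lie algebra action of $\tilde x$ on $V^\circ/pV^\circ$ is then the one defined in the text, i.e.\ $\tilde x\cdot w=p^{-1}(g-1)w\bmod p$, which is precisely $\beta_{V^\circ}(\bar x\otimes \bar w)$.

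The last two exactness statements are formal consequences of the short exact sequence of $K_1$-modules
\[
0\longrightarrow V^\circ/pV^\circ\xrightarrow{\,p\,}V^\circ/p^2V^\circ\longrightarrow V^\circ/pV^\circ\longrightarrow 0
\]
and right exactness of $K_1$-coinvariants: reduction mod $p$ is clearly surjective, the map ``multiplication by $p$'' is well-defined (independent of lift since $p\cdot pV^\circ\subseteq p^2V^\circ$), and an element of $V^\circ/p^2V^\circ$ reduces to zero mod $p$ exactly when it admits a representative of the form $p\tilde v$ with $\tilde v\in V^\circ$.

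The substantive point is exactness at the middle term, namely $\ker(p)=\operatorname{im}(\beta_{V^\circ})$. A class $\bar v\in V^\circ/pV^\circ$ lies in $\ker(p)$ iff in $V^\circ/p^2V^\circ$ there is a congruence
\[
p\tilde v\;\equiv\;\sum_i(g_i-1)w_i\pmod{p^2V^\circ}
\]
for some $g_i\in K_1$ and $w_i\in V^\circ$. Using $g_i=\exp(p\tilde x_i)$ with $\tilde x_i\in\mathfrak{sl}_{2,\cO_L}$ and the expansion above, each summand on the right lies in $pV^\circ/p^2V^\circ$, so after dividing by $p$ the congruence becomes
\[
\tilde v\;\equiv\;\sum_i p^{-1}(g_i-1)w_i\;\equiv\;\sum_i \beta_{V^\circ}(\bar x_i\otimes\bar w_i)\pmod{pV^\circ},
\]
exhibiting $\bar v$ in $\operatorname{im}(\beta_{V^\circ})$. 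Conversely, any element in $\operatorname{im}(\beta_{V^\circ})$ can be written as $p^{-1}(g-1)\tilde w\bmod p$, and multiplying by $p$ recovers $(g-1)\tilde w$, which vanishes in $(V^\circ/p^2V^\circ)_{K_1}$. This gives both inclusions and completes the proof.

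The only real obstacle is the bookkeeping of the implicit mod $p$ and mod $p^2$ reductions, together with checking that the Lie algebra formula $(g-1)w\equiv p\tilde x\cdot w\pmod{p^2}$ is legitimate in $V^\circ/p^2V^\circ$; one could equivalently present the whole argument as the tail of the long exact sequence in $K_1$-homology associated to $0\to V^\circ/pV^\circ\xrightarrow{p}V^\circ/p^2V^\circ\to V^\circ/pV^\circ\to 0$ and then identify the Bockstein connecting morphism with $\beta_{V^\circ}$ via the same computation, but the direct route described above is shorter and avoids having to set up continuous group homology.
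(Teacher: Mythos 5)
Your proof is correct and takes essentially the same approach as the paper: the last two exactness claims follow from right exactness of $K_1$-coinvariants, and the middle exactness is checked by the same two inclusions, using the definitional identity $p\beta_{V^\circ}(\bar x\otimes\bar w)=(\rho(\exp(p\tilde x))-1)\tilde w\bmod p^2V^\circ$ in both directions. The only difference is cosmetic — your preliminary discussion of the Taylor expansion $g-1=p\tilde x+\tfrac{p^2}{2}\tilde x^2+\cdots$ is motivational and not actually needed for the argument, since the definition of $\beta_{V^\circ}$ already supplies the required congruence directly.
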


\begin{proof}
As the functor of $K_1$-coinvariants is right exact and since $(V^\circ/pV^\circ)_{K_1}=V^\circ/pV^\circ$, it is sufficient to check that the kernel of the second map coincides with the image of $\beta_{V^\circ}$.

Let $x\in\mathfrak{sl}_{2,k}$ and $v\in V^\circ/p V^\circ$ and choose $\tilde{x}\in\mathfrak{sl}_{2,\cO_L}$ and $\tilde{v}\in V^\circ$ lifting $x$ and $v$. By definition we have:
\[ p\beta_{V^\circ}(x\otimes v)=\rho(\exp(p\tilde{x}))\tilde{v}-\tilde{v} \bmod p^2 V^\circ \in\ker((V^\circ/p^2 V^\circ)\rightarrow (V^\circ/p^2 V^\circ)_{K_1}).\]
This implies that the composite $p\beta_{V^\circ}$ is zero.

Conversely let $v\in V^\circ/p V^\circ$ be such that $p v$ is zero in $(V^\circ/p^2 V^\circ)_{K_1}$. This implies that there exist $k_1,\dots,k_r$ in $K_1$ and $\tilde{v}_1,\dots,\tilde{v}_r$ in $V^\circ$ such that
\[ p v=\sum_{i=1}^r (\rho(k_i)-1)\tilde{v}_i\bmod p^2 V^\circ.\]
Then there exist $\tilde{x}_1,\dots,\tilde{x}_r$ in $\mathfrak{sl}_{2,\cO_L}$ such that $k_i=\exp(p\tilde{x}_i)$ and we have $\beta_{V^\circ}(\sum_i {x}_i \otimes {v}_i)=v$ in $V^\circ/p V^\circ$, where $x_i \in \mathfrak{sl}_{2,k}$, $v_i \in V^\circ/p V^\circ$ are the images of $\tilde{x}_i$, $\tilde{v}_i$.\end{proof}

Recall that the group $K$ acts by the adjoint action on $\mathfrak{sl}_{2,L}$ and induces a $\Qp$-algebraic $E$-linear representation of $K$ on $\mathfrak{sl}_{2,L}\otimes_{\Qp}E$. There is a decomposition
\[ \mathfrak{sl}_{2,L}\otimes_{\Qp}E\simeq\bigoplus_{i=0}^{f-1}\mathfrak{sl}_{2,E},\]
where $K$ acts on the $i$-th summand by the adjoint action via the embedding $K\hookrightarrow\GL_2(E)$ given by $\sigma_i : L \into E$ on the coefficients. The sub-$\cO$-module $\mathfrak{sl}_{2,\cO_L}\otimes_{\Zp}\cO$ is a $K$-stable lattice and the action of $K$ on $(\mathfrak{sl}_{2,\cO_L}\otimes_{\Zp}\cO)/p(\mathfrak{sl}_{2,\cO_L}\otimes_{\Zp}\cO)\simeq \mathfrak{sl}_{2,k}\otimes_{\Fp}\F$ factors through $\GL_2(k)$ so that $K_1$ acts trivially on this quotient.

Now we compute $\beta_{V^\circ}$ in the case where $V^\circ$ is the lattice $\mathfrak{sl}_{2,\cO_L}\otimes_{\Zp}\cO$ in the locally algebraic representation $\mathfrak{sl}_{2,L}\otimes_{\Qp}E$.

\begin{lem}\label{lm:cocycle-adjoint}
Assume that $V^\circ=\mathfrak{sl}_{2,\cO_L}\otimes_{\Zp}\cO$. Then $V^\circ/p V^\circ\simeq\mathfrak{sl}_{2,k}\otimes_{\Fp}\F$ and the map $\beta_{V^\circ}$ is given explicitly by
\[ \beta_{V^\circ}(x\otimes y\otimes z)=[x,y]\otimes z\]
for $x,y\in\mathfrak{sl}_{2,k}$ and $z\in\F$.
\end{lem}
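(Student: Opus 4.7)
The plan is a direct unwinding of the definitions: the action is the adjoint action in the first tensor factor, and the exponential/logarithm calculus for the group $K_1$ translates it into an $\operatorname{ad}$-action modulo $p$.

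First, I would rewrite the representation explicitly. The lattice $V^\circ = \mathfrak{sl}_{2,\cO_L}\otimes_{\Zp}\cO$ has $K$ acting by $k\cdot(\tilde y\otimes z) = (k\tilde y k^{-1})\otimes z$, and reducing mod $p$ uses the canonical identification $V^\circ/pV^\circ\cong \mathfrak{sl}_{2,k}\otimes_{\Fp}\F$ coming from $\cO_L/p\cong k$ and $\cO/p\cong\F$. Since $\beta_{V^\circ}$ is $\F$-linear in the second argument, it suffices to verify the formula on decomposable elements $v = y\otimes z$, with lifts $\tilde v = \tilde y\otimes z$ where $\tilde y\in\mathfrak{sl}_{2,\cO_L}$ and $z\in\cO$.

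Next, I would fix a lift $\tilde x\in\mathfrak{sl}_{2,\cO_L}$ of $x\in\mathfrak{sl}_{2,k}$. Because $p>2$, the series $\exp(p\tilde x) = \sum_{n\ge 0} \tfrac{(p\tilde x)^n}{n!}$ converges in $K_1$, and the standard conjugation expansion in the associative $\Qp$-algebra $M_2(L)$ gives
\[
\exp(p\tilde x)\,\tilde y\,\exp(-p\tilde x) \;=\; \sum_{n\ge 0}\frac{p^n}{n!}\operatorname{ad}(\tilde x)^n(\tilde y) \;=\; \tilde y + p[\tilde x,\tilde y] + \frac{p^2}{2}[\tilde x,[\tilde x,\tilde y]] + \cdots
\]
Therefore
\[
p^{-1}\bigl(\rho(\exp(p\tilde x))(\tilde y\otimes z) - \tilde y\otimes z\bigr)
= [\tilde x,\tilde y]\otimes z + \sum_{n\ge 2}\frac{p^{n-1}}{n!}\operatorname{ad}(\tilde x)^n(\tilde y)\otimes z.
\]

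The final step is to show that all terms with $n\ge 2$ lie in $pV^\circ$, so that the reduction mod $p$ is exactly $[x,y]\otimes z$. Since $p>2$, one has $v_p(n!)=\tfrac{n-s_p(n)}{p-1}\le \tfrac{n-1}{p-1}\le n-2$ for $n\ge 2$, so $v_p(p^{n-1}/n!)\ge 1$; as $\operatorname{ad}(\tilde x)^n(\tilde y)\in\mathfrak{sl}_{2,\cO_L}$, the tail is in $p\,\mathfrak{sl}_{2,\cO_L}\otimes_{\Zp}\cO=pV^\circ$. This gives $\beta'_{V^\circ}(x,y\otimes z)=[x,y]\otimes z$, which by $\F$-linearity in the second variable extends to the claimed formula for $\beta_{V^\circ}$.

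There is no real obstacle here: the only mildly delicate point is the convergence/integrality estimate $v_p(p^n/n!)\ge 2$ for $n\ge 2$, which is where the hypothesis $p>2$ is used (and which is already implicit in the existence of $\exp:\mathfrak{sl}_{2,\cO_L}\simto K_1/Z_1$ recalled in \eqref{eq:lieK1}). The rest is the identity $\mathrm{Ad}(\exp X)=\exp(\operatorname{ad} X)$ applied to $X=p\tilde x$, which is a formal consequence of the associative expansion $\exp(X)Y\exp(-X)=\sum \tfrac{1}{n!}\operatorname{ad}(X)^n(Y)$ valid in any $\Qp$-Banach algebra where the series converges.
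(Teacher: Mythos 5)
Your proof takes the same route as the paper's: expand $\exp(p\tilde x)\tilde y\exp(p\tilde x)^{-1}-\tilde y$ via $\mathrm{Ad}(\exp X)=\exp(\operatorname{ad}X)$ and check that only the first-order term $p[\tilde x,\tilde y]$ survives modulo $p^2\mathfrak{sl}_{2,\cO_L}$. One small slip in the integrality estimate: the chain $v_p(n!)\le\tfrac{n-1}{p-1}\le n-2$ fails at $n=2$, since $\tfrac{1}{p-1}>0=n-2$; the needed conclusion $v_p(n!)\le n-2$ is still correct (for $n=2$ because $v_p(2!)=0$ when $p>2$, and for $n\ge 3$ your chain holds), so this does not break the argument, but the $n=2$ case should be disposed of directly rather than via that inequality.
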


\begin{proof}
Let $\tilde{x}$ and $\tilde{y}$ in $\mathfrak{sl}_{2,\cO_L}$ lifting $x$ and $y$. We have:
\[ \exp(p\tilde{x})\tilde{y}\exp(p\tilde{x})^{-1}-\tilde{y}\equiv p\tilde x\tilde y-p\tilde y\tilde x\pmod {p^2\mathfrak{sl}_{2,\cO_L}}\]
so that $\beta_{\mathfrak{sl}_{2,\cO_L}\otimes_{\Zp}\cO}(x\otimes y\otimes 1)=[x,y]$ and we conclude by $\F$-linearity.
\end{proof}

\begin{rem}\label{rem:devissage-beta}
By construction of $\beta_{V^\circ}$ we can check that $\beta_{V_1^\circ\oplus V_2^\circ}=\beta_{V_1^\circ}\oplus\beta_{V_2^\circ}$ and, if $W^\circ$ is another lattice on which $K_1$ acts trivially, $\beta_{V^\circ\otimes_\cO W^\circ}=\beta_{V^\circ}\otimes\Id_{W^\circ/pW^\circ}$.
\end{rem}

We leave to the reader the task to verify the following lemma along the lines of the proof of Lemma \ref{lm:homology-exact-sq}.

\begin{lem}\label{lm:sublattice}
Let $W\subset V^\circ/p V^\circ$ be a sub-$\F$-vector space stable under $K$ and let $V_1^\circ \subset V^\circ$ be the inverse image of $W$ in $V^\circ$. We have a commutative diagram with exact rows:
\[ \begin{tikzcd}[column sep=3.5em]
\mathfrak{sl}_{2,k}\otimes_{\Fp} W \ar[r,"\beta_{V^\circ}|_{\mathfrak{sl}_{2,k}\otimes_{\Fp} W}"] \ar[d,hookrightarrow] & V^\circ/pV^\circ \ar[r,"p"] \ar[d,equal] & (V_1^\circ/p^2 V^\circ)_{K_1} \ar[r] \ar[d]& W \ar[r] \ar[d,hookrightarrow] & 0 \\
\mathfrak{sl}_{2,k}\otimes_{\Fp} (V^\circ/pV^\circ) \ar[r,"\beta_{V^\circ}"] & V^\circ/pV^\circ \ar[r,"p"] & (V^\circ/p^2 V^\circ)_{K_1} \ar[r] &V^\circ/pV^\circ \ar[r] & 0. \end{tikzcd}\]
\end{lem}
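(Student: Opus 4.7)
The plan is to follow closely the proof of Lemma \ref{lm:homology-exact-sq}, treating each row of the diagram as an instance of the same Bockstein-type long exact sequence, but applied to the sublattice $V_1^\circ$ (which satisfies $pV^\circ \subset V_1^\circ \subset V^\circ$) rather than to $V^\circ$ itself. The commutativity of the diagram will follow from the naturality of all the constructions in the $K$-stable inclusion $V_1^\circ \hookrightarrow V^\circ$: the middle square commutes because multiplication by $p$ sends $V^\circ$ into $pV^\circ \subset V_1^\circ$; the square involving reduction mod $p$ commutes because this reduction is induced by $V_1^\circ \hookrightarrow V^\circ$; and the leftmost square commutes because $\beta_{V^\circ}\vert_{\mathfrak{sl}_{2,k}\otimes W}$ is by definition the restriction of $\beta_{V^\circ}$.

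For the exactness of the top row, the surjection $(V_1^\circ/p^2V^\circ)_{K_1}\twoheadrightarrow W$ is simply the natural quotient map $V_1^\circ/p^2V^\circ\twoheadrightarrow V_1^\circ/pV^\circ = W$, which factors through $K_1$-coinvariants because $K_1$ acts trivially on $V^\circ/pV^\circ \supseteq W$, and its kernel is (the $K_1$-coinvariants of) $pV^\circ/p^2V^\circ$, which is precisely the image of the map $p: V^\circ/pV^\circ \to (V_1^\circ/p^2V^\circ)_{K_1}$. For the remaining exactness step, I would first verify that the composition $p\circ \beta_{V^\circ}\vert_{\mathfrak{sl}_{2,k}\otimes W}$ is zero: given $x\in \mathfrak{sl}_{2,k}$ and $y\in W$, choose lifts $\tilde x\in \mathfrak{sl}_{2,\cO_L}$ and, crucially, $\tilde y\in V_1^\circ$ (this is where the hypothesis $y\in W$ enters). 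Then
\[
p\beta_{V^\circ}(x\otimes y)\equiv \rho(\exp(p\tilde x))\tilde y-\tilde y \pmod{p^2V^\circ},
\]
which is zero in $(V_1^\circ/p^2V^\circ)_{K_1}$ because $\tilde y\in V_1^\circ$.

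Conversely, suppose $v\in V^\circ/pV^\circ$ satisfies $pv=0$ in $(V_1^\circ/p^2V^\circ)_{K_1}$. Then there exist $k_1,\dots,k_r\in K_1$ and $\tilde u_1,\dots,\tilde u_r\in V_1^\circ$ such that
\[
p\tilde v\equiv \sum_{i=1}^r(\rho(k_i)-1)\tilde u_i \pmod{p^2V^\circ}
\]
for some lift $\tilde v\in V^\circ$ of $v$. Writing $k_i=\exp(p\tilde x_i)$ with $\tilde x_i\in \mathfrak{sl}_{2,\cO_L}$ and dividing by $p$, we obtain $v=\sum_i \beta_{V^\circ}(x_i\otimes\ovl{u}_i)$ in $V^\circ/pV^\circ$, where $\ovl u_i$ is the image of $\tilde u_i$ in $V_1^\circ/pV^\circ=W$. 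Hence $v$ lies in the image of $\beta_{V^\circ}\vert_{\mathfrak{sl}_{2,k}\otimes W}$, as desired.

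No step here looks like a genuine obstacle; the only subtlety—and the whole point of the refinement over Lemma \ref{lm:homology-exact-sq}—is the careful bookkeeping that the chosen lifts $\tilde u_i$ can be (and must be) taken in $V_1^\circ$ when $\ovl{u}_i\in W$, which is exactly what makes the restriction of $\beta_{V^\circ}$ to $\mathfrak{sl}_{2,k}\otimes W$ compute the kernel on the $V_1^\circ$ side of the diagram.
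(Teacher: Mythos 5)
Your proof is correct and follows exactly the line the paper itself suggests (the authors explicitly leave this verification to the reader, pointing at the proof of Lemma \ref{lm:homology-exact-sq}). The one point that needed care — choosing the lifts $\tilde u_i$ in $V_1^\circ$ so that their classes land in $W$ — you identify and handle correctly, and the rest (commutativity by naturality, exactness at $W$ via right-exactness of coinvariants and the identification $pV^\circ/p^2V^\circ\cong V^\circ/pV^\circ$) is routine.
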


\subsection{Preliminary computations}

In this technical subsection, we make some explicit computations with $\mathfrak{sl}_{2,\F}$-representations and deduce that a certain endomorphism of a direct sum of Serre weights is actually an automorphism.

If $G$ is an algebraic group over $\F$, we use the notion of $G$-module $M$ as defined in \cite[I.2.7]{RAGS}. Such an object has an underlying structure of an $\F$-vector space. It has moreover a natural structure of a module over the Lie algebra $\Lie(G)$ such that the structure map $\Lie(G)\otimes_\F M\rightarrow M$ is a morphism of $G$-modules, where $\Lie(G)$ is considered as a $G$-module for the adjoint action (\cite[I.7.11 \& I.7.18(1)]{RAGS}).

Given $\lambda\in X^*(T)$ (resp.~$\lambda\in X^*(\un{T})$), as in \S\ref{sec:inert-local-langl} we let $L(\lambda)_{/\F}$ be the irreducible algebraic representation of ${\GL_2}_{/\F}$ (resp.~of $\un{G}$) of highest weight $\lambda$.
We write $L(\lambda)$ instead of $L(\lambda)_{/\F}$ in order not to overload notation.

If $\lambda=(\lambda_i)_{0\leq i\leq f-1}$ with $\lambda_i\in X_1(T)$, we have
\[ L(\lambda)\simeq\bigotimes_{i=0}^{f-1} L(\lambda_i)^{(i)},\]
where $L(\lambda_i)^{(i)}$ is the inflation of the ${\GL_2}_{/\F}$-module $L(\lambda_i)$ to $\un{G}$ via the map $\un{G}\cong\prod_{\cJ}\GL_2\stackrel{\pi_i}{\onto}\GL_2$ corresponding to the $i$-th projection. 

Moreover $L(\lambda)$ inherits an action of the group $\un{G}(\F)=\GL_2(k\otimes_{\Fp}\F)$ and $F(\lambda)=L(\lambda)|_{\GL_2(k)}$ via the inclusion $\GL_2(k)\hookrightarrow\un{G}(\F)=\GL_2(k\otimes_{\Fp}\F)$ corresponding to the ring homomorphism $k\rightarrow k\otimes_{\Fp}\F$, $a\mapsto a\otimes 1$ (see \S\ref{sec:inert-local-langl}).

We fix the following $\F$-basis $(e,h,f)$ of $\mathfrak{sl}_{2,\F}$:
\[ e=\begin{pmatrix} 0 & 1 \\ 0 & 0 \end{pmatrix}, \quad h=\begin{pmatrix}
1 & 0 \\ 0 & -1 \end{pmatrix}, \quad f=\begin{pmatrix}
0& 0 \\ 1 & 0 \end{pmatrix}.\]
Recall that the space $\mathfrak{sl}_{2,\F}$ is a ${\GL_2}_{/\F}$-module for the adjoint action and if $p>2$ we have $\alpha\in X_1(T)$ and $\mathfrak{sl}_{2,\F}$ is isomorphic to $L(\alpha)$.

Let $\lambda\in X_1(T)$. We recall that $L(\lambda)$ has a structure of $\mathfrak{sl}_{2,\F}$-module. Let $v_\lambda$ be a highest weight vector of $L(\lambda)$. Then the $\F$-vector space $L(\lambda)$ has a basis given by $(f^iv_\lambda)_{0\leq i\leq r}$ with $r\defeq \ang{\lambda,\alpha^\vee}$ and the action of $\GL_2(\F)$ is given, for $v\in L(\lambda)$, by
\[ \begin{pmatrix}1 & a \\ 0 & 1\end{pmatrix}v=\sum_{n\geq0} a^n\frac{e^n}{n!}v, \quad \begin{pmatrix}1 & 0 \\ a & 1\end{pmatrix}v=\sum_{n\geq0} a^n\frac{f^n}{n!}v.\]
(See \cite[II.1.19(6)]{RAGS} and note that here the sum over $0 \le n \le p-1$ suffices.)

Assume from now on that $\lambda$ is $2$-deep in the lowest alcove, i.e.\ $2 \le r \le p-4$.
Then we have an isomorphism of ${\GL_2}_{/\F}$-modules (see \cite[Lemma]{humphreys-generic}):
\begin{equation}\label{eq:dec-2deep} \mathfrak{sl}_{2,\F}\otimes_\F L(\lambda)\simeq L(\alpha)\otimes_\F L(\lambda) \simeq L(\lambda)\oplus L(\lambda+\alpha)\oplus L(\lambda-\alpha),\end{equation}
noting that the weights $\lambda+\alpha$ and $\lambda-\alpha$ are $p$-restricted. %
We note that the vector $2(e\otimes fv_\lambda)+r(h\otimes v_\lambda)$ is annihilated by $e$ and is a weight vector of weight $\lambda$, it therefore generates the submodule isomorphic to $L(\lambda)$ in $\mathfrak{sl}_{2,\F}\otimes_\F L(\lambda)$. The vector $e\otimes v_\lambda$ (resp.~$e\otimes f^2v_\lambda+(r-1)h\otimes fv_\lambda-r(r-1)f\otimes v_\lambda$) is annihilated by $e$ and is a weight vector of weight $\lambda+\alpha$ (resp.~$\lambda-\alpha$) and generates the submodule isomorphic to $L(\lambda+\alpha)$ (resp.~$L(\lambda-\alpha)$).

We denote by $d_\lambda$ the unique map of ${\GL_2}_{/\F}$-modules $L(\lambda)\hookrightarrow \mathfrak{sl}_{2,\F}\otimes_\F L(\lambda)$ sending $v_\lambda$ to $2(e\otimes fv_\lambda)+r(h\otimes v_\lambda)$. Note that this is the unique (up to scalar) nonzero map between these ${\GL_2}_{/\F}$-modules.

\begin{lem}\label{lm:computation-GL2}
The composite map of ${\GL_2}_{/\F}$-modules
\[ \psi_\lambda : \mathfrak{sl}_{2,\F}\otimes_\F L(\lambda) \xrightarrow{\Id_{\mathfrak{sl}_{2,\F}}\otimes d_\lambda} \mathfrak{sl}_{2,\F}\otimes_\F\mathfrak{sl}_{2,\F}\otimes_\F L(\lambda) \xrightarrow{[-,-]\otimes\Id_{L(\lambda)}} \mathfrak{sl}_{2,\F}\otimes_\F L(\lambda)\]
is an isomorphism.
\end{lem}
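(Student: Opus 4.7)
The plan is to use Schur's lemma together with the decomposition \eqref{eq:dec-2deep}: since $\mathfrak{sl}_{2,\F}\otimes_\F L(\lambda)\cong L(\lambda)\oplus L(\lambda+\alpha)\oplus L(\lambda-\alpha)$ is a sum of pairwise non-isomorphic simple ${\GL_2}_{/\F}$-modules, every ${\GL_2}_{/\F}$-equivariant endomorphism of $\mathfrak{sl}_{2,\F}\otimes_\F L(\lambda)$ acts as a scalar on each isotypic summand. In particular $\psi_\lambda$ is an isomorphism if and only if the three scalars it induces are all nonzero in $\F$. It will then suffice to evaluate $\psi_\lambda$ on the highest weight vectors of each summand as described explicitly in the paragraph preceding the lemma.

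The essential computational input is that $d_\lambda$ is a morphism of ${\GL_2}_{/\F}$-modules, hence of $\mathfrak{sl}_{2,\F}$-modules, so $d_\lambda(f^k v_\lambda)=f^k\cdot d_\lambda(v_\lambda)$, where the action of $f$ on $\mathfrak{sl}_{2,\F}\otimes_\F L(\lambda)$ is the diagonal one (with $f$ acting by $\operatorname{ad}(f)$ on the first tensor factor). Starting from $d_\lambda(v_\lambda)=2e\otimes fv_\lambda+rh\otimes v_\lambda$, a short Leibniz-rule calculation gives
\[
d_\lambda(fv_\lambda)=2e\otimes f^2v_\lambda+(r-2)h\otimes fv_\lambda+2rf\otimes v_\lambda,
\]
and applying $f$ once more yields a similar formula for $d_\lambda(f^2v_\lambda)$. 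Once these are in hand, each scalar is extracted by applying $\psi_\lambda$ to a highest weight vector and collecting terms.

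On $L(\lambda+\alpha)$, generated by $e\otimes v_\lambda$, one gets directly $\psi_\lambda(e\otimes v_\lambda)=[e,2e]\otimes fv_\lambda+r[e,h]\otimes v_\lambda=-2r\cdot e\otimes v_\lambda$, so the scalar is $-2r$. On $L(\lambda)$, using the formula for $d_\lambda(fv_\lambda)$ above and the identities $[e,h]=-2e$, $[e,f]=h$, $[h,e]=2e$, $[h,f]=-2f$, one computes $\psi_\lambda(2e\otimes fv_\lambda+rh\otimes v_\lambda)=8e\otimes fv_\lambda+4rh\otimes v_\lambda$, so the scalar is $4$. On $L(\lambda-\alpha)$, applying $\psi_\lambda$ term by term to the highest weight vector $e\otimes f^2v_\lambda+(r-1)h\otimes fv_\lambda-r(r-1)f\otimes v_\lambda$ and collecting the coefficients of $e\otimes f^2v_\lambda$, $h\otimes fv_\lambda$, $f\otimes v_\lambda$ produces the triple $(2(r+2),\,2(r-1)(r+2),\,-2r(r-1)(r+2))$, which is $2(r+2)$ times the original vector. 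Hence the scalar is $2(r+2)$.

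To conclude, the three scalars $4$, $-2r$, and $2(r+2)$ are all nonzero in $\F$: this uses $p>2$ together with the hypothesis $2\le r\le p-4$, which ensures $r\not\equiv 0\pmod p$ and $r+2\not\equiv 0\pmod p$. The only real obstacle is the bookkeeping for the $L(\lambda-\alpha)$ summand, which requires iterating the Leibniz rule once more to obtain $d_\lambda(f^2v_\lambda)$ and carefully tracking the three resulting coefficients; everything else is formal.
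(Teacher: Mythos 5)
Your proof is correct and is essentially the same argument as the paper's: both exploit the multiplicity-free decomposition $\mathfrak{sl}_{2,\F}\otimes L(\lambda)\cong L(\lambda)\oplus L(\lambda+\alpha)\oplus L(\lambda-\alpha)$ and verify nonvanishing of $\psi_\lambda$ on the highest weight vector of each summand, with the same intermediate formulas for $d_\lambda(fv_\lambda)$ and $d_\lambda(f^2v_\lambda)$. Your reformulation via Schur's lemma (extracting the scalars $4$, $-2r$, $2(r+2)$ rather than merely observing non-vanishing) is a cosmetic repackaging of the identical computation.
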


\begin{proof}
As both sides have the same dimension, it is sufficient to prove that this map is injective. As a ${\GL_2}_{/\F}$-module, $\mathfrak{sl}_{2,\F}\otimes_\F L(\lambda)$ is a direct sum of distinct simple modules by \eqref{eq:dec-2deep}, it is therefore sufficient to prove that the map $\psi_\lambda$ is nonzero on some well chosen vector of each direct summand. We will check this for each of these modules.

The submodule isomorphic to $L(\lambda+\alpha)$ contains the vector $e\otimes v_\lambda$. We have
\begin{align*}
\psi_\lambda(e\otimes v_\lambda)&=([-,-]\otimes \Id_{L(\lambda)})(e\otimes (2(e\otimes fv_\lambda)+r(h\otimes v_\lambda))) \\
&=2 [e,e]\otimes fv_\lambda +r [e,h]\otimes v_\lambda\\
&=-2r e\otimes v_\lambda\neq0\end{align*}
since $2r\neq0$ in $\F$.

The submodule isomorphic to $L(\lambda)$ contains the vector $d_\lambda(v_\lambda)=2(e\otimes fv_\lambda)+r(h\otimes v_\lambda)$. Note that
\begin{align*} d_\lambda(f v_\lambda)&=f(2e\otimes f v_\lambda+r h\otimes v_\lambda) \\
&=2[f,e]\otimes fv_\lambda+2e\otimes f^2v_\lambda+r[f,h]\otimes v_\lambda+r h\otimes fv_\lambda \\
&=-2h\otimes fv_\lambda+2e\otimes f^2v_\lambda+2r f\otimes v_\lambda+r h\otimes fv_\lambda \\
&=2e\otimes f^2v_\lambda+(r-2)h\otimes fv_\lambda+2r f \otimes v_\lambda.
\end{align*}
We have
\begin{align*}
\psi_\lambda(d_\lambda(v_\lambda))&=([-,-]\otimes \Id_{L(\lambda)})(2e\otimes d_\lambda(fv_\lambda)+rh\otimes d_\lambda(v_\lambda)) \\
&=4[e,e]\otimes f^2v_\lambda+2(r-2)[e,h]\otimes fv_\lambda+4r[e,f]\otimes v_\lambda+2r[h,e]\otimes fv_\lambda+r^2[h,h]\otimes v_\lambda \\
&=-4(r-2) e\otimes fv_\lambda+4rh\otimes v_\lambda+4re\otimes fv_\lambda \\
&=8e\otimes f v_\lambda+4rh\otimes v_\lambda\neq0\end{align*}
since, for example, $8\neq0$ in $\F$.

The submodule isomorphic to $L(\lambda-\alpha)$ contains the vector $e\otimes f^2v_\lambda+(r-1)h\otimes fv_\lambda-r(r-1)f\otimes v_\lambda$. We first check that
\[ d_\lambda(f^2v_\lambda)=2e\otimes f^3v_\lambda+(r-4)h\otimes f^2v_\lambda+4(r-1)f\otimes fv_\lambda.\]
Then we have
\begin{align*}
\psi_\lambda(e&\otimes f^2v_\lambda+(r-1)h\otimes fv_\lambda-r(r-1)f\otimes v_\lambda)\\
&=2(r+2)e\otimes f^2v_\lambda+2(r-1)(r+2)h\otimes fv_\lambda -2r(r-1)(r+2)f\otimes v_\lambda
\end{align*}
and this is nonzero, since $2\leq r\leq p-4$. This proves the lemma.
\end{proof}

Let $\sigma$ be a Serre weight for $\un{G}_0\times_{\Zp}\Fp$.
It is an absolutely irreducible representation of $\un{G}_0(\Fp)=\GL_2(k)$.
There exists a $p$-restricted weight $\lambda\in X_1(\un{T})$ such that $\sigma\cong F(\lambda)= L(\lambda)|_{\GL_2(k)}\simeq\bigotimes_{i=0}^{f-1}L(\lambda_i)^{(i)}|_{\GL_2(k)}$ (see \S \ref{sec:inert-local-langl}). 

Assume from now on that $\lambda$ is $2$-deep in $\un{C}_0$. Then the weights $\lambda$, $\lambda\pm\alpha_i$ are $p$-restricted, hence we have an isomorphism of $\GL_2(k)$-representations 
\[ \mathfrak{sl}_{2,k}\otimes_{k,\sigma_i} F(\lambda) \cong F(\lambda) \oplus F(\lambda+\alpha_i) \oplus F(\lambda-\alpha_i), \]
where the summands on the right-hand side are irreducible and pairwise nonisomorphic.
For each $i$, we choose a nonzero map $d_{\sigma,i}\in \Hom_{\GL_2(k)}(\sigma,\mathfrak{sl}_{2,k}\otimes_{k,\sigma_i}\sigma)$.
By comparing with \eqref{eq:dec-2deep} it follows that that the map $d_{\sigma,i}$ is a nonzero multiple of the map $\Id_{\bigotimes_{j\neq i}L(\lambda_j)^{(j)}}\otimes d_{\lambda_i}^{(i)}$ and we define $\un{d}_\sigma\defeq(d_{\sigma,i})$ which is a $\GL_2(k)$-equivariant map from $\sigma$ to $\mathfrak{sl}_{2,k}\otimes_{\Fp}\sigma\simeq\bigoplus_i (\mathfrak{sl}_{2,k}\otimes_{k,\sigma_i}\sigma)$.
(Note that $\mathfrak{sl}_{2,k}\otimes_{k,\sigma_i}\sigma$ is isomorphic to the $\GL_2(k)$-restriction of $(\mathfrak{sl}_{2,\F}\otimes_{\F} L(\lambda_i))^{(i)}\bigotimes_{j\neq i}L(\lambda_j)^{(j)}$ or, equivalently, of $L(\alpha_i)\otimes_{\F}L(\lambda)$.)

\begin{prop}\label{prop:computation-Gbar}
Assume that $\lambda$ is $2$-deep in $\un{C}_0$. Then the map of $\GL_2(k)$-representations
\[ \Psi : \mathfrak{sl}_{2,k}\otimes_{\Fp}\sigma\xrightarrow{\Id_{\mathfrak{sl}_{2,k}}\otimes\un{d}_\sigma} \mathfrak{sl}_{2,k}\otimes_{\Fp}\mathfrak{sl}_{2,k}\otimes_{\Fp}\sigma\xrightarrow{[-,-]\otimes\Id_\sigma}\mathfrak{sl}_{2,k}\otimes_{\Fp}\sigma\]
is an isomorphism.
\end{prop}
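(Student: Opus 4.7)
The plan is to exhibit $\Psi$ as a direct sum of isomorphisms, one for each embedding $i \in \cJ$, by reducing to the ``one embedding'' situation already handled in Lemma~\ref{lm:computation-GL2}.

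First I would decompose the source and target. Using $k\otimes_{\Fp}\F\cong \F^{\cJ}$ (induced by the embeddings $\sigma_i$), there is a natural $\GL_2(k)$-equivariant decomposition
\[
\mathfrak{sl}_{2,k}\otimes_{\Fp}\sigma \;\cong\; \bigoplus_{i\in\cJ}\bigl(\mathfrak{sl}_{2,\F}^{(i)}\otimes_{\F}\sigma\bigr),
\]
where the superscript $(i)$ indicates that $\GL_2(k)$ acts on the $\mathfrak{sl}_{2,\F}$ factor through the embedding $\sigma_i$. Crucially, the Lie bracket extends $\F$-linearly to $\mathfrak{sl}_{2,k}\otimes_{\Fp}\F\cong\prod_i\mathfrak{sl}_{2,\F}^{(i)}$ as the product bracket, so $[\mathfrak{sl}_{2,\F}^{(i)},\mathfrak{sl}_{2,\F}^{(i')}]=0$ for $i\ne i'$ and the bracket restricts to the usual bracket on each factor $\mathfrak{sl}_{2,\F}^{(i)}$.

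Next I would analyze how $\un{d}_\sigma$ and $\Psi$ interact with this decomposition. By construction, $\un{d}_\sigma=(d_{\sigma,i})_i$ where each $d_{\sigma,i}$ lands in $\mathfrak{sl}_{2,\F}^{(i)}\otimes_\F\sigma$, and using $\sigma\cong\bigotimes_j L(\lambda_j)^{(j)}|_{\GL_2(k)}$ the map $d_{\sigma,i}$ is a nonzero scalar multiple of $d_{\lambda_i}^{(i)}\otimes \bigotimes_{j\ne i}\Id_{L(\lambda_j)^{(j)}}$. Consequently, for $x\in\mathfrak{sl}_{2,\F}^{(i)}$ and $v\in\sigma$, the term $[x,d_{\sigma,j}(v)]$ vanishes whenever $j\ne i$ (because the two factors of $\mathfrak{sl}$ live in different summands and therefore commute). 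Hence $\Psi$ preserves the decomposition, and on each summand $\mathfrak{sl}_{2,\F}^{(i)}\otimes_\F\sigma$ it acts as a nonzero scalar multiple of $(\psi_{\lambda_i}\otimes\Id)^{(i)}$, where $\psi_{\lambda_i}$ is the endomorphism of $\mathfrak{sl}_{2,\F}\otimes_\F L(\lambda_i)$ studied in Lemma~\ref{lm:computation-GL2} and $\Id$ is the identity on $\bigotimes_{j\ne i}L(\lambda_j)^{(j)}$.

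Finally I would conclude: since $\lambda$ is $2$-deep in $\un{C}_0$, each $\lambda_i$ is $2$-deep in the lowest alcove of ${\GL_2}_{/\F}$, so $\psi_{\lambda_i}$ is an isomorphism by Lemma~\ref{lm:computation-GL2}. Hence $\Psi$ is a direct sum of isomorphisms and is itself an isomorphism. The main (and only nontrivial) input is Lemma~\ref{lm:computation-GL2}; the step that requires a little care is verifying the block-diagonal nature of $\Psi$, which boils down to the commutativity of distinct embedding-components of $\mathfrak{sl}_{2,k}\otimes_{\Fp}\F$.
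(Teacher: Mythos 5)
Your proof is correct and follows essentially the same strategy as the paper: both observe that the Lie bracket is $k$-bilinear, which forces $\Psi$ to decompose as a direct sum over the embeddings $i\in\cJ$ of maps $\Psi_i$ that are (up to a nonzero scalar and tensoring with the identity on the remaining factors) the maps $\psi_{\lambda_i}$ of Lemma~\ref{lm:computation-GL2}. The only cosmetic difference is that you phrase the vanishing of cross-terms via the product-bracket structure on $\prod_i\mathfrak{sl}_{2,\F}^{(i)}$, while the paper phrases it as the bracket factoring through $\mathfrak{sl}_{2,k}\otimes_k\mathfrak{sl}_{2,k}$; these are the same observation.
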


\begin{proof}
As the map $[-,-]$ is $k$-bilinear, the map $[-,-]\otimes\Id_\sigma$ factors through
\[ \mathfrak{sl}_{2,k}\otimes_{\Fp}\mathfrak{sl}_{2,k}\otimes_{\Fp}\sigma\twoheadrightarrow \mathfrak{sl}_{2,k}\otimes_k\mathfrak{sl}_{2,k}\otimes_{\Fp}\sigma.\]
Therefore, the map $\Psi$ is the direct sum of the maps $\Psi_i$, where $\Psi_i$ is the $\F$-linear composite map
\[ \mathfrak{sl}_{2,k}\otimes_{k,\sigma_i}\sigma\xrightarrow{\Id_{\mathfrak{sl}_{2,k}}\otimes d_{\sigma,i}} \mathfrak{sl}_{2,k}\otimes_k\mathfrak{sl}_{2,k}\otimes_{k,\sigma_i}\sigma\xrightarrow{[-,-]\otimes\Id_\sigma} \mathfrak{sl}_{2,k}\otimes_{k,\sigma_i}\sigma.\]
First of all we remark that all the modules involved in the statement are actually restrictions to $\GL_2(k)$ of $\un{G}$-modules. Namely, $\sigma=L(\lambda)|_{\GL_2(k)}$ and the action of $\GL_2(k)$ on $\mathfrak{sl}_{2,k}\otimes_{k,\sigma_i}\F$ is the restriction to $\GL_2(k)$ of the action of $\un{G}$ on $\mathfrak{sl}_{2,\F}^{(i)}$. Moreover the maps $d_{\sigma,i}$ and $[-,-]$ are maps of $\un{G}$-modules. As $L(\lambda)\simeq\bigotimes_i L(\lambda_i)^{(i)}$ and $\mathfrak{sl}_{2,k}\otimes_{k,\sigma_i}\sigma\simeq\bigotimes_{j\neq i}L(\lambda_j)^{(j)}\otimes_\F (\mathfrak{sl}_{2,\F}\otimes_\F L(\lambda_i))^{(i)}$, we have $\Psi_i=\bigotimes_j \psi_{i,j}^{(j)}$, where $\psi_{i,j}$ is the identity of $L(\lambda_j)$ when $j\neq i$ and $\psi_{i,i}$ is a nonzero scalar multiple of the endomorphism $\psi_{\lambda_i}$ (where $\psi_{\lambda_i}$ is defined in Lemma \ref{lm:computation-GL2}). By Lemma \ref{lm:computation-GL2}, the map $\Psi_i$ is an isomorphism, hence so is $\Psi$.
\end{proof}

\subsection{Construction of the lattice}
\label{sec:constructionlattice}

Let $\sigma$ be a Serre weight. We recall that we denote by $P_\sigma$ the projective cover of $\sigma$ in the category of $\F[\GL_2(k)]$-modules and $\wtld{P}_\sigma$ the projective $\cO[\GL_2(k)]$-module lifting $P_\sigma$. Then $\wtld{P}_\sigma\otimes_\cO E$ is a (semisimple) finite-dimensional representation of $\GL_2(k)$ over $E$. By inflation, we view it as a $K$-representation on which the subgroup $K_1$ acts trivially.

We set $R_1\defeq\wtld{P}_\sigma$ and we recall that we have the $\Qp$-algebraic action of the group $K$ on $\mathfrak{sl}_{2,L}\otimes_{\Qp}E$ by the adjoint action. The $\cO$-module $R_2\defeq\mathfrak{sl}_{2,\cO_L}\otimes_{\Zp}\wtld{P}_\sigma$ is a $K$-stable lattice of $R_2[1/p]$ such that $K_1$ acts trivially on $R_2/p R_2$. As the group $K_1$ acts trivially on $\wtld{P}_\sigma$, Remark \ref{rem:devissage-beta} implies that $\beta_{R_2}=\beta_{\mathfrak{sl}_{2,\cO_L}\otimes_{\Zp}\cO}\otimes \Id_{P_\sigma}$. From Lemma \ref{lm:cocycle-adjoint}, we deduce that
\[ \beta_{R_2}=[-,-]\otimes\Id_{P_\sigma} : \mathfrak{sl}_{2,k}\otimes_{\Fp}\mathfrak{sl}_{2,k}\otimes_{\Fp}P_\sigma\longrightarrow\mathfrak{sl}_{2,k}\otimes_{\Fp}P_\sigma.\]

Let $R_{2,i}\defeq\mathfrak{sl}_{2,\cO_L}\otimes_{\cO_L,\sigma_i}
\wtld{P}_\sigma$ so that $R_2\simeq\bigoplus_i R_{2,i}$. Let
$\lambda\in X_1(\un{T})$ be such that $\sigma\simeq F(\lambda)$ and
assume that $\lambda$ is $3$-deep in $\un{C}_0$. 
For $0\leq i\leq f-1$, there exists an isomorphism of $K$-representations (see for example \cite[Prop.~3.3(2)]{LMS}):
\begin{equation}\label{decomp}
R_{2,i}/p R_{2,i}\simeq \mathfrak{sl}_{2,k}\otimes_{k,\sigma_i}P_\sigma\simeq P_{\sigma}\oplus P_{\sigma_{1,i}}\oplus P_{\sigma_{2,i}},
\end{equation}
where $\sigma_{1,i}=F(\lambda-\alpha_i)$ and $\sigma_{2,i}=F(\lambda+\alpha_i)$. We fix such an isomorphism and use it to define a $K$-equivariant injection $\iota_i : P_\sigma\hookrightarrow R_{2,i}/pR_{2,i}$. We let $\iota$ denote the ``diagonal'' embedding of $P_\sigma$:
\[ \iota : x\mapsto (\iota_i(x))_i\in R_2/pR_2\simeq\bigoplus_{i=0}^{f-1} R_{2,i}/p R_{2,i}.\]

As a first step, we consider a modification of the lattice $R_2$. We define a new lattice in $R_2[1/p]$ as follows:
\[ R_2'\defeq\set{x\in R_2 : (x\bmod pR_2)\in\iota(P_\sigma)}.\]
Note that $pR_2\subset R'_2$. As $K_1$ acts trivially on $P_\sigma$, the map $R'_2/pR'_2\onto P_\sigma$ sending $x$ to $\iota^{-1}(x\bmod p)$ factors through $R'_2/pR'_2\onto (R'_2/pR'_2)_{K_1}$ and gives rise to a $K$-equivariant surjective map $(R'_2/pR'_2)_{K_1}\onto P_\sigma$.

\begin{prop}\label{prop:alg-lattice}
For $x\in R_2$, we can find elements $k_1,\dots,k_r\in K_1$ and $x_1,\dots,x_r$ in $R'_2$ such that
\[ \sum_{i=1}^r (k_i-1)x_i\equiv px\pmod {p^2R_2}.\]
Hence the $K$-equivariant map $(R'_2/pR'_2)_{K_1}\onto P_\sigma$ is an isomorphism. 
\end{prop}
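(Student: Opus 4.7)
The plan is to translate the first assertion into a surjectivity statement for the restriction of the Bockstein map $\beta_{R_2}$ to $\mathfrak{sl}_{2,k}\otimes_{\Fp}\iota(P_\sigma)$, and then to verify this surjectivity by invoking Proposition \ref{prop:computation-Gbar}.

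Applying Lemma \ref{lm:sublattice} with $V^\circ=R_2$, $V_1^\circ=R'_2$, and $W=\iota(P_\sigma)$ produces the exact sequence
\[
\mathfrak{sl}_{2,k}\otimes_{\Fp}\iota(P_\sigma)\xrightarrow{\,\beta_{R_2}|\,} R_2/pR_2 \xrightarrow{\,p\,} (R'_2/p^2R_2)_{K_1}\longrightarrow \iota(P_\sigma)\longrightarrow 0.
\]
Unwinding the definition of coinvariants, the first assertion (the existence, for each $x\in R_2$, of $k_i\in K_1$ and $x_i\in R'_2$ with $\sum(k_i-1)x_i\equiv px \pmod{p^2R_2}$) is equivalent to the vanishing of the middle arrow labelled $p$, and hence by exactness equivalent to the surjectivity of $\beta_{R_2}|_{\mathfrak{sl}_{2,k}\otimes\iota(P_\sigma)}$ onto $R_2/pR_2$.

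To prove this surjectivity, observe that by Lemma \ref{lm:cocycle-adjoint} and Remark \ref{rem:devissage-beta}, $\beta_{R_2}=[-,-]\otimes\Id_{P_\sigma}$. Using $\iota$ to identify $\iota(P_\sigma)\cong P_\sigma$, $\beta_{R_2}|$ becomes a $\GL_2(k)$-equivariant endomorphism $\phi$ of the finite-dimensional $\F$-vector space $\mathfrak{sl}_{2,k}\otimes_{\Fp}P_\sigma$. In particular, surjectivity of $\phi$ is equivalent to injectivity, and since $\phi$ is $\GL_2(k)$-equivariant and the module has finite length, it suffices to check injectivity on the socle. By \eqref{decomp} and \eqref{eq:dec-2deep}, the socle equals $\mathfrak{sl}_{2,k}\otimes_{\Fp}\sigma=\bigoplus_{i\in\cJ}(\mathfrak{sl}_{2,k}\otimes_{k,\sigma_i}\sigma)$. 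For each $i\in\cJ$, the restriction $\iota_i|_\sigma$ lands in $\mathfrak{sl}_{2,k}\otimes_{k,\sigma_i}\sigma$ and, since the corresponding $\Hom$-space is one-dimensional by \eqref{eq:dec-2deep}, equals $c_i\,d_{\sigma,i}$ for some nonzero $c_i\in\F$ (nonzero because $\iota_i$ is injective). Using the fact that the bracket on $\mathfrak{sl}_{2,k}$ is $k$-bilinear, hence satisfies $[\mathfrak{sl}_{2,\F}^{(i)},\mathfrak{sl}_{2,\F}^{(j)}]=0$ whenever $i\neq j$ (after extending scalars and decomposing $\mathfrak{sl}_{2,k}\otimes_{\Fp}\F=\bigoplus_{i}\mathfrak{sl}_{2,\F}^{(i)}$), a direct computation shows that $\phi|_{\soc}$ splits as the direct sum $\bigoplus_{i\in\cJ} c_i\,\Psi_i$, where $\Psi_i$ is the $i$-th summand of the map $\Psi$ of Proposition \ref{prop:computation-Gbar}. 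By that proposition each $\Psi_i$ is an isomorphism, hence so is $\phi|_{\soc}$.

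The second assertion then follows formally from the first: the kernel of the natural surjection $R'_2/pR'_2\onto P_\sigma$ is $pR_2/pR'_2$, and the first assertion gives $pR_2\subset (K_1-1)R'_2+p^2R_2\subset (K_1-1)R'_2+pR'_2$, so this kernel vanishes in $(R'_2/pR'_2)_{K_1}$, yielding $(R'_2/pR'_2)_{K_1}\simto P_\sigma$. The main input is Proposition \ref{prop:computation-Gbar}; the key bridge, and the only delicate conceptual step beyond it, is the identification of $\phi|_{\soc}$ with a rescaling of $\Psi$, which is what connects the abstract lattice-theoretic reduction to the explicit Lie-algebraic computation.
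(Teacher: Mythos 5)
Your proof is correct and follows essentially the same route as the paper's: use Lemma \ref{lm:sublattice} to reduce the first assertion to the surjectivity (equivalently injectivity) of the Bockstein restricted to $\mathfrak{sl}_{2,k}\otimes_{\Fp}\iota(P_\sigma)$, check this on the socle $\mathfrak{sl}_{2,k}\otimes_{\Fp}\sigma$ via Lemma \ref{lm:socle-tensor}, and invoke Proposition \ref{prop:computation-Gbar}. The only cosmetic difference is that you make the scalars $c_i$ relating $\iota_i|_\sigma$ to $d_{\sigma,i}$ explicit, whereas the paper simply applies Proposition \ref{prop:computation-Gbar} with $d_{\sigma,i}=\iota_i|_\sigma$ directly; you also spell out the short deduction of the second assertion, which the paper leaves as an immediate consequence.
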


\begin{proof}
By Lemmas \ref{lm:homology-exact-sq} and \ref{lm:sublattice}, we have a commutative diagram with exact rows:
\[ \begin{tikzcd}
\mathfrak{sl}_{2,k}\otimes_{\Fp} P_\sigma \ar[r] \ar[d,hookrightarrow,"\Id_{\mathfrak{sl}_{2,k}}\otimes\iota"'] \ar[rd] & R_2/pR_2 \ar[r,"p"] \ar[d,equal] & (R_2'/p^2 R_2)_{K_1} \ar[r] \ar[d]& P_\sigma \ar[r] \ar[d,hookrightarrow,"\iota"] & 0 \\
\mathfrak{sl}_{2,k}\otimes_{\Fp} (R_2/pR_2) \ar[r,"\beta_{R_2}"] & R_2/pR_2 \ar[r,"p"] & (R_2/p^2R_2)_{K_1} \ar[r] & R_2/pR_2 \ar[r] & 0. \end{tikzcd}\]
We will prove that the diagonal map is surjective (equivalently, an isomorphism, for dimension reasons). This is equivalent to the first statement of the proposition, and the second statement immediately follows.

As $R_2/pR_2\simeq\mathfrak{sl}_{2,k}\otimes_{\Fp} P_\sigma$ and $\beta_{R_2}=[-,-]\otimes \Id_{P_\sigma}$, we need to prove that the composite map $([-,-]\otimes\Id_{P_\sigma})\circ (\Id_{\mathfrak{sl}_{2,k}}\otimes \iota)$ is surjective:
\[ \mathfrak{sl}_{2,k}\otimes_{\Fp} P_\sigma\xrightarrow{\Id_{\mathfrak{sl}_{2,k}}\otimes \iota}  \mathfrak{sl}_{2,k}\otimes_{\Fp}  \mathfrak{sl}_{2,k}\otimes_{\Fp} P_\sigma \xrightarrow{[-,-]\otimes\Id_{P_\sigma}} \mathfrak{sl}_{2,k}\otimes_{\Fp} P_\sigma.\]

For dimension reasons, it is equivalent to prove that it is injective. This can be checked on the socle.

The socle of $P_\sigma$ is isomorphic to $\sigma$ and the nonzero map (unique up to scalar) $\sigma\hookrightarrow P_\sigma$ induces a $K$-equivariant map $\mathfrak{sl}_{2,k}\otimes_{\Fp}\sigma\hookrightarrow \mathfrak{sl}_{2,k}\otimes_{\Fp}P_\sigma$ whose image is the socle of $\mathfrak{sl}_{2,k}\otimes_{\Fp}P_\sigma$ (see Lemma \ref{lm:socle-tensor} below).

To summarize, we have a commutative diagram
\[ \begin{tikzcd}[column sep=3.8em]
\mathfrak{sl}_{2,k}\otimes_{\Fp} \sigma\ar[r,"\Id_{\mathfrak{sl}_{2,k}}\otimes \iota|_{\sigma}"] \ar[d,hookrightarrow] &  \mathfrak{sl}_{2,k}\otimes_{\Fp}  \mathfrak{sl}_{2,k}\otimes_{\Fp} \sigma \ar[r,"{[-,-]}\otimes\Id_{\sigma}"] \ar[d,hookrightarrow]
\ar[d,hookrightarrow] & \mathfrak{sl}_{2,k}\otimes_{\Fp} \sigma \ar[d,hookrightarrow] \\
\mathfrak{sl}_{2,k}\otimes_{\Fp} P_\sigma \ar[r,"\Id_{\mathfrak{sl}_{2,k}}\otimes \iota"] & \mathfrak{sl}_{2,k}\otimes_{\Fp}  \mathfrak{sl}_{2,k}\otimes_{\Fp} P_\sigma \ar[r,"{[-,-]}\otimes\Id_{P_\sigma}"]  & \mathfrak{sl}_{2,k}\otimes_{\Fp} P_\sigma. \end{tikzcd}\]
We need to prove that the composition of the maps of the top row is injective and we will be done.

In the decomposition $\mathfrak{sl}_{2,k}\otimes_{\Fp}\sigma\simeq\bigoplus_{i=0}^{f-1}(\mathfrak{sl}_{2,k}\otimes_{k,\sigma_i}\sigma)$, the map $\iota|_\sigma$ corresponds to $(\iota_i|_\sigma)_{0\leq i\leq f-1}$. As $\iota_i$ is injective and $\sigma$ is the socle of $P_\sigma$, we have that $\iota_i|_\sigma$ is nonzero. We can apply Proposition \ref{prop:computation-Gbar} to conclude that the composite map in the top row of the diagram above is an isomorphism.
\end{proof}

\begin{lem}\label{lm:socle-tensor}
The $\GL_2(k)$-equivariant map $\sigma\hookrightarrow P_\sigma$ \emph{(}resp.~$P_\sigma\twoheadrightarrow \sigma$\emph{)} induces a $\GL_2(k)$-equiva\-riant map $\mathfrak{sl}_{2,k}\otimes_{\Fp}\sigma \hookrightarrow \mathfrak{sl}_{2,k}\otimes_{\Fp} P_\sigma$ \emph{(}resp.~$\mathfrak{sl}_{2,k}\otimes_{\Fp} P_\sigma\twoheadrightarrow \mathfrak{sl}_{2,k}\otimes_{\Fp} \sigma$\emph{)} whose image is the socle \emph{(}resp.~cosocle\emph{)} of $\mathfrak{sl}_{2,k}\otimes_{\Fp} P_\sigma$.
\end{lem}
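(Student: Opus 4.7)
The plan is to reduce to the decomposition already in hand and use flatness plus a dimension count. First I would split the tensor product along embeddings: since $k \otimes_{\F_p} \F \cong \bigoplus_{i\in\cJ} \F$ via $(\sigma_i)_i$, for any $k[\GL_2(k)]$-module $M$ there is a canonical $\GL_2(k)$-equivariant decomposition
\[
\mathfrak{sl}_{2,k} \otimes_{\F_p} M \;\cong\; \bigoplus_{i\in\cJ} \bigl(\mathfrak{sl}_{2,k} \otimes_{k,\sigma_i} M\bigr),
\]
and the two natural maps in the statement respect this decomposition. So it suffices to treat each summand $\mathfrak{sl}_{2,k} \otimes_{k,\sigma_i}(-)$ separately; I would then recall that by \eqref{decomp} (or rather its analog with $\sigma$ in place of $\wtld{P}_\sigma$), the $\GL_2(k)$-module $\mathfrak{sl}_{2,k} \otimes_{k,\sigma_i} \sigma$ is semisimple and isomorphic to $\sigma \oplus \sigma_{1,i} \oplus \sigma_{2,i}$ (using that $\lambda$ is $2$-deep in $\un C_0$), while $\mathfrak{sl}_{2,k} \otimes_{k,\sigma_i} P_\sigma \cong P_\sigma \oplus P_{\sigma_{1,i}} \oplus P_{\sigma_{2,i}}$ by \eqref{decomp} again (after reducing mod $p$).

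For the socle statement, the map $\sigma \hookrightarrow P_\sigma$ tensored with the $k$-flat module $\mathfrak{sl}_{2,k}$ gives an injection
\[
\mathfrak{sl}_{2,k} \otimes_{k,\sigma_i} \sigma \;\hookrightarrow\; \mathfrak{sl}_{2,k} \otimes_{k,\sigma_i} P_\sigma.
\]
Since the left-hand side is semisimple, its image is contained in the socle of the right-hand side, which by the decomposition above is $\sigma \oplus \sigma_{1,i} \oplus \sigma_{2,i}$. As source and target socle have the same $\F$-dimension, the injection lands isomorphically onto the socle. Summing over $i$ gives the first assertion.

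For the cosocle statement, I would give a dual argument: $\mathfrak{sl}_{2,k}$ is also $k$-flat as a right module, so $\mathfrak{sl}_{2,k} \otimes_{k,\sigma_i} P_\sigma \twoheadrightarrow \mathfrak{sl}_{2,k} \otimes_{k,\sigma_i} \sigma$ is surjective with semisimple target, hence factors through the cosocle of $\mathfrak{sl}_{2,k} \otimes_{k,\sigma_i} P_\sigma$, which is $\sigma \oplus \sigma_{1,i} \oplus \sigma_{2,i}$ and has the same dimension as $\mathfrak{sl}_{2,k} \otimes_{k,\sigma_i} \sigma$; equality of images follows. (Alternatively, one can apply $\Hom_\F(-,\F)$ to the socle case, using that $\wtld{P}_\sigma \cong \wtld{I}_\sigma$ is self-dual up to twist and that duality exchanges socle and cosocle.) There is no real obstacle here; the only point to be careful about is to argue the map is injective/surjective before concluding it identifies with the socle/cosocle, which is precisely where flatness of the vector space $\mathfrak{sl}_{2,k}$ over $k$ is used.
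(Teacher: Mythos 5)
Your proof is correct and follows essentially the same route as the paper's: decompose along the embeddings $\sigma_i$ using $k\otimes_{\Fp}\F \cong \F^{\cJ}$, observe that each $\mathfrak{sl}_{2,k}\otimes_{k,\sigma_i}\sigma$ is semisimple of the same dimension as the socle (resp.~cosocle) of $\mathfrak{sl}_{2,k}\otimes_{k,\sigma_i}P_\sigma$, and conclude from injectivity (resp.~surjectivity) of the tensored map. The only cosmetic difference is that you spell out the cosocle case (the paper leaves it as ``similar'') and invoke flatness explicitly, which is immediate here as everything is over a field.
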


\begin{proof}
As the map $\mathfrak{sl}_{2,k}\otimes_{\Fp} \sigma\hookrightarrow \mathfrak{sl}_{2,k}\otimes_{\Fp} P_\sigma$ is $k\otimes\F$-linear, it can be decomposed as the direct sum of the maps $\mathfrak{sl}_{2,k}\otimes_{k,\sigma_i}\sigma\rightarrow\mathfrak{sl}_{2,k}\otimes_{k,\sigma_i}P_\sigma$. Therefore it is sufficient to prove that the image of the map $\mathfrak{sl}_{2,k}\otimes_{k,\sigma_i}\sigma\rightarrow\mathfrak{sl}_{2,k}\otimes_{k,\sigma_i}P_\sigma$ is the socle of the right-hand side for each $0\leq i\leq f-1$. We observe that the left-hand side is semisimple (by \eqref{eq:dec-2deep}), the map is injective and the socle of the right-hand side has the same dimension as the left-hand side (by \eqref{decomp}). This implies the result. The case of the cosocle is similar.
\end{proof}

Using Proposition \ref{prop:alg-lattice}, we identify $(R'_2/pR'_2)_{K_1}$ with $P_\sigma$ and we define the lattice $R$ by ``glueing'' $R_1$ and $R_2'$ along $P_\sigma$:
\begin{equation}\label{eq:R}
\begin{aligned}
 R&\defeq\set{(x_1,x_2)\in R_1\oplus R_2' : (x_1\bmod p)=({\rm image\ of\ }x_2\bmod p)\ {\rm in\ }P_\sigma\simeq(R'_2/pR'_2)_{K_1}} \\
&=\set{(x_1,x_2)\in R_1\oplus R_2 : (x_2\bmod p)=\iota (x_1\bmod p)\in R_2/pR_2}
\end{aligned}
\end{equation}
(equivalently, $R\cong R_1\times_{P_\sigma}R'_2$). This is a $K$-stable lattice in $R_1[1/p]\oplus R_2[1/p]$. We define $r$ to be the map $R\rightarrow P_\sigma$ sending $(x_1,x_2)$ to $(x_1\bmod p)$.

\begin{thm}\label{thm:lattice-coinvariants}
There exists a short exact sequence of $K$-representations
\begin{equation}\label{eq:dec-lattice-modp}0 \longrightarrow R_2/p R_2\longrightarrow R/p R\xrightarrow{r} P_\sigma\longrightarrow0.\end{equation}
Moreover the map $r : R/ p R\onto P_\sigma$ induces an isomorphism $(R/p R)_{K_1}\xrightarrow{\sim}P_\sigma$.
\end{thm}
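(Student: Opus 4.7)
The approach is to recognise $R$ as the pullback $R_1 \times_{P_\sigma} R_2'$ (via the equivalent form of~\eqref{eq:R}), where the structure maps are $f_1 : R_1 \onto R_1/pR_1 = P_\sigma$ and $f_2 : R_2' \onto R_2'/pR_2 \xrightarrow{\iota^{-1}} P_\sigma$. The key observation is that, thanks to Proposition~\ref{prop:alg-lattice}, the $K_1$-coinvariants identification used to define $f_2$ coincides with the elementary reduction modulo $pR_2$, so that $\ker f_2 = R_2' \cap pR_2 = pR_2$. Consequently the projection onto the first factor gives a short exact sequence of $\cO[K]$-modules
\begin{equation*}
0 \longrightarrow pR_2 \longrightarrow R \xrightarrow{\pr_1} R_1 \longrightarrow 0,
\end{equation*}
where $pR_2 \hookrightarrow R$ via $z \mapsto (0,z)$.

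To obtain part~(1), I would tensor this sequence with $\F$ over $\cO$. This stays short exact because $R_1 = \wtld{P}_\sigma$ is $\cO$-free, so the image of $pR_2$ in $R/pR$ is $pR_2/((\{0\}\oplus pR_2)\cap pR)$. A direct verification gives $(\{0\}\oplus pR_2)\cap pR = \{0\}\oplus p^2 R_2$: if $(py_1,py_2) \in pR$ with $py_1 = 0$ then $y_1 = 0$, and then $(0,y_2) \in R$ forces $y_2 \in \ker f_2 = pR_2$, whence $py_2 \in p^2 R_2$. Since $R_2$ is $\cO$-free, multiplication by $1/p$ identifies $pR_2/p^2R_2$ with $R_2/pR_2$, yielding the desired sequence
\begin{equation*}
0 \longrightarrow R_2/pR_2 \longrightarrow R/pR \xrightarrow{r} P_\sigma \longrightarrow 0.
\end{equation*}

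For part~(2), I would apply the right-exact functor of $K_1$-coinvariants to this sequence. Both outer terms already carry the trivial $K_1$-action: on $R_1 = \wtld{P}_\sigma$ because it is inflated from $\GL_2(k)$, and on $R_2/pR_2 = \mathfrak{sl}_{2,k}\otimes_{\Fp} P_\sigma$ because the conjugation action of $K_1$ on $\mathfrak{sl}_{2,k}$ is trivial modulo~$p$. The claim therefore reduces to proving that the composition $R_2/pR_2 \hookrightarrow R/pR \onto (R/pR)_{K_1}$ vanishes.

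This last step is where Proposition~\ref{prop:alg-lattice} enters in its sharper form. Since $K_1$ fixes $R_1$ pointwise, $(K_1-1)R \subset \{0\}\oplus pR_2$. Given $z \in R_2$, Proposition~\ref{prop:alg-lattice} furnishes $k_i \in K_1$ and $x_i \in R_2'$ with $pz \equiv \sum_i (k_i-1)x_i \pmod{p^2 R_2}$. Lifting each $f_2(x_i) \in P_\sigma$ to some $y_i \in R_1$ produces elements $(y_i,x_i) \in R$, and triviality of the $K_1$-action on $R_1$ yields $(k_i-1)(y_i,x_i) = (0,(k_i-1)x_i)$. Summing, and observing that $(0,p^2 w) = p\cdot(0,pw) \in pR$ for any $w \in R_2$ (as $(0,pw)\in R$), one finds $(0,pz) \in (K_1-1)R + pR$, which is the desired vanishing. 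The genuine difficulty in the whole construction is Proposition~\ref{prop:alg-lattice} itself (which ultimately rests on the Lie-theoretic computation in Proposition~\ref{prop:computation-Gbar}); granted that input, Theorem~\ref{thm:lattice-coinvariants} is essentially a formal consequence.
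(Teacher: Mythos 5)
Your proposal is correct and follows essentially the same route as the paper: both hinge on the intersection computation $pR\cap(\{0\}\oplus pR_2)=\{0\}\oplus p^2R_2$ for the exact sequence, and both use Proposition~\ref{prop:alg-lattice} in the identical way (writing $pz\equiv\sum(k_i-1)x_i\pmod{p^2R_2}$ and lifting the $x_i$ to elements of $R$) for the coinvariants statement. The only cosmetic difference is that you organize part~(1) via the $\cO$-linear sequence $0\to pR_2\to R\to R_1\to 0$ and $\cO$-freeness of $R_1$, whereas the paper argues the isomorphism $pR_2/p^2R_2\cong\ker(\bar r)$ directly by a dimension count plus the same intersection computation.
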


\begin{proof}
As $p R_2\subset \ker(r)\subset R$ we have $p^2 R_2\subset p R$ and the inclusion of $pR_2$ in $\ker(r)$ induces a map $pR_2/p^2R_2\rightarrow \ker(r)/pR$. This map is actually a $K$-equivariant isomorphism
\[p R_2/p^2 R_2\simto \ker(r)/p R.\]
Namely these two representations are finite-dimensional over $\F$ and have the same dimension. It is therefore sufficient to prove that $p R\cap p R_2=p^2 R_2$. The right-hand side is clearly included in the left-hand side. Conversely let $(p x_1,p x_2)$ be some element in the left-hand side. We have $\iota(x_1\bmod p)=(x_2\bmod p)$ in $R_2/pR_2$. As $x_1=0$, we have $x_2\in p R_2$, which proves the assertion. This gives us the short exact sequence \eqref{eq:dec-lattice-modp}.

Now we prove the second assertion. We define $\overline{r} : R/p R\onto P_\sigma$ as the factorization of $r$ by $R/p R$. As $K_1$ acts trivially on $P_\sigma$ and $\overline{r}$ is $K$-equivariant, the map $\overline{r}$ factors as $(R/p R)_{K_1}\onto P_\sigma$. We need to prove that the kernel of $\overline{r}$ is contained in the kernel of $R/p R\onto (R/p R)_{K_1}$, i.e.\ that each element of $\ker(\ovl r)$ can be written as a finite sum $\sum_j (k_j-1)y_j$ with $k_j\in K_1$ and $y_j\in R/pR$.

Let $x\in \ker(\overline{r})$. By what precedes, there exists $y\in R_2$ such that $p y$ reduces to $x$ modulo $p R$. By Proposition \ref{prop:alg-lattice} we can find $k_1,\dots,k_r$ in $K_1$ and $x_1,\dots,x_r$ in $R'_2$ such that
\[ py\equiv \sum_{j=1}^r (k_j-1)x_j \pmod{p^2R_2}.\]
Let $z_1,\dots,z_r$ in $R_1$ be such that $\iota(z_j\bmod p)=(x_j\bmod p)$ for all $1\leq j\leq r$. Then $(z_j,x_j)\in R$ for all $1\leq j\leq r$. Since $K_1$ acts trivially on $R_1$, we have $(k_j-1)(z_j,x_j)=(0,(k_j-1)x_j)$ so that
\begin{equation}\label{spouf} \sum_{j=1}^r (k_j-1)(z_j,x_j)=(0,py+p^2u)\end{equation}
for some $u\in R_2$. 
Let $y_j$ be the image of $(z_j,x_j) \in R$ in $R/pR$. Reducing \eqref{spouf} modulo $p R$, we obtain
\[ \sum_{j=1}^r(k_j-1)y_j=x,\]
proving that $\overline{r}$ induces an isomorphism $(R/p R)_{K_1}\xrightarrow{\sim} P_\sigma$.
\end{proof}

\begin{cor}\label{rpr}
The $K$-cosocle of $R/pR$ is isomorphic to $\sigma$. Moreover the $K$-representations $(\Proj_{K/Z_1}\sigma)/\mathfrak{m}_{K_1}^2(\Proj_{K/Z_1}\sigma)$ and $R/pR$ are isomorphic.
\end{cor}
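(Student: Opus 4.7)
The strategy is to combine Theorem~\ref{thm:lattice-coinvariants} with a dimension count, using that both $R/pR$ and $(\Proj_{K/Z_1}\sigma)/\mathfrak{m}_{K_1}^2$ are quotients of the same projective object.

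First I would verify that $K_1$ acts trivially on $R_2/pR_2$. This is immediate: by construction $K_1$ acts trivially on $\wtld{P}_\sigma$, and the adjoint action of $K_1 = 1+p\M_2(\mathcal{O}_L)$ on $\mathfrak{sl}_{2,\mathcal{O}_L}$ reduces to the trivial action on $\mathfrak{sl}_{2,k}$, so tensoring we get $\mathfrak{m}_{K_1}(R_2/pR_2)=0$. Combined with the short exact sequence
\[
0\longrightarrow R_2/pR_2 \longrightarrow R/pR \xrightarrow{\;\;r\;\;} P_\sigma \longrightarrow 0
\]
of Theorem~\ref{thm:lattice-coinvariants}, and the fact that $K_1$ acts trivially on $P_\sigma$, this yields $\mathfrak{m}_{K_1}(R/pR) \subseteq R_2/pR_2$, hence $\mathfrak{m}_{K_1}^2(R/pR)=0$.

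Next I would identify the $K$-cosocle of $R/pR$. Since $\cosoc_K$ is semisimple, $K_1$ acts trivially on it, so the natural projection $R/pR\twoheadrightarrow \cosoc_K(R/pR)$ factors through $(R/pR)_{K_1}$. By Theorem~\ref{thm:lattice-coinvariants} the latter is isomorphic to $P_\sigma$, whose cosocle is $\sigma$; hence $\cosoc_K(R/pR)\cong \sigma$. This proves the first assertion.

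For the second assertion, set $M\defeq \Proj_{K/Z_1}\sigma$, so that $M/\mathfrak{m}_{K_1}^2 M$ is the largest quotient of $M$ annihilated by $\mathfrak{m}_{K_1}^2$, and it has cosocle $\sigma$. Since $R/pR$ has cosocle $\sigma$ and is killed by $\mathfrak{m}_{K_1}^2$ (by the first paragraph), the projectivity of $M$ in the category of smooth $K/Z_1$-representations yields a surjection $\pi\colon M/\mathfrak{m}_{K_1}^2 M \twoheadrightarrow R/pR$. It remains to check that $\pi$ is injective, which I would do by comparing $\F$-dimensions. From the short exact sequence of Theorem~\ref{thm:lattice-coinvariants},
\[
\dim_\F(R/pR) \;=\; \dim_\F(R_2/pR_2) + \dim_\F P_\sigma \;=\; 3f\cdot\dim_\F P_\sigma + \dim_\F P_\sigma,
\]
using $R_2/pR_2 \cong \mathfrak{sl}_{2,k}\otimes_{\Fp}P_\sigma$ and $\dim_\F(\mathfrak{sl}_{2,k}\otimes_{\Fp}\F)=3f$. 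On the other hand, there is a short exact sequence
\[
0\longrightarrow (\mathfrak{m}_{K_1}/\mathfrak{m}_{K_1}^2)\otimes_\F P_\sigma \longrightarrow M/\mathfrak{m}_{K_1}^2 M \longrightarrow P_\sigma\longrightarrow 0,
\]
and as recalled in the introduction $\mathfrak{m}_{K_1}/\mathfrak{m}_{K_1}^2 \cong \bigoplus_{j=0}^{f-1}(\mathrm{Sym}^2(\F^2)\otimes\det^{-1})^{(j)}$ has $\F$-dimension $3f$. Hence $\dim_\F(M/\mathfrak{m}_{K_1}^2M)=(1+3f)\dim_\F P_\sigma = \dim_\F(R/pR)$, so $\pi$ is an isomorphism. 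The only conceptual step is the first paragraph (that $\mathfrak{m}_{K_1}^2$ kills $R/pR$); everything else is a formal consequence of Theorem~\ref{thm:lattice-coinvariants} and a dimension count, so no real obstacle is expected.
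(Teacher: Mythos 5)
Your argument is correct and follows essentially the same path as the paper's own proof: identify the cosocle via the $K_1$-coinvariants from Theorem~\ref{thm:lattice-coinvariants}, use projectivity of $\Proj_{K/Z_1}\sigma$ (noting $Z_1$ acts trivially on $R$) to obtain a surjection onto $R/pR$, and conclude by the dimension count $(3f+1)\dim_\F P_\sigma$ on both sides. The only cosmetic difference is that the paper computes $\dim_\F(R/pR)$ by lifting to characteristic zero over $E$, whereas you count directly mod $p$ via $R_2/pR_2\cong\mathfrak{sl}_{2,k}\otimes_{\Fp}P_\sigma$.
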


\begin{proof}
As $K_1$ is a normal pro-$p$-subgroup of $K$, the group $K_1$ acts trivially on every semisimple representation of $K$. Therefore the $K$-cosocle of $R/pR$ is the $\GL_2(k)(=K/K_1)$-cosocle of $(R/pR)_{K_1}$. As $(R/pR)_{K_1}$ is isomorphic to $P_\sigma$ by Theorem \ref{thm:lattice-coinvariants}, we obtain
\[ \cosoc_K(R/pR)\simeq \cosoc_{\GL_2(k)}\big((R/pR)_{K_1}\big)\simeq \cosoc_{\GL_2(k)}(P_\sigma)\simeq\sigma.\]
Note that $Z_1$ acts trivially on $R_1$ and $R_2$, and hence also on $R$.
This implies that there exists a $K$-equivariant map $\theta : \Proj_{K/Z_1}\sigma\rightarrow R/pR$ which is surjective on cosocles and is hence surjective. Note that $R_2/pR_2$ is killed by $\mathfrak{m}_{K_1}$ %
so that Theorem \ref{thm:lattice-coinvariants} implies that $R/pR$ is killed by $\mathfrak{m}_{K_1}^2$. The map $\theta$ factors through the quotient $(\Proj_{K/Z_1}\sigma)/\mathfrak{m}_{K_1}^2(\Proj_{K_1/Z_1}\sigma)$ and gives rise to a surjective map
\[ (\Proj_{K_1/Z_1}\sigma)/\mathfrak{m}_{K_1}^2(\Proj_{K_1/Z_1}\sigma)\twoheadrightarrow R/pR.\]
We now prove that this map is an isomorphism. Namely, since $R$ is a lattice of $\wtld{P}_\sigma[1/p]\oplus\bigoplus_{i=0}^{f-1} (\mathfrak{sl}_{2,L}\otimes_{\cO_L,\sigma_i}\wtld{P}_\sigma)$, we have
\begin{eqnarray*}
\dim_\F (R/pR)&=&\dim_E\Big(\wtld{P}_\sigma[1/p]\oplus\bigoplus_{i=0}^{f-1} (\mathfrak{sl}_{2,L}\otimes_{\cO_L,\sigma_i}\wtld{P}_\sigma)\Big)\\
&=&(3f+1)\dim_E\big(\wtld{P}_\sigma[1/p]\big)\ =\ (3f+1)\dim_{\F}(P_\sigma).
\end{eqnarray*}
On the other hand, the isomorphism $(\Proj_{K/Z_1}\sigma)/\mathfrak{m}_{K_1}(\Proj_{K/Z_1}\sigma)\simeq P_\sigma$ induces an exact sequence
\[ 0 \to (\mathfrak{m}_{K_1/Z_1}/\mathfrak{m}_{K_1/Z_1}^2)\otimes_{\F}P_\sigma\longrightarrow (\Proj_{K/Z_1}\sigma)/\mathfrak{m}_{K_1}^2(\Proj_{K/Z_1}\sigma)\longrightarrow P_\sigma\longrightarrow0.\]
(Note that $\Proj_{K/Z_1}\sigma$ is projective in the category of pseudocompact $K_1/Z_1$-modules, since $K_1$ is an open subgroup of $K$.)
As the group $K_1/Z_1$ is uniform of dimension $3f$, we have $\dim_{\F}(\mathfrak{m}_{K_1/Z_1}/\mathfrak{m}_{K_1/Z_1}^2)=3f$, and hence 
\[ \dim_{\F}\big((\Proj_{K/Z_1}\sigma)/\mathfrak{m}_{K_1}^2(\Proj_{K/Z_1}\sigma)\big)= (3f+1)\dim_{\F}(P_\sigma).\]
This implies that $\dim_{\F}((\Proj_{K/Z_1}\sigma)/\mathfrak{m}_{K_1}^2(\Proj_{K/Z_1}\sigma))= \dim_{\F}(R/pR)$, so the map $\theta$ is an isomorphism. 
\end{proof}

\subsection{Projectivity} 

We prove several results which will be  used in the gluing process in \S\ref{sec:freeness}.  

\begin{prop}\label{prop:commutative}
Assume $\sigma\cong F(\lambda)$ where $\lambda\in X^*(\un{T})$ satisfies $2< \langle \lambda,\alpha_i^\vee\rangle <p-4$ for all $i\in\cJ$.
The endomorphism ring $\End_{K}(\Proj_{K/Z_1}\sigma/\mathfrak{m}_{K_1}^2(\Proj_{K/Z_1}\sigma))$ is commutative.
\end{prop}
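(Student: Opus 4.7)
The plan is to combine Corollary~\ref{rpr} with an analysis of the canonical filtration of $M \defeq \Proj_{K/Z_1}\sigma/\mathfrak{m}_{K_1}^2\Proj_{K/Z_1}\sigma$. Corollary~\ref{rpr} gives an isomorphism $M \cong R/pR$, so Theorem~\ref{thm:lattice-coinvariants} provides an exact sequence
\[
0 \to R_2/pR_2 \to M \to P_\sigma \to 0
\]
with $(M)_{K_1} \cong P_\sigma$. Comparing with the exact sequence $0 \to \mathfrak{m}_{K_1}M \to M \to M_{K_1} \to 0$ identifies $R_2/pR_2 = \mathfrak{m}_{K_1}M$, and moreover identifies this subspace canonically with $(\mathfrak{m}_{K_1}/\mathfrak{m}_{K_1}^2) \otimes_\F P_\sigma \cong \mathfrak{sl}_{2,k} \otimes_{\F_p} P_\sigma$ as $K$-representations (with $K_1$ acting trivially on both sides).

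First, I would derive the short exact sequence of $\F$-algebras
\[
0 \to \Hom_{\GL_2(k)}(P_\sigma, \mathfrak{sl}_{2,k}\otimes_{\F_p} P_\sigma) \to \End_K(M) \to \End_{\GL_2(k)}(P_\sigma) \to 0.
\]
The rightmost map sends $\phi \mapsto \phi'$, the induced endomorphism of $M/\mathfrak{m}_{K_1}M = P_\sigma$. It is surjective by projectivity of $\Proj_{K/Z_1}\sigma$. The kernel consists of endomorphisms factoring through $\mathfrak{m}_{K_1}M$, hence coming from maps $P_\sigma \to \mathfrak{m}_{K_1}M$. Crucially, any $\phi \in \End_K(M)$ is $\F\bbra{K_1/Z_1}$-linear, which forces its restriction to $\mathfrak{m}_{K_1}M \cong \mathfrak{sl}_{2,k}\otimes_{\F_p}P_\sigma$ to be $\mathrm{id}_{\mathfrak{sl}_{2,k}}\otimes \phi'$. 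I would then check two facts: (i) the kernel squares to zero, since any such $\phi$ composed with itself lands in $\mathfrak{m}_{K_1}M$ where it must vanish by the factorization; and (ii) the quotient $\End_{\GL_2(k)}(P_\sigma)$ is commutative, using Steinberg's tensor product theorem $P_\sigma \cong \bigotimes_i P_{F(\lambda_i)}^{(i)}$ and noting that under the genericity hypothesis each factor $\End_{\GL_2(\F_p)}(P_{F(\lambda_i)})$ is the commutative local ring $\F[\epsilon]/(\epsilon^2)$.

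The main obstacle is the cross-term commutation: for a lift $A_0 \in \End_K(M)$ of $\phi' \in \End_{\GL_2(k)}(P_\sigma)$ and $B_1 \in \Hom_{\GL_2(k)}(P_\sigma, \mathfrak{sl}_{2,k}\otimes_{\F_p} P_\sigma)$, I would need to show $A_0 B_1 = B_1 A_0$ in $\End_K(M)$. Unwinding the definitions through the filtration, this reduces to the identity $(\mathrm{id}_{\mathfrak{sl}_{2,k}} \otimes \phi') \circ B_1 = B_1 \circ \phi'$ as maps $P_\sigma \to \mathfrak{sl}_{2,k}\otimes_{\F_p}P_\sigma$. To verify it, I would apply \eqref{eq:dec-2deep} in each embedding together with Steinberg's theorem to get
\[
L(\alpha_i)\otimes P_\sigma \cong P_\sigma \oplus P_{F(\lambda+\alpha_i)} \oplus P_{F(\lambda-\alpha_i)}.
\]
The genericity hypothesis $2 < \langle\lambda,\alpha_i^\vee\rangle < p-4$ ensures that $F(\lambda)$ and $F(\lambda \pm \alpha_i)$ lie in distinct blocks of $\GL_2(k)$, so $\Hom_{\GL_2(k)}(P_\sigma, P_{F(\lambda\pm\alpha_i)}) = 0$. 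Thus the projection onto the $P_\sigma$-summand induces an isomorphism $\Hom_{\GL_2(k)}(P_\sigma, L(\alpha_i)\otimes P_\sigma) \simto \End_{\GL_2(k)}(P_\sigma)$; under this isomorphism, both the pre-composition by $\phi'$ and the post-composition by $\mathrm{id}\otimes \phi'$ correspond to multiplication by $\phi'$ in the commutative ring $\End_{\GL_2(k)}(P_\sigma)$, so they agree. This is the delicate point: it requires checking that the natural inclusion $P_\sigma \hookrightarrow L(\alpha_i)\otimes P_\sigma$ is an $\End_{\GL_2(k)}(P_\sigma)$-bimodule map, which ultimately follows from the commutativity of $\End_{\GL_2(k)}(P_\sigma)$ together with the characterization of the $P_\sigma$-summand by its cosocle.
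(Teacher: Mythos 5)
Your approach is genuinely different from the paper's, and the contrast is worth spelling out. The paper's proof is quite short and avoids working inside $\End_K(R/pR)$ directly: it observes that $R\otimes_{\cO}E$ is a direct sum of $2^f(f+1)$ pairwise non-isomorphic absolutely irreducible $K$-representations (multiplicity-freeness over $E$), hence $\End_{\cO[K]}(R)$ is commutative; a dimension count using Corollary~\ref{rpr} and the fact that $[P_{\sigma_{1,i}}:\sigma]=[P_{\sigma_{2,i}}:\sigma]=0$ shows $\dim_{\F}\End_{\F[K]}(R/pR)=2^f(f+1)$, so the reduction map $\End_{\cO[K]}(R)\to\End_{\F[K]}(R/pR)$ is surjective, and commutativity descends. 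Your approach instead works entirely mod $p$, constructing the short exact sequence of algebras
\[
0 \to \Hom_{\GL_2(k)}(P_\sigma, \mathfrak{sl}_{2,k}\otimes_{\F_p} P_\sigma) \to \End_K(M) \to \End_{\GL_2(k)}(P_\sigma) \to 0
\]
and arguing that the kernel is square-zero and the quotient commutative. This has the merit of exposing the algebra structure rather than deducing it from a lift, and the first two of your three claims (the exact sequence itself, and the fact that the kernel squares to zero) are correct and nicely argued.

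However, the ``delicate point'' you flag is a genuine gap, not merely a technicality to be checked. The claim you need is that for any $K$-equivariant $A_0\in\End_K(M)$ lifting $\phi'\in\End_{\GL_2(k)}(P_\sigma)$ and any $B_1\in\Hom_{\GL_2(k)}(P_\sigma,\mathfrak{sl}_{2,k}\otimes_{\F_p}P_\sigma)$, one has $(\mathrm{id}\otimes\phi')\circ B_1 = B_1\circ\phi'$. Since $\Hom_{\GL_2(k)}(P_\sigma,\mathfrak{sl}_{2,k}\otimes_{k,\sigma_i}P_\sigma)$ is free of rank one both as a right $A\defeq\End_{\GL_2(k)}(P_\sigma)$-module (pre-composition) and as a left $A$-module (post-composition with $\mathrm{id}\otimes(-)$), one gets an $\F$-algebra endomorphism $\beta$ of $A$ defined by $(\mathrm{id}\otimes\phi')\iota_i=\iota_i\beta(\phi')$, and the cross-term commutation is exactly the assertion $\beta=\mathrm{id}$. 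This is not a consequence of commutativity of $A$ plus ``characterization of the $P_\sigma$-summand by its cosocle'': there is no canonical inclusion $P_\sigma\hookrightarrow L(\alpha_i)\otimes P_\sigma$ (it is defined only up to $A^\times$, which has no effect on $\beta$ since $A$ is commutative), and a priori $\beta$ could be a nontrivial automorphism. Using the self-duality of $\mathfrak{sl}_{2,k}\otimes_{k,\sigma_i}\F$ one can show $\beta^2=\mathrm{id}$, but that still allows, e.g., $\beta(\epsilon)=-\epsilon$ on $A\cong\F[\epsilon]/(\epsilon^2)$ when $f=1$. Of course $\beta=\mathrm{id}$ is in fact true (the proposition holds), but you have not provided an argument and the assertion needs one; the cleanest way I see to establish it is precisely the lift-to-characteristic-zero and multiplicity-freeness used in the paper, which bypasses the bimodule comparison entirely.
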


\begin{proof}
By Corollary \ref{rpr}, it is equivalent to show that $\End_K(R/pR)$ is commutative. 

Note that $\wtld{P}_\sigma\otimes_{\cO}E$ is isomorphic to a direct sum of $2^f$ absolutely irreducible pairwise non-isomorphic $K$-representations, see the last paragraph of the proof of Lemma \ref{lem:inter}.
Thus, $R\otimes_{\cO}E$ is semisimple and isomorphic to a direct sum of $2^f(f+1)$ absolutely irreducible and pairwise non-isomorphic $K$-representations.
We conclude that $\End_{E[K]}(R\otimes_{\cO}E)$ is a commutative ring
of dimension $2^f(f+1)$. Since
$\End_{\cO[K]}(R)\otimes_{\cO}E=\End_{E[K]}(R\otimes_{\cO}E)$ and
$\End_{\cO[K]}(R)$ is $p$-torsion free,   $\End_{\cO[K]}(R)$ is also a commutative ring and is a free $\cO$-module of rank $2^f(f+1)$. The  exact sequence 
$0\ra R\overset{\times p}{\lra} R\ra R/pR\ra0$
induces 
\[0\ra \End_{\cO[K]}(R)\overset{\times p}{\lra} \End_{\cO[K]}(R)\overset{\gamma}\ra \Hom_{\cO[K]}(R,R/pR)=\End_{\F[K]}(R/pR).\]
From the construction of $R$, see \eqref{decomp} and \eqref{eq:R}, and using the fact that $[P_{\sigma}:\sigma]=2^f$ and $[P_{\sigma_{1,i}}:\sigma]=[P_{\sigma_{2,i}}:\sigma]=0$ (the latter justified by Proposition \ref{lm:princ-series}\ref{it:princ-series-2} and the assumption on $\lambda$),  we get $[R/pR:\sigma]=2^f(f+1)$, and so 
\[\dim_{\F}\End_{\F[K]}(R/pR)=\dim_{\F}\Hom_{\F[K]}(\Proj_{K/Z_1}\sigma,R/pR)=2^f(f+1)\]
by Corollary \ref{rpr}. 
Hence $\gamma$ is surjective, and the result follows.
\end{proof}

We assume from now on that $5< \langle \lambda,\alpha_i^\vee\rangle <p-7$.
Letting $\tau$ be a Serre weight occurring in $\JH(R/pR)$, we denote by $R_{\tau}$  the object $R$ constructed in \S\ref{sec:constructionlattice} with $\sigma$ replaced by $\tau$. Then $\End_{K}(R_{\tau}/pR_{\tau})$ is also commutative by Proposition \ref{prop:commutative} and the assumption on $\lambda$. 

\begin{lem}\label{lem:tau-cyclic}
As an $\End_{K}(R_{\tau}/pR_{\tau})$-module, $\Hom_K(R_{\tau}/pR_{\tau},R/pR)$ is cyclic.
\end{lem}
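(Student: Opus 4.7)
The plan is to apply Nakayama's lemma once the commutativity and local structure of $E_\tau \defeq \End_K(R_\tau/pR_\tau)$ are established.

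First, I would apply Proposition \ref{prop:commutative} with $\sigma$ replaced by $\tau$ to obtain commutativity of $E_\tau$. The hypothesis is satisfied because every $\tau \in \JH(R/pR)$ has highest weight satisfying the required bound $2 < \langle \cdot, \alpha_i^\vee \rangle < p-4$: this is a consequence of the standing assumption $5 < \langle \lambda, \alpha_i^\vee \rangle < p-7$ together with the fact that (from the extension graph analysis in \S\ref{sec:ext:graph} and the structure of $R/pR$ as an extension of $P_\sigma$ by $\bigoplus_i (P_\sigma \oplus P_{\sigma_{1,i}} \oplus P_{\sigma_{2,i}})$) the weights appearing in $\JH(R/pR)$ differ from $\lambda$ only by combinations of at most two of the characters $\pm \alpha_i$, $\pm \ovl\eta_i$. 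Combining commutativity with Corollary \ref{rpr} applied to $\tau$ (which identifies $R_\tau/pR_\tau$ with $\Proj_{K/Z_1}\tau/\mathfrak{m}_{K_1}^2\Proj_{K/Z_1}\tau$ and shows it has irreducible $K$-cosocle $\tau$, hence is indecomposable), $E_\tau$ is a commutative local $\F$-algebra with residue field $\F$.

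Next, I would construct an explicit generator $\psi_0$ of $M \defeq \Hom_K(R_\tau/pR_\tau, R/pR)$. Since $\tau \in \JH(R/pR)$, and since both $R_\tau \otimes_\cO E$ and $R \otimes_\cO E$ decompose (as shown in the proof of Proposition \ref{prop:commutative}) into direct sums of $2^f(f+1)$ pairwise non-isomorphic absolutely irreducible $E[K]$-representations, there is at least one common irreducible constituent. Projecting the one side onto this constituent and embedding into the other side yields a non-zero $\cO[K]$-linear map $R_\tau \to R$; reducing modulo $p$ produces the desired $\psi_0 \in M$.

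By Nakayama's lemma applied to the finitely generated right $E_\tau$-module $M$, cyclicity is equivalent to $\dim_\F M/\mathfrak{m}_{E_\tau} M \leq 1$, which amounts to showing that the natural map $E_\tau \to M$, $e \mapsto \psi_0 \circ e$, is surjective. The main obstacle — and the technical heart of the proof — will be verifying this surjectivity. I would do so by descending from characteristic zero, where the analogous statement is transparent: $\End_{E[K]}(R_\tau \otimes E) \cong E^{2^f(f+1)}$ is a product of fields indexed by the irreducible constituents of $R_\tau \otimes E$, $\Hom_{E[K]}(R_\tau \otimes E, R \otimes E)$ decomposes as a product over the common constituents, and $\psi_0 \otimes 1$ has non-zero projection onto each common factor (by construction) so generates the Hom-space cyclically over $\End_{E[K]}(R_\tau \otimes E)$. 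The mod-$p$ descent proceeds by applying $\Hom_{\cO[K]}(R_\tau, -)$ to the short exact sequence $0 \to R \xrightarrow{\times p} R \to R/pR \to 0$ and combining this with the analogous surjectivity $\End_{\cO[K]}(R_\tau)/p \twoheadrightarrow E_\tau$, established by a dimension count as in the proof of Proposition \ref{prop:commutative}. This comparison between the integral and the residual structures of the orders $\End_{\cO[K]}(R_\tau)$ and $\Hom_{\cO[K]}(R_\tau, R)$ inside their generic fibres is where I expect all the technical work to lie.
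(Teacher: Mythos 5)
Your proof has a genuine gap, and the paper takes a quite different route.

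First, a concrete internal inconsistency. You build $\psi_0$ by projecting $R_\tau\otimes E$ onto a \emph{single} common irreducible constituent and re-embedding into $R\otimes E$, then scaling and reducing mod $p$. By construction, $\psi_0\otimes 1$ is then supported on exactly one factor of $\Hom_{E[K]}(R_\tau\otimes E, R\otimes E)$, which is a product indexed by \emph{all} the common constituents. So your later assertion that ``$\psi_0\otimes 1$ has non-zero projection onto each common factor (by construction)'' is simply false for the $\psi_0$ you constructed, and in particular $\psi_0$ does not generate the Hom-space even after inverting $p$.

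Second, and more seriously, cyclicity over $\End_{\cO[K]}(R_\tau)$ of the lattice $\Hom_{\cO[K]}(R_\tau, R)$ does \emph{not} follow from cyclicity after inverting $p$. Consider $A=\{(a,b)\in\cO^2 : a\equiv b\bmod p\}$ acting on $M=\cO^2$: $M[1/p]$ is free of rank one over $A[1/p]=E^2$, yet $M/pM$ is a two-dimensional $A/pA\cong\F[x]/(x^2)$-module on which $x$ acts by zero, hence not cyclic. So the lattice comparison ``where I expect all the technical work to lie'' is not a descent bookkeeping step — it is the entire content of the lemma, and there is no general-nonsense reason it should hold. You would also need the Ext$^1$ obstruction to the surjectivity of $\Hom_{\cO[K]}(R_\tau,R)/p\to\Hom_K(R_\tau/pR_\tau,R/pR)$ to vanish, which again is not automatic.

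The paper avoids all of this by working purely mod $p$. It invokes \cite[Thm.~2.30]{HuWang2} to produce the unique quotient $I(\tau,\sigma)$ of $R/pR$ with socle $\tau$ and $[I(\tau,\sigma):\sigma]=1$, uses the projectivity of $R_\tau/pR_\tau\cong(\Proj_{K/Z_1}\tau)/\mathfrak{m}_{K_1}^2$ in the category of $\F\bbra{K/Z_1}/\mathfrak{m}_{K_1}^2$-modules to lift $R_\tau/pR_\tau\onto\tau\into I(\tau,\sigma)$ to a map $\phi_\tau\colon R_\tau/pR_\tau\to R/pR$, and then deduces $[\mathrm{coker}(\phi_\tau):\tau]=0$ from the uniqueness of $I(\tau,\sigma)$. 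Cyclicity then follows because any map $R_\tau/pR_\tau\to R/pR$ must factor through $\mathrm{Im}(\phi_\tau)$ (its cosocle is a quotient of $\tau$, which cannot survive in $\mathrm{coker}(\phi_\tau)$) and hence lifts through $\phi_\tau$ by projectivity. The key input you are missing is precisely the structural fact from \cite[Thm.~2.30]{HuWang2}; without some version of it, the characteristic-zero argument cannot be completed. Your first step (commutativity and locality of $\End_K(R_\tau/pR_\tau)$) is fine, but Nakayama is then essentially a restatement of the problem, not a reduction of it.
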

\begin{proof}
By \cite[Thm.~2.30]{HuWang2} (which generalizes \cite[Cor.~3.12]{BP}),
there is a unique quotient of $R/pR$, denoted by $I(\tau,\sigma)$,
such that $\soc_K I(\tau,\sigma)=\tau$ and
$[I(\tau,\sigma):\sigma]=1$; moreover $I(\tau,\sigma)$ is multiplicity
free. The projectivity of $R_{\tau}/pR_{\tau}$ in the category
of $\F[\![K/Z_1]\!]/\mathfrak{m}_{K_1}^2$-modules then gives a morphism
$\phi_{\tau}:R_{\tau}/pR_{\tau}\ra R/pR$ which makes the following
diagram commutative 
\[\xymatrix{R_{\tau}/pR_{\tau}\ar@{-->}_{\phi_{\tau}}[d]\ar@{->>}[r]&\tau\ar@{^(->}[d]\\
R/pR\ar@{->>}[r]& I(\tau,\sigma).}\]
We have $[\mathrm{coker}(\phi_{\tau}):\tau]=0$, because any quotient of $R/pR$ in which $\tau$ occurs must admit $I(\tau,\sigma)$ as a quotient by \cite[Thm. 2.30]{HuWang2}. We deduce the result and also the fact that $\phi_{\tau}$ is a generator of $\Hom_K(R_{\tau}/pR_{\tau},R/pR)$ over $\End_K(R_{\tau}/pR_{\tau})$.
\end{proof}

\begin{prop}\label{prop:projectivity}
Let $Q$ be a quotient of $R/pR$. Then $Q$ satisfies the following property: for any subquotient $Q'$ of $Q$, the projection $R/pR\twoheadrightarrow Q$ induces an isomorphism
\[\Hom_{K}(Q,Q')\simto \Hom_{K}(R/pR,Q').\]
In particular, if $\cosoc_K(Q')\cong \sigma$, then there exists a $K$-equivariant surjection $Q\twoheadrightarrow Q'$.
\end{prop}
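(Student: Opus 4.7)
The proof begins by reducing to the case where $Q'$ is a subrepresentation of $Q$. Given a general subquotient $Q' = Q_1/Q_2$ of $Q$, observe that $Q'$ is a subrepresentation of $Q/Q_2$, which is itself a quotient of $R/pR$; moreover a morphism $T \to Q'$ factors through $Q$ if and only if it factors through $Q/Q_2$ (since $Q \twoheadrightarrow Q/Q_2$ is surjective). So it suffices to prove the statement with $Q$ replaced by $Q/Q_2$ and $Q' \subseteq Q/Q_2$.

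Assume therefore $Q' \subseteq Q$ and write $T \defeq R/pR$. By Corollary \ref{rpr} we have $T \cong (\Proj_{K/Z_1}\sigma)/\mathfrak{m}_{K_1}^2$, which exhibits $T$ as the projective cover of $\sigma$ in the abelian subcategory $\mathcal{C}_2$ of smooth $K/Z_1$-representations killed by $\mathfrak{m}_{K_1}^2$. In particular $T$ is projective in $\mathcal{C}_2$. Given $\phi: T \to Q'$, compose with the inclusion $\iota: Q' \hookrightarrow Q$ and lift $\iota\phi$ along the surjection $\pi: T \twoheadrightarrow Q$ (using projectivity of $T$ in $\mathcal{C}_2$) to obtain $\tilde\phi \in \End_K(T)$ with $\pi\tilde\phi = \iota\phi$. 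The claim that $\phi$ factors through $\pi$ is then equivalent to $\tilde\phi(N) \subseteq N$, where $N \defeq \ker\pi$, since this gives $\iota\phi(N) = \pi\tilde\phi(N) \subseteq \pi(N) = 0$, hence $\phi(N) = 0$ by injectivity of $\iota$.

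The main technical step is therefore to prove that every $K$-subrepresentation $N \subseteq T$ is automatically stable under the action of $\End_K(T)$. The two ingredients to establish this are Proposition \ref{prop:commutative} (commutativity of $\End_K(T)$) and Lemma \ref{lem:tau-cyclic} (cyclicity of $\Hom_K(R_\tau/pR_\tau, T)$ over $\End_K(R_\tau/pR_\tau)$ for each Serre weight $\tau \in \JH(T)$, with explicit generators $\phi_\tau$). Using that each $R_\tau/pR_\tau$ is the projective cover of $\tau$ in $\mathcal{C}_2$ together with the cyclicity statement, one writes an arbitrary sub $N$ as a sum of images of endomorphisms of $T$ landing inside $N$: for each $\tau$ occurring in $N$ one produces a suitable endomorphism of $T$ factoring through $\phi_\tau$. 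Commutativity of $\End_K(T)$ then forces these images to be preserved by every endomorphism of $T$, which yields the required stability of $N$.

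Granting the first assertion, the ``in particular'' statement is immediate: if $\cosoc_K(Q') \cong \sigma$, the projectivity of $T$ in $\mathcal{C}_2$ yields a surjection $T \twoheadrightarrow Q'$, which by the first part factors through $\pi: T \twoheadrightarrow Q$, producing the desired surjection $Q \twoheadrightarrow Q'$. The chief obstacle of the proof is the structural step showing $\End_K(T)$-stability of arbitrary subrepresentations of $T$; this requires careful bookkeeping of the explicit descriptions furnished by Lemma \ref{lem:tau-cyclic} together with the commutativity of $\End_K(T)$, which together constrain the way subrepresentations interact with the endomorphism action.
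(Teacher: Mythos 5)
Your two preliminary reductions are sound: passing from a general subquotient $Q'$ to the case $Q'\subseteq Q$ is fine (only the direction you use -- that factoring through $Q/Q_2$ implies factoring through $Q$, since $\ker(T\to Q)\subseteq\ker(T\to Q/Q_2)$ -- is needed), and the further reduction to "every $K$-subrepresentation $N$ of $T=R/pR$ is stable under $\End_K(T)$" is valid and in fact equivalent to the proposition (take $Q'=Q=T/N$). The "in particular" deduction at the end is also correct.

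The gap lies in the central claim that any subrepresentation $N\subseteq T$ is a sum of images of endomorphisms of $T$. This is not justified, and it is not true for modules over finite-dimensional algebras in general: for instance, for the projective cover $P$ of a simple in a setting where $\End(P)=\F$ (e.g.\ the Kronecker quiver), $P$ has many proper nonzero submodules but the only images of endomorphisms are $0$ and $P$. Nothing in the ingredients you cite (commutativity of $\End_K(T)$ and cyclicity of $\Hom_K(R_\tau/pR_\tau,T)$ over $\End_K(R_\tau/pR_\tau)$) yields this structural statement about the submodule lattice of $T$. The sketch of the construction also has a formal problem: $\phi_\tau$ is a map $R_\tau/pR_\tau\to T$, so "an endomorphism of $T$ factoring through $\phi_\tau$" would be $\phi_\tau\circ h$ for some $h\colon T\to R_\tau/pR_\tau$, but no such $h$ is produced and there is no reason the image of any such composite should land in $N$, let alone generate it. Your "sum of images" claim is strictly stronger than the stability you actually need, and it is where the proof breaks.

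The paper avoids this by not manipulating submodules of $T$ at all. Instead, for each $\tau\in\JH(T)$ it treats $\Hom_K(R_\tau/pR_\tau,-)$ as a module over the commutative ring $\End_K(R_\tau/pR_\tau)$ (not over $\End_K(T)$), shows that $\Hom_K(R_\tau/pR_\tau,Q)$ is cyclic with an explicit annihilator $\mathfrak{a}_{\tau,Q}=\Hom_K(R_\tau/pR_\tau,\ker\phi_{\tau,Q})$, and deduces by commutativity that the subquotient $\Hom_K(R_\tau/pR_\tau,Q')$ is also killed by $\mathfrak{a}_{\tau,Q}$. A snake lemma argument then converts this into the statement that any $f\colon T\to Q'$ vanishes on the largest submodule of $\ker_Q$ with $\tau$-isotypic cosocle, for every $\tau$; summing over $\tau$ gives $f(\ker_Q)=0$. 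This is a weight-by-weight argument, and it is what the commutativity and cyclicity lemmas are actually set up to prove.
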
 
 
\begin{rem} \label{rem:projectivity} Proposition \ref{prop:projectivity} can be interpreted as saying that $Q$ is a projective object in the smallest abelian subcategory of  $\F[\![K/Z_1]\!]/\mathfrak{m}_{K_1}^2$-modules which contains all subquotients of $Q$. 
\end{rem}
\begin{proof}
Let $\tau\in \JH(R/pR)$.  The projectivity of $R_{\tau}/pR_{\tau}$ implies a surjection \[\Hom_K(R_{\tau}/pR_{\tau},R/pR)\twoheadrightarrow \Hom_K(R_{\tau}/pR_{\tau},Q),\] so that $\Hom_K(R_{\tau}/pR_{\tau},Q)$ is a cyclic $\End_K(R_{\tau}/pR_{\tau})$-module generated by the composite map
\[\phi_{\tau,Q}: R_{\tau}/pR_{\tau}\overset{\phi_{\tau}}{\lra} R/pR\twoheadrightarrow Q \]
where $\phi_{\tau}$ is as in the proof of Lemma \ref{lem:tau-cyclic} and the second map is the natural projection. Moreover, the annihilator ideal \[\mathfrak{a}_{\tau,Q}\defeq \{h\in \End_K(R_{\tau}/pR_{\tau}): \phi_{\tau,Q}\circ h=0\}\] is identified with 
  $\Hom_K(R_{\tau}/pR_{\tau},\ker(\phi_{\tau,Q}))$.  By the projectivity of $R_{\tau}/pR_{\tau}$, $\Hom_K(R_{\tau}/pR_{\tau},Q')$ is a subquotient  of $\Hom_K(R_{\tau}/pR_{\tau},Q)$ as $\End_K(R_{\tau}/pR_{\tau})$-modules, so it is also annihilated by $\mathfrak{a}_{\tau,Q}$. Here we use the commutativity of $\End_K(R_{\tau}/pR_{\tau})$ in Proposition \ref{prop:commutative}.
This means that any $f_{\tau}\in \Hom_K(R_{\tau}/pR_{\tau},Q')$ is zero on the image of the evaluation map
\[\Hom_K(R_{\tau}/pR_{\tau},\ker(\phi_{\tau,Q}))\otimes R_{\tau}/pR_{\tau} \ra \ker(\phi_{\tau,Q}).\]
The projectivity of $R_{\tau}/pR_{\tau}$ shows that the above image   is identified with the largest submodule of $\ker(\phi_{\tau,Q})$ whose cosocle is $\tau$-isotypic; we denote it by $\ker(\phi_{\tau,Q})^{\tau}$. 

Let $f\in \Hom_K(R/pR,Q')$. We need to prove that $f$ factors through $Q$, equivalently that $f$ is zero on $\ker_Q\defeq\ker(R/pR\twoheadrightarrow Q)$. The snake lemma gives the following exact sequence 
\[0\ra \ker(\phi_{\tau}) \ra \ker(\phi_{\tau,Q})\overset{\phi_{\tau}}{\lra} \ker_Q\ra \mathrm{coker}(\phi_{\tau}).\]
By the last paragraph (applied to $f_{\tau}=f\circ\phi_{\tau}$), $f$ is zero on the image of $\ker(\phi_{\tau,Q})^{\tau}$ in $\ker_Q$.
Since $[\mathrm{coker}(\phi_{\tau}):\tau]=0$ (see the proof of Lemma \ref{lem:tau-cyclic}), any morphism $R_{\tau}/pR_{\tau}\ra \ker_Q$ must factor through $\phi_{\tau}$, hence the image of $\ker(\phi_{\tau,Q})^{\tau}$ is equal to the largest submodule of $\ker_Q$ whose cosocle is $\tau$-isotypic.   Since $\tau$ is arbitrary, $f$ must be identically zero on $\ker_Q$.

The last assertion is obvious, because under the assumption on $Q'$ there exists a $K$-equivariant surjection $R/pR\twoheadrightarrow Q'$ which must factor through $Q$ by the first assertion. 
\end{proof}
\section{Global applications}\label{sec:globalapplications}

We prove our main global results: Theorem \ref{mainpatching}, Theorem \ref{mainpatching2}, Theorem \ref{largest}, Corollary \ref{padiclanglands} and Corollary \ref{mainmain}.

\subsection{Patching functors}\label{patching}

We introduce the global background and the patching functors that we will use (following \cite[\S 6.2]{EGS}). We assume $p>5$ (for the main theorem, we will in fact need {$p>23$}) and $E$ unramified, i.e.\ ${\mathcal O}=W(\F)$. We use the notation and conventions of \S\ref{sec:preliminaries}.

We fix $F$ a totally real number field, and denote by ${\mathcal O}_F$ its ring of integers and $S_p$ the set of places of $F$ above $p$. We assume $F$ is unramified at each place in $S_p$. For each place $w$ of $F$ we denote by $F_w$ the completion of $F$ at $w$, ${\mathcal O}_{F_w}$ its ring of integers and ${\rm Frob}_w$ a geometric Frobenius element at $w$. We denote by ${\mathbb A}_F^{\infty}$ the finite ad\`eles of $F$. For any finite place $w$ of $F$, let $q_w$ denote the cardinality of the residue field of $F_w$.

We fix $D/F$ a quaternion algebra of center $F$ which is split at all places in $S_p$ and at no more than one infinite place of $F$ (in the sequel we call the two cases the ``indefinite case'' and the ``definite case''). In the indefinite case we assume $(F,D)\ne ({\mathbb Q},\GL_2)$ (our main result is already known in the case $(F,D)= ({\mathbb Q},\GL_2)$). We denote by $S_D$ the set of finite places where $D$ ramifies. We fix a maximal order ${\mathcal O}_D$ in $D$ and isomorphisms $({\mathcal O}_D)_w\buildrel\sim\over\rightarrow \M_2({\mathcal O}_{F_w})$ for $w\notin S_D$, where $({\mathcal O}_D)_w\defeq {\mathcal O}_D\otimes_{{\mathcal O}_F}{\mathcal O}_{F_w}$.

We fix $\rbar:G_F\ra \GL_2(\F)$ a continuous representation and set $\rbar_w\defeq \rbar|_{G_{F_w}}$ for each finite place $w$ of $F$. We assume that $\rbar |_{G_{F(\!\sqrt[p]{1})}}$ is absolutely irreducible and $\rbar_w$ is generic in the sense of \cite[Def.\ 11.7]{BP} (or \cite[Def.\ 2.1.1]{EGS}) for $w\in S_p$. We let $S_{\rbar}$ be the set of (finite) places where $\rbar$ is ramified (hence $S_p\subseteq S_{\rbar}$ by the previous genericity) and we moreover assume that the universal framed deformation ring $R_{\rbar_w}$ of $\rbar_w$ over $W(\F)$ is formally smooth over $W(\F)$ if $w\in (S_D \cup S_{\rbar})\backslash S_p$ (see Remark \ref{shotton} below). We let $\psi:G_F\rightarrow W(\F)^\times$ be the Teichm\"uller lift of $\omega \det \rbar$ and set $\psi_w\defeq \psi|_{G_{F_w}}$.

Assume first that we are in the indefinite case. For a compact open
subgroup $V$ of $(D\otimes_F{\mathbb A}_F^\infty)^\times$ let $X_V$ be
the associated smooth projective algebraic Shimura curve over $F$ (see
e.g.\ \cite[\S 3.1]{BD} and the references therein). We choose the
convention $\eps=-1$ as in \cite{BDJ} to define $X_V$. This is not the
convention of \cite{BD}, but we point out that the results of \cite{BD} that we will use
below do not depend on this choice. We assume that there exists $V$ such that
\begin{equation}\label{Vnonzero}
\Hom_{G_F}\!\big(\rbar,H^1_{{\rm \acute et}}(X_V \times_F \overline F, \F)\big)\ne 0.
\end{equation}
Then one can always take $V$ of the following form: $V=\prod V_w$ with $V_w\subseteq ({\mathcal O}_D)_{w}^\times$ for all $w$, $V_w=({\mathcal O}_D)_{w}^\times$ for $w\notin S_D \cup S_{\rbar}$ and $V_w=1+p\M_2({\mathcal O}_{F_w})$ for $w\in S_p$ (see e.g.\ \cite[Thm.\ 3.2.2]{BD} or the proof of \cite[Cor. 3.2.3]{BD}). For Serre weights $(\sigma_w)_{w\in S_p}$ and any $V=\prod V_w$ such that (\ref{Vnonzero}) holds and $V_w\subseteq 1+p\M_2({\mathcal O}_{F_w})$ is normal in $({\mathcal O}_D)_{w}^\times$ for $w\in S_p$ we have by \cite[\S 5.5]{gee-kisin}:
\begin{equation}\label{h1nonzero}
\Hom_{\GL_2({\mathcal O}_F\otimes_{\mathbb Z}\Zp)}\Big(\!\bigotimes_{\F,w}\sigma_w,\Hom_{G_F}\!\big(\rbar,H^1_{\rm \acute et}(X_V \times_F \overline F, \F)\big)\Big)\ne 0 \Longleftrightarrow \sigma_w\in W(\rbar_w^\vee) \ \forall w\in S_p,
\end{equation}
where we recall that $W(\rbar_w^\vee)$ is defined as in \cite[\S 3]{BDJ} (with $\rho$ there being $\overline r_w^\vee$), cf.\ \S\ref{sec:inert-local-langl}.

We now fix
\begin{enumerate}
\item\label{it:asum1} a finite place $v\in S_p$ such that $\rbar_v$ is semisimple of one of the following forms up to twist:
\begin{enumerate}[label=(\alph*)]
\item\label{it:asum1a} $\rbar_v\vert_{I_{F_v}}\cong \begin{pmatrix}\omega_f^{(r_0+1)+\cdots+p^{f-1}(r_{f-1}+1)}&0\\0&1\end{pmatrix}$\ \ {$12\leq r_i\leq p-15$},
\item\label{it:asum1b} $\rbar_v\vert_{I_{F_v}}\cong\begin{pmatrix}\omega_{2f}^{(r_0+1)+\cdots+p^{f-1}(r_{f-1}+1)}&0\\0&\omega_{2f}^{q_v({\rm same})}\end{pmatrix}$\ {$13\leq r_0\leq p-14$, $12\leq r_i\leq p-15$ for $i>0$},
\end{enumerate}
(equivalently, $\rbar_v^\vee$ satisfies the same hypothesis; note that, up to twist, $\rbar_v$ is of the form described at the beginning of \S \ref{sec:setup});
\item\label{it:asum2} a finite place $w_1\notin S_D \cup S_{\rbar}$ such that
\begin{enumerate}[label=(\alph*)]
\item\label{it:asum2a} Norm$(w_1)$ is not congruent to $1$ mod $p$,
\item\label{it:asum2b} the ratio of the eigenvalues of $\rbar({\rm Frob}_{w_1})$ is not in $\{1, {\rm Norm}(w_1), {\rm Norm}(w_1)^{-1}\}$,
\item\label{it:asum2c} for any nontrivial root of unity $\zeta$ in a quadratic extension of $F$, $w_1\nmid (\zeta + \zeta^{-1}-2)$
\end{enumerate}
(such a place $w_1$ exists by \cite[\S\S 6.2, 6.5]{EGS});
\item\label{it:asum3} a finite set of finite places $S$ such that
\begin{enumerate}[label=(\alph*)]
\item\label{it:asum3a} $S$ contains $S_D \cup S_{\rbar}$ but not $w_1$,
\item\label{it:asum3b} for $w\in S\backslash S_p$ the framed deformation ring $R_{\rbar_w^\vee}$ of $\rbar_w^\vee$ is formally smooth over $W(\F)$;
\end{enumerate}
\item\label{it:asum4} a compact open subgroup $U=\prod_wU_w\subseteq \prod_w({\mathcal O}_D)_{w}^\times$ such that 
\begin{enumerate}[label=(\alph*)]
\item\label{it:asum4a} $U_w=({\mathcal O}_D)_{w}^\times=\GL_2({\mathcal O}_{F_{w}})$ for $w\notin S \cup \{w_1\}$ or $w\in S_p$,
\item\label{it:asum4b} \eqref{Vnonzero} holds for $V=\big(\prod_{w\notin S_D \cup S_{\rbar}}({\mathcal O}_D)_{w}^\times\big)\big(\prod_{(S_D \cup S_{\rbar})\backslash S_p}U_w\big)\big(\prod_{w\in S_p}1+p\M_2({\mathcal O}_{F_w})\big)$,
\item\label{it:asum4c} $U_{w_1}$ is contained in the subgroup of $({\mathcal O}_D)_{w_1}^\times=\GL_2({\mathcal O}_{F_{w_1}})$ of matrices that are upper-triangular unipotent mod $w_1$.
\end{enumerate} 
\end{enumerate}

\begin{rem}\label{shotton}
Using \cite[\S 5]{Shotton} one can make assumption \ref{it:asum3}\ref{it:asum3b} above completely explicit. For instance, if Norm$(w)$ is not congruent to $\pm 1$ mod $p$, then $R_{\rbar_w^\vee}$ (or equivalently $R_{\rbar_w}$, the two rings are isomorphic by duality) is always formally smooth, except when $\rbar_w\cong \begin{pmatrix} \omega & 0 \\ 0 & 1\end{pmatrix}$ up to twist.
\end{rem}

The following lemma due to Hamann \cite[Thm.\ 4]{Hamann} will be convenient below.

\begin{lem}\label{lm:hamann}
Suppose that $R$, $S$ are local rings. If $R\bbra x \cong S\bbra x$, then $R \cong S$.
\end{lem}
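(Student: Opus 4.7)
The plan is to follow Hamann's strategy \cite[Thm.\ 4]{Hamann}. The first step is to observe that any ring isomorphism $\phi: R\bbra{x} \xrightarrow{\sim} S\bbra{x}$ between local rings is automatically a local homomorphism, and hence maps the maximal ideal $\mathfrak{m}_R + (x)$ onto $\mathfrak{m}_S + (x)$. Writing $\phi(x) = s_0 + s_1 x + s_2 x^2 + \cdots$ with $s_i \in S$, the constant term $s_0$ thus lies in $\mathfrak{m}_S$. The second step is to reduce to a quotient statement: $\phi$ induces an isomorphism $R \cong R\bbra{x}/(x) \xrightarrow{\bar\phi} S\bbra{x}/(\phi(x))$, so it suffices to exhibit an isomorphism $S\bbra{x}/(\phi(x)) \cong S$.

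The third step is to establish this quotient isomorphism. When the coefficient $s_1 \in S$ is a unit, Weierstrass preparation applied to $\phi(x)$ yields a factorization $\phi(x) = u \cdot (x - s)$ with $u \in S\bbra{x}^\times$ and $s \in \mathfrak{m}_S$, so that $S\bbra{x}/(\phi(x)) \cong S\bbra{x}/(x-s) \cong S$ via $x \mapsto s$, as desired.

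The main obstacle is the case when $s_1 \in \mathfrak{m}_S$, for which Weierstrass preparation cannot be applied directly. Hamann's approach is to analyze the inverse $\phi^{-1}(x) = r_0 + r_1 x + \cdots \in R\bbra{x}$ (where again $r_0 \in \mathfrak{m}_R$) simultaneously, and to exploit the identity $\phi(\phi^{-1}(x)) = x$ together with an inductive straightening procedure on the coefficients to construct an automorphism of $S\bbra{x}$ that transforms $\phi(x)$ into the required form $u \cdot (x - s)$. The bookkeeping for this induction is the nontrivial core of the argument, and I would invoke Hamann's result rather than reproduce it in detail.
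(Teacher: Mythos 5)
The paper does not prove this lemma; it is Hamann's theorem and is simply quoted, so there is no argument of the paper's to compare yours against. What can be checked is whether your reductions before the appeal to Hamann are sound, and they are not: the lemma concerns \emph{arbitrary} local rings, and both Weierstrass preparation in $S\bbra{x}$ and the identification $S\bbra{x}/(x-s)\cong S$ ``via $x\mapsto s$'' require $S$ to be $\fm_S$-adically complete. For a concrete failure of the first, take $S=\ZZ_{(p)}$ and $f=-p+x-px^2\in S\bbra{x}$, so $s_0=-p\in\fm_S$ and $s_1=1\in S^\times$. If $f=u\cdot(x-s)$ with $u=\sum u_nx^n\in S\bbra{x}^\times$ and $s\in\fm_S$, then $u_0s=p$ forces $s\neq 0$, and matching coefficients in degree $n\geq 3$ gives $u_2=u_{2+k}s^k$ for all $k$, so $u_2\in\bigcap_k(s^k)=0$; since $S$ is a domain and $s\neq 0$ one gets $u_n=0$ for all $n\geq 2$, then $u_1=-p$, $u_0=1-ps$, and $s-ps^2=p$. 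Writing $s=pt$, this becomes $p^2t^2-t+1=0$, which has no rational solution (its discriminant $1-4p^2$ is negative). So $f$ is not associate to any $x-s$, and Weierstrass preparation fails. For the second step, note that $\ZZ_{(p)}\bbra{x}/(x-p)\cong\ZZ_p$, not $\ZZ_{(p)}$: ``evaluation at $p$'' on power series over $\ZZ_{(p)}$ produces the $p$-adic completion.

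So your ``easy case'' $s_1\in S^\times$ is not in fact disposed of, and the dichotomy on $s_1$ does not do the work you want: the target conclusion, that $\phi(x)$ can be brought to the form $u\cdot(x-s)$, is simply false for non-complete local rings, so whatever Hamann's inductive argument does, it cannot be producing such a factorization --- it must characterize the quotient $S\bbra{x}/(\phi(x))$ by some other mechanism. (In the applications in this paper all rings are complete Noetherian local, so your Weierstrass step would be legitimate \emph{there}, but you are purporting to prove the lemma in the stated generality.) You should either restrict the statement to complete local rings and then prove it, or cite Hamann outright without the misleading reduction.
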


For each $w\in S_p\backslash \{v\}$ we fix a tame inertial type $\tau_w$ such that $\JH(\ovl{\sigma(\tau_w)^\vee})=\JH(\ovl{\sigma(\tau_w^\vee)})$ contains exactly one Serre weight in $W(\rbar_w^\vee)$ (\cite[Prop.\ 3.5.1]{EGS}) and we fix a $\GL_2({\mathcal O}_{F_{w}})$-invariant lattice $\sigma^0(\tau_w^\vee)$ in $\sigma(\tau_w^\vee)=\sigma(\tau_w)^\vee$ (so, increasing $\F$ if necessary, $\sigma^0(\tau_w^\vee)$ is a free $W(\F)$-module, see the last statement in \cite[Lemma\ 3.1.1]{EGS}). As any Serre weight in $W(\rbar_w^\vee)$ has central character $(\omega^{-1}\det \rbar_w^\vee)\vert_{I_{F_w}}=\overline \psi|_{I_{F_w}}^{-1}$ and $\tau_w$ is tame, the central character of $\sigma^0(\tau_w^\vee)$ is $\psi|_{I_{F_w}}^{-1}$ and $\det \tau_w=\psi|_{I_{F_w}}$. We define a representation $\sigma_p^v$ of $\prod_{w\in S\backslash \{v\}}U_w$ over $W(\F)$ by
\begin{equation}\label{horsv}
\sigma_p^v\defeq \bigotimes_{w\in S_p\backslash \{v\}}\sigma^0(\tau_w^\vee),
\end{equation}
with $\prod_{w\in S\backslash \{v\}}U_w$ acting via $\prod_{w\in S\backslash \{v\}}U_w\twoheadrightarrow \prod_{w\in S_p\backslash \{v\}}U_w=\prod_{w\in S_p\backslash \{v\}}\GL_2({\mathcal O}_{F_{w}})$. As in \cite[\S\S 6.2, 6.4]{EGS} using $K=U$, we then define a patching functor (depending on $\sigma_p^v$)
$$M_\infty^{\sigma_p^v}:\ \sigma_v\longmapsto M_\infty(\sigma_p^v\otimes_{W(\F)}\sigma_v)$$
from the category of continuous representations $\sigma_v$ of $\GL_2({\mathcal O}_{F_{v}})$ on finite type $W(\F)$-modules with central character $\psi|_{I_{F_v}}^{-1}$ to the category of finite type $R_\infty$-modules, where (see \cite[\S 5.4.1]{gee-kisin})
\begin{equation*}
R_\infty\defeq R^{\rm loc}\bbra{X_1,\cdots,X_{q-[F:{\mathbb Q}]+|S|-1}}.
\end{equation*}
Here $q$ is an integer $\geq [F:{\mathbb Q}]$ and
$$R^{\rm loc}\defeq\Big(\widehat\bigotimes_{w\in S\backslash S_p}R_{\rbar_w}^{\psi_w}\Big)\widehat\otimes_{W(\F)}\Big(\widehat\bigotimes_{w\in S_p\backslash \{v\}}R_{\rbar_w}^{(0,-1),\tau_w,\psi_w}\Big)\widehat\otimes_{W(\F)}R_{\rbar_v}^{\psi_v},$$
where the exponent $\psi_w$ means framed deformations of $\rbar_w$ with fixed determinant $\varepsilon^{-1}\psi_w$ and where $R_{\rbar_w}^{(0,-1),\tau_w,\psi_w}$ is the reduced $p$-torsion free quotient of $R_{\rbar_w}^{\psi_w}$ parametrizing those deformations which have parallel Hodge--Tate weights $(0,-1)$ and inertial type $\tau_w$ (by local-global compatibility and the inertial local Langlands correspondence, for $w\in S_p \backslash \{v\}$ the action of $R_{\rbar_w}^{\psi_w}$ on $M_\infty(\sigma_p^v\otimes_{W(\F)}\sigma_v)$ factors through this quotient).
By assumption \ref{it:asum3}\ref{it:asum3b} above (with \cite[Rk.\ 5.2.2]{gee-kisin} and Lemma \ref{lm:hamann}) we have $R_{\rbar_w}^{\psi_w}\cong W(\F)\bbra{X_1,X_2,X_3}$ for $w\in S\backslash S_p$, and by genericity of $\rbar_v$ we have $R_{\rbar_v}^{\psi_v}\cong W(\F)\bbra{X_1,\dots,X_{3+3[F_v:\Qp]}}$. Taking the duals of representations induces a canonical isomorphism $R_{\rbar_w}^{(0,-1),\tau_w,\psi_w}\cong R_{\rbar_w^\vee}^{(1,0),\tau_w^\vee,\psi_w^{-1}}$, where the ring on the right-hand side is the more familiar quotient of $R_{\rbar_w^\vee}$ parametrizing potentially Barsotti--Tate deformations of $\rbar_w^\vee$ with inertial type $\tau_w^\vee$ and determinant $\varepsilon \psi_w^{-1}$. By \cite[Thm.\ 7.2.1(2)]{EGS} (with \cite[Rk.\ 5.2.2]{gee-kisin} and Lemma \ref{lm:hamann}) we have $R_{\rbar_w^\vee}^{(1,0),\tau_w^\vee,\psi_w^{-1}}\cong W(\F)\bbra{X_1,\dots,X_{3+[F_w:\Qp]}}$, so that we finally get
\begin{equation}\label{rinfini}
R_\infty\cong R_{\rbar_v}^{\psi_v}\bbra{X_1,\dots,X_{4(|S|-1)+q-[F_v:\Qp]}}\cong W(\F)\bbra{X_1,\dots,X_{4|S|+q-1+2[F_v:\Qp]}}.
\end{equation}

\begin{rem}\label{mEGS}
Here are several remarks on the definition of $M_\infty(\sigma_p^v\otimes_{W(\F)}\sigma_v)$ in \cite{EGS}.
\begin{enumerate}
\item\label{it:EGS1} One needs to extend the action of $U$ on $\sigma_p^v\otimes_{W(\F)}\sigma_v$ (which acts via $U\twoheadrightarrow \prod_{w\in S_p}U_w$) to an action of $U({\mathbb A}_F^{\infty})^\times$ with $({\mathbb A}_F^{\infty})^\times$ acting via
$$({\mathbb A}_F^{\infty})^\times \twoheadrightarrow ({\mathbb A}_F^{\infty})^\times/F^\times\buildrel {\rm Artin} \over \twoheadrightarrow {\rm Gal}(F^{\rm ab}/F) \buildrel \psi^{-1} \over \rightarrow W(\F)^\times.$$
Note that we believe this action of $({\mathbb A}_F^{\infty})^\times$ in \cite[\S 6.2]{EGS} should also be via $\psi^{-1}$, not $\psi$ (as it is there), otherwise there is a contradiction with (at least) $\det \tau = \psi|_{I_{F_v}}$ in \cite[\S 7.1]{EGS}, since the normalization of $\sigma(\tau)$ in \cite[\S 1.9]{EGS} is dual to the one in \cite[\S 2.1.1]{BM}.
(See also \cite[Rk.\ A.1]{CEGS-C}, as was pointed out to us by David Savitt.)
\item\label{it:EGS2} Accordingly, we need to modify the maximal ideal $\mathfrak m$ associated to $\rbar$ in \cite[\S 6.2]{EGS} as follows: ${\mathfrak m}$ is the maximal ideal generated by $T_w-S_w{\rm tr}(\rbar({\rm Frob}_w)),\ {\rm Norm}(w)-S_w{\det}(\rbar({\rm Frob}_w))$ for $w\notin S\cup \{w_1\}$ (this is the maximal ideal of \cite[\S 4]{BDJ}).
\item\label{it:EGS3} For any $V\subseteq U$ the finite group $V({\mathbb A}_F^\infty)^\times/VF^\times$ acts on $X_V$ without fixing any geometric point (see e.g.\ part (iv) of the proof of \cite[Lemme\ 3.6.2]{BD}, replacing $w_0$ there by $w_1$). In the definition of $S(\sigma)$ in \cite[\S 6.2]{EGS} in the indefinite case, one should replace the Shimura curve by its quotient by this finite action (which is still a smooth projective curve over $F$), analogously to the definite case of {\it loc.~cit.}, where $S(\sigma)$ is defined as functions $f:D^\times\backslash (D\otimes_F{\mathbb A}_F^\infty)^\times \rightarrow \sigma(\theta)^*$ such that $f(gd)=d^{-1}f(g)$ for $d\in U({\mathbb A}_F^\infty)^\times$ (not just $d\in U$). Note that replacing $X_V$ by its quotient does not change $\Hom_{G_F}\!\big(\rbar,H^1_{{\rm \acute et}}(X_V \times_F \overline F, \F)\big)$ (arguing as in the proof of \cite[Thm.\ 3.7.1]{BD}).
\end{enumerate}
\end{rem}

Denote by ${\mathfrak m}_\infty$ the maximal ideal of $R_\infty$ and for $w\in S_p\backslash \{v\}$ let $\sigma_w$ be the unique Serre weight in $W(\rbar_w^\vee)$ that appears in $\JH(\ovl{\sigma(\tau_w^\vee)})$. 
By a standard Hochschild--Serre spectral sequence (see e.g.\ the proof of \cite[Lemma\ 4.11]{BDJ} or of \cite[Lemme\ 3.6.2]{BD}) we have isomorphisms of finite-dimensional $\F$-vector spaces for any representation $\sigma_v$ of $\GL_2({\mathcal O}_{F_{v}})$ over $W(\F)$ as above such that $V_v$ acts trivially on $\sigma_v$ (see also \cite[(5.3)]{LMS}):
\begin{align}\label{h1fini}
\nonumber M_\infty^{\sigma_p^v}(\sigma_v)/{\mathfrak m}_\infty&\cong \Hom_{G_F}\!\Big(\rbar, \Hom_{U/V}\!\big((\bigotimes _{w\in S_p\backslash\{v\}} \sigma_w)\otimes \sigma_v, H^1_{{\rm \acute et}}(X_V \times_F \overline F, \F)\big)\Big)^{\!\vee}\\
\nonumber &\cong \Hom_{U/V}\!\Big((\bigotimes _{w\in S_p\backslash\{v\}} \sigma_w)\otimes \sigma_v, \Hom_{G_F}\!\big(\rbar, H^1_{{\rm \acute et}}(X_V \times_F \overline F, \F)\big)\Big)^{\!\vee}\\
&\cong \Hom_{U_v/V_v}\!\Big(\sigma_v, \Hom_{U^v/V^v}\!\Big(\!\!\bigotimes _{w\in S_p\backslash\{v\}} \sigma_w, \Hom_{G_F}\!\big(\rbar, H^1_{{\rm \acute et}}(X_V \times_F \overline F, \F)\big)\Big)\Big)^{\!\vee}
\end{align}
for any $V=\prod V_w$ such that $V_w=U_w$ if $w\notin S_p$ and $V_w\subseteq 1+p\M_2({\mathcal O}_{F_w})$ with $V_w$ normal in $\GL_2({\mathcal O}_{F_w})$ if $w\in S_p$ (and, as usual, $U^v\defeq \prod_{w \ne v} U_w$ and likewise for $V^v$). In particular, it follows from (\ref{h1nonzero}) and the exactness of the patching functor $M_\infty^{\sigma_p^v}$ in \cite[\S 6.2]{EGS} that $M_\infty^{\sigma_p^v}(\sigma_v)\ne 0$ if and only if $\JH(\ovl{\sigma_v})\cap W(\rbar_v^\vee)\ne \emptyset$.

The definite case is analogous to the indefinite one. We have the equivalence (\ref{h1nonzero}), replacing $\Hom_{G_F}(\rbar,H^1_{\rm \acute et}(X_V \times_F \overline F, \F))$ by $S(V,\F)[{\mathfrak m}]$, where $S(V,\F)\defeq \{f:D^\times\backslash (D\otimes_F{\mathbb A}_F^\infty)^\times/V \rightarrow \F\}$ and (as in Remark \ref{mEGS}\ref{it:EGS2}) $\mathfrak m$ is generated by $T_w-S_w{\rm tr}(\rbar({\rm Frob}_w)),\ {\rm Norm}(w)-S_w{\det}(\rbar({\rm Frob}_w))$ for $w\notin S\cup \{w_1\}$ such that $V_w=({\mathcal O}_D)_{w}^\times$, with $T_w$, $S_w$ acting on $S(V,\F)$ (via right translation on functions), respectively, by $V\begin{pmatrix}\varpi_w & 0\\ 0&1\end{pmatrix}V$, $V\begin{pmatrix}\varpi_w & 0\\ 0&\varpi_w\end{pmatrix}V$, where $\varpi_w$ is any uniformizer in $F_w$. In the definition of $M(\sigma_p^v\otimes_{W(\F)}\sigma_v)$ in \cite[\S 6.2]{EGS} one again modifies the maximal ideal $\mathfrak m$ as in Remark \ref{mEGS}\ref{it:EGS2}. Finally (\ref{h1fini}) becomes
\begin{equation}\label{h0fini}
M_\infty^{\sigma_p^v}(\sigma_v)/{\mathfrak m}_\infty\cong \Hom_{\GL_2({\mathcal O}_{F_v})}\!\Big(\sigma_v, \Hom_{U^v/V^v}\!\big(\!\!\bigotimes _{w\in S_p\backslash\{v\}} \sigma_w, S(V,\F)[{\mathfrak m}]\big)\Big)^{\!\vee}.
\end{equation}

For convenience, we consider the following admissible smooth representation $\pi$ of $\GL_2(F_v)$ over $\F$ with central character $\overline\psi^{-1}$:
\begin{alignat}{2}
\pi&\defeq\varinjlim_{V_v}\Hom_{U^v/V^v}\!\Big(\!\bigotimes _{w\in S_p\backslash\{v\}} \sigma_w, \Hom_{G_F}\!\big(\rbar, H^1_{{\rm \acute et}}(X_{V^vV_v} \times_F \overline F, \F)\big)\Big) &&\ {\rm in\ the\ indefinite\ case},\label{eq:pi-indef}\\
\pi&\defeq\varinjlim_{V_v}\Hom_{U^v/V^v}\!\big(\!\bigotimes _{w\in S_p\backslash\{v\}} \sigma_w, S(V^vV_v,\F)[{\mathfrak m}]\big)&&\ {\rm in\ the\ definite\ case\label{eq:pi-def}}.
\end{alignat}
Then \eqref{h1fini} and \eqref{h0fini} both become 
\begin{equation}\label{eq:Minfty-pi}M_\infty^{\sigma_p^v}(\sigma_v)/{\mathfrak m}_\infty\cong \Hom_{\GL_2(\cO_{F_v})}(\sigma_v,\pi)^{\vee}.\end{equation}

\subsection{Freeness for types}\label{tobefree1}

We prove some freeness results for $M_\infty^{\sigma_p^v}(\sigma)$ and $M_\infty^{\sigma_p^v}(\sigma)[1/p]$ for various representations $\sigma$.

We now set $K\defeq \GL_2({\mathcal O}_{F_v})$, $K_1\defeq 1+p\M_2(\mathcal{O}_{F_v})$ and we freely use the notation of \S\ref{sec:smooth:rep} (with $L=F_v$, $k$ the residue field, etc.) and of \S\ref{patching}. In order not to overload notation, we now just write $M_\infty$ for $M_\infty^{\sigma_p^v}$. If $A$ is a commutative ring and $M$ is an $A$-module, we call scheme-theoretic support of $M$ the quotient $A/{\rm Ann}_A(M)$.

\begin{lem}\label{induction}
Let $A$ be a commutative ring and $N\subseteq M$ two $A$-modules. We assume there is an integer $r\ge 1$ such that
\begin{enumerate}
\item\label{it:ind1} $N$ and $M/N$ are free of rank $r$ over their respective scheme-theoretic supports;
\item\label{it:ind2} $M$ can be generated as an $A$-module by $r$ elements;
\item\label{it:ind3} there is an isomorphism of $A$-modules ${\rm Ann}_A(M/N)/{\rm Ann}_A(M)\cong A/{\rm Ann}_A(N)$.
\end{enumerate}
Then $M$ is free of rank $r$ over its scheme-theoretic support.
\end{lem}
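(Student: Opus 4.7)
Proof plan. Let $I \defeq \mathrm{Ann}_A(M)$, $J \defeq \mathrm{Ann}_A(N)$, and $K \defeq \mathrm{Ann}_A(M/N)$; all three contain $I$, and from $KM \subseteq N$ together with $JN=0$ we get $JK \subseteq I$, so $K/I$ inherits an $A/J$-module structure. Hypothesis (iii) then reads $K/I \cong A/J$ as $A$-modules (equivalently, as $A/J$-modules), and in particular $K^r/I^r \cong (K/I)^r \cong (A/J)^r$. I would like to show that for any surjection $\phi:A^r \twoheadrightarrow M$ supplied by (ii), the induced map $\bar\phi:(A/I)^r \twoheadrightarrow M$ is an isomorphism; my plan is to play the short exact sequences
\[
0\to K^r/I^r \to (A/I)^r \to (A/K)^r \to 0\quad\text{and}\quad 0\to N\to M\to M/N\to 0
\]
off one another, invoking Vasconcelos' theorem (any surjective endomorphism of a finitely generated module over a commutative ring is bijective) at two critical points.

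The first step is to normalize $\phi$. Composing $\phi$ with $M\twoheadrightarrow M/N$ and an identification $M/N\cong (A/K)^r$ from (i) gives a surjection $A^r \twoheadrightarrow (A/K)^r$. The images of the standard basis of $A^r$ must generate $(A/K)^r$; Vasconcelos upgrades this to a basis, so after renaming the basis of $M/N$, the composite becomes the standard projection $A^r\twoheadrightarrow (A/K)^r$. Consequently $\ker \phi \subseteq K^r$, and $\phi$ restricts to a map $\phi_0: K^r \to N$ that is surjective by a direct diagram chase using surjectivity of $\phi$.

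The crux is then to observe that the induced map $\bar\phi_0: K^r/I^r \to N$ is a surjection of two $A/J$-modules each isomorphic to $(A/J)^r$ (the source by (iii), the target by (i)); via these identifications it becomes an $A/J$-linear surjective endomorphism of $(A/J)^r$, hence an isomorphism by Vasconcelos. The short five lemma applied to the commutative diagram of short exact sequences above (with right vertical map a tautological isomorphism after the basis choice) then promotes this to $\bar\phi$ being an isomorphism, giving $M\cong (A/I)^r$ as claimed. The main conceptual obstacle is recognizing that condition (iii) is designed precisely to put the source and target of $\bar\phi_0$ into the same shape so that Vasconcelos can be applied; once this is seen, the rest is routine diagram chasing.
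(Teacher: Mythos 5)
Your proof is correct and follows essentially the same route as the paper: normalize a surjection $A^r \twoheadrightarrow M$ against an isomorphism $M/N \cong (A/\mathrm{Ann}(M/N))^r$, observe that the restriction to the annihilator-kernel lands in $N$ and is surjective, then use condition (iii) to recognize both sides as $(A/\mathrm{Ann}(N))^r$ and invoke Vasconcelos again. The paper streamlines the bookkeeping by replacing $A$ with $A/\mathrm{Ann}_A(M)$ at the outset (so your $I$ becomes $0$ and $K^r/I^r$ becomes $K^r$), but the diagram, the two applications of Vasconcelos, and the concluding five-lemma/snake-lemma step are the same.
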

\begin{proof}
Replacing $A$ by $A/{\rm Ann}_A(M)$, we can assume ${\rm Ann}_A(M)=0$. Let $I\defeq {\rm Ann}_A(M/N)$ and $f:A^r \twoheadrightarrow M$ an $A$-linear surjection by \ref{it:ind2}. Then the composition of $f$ with $M\twoheadrightarrow M/N$ factors through $(A/I)^r$ and we deduce a commutative diagram of $A$-modules
\begin{equation*}
\begin{tikzcd}
0 \ar[r] & I^r \ar[r] \ar[d] & A^r \ar[r] \ar[d, twoheadrightarrow, "f"]& (A/I)^r \ar[r] \ar[d, twoheadrightarrow] & 0 \\
0 \ar[r] & N \ar[r] &M \ar[r] &M/N \ar[r] & 0. \end{tikzcd}
\end{equation*}
By~\ref{it:ind1} we have an isomorphism of $A$-modules $M/N\cong (A/I)^r$ and it follows from e.g.\ \cite[Thm.\ 2.4]{Ma} that the surjection on the right is an isomorphism. The snake lemma then shows that the vertical map on the left is surjective. Since $I\cong A/{\rm Ann}_A(N)$ by~\ref{it:ind3} (recall ${\rm Ann}_A(M)=0$) and $N\cong (A/{\rm Ann}_A(N))^r$ by~\ref{it:ind1}, \cite[Thm.\ 2.4]{Ma} again shows that the vertical map on the left is bijective, and hence all vertical maps are bijective.
\end{proof}

Recall that a finite type module $M$ over a noetherian local ring $A$ is called maximal CM over $A$ if it is Cohen--Macaulay and if its Krull dimension (which is the Krull dimension of $A/{\rm Ann}_A(M)$) is equal to the Krull dimension of $A$. In particular, $A/{\rm Ann}_A(M)$ has no embedded associated prime.

\begin{lem}\label{CM}
Let $\sigma$ be any smooth representation of $K$ on a finite length $W(\F)$-module. Then the finite type $R_\infty$-module $M_\infty(\sigma)$ is maximal CM over its scheme-theoretic support.
\end{lem}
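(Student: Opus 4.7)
The lemma is a standard property of the patching functor $M_\infty$ following the construction in \cite[\S 6]{EGS}, and I would establish it by dévissage starting from the case of $K$-invariant $W(\F)$-lattices in potentially Barsotti--Tate representations of tame inertial type.

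First, for any $K$-invariant $W(\F)$-lattice $L \subset \sigma(\tau_v^\vee)$ where $\tau_v$ is a tame inertial type, the module $M_\infty(L)$ is maximal Cohen--Macaulay over its scheme-theoretic support. This is built into the patching construction: the choice of auxiliary level at $w_1$ (conditions~\ref{it:asum2}\ref{it:asum2a}--\ref{it:asum2c} and Remark~\ref{mEGS}\ref{it:EGS3}) ensures that the finite-level spaces of mod $p^n$ automorphic forms being patched are free modules over the appropriate Hecke algebras, so the patching process yields a module with the claimed CM property; the support is (a formally smooth cover of) $R_{\rbar_v^\vee}^{(1,0),\tau_v^\vee,\psi_v^{-1}}$, which is itself CM of Krull dimension $\dim R_\infty - [F_v{:}\Qp]$ (compare \cite[Prop.\ 6.1.1]{EGS}).

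For a general finite-length $\sigma$, I proceed by induction on the length. Since such $\sigma$ is $p$-power torsion and smooth of finite $\F$-length, it admits a composition series whose constituents are Serre weights. When $\sigma$ is a Serre weight, by the genericity of $\rbar_v$ (cf.\ Proposition~\ref{prop:SW:graph}) I would choose a tame inertial type $\tau_v$ with $\sigma \in \JH(\ovl{\sigma(\tau_v^\vee)})$, realize $\sigma$ as a subquotient of the reduction $L/pL$ of a suitable $W(\F)$-lattice $L \subset \sigma(\tau_v^\vee)$, and deduce the CM property of $M_\infty(\sigma)$ from that of $M_\infty(L)$ and $M_\infty(pL) \cong M_\infty(L)$ by exactness of $M_\infty$ and the short exact sequences resolving $\sigma$. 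For the inductive step, a short exact sequence $0 \to \sigma_1 \to \sigma \to \sigma_2 \to 0$ gives, after applying the exact functor $M_\infty$, a short exact sequence whose outer terms are maximal CM of the same dimension over equidimensional subschemes of $\Spec R_\infty/(p)$; the depth lemma on short exact sequences over the regular local ring $R_\infty$ (regular by (\ref{rinfini})) then forces $M_\infty(\sigma)$ to be maximal CM over its support.

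The main obstacle is the rigorous dévissage step: one must verify that the supports of $M_\infty$ applied to successive subquotients in a composition series are equidimensional subschemes of $\Spec R_\infty/(p)$ of the same expected Krull dimension, so that the depth lemma indeed propagates the CM property. This follows from the first step, combined with the explicit description of the special fibres of the relevant tame potentially Barsotti--Tate deformation rings computed in Proposition~\ref{prop:def:ring} (in particular Corollary~\ref{cor:special-fibre-def-ring}) and the regularity of $R_\infty$.
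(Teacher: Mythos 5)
Your dévissage step (for a general finite-length $\sigma$) is sound in spirit and matches the paper: the depth lemma, applied over the scheme-theoretic support (not over $R_\infty$ itself, where the modules have positive codimension support and thus cannot literally be maximal CM), propagates the property through extensions, provided the outer terms have the same Krull dimension. The paper's version of this is the observation that maximal CM over a fixed noetherian local ring $A$ is preserved by extensions, verified via the $\Ext^i_A(\F,-)$ characterization.

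But your base case has a genuine gap. You propose to obtain the CM property for a single Serre weight $\sigma_v$ by realizing it as a subquotient of $L/pL$ for a tame potentially Barsotti--Tate lattice $L$, and then ``deduce the CM property of $M_\infty(\sigma_v)$ from that of $M_\infty(L)$ by exactness and short exact sequences.'' This does not work: exactness of $M_\infty$ and the CM property of $M_\infty(L/pL)$ give you \emph{nothing} about subquotients. The depth lemma only goes in the direction ``sub/quotient CM of the same dimension $\Rightarrow$ extension CM''; it does not descend to subquotients. (Concretely, given $0 \to M' \to M \to M'' \to 0$ with $M$ CM, $M'$ and $M''$ can each fail to be CM.) The paper instead takes the Cohen--Macaulayness and equidimensionality of $M_\infty(\sigma_v)$ for Serre weights $\sigma_v$ as \emph{axioms} of the patching functor -- this is part of \cite[Def.~6.1.1]{EGS}, and is the correct point to cite, rather than trying to re-derive it from the construction and then propagate it (in the wrong direction) from lattices of larger types. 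Once that is granted, the equidimensionality you flag as ``the main obstacle'' is immediate -- the Krull dimension of $M_\infty(\sigma_v)$ is independent of $\sigma_v$ by the same axiom -- and there is no need to invoke the explicit deformation ring computations of Proposition~\ref{prop:def:ring} or Corollary~\ref{cor:special-fibre-def-ring}, which would be a much heavier tool than needed.
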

\begin{proof}
We can assume $M_\infty(\sigma)\ne 0$. For each Serre weight $\sigma_v$ such that $M_\infty(\sigma_v)\ne 0$, it follows from \cite[Def.\ 6.1.1]{EGS} that the Krull dimension of $M_\infty(\sigma_v)$ does not depend on $\sigma_v$, call it $d$, and that $M_\infty(\sigma_v)$ is Cohen--Macaulay. By exactness of the functor $M_\infty$, the Krull dimension of $M_\infty(\sigma)$ is the maximum of the Krull dimensions of the $M_\infty(\sigma_v)$ for the constituents $\sigma_v$ of $\sigma$, hence is also $d$. In particular, each nonzero such $M_\infty(\sigma_v)$ is maximal CM over $R_\infty/{\rm Ann}_{R_\infty}(M_\infty(\sigma))$. But being maximal CM over a given noetherian local ring $A$ of residue field $\F$ is preserved by extensions of modules (as can be checked from the characterization of Cohen--Macaulay modules using ${\rm Ext}_A^i(\F,-)$). Hence $M_\infty(\sigma)$ is maximal Cohen--Macaulay.
\end{proof}

If $\tau$ is a tame inertial type and $\lambda=((a_j,b_j))_{j\in \{0,\dots,f-1\}}$, where $a_j>b_j$ are integers, we set
\begin{equation}\label{infinitame}
R_\infty^{\lambda,{\tau}}\defeq R_\infty \otimes_{R_{\rbar_v^\vee}}R_{\rbar_v^\vee}^{\lambda,{\tau}},
\end{equation}
where $R_{\rbar_v^\vee}^{\lambda,{\tau}}$ parametrizes (framed) deformations of $\rbar_v^\vee$ of inertial type $\tau$ and Hodge--Tate weights $(a_j,b_j)$ in the embedding $\sigma_j:F_v\hookrightarrow E$. Note that from the determinant condition (see (\ref{rinfini})), one must have $a_j+b_j=1$ for all $j$ in order for $R_\infty^{\lambda,{\tau}}$ to be nonzero. When $a_j=a$ and $b_j=b$ for all $j$, we write $R_\infty^{(a,b),{\tau}}$. We finally write $\overline R_\infty\defeq R_\infty/(p)$ and $\overline R_\infty^{\lambda,\tau}\defeq R_\infty^{\lambda,\tau}/(p)$.

\begin{prop}\label{starringr}
There exists an integer $r \ge 1$ such that
\begin{enumerate}
\item\label{it:star1} for all $\sigma_v\in W(\rbar_v^\vee)$ the module $M_\infty(\sigma_v)$ is free of rank $r$ over its scheme-theoretic support, which is formally smooth over $\F$;
\item\label{it:star2} for all tame inertial types $\tau$ such that $\JH(\ovl{\sigma(\tau)})\cap W(\rbar_v^\vee)\ne \emptyset$ and all $K$-invariant $W(\F)$-lattices $\sigma^0(\tau)$ in $\sigma(\tau)$ with irreducible cosocle, the module $M_\infty(\sigma^0(\tau))$ is free of rank $r$ over its scheme-theoretic support, which is a domain.
\end{enumerate}
\end{prop}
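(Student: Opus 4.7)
The plan is to prove (ii) first for each tame type individually, then to show that the resulting rank is independent of the type, and finally to deduce (i) by a devissage on lattices together with the commutative algebra encoded in Lemma~\ref{induction}.

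For (ii) with a fixed tame $\tau$, I would first verify that $R_{\rbar_v^\vee}^{(1,0),\tau}$ is formally smooth over $W(\F)$ of the expected relative dimension. This follows by specializing the Kisin-module computation behind Proposition~\ref{prop:def:ring} to a single Hodge--Tate weight $(1,0)$: the finite-height condition reduces to $\det A^{(j)} \in R^\times(v+p)$ in each embedding, while the monodromy condition of Proposition~\ref{prop:monodromy} is vacuous for $h=1$, so after eliminating variables one obtains a power series ring over $W(\F)$. Hence $R_\infty^{(1,0),\tau}$ is itself a power series ring over $W(\F)$. By Lemma~\ref{CM}, $M_\infty(\sigma^0(\tau))$ is maximal Cohen--Macaulay over its schematic support, and this support is a quotient of $R_\infty^{(1,0),\tau}$ by local-global compatibility at $v$ (any closed point of the support corresponds, through patching and classical Langlands, to a potentially Barsotti--Tate deformation of type $\tau$). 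The support is in fact all of $R_\infty^{(1,0),\tau}$: the hypothesis $\JH(\ovl{\sigma(\tau)})\cap W(\rbar_v^\vee)\ne\emptyset$ together with standard Taylor--Wiles--Kisin type density arguments shows that any minimal prime carries an automorphic characteristic-zero specialization, hence lies in the support. Since $R_\infty^{(1,0),\tau}$ is regular, Auslander--Buchsbaum gives freeness of $M_\infty(\sigma^0(\tau))$, of some rank $r(\tau,\sigma^0(\tau))$. This rank depends only on the generic fibre, hence only on $\tau$; call it $r(\tau)$.

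Next I would show that $r(\tau)$ is a common constant $r\ge 1$ for every admissible $\tau$. If $\tau,\tau'$ are tame types such that some $\sigma_v\in W(\rbar_v^\vee)$ lies in $\JH(\ovl{\sigma(\tau)})\cap\JH(\ovl{\sigma(\tau')})$, pick the lattices $\sigma^0(\tau)_{\sigma_v}$, $\sigma^0(\tau')_{\sigma_v}$ with cosocle $\sigma_v$; the surjections onto $\sigma_v$ and the exactness of $M_\infty$ force the rank of the image in $M_\infty(\sigma_v)$ to be identifiable from either side, giving $r(\tau)=r(\tau')$. The bijection $\theta$ of Lemma~\ref{lem:inter} and the adjacency combinatorics of $W(\rbar_v^\vee)$ (Proposition~\ref{prop:SW:graph}) then chain these equalities to a single value $r$.

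To deduce (i), fix $\sigma_v\in W(\rbar_v^\vee)$, choose a tame type $\tau$ with $\sigma_v\in\JH(\ovl{\sigma(\tau)})$, and let $L\defeq \sigma^0(\tau)_{\sigma_v}$. By (ii), $M_\infty(\ovl L)=M_\infty(L)/\varpi$ is free of rank $r$ over $R_\infty^{(1,0),\tau}/\varpi$. Run through the radical filtration of $\ovl L$: the successive constituents $\sigma'\in\JH(\ovl{\sigma(\tau)})$ outside $W(\rbar_v^\vee)$ satisfy $M_\infty(\sigma')=0$ by \eqref{eq:Minfty-pi} and \eqref{h1nonzero}, so they do not contribute; the constituents in $W(\rbar_v^\vee)\setminus\{\sigma_v\}$ are handled inductively via Lemma~\ref{induction}, using a multi-type analog of Proposition~\ref{prop:multitype-def-ring} for Hodge--Tate weights $(1,0)$ to describe the relevant annihilator ideals as sums and intersections of the primes $\fp_\tau^{(1,0)}\defeq \ker(R_\infty\onto R_\infty^{(1,0),\tau})$. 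At the end one obtains that $M_\infty(\sigma_v)$ is free of rank $r$ over its schematic support, and that support is $R_\infty/\bigcap_{\tau}\fp_\tau^{(1,0)}$ (the intersection ranging over tame $\tau$ with $\sigma_v\in\JH(\ovl{\sigma(\tau)})$), which by the $(1,0)$-analog of Proposition~\ref{prop:multitype-def-ring} is formally smooth over $W(\F)$; reducing modulo $\varpi$ yields formal smoothness over $\F$ as claimed.

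The main obstacle is the inductive step in the deduction of (i): verifying hypothesis~\ref{it:ind3} of Lemma~\ref{induction} at each stage of the devissage. This amounts to showing that the annihilators of the various $M_\infty(\sigma')$ fit together along the expected sum/intersection pattern of the ideals $\fp_\tau^{(1,0)}$ in $R_\infty$. While this is tractable thanks to the explicit description of tame deformation rings provided by the computations of \S\ref{sec:galo-deform-rings} (and in particular their $(1,0)$-specialization), the bookkeeping is delicate, and the possibility of such an approach in our generic setting is precisely the content of the ``refinement of the techniques of \cite{EGS}'' alluded to in the introduction.
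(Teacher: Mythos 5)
Your proposal inverts the paper's order of attack (the paper proves part (i) first as the $|\mathcal J|=1$ case of an induction over capped intervals, and deduces part (ii) at the very end by a Nakayama argument; you instead want to prove (ii) first and then devissage down to (i)). The direct argument for (ii) — reduce mod $p$, apply Lemma~\ref{CM}, use that $R_\infty^{(1,0),\tau}$ is regular and that the set-theoretic support must be all of this irreducible ring, then invoke Auslander--Buchsbaum — is sound in outline; the paper achieves the same thing via \cite[Thm.\ 7.2.1(2)]{EGS} and \cite[Lemma\ 6.1.4]{EGS}.

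However there are two genuine gaps. First, your argument that $r(\tau)$ is a common constant is not correct as stated. Knowing that $M_\infty(\sigma^0(\tau)_{\sigma_v})$ is free of rank $r(\tau)$ gives only $\dim_\F M_\infty(\sigma_v)/\mathfrak{m}_\infty\le r(\tau)$, since $M_\infty(\sigma_v)$ is a quotient; without first establishing structural control on $M_\infty(\sigma_v)$ there is no reason the ``rank of the image'' is the same on both sides. The paper instead exploits a two-component case: for a capped interval $\mathcal J$ of size $2$, $M_\infty(\overline\sigma^{\mathcal J})$ is free over a ring of the form $\F\bbra{X_1,X_2,\dots}/(X_1X_2)$, whence the two constituent modules are automatically free of the same rank over $\F\bbra{X_1,\dots}$ and $\F\bbra{X_2,\dots}$.

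Second, and more seriously, the devissage from (ii) to (i) is left essentially unsubstantiated. You flag verification of hypothesis~(iii) of Lemma~\ref{induction} as the ``main obstacle'' and appeal to a ``multi-type analog of Proposition~\ref{prop:multitype-def-ring} for Hodge--Tate weights $(1,0)$,'' but no such statement exists in the paper and you do not establish one. What the paper actually uses is the explicit ideal calculus of \cite[\S10.1]{EGS}: the ideals $I_{\mathcal J}$ with the identity $I_{\mathcal J\setminus\{J_0\}}/I_{\mathcal J}\cong \overline R_\infty/I_{\{J_0\}}$, together with the identification ${\rm Ann}_{\overline R_\infty}(M_\infty(\overline\sigma^{\mathcal J}))=I_{\mathcal J}$ (which itself needs the Cohen--Macaulay reducedness argument, not just the formal smoothness of the components). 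You also omit the generation-by-$r$-elements input to Lemma~\ref{induction}: this requires \cite[Lemma\ 4.5]{DanWild} (replacing \cite[Lemma\ 10.1.13]{EGS}), without which hypothesis~(ii) of the lemma cannot be verified. As written, your proof of part (i) is a plan for a proof rather than a proof, and the hard step — the one the proposition is about — is exactly the step you defer.
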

\begin{proof}
Note first that the last assertions in \ref{it:star1} and \ref{it:star2} are a consequence of \cite[Def.\ 6.1.1]{EGS}, \cite[Thm.\ 7.2.1(2), (5)]{EGS}, and \cite[Prop.\ 3.5.1]{EGS}. The strategy of the proof is very close to the one of \cite[Thm.\ 10.1.1]{EGS} (which proves the case $r=1$), and we freely use some notation from {\it loc.~cit.} (it would be too tedious to recall everything). By \cite[\S 5.1]{EGS} there is a set ${\mathcal P}_{\tau}$ of subsets of $\{0,\dots,f-1\}$ and a unique $J\in {\mathcal P}_{\tau}$ such that $\sigma^0(\tau)=\sigma^0_J(\tau)$. The constituents of $\JH(\ovl{\sigma_J^0(\tau)})\cap W(\rbar_v^\vee)$ are indexed by a certain subset $\mathcal W$ of ${\mathcal P}_{\tau}$, and for certain subsets ${\mathcal J}\subseteq {\mathcal W}$ called capped intervals (see \cite[Def.\ 10.1.4]{EGS}) there exists a subquotient $\overline\sigma^{\mathcal J}$ of $\overline{\sigma^0_J(\tau)}$ such that the irreducible constituents of $\overline\sigma^{\mathcal J}$ are exactly the constituents of $\JH(\ovl{\sigma_J^0(\tau)})\cap W(\rbar_v^\vee)$ indexed by the elements of $\mathcal J$. We first prove by induction on $|{\mathcal J}|$ that the module $M_\infty(\overline\sigma^{\mathcal J})$ is free of rank $r$ over its scheme-theoretic support for an integer $r$ which depends neither on $\tau$ nor on $\mathcal J$.

By the argument in the proof of \cite[Lemma\ 3.6.2]{LLLM2}, the ring $\overline R_\infty/{\rm Ann}_{\overline R_\infty}(M_\infty(\overline\sigma^{\mathcal J}))$ is reduced. Indeed, it is generically reduced by d\'evissage, since the scheme-theoretic supports of $M_\infty(\sigma_v)$ for Serre weights $\sigma_v \in W(\rbar_v^\vee)$ are reduced, irreducible, and pairwise distinct (of dimension independent of $\sigma_v$) and since $\overline\sigma^{\mathcal J}$ is multiplicity-free; it also has no embedded associated prime, since $M_\infty(\overline\sigma^{\mathcal J})$ is Cohen--Macaulay by Lemma \ref{CM}. Let $I_{\mathcal J}$ be the ideal of $\overline R_\infty$ defined in \cite[\S 10.1]{EGS}, it follows that
\begin{equation}\label{annulateur}
{\rm Ann}_{\overline R_\infty}(M_\infty(\overline\sigma^{\mathcal J}))=I_{\mathcal J}.
\end{equation}

If $|{\mathcal J}|\leq 2$, then by \cite[Prop.\ 3.5.1]{EGS}, \cite[Prop.\ 10.1.11]{EGS} and the very last paragraph in the proof of \cite[Lemma 10.1.12]{EGS} there is a tame inertial type $\tau'$ and a $W(\F)$-lattice $\sigma^0(\tau')$ in $\sigma(\tau')$ such that $\JH(\ovl{\sigma^0(\tau')})\cap W(\rbar_v^\vee)=\JH(\overline\sigma^{\mathcal J})$ and $M_\infty(\ovl{\sigma^0(\tau')})\cong M_\infty(\overline\sigma^{\mathcal J})$. By \cite[Thm.\ 7.2.1(2)]{EGS} (and \cite[Rk.\ 5.2.2]{gee-kisin}) the local ring $R_{\rbar_v^\vee}^{(1,0),{\tau'},\psi^{-1}_v}$ is regular, and hence also $R_\infty^{(1,0),{\tau'}}$ by (\ref{rinfini}) and (\ref{infinitame}). By \cite[Lemma\ 6.1.4]{EGS} it follows that $M_\infty(\sigma^0(\tau'))$ is free of finite type over $R_\infty^{(1,0),{\tau'}}$. Hence $M_\infty(\ovl{\sigma^0(\tau')})\cong M_\infty(\overline\sigma^{\mathcal J})$ is also free of finite type over $\overline R_\infty^{(1,0),\tau'}\cong \overline R_\infty/I_{\mathcal J}$. 

If $|{\mathcal J}|=2$, then $\overline\sigma^{\mathcal J}$ has two distinct constituents $\sigma_1$, $\sigma_2$ and the freeness of $M_\infty(\overline\sigma^{\mathcal J})$ over $\overline R_\infty/I_{\mathcal J}$ (which is a power series ring over $\F\bbra{X_1,X_2}/(X_1X_2)$) easily implies that $M_\infty(\sigma_1)$ and $M_\infty(\sigma_2)$ have the {\it same} rank over their schematic support (which is a power series ring over, respectively, $\F\bbra{X_1}$ and $\F\bbra{X_2}$). Using \cite[Prop.\ 10.1.11]{EGS} and the fact that all Serre weights in $W(\rbar_v^\vee)$ can be ``connected'' by nonsplit extensions (as follows e.g.\ from \cite[Prop.\ 3.5.2]{EGS} applied to a semisimple $\rhobar$), we obtain \ref{it:star1} for a certain integer $r\ge 1$.

If $|{\mathcal J}| > 2$ and $\mathcal J$ has a unique minimal element $J_0$ (for inclusion inside $\{0,\dots,f-1\}$), then exactly as in the analogous case of the proof of \cite[Thm.\ 10.1.1]{EGS} but using \cite[Lemma\ 4.5]{DanWild} instead of \cite[Lemma\ 10.1.13]{EGS}, we deduce that the $\overline R_\infty$-module $M_\infty(\overline\sigma^{\mathcal J})$ is generated by $r$ elements. Then one applies Lemma \ref{induction} to $M=M_\infty(\overline\sigma^{\mathcal J})$ and $N=M_\infty(\overline\sigma^{\{J_0\}})$ (the hypotheses of the lemma are satisfied, as $M/N\cong M_\infty(\overline\sigma^{{\mathcal J}\backslash \{J_0\}})$, $I_{{\mathcal J}\backslash \{J_0\}}/I_{\mathcal J}\cong \overline R_\infty/I_{\mathcal \{J_0\}}$ and using (\ref{annulateur})) together with the induction hypothesis on $|{\mathcal J}|$ to deduce that $M_\infty(\overline\sigma^{\mathcal J})$ is free of rank $r$ over $\overline R_\infty/ I_{\mathcal J}$.

If $|{\mathcal J}| > 2$ and $\mathcal J$ has at least two distinct minimal elements $J_1$, $J_2$, let ${\mathcal J}_i\defeq {\mathcal J}\backslash\{J_i\}$, $i=1,2$. Then by the induction hypothesis $M_\infty(\overline\sigma^{{\mathcal J}_1})$, $M_\infty(\overline\sigma^{{\mathcal J}_2})$ and $M_\infty(\overline\sigma^{{\mathcal J}_1\cap {\mathcal J}_2})$ are all free of rank $r$ over (respectively) $\overline R_\infty/I_{{\mathcal J}_1}$, $\overline R_\infty/I_{{\mathcal J}_2}$ and $\overline R_\infty/I_{{\mathcal J}_1\cap {\mathcal J}_2}$. Hence so is the fiber product $M_\infty(\overline\sigma^{{\mathcal J}_1})\times_{M_\infty(\overline\sigma^{{\mathcal J}_1\cap {\mathcal J}_2})}M_\infty(\overline\sigma^{{\mathcal J}_1})\cong M_\infty(\overline\sigma^{\mathcal J})$ over $\overline R_\infty/I_{{\mathcal J}_1}\times_{\overline R_\infty/I_{{\mathcal J}_1\cap {\mathcal J}_2}}\overline R_\infty/I_{{\mathcal J}_2}\cong \overline R_\infty/I_{\mathcal J}$ (see the analogous case in the proof of \cite[Thm.\ 10.1.1]{EGS}).

It remains to finish the proof of \ref{it:star2}. By the previous proof, $M_\infty(\ovl{\sigma_J^0(\tau)})\cong M_\infty(\overline\sigma^{\mathcal W})$ is free of rank $r$ over $\overline R_\infty/I_{\mathcal W}\cong \overline R_\infty^{(1,0),\tau}$. By Nakayama's lemma, we deduce a surjection of $R_\infty^{(1,0),\tau}$-modules $f:(R_\infty^{(1,0),\tau})^r\twoheadrightarrow M_\infty(\sigma_J^0(\tau))$ which is an isomorphism modulo $p$, hence satisfies $p\ker(f)=\ker(f)$ since $M_\infty(\sigma_J^0(\tau))$ has no $p$-torsion. By Nakayama's lemma again we deduce $\ker(f)=0$, which finishes the proof.
\end{proof}

\begin{cor}\label{penible}
Let $\sigma\defeq \bigoplus_{i=1}^m \sigma_i^{n_i}$, where $m, n_i\ge 1$ and the $\sigma_i=\sigma_i^{\rm smooth}\otimes_{E}\sigma_i^{\rm alg}$ are pairwise nonisomorphic absolutely irreducible locally $\Qp$-algebraic representations of $K$ over $E$ satisfying the following hypothesis: $\sigma_i^{\rm smooth}\otimes_E\Qpbar$ lies in the image of the inertial local Langlands correspondence $\tau \mapsto \sigma(\tau)$ \emph{(}after extending scalars to $\Qpbar$, see \S\ref{sec:inert-local-langl}\emph{)}
and $\bigcup_i \JH(\ovl{\sigma_i})\cap W(\rbar_v^\vee)\ne \emptyset$. Let $\sigma^0$ be any $W(\F)$-lattice in $\sigma$ preserved by $K$. Then
\begin{enumerate}
\item\label{it:pen1} $M_\infty(\sigma^0)$ is maximal CM over its scheme-theoretic support $S\defeq R_\infty/{\rm Ann}_{R_\infty}(M_\infty(\sigma^0))$, which is reduced;
\item\label{it:pen2} $M_\infty(\sigma^0)\otimes_{W(\F)}E$ is locally free over its scheme-theoretic support $S[1/p]$, which is formally smooth over $E$.
\end{enumerate}
\end{cor}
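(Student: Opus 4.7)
The plan is to first establish the maximal Cohen--Macaulay statement in (i), then derive (ii) via Kisin's formal smoothness theorem for generic fibers of potentially semistable deformation rings together with Auslander--Buchsbaum, and finally conclude the reducedness in (i) from (ii). The two main inputs are the exactness of $M_\infty$ (combined with Lemma~\ref{CM}) and \cite[Thm.~3.3.8]{KisinPSS}.

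Exactness of $M_\infty$ applied to $0 \to \sigma^0 \xrightarrow{p} \sigma^0 \to \sigma^0/p\sigma^0 \to 0$ shows that $M_\infty(\sigma^0)$ is $p$-torsion free, which forces $S$ to be $p$-torsion free too (if $ps$ annihilates $M_\infty(\sigma^0)$ then so does $s$, by regularity of $p$). By Lemma~\ref{CM}, $M_\infty(\sigma^0/p\sigma^0)$ is maximal CM over $S/pS$; combining this with the standard depth/dimension relations for the regular element $p$ gives $\mathrm{depth}_S M_\infty(\sigma^0) = \mathrm{depth}_{S/pS} M_\infty(\sigma^0/p\sigma^0) + 1 = \dim(S/pS) + 1 = \dim S$, so $M_\infty(\sigma^0)$ is maximal CM over $S$.

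For (ii), any two $K$-stable $W(\F)$-lattices in $\sigma$ are commensurable, so after inverting $p$ we may replace $\sigma^0$ by $\bigoplus_i (\sigma_i^0)^{n_i}$ for any choice of $K$-stable lattices $\sigma_i^0 \subset \sigma_i$; by additivity of $M_\infty$ we obtain $M_\infty(\sigma^0)[1/p] \cong \bigoplus_i M_\infty(\sigma_i^0)[1/p]^{n_i}$. Fix $i$, write $\sigma_i^{\mathrm{smooth}} \cong \sigma(\tau_i)$ for some inertial type $\tau_i$, and let $\lambda_i \in X^*(\un{T})$ be the labeled Hodge--Tate weights determined by $\sigma_i^{\mathrm{alg}}$. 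By local-global compatibility in the patched setup (cf.~\cite[\S 6.2]{EGS} together with the determinant condition imposed by \eqref{horsv}), the $R_\infty$-action on $M_\infty(\sigma_i^0)$ factors through the potentially semistable deformation ring $R_\infty^{\lambda_i,\tau_i}$ of \eqref{infinitame}, whose generic fiber is formally smooth (hence regular) over $E$ by Kisin's theorem. Following the Taylor--Wiles-style dimension count of Proposition~\ref{starringr}, $M_\infty(\sigma_i^0)[1/p]$ is maximal CM of full dimension over $R_\infty^{\lambda_i,\tau_i}[1/p]$, so by Auslander--Buchsbaum it is locally free over $R_\infty^{\lambda_i,\tau_i}[1/p]$ (whose irreducible components agree with its connected components, since regular rings are normal, and are hence pairwise disjoint formally smooth pieces). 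Since distinct $(\tau_i,\lambda_i)$ define scheme-theoretically disjoint subschemes of $\Spec R_\infty[1/p]$ (the pair being uniquely determined by a Galois representation, and distinct for distinct $i$ as the $\sigma_i$ are pairwise nonisomorphic), $S[1/p]$ is a disjoint union of the $R_\infty^{\lambda_i,\tau_i}[1/p]$, hence formally smooth. The reducedness in (i) then follows: $S$ is $p$-flat, hence embeds into $S[1/p]$, which is reduced by formal smoothness.

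The main technical obstacles are (a) carefully tracking the duality and determinant normalizations (see Remark~\ref{mEGS}) to verify that the $R_\infty$-action on $M_\infty(\sigma_i^0)$ factors through $R_\infty^{\lambda_i,\tau_i}$, and (b) establishing full-dimensional maximal CM-ness of $M_\infty(\sigma_i^0)[1/p]$ over $R_\infty^{\lambda_i,\tau_i}[1/p]$ (i.e., that the scheme-theoretic support equals all of $R_\infty^{\lambda_i,\tau_i}[1/p]$), which is a Taylor--Wiles--Kisin dimension count extending Proposition~\ref{starringr} from the parallel weight $(1,0)$ setting to arbitrary labeled Hodge--Tate weights; once these are settled, the remaining steps are formal commutative algebra.
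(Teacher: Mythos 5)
Your proof has the same overall structure as the paper's: decompose $M_\infty(\sigma^0)[1/p]$ as a direct sum over the $\sigma_i$, identify the generic fiber of the scheme-theoretic support with a closed subscheme of the formally smooth (by \cite[Thm.~(3.3.8)]{KisinPSS}) generic fiber of a potentially crystalline deformation ring, use disjointness of the supports for distinct $i$, get Cohen--Macaulayness from Lemma~\ref{CM} together with $p$-regularity of $M_\infty(\sigma^0)$, and finish with Auslander--Buchsbaum.

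However, there is one place where you assert strictly more than you can justify and more than is needed. You claim that $M_\infty(\sigma_i^0)[1/p]$ is maximal CM of \emph{full dimension} over $R_\infty^{\lambda_i,\tau_i}[1/p]$, i.e.\ that its scheme-theoretic support is \emph{all} of $R_\infty^{\lambda_i,\tau_i}[1/p]$, and you flag this yourself as ``obstacle (b),'' proposing to extend the dimension count of Proposition~\ref{starringr} to arbitrary labeled Hodge--Tate weights. This full-support claim is not a consequence of the inputs you invoke, and in general it need not hold (some components of $\Spec R_\infty^{\lambda_i,\tau_i}$ may simply not meet the support of $M_\infty$). What the Taylor--Wiles--Kisin machinery actually gives — and what the paper cites, via \cite[Lemmas 4.17, 4.18]{CEGGPS} — is the weaker statement that the $R_\infty$-action factors through a reduced, equidimensional, $p$-torsion-free quotient and that the support is a (possibly proper) \emph{union of irreducible components} of that quotient. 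This is already enough: a union of irreducible components of the regular, normal scheme $\Spec R_\infty^{\lambda_i,\tau_i}[1/p]$ is automatically open and closed, hence still formally smooth; one then applies Auslander--Buchsbaum over $S[1/p]$ rather than over $R_\infty^{\lambda_i,\tau_i}[1/p]$ and reaches the same conclusion. So your ``obstacle (b)'' dissolves once you replace ``equals all of $R_\infty^{\lambda_i,\tau_i}[1/p]$'' by ``is a union of components of $R_\infty^{\lambda_i,\tau_i}[1/p]$''; no extension of Proposition~\ref{starringr} to weight $(2,-1)$ is needed for this corollary.
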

\begin{proof}
For $i\in \{1,\dots,m\}$ let $\sigma_i^0$ be any $K$-invariant $W(\F)$-lattice in $\sigma_i$. It easily follows from the exactness of the functor $M_\infty$ that there is an isomorphism of $R_{\infty}[1/p]$-modules
\begin{equation}\label{generic}
M_\infty(\sigma^0)[1/p]\cong \bigoplus_{i=1}^m M_\infty(\sigma_i^0)[1/p]^{\oplus n_i}.
\end{equation}
From the Taylor--Wiles--Kisin method, we know that the action of $R_\infty$ on $M_\infty(\sigma_i^0)$ factors through a reduced equidimensional $p$-torsion free quotient of $R_\infty$ and that the support of $M_\infty(\sigma_i^0)$ is a union of irreducible components of that quotient (see e.g.\ \cite[Lemmas 4.17, 4.18]{CEGGPS}). Hence the scheme-theoretic support of $M_\infty(\sigma_i^0)$ is also a reduced $p$-torsion free quotient $R_\infty/I_i$ of $R_\infty$. It follows from (\ref{generic}) that the support of $M_\infty(\sigma^0)[1/p]$ is $S[1/p]\cong (R_\infty/\bigcap_iI_i)[1/p]$ (as there is no $p$-torsion). Since the $\Spec\ \!(R_\infty/I_i)[1/p]$ for ${1\leq i\leq m}$ correspond to {\it disjoint} closed subschemes of $\Spec R_\infty[1/p]$ (as the locally algebraic representations $\sigma_i$ are pairwise distinct), one has by the Chinese remainder theorem
\begin{equation}\label{chinese}
S[1/p]=(R_\infty/\bigcap_iI_i)[1/p]\cong \prod_{i=1}^m (R_\infty/I_i)[1/p],
\end{equation}
which is thus reduced and formally smooth over $E$ by \cite[Thm.\ (3.3.8)]{KisinPSS}, hence regular by \cite[Thm.\ 28.7]{Ma}. Since $S$ has no $p$-torsion (as $S$ acts faithfully on $M_\infty(\sigma^0)$ which has no $p$-torsion by exactness of $M_\infty$), we deduce that $S$ is also reduced. 

The module $M_\infty(\sigma^0)/(p)\cong M_\infty(\ovl{\sigma^0})$ is a Cohen--Macaulay-module by Lemma \ref{CM}, and $p$ is a non-zero-divisor on $M_\infty(\sigma^0)$, hence $M_\infty(\sigma^0)$ is also Cohen--Macaulay, hence maximal CM over $S$. Moreover applying \cite[Thm.\ 17.3(iii)]{Ma} to $M_\infty(\sigma^0)$ we see that $M_\infty(\sigma^0)[1/p]$ is also Cohen--Macaulay as an $S[1/p]$-module. The Auslander--Buchsbaum formula applied to the localizations at prime ideals of $S[1/p]$ of the Cohen--Macaulay module $M_\infty(\sigma^0)[1/p]$ over the regular ring $S[1/p]$ implies $M_\infty(\sigma^0)[1/p]$ is locally free over $S[1/p]$.
 \end{proof}

The following remark shows that the assumption on $\sigma_i^{\rm smooth}$ is often satisfied.

\begin{rem}\label{rk:image-of-inertial-llc}
  If $\sigma$ is any irreducible smooth representation of $K$ over $E$ that is tame (i.e.\ the action of $K$ factors through $K\twoheadrightarrow \GL_2(k)$)
  and that is not a twist of the Steinberg representation of $\GL_2(k)$ (equivalently, is not of dimension $q_v$), then $\sigma\otimes_E\Qpbar$ lies in the image of
  the inertial local Langlands correspondence $\tau \mapsto \sigma(\tau)$, after extending scalars to $\Qpbar$. (To see this, first note that $\sigma$ is
  absolutely irreducible by (the proof of) \cite[Lemma 3.1.1]{EGS}. If $\sigma$ is one-dimensional, then it is clear that $\sigma$ lies in the image; otherwise, $\sigma$ is
  a principal series or cuspidal representation of $\GL_2(k)$, and the claim follows from the case $a = 1$ in \cite[Th.\ 2.1.1.4]{BM} or alternatively \cite[Prop.\ 2.4.1]{EGH}.)
\end{rem}
 
 For any Serre weight $\sigma_v$, recall that we have defined in \S\ref{sec:lattices} the two $\GL_2(k)$-representations $P_{\sigma_v}=\Proj_{\GL_2(k)}\sigma_v$ and $\widetilde P_{\sigma_v}$ over, respectively, $\F$ and ${\mathcal O}=W(\F)$. 

\begin{prop}\label{HT10}
If $\sigma_v\in W(\rbar_v^\vee)$, then $M_\infty(\widetilde P_{\sigma_v})$ is free of rank $r$ over $R_\infty/\cap_{\tau}{\mathfrak p}_\tau$, where $\tau$ runs over the tame inertial types such that $\sigma_v\in \JH(\ovl{\sigma(\tau)})$ and ${\mathfrak p}_\tau$ is the prime ideal $\ker(R_\infty\twoheadrightarrow R_\infty^{(1,0),{\tau}})$.
\end{prop}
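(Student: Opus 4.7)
The plan is to exploit the decomposition of $\widetilde P_{\sigma_v}\otimes_{\cO}E$ into $E$-irreducibles and then to glue the mod $p$ information via a Cohen--Macaulay argument. First I would show that there is an isomorphism of $E[\GL_2(k)]$-modules
\[
\widetilde P_{\sigma_v}\otimes_{\cO}E \;\cong\; \bigoplus_{\tau} \sigma(\tau),
\]
where $\tau$ ranges over the tame inertial types (up to isomorphism) such that $\sigma_v \in \JH(\ovl{\sigma(\tau)})$, each appearing with multiplicity one. The existence of a decomposition into irreducibles follows from Brauer--Nesbitt; the multiplicity of any irreducible $E$-representation $V$ in the direct sum equals $[\ovl V:\sigma_v]$ since $\widetilde P_{\sigma_v}$ is the projective cover of $\sigma_v$ and $\widetilde P_{\sigma_v}[1/p]$ is semisimple; and for $\sigma_v$ generic, twists of Steinberg contribute zero to the sum and the nonzero multiplicities are forced to equal one by the combinatorial description of Jordan--H\"older factors in Proposition~\ref{prop:JH:graph}.

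Corollary~\ref{penible} then applies (its hypothesis is satisfied by Remark~\ref{rk:image-of-inertial-llc}): setting $S\defeq R_\infty/\Ann_{R_\infty}(M_\infty(\widetilde P_{\sigma_v}))$, it yields that $S$ is reduced and $p$-torsion free, that $M_\infty(\widetilde P_{\sigma_v})$ is maximal Cohen--Macaulay over $S$, and that $M_\infty(\widetilde P_{\sigma_v})\otimes_{\cO}E$ is locally free over
\[
S[1/p] \;\cong\; \prod_\tau R_\infty^{(1,0),\tau}[1/p].
\]
Since each $R_\infty^{(1,0),\tau}$ is a $p$-flat domain of the same Krull dimension (via Proposition~\ref{prop:def:ring} combined with \eqref{infinitame}, or \cite[Thm.~7.2.1]{EGS}) and $S$ is reduced, $p$-flat and equidimensional of the same dimension, comparing minimal primes forces $S = R_\infty/\bigcap_\tau \mathfrak{p}_\tau$.

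To pin down the local rank, each summand $\sigma(\tau)$ in the decomposition produces, after choosing any $K$-stable lattice $\sigma^0(\tau)$, a $K$-equivariant map $\sigma^0(\tau)\to\widetilde P_{\sigma_v}$ that is nonzero modulo $p$, and by exactness of $M_\infty$ this induces $M_\infty(\sigma^0(\tau))\to M_\infty(\widetilde P_{\sigma_v})$. Localizing at the generic point of $\Spec R_\infty/\mathfrak{p}_\tau$ and using the freeness of rank $r$ of $M_\infty(\sigma^0(\tau))$ from Proposition~\ref{starringr}\ref{it:star2} together with the multiplicity-one appearance of $\sigma(\tau)$ in $\widetilde P_{\sigma_v}[1/p]$, I conclude that $M_\infty(\widetilde P_{\sigma_v})[1/p]$ is locally free of rank exactly $r$ over $S[1/p]$. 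To promote this to freeness over $S$, I would use Proposition~\ref{starringr}\ref{it:star1} combined with exactness of $M_\infty$, d\'evissage along the socle filtration of $P_{\sigma_v}$, and the identification $M_\infty(\widetilde P_{\sigma_v})/p = M_\infty(P_{\sigma_v})$ to bound the dimension of $M_\infty(\widetilde P_{\sigma_v})/\mathfrak{m}_\infty$ by $r$; Nakayama then furnishes a surjection $f\colon S^r\twoheadrightarrow M_\infty(\widetilde P_{\sigma_v})$ whose kernel is supported in codimension $\geq 1$ in $\Spec S$ by the local freeness, but is forced to be zero by the maximal Cohen--Macaulayness of $M_\infty(\widetilde P_{\sigma_v})$, which excludes embedded primes.

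The principal obstacle is the multiplicity-one statement in the first step: one must verify that for every tame $\tau$ with $\sigma_v\in \JH(\ovl{\sigma(\tau)})$, the weight $\sigma_v$ occurs with multiplicity exactly one in $\ovl{\sigma(\tau)}$, and that no twist of Steinberg contributes to $\widetilde P_{\sigma_v}[1/p]$. This relies on the full genericity of $\rbar_v$ from \S\ref{sec:setup} and on careful combinatorial analysis via Proposition~\ref{prop:JH:graph}, including control over non-generic tame types such as scalar ones, which could otherwise introduce higher multiplicities or spurious components in the annihilator.
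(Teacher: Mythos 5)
Your first two steps (the decomposition $\widetilde P_{\sigma_v}\otimes_\cO E\cong\bigoplus_\tau\sigma(\tau)$ with multiplicity one, and the application of Corollary~\ref{penible} to identify $S[1/p]$) are essentially the paper's, and your treatment of the multiplicity-one and Steinberg issues is fine. Your concluding Cohen--Macaulay/local-freeness argument to upgrade a surjection $S^r\twoheadrightarrow M_\infty(\widetilde P_{\sigma_v})$ to an isomorphism is also valid (in fact, since $S$ is reduced and $p$-torsion free, once one has an isomorphism after inverting $p$ the kernel lies in $p$-power torsion and must vanish, which is what the paper does). The problem lies entirely in how you propose to produce the surjection $S^r\twoheadrightarrow M_\infty(\widetilde P_{\sigma_v})$ in the first place, i.e.\ the bound $\dim_\F M_\infty(P_{\sigma_v})/\mathfrak m_\infty\le r$.

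You say you would get this bound from ``Proposition~\ref{starringr}\ref{it:star1} combined with exactness of $M_\infty$, d\'evissage along the socle filtration of $P_{\sigma_v}$''. This does not work: d\'evissage together with exactness gives only a subadditive upper bound
\[
\dim_\F M_\infty(P_{\sigma_v})/\mathfrak m_\infty \;\le\; \sum_{\kappa\in\JH(P_{\sigma_v})} \dim_\F M_\infty(\kappa)/\mathfrak m_\infty,
\]
and the right-hand side equals $r$ times the number of modular Jordan--H\"older factors of $P_{\sigma_v}$, which is far larger than $r$. The bound $\le r$ is not a local/combinatorial consequence of the socle filtration; it is a genuine global statement about $\pi^{K_1}$. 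The paper's proof of this step identifies $\dim_\F M_\infty(P_{\sigma_v})/\mathfrak m_\infty$ with $\dim_\F\Hom_{\GL_2(k)}(P_{\sigma_v},W)$ for $W=\pi^{K_1}$ via \eqref{eq:Minfty-pi}, and then proves the crucial structural fact that there is a $\GL_2(k)$-equivariant embedding $W\hookrightarrow D_0(\rbar_v^\vee)^{\oplus r}$, where $D_0(\rbar_v^\vee)$ is the Breuil--Pa\v{s}k\={u}nas representation. The bound then follows because $\Hom_{\GL_2(k)}(P_{\sigma_v}, D_0(\rbar_v^\vee)/\soc D_0(\rbar_v^\vee))=0$, i.e.\ $\sigma_v$ has multiplicity one in $D_0(\rbar_v^\vee)$. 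Establishing the embedding $W\hookrightarrow D_0(\rbar_v^\vee)^{\oplus r}$ is the real content of the step and relies on the arguments of \cite[Lemma~4.5, Prop.~4.6]{LMS}, which themselves use the patching functor, the freeness results of Proposition~\ref{starringr}, and the structure theory of $D_0(\rbar_v^\vee)$. Without this input, you have no way to get from your correct identification of $S[1/p]$ and local rank to a surjection $S^r\twoheadrightarrow M_\infty(\widetilde P_{\sigma_v})$, and the proof collapses at that point.
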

\begin{proof}
(i) We first prove that the $R_\infty$-module $M_\infty(\widetilde P_{\sigma_v})$ can be generated by $r$ elements. By Nakayama's lemma, it is enough to prove the same statement with $M_\infty(P_{\sigma_v})$, or, equivalently, that $\dim_{\F}(M_\infty(P_{\sigma_v})/{\mathfrak m}_\infty)\le r$. By \eqref{eq:Minfty-pi} it is enough to prove \[\dim_{\F} \Hom_{K}(P_{\sigma_v},\pi)\!=\dim_{\F} \Hom_{\GL_2(k)}(P_{\sigma_v},W)\!=r,\] where $\pi$ is the admissible smooth representation of $\GL_2(F_v)$ defined in \eqref{eq:pi-indef} or \eqref{eq:pi-def} and $W\defeq\pi^{K_1}$. By Proposition \ref{starringr}\ref{it:star1} we have $\dim_{\F} \Hom_{\GL_2(k)}({\sigma_v},W)=r$. Let $D_0(\rbar_v^\vee)$ be the representation of $\GL_2(k)$ over $\F$ defined in \cite[\S 13]{BP} (see also Lemma \ref{lem:connected}) and recall that by construction
$$\Hom_{\GL_2(k)}\big(P_{\sigma_v}, D_0(\rbar_v^\vee)/\soc_{\GL_2(k)}D_0(\rbar_v^\vee)\big)=0.$$
Hence it is enough to prove that there is a $\GL_2(k)$-equivariant injection
\begin{equation*}
W\hookrightarrow D_0(\rbar_v^\vee)^{\oplus r}
\end{equation*}
(which is necessarily an isomorphism on $\soc_{\GL_2(k)}W=(\soc_{\GL_2(k)}D_0(\rbar_v^\vee))^{\oplus r}$), or equivalently a $\GL_2(k)$-equivariant surjection $(D_0(\rbar_v^\vee)^\vee)^{\oplus r}\twoheadrightarrow W^\vee$. But this follows exactly as in the proofs of \cite[Lemma\ 4.5]{LMS} and \cite[Prop.\ 4.6]{LMS} (plus Proposition~\ref{starringr}). More precisely, one replaces the integer $1$ by the integer $r$ in the statements of {\it loc.~cit.}, and the proofs are basically the same, replacing the surjection $\bigoplus_\kappa P_\kappa\twoheadrightarrow D_0^\vee$ by a surjection $\bigoplus_\kappa P_\kappa^{\oplus r}\twoheadrightarrow D_0^\vee$ (for \cite[Lemma\ 4.5]{LMS}, one gets at the end of the proof $\dim_{\F}\Hom_K(D_0^\vee,\overline\sigma^0(\tau))>r$ instead of $\dim_{\F}\Hom_K(D_0^\vee,\overline\sigma^0(\tau))>1$).

(ii) We now prove the proposition. Let $S=R_\infty/{\rm Ann}_{R_\infty}(M_\infty(\widetilde P_{\sigma_v}))$ be the scheme-theoretic support of $M_\infty(\widetilde P_{\sigma_v})$. The representation $\widetilde P_{\sigma_v}[1/p]$ over $E$ is the direct sum of the (tame smooth) representations $\sigma(\tau)$ for all the tame inertial types $\tau$ such that $\sigma_v\in \JH(\ovl{\sigma(\tau)})$, and each such $\sigma(\tau)$ occurs only once. It follows from (\ref{generic}) (with all $n_i=1$), (\ref{chinese}) and Proposition \ref{starringr}\ref{it:star2} that $M_\infty(\widetilde P_{\sigma_v})[1/p]$ is free of rank $r$ over $S[1/p]$. By (i), we have a surjection $S^r\twoheadrightarrow M_\infty(\widetilde P_{\sigma_v})$ which is thus an isomorphism after inverting $p$ (\cite[Thm.\ 2.4]{Ma}), hence is also injective. Finally we obtain $S=R_\infty/\cap_{\tau}{\mathfrak p}_\tau$ from (\ref{generic}), from $M_\infty(\widetilde P_{\sigma_v})\hookrightarrow M_\infty(\widetilde P_{\sigma_v})[1/p]$ and from the fact the rings $R_\infty^{(1,0),{\tau}}$ are all domains (Proposition \ref{starringr}\ref{it:star2}), as then the support of each $M_\infty(\sigma(\tau)^0)[1/p]$ in (\ref{generic}) is exactly $R_\infty/{\mathfrak p}_\tau$. 
\end{proof}
 
\subsection{Freeness for projective covers}
\label{sec:freeness}
 
We prove that $M_\infty(R)$ is free over its scheme-theoretic support, where $R$ is the lattice defined in \eqref{eq:R} of  \S\ref{sec:constructionlattice}.

We keep all the notation of \S\ref{tobefree1} and we fix a Serre weight $\sigma_v\in W(\rbar^\vee_v)$. We start with the following lemma. 

\begin{lem} \label{lem:Loewy3}  
If $Q$ is a quotient of $\Proj_{K/Z_1}\sigma_v/\mathfrak{m}_{K_1}^2(\Proj_{K/Z_1}\sigma_v)$ satisfying the following conditions
\begin{itemize}
\item[$\mathrm{(a)}$]  $\JH(\soc_K(Q))\subseteq \JH(\mathrm{Proj}_{K/K_1}\sigma_v)$ up to multiplicity, 
\item[$\mathrm{(b)}$] $[Q/\soc_K(Q):\sigma_v]=1$,
\end{itemize}
then both $\rad_K(Q)$ and $Q/S$ are fixed by $K_1$,  where $S$ denotes the largest submodule of $\soc_K(Q)$ which is $\sigma_v$-isotypic.   %
If furthermore $Q$ satisfies
\begin{itemize}
\item[$\mathrm{(c)}$] $\JH(\soc_K(Q))\subseteq W(\rbar^\vee_v)$ up to multiplicity,  
\item[$\mathrm{(d)}$]  $\JH(\rad_K(Q)/\soc_K(Q))\cap W(\overline{r}_v^{\vee})=\emptyset$, 
\end{itemize} 
then  $Q$ has Loewy length $\leq 3$. 
\end{lem}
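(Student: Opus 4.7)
The proof breaks into the two parts of the statement, and both rely on the concrete description $P \defeq \Proj_{K/Z_1}\sigma_v/\mathfrak{m}_{K_1}^2 \cong R/pR$ provided by Corollary~\ref{rpr} together with the short exact sequence
\[
0 \longrightarrow R_2/pR_2 \longrightarrow R/pR \longrightarrow P_{\sigma_v} \longrightarrow 0
\]
of Theorem~\ref{thm:lattice-coinvariants}, in which $R_2/pR_2 = \mathfrak{m}_{K_1}P$ and both ends are $K_1$-trivial (recall $R_2/pR_2 \cong \bigoplus_i \mathfrak{sl}_{2,k}\otimes_{k,\sigma_i} P_{\sigma_v}$ by \eqref{decomp}). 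Writing $\pi:P\twoheadrightarrow Q$ for the given surjection, push-forward produces the corresponding sequence $0\to \mathfrak{m}_{K_1}Q\to Q\to \bar Q\to 0$ with $\bar Q\defeq Q/\mathfrak{m}_{K_1}Q$ a quotient of $P_{\sigma_v}$ (in particular a $\GL_2(k)$-representation with cosocle $\sigma_v$), together with the induced Bockstein $\beta_Q:\mathfrak{sl}_{2,k}\otimes_{\F_p}\bar Q \twoheadrightarrow \mathfrak{m}_{K_1}Q$, controlling when elements of $\bar Q$ lift to $K_1$-invariants in $Q$.

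For part one, since we always have $\mathfrak{m}_{K_1}Q\subseteq\rad_K(Q)$ (the cosocle $\sigma_v$ is $K_1$-trivial) and $\mathfrak{m}_{K_1}^2Q=0$, the statement that $\rad_K(Q)$ is $K_1$-fixed is equivalent to the vanishing of $\beta_Q$ on the subobject $\rad_{\GL_2(k)}(\bar Q)\subseteq \bar Q$. The plan is to exploit the projectivity of $Q$ established in Proposition~\ref{prop:projectivity} (and Remark~\ref{rem:projectivity}): any $K$-equivariant map from $Q$ to a subquotient $Q'$ factors through $\pi$, so condition (a) (all socle constituents of $Q$ lie in $\JH(P_{\sigma_v})$) translates into a rigidity on the way the kernel of $P\twoheadrightarrow Q$ meets $R_2/pR_2$; explicit bookkeeping via Proposition~\ref{prop:computation-Gbar} (which makes the Bockstein for $P$ an \emph{isomorphism}) then forces $\beta_Q$ to vanish on $\rad_{\GL_2(k)}(\bar Q)$, as any nonzero contribution would manifest as a constituent of $\soc_K Q$ outside $\JH(P_{\sigma_v})$. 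The argument for $Q/S$ is parallel: one reduces to showing that the top part of $Q/S$ (everything above $\soc_K(Q/S)=\soc_K(Q)/S$, which now has no $\sigma_v$) is $K_1$-fixed, using condition (b) to get $[Q/S:\sigma_v]=1$ and then invoking (a variant of) Lemma~\ref{lm:K1-invt}.

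For part two, granting that $\rad_K(Q)$ is $K_1$-fixed, it is a $\GL_2(k)$-representation and $Q$ has Loewy length $\leq 3$ if and only if $\rad_K(Q)/\soc_K(Q)$ is semisimple. I would argue by contradiction: a non-split extension inside $\rad_K(Q)/\soc_K(Q)$ would, by Lemma~\ref{lm:ext1} and the extension-graph combinatorics of \S\ref{sec:ext:graph}, force two adjacent Serre weights $\tau,\tau'\in \Lambda_W^{\lambda}$ to both lie in $\JH(\rad_K(Q)/\soc_K(Q))$. Lifting such an extension inside $Q$ and using the projectivity of $Q$ (Proposition~\ref{prop:projectivity}) together with the structure of $P$ described above would then produce either a constituent of $\soc_K(Q)$ forbidden by (a) or a constituent of $\rad_K(Q)/\soc_K(Q)$ lying in $W(\rbar_v^\vee)$, contradicting (c).

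The hardest step is the first claim, since the Bockstein $\beta_Q$ is subtle: it is induced from $\beta_P$ via the specific quotient $\pi$, and not every quotient of $P$ kills it on its radical. The technical heart of the argument is to leverage Proposition~\ref{prop:projectivity} (projectivity of $Q$ in the smallest abelian subcategory generated by its subquotients, Remark~\ref{rem:projectivity}) to reduce the verification of $\beta_Q|_{\rad\bar Q}=0$ to an assertion about $\Hom_K$ from $P$ into suitable subquotients of $Q$; this is where the hypothesis (a) on $\JH(\soc_K Q)$ must be brought to bear, via a careful analysis of the $\GL_2(k)$-decomposition of $\mathfrak{sl}_{2,k}\otimes_{k,\sigma_i} P_{\sigma_v}$ supplied by Proposition~\ref{prop:K-rep-by-ind} and Lemma~\ref{lm:princ-series}\ref{it:princ-series-2}.
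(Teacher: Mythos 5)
Your proposal takes a genuinely different route from the paper's: you try to reuse the lattice realization $P = R/pR$ (Corollary~\ref{rpr}), the exact sequence of Theorem~\ref{thm:lattice-coinvariants}, and a Bockstein-type map $\beta_Q$, whereas the paper's proof never returns to the lattice picture. Instead, the paper first decomposes $\soc_K(Q)$ into irreducible constituents, observes that the maps $Q\to Q_{\sigma_i}$ (quotients of $Q$ with socle $\sigma_i$) induce a surjection $\rad_K(Q)/\soc_K(Q)\twoheadrightarrow\rad_K(Q_{\sigma_i})/\soc_K(Q_{\sigma_i})$, and reduces to the case of irreducible $\soc_K(Q)$. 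It then splits into two cases: if $\soc_K(Q)\not\cong\sigma_v$, condition (b) gives $[Q:\sigma_v]=1$ and the unique-submodule theory of \cite[Thm.~2.30]{HuWang2} identifies $Q$ with $I(\soc_K(Q),\sigma_v)$, which is $K_1$-fixed by (a); if $\soc_K(Q)\cong\sigma_v$, the paper invokes \cite[Cor.~2.26, Prop.~4.18, Cor.~2.31]{HuWang2} to control $\rad_K(Q)/\soc_K(Q)$ via the $\Ext^1$-graph. The Loewy-length bound in part two is then deduced from \cite[Prop.~2.24]{HuWang} applied in the first case. None of this case analysis, nor the reduction to irreducible socle, appears in your proposal.

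More importantly, your proposal has a genuine gap at its center. You correctly translate ``$\rad_K(Q)$ is $K_1$-fixed'' into ``$\beta_Q$ vanishes on $\rad_{\GL_2(k)}(\bar Q)$'', but the claimed deduction of that vanishing---that ``any nonzero contribution would manifest as a constituent of $\soc_K Q$ outside $\JH(P_{\sigma_v})$''---is asserted, not proved, and I don't see why it's true. The Bockstein of $P$ being nondegenerate (Proposition~\ref{prop:computation-Gbar}) does not directly constrain the Bockstein of an arbitrary quotient $Q$: $\beta_Q$ is obtained from $\beta_P$ by pushing forward along $P\twoheadrightarrow Q$, and a nonzero $\beta_Q$-image of $\rad_{\GL_2(k)}\bar Q$ could perfectly well lie in constituents of $\mathfrak{m}_{K_1}Q$ that are permitted by (a). Moreover, $\mathfrak{m}_{K_1}Q$ need not be contained in $\soc_K(Q)$, so a nonzero Bockstein contribution does not ``manifest as a constituent of $\soc_K Q$'' at all. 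The reduction ``to an assertion about $\Hom_K$ from $P$ into suitable subquotients of $Q$'' via Proposition~\ref{prop:projectivity} is not carried out, and I do not see how projectivity of $Q$ in the sense of Remark~\ref{rem:projectivity} forces a map $P\to Q'$ (rather than $Q\to Q'$) to interact with $\beta_Q$ in the way you need. Similarly, in part two, the claim that a nonsemisimple $\rad_K(Q)/\soc_K(Q)$ would ``force two adjacent Serre weights'' into $\JH(\rad_K(Q)/\soc_K(Q))$ needs an argument: a nonsplit extension between $\tau$ and $\tau'$ gives adjacency of $\tau$ and $\tau'$ in $\Lambda_W^\lambda$, but that alone does not yield the contradiction with (a) or (c) that you want, and the subsequent ``lifting'' step is again only gestured at. In short, the strategy you outline might be completable, but as written it omits the precise verifications that make the lemma work; the paper sidesteps the Bockstein entirely and leans instead on the structural results of \cite{HuWang2} after reducing to the irreducible-socle case.
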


\begin{proof}
Note that $\mathrm{cosoc}_K(Q)\cong \sigma_v$. Fix a  decomposition of $\soc_K(Q)$ as $\bigoplus_{i=1}^n \sigma_i$, with $\sigma_i$ irreducible (with $\sigma_i \cong \sigma_j$ allowed). For each $i$, $Q$ admits a quotient, say $Q_{\sigma_i}$, with socle $\sigma_i$ (via $\sigma_i \into Q \onto Q_{\sigma_i}$). Then the natural morphism $Q\ra \bigoplus_{i=1}^nQ_{\sigma_i}$ is injective and 
\begin{equation}\label{eq:radsum}
\rad_K(Q)\subset \rad_K(\bigoplus_{i=1}^nQ_{\sigma_i})=\bigoplus_{i=1}^n\rad_K(Q_{\sigma_i}). \end{equation}
Moreover, since taking radical preserves surjective morphisms, see \cite[\S1, Prop.~4]{Alperin} (applied to a suitable finite-dimensional quotient of the ring $\F[K]$), we have an induced surjection \begin{equation}\label{eq:rad-soc}\rad_K(Q)/\soc_K(Q)\twoheadrightarrow \rad_K(Q_{\sigma_i})/\soc_K(Q_{\sigma_i}). \end{equation}

Assume first that $Q$ satisfies conditions (a), (b). To prove that $\rad_K(Q)$ is fixed by $K_1$, using \eqref{eq:radsum} 
we may assume $\soc_K(Q)$ is irreducible (replace $Q$ by some $Q_{\sigma_i}$).  We have two cases.
\begin{itemize}
\item[--] If $\soc_K(Q)\not\cong \sigma_v$, then $[Q:\sigma_v]=1$ by (b).  By \cite[Thm.~2.30]{HuWang2} $Q$ is isomorphic to $I(\soc_K(Q),\sigma_v)$, and   $Q$ is itself fixed by $K_1$ by (a).  
\item[--] If $\soc_K(Q)\cong \sigma_v$, then $[Q/\sigma_v:\sigma_v]=1$ and  $Q/\sigma_v$ is multiplicity free  by \cite[Cor.~2.26]{HuWang2}. Then $Q$ fits in an exact sequence $0\ra \sigma_v\ra Q\ra Q/\sigma_v\ra0$ (analogous to \cite[(4.9)]{HuWang2}), and   the end of the proof of \cite[Prop.~4.18]{HuWang2} shows that  $\rad_K(Q)/\soc_K(Q)=\rad_K(Q)/\sigma_v$ is semisimple and embeds in   $ \bigoplus \sigma_v'$, where the sum is taken over all Serre weights   $\sigma_v'$   such that $\Ext^1_{K/K_1}(\sigma_v',\sigma_v)\neq0$.  Hence, $\rad_K(Q)\subset Q^{K_1}$ by (the dual version of)  \cite[Cor.~2.31]{HuWang2}. We also deduce that $Q$ has Loewy length $3$ in this case.
\end{itemize}

 We prove that $Q/S$ is  fixed by $K_1$. Using the exact sequence $0\ra S\ra Q\ra Q/S\ra0$, we deduce that if $\Hom_{K}(\sigma,Q/S)\neq0$ for some Serre weight $\sigma$, then either $\sigma\in \soc_K(Q)$, or $\Ext^1_{K/Z_1}(\sigma,\sigma_v)\neq0$. In either case, we have $\sigma\in \JH(\Proj_{K/K_1}(\sigma_v))$ (use \cite[Lemma~2.10(ii)]{HuWang2} in the second case). Noting that $[Q/S:\sigma_v]=1$ by the construction of $S$,  the conclusion follows from \cite[Cor.~2.31]{HuWang2}. 

Assume now that $Q$ also satisfies conditions (c) and (d). Again using \eqref{eq:radsum} and  \eqref{eq:rad-soc}, we may assume $\soc_K(Q)$ is irreducible. The case $\soc_K(Q)\cong \sigma_v$ is treated above. Assume $\soc_K(Q)\not\cong \sigma_v$. As seen above, $Q=I(\soc_K(Q),\sigma_v)$. Since $\soc_K(Q)\in W(\overline{r}_v^{\vee})$ by (c), it follows from \cite[Prop.~2.24]{HuWang} that any Jordan--H\"older factor of $Q$ lies in $W(\overline{r}_v^{\vee})$. Hence, we must have  $\rad_K(Q)/\soc_K(Q)=0$ by (d),  and $Q$ has Loewy length $2$. This finishes the proof.   
\end{proof}

For $j\in \{0,\dots,f-1\}$ let $V(\alpha_j)\defeq V((1,-1))^{(j)}_{/W(\F)}\cong ({\rm Sym}^2(W(\F)^2)\otimes {\det}^{-1})^{(j)}$ be the algebraic representation of $K$ over $W(\F)$ as defined in \S\ref{sec:GT:prel}. As in \S\ref{sec:constructionlattice} we define the locally algebraic representation $R_{2,j}\defeq V(\alpha_j)\otimes_{W(\F)}\widetilde P_{\sigma_v}$ of $K$ over $W(\F)$ (so $R_2=\bigoplus_j R_{2,j}$). We set
$$R'_{2,j}\defeq \{x\in R_{2,j} : (x\bmod pR_{2,j})\in\ P_{\sigma_v}\}$$
using the fixed embedding $\iota_j:P_{\sigma_v} \hookrightarrow R_{2,j}/pR_{2,j}$ from \S\ref{sec:constructionlattice}. This is a $K$-invariant $W(\F)$-lattice in $R_{2,j}[1/p]$ such that $pR_{2,j}\subset R'_{2,j} \subset R_{2,j}$ and $R'_{2,j}/pR_{2,j}\simto P_{\sigma_v}$.  Comparing the constructions of $R'_{2,j}$ and of $R_{2}'$ (in \S\ref{sec:constructionlattice}), it is direct to see that  the natural map $R_2'\ra R'_{2,j}$ (induced by the projection $R_2\cong\bigoplus_i R_{2,i}\twoheadrightarrow R_{2,j}$)  is surjective, hence  $R'_2/pR'_2\rightarrow R'_{2,j}/pR'_{2,j}$ is also surjective. By Proposition \ref{prop:alg-lattice}, we deduce $(R'_{2,j}/pR'_{2,j})_{K_1}=P_{\sigma_v}$ (hence $\cosoc_K(R'_{2,j}/pR'_{2,j})=\sigma_v$) and (using (\ref{decomp})) a $K$-equivariant short exact sequence 
\begin{equation}\label{eq:R'2j}0 \longrightarrow P_{\sigma_{1,j}}\oplus P_{\sigma_{2,j}} \longrightarrow R'_{2,j}/pR'_{2,j} \longrightarrow P_{\sigma_v} \longrightarrow 0.\end{equation}

\begin{lem}\label{easyglueingi}
For all $j\in \{0,\dots,f-1\}$ the $R_\infty$-module $M_\infty(R'_{2,j})$ is minimally generated by $r$ elements.
\end{lem}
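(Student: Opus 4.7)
The plan is to reduce, by Nakayama's lemma and~\eqref{eq:Minfty-pi}, to showing $\dim_\F \Hom_K(V, \pi) \le r$, where $V \defeq R'_{2,j}/pR'_{2,j}$ (we use that $p\pi = 0$ to replace $R'_{2,j}$ by its reduction mod $p$). The key step I intend to establish is that every $K$-equivariant map $f : V \to \pi$ factors through the $K_1$-coinvariants projection $V \twoheadrightarrow V_{K_1} = P_{\sigma_v}$; granted this, $\Hom_K(V,\pi) \hookrightarrow \Hom_K(P_{\sigma_v}, \pi)$, and the latter has $\F$-dimension at most $r$ via the embedding $\pi^{K_1} \hookrightarrow D_0(\rbar_v^\vee)^{\oplus r}$ proven in Proposition~\ref{HT10}(i), combined with $[D_0(\rbar_v^\vee) : \sigma_v] = 1$ (since $\sigma_v \in W(\rbar_v^\vee)$ and $D_0(\rbar_v^\vee)$ is multiplicity-free).

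To prove the factorization claim, set $Q \defeq f(V) \subseteq \pi$. The natural projection $R \twoheadrightarrow R'_{2,j}$ onto the $j$-th summand of $R'_2$ (surjective by the explicit description~\eqref{eq:R} of $R$) together with Corollary~\ref{rpr} shows that $V$ is a quotient of $\Proj_{K/Z_1}\sigma_v/\fm_{K_1}^2$, whence $\cosoc_K Q = \sigma_v$. A direct computation using Lemma~\ref{lm:princ-series}(ii) and Remark~\ref{rk:t_lambda}(iii) applied to $\sigma_{i,j} = F(\lambda \mp \alpha_j)$ gives $[P_{\sigma_{i,j}} : \sigma_v] = 0$ for $i = 1, 2$ (the required shift $\pm 2\ovl\eta_j$ cannot be expressed as $\sum a_i \ovl\eta_i$ with $a_i \in \{0,\pm 1\}$), so \eqref{eq:R'2j} yields $[V : \sigma_v] = 1$ and in particular $[Q : \sigma_v] \le 1$. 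On the other hand, $\soc_K Q \subseteq \soc_K \pi \subseteq \pi^{K_1} \hookrightarrow D_0(\rbar_v^\vee)^{\oplus r}$, so $\JH(\soc_K Q) \subseteq W(\rbar_v^\vee) \subseteq \JH(P_{\sigma_v})$. If $\sigma_v \in \soc_K Q$, then $[Q:\sigma_v]=1$ together with $\cosoc_K Q = \sigma_v$ forces $Q = \sigma_v$, and $f$ trivially factors through $V \twoheadrightarrow \cosoc V = \sigma_v$, hence through $V \twoheadrightarrow P_{\sigma_v}$. Otherwise, $Q$ satisfies hypotheses (a) and (b) of Lemma~\ref{lem:Loewy3}, which gives that $Q/S$ is $K_1$-fixed, where $S$ is the maximal $\sigma_v$-isotypic subrepresentation of $\soc_K Q$; since $\sigma_v \notin \soc_K Q$ by assumption, $S = 0$ and $Q$ itself is $K_1$-fixed. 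Consequently $f$ kills $\fm_{K_1} V$ and factors through $V/\fm_{K_1}V = V_{K_1} = P_{\sigma_v}$, as claimed.

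The main obstacle is that applying $\Hom_K(-, \pi)$ directly to \eqref{eq:R'2j} only yields the crude bound $\dim \Hom_K(V, \pi) \le 3r$, since the weights $\sigma_{1,j}$ and $\sigma_{2,j}$ can genuinely appear in $D_0(\rbar_v^\vee)$ (depending on the sign pattern determining $W(\rbar_v^\vee)$). The sharp bound $r$ emerges only by simultaneously exploiting the multiplicity-one property $[V : \sigma_v] = 1$, the socle constraint $\JH(\soc_K \pi) \subseteq W(\rbar_v^\vee)$ inherited from $\pi$, and the structural dichotomy provided by Lemma~\ref{lem:Loewy3}; together these pin down $Q$ sufficiently to force the factorization through $V_{K_1}$.
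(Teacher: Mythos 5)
Your argument breaks on the multiplicity claim $[V:\sigma_v]=1$. In fact, from the extension~\eqref{eq:R'2j} one gets
\[
[V:\sigma_v]=[P_{\sigma_{1,j}}:\sigma_v]+[P_{\sigma_{2,j}}:\sigma_v]+[P_{\sigma_v}:\sigma_v]=0+0+2^f=2^f,
\]
since $[\Proj_{\GL_2(k)}\sigma_v:\sigma_v]=2^f$ for $p$-regular $\sigma_v$ (this is used explicitly in the proof of Proposition~\ref{prop:commutative}). Your deduction that the $\pm2\ovl\eta_j$ shift rules out $\sigma_v$ from $\JH(P_{\sigma_{i,j}})$ is correct, but it only kills the first two summands, not the $2^f$ copies of $\sigma_v$ sitting inside $P_{\sigma_v}$ itself. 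Consequently $[Q:\sigma_v]$ can be as large as $2^f$, and both halves of your dichotomy collapse: in the case $\sigma_v\in\soc_K Q$ you cannot conclude $Q=\sigma_v$, and in the case $\sigma_v\notin\soc_K Q$ you cannot verify hypothesis~(b) of Lemma~\ref{lem:Loewy3}, namely $[Q/\soc_K Q:\sigma_v]=1$. The factorization of $f$ through $V_{K_1}=P_{\sigma_v}$ is therefore not established, and the proof does not go through.

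This is not a cosmetic issue: handling quotients $Q$ with $[\rad_K Q/\soc_K Q:\sigma_v]\neq 0$ is precisely where the paper's proof does extra work. The paper argues by induction on the length of an arbitrary nonzero quotient $Q$ of $V$. When $[\rad_K Q/\soc_K Q:\sigma_v]\neq 0$ it produces a proper submodule $Q'\subsetneq Q$ with cosocle $\sigma_v$ and $[Q'/\soc_K Q':\sigma_v]=1$, realizes $Q'$ as a proper quotient of $Q$ via Proposition~\ref{prop:projectivity}, and then closes the induction using \cite[Lemma~4.5]{DanWild} applied to $M_\infty(Q)$, $M_\infty(Q')$, $M_\infty(\sigma')$; only in the remaining case $[\rad_K Q/\soc_K Q:\sigma_v]=0$ does it invoke Lemma~\ref{lem:Loewy3} and reduce to $P_{\sigma_v}$ as you attempted to do from the start. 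Your outline correctly identifies the reduction to $P_{\sigma_v}$ and the roles of Proposition~\ref{HT10}(i) and $[D_0(\rbar_v^\vee):\sigma_v]=1$, but it is missing the inductive mechanism for controlling the excess copies of $\sigma_v$ in $Q$, and there is no obvious shortcut around it.
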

\begin{proof}

We prove by induction on the length of $Q$ (as a representation of $K$) that if $Q$ is a nonzero quotient of $R_{2,j}'/pR_{2,j}'$, then $M_\infty(Q)$ is minimally generated by $r$ elements. 
If $\lg(Q)=1$, then $Q=\sigma_v$ (as $(R_{2,j}'/pR_{2,j}')_{K_1}=P_{\sigma_v}$) and $M_\infty(\sigma_v)$ is minimally generated by $r$ elements by Proposition \ref{starringr}\ref{it:star1}.  Now assume that the result is proved for all quotients of $R_{2,j}'/pR_{2,j}'$ of length $\leq n$. Let $Q$ be a quotient of $R_{2,j}'/pR_{2,j}'$ of length $n+1$. If the socle of $Q$ contains a Serre weight $\sigma$ which is not in $W(\rbar_v^\vee)$, then $M_\infty(Q)=M_\infty(Q/\sigma)$ and $M_\infty(Q/\sigma)$ is minimally generated by $r$ elements by induction. Hence we can assume that all the Serre weights in the socle of $Q$ are in $W(\rbar_v^\vee)$.

 Assume first that $[\rad_K(Q)/\soc_K(Q):\sigma_v]\neq 0$ (in particular $[Q:\sigma_v]\geq 2$ as $\cosoc_K(Q)=\sigma_v$). Then we may find a submodule $Q'\subsetneq Q$ such that $\cosoc_K(Q')\cong \sigma_v$ and $Q'$ is not contained in $\soc_KQ$. By  Proposition \ref{prop:projectivity}, $Q'$ is isomorphic to a (proper) quotient of $Q$, so  $M_{\infty}(Q')$ is minimally generated by $r$ elements by induction. On the other hand, let $\sigma'$ be  a Serre weight in $\soc_K(Q')$. Then $M_{\infty}(Q'/\sigma')$ and $M_{\infty}(Q/\sigma')$ are also  minimally generated by $r$ elements by induction. The conclusion follows from \cite[Lemma~4.5]{DanWild} with $M$, $M'$ and $M''$ taken to be $M_{\infty}(Q)$, $ M_{\infty}(Q')$ and $M_{\infty}(\sigma')$ respectively.  

Assume now that $[\rad_K(Q)/\soc_K(Q):\sigma_v]=0$, so that $[Q/\soc_K(Q):\sigma_v]=1$.  Moreover, if $S$ denotes the largest submodule of $\soc_K(Q)$ which is $\sigma_v$-isotypic, then $Q/S$ is  a quotient of $P_{\sigma}=(R'_{2,j}/pR'_{2,j})_{K_1}$  by the first part of Lemma \ref{lem:Loewy3}; note that condition (a) of that lemma holds by our assumption on $\soc_K(Q)$ above and the fact $W(\overline{r}_v^{\vee})\subseteq \JH(\Proj_{K/K_1}\sigma_v)$   (see \cite[\S11]{BP}). Using \eqref{eq:R'2j}, this means that the composite morphism 
$P_{\sigma_{1,j}}\oplus P_{\sigma_{2,j}}\ra R'_{2,j}/pR'_{2,j}\twoheadrightarrow Q$ has image contained in $S$. Since $S$ is $\sigma_{v}$-isotypic (and $\sigma_v\ncong\sigma_{1,j},\sigma_{2,j}$), this image must be zero and $Q$ is a quotient of $P_{\sigma_v}$.   
 As $M_\infty(P_{\sigma_v})$ is generated by $r$ elements by Proposition \ref{HT10}, it follows that $M_\infty(Q)$ is also generated by $r$ elements. As $M_\infty(\sigma_v)$ is minimally generated by $r$ elements by Proposition \ref{starringr}\ref{it:star1} and $M_\infty(\sigma_v)$ is a quotient of $M_\infty(Q)$, we finally have that $M_\infty(Q)$ is minimally generated by $r$ elements.

We conclude that the $R_\infty$-module $M_\infty(R'_{2,j}/pR'_{2,j})$ is generated by $r$ elements, from which the result follows by Nakayama's lemma. 
\end{proof}

\begin{prop}\label{prop:freeness-loc-alg-type}
  Suppose that $\tau^0$ is a representation of $K/K_1 = \GL_2(k)$ on a finite free $W(\F)$-module such that $\tau^0[1/p]$ is (absolutely) irreducible
  and $\cosoc_K \tau^0 \cong \sigma_v$.
  Fix $j \in \{0,\dots,f-1\}$. 
  If $L$ is any $K$-stable lattice  in $(V(\alpha_j) \otimes_{W(\F)} \tau^0)[1/p]$ such that $\cosoc_K L \cong \sigma_v$,
  then $M_\infty(L)$ is free of rank $r$ over its schematic support, which is a domain.
\end{prop}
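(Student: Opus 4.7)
The plan is to reduce to the known case of $R'_{2,j}$, treated in Lemma~\ref{easyglueingi} and Proposition~\ref{HT10}, by exploiting the uniqueness of $K$-stable lattices with a prescribed irreducible cosocle. First, $\tau^0[1/p]$ is an absolutely irreducible tame smooth representation of $K$ over $E$ with cosocle $\sigma_v$; since $\sigma_v \in W(\rbar_v^\vee)$ is regular (all weights in $W(\rbar_v^\vee)$ are deep in $\un{C}_0$ by the genericity hypothesis on $\rbar_v$), $\tau^0[1/p]$ is not a twist of Steinberg, so Remark~\ref{rk:image-of-inertial-llc} produces a tame inertial type $\tau_0$ with $\tau^0[1/p] \cong \sigma(\tau_0)$. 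By uniqueness of $K$-stable lattices with irreducible cosocle (\cite[Lemma~3.1.1]{EGS}), $\tau^0$ agrees up to $W(\F)^\times$-scaling with the image of the natural projection $\widetilde P_{\sigma_v} \twoheadrightarrow \sigma(\tau_0)$ coming from the decomposition $\widetilde P_{\sigma_v}[1/p] = \bigoplus_\tau \sigma(\tau)$; similarly, $L$ agrees up to scaling with the image of $R'_{2,j}$ under the induced surjection $R_{2,j} \twoheadrightarrow V(\alpha_j) \otimes_{W(\F)} \tau^0$, since both are $K$-stable lattices in the irreducible $E$-representation $V_E(\alpha_j)^{(j)} \otimes_E \sigma(\tau_0)$ with irreducible cosocle $\sigma_v$.

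Applying Corollary~\ref{penible} (with $m = n_1 = 1$) to the locally algebraic representation $L[1/p]$ and the lattice $L$, we obtain that $M_\infty(L)$ is maximal Cohen--Macaulay over its scheme-theoretic support $S$, which is reduced, and that $M_\infty(L)[1/p]$ is locally free over $S[1/p]$, which is formally smooth over $E$. The action of $R_\infty$ on $M_\infty(L)$ factors through the quotient $R_\infty^{(2,-1)_j, \tau_0}$ parametrizing potentially crystalline deformations of $\rbar_v^\vee$ with inertial type $\tau_0$ and Hodge--Tate weights $(2,-1)$ in embedding $j$, $(1,0)$ elsewhere, and this quotient is a domain by Proposition~\ref{prop:def:ring} (after the standard identification relating the weights $(2,-1), (1,0)$ and the determinant condition to the weights $(3,0), (2,1)$ appearing there). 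Since $S$ is a reduced quotient of this domain and has full support (as $M_\infty(L) \ne 0$ by the patching functor axioms), we conclude $S = R_\infty^{(2,-1)_j, \tau_0}$ is a domain, and $M_\infty(L)[1/p]$ is free of some rank $r'$ over $S[1/p]$.

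To finish, the surjection $R'_{2,j} \twoheadrightarrow L$ constructed above yields, via exactness of $M_\infty$, a surjection $M_\infty(R'_{2,j}) \twoheadrightarrow M_\infty(L)$, so $M_\infty(L)$ is generated by $r$ elements by Lemma~\ref{easyglueingi}; conversely, the cosocle projection $L \twoheadrightarrow \sigma_v$ gives a surjection $M_\infty(L) \twoheadrightarrow M_\infty(\sigma_v)$, and $M_\infty(\sigma_v)$ is minimally generated by $r$ elements by Proposition~\ref{starringr}\ref{it:star1}. Combining these, $M_\infty(L)$ is minimally generated by exactly $r$ elements, so in particular $r' = r$. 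Any surjection $S^r \twoheadrightarrow M_\infty(L)$ has kernel killed by inverting $p$ (as the induced map of free rank-$r$ modules over $S[1/p]$ is an isomorphism by \cite[Thm.~2.4]{Ma}) and $p$-torsion free (since $S$ is), hence zero, giving $M_\infty(L) \cong S^r$.

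The main obstacle is the precise identification in the middle paragraph: one must carefully relate the potentially crystalline deformation ring attached to the Hodge--Tate weights of $L[1/p]$ to the specific domains appearing in Proposition~\ref{prop:def:ring}, which involves a twist by the central character and a Galois duality. Once this is in place, the remaining ingredients, namely the generation count via Lemma~\ref{easyglueingi} and the Nakayama-style freeness argument, are routine.
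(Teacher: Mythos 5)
Your overall strategy --- identify $L$ (up to homothety) with the image of $R'_{2,j}$, then bound the number of generators of $M_\infty(L)$ from above via Lemma~\ref{easyglueingi} and from below via Proposition~\ref{starringr}\ref{it:star1} --- matches the spirit of the paper's argument, and the Nakayama-style finish is exactly the right tool. But there is a genuine gap in the deduction ``$M_\infty(L)$ is minimally generated by exactly $r$ elements, so in particular $r' = r$.'' Minimal generation by $r$ elements yields $r' \le r$ but in no way rules out $r' < r$: for a maximal Cohen--Macaulay module $M$ over a CM local domain $S$ that is not regular, one can perfectly well have $\mu(M) > \mathrm{rank}\, M$. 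A concrete example with the right flavour is $S = W(\F)\bbra{x,y}/(xy - p^2)$ (a $p$-torsion-free CM domain with $S[1/p]$ regular, so the hypotheses delivered by Corollary~\ref{penible} are all met) and $M = (x,p) \subset S$, which is maximal CM, minimally generated by $2$ elements, and has $M[1/p] = S[1/p]$ free of rank $1$. So the last step of your argument, applying \cite[Thm.~2.4]{Ma} to conclude that $S[1/p]^r \to M_\infty(L)[1/p]$ is an isomorphism, requires the input $r' = r$ that you have not established. In fact $S = R_\infty^{(2,-1)_j,\tau_0}$ is known \emph{not} to be regular (its special fibre $S/pS$ has nodal singularities by Corollary~\ref{cor:special-fibre-def-ring}), so one cannot invoke Auslander--Buchsbaum to get freeness from maximal Cohen--Macaulayness either.

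The paper closes precisely this gap by analysing $L/pL$ rather than $M_\infty(L)[1/p]$: it sets up the short exact sequence $0 \to W_1 \xrightarrow{\,p\,} L/pL \to W_2 \to 0$ with $W_1 = (V(\alpha_j)\otimes\tau^0)/L$ and $W_2 = L/p(V(\alpha_j)\otimes\tau^0)$, shows $M_\infty(W_1) = 0$ by a Jordan--H\"older/extension-graph computation (the constituents of $W_1$ are not modular Serre weights), identifies $M_\infty(W_2)$ with $M_\infty(\overline{\tau'^0})$ for a carefully constructed tame type lattice $\tau'^0$ whose constituents sweep out exactly the modular ones in $W_2$, and then invokes Proposition~\ref{starringr}\ref{it:star2} to get that $M_\infty(L/pL)$ is \emph{free of rank $r$} over a reduced ring which is then identified with $S/pS$ via Corollary~\ref{cor:special-fibre-def-ring}. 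This mod-$p$ rank-$r$ freeness is the lower bound that pins $r' = r$ and is the essential content you are skipping. One smaller issue: your citation of \cite[Lemma~3.1.1]{EGS} for uniqueness of the $K$-stable lattice $L$ with irreducible cosocle does not quite apply, since that statement is for tame (smooth) types, whereas $L[1/p]$ is locally algebraic; the paper instead derives uniqueness from the multiplicity-freeness of $F(\alpha_j)\otimes_\F\overline{\tau^0}$, which is Proposition~\ref{prop:main}.
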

 
The proof shows that such a lattice $L$ exists and is unique up to homothety.

\begin{proof}
We write $\sigma_v \cong F(\lambda)$ for some $\lambda\in X_1(\un{T})$ that is 8-deep in $\un{C}_0$ by Proposition~\ref{prop:SW:graph} and our genericity assumption. 
  By Remark~\ref{rk:image-of-inertial-llc} and Proposition \ref{prop:JH:graph} (and Lemma~\ref{lm:change-origin}) there exist $\mu \in X_1(\un{T})$ and signs $\un{\eps}\in\set{\pm1}^\cJ$ such that $\JH(\overline{\tau^0}) = \JH(D_{\mu,\un{\eps}})$ (with multiplicities!), where $F(\t_\mu(\sum \eps_i \ovl{\eta}_i)) \cong \sigma_v$ and $D_{\mu,\un{\eps}}$ is defined in \eqref{eq:D-lambda-eps}. 
We deduce that $F(\alpha_j) \otimes_{\F} \overline{\tau^0}$ is multiplicity-free, as $F(\alpha_j) \otimes_{\F} D_{\mu,\un{\eps}}$ is multiplicity-free by Lemma~\ref{lemmJH}.
  
As $\cosoc_K \tau^0 \cong \sigma_v$ we have a surjection $\pr : \widetilde P_{\sigma_v} \onto \tau^0$, and we let $L_0$
denote the image of the composition \[R_{2,j}' \into V(\alpha_j) \otimes_{W(\F)} \widetilde P_{\sigma_v} \onto
  V(\alpha_j) \otimes_{W(\F)} \tau^0\] where the second morphism is $\id \otimes \pr$. By the paragraph before
Lemma~\ref{easyglueingi} we see that $L_0$ is a lattice as in the statement of the proposition, and that it moreover
contains $p(V(\alpha_j) \otimes_{W(\F)} \tau^0)$. 
Since $(V(\alpha_j) \otimes_{W(\F)} \tau^0)[1/p]$ is irreducible and residually 
  multiplicity-free by the preceding paragraph,  up to homothety $L_0$ is the unique lattice in $(V(\alpha_j) \otimes_{W(\F)} \tau^0)[1/p]$ such that $\cosoc_{K}(L_0)\cong \sigma_v$ by \cite[Lemma 4.1.1]{EGS}.
Therefore, we may assume that $L = L_0$.

  We first show that $M_\infty(L/pL)$
  is free of rank $r$ over its schematic support. We have a short exact sequence  \[ 0 \to W_1 \xrightarrow{\,p\,} L/pL \to W_2 \to 0, \]
  where $W_1 \defeq (V(\alpha_j) \otimes_{W(\F)} \tau^0)/L$ and $W_2 \defeq L/p(V(\alpha_j) \otimes_{W(\F)} \tau^0)$.  We
  will show that (i) $M_\infty(W_1) = 0$ and (ii) there exists a lattice $\tau'^0$ in a tame type with cosocle $\sigma_v$ and
  a surjection $W_2 \onto \overline{\tau'^0}$ such that $M_\infty(W_2) = M_\infty(\overline{\tau'^0})$. We then conclude by
  Proposition~\ref{starringr} that $M_\infty(L/pL) = M_\infty(\overline{\tau'^0})$ can be generated by $r$ elements.

  We will use repeatedly in this proof that if $V_1$ and $V_2$ are multiplicity-free representations of $\GL_2(k)$ over $\F$ 
  having cosocle $\sigma_v$, then $\JH(V_1) \subset \JH(V_2)$ implies that $V_1$ is a quotient of $V_2$. (The reason is that
  the $V_i$ are quotients of $P_{\sigma_v}$, hence factor through the largest quotient of $P_{\sigma_v}$ that is multiplicity-free
  \cite[Prop.\ 3.6, Thm.\ 4.7]{BP}.) Using notation as in \S\ref{sec:preliminaries} locally at the place $v$ we will also use that 
  if a weight $\lambda$ is $7$-deep in $\un{C}_0$ and $\un{\eps}\in\set{\pm1}^\cJ$, then the submodule structure of 
  $D_{\lambda,\un{\eps}}$ is known by Theorem~\ref{thm:elimination}, parts \ref{it:elimination-1} and \ref{it:elimination-5}
  (where the integers $a_i$ are now restricted by $0 \le a_i \le 1$). (It
  is also known by \cite[Thm.\ 4.7]{BP}, but using different notation.)

  As $\ovl{\tau^0}$ and $D_{\mu,\un{\eps}}$ are multiplicity-free with cosocle $\sigma_v$ (see the beginning of the proof for $D_{\mu,\un{\eps}}$), we deduce by the previous paragraph that $\overline{\tau^0} \cong D_{\mu,\un{\eps}}$.
  As $\lambda$ is 8-deep in $\un{C}_0$ we know by Remark~\ref{rk:t_lambda}\ref{it:t_lambda:4} that $\mu$ and $\mu+\eps_j\alpha_j$ are 7-deep in $\un{C}_0$.
  By the beginning of the proof $F(\alpha_j) \otimes_{\F} D_{\mu,\un{\eps}}$ is multiplicity-free, hence 
  $W_2$ is (by its definition) the unique subrepresentation of $F(\alpha_j) \otimes_{\F} D_{\mu,\un{\eps}}$ with cosocle $\sigma_v \cong F(\t_\mu(\sum \eps_i \ovl{\eta}_i))$.
  Therefore, the irreducible constituent of $W_2$ are given by the set $\{ F(\t_\mu(\sum \eps_i a_i \ovl{\eta}_i)) : 0 \le a_i \le 1$ for all $i \ne j$, $0 \le a_j \le 2\}$.
  (This is a consequence of Proposition \ref{prop:main}, which combined with Lemma~\ref{lm:ext1}, completely determines the submodule structure of $F(\alpha_j) \otimes_{\F} D_{\mu,\un{\eps}}$.
  We leave the details as a pleasant exercise to the reader.)

By Proposition \ref{prop:main} and since we know the constituents of $W_2$, the constituents of $W_1$ have the form $F(\t_\mu(\sum \eps_i a_i \ovl{\eta}_i))$, where $0 \le a_i \le 1$ for all
  $i \ne j$ and $a_j \in \{-2,-1,3\}$.  As $\sigma_v \cong F(\t_\mu(\sum \eps_i \ovl{\eta}_i))$ is modular, we see by Proposition \ref{prop:SW:graph} 
  that any other modular Serre weight is of the form $F(\t_\mu(\sum \eps_i b_i \ovl{\eta}_i))$ with $0 \le b_i \le 2$ for all $i\in\cJ$. We conclude that $M_\infty(W_1) = 0$.

  For short let $\nu \defeq \sum \eps_i \ovl{\eta}_i$.  Using again Proposition \ref{prop:SW:graph} we write the modular Serre weights as
  $F(\t_\mu(\nu + \sum \eps_i' b_i \ovl{\eta}_i))$ for some signs $\un{\eps}'\in\set{\pm1}^\cJ$ and integers $0 \le b_i \le 1$  (with $\un b = 0$
  corresponding to $\sigma_v$) and note that the constituents of $W_2$ are given by $F(\t_\mu(\nu - \sum c_i \ovl{\eta}_i))$,
  $0 \le c_i \le 1$ for $i \ne j$ and $-1 \le c_j \le 1$.

  By Proposition \ref{prop:JH:graph} (and Lemma \ref{lm:change-origin}) we can find a representation $\tau'^0$ of $\GL_2(k)$ on a finite free $W(\F)$-module such that $\tau'^0[1/p]$ is irreducible
  and such that
  \[ \JH(\ovl{\tau'^0}) = \Big\{ F(\t_\mu(\nu - \sum_{i \ne j} d_i \ovl{\eta}_i + \eps'_j d_j \ovl{\eta}_j)) : \text{$0 \le d_i \le 1$ for all $i$}\Big\}.\]
  We may
  assume that $\tau'^0$ has cosocle $\sigma_v$. As $\ovl{\tau'^0}$ and $W_2$ are multiplicity-free, have cosocle $\sigma_v$,
  and $\JH(\ovl{\tau'^0}) \subset \JH(W_2)$ we see that $\ovl{\tau'^0}$ is a quotient of $W_2$.
  By the above definition of $\varepsilon'_j$, the Serre weights in the complement $\JH(W_2)\setminus \JH(\ovl{\tau'^0})$ are not
  modular, so $M_\infty(W_2) = M_\infty(\overline{\tau'^0})$, as desired.

  We have shown that $M_\infty(L/pL)$ is free of rank $r$ over its schematic support. To deduce that $M_\infty(L)$ is
  free of rank $r$ over its schematic support $S$, we first observe that $S = R_\infty^{(2,-1)_j,\tau}$, 
  where $(2,-1)_j$ is $(2,-1)$ in the embedding $\sigma_j:F_v\hookrightarrow E$ and $(1,0)$ elsewhere,
  as $R_\infty^{(2,-1)_j,\tau}$ is a domain (apply Proposition \ref{prop:def:ring} and \cite[Rk.\ 5.2.2]{gee-kisin} to $\rhobar=\rbar_v^\vee$ after a suitable twist).
  Therefore, $M_\infty(L/pL)$ is an $S/pS = \ovl{R_\infty^{(2,-1)_j,\tau}}$-module that is 
  (set-theoretically) supported on all of $\Spec(S/pS)$. By Corollary~\ref{cor:special-fibre-def-ring}, $S/pS$ is reduced. Hence $S/pS$ is the schematic support
  of $M_\infty(L/pL)$. By the argument in the last paragraph of the proof of Proposition~\ref{starringr} we deduce
  that $M_\infty(L)$ is free of rank $r$ over its schematic support $S$.
\end{proof}

\begin{thm}\label{HT2-1}
Let $j\in \{0,\dots,f-1\}$. Then $M_\infty(R'_{2,j})$ is free of rank $r$ over $R_\infty/\cap_{\tau}{\mathfrak p}_\tau$, where $\tau$ runs over the tame inertial types such that $\sigma_v\in \JH(\ovl{\sigma(\tau)})$ and ${\mathfrak p}_\tau$ is the prime ideal $\ker(R_\infty\twoheadrightarrow R_\infty^{(2,-1)_j,{\tau}})$, where $(2,-1)_j$ is $(2,-1)$ in the embedding $\sigma_j:F_v\hookrightarrow E$ and $(1,0)$ elsewhere.
\end{thm}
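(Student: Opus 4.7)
The plan is to establish the statement in two stages: first prove freeness of rank $r$ on the generic fiber of $M_\infty(R'_{2,j})$ by decomposing the locally algebraic representation $R'_{2,j}[1/p]$ according to tame types, and then bootstrap to the integral statement using Lemma~\ref{easyglueingi} together with $p$-torsion freeness.

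For the generic fiber, first I would observe that the inclusion $R'_{2,j} \subset R_{2,j}$ has finite cokernel killed by $p$ (as $R_{2,j}/R'_{2,j}$ is a finite $\F$-vector space), so by exactness of $M_\infty$ we obtain $M_\infty(R'_{2,j})[1/p] = M_\infty(R_{2,j})[1/p]$. Since $\widetilde P_{\sigma_v}[1/p]$ decomposes over $E$ as $\bigoplus_\tau \sigma(\tau)$, where $\tau$ runs over the tame inertial types with $\sigma_v \in \JH(\ovl{\sigma(\tau)})$, we get a decomposition $R_{2,j}[1/p] = \bigoplus_\tau V(\alpha_j)\otimes_E \sigma(\tau)$ into absolutely irreducible, pairwise nonisomorphic locally algebraic $K$-representations (with smooth part in the image of the inertial LLC by Remark~\ref{rk:image-of-inertial-llc}). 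Corollary~\ref{penible} applied to this decomposition then shows that $M_\infty(R_{2,j})[1/p]$ is locally free over its schematic support, which by the Chinese remainder argument in that corollary equals $\prod_\tau R_\infty^{(2,-1)_j,\tau}[1/p]$. The rank on each component equals $r$ by Proposition~\ref{prop:freeness-loc-alg-type} (applied to the lattice described there inside $V(\alpha_j)\otimes_E \sigma(\tau)$).

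Next I would bootstrap to the integral statement. By Lemma~\ref{easyglueingi} the $R_\infty$-module $M_\infty(R'_{2,j})$ is generated by $r$ elements, giving a surjection $\phi \colon R_\infty^r \twoheadrightarrow M_\infty(R'_{2,j})$. Since the action on the generic fibre factors through $(R_\infty/\cap_\tau \fp_\tau)[1/p]$ and $M_\infty(R'_{2,j})$ is $p$-torsion free (as $M_\infty$ is exact and $R'_{2,j}$ is $\cO$-flat), the ideal $\cap_\tau \fp_\tau$ annihilates $M_\infty(R'_{2,j})$ integrally. Thus $\phi$ factors through $\ovl \phi \colon (R_\infty/\cap_\tau \fp_\tau)^r \twoheadrightarrow M_\infty(R'_{2,j})$. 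Now $R_\infty/\cap_\tau \fp_\tau$ embeds into $\prod_\tau R_\infty^{(2,-1)_j,\tau}$, which is $p$-torsion free since each factor is a domain by Proposition~\ref{prop:def:ring}, hence $(R_\infty/\cap_\tau \fp_\tau)^r$ is itself $p$-torsion free. The map $\ovl\phi$ becomes an isomorphism after inverting $p$ (as both sides become free of rank $r$ over $(R_\infty/\cap_\tau \fp_\tau)[1/p]$ by Step~1 and Matsumura's~\cite[Thm.~2.4]{Ma}), so its kernel is a $p$-torsion submodule of a $p$-torsion free module, hence zero.

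The main technical content sits in the generic fiber step, which essentially bundles Proposition~\ref{prop:freeness-loc-alg-type} across the $2^f$ tame types; the bootstrap from there to the integral statement is then essentially formal. The only real subtlety to watch carefully is checking that Corollary~\ref{penible} applies to $V(\alpha_j)\otimes_E \sigma(\tau)$ (which requires verifying these are pairwise nonisomorphic absolutely irreducible locally algebraic representations), but this is routine.
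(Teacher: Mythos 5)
Your proof is correct and follows essentially the same route as the paper: the same three key inputs (Lemma~\ref{easyglueingi} for the bound on generators, Corollary~\ref{penible} via the decomposition $R'_{2,j}[1/p]\cong\bigoplus_\tau V(\alpha_j)\otimes_E\sigma(\tau)$, and Proposition~\ref{prop:freeness-loc-alg-type} for rank~$r$ on each component), followed by the same ``$p$-torsion free plus Matsumura'' bootstrap. The only cosmetic difference is that the paper first identifies the schematic support as $R_\infty/\cap_\tau\fp_\tau$ and then shows the surjection $S^r\onto M_\infty(R'_{2,j})$ is injective, whereas you show directly that $\cap_\tau\fp_\tau$ kills $M_\infty(R'_{2,j})$ and deduce the annihilator identification as a byproduct of the final isomorphism; this is a reordering, not a different argument.
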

\begin{proof}
By Lemma \ref{easyglueingi} the $R_\infty$-module $M_\infty(R'_{2,j})$ is generated by $r$ elements, i.e.\ there is a surjection $f:S^r\twoheadrightarrow M_\infty(R'_{2,j})$, where $S\defeq R_\infty/{\rm Ann}_{R_\infty}(M_\infty(R'_{2,j}))$.

Note that $R'_{2,j}[1/p]=R_{2,j}[1/p]$ over $E$ is the direct sum of the representations $V(\alpha_j)_E \otimes_{E} \sigma(\tau)$ for all the tame inertial types $\tau$ such that $\sigma_v\in \JH(\ovl{\sigma(\tau)})$, and each such $\sigma(\tau)$ occurs only once. In particular, it is as in Corollary \ref{penible}, where for all $i$ we have $n_i=1$.
Arguing as in the last sentence of the proof of Proposition \ref{HT10}, it follows from (\ref{generic}) and the fact that all the rings $R_\infty^{(2,-1)_j,{\tau}}$ for $\tau$ such that $\sigma_v\in \JH(\ovl{\sigma(\tau)})$ are domains (apply Proposition \ref{prop:def:ring} and \cite[Rk.\ 5.2.2]{gee-kisin} to $\rhobar=\rbar_v^\vee$ after a suitable twist) that $S= R_\infty/\cap_{\tau}{\mathfrak p}_\tau$ for ${\mathfrak p}_\tau$ as in the statement. 

By Proposition~\ref{prop:freeness-loc-alg-type}, for each type $\tau$ as in the previous paragraph the module $M_\infty(V(\alpha_j) \otimes_{W(\F)} \sigma(\tau)^0)[1/p]$ is free of rank $r$ over $R_\infty^{(2,-1)_j,\tau}[1/p]$. Thus by \eqref{generic} and \eqref{chinese} the $S[1/p]$-module $M_\infty(R'_{2,j})[1/p]$ is locally free of rank $r$, i.e.\ the localization of $M_\infty(R'_{2,j})[1/p]$ at each prime ideal of $S[1/p]$ is free of rank $r$. Hence (using again \cite[Thm.\ 2.4]{Ma}), we see that $(\ker(f)[1/p])_{\mathfrak p}=0$ for all prime ideals $\mathfrak p$ of $S[1/p]$, which implies $\ker(f)[1/p]=0$, and hence $\ker(f)=0$ since $S$ has no $p$-torsion. This finishes the proof.
\end{proof}

Set $L_{-1}\defeq \widetilde P_{\sigma_v}$ and for $j\in \{0,\dots, f-1\}$ define a $K$-stable $W(\F)$-lattice $L_j$ in
$$\widetilde P_{\sigma_v}[1/p]\oplus \big(\bigoplus_{j'=0}^jV(\alpha_j)\otimes_{W(\F)}\widetilde P_{\sigma_v}\big)[1/p]=\widetilde P_{\sigma_v}[1/p]\oplus \big(\bigoplus_{j'=0}^jR_{2,j'}\big)[1/p]$$
by induction by
\begin{equation}\label{produitfibre}
L_j \defeq L_{j-1}\times_{P_{\sigma_v}} R'_{2,j},
\end{equation}
or equivalently
\begin{multline*}
L_j = \{(x_1,(x_{2,j'})_{0\leq j'\leq j})\in \widetilde P_{\sigma_v} \oplus \big(\bigoplus_{j'=0}^{j} R_{2,j'}\big) : (x_{2,j'}\bmod pR_{2,j'})=(x_1\bmod p\widetilde P_{\sigma_v})\\
{\rm in}\ P_{\sigma_v}\hookrightarrow R_{2,j'}/pR_{2,j'}\ \forall\ j'\in \{0,\dots,j\}\}.
\end{multline*}
Note that $L_{f-1}=R$ (see \S\ref{sec:constructionlattice}).

Let $\tau$ be a tame inertial type such that $\sigma_v \in \JH(\overline{\sigma(\tau)})$.  Then $\sigma(\tau)$ is a quotient of $\widetilde{P}_{\sigma_v}[1/p]$, and the image of $\widetilde{P}_{\sigma_v}$ is a $W(\F)$-lattice in $\sigma(\tau)$ with cosocle $\sigma_v$ which we denote by $\sigma(\tau)^0$. Let \[T_{2,j}\defeq V(\alpha_j)\otimes_{W(\F)} \sigma(\tau)^0\] and let $T_{2,j}'\subset T_{2,j}$ be the sublattice constructed in  the second paragraph of the proof of Proposition \ref{prop:freeness-loc-alg-type}, which satisfies $\cosoc_KT_{2,j}'\cong\sigma_v$. 
Then by the proof of \emph{loc.\ cit.}, $T_{2,j}'$ is identified with the image of 
the composite morphism
\[R_{2,j}'\hookrightarrow R_{2,j}\twoheadrightarrow T_{2,j}.\]
In particular, we have $pT_{2,j}\subset T_{2,j}'$ (as $pR_{2,j}\subset R_{2,j}'$).  Set $Y_j\defeq T_{2,j}'/pT_{2,j}$, so $Y_j$ is a quotient of $P_{\sigma_v}$ and hence of $L_{j-1}$. 
For $0\leq j\leq f-1$, define
\[N_j\defeq L_{j-1}\times_{Y_j}T'_{2,j}.\]

\begin{lem}\label{lem:WinYj}
With the above notation, the surjection $T'_{2,j}/pT'_{2,j} \onto Y_j$ induces an isomorphism $M_{\infty}(T'_{2,j}/pT'_{2,j}) \cong M_{\infty}(Y_j)$.
\end{lem}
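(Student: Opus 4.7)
The plan is to exploit the chain of inclusions $pT'_{2,j}\subset pT_{2,j}\subset T'_{2,j}\subset T_{2,j}$ and the vanishing of $M_\infty$ on the lattice quotient $T_{2,j}/T'_{2,j}$ already established in the proof of Proposition~\ref{prop:freeness-loc-alg-type}.

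First, I would observe that since $pT_{2,j}\subset T'_{2,j}$, the reduction $T'_{2,j}/pT'_{2,j}\onto T'_{2,j}/pT_{2,j}=Y_j$ (the surjection in the statement) sits in a short exact sequence of $K$-representations
\begin{equation*}
0\longrightarrow pT_{2,j}/pT'_{2,j}\longrightarrow T'_{2,j}/pT'_{2,j}\longrightarrow Y_j\longrightarrow 0.
\end{equation*}
Multiplication by $p$ provides a $K$-equivariant isomorphism $T_{2,j}/T'_{2,j}\xrightarrow{\sim} pT_{2,j}/pT'_{2,j}$, because $T_{2,j}$ is $p$-torsion free and $T_{2,j}/T'_{2,j}$ is already annihilated by $p$. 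By exactness of $M_\infty$, the desired isomorphism $M_\infty(T'_{2,j}/pT'_{2,j})\cong M_\infty(Y_j)$ therefore reduces to the vanishing $M_\infty(T_{2,j}/T'_{2,j})=0$.

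Next, I would identify $T_{2,j}/T'_{2,j}$ with the representation denoted $W_1$ in the proof of Proposition~\ref{prop:freeness-loc-alg-type}, applied to the lattice $\tau^0\defeq\sigma(\tau)^0$ (which satisfies $\tau^0[1/p]$ irreducible and $\cosoc_K\tau^0\cong\sigma_v$ by construction), since in that proof $L=T'_{2,j}$ is exactly the lattice of the present setting. The key fact, established there by an explicit Jordan--H\"older analysis using the extension graph together with Proposition~\ref{prop:SW:graph}, is that none of the irreducible constituents of $W_1=T_{2,j}/T'_{2,j}$ belongs to $W(\rbar_v^\vee)$. Since $M_\infty(\sigma')=0$ for every Serre weight $\sigma'\notin W(\rbar_v^\vee)$ (by the remarks following the definition of the patching functor in \S\ref{patching}, specifically the consequence of \eqref{h1nonzero}), exactness of $M_\infty$ and d\'evissage yield $M_\infty(W_1)=0$.

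There is no real obstacle here: the computation of $\JH(T_{2,j}/T'_{2,j})$ and the proof that no Serre weight in it is modular have already been carried out inside the proof of Proposition~\ref{prop:freeness-loc-alg-type}, so the lemma is essentially a bookkeeping consequence of that analysis combined with the two short exact sequences above.
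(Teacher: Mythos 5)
Your proposal is correct and follows essentially the same route as the paper: the paper's proof simply identifies $T'_{2,j}/pT'_{2,j}$ and $Y_j$ with the objects $L/pL$ and $W_2$ in the proof of Proposition~\ref{prop:freeness-loc-alg-type}, cites the short exact sequence $0 \to W_1 \xrightarrow{p} L/pL \to W_2 \to 0$ from there, and invokes the vanishing $M_\infty(W_1)=0$ already established in that argument. You have merely spelled out the identification $pT_{2,j}/pT'_{2,j} \cong T_{2,j}/T'_{2,j}$ (which is the ``multiplication by $p$'' map in the paper's short exact sequence) a bit more explicitly.
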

\begin{proof}
Note that the representations $T'_{2,j}/pT'_{2,j}$ and $Y_j$ are exactly the representations denoted by $L/pL$ and $W_2$ respectively in the proof of Proposition \ref{prop:freeness-loc-alg-type},  and that $M_{\infty}(L/pL) = M_{\infty}(W_2)$ follows
from $M_{\infty}(W_1)=0$, see the second paragraph of this proof. 
\end{proof}

For a smooth $K$-representation $V$ over $\F$ of finite dimension, we denote by $(\rad_K^i(V))_{i\geq 0}$ its radical filtration: $\rad_K^0(V)=V$ and inductively $\rad^i_K(V)=\rad_K(\rad_K^{i-1}(V))$ for $i\geq 1$.   
As remarked in the proof of Lemma \ref{lem:Loewy3}, taking $\rad_K^i(-)$ preserves surjective morphisms (see \cite[\S1, Prop.~4]{Alperin}).

\begin{lem}\label{lem:cosoc3}
The surjection $R_{2,j}'\twoheadrightarrow T_{2,j}'$ induces a surjection $L_j\twoheadrightarrow N_j$, which induces an isomorphism \[(L_j/pL_j)/\rad_K^3(L_j/pL_j)\cong (N_j/pN_j)/\rad_K^3(N_j/pN_j).\]
\end{lem}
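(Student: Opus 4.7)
The plan is first to verify surjectivity of $L_j\twoheadrightarrow N_j$ directly from the fiber product descriptions, then identify the kernel modulo $p$ by a flat base change of two parallel short exact sequences, and finally use a general commutator fact about $\F\bbra{K/Z_1}$ to place this kernel inside $\rad_K^3(L_j/pL_j)$.

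For surjectivity: the surjection $\phi\colon R'_{2,j}\twoheadrightarrow T'_{2,j}$ is compatible with $P_{\sigma_v}=R'_{2,j}/pR_{2,j}\twoheadrightarrow T'_{2,j}/pT_{2,j}=Y_j$, yielding a natural map $L_j\to N_j$. Given $(x,z)\in N_j$ I will lift $z$ to some $y_0\in R'_{2,j}$, noting that $\pi_2(y_0)$ and $\pi_1(x)$ both map to $\bar z\in Y_j$ so their difference lies in $\ker(P_{\sigma_v}\twoheadrightarrow Y_j)$; the map $\pi_2$ induces a surjection $\ker(\phi)\twoheadrightarrow \ker(P_{\sigma_v}\twoheadrightarrow Y_j)$ (any element of the right-hand side can be lifted through $R'_{2,j}$ and then corrected by a multiple of $p$ to land in $\ker(\phi)$), so I can adjust $y_0$ to obtain the required $y$.

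To compute the kernel of $L_j/pL_j\twoheadrightarrow N_j/pN_j$, I will tensor the two short exact sequences of $W(\F)$-lattices
\begin{equation*}
0\to pR_{2,j}\to L_j\to L_{j-1}\to 0,\qquad 0\to pT_{2,j}\to N_j\to L_{j-1}\to 0
\end{equation*}
with $\F$; since $L_{j-1}$ is $W(\F)$-flat and $pR_{2,j}/p^2R_{2,j}\cong R_{2,j}/pR_{2,j}\cong F(\alpha_j)\otimes_\F P_{\sigma_v}$, and similarly $pT_{2,j}/p^2T_{2,j}\cong F(\alpha_j)\otimes_\F \overline{\sigma(\tau)^0}$, I obtain two compatible short exact sequences with identical right-hand term $L_{j-1}/pL_{j-1}$. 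A diagram chase identifies $\ker(L_j/pL_j\twoheadrightarrow N_j/pN_j)=F(\alpha_j)\otimes_\F(K'/pK')$, where $K'=\ker(\widetilde P_{\sigma_v}\twoheadrightarrow\sigma(\tau)^0)$.

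The main point is then to show $F(\alpha_j)\otimes (K'/pK')\subset \rad_K^3(L_j/pL_j)$. First, the natural surjection $L_{f-1}=R\twoheadrightarrow L_j$ (forgetting the last $f-1-j$ coordinates, which is surjective because $\pi_2\colon R'_{2,j'}\twoheadrightarrow P_{\sigma_v}$ is for every $j'$) together with Corollary \ref{rpr} shows that $L_j/pL_j$ is a quotient of $R/pR\cong (\Proj_{K/Z_1}\sigma_v)/\mathfrak{m}_{K_1}^2$, and is in particular killed by $\mathfrak{m}_{K_1}^2$. The key algebraic input is the containment $\mathfrak{m}_{K_1/Z_1}\subset \mathfrak{m}_{K/Z_1}^2$ inside $\F\bbra{K/Z_1}$; this follows from the isomorphism $K_1/Z_1\cong K_1\cap \SL_2(\cO_{F_v})$ (valid since $p>2$ implies squaring is a bijection on $1+p\cO_{F_v}$), the inclusion $K_1\cap \SL_2(\cO_{F_v})\subset \SL_2(\cO_{F_v})=[K,K]$, and the standard expansion showing $[g,h]-1\in\mathfrak{m}_K^2$ for any $g,h\in K$. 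Consequently $\mathfrak{m}_{K_1}(L_j/pL_j)\subset\rad^2(L_j/pL_j)$. The sub $A_j\defeq F(\alpha_j)\otimes P_{\sigma_v}$ of $L_j/pL_j$ is contained in $\mathfrak{m}_{K_1}(L_j/pL_j)$ (as $A_j$ is the image of $pR_{2,j}$, built from infinitesimal $K_1$-action on $V(\alpha_j)$), hence in $\rad^2(L_j/pL_j)$; moreover $K_1$ acts trivially on $A_j$, so $\rad_K(A_j)=\mathfrak{m}_K\cdot A_j$ equals $\rad_{\GL_2(k)}(A_j)=F(\alpha_j)\otimes\rad(P_{\sigma_v})$ and lies in $\mathfrak{m}_K\cdot\rad^2(L_j/pL_j)=\rad^3(L_j/pL_j)$. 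Finally $K'/pK'=\ker(P_{\sigma_v}\twoheadrightarrow\overline{\sigma(\tau)^0})\subset\rad(P_{\sigma_v})$ because $\cosoc(\overline{\sigma(\tau)^0})\cong\sigma_v$, so $F(\alpha_j)\otimes (K'/pK')\subset F(\alpha_j)\otimes\rad(P_{\sigma_v})\subset\rad^3(L_j/pL_j)$, as desired.

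The main obstacle will be the algebraic step $\mathfrak{m}_{K_1/Z_1}\subset\mathfrak{m}_{K/Z_1}^2$: one must be attentive to the quotient by $Z_1$ and to the fact that $K_1\not\subset\SL_2(\cO_{F_v})$ directly; it is the \emph{image} of $K_1$ in $K/Z_1$ that lies in the commutator subgroup of $K/Z_1$, and this is where the hypothesis $p>2$ intervenes. The rest of the argument is formal bookkeeping once one has the diagram of short exact sequences and the observation that $L_j/pL_j$ is a quotient of $(\Proj_{K/Z_1}\sigma_v)/\mathfrak{m}_{K_1}^2$.
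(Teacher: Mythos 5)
Your reduction to the snake lemma is sound: the surjectivity of $L_j\twoheadrightarrow N_j$ and the identification $\ker(L_j/pL_j\twoheadrightarrow N_j/pN_j)=\ker(\gamma)\cong F(\alpha_j)\otimes_{\F}(K'/pK')$ both come out correctly from the two parallel short exact sequences, and this much agrees with the paper. The gap is in the step where you try to place this kernel inside $\rad_K^3(L_j/pL_j)$. Throughout the argument you treat $\rad_K$ as multiplication by the augmentation ideal $\fm_K$, so that the containment $\fm_{K_1/Z_1}\subset\fm_{K/Z_1}^2$ in $\F\bbra{K/Z_1}$ yields $\fm_{K_1}(L_j/pL_j)\subset\fm_K^2(L_j/pL_j)$ and so on. But in the paper (and in its applications, e.g.\ Proposition \ref{prop:Lj-Nj-equivalence-HW} via Lemma \ref{lem:Loewy3}), $\rad_K$ is the Jacobson radical: this is the only interpretation consistent with, say, $\cosoc_{\GL_2(k)}(P_{\sigma_v})\cong\sigma_v$ and the Loewy-length arguments. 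Since $K$ is not pro-$p$, the two notions genuinely disagree: $J(\F\bbra{K/Z_1})\subsetneq\fm_{K/Z_1}$ (already $\fm_{K/Z_1}\cdot P_{\sigma_v}=P_{\sigma_v}$ for $\sigma_v\ne\mathbf{1}$, while $J\cdot P_{\sigma_v}\subsetneq P_{\sigma_v}$).

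With $\rad_K$ the Jacobson radical, the implication you want --- $\fm_{K_1}M\subset\rad_K^2(M)$ for $M=L_j/pL_j$ --- does \emph{not} follow from $\fm_{K_1/Z_1}\subset\fm_{K/Z_1}^2$, and is in fact false for a general $M$ killed by $\fm_{K_1}^2$: if $E$ is a two-step module $0\to\tau'\to E\to\tau\to 0$ of $K/Z_1$-representations with $\tau,\tau'$ irreducible $\GL_2(k)$-modules and the extension not $K_1$-trivial (such exist, since $\Hom_{\GL_2(k)}(\tau,F(\alpha_j)\otimes\tau)\ne 0$), then $\rad_K^2(E)=0$ but $\fm_{K_1}E=\tau'\ne 0$. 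Equivalently, at the level of ideals, $\fm_{K_1/Z_1}\F\bbra{K/Z_1}$ is \emph{not} contained in $J(\F\bbra{K/Z_1})^2$, because a commutator $[a,b]-1=a^{-1}b^{-1}\bigl((a-1)(b-1)-(b-1)(a-1)\bigr)$ lands in $\fm_K^2$ but not in $J_K^2$ unless $a-1,b-1\in J_K$, which fails for generic $a,b\in K$. What makes the containment true for $L_j/pL_j$ is the specific structure of that module --- it is a quotient of $(\Proj_{K/Z_1}\sigma_v)/\fm_{K_1}^2$ that \emph{admits $P_{\sigma_v}$ as a quotient} --- and this is exactly the input the paper takes from \cite[Lemma 2.10(ii)]{HuWang2}, which shows that $M/\rad_K^2(M)$ only depends on $P_{\sigma_v}$ for such $M$. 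This structural rigidity is what forces $R_{2,j}/pR_{2,j}$ into $\rad_K^2(L_j/pL_j)$ and cannot be replaced by the pure commutator observation. So the overall architecture of your proof is right, but the ``key algebraic input'' you isolate is insufficient, and the actual load-bearing fact is the representation-theoretic one from \cite{HuWang2}.
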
 

\begin{proof}
As seen above, we have $\ker(T_{2,j}'\twoheadrightarrow Y_j)=pT_{2,j}$,  which implies a short exact sequence 
\[0\ra T_{2,j}\overset{\times p}{\lra} N_j\ra L_{j-1}\ra0\]
and consequently
\[0\ra T_{2,j}/pT_{2,j}\overset{\times p}{\lra} N_j/pN_j\ra L_{j-1}/pL_{j-1}\ra0.\]
Since $\ker(R_{2,j}'\twoheadrightarrow P_{\sigma_v})=pR_{2,j}$, we have a similar exact sequence for $L_j$ which fits in the following commutative diagram
\begin{equation}\label{eq:LjNj}
  \begin{gathered}
    \xymatrix{0\ar[r]&R_{2,j}/pR_{2,j}\ar^{\times p}[r]\ar^{\gamma}[d]& L_{j}/pL_j  \ar^{\beta}[d]\ar[r]&L_{j-1}/pL_{j-1}\ar[r]\ar@{=}[d]&0\\
      0\ar[r]& T_{2,j}/pT_{2,j}\ar^{\times p}[r]&N_{j}/pN_j\ar[r]& L_{j-1}/pL_{j-1}\ar[r]&0.}
  \end{gathered}
\end{equation}
It is direct to check that the morphism $\gamma$ is identified with
\[F(\alpha_j)\otimes_{\F} (\widetilde P_{\sigma_v}/p\widetilde P_{\sigma_v}) \ra F(\alpha_j)\otimes_{\F} (\sigma(\tau)^0/p\sigma(\tau)^0)\]
and is induced from the quotient morphism $\widetilde{P}_{\sigma_v}\twoheadrightarrow \sigma(\tau)^0$. In particular, $\gamma$ is surjective, hence so is $\beta$ from which the first claim follows. 

To prove the second claim, %
it is enough to show $\ker(\beta)\subset \rad_K^3(L_j/pL_j)$. Observe that if $M$ is a quotient of $(\Proj_{K/Z_1}\sigma_v)/\mathfrak{m}_{K_1}^2(\Proj_{K/Z_1}\sigma_v)$ which admits $P_{\sigma_v}$ as a quotient, then  the induced morphism
\[M/\rad^i_K(M)\onto P_{\sigma_v}/\rad_K^i(P_{\sigma_v})\]
is an isomorphism for $i=1,2$. Indeed, this is clear for $i=1$, and  can be deduced using \cite[Lemma~2.10(ii)]{HuWang2} for $i=2$. Thus, noting that both $L_{j}/pL_{j}$ and $N_j/pN_j$ are quotients of $(\Proj_{K/Z_1}\sigma_v)/\mathfrak{m}_{K_1}^2(\Proj_{K/Z_1}\sigma_v)$ (using Corollary \ref{rpr}) and both admit $P_{\sigma_v}$ as a quotient, we get $\ker(\beta)=\ker(\rad_K^2(\beta))$, and hence it is enough to prove 
\[\ker(\rad_K^2(\beta))\subseteq \rad_K(\rad_K^2(L_j/pL_j)).\]
Since $L_{j-1}/pL_{j-1}$ also admits $P_{\sigma}$ as a quotient, we   again obtain from the observation  above a commutative diagram as in \eqref{eq:LjNj}, but with $L_j/pL_j$, $N_j/pN_j$ and $L_{j-1}/pL_{j-1}$ replaced by their $\rad_K^2(-)$ and the left column in (\ref{eq:LjNj}) unchanged, 
from which we obtain  $\ker(\rad_K^2(\beta))=\ker(\gamma)$.
Hence  it is enough to prove  $\ker(\gamma)\subseteq \rad_K(R_{2,j}/pR_{2,j})$,  equivalently   $\gamma$ induces an isomorphism on cosocles. 
But this follows from the proof of Lemma \ref{socle} (taking duals there).
\end{proof}

The reason for introducing $N_j$ is as follows.

\begin{prop}\label{prop:Lj-Nj-equivalence-HW}
For $j\in\{0,\dots,f-1\}$, the following statements are equivalent:
 
\begin{enumerate}
\item  $M_{\infty}(L_j)$ can be generated by $r$ elements over $R_{\infty}$;
\item  $M_{\infty}(N_j)$ can be generated by $r$ elements over $R_{\infty}$.  
\end{enumerate}
\end{prop}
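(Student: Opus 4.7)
The implication (i) $\Rightarrow$ (ii) is immediate from the exactness of $M_\infty$ applied to the surjection $L_j \twoheadrightarrow N_j$ of Lemma \ref{lem:cosoc3}: any system of $r$ generators of $M_\infty(L_j)$ projects to a generating system of $M_\infty(N_j)$ of the same size.

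For the converse direction (ii) $\Rightarrow$ (i), Nakayama's lemma together with the isomorphism $M_\infty(-)/\mathfrak{m}_\infty \cong \Hom_K(-,\pi)^\vee$ coming from \eqref{eq:Minfty-pi} translate conditions (i) and (ii) respectively into $\dim_\F\Hom_K(L_j,\pi)\le r$ and $\dim_\F\Hom_K(N_j,\pi)\le r$. The surjection $L_j\twoheadrightarrow N_j$ induces an injection $\Hom_K(N_j,\pi)\hookrightarrow\Hom_K(L_j,\pi)$, so it is enough to show that every $K$-equivariant homomorphism $f:L_j\to\pi$ factors through $N_j$. Since $\pi$ is annihilated by $p$, $f$ factors through $L_j/pL_j$, and by Lemma \ref{lem:cosoc3} the kernel of $L_j/pL_j\twoheadrightarrow N_j/pN_j$ is contained in $\rad_K^3(L_j/pL_j)$. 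Consequently, the problem reduces to showing that the image $Q\defeq f(L_j/pL_j)\subseteq\pi$ has Loewy length at most three as a $K$-representation.

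To establish this Loewy-length bound we appeal to Lemma \ref{lem:Loewy3}. By Corollary \ref{rpr} we have $L_{f-1}/pL_{f-1}\cong (\Proj_{K/Z_1}\sigma_v)/\mathfrak{m}_{K_1}^2(\Proj_{K/Z_1}\sigma_v)$, and the iterated fibre-product projections coming from \eqref{produitfibre} exhibit $L_j/pL_j$, hence also $Q$, as a quotient of $(\Proj_{K/Z_1}\sigma_v)/\mathfrak{m}_{K_1}^2(\Proj_{K/Z_1}\sigma_v)$, placing us in the setting of the lemma. Hypothesis (a) holds since the Serre weights in $\soc_K Q\subseteq\soc_K\pi$ all share the central character of $\sigma_v$ and therefore belong to $\JH(\Proj_{K/K_1}\sigma_v)$; hypothesis (b) follows from the explicit decompositions \eqref{decomp} and \eqref{eq:R'2j} combined with Proposition \ref{prop:projectivity}, which controls how often $\sigma_v$ appears above the socle in any quotient of $L_j/pL_j$; and hypothesis (c), that $\rad_K Q/\soc_K Q$ contains no element of $W(\rbar_v^\vee)\setminus\{\sigma_v\}$, is extracted from the description of $\pi[\mathfrak{m}_{K_1}^2]$ supplied by Corollary \ref{cor:J-fil} and the submodule structure of the representations $V_\sigma$ described there.

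The main obstacle in this plan is the verification of hypothesis (c) of Lemma \ref{lem:Loewy3}: it requires combining the explicit lattice structure from \S\ref{sec:lattices} with the radical filtration of $\pi[\mathfrak{m}_{K_1}^2]$ given by Corollary \ref{cor:J-fil}, and carefully ruling out the appearance of any modular Serre weight distinct from $\sigma_v$ in the intermediate Loewy layers of $Q$. This is the precise point at which the delicate interplay between the global input provided by the patching functor $M_\infty$ and the local representation-theoretic structure of $\pi$ becomes essential.
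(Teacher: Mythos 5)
Your overall strategy — translate everything to $\Hom_K(-,\pi)$ via the patching isomorphism, reduce to showing a morphism $L_j\to\pi$ factors through $N_j$, apply Lemma~\ref{lem:Loewy3} to the image $Q$, and then use Lemma~\ref{lem:cosoc3} — matches the shape of the paper's proof. However, there is a genuine gap in how you propose to verify hypothesis~(b) of Lemma~\ref{lem:Loewy3}, and that gap is precisely what the paper's argument is designed to close.

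You attempt to verify $[Q/\soc_K(Q):\sigma_v]=1$ ``from the explicit decompositions \eqref{decomp} and \eqref{eq:R'2j} combined with Proposition~\ref{prop:projectivity}''. But these don't control the multiplicity of $\sigma_v$ above the socle in an \emph{arbitrary} quotient $Q$ of $L_j/pL_j$: the representation $L_j/pL_j$ itself contains many copies of $\sigma_v$ above its socle, so a generic quotient certainly can too. You cannot show that every $K$-morphism $L_j\to\pi$ factors through $N_j$ by directly checking~(b) for every image. The paper instead argues by contradiction: assuming $\dim_\F\Hom_K(L_j,\pi)>r$, it picks a morphism $h$ not factoring through $\cosoc_K L_j$ with $[\mathrm{Im}(h):\sigma_v]$ \emph{minimal}, and then uses Proposition~\ref{prop:projectivity} to show that if $[\mathrm{Im}(h)/\soc_K\mathrm{Im}(h):\sigma_v]>1$, one could produce a smaller counterexample, violating minimality. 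This minimality device is the key step that makes~(b) available, and it is missing from your proposal. (Of course, once the proposition is proved one learns \emph{a posteriori} that every morphism $L_j\to\pi$ factors through $N_j$, but you cannot assume this to prove it.)

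Two smaller inaccuracies: for hypothesis~(a), your justification that the weights in $\soc_K Q$ ``share the central character of $\sigma_v$ and therefore belong to $\JH(\Proj_{K/K_1}\sigma_v)$'' does not hold — having the right central character is far from being a constituent of $\Proj_{K/K_1}\sigma_v$. The correct argument is that $\soc_K Q\subseteq\soc_K\pi=\bigoplus_{\sigma\in W(\rbar_v^\vee)}\sigma^{\oplus r}$, so the socle constituents lie in $W(\rbar_v^\vee)$, and one then invokes $W(\rbar_v^\vee)\subseteq\JH(\Proj_{K/K_1}\sigma_v)$ from \cite[\S11]{BP}. For hypothesis~(c), you invoke Corollary~\ref{cor:J-fil} and the structure of the $V_\sigma$, whereas the paper uses that after establishing~(a) and~(b), the first part of Lemma~\ref{lem:Loewy3} already gives $\rad_K(Q)\subseteq Q^{K_1}\subseteq\pi^{K_1}$, and then $\rad_K(Q)/\soc_K(Q)\hookrightarrow\pi^{K_1}/\soc_K(\pi)$ (using $\soc_K(Q)=Q\cap\soc_K(\pi)$) rules out all modular Serre weights, since $\pi^{K_1}\cong D_0(\rbar_v^\vee)^{\oplus r}$. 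Your route via Corollary~\ref{cor:J-fil} may be salvageable, but the paper's argument is cleaner and self-contained given~(a),~(b).
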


\begin{proof}
Let $\pi$ be the  admissible smooth representation of $\GL_2(F_v)$ over $\F$ defined in \eqref{eq:pi-indef} or \eqref{eq:pi-def}. Then by~\eqref{eq:Minfty-pi} we see that (i) (resp.~(ii)) is equivalent  to saying that $\dim_{\F}\Hom_K(L_j,\pi)=r$ (resp.~$\dim_{\F}\Hom_K(N_j,\pi)=r$).   Moreover, since $N_j$ is a quotient of $L_j$, we clearly have (i)$\Rightarrow$(ii).  
 
 The proof of (ii)$\Rightarrow$(i) is motivated by that of \cite[Prop.~4.18]{HuWang2}.  Assume $\dim_{\F}\Hom_K(L_j,\pi)>r$. Then, since $\dim_{\F}\Hom_{K}(\sigma_v,\pi)=f$ by Proposition \ref{starringr}, there exists a nonzero morphism $h:L_j\ra \pi$ which does not factor through $\cosoc_KL_j=\sigma_v$. We choose $h$ such that $[\mathrm{Im}(h):\sigma_v]$ is minimal; denote by $Q$ the image of $h$. We must have $[Q/\soc_K(Q):\sigma_v]=1$, otherwise $Q$ contains a submodule $Q'$ with cosocle $\sigma_v$ and such that $[Q'/\soc_K(Q'):\sigma_v]=1$, so there exists a morphism $L_j\ra \pi $ with image $Q'$ by Proposition \ref{prop:projectivity} (applied with $Q=L_j$),   which contradicts the choice of $h$.  By the proof of Proposition \ref{HT10} we have   $\pi^{K_1}=D_0(\overline{r}_v^{\vee})^{\oplus r}$. It in particular implies   \[\soc_K(Q)\subseteq \soc_K(D_0(\overline{r}_v^{\vee})^{\oplus r})=\bigoplus_{\sigma\in W(\overline{r}_v^{\vee})}\sigma^{\oplus r}.\]
Note that $W(\overline{r}_v^{\vee})\subseteq \JH(\Proj_{K/K_1}\sigma_v)$ (see \cite[\S11]{BP}), so  $Q$ satisfies conditions (a), (b) in Lemma \ref{lem:Loewy3}. Thus,  by the (first) part of \emph{loc.~cit.} we have $\rad_K(Q)\subseteq Q^{K_1}\subseteq \pi^{K_1}$. Since $\soc_K(Q)=Q\cap \soc_K(\pi)$ (in particular $\JH(\soc_K(Q))\subseteq W(\overline{r}_v^{\vee})$ up to multiplicities), we also have $\rad_K(Q)/\soc_K(Q)\hookrightarrow \pi^{K_1}/\soc_K(\pi)$ which implies that $Q$ satisfies conditions (c) and (d) of Lemma \ref{lem:Loewy3}, and  
hence $Q$ has Loewy length $\leq 3$.   Lemma \ref{lem:cosoc3} then shows that  $h:L_j\ra Q$ factors through $N_j$, hence gives a contradiction to (ii).
\end{proof}

\begin{lem}\label{lem:hard-glueing}
  Suppose that $R$ is a commutative noetherian local ring.
  Suppose that $M_1$, $M_2$, $M$ are nonzero $R$-modules that are free of rank $r$ over their respective schematic support and that we are given surjections $M_i \onto M$ for $i = 1, 2$.
Then $\Ann_R(M_1 \times_M M_2) = \Ann_R(M_1) \cap \Ann_R(M_2)$.
Moreover, the following are equivalent:
  \begin{enumerate}
  \item 
  \label{it:hard-glueing:1}
  $M_1 \times_M M_2$ is free of rank $r$ over its schematic support;
  \item 
  \label{it:hard-glueing:2}
  $\Ann_R(M) = \Ann_R(M_1) + \Ann_R(M_2)$;
  \item 
  \label{it:hard-glueing:3}
  $\Ann_R(M) \subset \Ann_R(M_1) + \Ann_R(M_2)$.
  \end{enumerate}
\end{lem}

\begin{proof}
The first assertion is clear, since the $M_i$ surject onto $M$.
We now prove the equivalence between \ref{it:hard-glueing:1}, \ref{it:hard-glueing:2} and \ref{it:hard-glueing:3}.
By assumption we can write $M_i = (R/I_i)^{\oplus r}$ and $M = (R/I)^{\oplus r}$ for (proper) ideals $I_i \subset I$.
  Without loss of generality we may assume that the given surjections are the natural maps $(R/I_i)^{\oplus r} \onto (R/I)^{\oplus r}$.
  Then $M_1 \times_M M_2 \cong (R/I_1 \times_{R/I} R/I_2)^{\oplus r}$ and by Nakayama we are reduced to the case $r = 1$, which is \cite[Lemma 8.11]{HuWang2}.
\end{proof}

From now on, we choose the tame inertial type $\tau$ in the discussion above such that $W(\overline{r}_v^{\vee})\subset \JH(\overline{\sigma(\tau)})$; this is always possible by \cite[Prop.~3.5.2]{EGS}. Since $\overline{r}_v^{\vee}$ is assumed to be semisimple,  this forces $W(\overline{r}_v^{\vee})= \JH(\overline{\sigma(\tau)})$ and $\tau$ is uniquely determined.
We will denote it by $\tau_0$ in what follows.  

\begin{thm}\label{HT102-1}
Let $j\in \{-1,\dots,f-1\}$. Then $M_\infty(L_j)$ is free of rank $r$ over $R_\infty/\cap_{\lambda,\tau}{\mathfrak p}_{\lambda,\tau}$, where ${\mathfrak p}_{\lambda,\tau}$ is the prime ideal $\ker(R_\infty\twoheadrightarrow R_\infty^{\lambda,{\tau}})$ with $\tau$ running over the tame inertial types such that $\sigma_v\in \JH(\ovl{\sigma(\tau)})$ and $\lambda = (\lambda_{j'})_{0\leq j'\leq f-1}$ running over the Hodge--Tate weights such that $\lambda_{j'}\in \{(1,0),(2,-1)\}$ if $0\leq j'\leq j$ and $\lambda_{j'} = (1,0)$ if $j+1\leq j'\leq f-1$.
\end{thm}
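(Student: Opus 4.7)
The proof proceeds by induction on $j$. The base case $j=-1$ is immediate from Proposition~\ref{HT10}, since for $j=-1$ the allowed $\lambda$ in the statement reduces to the parallel weight $(1,0)$ in every embedding, and $L_{-1}=\widetilde{P}_{\sigma_v}$. For the inductive step, I would exploit the fiber product presentation $L_j = L_{j-1}\times_{P_{\sigma_v}} R'_{2,j}$ from~\eqref{produitfibre}. By exactness of the patching functor $M_\infty$ we obtain
\[
M_\infty(L_j)\cong M_\infty(L_{j-1})\times_{M_\infty(P_{\sigma_v})} M_\infty(R'_{2,j}),
\]
where $M_\infty(L_{j-1})$ is free of rank $r$ over its schematic support (by induction) and $M_\infty(R'_{2,j})$ is free of rank $r$ over $R_\infty/\bigcap_\tau\mathfrak{p}_\tau^{(2,-1)_j}$ (by Theorem~\ref{HT2-1}).

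The natural approach via Lemma~\ref{lem:hard-glueing} would require the annihilator containment $\Ann(M_\infty(P_{\sigma_v}))\subseteq \Ann(M_\infty(L_{j-1})) + \bigcap_\tau\mathfrak{p}_\tau^{(2,-1)_j}$, which is the intersection over many tame types and is hard to compute directly. To sidestep this, I would use the detour through $N_j=L_{j-1}\times_{Y_j}T'_{2,j}$ provided by Proposition~\ref{prop:Lj-Nj-equivalence-HW}: $M_\infty(L_j)$ can be generated by $r$ elements over $R_\infty$ if and only if $M_\infty(N_j)$ can. For $N_j$, the same lemma applied to the fiber product $M_\infty(N_j)\cong M_\infty(L_{j-1})\times_{M_\infty(Y_j)}M_\infty(T'_{2,j})$ (using Proposition~\ref{prop:freeness-loc-alg-type} to know that $M_\infty(T'_{2,j})$ is free of rank $r$ over $R_\infty/\mathfrak{p}_{\tau_0}^{(2,-1)_j}$) reduces freeness to the annihilator condition
\[
\Ann_{R_\infty}(M_\infty(Y_j))\subseteq \Ann_{R_\infty}(M_\infty(L_{j-1})) + \mathfrak{p}_{\tau_0}^{(2,-1)_j}.
\]
By Lemma~\ref{lem:WinYj} and exactness of $M_\infty$ we have $M_\infty(Y_j)=M_\infty(T'_{2,j})/pM_\infty(T'_{2,j})$, so $\Ann_{R_\infty}(M_\infty(Y_j))=(p)+\mathfrak{p}_{\tau_0}^{(2,-1)_j}$, and the required containment collapses to
\[
p\in \Ann_{R_\infty}(M_\infty(L_{j-1})) + \mathfrak{p}_{\tau_0}^{(2,-1)_j}.
\]

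The main obstacle (and the reason for the detour) is precisely this containment, which I would prove by invoking Proposition~\ref{prop:p:in:inter}. Indeed, via the identifications of \S\ref{sec:galo-deform-rings} and the induction hypothesis, $\Ann_{R_\infty}(M_\infty(L_{j-1}))$ contains one of the ideals $\fq^{(j),(2,1)}_i$ at the $j$-th coordinate (after a determinantal twist to match the Hodge--Tate conventions), and $\mathfrak{p}_{\tau_0}^{(2,-1)_j}$ corresponds to $\fp^{(j),(3,0)}_{\tilde w}$ for a suitable $\tilde w\in X(\sigma_v^\vee)$; Proposition~\ref{prop:p:in:inter} then yields $p$ in the sum. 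Once this is done, $M_\infty(N_j)$ is free of rank $r$, hence generated by $r$ elements, hence so is $M_\infty(L_j)$ by Proposition~\ref{prop:Lj-Nj-equivalence-HW}.

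It remains to upgrade ``generated by $r$ elements'' to freeness over the support and to identify the support. Since $M_\infty(L_j)$ is a submodule of $M_\infty(L_{j-1})\oplus M_\infty(R'_{2,j})$ surjecting onto both factors, we have $\Ann_{R_\infty}(M_\infty(L_j))=\Ann_{R_\infty}(M_\infty(L_{j-1}))\cap \Ann_{R_\infty}(M_\infty(R'_{2,j}))$, and the diagonal map provides a natural injection $(R_\infty/\Ann_{R_\infty}(M_\infty(L_j)))^r\hookrightarrow M_\infty(L_j)$. Any system of $r$ generators yields a surjection $R_\infty^r\twoheadrightarrow M_\infty(L_j)$ that factors through $(R_\infty/\Ann_{R_\infty}(M_\infty(L_j)))^r$; comparing with the injection forces both maps to be isomorphisms. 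The description of the support as $R_\infty/\bigcap_{\lambda,\tau}\mathfrak{p}_{\lambda,\tau}$ then follows by combining the inductive description of $\Ann_{R_\infty}(M_\infty(L_{j-1}))$ with $\Ann_{R_\infty}(M_\infty(R'_{2,j}))=\bigcap_\tau\mathfrak{p}_\tau^{(2,-1)_j}$: the allowed $\lambda_{j'}\in\{(1,0),(2,-1)\}$ propagates to indices $j'\leq j$, while $\lambda_{j'}=(1,0)$ for $j'>j$ remains unchanged.
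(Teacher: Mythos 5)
Your proposal follows the paper's strategy very closely (induction on $j$, the detour through $N_j$ via Proposition~\ref{prop:Lj-Nj-equivalence-HW}, reduction to $p\in \Ann(M_\infty(L_{j-1})) + \mathfrak p_{\tau_0}^{(2,-1)_j}$, and Proposition~\ref{prop:p:in:inter}). Up to and including the paragraph where you deduce that $M_\infty(N_j)$ (and hence $M_\infty(L_j)$) is generated by $r$ elements, the outline is sound; the only wrinkle is the assertion that $\Ann_{R_\infty}(M_\infty(L_{j-1}))$ ``contains one of the ideals $\fq^{(j),(2,1)}_i$'', which is not quite what the paper establishes — it shows that $\fq^{(j),(2,1)}_1\cap\fq^{(j),(2,1)}_2$ is contained (one gets $\fq^{(j),(2,1)}_{k_j}$ inside $\fp^{\un{(2,1)}}_{\un k}$ for each $\un k$, and $k_j$ takes both values as $\un k$ varies). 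Since Proposition~\ref{prop:p:in:inter} produces $p$ in $\fq_1\cap\fq_2 + \fp^{(j),(3,0)}_{\un 2}$, this is exactly what you need and the imprecision is harmless.

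The genuine gap is in the final step. You construct an injection $(R_\infty/\Ann(M_\infty(L_j)))^r\hookrightarrow M_\infty(L_j)$ and a surjection $(R_\infty/\Ann(M_\infty(L_j)))^r\twoheadrightarrow M_\infty(L_j)$ and assert that ``comparing with the injection forces both maps to be isomorphisms''. This does not follow for a finitely generated module over an arbitrary noetherian local ring $A$: from $\pi: A^r\twoheadrightarrow M$ and $\iota: A^r\hookrightarrow M$ one obtains, by projectivity, an injective endomorphism $\alpha$ of $A^r$ with $\pi\circ\alpha=\iota$, and an injective endomorphism of $A^r$ need not be surjective (e.g.\ multiplication by a nonunit nonzerodivisor), so $\pi$ may still have nonzero kernel. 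The statement does become true under the extra hypothesis that $A$ has no embedded associated primes (in particular, if $A$ is reduced), by localizing at minimal primes and comparing ranks — but you don't invoke reducedness of the support, which is part~(i) of Corollary~\ref{penible}. The paper instead appeals to part~(ii) of Proposition~\ref{HT10}: one shows $M_\infty(L_j)[1/p]$ is locally free of rank $r$ over $S[1/p]$ using Corollary~\ref{penible}, concludes the surjection becomes an isomorphism after inverting $p$, and then uses $p$-torsion-freeness of $S^r$. An even more direct fix, staying within your framework, is to observe that any surjection $\pi: (R_\infty/(I_1\cap I_2))^r \twoheadrightarrow M_\infty(L_j)$ composed with the projections to $M_\infty(L_{j-1})\cong (R_\infty/I_1)^r$ and $M_\infty(R'_{2,j})\cong(R_\infty/I_2)^r$ induces surjective, hence bijective, endomorphisms of $(R_\infty/I_i)^r$, so $\ker\pi \subseteq (I_1 A)^r\cap(I_2 A)^r = (I_1\cap I_2/(I_1\cap I_2))^r = 0$ (here $A=R_\infty/(I_1\cap I_2)$); this uses the fiber product structure directly and requires no injection at all. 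As written, though, your ``comparing'' step is not a proof.
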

\begin{proof}
Twisting all the Galois deformations by $\varepsilon$, we can replace $\rbar_v^\vee$ by $\rbar_v^\vee(1)$, $\{(1,0),(2,-1)\}$ by $\{(2,1),(3,0)\}$ and $\sigma_v\in \JH(\ovl{\sigma(\tau)})$ by $\sigma_v\otimes (N_{k/\Fp}\circ \det^{-1})\in \JH(\ovl{\sigma(\tau)})$ (all the deformations now have determinant $\varepsilon^3\psi_v^{-1}$). Note first that all the rings $R_\infty^{\lambda,{\tau}}$ are domains by Proposition \ref{prop:def:ring} (and \cite[Rk.\ 5.2.2]{gee-kisin}) applied to a suitable twist of $\rbar_v^\vee$ to get $\rhobar=\rbar_v^\vee$ as in \S\ref{sec:setup}. The proof is by induction on $j\geq -1$. If $j=-1$, this is Proposition \ref{HT10}. Assume the statement is true for $M_\infty(L_{j-1})$ and let us prove it for $M_\infty(L_j)$. 

We first prove that the $R_\infty$-module $M_\infty(N_j)$ can be generated by $r$ elements.
From the exactness of $M_\infty$ and (\ref{produitfibre}) we deduce
$$M_\infty(N_j)=M_\infty(L_{j-1})\times_{M_\infty(Y_j)}M_\infty(T'_{2,j}).$$
Note that the maps $M_\infty(L_{j-1})\rightarrow M_\infty(Y_j)$, $M_\infty(T'_{2,j})\rightarrow M_\infty(Y_j)$ are surjective.
These three modules are free of rank $r$ over their schematic supports by induction hypothesis, Proposition~\ref{prop:freeness-loc-alg-type}, 
and Lemma~\ref{lem:WinYj}.
By Lemma~\ref{lem:hard-glueing} it is enough to prove
\begin{equation}\label{sumcontainsp}
{\rm Ann}_{R_\infty}(M_\infty(Y_j))\subseteq {\rm Ann}_{R_\infty}(M_\infty(L_{j-1})) + {\rm Ann}_{R_\infty}(M_\infty(T'_{2,j})).
\end{equation}
By Lemma \ref{lem:WinYj} we have $M_{\infty}(Y_j)=M_{\infty}(T_{2,j}'/pT_{2,j}')$, so
\[{\rm Ann}_{R_{\infty}}(M_{\infty}(Y_j))={\rm Ann}_{R_\infty}(M_\infty(T'_{2,j}/pT_{2,j}'))=(p)+{\rm Ann}_{R_\infty}(M_\infty(T'_{2,j})),\]
where the second equality holds because  $M_{\infty}(T_{2,j}')$ is free of rank $r$ over its schematic support. Hence, to prove  \eqref{sumcontainsp}   it is enough to prove
\begin{equation}\label{sumcontainsp2} p\in {\rm Ann}_{R_{\infty}}(M_{\infty}(L_{j-1}))+{\rm Ann}_{R_{\infty}}(M_{\infty}(T_{2,j}')).\end{equation} 

Consider the ring
$$R_\infty^{\leq (3,0),\sigma_v}\defeq R_\infty \otimes_{R_{\rbar_v^\vee}} R_{\rbar_v^\vee}^{\leq (3,0),\sigma_v}\cong R_\infty/\cap_{\lambda,\tau}{\mathfrak p}_{\lambda,\tau},$$
where $R_{\rbar_v^\vee}^{\leq (3,0),\sigma_v}$ is as in Proposition \ref{prop:multitype-def-ring} and where ${\mathfrak p}_{\lambda,\tau}=\ker(R_\infty\twoheadrightarrow R_\infty^{\lambda,{\tau}})$ with $\tau$ running over the tame inertial types such that $\sigma_v\otimes (N_{k/\Fp}\circ \det^{-1})\in \JH(\ovl{\sigma(\tau)})$ and $\lambda = (\lambda_{j'})_{0\leq j'\leq f-1}$ running over $\{(2,1),(3,0)\}^f$. By Proposition \ref{prop:multitype-def-ring} and (\ref{rinfini}), and increasing $q$ if necessary, we have for some integer $h\geq 1$ and a certain explicit ring $\oS = (\widehat{\bigotimes}_{\cO, 0\leq j\leq f-1}S^{(j)})/J$ that
$$R_\infty^{\leq (3,0),\sigma_v}\cong \oS\bbra{X_1,\dots,X_h}$$
(using again \cite[Rk.\ 5.2.2]{gee-kisin} and Lemma \ref{lm:hamann}, as we have conditions on the determinant here). 
For each $\lambda \in \{(2,1),(3,0)\}^f$ and $\un k \in \{1,2\}^f$ in Proposition \ref{prop:multitype-def-ring} an ``explicit'' prime ideal of $\oS$ is defined 
that we denote here simply by $\fp^\lambda_{\un k} = \sum_j \fp^{(j),\lambda_j}_{\un k}$ and that we consider as an ideal of $R_\infty^{\leq (3,0),\sigma_v}$ via $\oS \hookrightarrow R_\infty^{\leq (3,0),\sigma_v}$.
In other words, the ideals $\fp^{(j),\lambda_j}_{\tld{w}}$ from Proposition \ref{prop:multitype-def-ring} are relabeled as $\fp^{(j),\lambda_j}_{i(\tld{w})}$ and any value of $i(\tld w)_j$ equal to 3 is changed to 1 here, to simplify notation (see the beginning of \S\ref{sec:deformation-rings2} for the notation).
Moreover there is a bijection
\begin{equation*}
\iota: \{\tau : \sigma_v\otimes (N_{k/\Fp}\circ {\det})\in \JH(\ovl{\sigma(\tau)})\}\buildrel\sim\over\longrightarrow \{1,2\}^f
\end{equation*}
such that ${\mathfrak p}_{\lambda,\tau}= \fp^\lambda_{\iota(\tau)}$. 
From Lemma~\ref{lem:lowest-hodge-type} we also have
prime ideals of $S^{(j)}$ that we relabel here as $\fq_1^{(j),(2,1)}$, $\fq_2^{(j),(2,1)}$ such that $\fq_{k_j}^{(j),(2,1)} \subset \fp^\lambda_{\un k}$
whenever $\lambda_j = (2,1)$ and such that $\sum_j \fq_{k_j}^{(j),(2,1)} = \fp^{\un {(2,1)}}_{\un k}$ for all $\un k\in \{1,2\}^f$.

We note that by Lemma~\ref{lem:inter} we have $\tau_0 \cong \tau_{\tld w}$, where $\tld w_j = \mathfrak{w}t_{(2,1)}$ for each $0\leq j\leq f-1$,
so ${\mathfrak p}_{\lambda,\tau_0} = \fp^\lambda_{\un 2}$.
Then by Proposition~\ref{prop:freeness-loc-alg-type} and Theorem \ref{HT2-1} we deduce
\begin{eqnarray*}
{\rm Ann}_{R_\infty}(M_\infty(T'_{2,j})) &=& \fp^{\lambda(j)}_{\un 2},\\
{\rm Ann}_{R_\infty}(M_\infty(R'_{2,j'})) &=& \bigcap_{\un k} \fp^{\lambda(j')}_{\un k},
\end{eqnarray*}
where $\lambda(j')_{j''} \defeq (2,1)$ if $j'' \ne j'$ and $\lambda(j')_{j'} \defeq (3,0)$.
From the definition of $L_{j-1}$ as an iterated fiber product we have using the first part of Lemma~\ref{lem:hard-glueing} and Proposition~\ref{HT10} that
\begin{align*}
{\rm Ann}_{R_\infty}(M_\infty(L_{j-1}))&= {\rm Ann}_{R_\infty}(M_\infty(\widetilde P_{\sigma_v})) \cap \bigg(\bigcap_{0\leq j'\leq j-1}{\rm Ann}_{R_\infty}(M_\infty(R'_{2,j'}))\bigg)\\
&= \bigcap_{\un k} \fp^{\un {(2,1)}}_{\un k} \cap \bigcap_{0\leq j'\leq j-1} \bigcap_{\un k} \fp^{\lambda(j')}_{\un k}.
\end{align*}
By above %
we get that $\fq^{(j),(2,1)}_1 \cap \fq^{(j),(2,1)}_2 \subset {\rm Ann}_{R_\infty}(M_\infty(L_{j-1}))$ (note that $\lambda(j')_j=(2,1)$ for $0\leq j'\leq j-1$) and $\fp_{\un 2}^{(j),(3,0)} \subset {\rm Ann}_{R_\infty}(M_\infty(T'_{2,j}))$.
Hence to prove~\eqref{sumcontainsp2} it is enough to prove that $p\in \fq^{(j),(2,1)}_1 \cap \fq^{(j),(2,1)}_2 + \fp_{\un 2}^{(j),(3,0)}$, which is a special case of Proposition \ref{prop:p:in:inter}.

We have shown that $M_\infty(N_j)$ can be generated by $r$ elements, so the same is true for $M_\infty(L_j)$ by Proposition~\ref{prop:Lj-Nj-equivalence-HW}.
Let $S = R_\infty/{\rm Ann}_{R_\infty}(M_\infty(L_{j}))$.
Now we can argue just as in part (ii) of the proof of Proposition~\ref{HT10} to see first that $M_\infty(L_j)[1/p]$ is free of rank $r$ over $S[1/p]$ and then deduce
that any surjection $S^r \onto M_\infty(L_j)$ has to be an isomorphism.
This completes the proof.
\end{proof}

\begin{cor}\label{finally!}
The module $M_\infty(R)$ is free of rank $r$ over $R_\infty/\cap_{\lambda,\tau}{\mathfrak p}_{\lambda,\tau}$, where ${\mathfrak p}_{\lambda,\tau}$ is the prime ideal $\ker(R_\infty\twoheadrightarrow R_\infty^{\lambda,{\tau}})$ with $\tau$ running over the tame inertial types such that $\sigma_v\in \JH(\ovl{\sigma(\tau)})$ and $\lambda = (\lambda_{j})_{0\leq j\leq f-1}$ running over the Hodge--Tate weights such that $\lambda_{j}\in \{(1,0),(2,-1)\}$ for all $j$. In particular, $\dim_{\F}(M_\infty(R)/{\mathfrak m}_\infty) =r$.
\end{cor}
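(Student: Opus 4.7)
My plan is to deduce the corollary almost immediately from Theorem~\ref{HT102-1} by specialising to $j = f-1$. The key observation is that the lattice $R$ constructed in \S\ref{sec:constructionlattice} coincides with $L_{f-1}$: indeed, from the definition of $L_j$ as the iterated fibre product $L_j = L_{j-1}\times_{P_{\sigma_v}} R'_{2,j}$, starting from $L_{-1} = \widetilde P_{\sigma_v}$, one unwinds the recursion and compares with \eqref{eq:R} to see that $L_{f-1}$ is exactly the ``glueing along $P_{\sigma_v}$'' of $\widetilde P_{\sigma_v}$ with all the lattices $R'_{2,j}$ for $j = 0,\dots,f-1$, which is $R$. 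This identification is already used implicitly just after the definition of the $L_j$. I would state this identification as the first sentence of the proof.

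Given $R = L_{f-1}$, the first assertion of the corollary is literally the case $j = f-1$ of Theorem~\ref{HT102-1}, so no further work is needed: $M_\infty(R)$ is free of rank $r$ over $R_\infty/\bigcap_{\lambda,\tau}\mathfrak{p}_{\lambda,\tau}$, with $\tau$ running over the tame inertial types such that $\sigma_v\in\JH(\ovl{\sigma(\tau)})$ and $\lambda = (\lambda_j)_{0\le j\le f-1}$ running over all tuples with $\lambda_j\in\{(1,0),(2,-1)\}$ (no restriction on any coordinate, since $j = f-1$ makes every index ``free'').

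For the final dimension assertion, I would note that $R_\infty/\bigcap_{\lambda,\tau}\mathfrak{p}_{\lambda,\tau}$ is a complete noetherian local ring with residue field $\F$ (it is a quotient of $R_\infty$, which has residue field $\F$ by \eqref{rinfini}), and $\mathfrak{m}_\infty$ surjects onto its maximal ideal. Hence freeness of rank $r$ implies
\[
M_\infty(R)/\mathfrak{m}_\infty M_\infty(R) \;\cong\; \big(R_\infty/\mathfrak{m}_\infty\big)^{\oplus r} \;=\; \F^{\oplus r},
\]
giving $\dim_\F M_\infty(R)/\mathfrak{m}_\infty = r$. Since the hard work is entirely contained in Theorem~\ref{HT102-1}, there is no genuine obstacle here; the only point requiring care is the identification $R = L_{f-1}$, which is a direct comparison of the two fibre-product constructions.
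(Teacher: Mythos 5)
Your proposal is correct and matches the paper's intent exactly: the paper itself notes just before Theorem~\ref{HT102-1} that $L_{f-1}=R$ and states the corollary without further proof, since it is literally the $j=f-1$ case of that theorem together with the immediate observation that a free module of rank $r$ over a local quotient of $R_\infty$ has $r$-dimensional fiber at $\mathfrak{m}_\infty$.
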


Recall that we have defined the $K$-representation $(\Proj_{K/Z_1}\sigma_v)/\mathfrak{m}_{K_1}^2$ with cosocle $\sigma_v$ (see e.g.\ \S\ref{sec:constructionlattice}). From Corollary \ref{finally!}, Proposition \ref{starringr}\ref{it:star1} and the isomorphism $R/pR\cong \! (\Proj_{K/Z_1}\!\sigma_v)/\mathfrak{m}_{K_1}^2$ of Corollary \ref{rpr}, we deduce the following result.

\begin{thm}\label{mainpatching}
The surjection
$$(\Proj_{K/Z_1}\sigma_v)/\mathfrak{m}_{K_1}^2\twoheadrightarrow \sigma_v$$
induces an isomorphism of \emph{(}nonzero finite-dimensional\emph{)} $\F$-vector spaces
$$M_\infty\big((\Proj_{K/Z_1}\sigma_v)/\mathfrak{m}_{K_1}^2\big)/{\mathfrak m}_\infty \buildrel\sim\over\longrightarrow M_\infty(\sigma_v)/{\mathfrak m}_\infty.$$
\end{thm}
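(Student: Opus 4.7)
The plan is to deduce Theorem \ref{mainpatching} essentially for free from the two deep freeness results already established, namely Proposition \ref{starringr}\ref{it:star1} and Corollary \ref{finally!}, by a rank comparison argument.

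First, I would identify $(\Proj_{K/Z_1}\sigma_v)/\mathfrak{m}_{K_1}^2$ with $R/pR$ via the $K$-equivariant isomorphism of Corollary \ref{rpr}, where $R=L_{f-1}$ is the lattice constructed in \S\ref{sec:lattices}. This reduces the statement to showing that the composition
\[
M_\infty(R/pR)/\mathfrak{m}_\infty \longrightarrow M_\infty(\sigma_v)/\mathfrak{m}_\infty
\]
induced by the $K$-equivariant cosocle projection $R/pR \twoheadrightarrow \sigma_v$ is an isomorphism of $\F$-vector spaces.

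Next, I would observe that this map is automatically surjective: exactness of the patching functor $M_\infty$ applied to $R/pR \twoheadrightarrow \sigma_v$ gives a surjection $M_\infty(R/pR) \twoheadrightarrow M_\infty(\sigma_v)$, and reduction modulo $\mathfrak{m}_\infty$ preserves surjectivity. Since both sides are finite-dimensional $\F$-vector spaces, it suffices to compare dimensions.

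For the target, Proposition \ref{starringr}\ref{it:star1} provides $\dim_{\F} M_\infty(\sigma_v)/\mathfrak{m}_\infty = r$. For the source, I would apply $M_\infty$ to the short exact sequence $0\to R \xrightarrow{p} R \to R/pR \to 0$; by exactness this yields $M_\infty(R/pR) \cong M_\infty(R)/pM_\infty(R)$, and then (since $p \in \mathfrak{m}_\infty$)
\[
M_\infty(R/pR)/\mathfrak{m}_\infty \cong M_\infty(R)/\mathfrak{m}_\infty,
\]
which has dimension $r$ over $\F$ by Corollary \ref{finally!}. Comparing the two sides, the surjection must be an isomorphism, completing the proof.

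In this final reduction there is no real obstacle — the work has been done in \S\ref{sec:lattices} (constructing the lattice $R$ whose reduction realizes $(\Proj_{K/Z_1}\sigma_v)/\mathfrak{m}_{K_1}^2$) and in \S\ref{sec:freeness} (showing by induction on $j$ that each $M_\infty(L_j)$ is free of the \emph{same} rank $r$ over its schematic support, using the delicate gluing argument with the auxiliary modules $N_j$ and the commutative algebra input of Proposition \ref{prop:p:in:inter}). Here everything is then assembled through the identification of Corollary \ref{rpr} and the dimension count above.
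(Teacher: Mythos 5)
Your proof is correct and matches the paper's argument exactly: both identify $(\Proj_{K/Z_1}\sigma_v)/\mathfrak{m}_{K_1}^2$ with $R/pR$ via Corollary \ref{rpr}, then compare the $r$-dimensional fibers using Corollary \ref{finally!} and Proposition \ref{starringr}\ref{it:star1}, with surjectivity coming from exactness of $M_\infty$. Nothing to add.
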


\begin{rem}
The exactness of the functor $M_\infty$ shows that the isomorphism in Theorem \ref{mainpatching} is of course totally wrong without quotienting by ${\mathfrak m}_\infty$.
\end{rem}

\subsection{Gelfand--Kirillov dimensions}\label{sec:GKsection}

We prove our main global results.

We keep all our previous notation. We recall our assumptions: $F$ is a totally real number field unramified at $p$, $D$ is a quaternion algebra of center $F$ split above $p$ and at not more than one infinite place, $v$ is a fixed place of $F$ above $p$ and $\rbar:G_F\ra \GL_2(\F)$ is a continuous representation satisfying the following conditions: $\rbar |_{G_{F(\!\sqrt[p]{1})}}$ is absolutely irreducible, $\rbar_w$ is generic in the sense of \cite[Def.\ 11.7]{BP} if $w|p$, $w\ne v$, $\rbar_v$ is semisimple generic in the sense of \S\ref{patching} {(the latter implies $p>23$)} and $R_{\rbar_w}$ is formally smooth over $W(\F)$ if $w\in (S_D \cup S_{\rbar})\backslash S_p$.

We choose $w_1$, $S$ and $U=\prod U_w$ as in \S\ref{patching}, and consider   the  admissible smooth representation $\pi$ of $\GL_2(F_v)$  defined in \eqref{eq:pi-indef} or \eqref{eq:pi-def}. 
Recall we defined the Gelfand--Kirillov dimension $\dim_{\GL_2(F_v)}(\pi)$ in \S\ref{sec:kirillov}.

\begin{thm}\label{mainpatching2}
We have $\dim_{\GL_2(F_v)}(\pi) = [F_v:\Qp]$.
\end{thm}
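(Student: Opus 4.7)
The plan is to establish the equality $\dim_{\GL_2(F_v)}(\pi) = f$ by verifying the hypotheses of Theorem \ref{thm:GKdim-criterion} (to get the upper bound $\le f$) and then using a maximal Cohen--Macaulay / patching argument to get the matching lower bound.

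First I would note that $\pi$ has central character $\overline{\psi}^{-1}$ by construction, so Theorem \ref{thm:GKdim-criterion} is applicable in principle. There are three hypotheses to verify: (i) $\JH(\soc_K(\pi))=W(\rbar_v^\vee)$ up to multiplicity, (ii) $[\pi[\mathfrak{m}_{K_1}^2]:\sigma_v]=[\soc_K(\pi):\sigma_v]$ for every $\sigma_v\in W(\rbar_v^\vee)$, and (iii) $\JH(\pi^{I_1})=\JH(D_1(\rbar_v^\vee))$ up to multiplicity. Conditions (i) and (iii) are the ``easier'' ones: by formula \eqref{eq:Minfty-pi} and Proposition \ref{starringr}\ref{it:star1}, we have
\[ \Hom_K(\sigma_v,\pi) \;\cong\; (M_\infty(\sigma_v)/\mathfrak{m}_\infty)^\vee, \]
which is nonzero of dimension $r$ exactly when $\sigma_v\in W(\rbar_v^\vee)$; this gives (i) with uniform multiplicity $r$. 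For (iii), the argument of part (i) of the proof of Proposition \ref{HT10} already shows that $\pi^{K_1}\cong D_0(\rbar_v^\vee)^{\oplus r}$ as $\GL_2(k)$-representations, and passing to $I_1$-invariants on both sides yields $\pi^{I_1}\cong D_1(\rbar_v^\vee)^{\oplus r}$, which is connected by Lemma \ref{lem:connected}.

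The crucial hypothesis (ii) is where the results of \S\ref{sec:freeness} come in. For any $\sigma_v\in W(\rbar_v^\vee)$ we compute, again by \eqref{eq:Minfty-pi} and exactness of $M_\infty$,
\[ [\pi[\mathfrak{m}_{K_1}^2] : \sigma_v] \;=\; \dim_\F \Hom_K\!\big((\Proj_{K/Z_1}\sigma_v)/\mathfrak{m}_{K_1}^2,\,\pi\big) \;=\; \dim_\F \!\big(M_\infty\!\big((\Proj_{K/Z_1}\sigma_v)/\mathfrak{m}_{K_1}^2\big)/\mathfrak{m}_\infty\big). \]
By Theorem \ref{mainpatching} the right-hand side equals $\dim_\F M_\infty(\sigma_v)/\mathfrak{m}_\infty = r$, matching $[\soc_K(\pi):\sigma_v]=r$ established above. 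Applying Theorem \ref{thm:GKdim-criterion} then yields $\dim_{\GL_2(F_v)}(\pi)\le f$.

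For the lower bound $\dim_{\GL_2(F_v)}(\pi)\ge f$, I would invoke the standard patching/Cohen--Macaulay argument: by Proposition \ref{starringr}\ref{it:star1} and \cite[Thm.~7.2.1]{EGS}, for any $\sigma_v\in W(\rbar_v^\vee)$ the module $M_\infty(\sigma_v)$ is a nonzero maximal Cohen--Macaulay module over its schematic support $R_\infty/\mathfrak{a}_{\sigma_v}$, which is a power series ring over $\cO$ of relative dimension computable from \eqref{rinfini} and the potentially Barsotti--Tate deformation ring. Combined with the control on how $R_\infty$ acts through the quotient corresponding to the arithmetic action on completed homology, this forces the $\F\bbra{K_1/Z_1}$-module $\pi^\vee$ to have grade at most $2f$, i.e.\ $\dim_{\GL_2(F_v)}(\pi)\ge f$; the argument is essentially ``miracle flatness in reverse'' (cf.\ \cite[Cor.~A.30]{GN}). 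The main obstacle is the upper bound, and within it hypothesis (ii), which in turn rests on Theorem \ref{mainpatching} — that is the essentially new input that required the entire analysis of lattices in locally algebraic representations (\S\ref{sec:lattices}) and the freeness results of \S\ref{sec:freeness}. Once (ii) is in hand, the proof is essentially a formal application of Theorem \ref{thm:GKdim-criterion}.
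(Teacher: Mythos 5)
Your upper bound argument ($\dim_{\GL_2(F_v)}(\pi)\le f$) is correct and follows essentially the paper's route: you verify the three hypotheses of Theorem~\ref{thm:GKdim-criterion} and apply it, with hypothesis (ii) coming from Theorem~\ref{mainpatching} exactly as in the paper. Your handling of hypothesis (iii) via the isomorphism $\pi^{K_1}\cong D_0(\rbar_v^\vee)^{\oplus r}$ (which the paper itself invokes from the proof of Proposition~\ref{HT10} in the proof of Proposition~\ref{prop:Lj-Nj-equivalence-HW}) is a slight repackaging of the paper's argument, which instead establishes the two inclusions $\JH(D_1(\rbar_v^\vee))\subseteq\JH(\pi^{I_1})$ via \cite[Lemma 9.2]{breuil-buzzati} and the reverse via \cite[Lemma 14.1]{BP}. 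This part is fine.

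The lower bound, however, has a genuine gap. You assert that Cohen--Macaulayness of the finite patched modules $M_\infty(\sigma_v)$, combined with unspecified ``control on how $R_\infty$ acts,'' forces $j_{\F\bbra{K_1/Z_1}}(\pi^\vee)\le 2f$, but you give no argument and the objects you list are not the ones that make the argument work. The Cohen--Macaulayness of the individual modules $M_\infty(\sigma_v)$ over their (finite-dimensional) schematic supports carries no information about the grade of $\pi^\vee$ as a $\F\bbra{K_1/Z_1}$-module; you need a $\GL_2(F_v)$-equivariant object over $R_\infty\bbra{K_1/Z_1}$ linking the arithmetic to the analysis on $K_1$. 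The paper introduces the ``big'' profinite patched module $\mathbb{M}_\infty$ (from \cite[\S 6]{DoLe}, \cite{CEGGPS}), which is finitely generated \emph{free} over $S_\infty\bbra{K_1/Z_1}$ and satisfies $\mathbb{M}_\infty/\mathfrak{m}_\infty\cong\pi^\vee$. That freeness, not the Cohen--Macaulayness of each $M_\infty(\sigma_v)$, is the crucial input: it gives $j_{S_\infty\bbra{K_1/Z_1}}(\mathbb{M}_\infty)=0$, whence $j_{R_\infty\bbra{K_1/Z_1}}(\mathbb{M}_\infty)=\dim R_\infty-\dim S_\infty=2f$ by \cite[Lemma A.19]{GN}, while \cite[Lemma A.16]{GN} gives $j_{R_\infty\bbra{K_1/Z_1}}(\mathbb{M}_\infty)\ge j_{\F\bbra{K_1/Z_1}}(\pi^\vee)=3f-\dim_{\GL_2(F_v)}(\pi)$, and these combine to give the lower bound. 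Your citation of \cite[Cor.~A.30]{GN} (``Miracle Flatness'') is also misplaced: that corollary is used afterwards, in Theorem~\ref{thm:flat_infty}, to deduce flatness of $\mathbb{M}_\infty$ over $R_\infty$ \emph{from} the equality $\dim_{\GL_2(F_v)}(\pi)=f$, so invoking it at this stage would be circular. Without $\mathbb{M}_\infty$, its freeness over $S_\infty\bbra{K_1/Z_1}$, and the two grade lemmas, your lower bound argument does not assemble.
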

\begin{proof}
(i) By \cite[\S 5.5]{gee-kisin} $\pi$ satisfies assumption (i) in Theorem \ref{thm:GKdim-criterion} (for $\rhobar=\rbar_v^\vee$). It follows from \eqref{eq:Minfty-pi} and Theorem \ref{mainpatching} (choosing $M_\infty=M_\infty^{\sigma_p^v}$ as in \S\ref{patching} for $\sigma_p^v$ as in (\ref{horsv}) with $\JH(\ovl{\sigma(\tau_w^\vee)})\cap W(\rbar_w^\vee)=\{\sigma_w\}$) that for all $\sigma_v\in W(\rbar_v^\vee)$ we have
$$[\pi[\mathfrak{m}_{K_1}^2] : \sigma_v]=[\soc_K(\pi) : \sigma_v],$$
so that $\pi$ satisfies also assumption (ii) in Theorem \ref{thm:GKdim-criterion}. 
Finally, we prove that $\JH(\pi^{I_1})=\JH(D_1(\rbar_v^\vee))$ (up to multiplicity), and so by Lemma \ref{lem:connected} $\pi$ satisfies assumption (iii) in Theorem \ref{thm:GKdim-criterion}. We only give the proof in the definite case, the indefinite case can be treated similarly (see e.g.\ (\ref{injindefinite}) below). The $K$-equivariant embedding $\bigoplus_{\sigma_v\in W(\rbar_v^\vee)}\sigma_v^{m_{\sigma_v}}\hookrightarrow \pi$, where $m_{\sigma_v}=[\soc_K(\pi):\sigma_v]$, induces a $K\times (U^v/V^v)$-equivariant morphism 
\[\Big(\bigoplus_{\sigma_v\in W(\rbar_v^\vee)}\sigma_v^{m_{\sigma_v}}\Big)\otimes_{\F}\Big(\bigotimes_{w\in S_p\backslash\{v\}}\sigma_w\Big)\ra \varinjlim_{V_v}S(V^vV_v,\F)[{\mathfrak m}],\]
which is injective because $\bigotimes_{w\in S_p\backslash\{v\}}\sigma_w$ is irreducible. 
By \cite[Lemma 9.2]{breuil-buzzati}, the last embedding extends to an embedding 
\[\Big(\bigoplus_{\sigma_v\in W(\rbar_v^\vee)}D_{0,\sigma_v}(\brho)^{m_{\sigma_v}}\Big)\otimes_{\F} \Big(\bigotimes_{w\in S_p\backslash\{v\}}\sigma_w\Big)\hookrightarrow \varinjlim_{V_v}S(V^vV_v,\F)[{\mathfrak m}]\]
and gives in turn an embedding
\[\bigoplus_{\sigma_v\in W(\rbar_v^\vee)}D_{0,\sigma_v}(\brho)^{m_{\sigma_v}}\hookrightarrow \pi.\]
In particular, we have $\JH(D_1(\rbar_v^\vee))\subseteq \JH(\pi^{I_1})$. But using \cite[Lemma 14.1]{BP}, we actually have $\JH(D_1(\rbar_v^\vee))=\JH(\pi^{I_1})$ (up to multiplicity), and so $\pi$ satisfies assumption (iii) in Theorem \ref{thm:GKdim-criterion}. 
We can thus apply Theorem \ref{thm:GKdim-criterion} which gives $\dim_{\GL_2(F_v)}(\pi) \leq [F_v:\Qp]$.

(ii) By the arguments of \cite[\S 6]{DoLe}, replacing $K^v$ in
\cite[\S 6.1]{DoLe} by $U^v$, the representation $V=\bigotimes_{w\in S,
  w\ne v}V_w$ of $K^v$ in {\it loc.~cit.} by the representation
$\sigma_p^v$ of $U^v$ in (\ref{horsv}) and forgetting the Hecke
operators $T_w$ at places $w\in S'$ (since we do not care about
multiplicity $1$), the same patching process as in \cite[\S 6.2]{DoLe}
(which is a variant/special case of the main construction of
\cite{CEGGPS} and \cite[\S9]{ScholzeLT}) produces a ``big'' patched module ${\mathbb M}_\infty$ over $R_\infty\bbra{\GL_2({\mathcal O}_{F_v})}$ (with a compatible action of $\GL_2(F_v)$) which is finitely generated free over the local ring $S_\infty\bbra{K_1/Z_1}$, where $S_\infty\defeq W(\F)\bbra{x_1,\dots, x_{4|S|+q-1}}$ (see (\ref{rinfini}) for $q$). Moreover we have ${\mathbb M}_\infty/{\mathfrak m}_\infty \cong \pi^\vee$ and for any continuous representation $\sigma_v$ of $\GL_2({\mathcal O}_{F_{v}})$ over a finite type $W(\F)$-module with central character $\psi|_{I_{F_v}}^{-1}$ we have $M_\infty(\sigma_v)=\Hom_{W(\F)\bbra{\GL_2({\mathcal O}_{F_v})}}^{\rm cont}({\mathbb M}_\infty,\sigma_v^\vee)^\vee$, where $(-)^\vee\defeq \Hom_{W(\F)}^{\rm cont}(-,E/W(\F))$ and ${\mathbb M}_\infty$ is endowed with its natural profinite topology. It follows from \cite[Lemma\ A.16]{GN}, Lemma \ref{lem:GvsG/Z} and (\ref{rinfini}) that we have (where the grade $j_{A}$ is as in \S\ref{sec:kirillov})
\begin{multline}\label{uppergrade}
j_{R_\infty\bbra{K_1/Z_1}}({\mathbb M}_\infty)\geq j_{\F\bbra{K_1/Z_1}}({\mathbb M}_\infty/{\mathfrak m}_\infty)=\dim(K_1/Z_1)-\dim_{\GL_2(F_v)}(\pi)\\
=3[F_v:\Qp]-\dim_{\GL_2(F_v)}(\pi).
\end{multline}
Since ${\mathbb M}_\infty$ is free of finite type over $S_\infty\bbra{K_1/Z_1}$, we have $j_{S_\infty\bbra{K_1/Z_1}}({\mathbb M}_\infty)=0$. It then follows from \cite[Lemma\ A.19]{GN} (together with \cite[Def.\ A.2]{GN} and \cite[Prop.\ A.4(1)]{GN}) that
\begin{equation}\label{rsinfini}
j_{R_\infty\bbra{K_1/Z_1}}({\mathbb M}_\infty)=\big(\dim(R_\infty) + \dim(K_1/Z_1)\big)-\big(\dim(S_\infty) + \dim(K_1/Z_1)\big)
=2[F_v:\Qp],\end{equation}
where the last equality follows from (\ref{rinfini}) and the definition of $S_\infty$. Combining (\ref{uppergrade}) and (\ref{rsinfini}), we deduce $2[F_v:\Qp]\geq 3[F_v:\Qp]-\dim_{\GL_2(F_v)}(\pi)$, i.e.\ $\dim_{\GL_2(F_v)}(\pi)\geq [F_v:\Qp]$, which finishes the proof. 
\end{proof}

Recall that for any Serre weight $\sigma_v$ we have defined in \S\ref{sec:smooth:rep} the injective envelope $\Inj_{K/Z_1}\sigma_v$ with socle $\sigma_v$.

\begin{thm}\label{largest}
There is an integer $r\geq 1$ such that $\pi[\mathfrak{m}_{K_1}^2]\cong \big(\bigoplus_{\sigma_v\in W(\rbar_v^\vee)}\widetilde D_{\sigma_v}\big)^{\oplus r}$, where $\widetilde D_{\sigma_v}$ is the largest subrepresentation of $(\Inj_{K/Z_1}\sigma_v)[\mathfrak{m}_{K_1}^2]$ containing $\sigma_v$ with multiplicity $1$ \emph{(}= its socle\emph{)} and no other Serre weights of $W(\rbar_v^\vee)$. In particular, each irreducible constituent of $\pi[\mathfrak{m}_{K_1}^2]$ has multiplicity $r$.
\end{thm}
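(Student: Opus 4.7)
The plan is to realize $\pi[\mathfrak{m}_{K_1}^2]$ as the representation $V$ picked out by Corollary \ref{cor:J-fil}\ref{it:J-fil-1}, for $\tau\defeq\soc_K\pi$, via a double comparison whose main analytic input (Theorem \ref{mainpatching}) is already at our disposal. First I would extract the relevant multiplicities. Combining \eqref{eq:Minfty-pi} with Proposition \ref{starringr}\ref{it:star1} gives $\dim_{\F}\Hom_K(\sigma_v,\pi)=r$ for every $\sigma_v\in W(\rbar_v^\vee)$, and since all $K$-socle constituents of $\pi$ lie in $W(\rbar_v^\vee)$ (from \cite[\S 5.5]{gee-kisin}, as already used in Step (i) of the proof of Theorem \ref{mainpatching2}), we obtain $\soc_K\pi\cong\bigoplus_{\sigma_v\in W(\rbar_v^\vee)}\sigma_v^{\oplus r}=:\tau$. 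Applying \eqref{eq:Minfty-pi} and Theorem \ref{mainpatching} to each $\sigma_v\in W(\rbar_v^\vee)$ then yields
\[
\dim_{\F}\Hom_K\!\big((\Proj_{K/Z_1}\sigma_v)/\mathfrak{m}_{K_1}^2,\pi\big)=r.
\]
Since $\pi[\mathfrak{m}_{K_1}^2]$ is finite-dimensional (by admissibility of $\pi$) and every such homomorphism factors through it, the projectivity of $\Proj_{K/Z_1}\sigma_v$ in the category of smooth $K/Z_1$-representations turns the displayed dimension into the multiplicity $[\pi[\mathfrak{m}_{K_1}^2]:\sigma_v]$. Hence $[\pi[\mathfrak{m}_{K_1}^2]:\sigma_v]=r$ for every $\sigma_v\in W(\rbar_v^\vee)$.

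Next I would fix an embedding $\pi[\mathfrak{m}_{K_1}^2]\hookrightarrow \Inj_{K/Z_1}\tau$ extending the socle inclusion; the image lies in $(\Inj_{K/Z_1}\tau)[\mathfrak{m}_{K_1}^2]$ and, by the multiplicity computation above, satisfies the hypotheses of Corollary \ref{cor:J-fil}\ref{it:J-fil-1} with $m_\sigma=r$ for all $\sigma\in W(\rbar_v^\vee)$. By maximality this gives $\pi[\mathfrak{m}_{K_1}^2]\subseteq V$, and Corollary \ref{cor:J-fil}\ref{it:J-fil-1} and \ref{it:J-fil-2} identifies $V\cong \bigoplus_{\sigma_v\in W(\rbar_v^\vee)}\widetilde D_{\sigma_v}^{\oplus r}$.

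For the reverse inclusion I would use injectivity. The central character $\ovl\psi^{-1}$ of $\pi$ is trivial on the pro-$p$ subgroup $Z_1$ (since the Teichm\"uller lift $\psi$ takes values in roots of unity of order prime to $p$), so $\pi$ is a smooth $K/Z_1$-representation and the socle inclusion $\tau\hookrightarrow \pi$ extends to a $K$-equivariant homomorphism $\Inj_{K/Z_1}\tau\to \pi$. Restricting to $(\Inj_{K/Z_1}\tau)[\mathfrak{m}_{K_1}^2]$ lands in $\pi[\mathfrak{m}_{K_1}^2]$, and further restriction to $V$ yields a map $V\to \pi[\mathfrak{m}_{K_1}^2]$ which is injective because it restricts to the original inclusion on $\soc_K V=\tau$. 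Combined with $\pi[\mathfrak{m}_{K_1}^2]\subseteq V$, this forces equality by a dimension count, giving the desired isomorphism $\pi[\mathfrak{m}_{K_1}^2]\cong \bigoplus_{\sigma_v\in W(\rbar_v^\vee)}\widetilde D_{\sigma_v}^{\oplus r}$. The final assertion that each irreducible constituent has multiplicity exactly $r$ follows immediately from Corollary \ref{cor:J-fil}\ref{it:J-fil-2} (each $\widetilde D_{\sigma_v}$ is multiplicity-free) and \ref{it:J-fil-3} (the sets $\JH(\widetilde D_{\sigma_v})$ are pairwise disjoint). The bulk of the work has already been absorbed into Theorem \ref{mainpatching}; what is added here is the injective-envelope construction of the reverse embedding, which bypasses the need to control multiplicities of Serre weights outside $W(\rbar_v^\vee)$ directly and is the only nonroutine bookkeeping step.
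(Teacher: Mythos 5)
The forward inclusion $\pi[\mathfrak{m}_{K_1}^2]\subseteq V$ via maximality and Theorem \ref{mainpatching} is correct and matches the paper's reasoning. However, your argument for the reverse inclusion has a genuine gap. You write that ``the socle inclusion $\tau\hookrightarrow\pi$ extends to a $K$-equivariant homomorphism $\Inj_{K/Z_1}\tau\to\pi$,'' but injectivity of $\Inj_{K/Z_1}\tau$ produces maps \emph{into} $\Inj_{K/Z_1}\tau$, not out of it: given $\tau\hookrightarrow\pi$ and $\tau\hookrightarrow\Inj_{K/Z_1}\tau$, injectivity of the target yields a map $\pi\to\Inj_{K/Z_1}\tau$, which is exactly the wrong direction for your purposes. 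What you need would instead require $\pi$ (equivalently $\pi^\vee$) to be injective (resp.\ projective) as a $K/Z_1$-representation, which is not the case here.

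The paper closes this gap using an input that your argument never invokes: the ``big'' patched module $\mathbb M_\infty$ from \S \ref{patching} is \emph{projective} of finite type over $S_\infty\bbra{K}_Z$, so that $\mathbb M_\infty/(p,x_1,\dots,x_{4|S|+q-1})$ is a finite projective $\F\bbra{K}_Z$-module, necessarily of the form $\oplus_{\sigma_v}(\Proj_{K/Z_1}\sigma_v^\vee)^{\oplus m_{\sigma_v}}$. Evaluating $\Hom_{\F\bbra{K}}^{\rm cont}$ of this against $\widetilde D_{\sigma_v}^\vee\onto\sigma_v^\vee$, and then passing to the $\mathfrak m_\infty$-torsion (recall $\mathbb M_\infty/\mathfrak m_\infty\cong\pi^\vee$), gives the crucial isomorphism $\Hom_K(\widetilde D_{\sigma_v},\pi)\cong\Hom_K(\sigma_v,\pi)$, from which the embedding $\big(\oplus_{\sigma_v}\widetilde D_{\sigma_v}\big)^{\oplus r}\hookrightarrow\pi[\mathfrak{m}_{K_1}^2]$ follows by lifting a basis of $\Hom_K(\sigma_v,\pi)$ and noting injectivity on socles. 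Knowing multiplicities of Serre weights in $W(\rbar_v^\vee)$ (which is all your Theorem \ref{mainpatching} input gives you) does not control multiplicities of the other constituents of $\widetilde D_{\sigma_v}$ in $\pi[\mathfrak{m}_{K_1}^2]$, so the reverse inclusion cannot be deduced from maximality and a dimension count alone.
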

\begin{proof}
The existence of $\widetilde D_{\sigma_v}$ is proven in Corollary
\ref{cor:J-fil}\ref{it:J-fil-1}. It follows from its construction in
\cite[\S 6.2]{DoLe} and \cite{CEGGPS} that ${\mathbb M}_\infty$ (see part (ii) of the proof of Theorem \ref{mainpatching2}) is projective of finite type over $S_\infty\bbra{K}_Z$, where $S_\infty\bbra{K}_Z$ is the largest quotient of $S_\infty\bbra{K}$ on which the center of $K=\GL_2({\mathcal O}_{F_v})$ acts by $\psi|_{I_{F_v}}$. In particular, ${\mathbb M}_\infty/(p,x_1,\dots,x_{4|S|+q-1})$ is finite projective over $\F\bbra{K}_Z$. Noting that \[\Hom_{W(\F)\bbra{K}}^{\rm cont}({\mathbb M}_\infty/(p,x_1,\dots,x_{4|S|+q-1}),\sigma_v^\vee)^\vee\cong M_\infty(\sigma_v)/(p,x_1,\dots,x_{4|S|+q-1}) \]
which is nonzero if and only if $\sigma_v\in W(\rbar_v^\vee)$, we deduce
$${\mathbb M}_\infty/(p,x_1,\dots,x_{4|S|+q-1})\cong \bigoplus_{\sigma_v\in W(\rbar_v^\vee)}(\Proj_{K/Z_1}\sigma_v^\vee)^{\oplus m_{\sigma_v}}$$
for some integers $m_{\sigma_v}\geq 1$ (in fact $m_{\sigma_v}\geq r$, where $r\geq 1$ is as in Proposition \ref{starringr}\ref{it:star1}). This implies by the definition of $\widetilde D_{\sigma_v}$
$$\Hom_{\F\bbra{K}}^{\rm cont}\Big({\mathbb M}_\infty/(p,x_1,\dots,x_{4|S|+q-1}),\widetilde D_{\sigma_v}^\vee\Big)\buildrel\sim\over\longrightarrow \Hom_{\F\bbra{K}}^{\rm cont}\Big({\mathbb M}_\infty/(p,x_1,\dots,x_{4|S|+q-1}),\sigma_v^\vee\Big)$$
and hence taking on both sides the subspaces where ${\mathfrak m}_\infty$ acts by $0$ (${\mathfrak m}_\infty$ acts through the action of $R_\infty$ on ${\mathbb M}_\infty/(p,x_1,\dots,x_{4|S|+q-1})$) we get
$$\Hom_{\F\bbra{K}}^{\rm cont}\Big({\mathbb M}_\infty/{\mathfrak m}_\infty,\widetilde D_{\sigma_v}^\vee\Big)\buildrel\sim\over\longrightarrow \Hom_{\F\bbra{K}}^{\rm cont}\Big({\mathbb M}_\infty/{\mathfrak m}_\infty,\sigma_v^\vee\Big).$$
Using ${\mathbb M}_\infty/{\mathfrak m}_\infty\cong \pi^\vee$ this last isomorphism can be rewritten 
$$\Hom_K(\widetilde D_{\sigma_v},\pi)=\Hom_K(\widetilde D_{\sigma_v},\pi[\mathfrak{m}_{K_1}^2])\buildrel\sim\over\longrightarrow\Hom_K(\sigma_v,\pi)=\Hom_K(\sigma_v,\soc_K\pi).$$
Since $\soc_K\pi=(\bigoplus_{\sigma_v\in W(\rbar_v^\vee)}\sigma_v)^{\oplus r}$ by Proposition \ref{starringr}\ref{it:star1}, we deduce an inclusion
\begin{equation}\label{injindefinite}
\big(\bigoplus_{\sigma_v\in W(\rbar_v^\vee)}\widetilde D_{\sigma_v}\big)^{\oplus r}\subseteq \pi[\mathfrak{m}_{K_1}^2].
\end{equation}
But it follows from Corollary \ref{cor:J-fil}\ref{it:J-fil-1} (using $\pi[\mathfrak{m}_{K_1}^2]\subseteq (\Inj_{K/Z_1} \tau_v)[\mathfrak{m}_{K_1}^2]$ for $\tau_v\defeq \bigoplus_{\sigma_v\in W(\rbar_v^\vee)}\sigma_v^{\oplus r}$) and Theorem \ref{mainpatching} that $\pi[\mathfrak{m}_{K_1}^2]$ cannot be (strictly) larger, whence the isomorphism of the statement. The last sentence in the statement then follows from Corollary \ref{cor:J-fil}\ref{it:J-fil-2} and \ref{it:J-fil-3}.
\end{proof}

\begin{thm}\label{thm:flat_infty}
  The $R_\infty$-module ${\mathbb M}_\infty$ is faithfully flat.  
\end{thm}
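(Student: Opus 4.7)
The plan is to deduce faithful flatness from the Gelfand--Kirillov dimension computation in Theorem~\ref{mainpatching2} via the ``miracle flatness'' formalism of Gee--Newton (\cite[App.\ A]{GN}). All the numerical inputs are already gathered in part (ii) of the proof of Theorem~\ref{mainpatching2}; the present statement is essentially a formal consequence.

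First I would recall the structural properties of $\mathbb{M}_\infty$. By construction, $\mathbb{M}_\infty$ is a finitely generated $R_\infty\bbra{K_1/Z_1}$-module and, crucially, is free of finite type over $S_\infty\bbra{K_1/Z_1}$, where $S_\infty = W(\F)\bbra{x_1,\dots,x_{4|S|+q-1}}$. Both $R_\infty\bbra{K_1/Z_1}$ and $S_\infty\bbra{K_1/Z_1}$ are regular local rings (since $K_1/Z_1$ is uniform), and (\ref{rinfini}) gives $\dim R_\infty\bbra{K_1/Z_1} - \dim S_\infty\bbra{K_1/Z_1} = 2f$. In particular $\mathbb{M}_\infty$ is Cohen--Macaulay over $R_\infty\bbra{K_1/Z_1}$, and the change-of-rings argument of \cite[Lemma A.19]{GN} (already used in (\ref{rsinfini})) gives
$$j_{R_\infty\bbra{K_1/Z_1}}(\mathbb{M}_\infty) = 2f.$$

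Next I would combine this with the GK-dimension computation. By Theorem~\ref{mainpatching2} we have $\dim_{\GL_2(F_v)}(\pi) = f$, and since $\mathbb{M}_\infty/\mathfrak{m}_\infty \cong \pi^\vee$ (with $Z_1$ acting trivially), Lemma~\ref{lem:GvsG/Z} yields
$$j_{\F\bbra{K_1/Z_1}}(\mathbb{M}_\infty/\mathfrak{m}_\infty) = \dim(K_1/Z_1) - \dim_{\GL_2(F_v)}(\pi) = 3f - f = 2f.$$
So the two grades coincide, turning the a priori inequality (\ref{uppergrade}) of \cite[Lemma A.16]{GN} into an equality.

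Finally I would invoke the miracle flatness criterion of Gee--Newton in the form \cite[Cor.\ A.30]{GN} (or equivalently \cite[Prop.\ A.31]{GN}): for a finitely generated module $M$ over $R\bbra{H}$ with $R$ a regular local $W(\F)$-algebra and $H$ a compact $p$-adic analytic group without $p$-torsion, $M$ is flat over $R$ precisely when $j_{R\bbra H}(M) = j_{\F\bbra H}(M/\mathfrak{m}_R M)$. Applying this with $R = R_\infty$, $H = K_1/Z_1$ and $M = \mathbb{M}_\infty$, we conclude that $\mathbb{M}_\infty$ is flat over $R_\infty$. Faithful flatness is then automatic: $\mathbb{M}_\infty \otimes_{R_\infty} \F \cong \pi^\vee$ is nonzero (as $\pi \ne 0$), and a flat module over a noetherian local ring which is nonzero modulo the maximal ideal is faithfully flat. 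There is no real obstacle here, as the one substantive ingredient---the sharp GK-dimension---has been established in Theorem~\ref{mainpatching2}; the remaining work is to check that the hypotheses of the Gee--Newton criterion apply verbatim in our setting.
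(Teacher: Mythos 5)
Your proposal is essentially identical to the paper's proof: both deduce Cohen--Macaulayness of $\mathbb{M}_\infty$ over $R_\infty\bbra{K_1/Z_1}$ from its freeness over $S_\infty\bbra{K_1/Z_1}$ (via \cite[Cor.\ A.29]{GN}), combine Theorem~\ref{mainpatching2} with \eqref{uppergrade} and \eqref{rsinfini} to get the grade equality $j_{R_\infty\bbra{K_1/Z_1}}(\mathbb{M}_\infty)=j_{\F\bbra{K_1/Z_1}}(\mathbb{M}_\infty/\mathfrak{m}_\infty)=2f$, and then apply the Gee--Newton ``Miracle Flatness'' criterion \cite[Cor.\ A.30]{GN}, with faithfulness coming from $\mathbb{M}_\infty/\mathfrak{m}_\infty\cong\pi^\vee\neq 0$. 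The argument is correct.
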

\begin{proof}
  Since ${\mathbb M}_\infty$ is free of finite type over $S_\infty\bbra{K_1/Z_1}$, it follows from \cite[Cor.\ A.29]{GN} applied to $M={\mathbb M}_\infty$, $A=S_\infty\bbra{K_1/Z_1}$ and $B=R_\infty\bbra{K_1/Z_1}$ (using (\ref{rinfini})) that ${\mathbb M}_\infty$ is a Cohen--Macaulay $R_\infty\bbra{K_1/Z_1}$-module. By Theorem \ref{mainpatching2}, (\ref{uppergrade}), and (\ref{rsinfini}) we have
$$j_{R_\infty\bbra{K_1/Z_1}}({\mathbb M}_\infty)=j_{\F\bbra{K_1/Z_1}}({\mathbb M}_\infty/{\mathfrak m}_\infty)=2[F_v:\Qp],$$
and it then follows from \cite[Cor.\ A.30]{GN} (``Miracle Flatness'')
that ${\mathbb M}_\infty$ is flat over $R_\infty$. As $R_\infty$ is
a local ring and ${\mathbb M}_\infty/{\mathfrak m}_\infty\ne 0$, it follows that
$\mathbb{M}_\infty$ is faithfully flat over $R_\infty$.
\end{proof}

\begin{cor}\label{padiclanglands}
Let $x:R_\infty\rightarrow {\mathcal O}'$ be any homomorphism of local $W(\F)$-algebras, where ${\mathcal O}'$ is the ring of integers of a finite extension $E'$ of $E$, and set
$$V(x)\defeq \Hom_{{\mathcal O}'}^{\rm cont}\big({\mathbb M}_\infty\otimes_{R_\infty,x}{\mathcal O}',E'\big).$$
Then $V(x)$ is a {\rm nonzero} admissible unitary Banach representation of $\GL_2(F_v)$ over $E'$ with a $\GL_2(F_v)$-invariant unit ball \emph{(}given by $\Hom_{{\mathcal O}'}^{\rm cont}\big({\mathbb M}_\infty\otimes_{R_\infty,x}{\mathcal O}',{\mathcal O}'\big)$\emph{)} lifting $\pi\otimes_{\F}\F'$, where $\F'$ is the residue field of $\mathcal O'$.
\end{cor}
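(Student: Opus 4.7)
The plan is to deduce the corollary directly from Schikhof duality applied to the base changed module $\mathbb{M}'_\infty \defeq \mathbb{M}_\infty \otimes_{R_\infty, x} \mathcal{O}'$. From the construction recalled in the proof of Theorem \ref{mainpatching2}, $\mathbb{M}_\infty$ carries a continuous $\GL_2(F_v)$-action commuting with the $R_\infty$-action and is finitely generated (in fact free) over $S_\infty\bbra{K_1/Z_1}$. Consequently $\mathbb{M}'_\infty$ is a profinite $\mathcal{O}'$-module, finitely generated over $\mathcal{O}'\bbra{K_1/Z_1}$, endowed with a compatible continuous $\GL_2(F_v)$-action. The key input is Theorem \ref{thm:flat_infty}: since $\mathbb{M}_\infty$ is faithfully flat over $R_\infty$, base change along $x$ yields an $\mathcal{O}'$-flat and \emph{nonzero} module $\mathbb{M}'_\infty$.

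Next I would invoke Schikhof duality \cite[\S 1]{schneider-teitelbaum-IL}, which provides an anti-equivalence between admissible unitary $E'$-Banach representations of $K_1/Z_1$ with invariant unit ball and finitely generated $\mathcal{O}'$-torsion-free $\mathcal{O}'\bbra{K_1/Z_1}$-modules, the latter recovering the unit ball via $N \mapsto \Hom^{\mathrm{cont}}_{\mathcal{O}'}(N, \mathcal{O}')$. Applied to $N = \mathbb{M}'_\infty$, this equips $V(x) = \Hom^{\mathrm{cont}}_{\mathcal{O}'}(\mathbb{M}'_\infty, E')$ with a structure of admissible unitary Banach representation of $K_1/Z_1$, with invariant unit ball $\Hom^{\mathrm{cont}}_{\mathcal{O}'}(\mathbb{M}'_\infty, \mathcal{O}')$. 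The $\GL_2(F_v)$-action transferred from $\mathbb{M}_\infty$ extends the structure to all of $\GL_2(F_v)$, and nonzeroness of $V(x)$ is equivalent to that of $\mathbb{M}'_\infty$, already noted above.

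Finally, to identify the reduction of the unit ball modulo $\varpi$, note that $x \bmod \varpi : R_\infty \to \F'$ is a local homomorphism, hence factors as $R_\infty \twoheadrightarrow R_\infty/\mathfrak{m}_\infty = \F \xrightarrow{\overline{x}} \F'$. Thus
$$\mathbb{M}'_\infty / \varpi \cong \mathbb{M}_\infty \otimes_{R_\infty,\, x \bmod \varpi} \F' \cong (\mathbb{M}_\infty / \mathfrak{m}_\infty) \otimes_{\F,\, \overline{x}} \F' \cong \pi^\vee \otimes_{\F} \F',$$
using the isomorphism $\mathbb{M}_\infty/\mathfrak{m}_\infty \cong \pi^\vee$ recorded in the proof of Theorem \ref{mainpatching2}; dualising recovers $\pi \otimes_{\F} \F'$ as the reduction of the unit ball. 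The main obstacle, flatness of $\mathbb{M}_\infty$ over $R_\infty$, has already been surmounted in Theorem \ref{thm:flat_infty} as a consequence of the Gelfand--Kirillov dimension computation (Theorem \ref{mainpatching2}) combined with the Gee--Newton ``Miracle Flatness'' criterion. Given that theorem, the argument for the present corollary is essentially formal, and no further substantive difficulty is anticipated.
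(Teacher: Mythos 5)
Your proposal is correct and follows essentially the same route as the paper: establish $\mathcal{O}'$-flatness of $\mathbb{M}_\infty\otimes_{R_\infty,x}\mathcal{O}'$ via Theorem \ref{thm:flat_infty} and base change, then invoke Schikhof duality (\cite[Thm.~1.2]{schneider-teitelbaum-IL}) to deduce both the Banach structure and nonzeroness. The only difference is that you spell out the identification of the reduction of the unit ball with $\pi\otimes_{\F}\F'$ via $x\bmod\varpi$ factoring through $R_\infty/\fm_\infty=\F$, a step the paper leaves implicit.
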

\begin{proof}
The fact that $V(x)$ is an admissible unitary Banach representation of
$\GL_2(F_v)$ follows from \cite[Prop.\ 2.13]{CEGGPS}. We need to prove
$V(x)\ne 0$ (note that we know ${\mathbb
  M}_\infty\otimes_{R_\infty,x}{\mathcal O}'\ne 0$, as ${\mathbb
  M}_\infty/{\mathfrak m}_\infty\ne 0$, but it could be $p$-power
torsion).  By Theorem \ref{thm:flat_infty}, the $R_\infty$-module
${\mathbb M}_\infty$ is flat, hence ${\mathbb M}_\infty\otimes_{R_\infty,x}{\mathcal O}'$ is flat over ${\mathcal O}'$ by base change, and the result easily follows by \cite[Thm.\ 1.2]{schneider-teitelbaum-IL}.
\end{proof}

\begin{rem}\label{localfactor}
Under slightly more general hypotheses on $\rbar$, one can prove Theorem \ref{mainpatching2}, Theorem \ref{largest} and Corollary \ref{padiclanglands} with $\pi$ replaced by the ``minimal local factor'' of \cite[\S 3.3]{BD} and \cite[\S 6.5]{EGS}. The strategy is completely similar using Theorem \ref{thm:GKdim-criterion}, the patching functor $M_\infty^{\rm min}$ of \cite[\S 6.5]{EGS} (and the ``big'' minimal patched module of \cite[\S 6]{DoLe}), and the variant of Corollary \ref{finally!} with $M_\infty^{\rm min}$ instead of $M_\infty=M_\infty^{\sigma_p^v}$, where we now have $r=1$. Details are left to the reader.
\end{rem}

\begin{cor}\label{mainmain}
For any compact open subgroup
$$V^v=\prod_{w\notin S_D\cup S_{\rbar}}\!\!\!({\mathcal O}_D)_{w}^\times\!\!\prod_{w\in (S_D\cup S_{\rbar})\backslash\{v\}}\!\!\!\!\!V_w\ \ \ \subseteq \ \ \ \prod_{w\ne v}({\mathcal O}_D)_{w}^\times$$
such that $V_w$ is a subgroup of $1+p\M_2({\mathcal O}_{F_w})$ for $w\in S_p\backslash \{v\}$ and such that $\pi\ne 0$, where
\begin{eqnarray*}
&&\pi\defeq \varinjlim_{V_v}\Hom_{G_F}\!\big(\rbar,H^1_{{\rm \acute et}}(X_{V^vV_v} \times_F \overline F, \F)\big)\ {\rm in\ the\ indefinite\ case},\\
&&\pi\defeq\varinjlim_{V_v}S(V^vV_v,\F)[{\mathfrak m}]\ {\rm in\ the\ definite\ case},
\end{eqnarray*}
we have $\dim_{\GL_2(F_v)}(\pi) = [F_v:\Qp]$.
\end{cor}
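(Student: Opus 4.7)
}
The strategy is to deduce the corollary from Theorem \ref{mainpatching2} by combining a dévissage over Serre weight types at the places $w\in S_p\setminus\{v\}$ (for the upper bound) with the patching-theoretic ``Miracle Flatness'' argument of part (ii) of the proof of Theorem \ref{mainpatching2}, adapted to the corollary's level $V^v$ (for the lower bound).

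For the upper bound $\dim_{\GL_2(F_v)}(\pi)\leq [F_v:\Qp]$, I would first choose a compact open $\widetilde V^v\subseteq V^v$ such that $\widetilde V_w$ is normal in $\GL_2(\cO_{F_w})$ and still contained in $1+p\M_2(\cO_{F_w})$ for each $w\in S_p\setminus\{v\}$, and also such that $\widetilde V_{w_1}$ matches the Taylor--Wiles level of \S\ref{patching}\ref{it:asum2} at some auxiliary place $w_1\in S$ (enlarging $S$ if necessary, with the formal smoothness of $R_{\rbar_{w_1}}$ following from genericity as in Remark \ref{shotton}). Then the corresponding $\widetilde\pi$ contains $\pi$ as a $\GL_2(F_v)$-subrepresentation (smaller level gives larger invariants) and carries a commuting smooth action of $H\defeq \prod_{w\in S_p\setminus\{v\}}\GL_2(\cO_{F_w})/\widetilde V_w$. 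Filtering $\widetilde\pi$ by its $H$-socle filtration, the graded pieces are $H$-semisimple, and local-global compatibility (as in \eqref{h1nonzero}, via \cite[\S5.5]{gee-kisin}) restricts the irreducible constituents to tensors $\otimes_{w}\sigma_w$ with $\sigma_w\in W(\rbar_w^\vee)$. For any such tuple $(\sigma_w)$, the $(\sigma_w)$-isotypic component of a graded piece is, as $\GL_2(F_v)$-representation, a direct sum of copies of the representation $\pi_{(\sigma_w)}$ of \eqref{tobepatched} at level $\widetilde V^v$, to which Theorem \ref{mainpatching2} applies and yields $\dim_{\GL_2(F_v)}(\pi_{(\sigma_w)})=[F_v:\Qp]$ when nonzero. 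Invoking the max-additivity $\dim_G(A)=\max(\dim_G(A'),\dim_G(A''))$ for short exact sequences of admissible smooth representations (an immediate consequence of the grade-theoretic definition in \S\ref{sec:kirillov}), one concludes $\dim_{\GL_2(F_v)}(\widetilde\pi)\leq [F_v:\Qp]$, and hence the upper bound for $\pi$.

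For the lower bound $\dim_{\GL_2(F_v)}(\pi)\geq [F_v:\Qp]$, I would not shrink $V^v$ but rather run the patching construction of \cite[\S 6]{DoLe} (see also \cite{CEGGPS}) directly for the corollary's $V^v$, without prescribing fixed tame types $\sigma_p^v$ at the places in $S_p\setminus\{v\}$. The formal smoothness of $R_{\rbar_w}$ at $w\in S\setminus S_p$ and the genericity of $\rbar_v$ ensure the same dimension formula $\dim R_\infty-\dim S_\infty=2[F_v:\Qp]$ as in \eqref{rinfini}. The resulting $\mathbb M_\infty$ is finite free over $S_\infty\bbra{K_1/Z_1}$ and satisfies $\mathbb M_\infty/\mathfrak m_\infty\simeq \pi^\vee$, so the argument of part (ii) of Theorem \ref{mainpatching2} goes through verbatim: $j_{R_\infty\bbra{K_1/Z_1}}(\mathbb M_\infty)=2[F_v:\Qp]$ by \cite[Lem.\ A.19]{GN}, and this is at least $j_{\F\bbra{K_1/Z_1}}(\pi^\vee)=3[F_v:\Qp]-\dim_{\GL_2(F_v)}(\pi)$ by \cite[Lem.\ A.16]{GN}, giving the desired inequality.

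The main obstacle is in this lower bound: one must verify that the patching machinery of \cite[\S6]{DoLe} (which in \S\ref{patching} is set up with prescribed tame types $\tau_w$ at $w\in S_p\setminus\{v\}$, forcing the action of $R_\infty$ through the single potentially Barsotti--Tate deformation ring $R_{\rbar_w^\vee}^{(1,0),\tau_w^\vee,\psi_w^{-1}}$) extends to the ``untyped'' setup where the action factors only through the whole potentially Barsotti--Tate deformation ring at each $w\in S_p\setminus\{v\}$ (which is automatic from the condition $V_w\subset 1+p\M_2(\cO_{F_w})$). The dimension count in \eqref{rinfini} survives because each irreducible component of this potentially Barsotti--Tate ring has the same dimension $3+[F_w:\Qp]$ by \cite[Thm.\ 7.2.1(2)]{EGS}, so the count of formal variables in $R_\infty$ and $S_\infty$ and the freeness of $\mathbb M_\infty$ over $S_\infty\bbra{K_1/Z_1}$ go through with only cosmetic modifications to the argument in \cite[\S 6]{DoLe}.
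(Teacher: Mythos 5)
Your upper bound argument is essentially the paper's: after shrinking $V^v$ so that $V_w$ is normal in $\GL_2(\cO_{F_w})$ for $w\in S_p\setminus\{v\}$ and $V_{w_1}$ is as in \S\ref{patching}, one rewrites $\pi$ via $\Hom_{U^v/V^v}\big(\otimes_w(\Ind_{V_w}^{\GL_2(\cO_{F_w})}1)_Z,-\big)$ and decomposes each $(\Ind_{V_w}^{\GL_2(\cO_{F_w})}1)_Z$ into Serre weights; your $H$-socle filtration is the same d\'evissage. Theorem~\ref{mainpatching2} then bounds each graded piece by $[F_v:\Qp]$, and the max-additivity of $\dim_G$ under short exact sequences completes the bound. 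This is fine.

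Your lower bound, however, has a genuine gap, and it is more than a technicality to be checked. You propose running patching at level $V^v$ \emph{without} cutting down by tame types at $w\in S_p\setminus\{v\}$, and then applying the Miracle Flatness/grade argument from part (ii) of Theorem~\ref{mainpatching2} verbatim. But removing the type restriction replaces the rings $R_{\rbar_w}^{(0,-1),\tau_w,\psi_w}$ (of relative dimension $3+[F_w:\Qp]$) by the full $R_{\rbar_w}^{\psi_w}$ (of relative dimension $3+3[F_w:\Qp]$), so the dimension count changes to
\[
\dim R_\infty - \dim S_\infty \;=\; 2[F_v:\Qp] \;+\; 2\!\!\sum_{w\in S_p\setminus\{v\}}\!\![F_w:\Qp] \;=\; 2[F:\Qp],
\]
not $2[F_v:\Qp]$ (unless $p$ is inert in $F$). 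The grade inequality then only yields the \emph{weaker} bound $\dim_{\GL_2(F_v)}(\pi)\geq 3[F_v:\Qp]-2[F:\Qp]$, which is useless (and typically negative) whenever there is more than one place above $p$. Your claim that ``the formal smoothness of $R_{\rbar_w}$ at $w\in S\setminus S_p$ and the genericity of $\rbar_v$ ensure the same dimension formula'' overlooks exactly this shift at the places $w\in S_p\setminus\{v\}$. The paper avoids re-patching entirely: it uses Ihara's lemma at $w_1$ to normalize the level at $w_1$ without changing $\dim_{\GL_2(F_v)}(\pi)$, reduces to the case $V_w=1+p\M_2(\cO_{F_w})$ for $w\in S_p\setminus\{v\}$ (shrinking $V_w$ can only increase $\dim$, but it is already bounded above by $[F_v:\Qp]$ from part (i)), and then observes — using non-Eisenstein injectivity of $H^1_{\acute{\rm e}t}(\cdots)_{\mathfrak m}$ as a $U^v/V^v$-representation — that $\pi$ at this level \emph{contains} a $\GL_2(F_v)$-subrepresentation of the form \eqref{tobepatched}, whose dimension equals $[F_v:\Qp]$ by Theorem~\ref{mainpatching2}. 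The lower bound thus comes from a containment argument, not from a new Miracle Flatness computation, and this is the step you are missing.
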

\begin{proof}
Note that the ideal $\mathfrak m$ in the definite case is as in Remark \ref{mEGS}\ref{it:EGS2} for $S$ big enough (the resulting eigenspace does not depend on $S$ by \cite[Lemma\ 4.6(a)]{BDJ}). We prove the indefinite case only, the definite case being similar. We can and do choose a place $w_1$ as in \S\ref{patching}.

(i) We first prove $\dim_{\GL_2(F_v)}(\pi) \leq [F_v:\Qp]$. Since the Gelfand--Kirillov dimension of a subspace is at most as big as the one of the space, it is enough to prove this upper bound for a smaller $V^v$. In particular, we can assume that $V_{w_1}$ is a subgroup of the group of matrices that are upper-triangular unipotent mod $w_1$ and that $V_w$ is a subgroup of $1+p\M_2({\mathcal O}_{F_w})$ which is normal in $\GL_2({\mathcal O}_{F_w})$ for $w\in S_p\backslash \{v\}$. Let $S\defeq S_D\cup S_{\rbar}$ and $U=\prod_wU_w$ with $U_w\defeq V_w$ if $w\notin S_p$ and $U_w\defeq ({\mathcal O}_D)_{w}^\times\cong \GL_2({\mathcal O}_{F_w})$ if $w\in S_p$, then $S$ and $U$ satisfy all the conditions in \S\ref{patching} and we have
\begin{equation}\label{ind}
\pi\cong \varinjlim_{V_v}\Hom_{U^v/V^v}\!\Big(\!\bigotimes _{w\in S_p\backslash\{v\}} \big(\Ind_{V_w}^{\GL_2({\mathcal O}_{F_w})}1\big)_Z, \Hom_{G_F}\!\big(\rbar, H^1_{{\rm \acute et}}(X_{V^vV_v} \times_F \overline F, \F)\big)\Big),
\end{equation}
where $(\Ind_{V_w}^{\GL_2({\mathcal O}_{F_w})}1)_{Z}$ is the maximal quotient of $\Ind_{V_w}^{\GL_2({\mathcal O}_{F_w})}1$ on which the center of $\GL_2({\mathcal O}_{F_w})$ acts by $\ovl\psi^{-1}|_{I_{F_w}}$. Writing each $(\Ind_{V_w}^{\GL_2({\mathcal O}_{F_w})}1)_Z$ as a successive extension of Serre weights for $\GL_2({\mathcal O}_{F_w})$, an obvious d\'evissage shows that it is enough to prove that for all Serre weights $(\sigma_w)_{w\in S_p\backslash \{v\}}$, we have
$$\dim_{\GL_2(F_v)}\bigg(\varinjlim_{V_v}\Hom_{U^v/V^v}\!\Big(\!\bigotimes _{w\in S_p\backslash\{v\}} \sigma_w, \Hom_{G_F}\!\big(\rbar, H^1_{{\rm \acute et}}(X_{V^vV_v} \times_F \overline F, \F)\big)\Big)\bigg)\leq [F_v:\Qp].$$
But this follows from (\ref{h1nonzero}) and Theorem \ref{mainpatching2}. In fact, using
$$\Hom_{U^v/V^v}\!\big(-, \Hom_{G_F}\!\big(\rbar, H^1_{{\rm \acute et}}(X_{V^vV_v} \times_F \overline F, \F)\big)\big)\cong \Hom_{G_F}\!\big(\rbar,\Hom_{U^v/V^v}\!\big(-, H^1_{{\rm \acute et}}(X_{V^vV_v} \times_F \overline F, \F)\big)\big)$$
together with
$$\Hom_{G_F}\!\big(\rbar,\Hom_{U^v/V^v}\!\big(-, H^1_{{\rm \acute et}}(X_{V^vV_v} \times_F \overline F, \F)\big)\big)\cong \Hom_{G_F}\!\big(\rbar,\Hom_{U^v/V^v}\!\big(-, H^1_{{\rm \acute et}}(X_{V^vV_v} \times_F \overline F, \F)_{\mathfrak m}\big)\big)$$
(for $\mathfrak m$ as in Remark \ref{mEGS}\ref{it:EGS2}) and the fact that $H^1_{{\rm \acute et}}(X_{V^vV_v} \times_F \overline F, \F)_{\mathfrak m}$ is an injective representation of $U^v/V^v$ over $\F$ (since $\mathfrak m$ is non-Eisenstein), we easily deduce that, in the above d\'evissage, $\pi$ as in (\ref{ind}) contains
$$\varinjlim_{V_v}\Hom_{U^v/V^v}\!\Big(\!\bigotimes _{w\in S_p\backslash\{v\}} \sigma_w, \Hom_{G_F}\!\big(\rbar, H^1_{{\rm \acute et}}(X_{V^vV_v} \times_F \overline F, \F)\big)\Big)$$
for at least one tuple $(\sigma_w)_{w\in S_p\backslash \{v\}}$ with $\sigma_w\in W(\rbar_w^\vee)$ for all $w\in S_p\backslash \{v\}$ (since $\pi\ne 0$). 
(We also use that $\Hom_{U^v/V^v}(\bigotimes_{w\in S_p\backslash \{v\}} \sigma_w, H^1_{{\rm \acute et}}(X_{V^vV_v} \times_F \overline F, \F)_{\mathfrak m}) \ne 0$ if and only if $\sigma_w \in W(\rbar_w^\vee$) for all $w$, by \cite[Lemma 4.10]{BDJ}.)
This implies $\dim_{\GL_2(F_v)}(\pi) = [F_v:\Qp]$ by Theorem \ref{mainpatching2} (for $\pi$ as in (\ref{ind})).

(ii) We now prove $\dim_{\GL_2(F_v)}(\pi) = [F_v:\Qp]$ for $\pi$ as in the statement. Set $V'^v=\prod_{w\ne v}V'_w$ with $V'_w=V_w$ if $w\ne w_1$ and $V'_{w_1}=$ subgroup of $({\mathcal O}_D)_{w_1}^\times$ of matrices that are upper-triangular unipotent mod $w_1$. By Ihara's Lemma at the place $w_1$, which is easy here thanks to all the assumptions on $w_1$, we have for sufficiently small $V_v$ that
$$\Hom_{G_F}\!\big(\rbar,H^1_{{\rm \acute et}}(X_{V^vV_v} \times_F \overline F, \F)\big)^{\oplus 2}\cong \Hom_{G_F}\!\big(\rbar,H^1_{{\rm \acute et}}(X_{V'^vV_v} \times_F \overline F, \F)\big)$$
and hence a $\GL_2(F_v)$-equivariant isomorphism
$$\pi^{\oplus 2}\cong \pi'\defeq \varinjlim_{V_v}\Hom_{G_F}\!\big(\rbar,H^1_{{\rm \acute et}}(X_{V'^vV_v} \times_F \overline F, \F)\big).$$
In particular, $\dim_{\GL_2(F_v)}(\pi)=\dim_{\GL_2(F_v)}(\pi')$. Replacing $V$ by $V'$, we can thus assume that $V_{w_1}$ is the subgroup of $({\mathcal O}_D)_{w_1}^\times$ of matrices that are upper-triangular unipotent mod $w_1$. It is enough to prove $\dim_{\GL_2(F_v)}(\pi) = [F_v:\Qp]$ when $V_w = 1+p\M_2({\mathcal O}_{F_w})$ for $w\in S_p\backslash \{v\}$ (as $\dim_{\GL_2(F_v)}(\pi)$ for the subgroup $V_w$ of $1+p\M_2({\mathcal O}_{F_w})$ can only grow, but is anyway bounded by $[F_v:\Qp]$ by (i)). But this follows from the last assertion in part (i) above.
\end{proof}

\begin{rem}\label{abitfurther}
If $V^v=\prod_{w\notin S}({\mathcal O}_D)_{w}^\times\prod_{w\in S\backslash\{v\}}\!V_w$ for some finite set $S$ containing $S_D\cup S_{\rbar}$ such that $R_{\rbar_w}$ is formally smooth for $w\in S\backslash S_p$, the same proof gives $\dim_{\GL_2(F_v)}(\pi) = [F_v:\Qp]$. Without assuming $V_w\subset 1+p\M_2({\mathcal O}_{F_w})$ for $w\in S_p\backslash \{v\}$, the above proof still gives the bound $\dim_{\GL_2(F_v)}(\pi) \leq [F_v:\Qp]$. 
\end{rem}

\subsection{Flatness for the dual of completed cohomology}\label{sec:platitude_Hecke} We give an
application to the flatness of the dual of completed cohomology.

In this section we assume moreover that $p$ is inert in $F$, so that $S_p=\{v\}$. Let $V^v$ be as in the final part of \S\ref{patching}, %
i.e.\ $V^v=\prod_{w\ne v}V_w=\prod_{w\ne v}U_w$ (as $p$ is inert, $V_w = U_w$ for all $w \ne v$).

For each compact open subgroup $V_v\subset 1+p\M_2(\mathcal{O}_{F_v})$
and for each $n\geq1$ we define the
$\psi^{-1}$-isotypic subspaces 
\[
  H^1_{\textrm{\'et}}(X_{V^vV_v}\times_F\overline{F},W(\F)/p^n)^{\psi^{-1}}
  \quad (\text{resp.~}S(V^vV_v,W(\F)/p^n)^{\psi^{-1}} \ \text{ in
    the definite case})\] 
for the action of the center
$(\mathbb{A}_F^\infty)^\times$ of $(D\otimes_F\mathbb{A}_F^\infty)^\times$, where $\psi^{-1}$ is viewed as a character
of $(\mathbb{A}_F^\infty)^{\times}$ via the global Artin map (sending uniformizers
to geometric Frobenius elements). Let
$\mathbb{T}(V^vV_v,W(\F)/p^n)^{\psi^{-1}}$ be the $W(\F)$-subalgebra of
$\End_{W(\F)}(H^1_{\textrm{\'et}}(X_{V^vV_v}\times_F\overline{F},W(\F)/p^n)^{\psi^{-1}})$
(respectively $\End_{W(\F)}(S(V^vV_v,W(\F)/p^n)^{\psi^{-1}})$) generated by
the endomorphisms $T_w$ and $S_w$ for $w\notin S\cup\set{w_1}$ and
$\mathbb{T}(V^vV_v,W(\F)/p^n)^{\psi^{-1}}_{\rbar}$ its localization at
the maximal ideal $\mathfrak{m}$ generated by the elements
$T_w-S_w\tr(\rbar(\mathrm{Frob}_w))$,
$\mathrm{Norm}(w)-S_w\det(\rbar(\mathrm{Frob}_w))$ for
$w\notin S\cup\set{w_1}$ (see Remark \ref{mEGS}\ref{it:EGS2}). Let
$\widehat{\mathbb{T}}(V^v)_{\rbar}^{\psi^{-1}}$ be the ``big
Hecke algebra''
\[
  \widehat{\mathbb{T}}(V^v)_{\rbar}^{\psi^{-1}}\defeq\varprojlim_{n,V_v}\mathbb{T}(V^vV_v,W(\F)/p^n)_{\rbar}^{\psi^{-1}}.\]
We define respectively
\begin{align*}
  \widehat{H}^1(V^v)^{\psi^{-1}}_{\rbar}&\defeq
  \text{\small{$\varprojlim_n\varinjlim_{V_v}\bigg(H^1(X_{V^vV_v}\times_F\overline{F},W(\F)/p^n)^{\psi^{-1}}\otimes_{\mathbb{T}(V^vV_v,W(\F)/p^n)^{\psi^{-1}}}\mathbb{T}(V^vV_v,W(\F)/p^n)_{\rbar}^{\psi^{-1}}\bigg)$}}
  \\ 
  \widehat{S}(V^v)^{\psi^{-1}}_{\rbar}&\defeq\text{\small{$\varprojlim_n\varinjlim_{V_v}\bigg(S(V^vV_v,W(\F)/p^n)^{\psi^{-1}}\otimes_{\mathbb{T}(V^vV_v,W(\F)/p^n)^{\psi^{-1}}}\mathbb{T}(V^vV_v,W(\F)/p^n)_{\rbar}^{\psi^{-1}}\bigg)$}}
\end{align*}
so that $\widehat{H}^1(V^v)^{\psi^{-1}}_{\rbar}$ and its dual
$\Hom_{W(\F)}(\widehat{H}^1(V^v)^{\psi^{-1}}_{\rbar},W(\F))$ (resp.~$\widehat{S}(V^v)_{\rbar}^{\psi^{-1}}$ and its dual) are
naturally
$\widehat{\mathbb{T}}(V^v)^{\psi^{-1}}_{\rbar}$-modules.

Let $R_{\rbar,S\cup\set{w_1}}^{\psi}$ be the universal deformation
$W(\F)$-algebra of $\rbar$ parametrizing (unframed) deformations $r$ of $\rbar$ such that
$r$ is unramified outside of $S\cup\set{w_1}$ and $\psi=\eps\det(r)$. It
follows from the construction of $\mathbb{M}_\infty$ in
\cite[\S2]{CEGGPS}, \cite[\S9]{ScholzeLT} and \cite[\S6]{DoLe} that we
have a sequence of surjective morphisms of local rings
\begin{equation}\label{eq:bigalgebrasmaps}
R_\infty\otimes_{S_\infty}W(\F)=R_\infty/(x_1,\dots, x_{4|S|+q-1})\twoheadrightarrow
  R_{\rbar,S\cup\set{w_1}}^\psi\twoheadrightarrow\widehat{\mathbb{T}}(V^{v})_{\rbar}^{\psi^{-1}}
\end{equation}
and a compatible isomorphism
\[ \mathbb{M}_\infty\otimes_{S_\infty}W(\F)=\mathbb{M}_\infty/(x_1,\dots, x_{4|S|+q-1}) \simeq
  \Hom_{W(\F)}(\widehat{H}^1(V^v)_{\rbar}^{\psi^{-1}},W(\F))\]
(resp.\ $\mathbb{M}_\infty\otimes_{S_\infty}W(\F)\cong
\Hom_{W(\F)}(\widehat{S}(V^v)_{\rbar}^{\psi^{-1}},W(\F))$ in the
definite case). Note that among all choices involved in this
construction, there is a choice of basis of the universal deformation
over $R_{\rbar,S\cup\set{w_1}}^{\psi}$.
\begin{cor}\label{cor:platitude_Hecke}
  All the maps in \eqref{eq:bigalgebrasmaps} are
  isomorphisms. Moreover $\Hom_{W(\F)}(\widehat{H}^1(V^v)^{\psi^{-1}}_{\rbar},W(\F))$
  \emph{(}resp.~$\Hom_{W(\F)}(\widehat{S}(V^v)_{\rbar}^{\psi^{-1}},W(\F))$\emph{)} is
  a faithfully flat
  $\widehat{\mathbb{T}}(V^v)_{\rbar}^{\psi^{-1}}$-module and
  $\widehat{\mathbb{T}}(V^v)_{\rbar}^{\psi^{-1}}$ is a complete
  intersection.
\end{cor}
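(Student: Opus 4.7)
The plan is to deduce this corollary as a formal consequence of Theorem \ref{thm:flat_infty} by base change along the surjection $R_\infty \twoheadrightarrow \overline{R}_\infty \defeq R_\infty/(x_1, \ldots, x_{4|S|+q-1})$.

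First, since $\mathbb{M}_\infty$ is faithfully flat over $R_\infty$ by Theorem \ref{thm:flat_infty}, base change yields that
\[
\overline{\mathbb{M}}_\infty \defeq \mathbb{M}_\infty/(x_1, \ldots, x_{4|S|+q-1}) \cong \Hom_{W(\F)}(\widehat{H}^1(V^v)_{\rbar}^{\psi^{-1}}, W(\F))
\]
is faithfully flat over $\overline{R}_\infty$. Moreover, by construction of the Hecke action the $\overline{R}_\infty$-action on $\overline{\mathbb{M}}_\infty$ factors through $\widehat{\mathbb{T}} \defeq \widehat{\mathbb{T}}(V^v)_{\rbar}^{\psi^{-1}}$ via the surjections in \eqref{eq:bigalgebrasmaps}. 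Denoting by $I \subset \overline{R}_\infty$ the kernel of the composite map $\overline{R}_\infty \twoheadrightarrow \widehat{\mathbb{T}}$, we have $I \cdot \overline{\mathbb{M}}_\infty = 0$; but flatness gives that $I \otimes_{\overline{R}_\infty} \overline{\mathbb{M}}_\infty \hookrightarrow \overline{\mathbb{M}}_\infty$ is injective, so $I \otimes_{\overline{R}_\infty} \overline{\mathbb{M}}_\infty = 0$, and faithful flatness then forces $I = 0$. Hence the composite in \eqref{eq:bigalgebrasmaps} is an isomorphism, and since both of its factors are surjective, each is itself an isomorphism. The faithful flatness of $\Hom_{W(\F)}(\widehat{H}^1(V^v)_{\rbar}^{\psi^{-1}}, W(\F))$ over $\widehat{\mathbb{T}}$ is then immediate; the definite case is handled identically after substituting $\widehat{S}(V^v)_{\rbar}^{\psi^{-1}}$ for $\widehat{H}^1(V^v)_{\rbar}^{\psi^{-1}}$.

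It remains to establish the complete intersection property. Under the isomorphism $\widehat{\mathbb{T}} \cong \overline{R}_\infty$ just obtained, and given that $R_\infty$ is a formal power series ring over $W(\F)$ by \eqref{rinfini} (hence regular), it suffices to verify that $(x_1, \ldots, x_{4|S|+q-1})$ is a regular sequence in $R_\infty$. The $x_i$ form a regular sequence in the regular local ring $S_\infty = W(\F)\bbra{x_1, \ldots, x_{4|S|+q-1}}$, so they act regularly on the finite free $S_\infty\bbra{K_1/Z_1}$-module $\mathbb{M}_\infty$. Since $\mathbb{M}_\infty$ is faithfully flat over $R_\infty$, this regularity descends to $R_\infty$ by the standard principle that a faithfully flat ring map both preserves and detects injectivity of multiplication maps: applied one $x_i$ at a time to the modules $R_\infty/(x_1, \ldots, x_{i-1})$, it shows that each $x_i$ is a non-zero-divisor on $R_\infty/(x_1, \ldots, x_{i-1})$ because it is so on $\mathbb{M}_\infty/(x_1, \ldots, x_{i-1})\mathbb{M}_\infty$. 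Thus $\overline{R}_\infty$ is the quotient of a regular ring by a regular sequence, hence a complete intersection, and the same holds for $\widehat{\mathbb{T}}$.

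The main substantive input has already been carried out in Theorem \ref{thm:flat_infty}; the present corollary is essentially a formal consequence, with the only point requiring care being the descent of regularity of the sequence $(x_1, \ldots, x_{4|S|+q-1})$ from $\mathbb{M}_\infty$ to $R_\infty$ in the final paragraph.
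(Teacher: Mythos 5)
Your proposal is correct and takes essentially the same approach as the paper: both start from the faithful flatness of $\mathbb{M}_\infty$ over $R_\infty$ (Theorem \ref{thm:flat_infty}), deduce that $\overline{\mathbb{M}}_\infty$ is faithfully flat over $\overline{R}_\infty\defeq R_\infty/(x_1,\dots,x_{4|S|+q-1})$ by base change, use faithfulness to force the surjections in \eqref{eq:bigalgebrasmaps} to be isomorphisms, and then conclude the complete intersection property by showing that $(x_1,\dots,x_{4|S|+q-1})$ is a regular sequence in the regular local ring $R_\infty$. The one small difference is in the last step: you descend regularity of the sequence directly from $\mathbb{M}_\infty$ (finite free over $S_\infty\bbra{K_1/Z_1}$) to $R_\infty$ via faithful flatness of $\mathbb{M}_\infty$ over $R_\infty$, whereas the paper first observes that $R_\infty$ is faithfully flat over $S_\infty$ (since $\mathbb{M}_\infty$ is faithfully flat over both) and then transfers the regular sequence from $S_\infty$ to $R_\infty$; both routes are elementary and equivalent in substance.
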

\begin{proof}
  This is \cite[Prop.\ 4.3.1]{GN}. However, since our setup is
  slightly different, we reproduce the proof in our case. We only prove
  the case of Shimura curves, the definite case being identical.

  We first notice that
  $\Hom_{W(\F)}(\widehat{H}^1(V^v)_{\rbar}^{\psi^{-1}},W(\F))\simeq
  \mathbb{M}_\infty/(x_1,\dots, x_{4|S|+q-1})$ is a faithfully flat
  $R_\infty/(x_1,\dots, x_{4|S|+q-1})$-module, since $\mathbb{M}_\infty$ is a faithfully flat $R_\infty$-module by Theorem \ref{thm:flat_infty}. 
  As a consequence, the
  composite of the maps
  \[ R_\infty/(x_1,\dots, x_{4|S|+q-1})\twoheadrightarrow
    R_{\rbar,S\cup\set{w_1}}^\psi\twoheadrightarrow\widehat{\mathbb{T}}(V^v)_{\rbar}^{\psi^{-1}}\rightarrow\End_{W(\F)}\Big(\Hom_{W(\F)}(\widehat{H}^1(V^v)_{\rbar}^{\psi^{-1}},W(\F))\Big)\]
  is injective. This proves the first claim and the faithful flatness
  of $\Hom_{W(\F)}(\widehat{H}^1(V^v)^{\psi^{-1}}_{\rbar},W(\F))$ as a
  $\widehat{\mathbb{T}}(V^{v})_{\rbar}^{\psi^{-1}}$-module. As
  $\mathbb{M}_\infty$ is a faithfully flat $R_\infty$- and
  $S_\infty$-module, $R_\infty$ is a faithfully flat
  $S_\infty$-module. As $(x_1,\dots, x_{4|S|+q-1})$ is a regular
  sequence in $S_\infty$, it is $R_\infty$-regular and therefore
  $R_\infty/(x_1,\dots, x_{4|S|+q-1})\simeq
  R_{\rbar,S\cup\set{w_1}}^{\psi}\simeq\widehat{\mathbb{T}}(V^{v})_{\rbar}^{\psi^{-1}}$
  is a complete intersection.
\end{proof}

\begin{rem}
We expect the statement of Corollary \ref{cor:platitude_Hecke} to hold without assuming that $p$ is inert in $F$: one should extend the results of \S \ref{sec:GKsection} to include \emph{all} places above $p$, or use a non-constant coefficient system at all places $w\in S_p\backslash \{v\}$.
This is somewhat beyond the purpose of this work, and we decided not to pursue it here.
\end{rem}

\newpage

\bibliography{Biblio}
\bibliographystyle{amsalpha}

\end{document}